\newcommand{\thesistitle}{Two inquiries about finite groups and well-behaved quotients}
\newcommand{\thesisauthor}{Ben Blum-Smith}
\newcommand{\thesisadvisora}{Yuri Tschinkel}
\newcommand{\thesisadvisorb}{Fedor Bogomolov}
\newcommand{\graddate}{May 2017}
\newglossaryentry{naturals}
{
	name = {\ensuremath{\mathbb{N}}},
	description = {Nonnegative integers. Thus an $\mathbb{N}$-graded ring has a degree zero piece},
	sort = N
}
\newglossaryentry{[n]}
{
	name = {\ensuremath{[n]}},
	description = {The set $\{1,\dots,n\}$},
	sort = n
}
\newglossaryentry{spec}
{
	name = {\ensuremath{\operatorname{Spec}}},
	description = {The prime spectrum of a ring, i.e. the set of prime ideals, topologized by the Zariski topology},
	sort = Spec
}
\newglossaryentry{coprod}
{
	name = {\ensuremath{\coprod}},
	description = {The coproduct in the category of sets, i.e., the disjoint union},
	sort = coprod,
	symbol = {\ensuremath{\coprod}}
}
\newglossaryentry{height}
{
	name = {\ensuremath{\operatorname{ht}}},
	description = {The height of a prime ideal, i.e. the maximum length of a chain of prime ideals descending from the given one},
	sort = ht
}
\newglossaryentry{partitions}
{
	name = {\ensuremath{\mathscr{P}}},
	description = {The monoid of partitions with at most $n$ parts, describing the shapes of monomials in $R = A[x_1,\dots,x_n]$},
	sort = P
}
\newglossaryentry{conjparts}
{
	name = {\ensuremath{\overline{\mathscr{P}}}},
	description = {The monoid of partitions with parts of size at most $n$, describing the fine grades of elements in the Stanley-Reisner ring $S = A[B_n\setminus\emptyset]$},
	sort = P'
}
\newglossaryentry{Bn}
{
	name = {\ensuremath{B_n}},
	description = {The boolean algebra of subsets of $[n]$, seen as a poset},
	sort = Bn
}
\newglossaryentry{garsia}
{
	name = {\ensuremath{\mathscr{G}}},
	description = {The Garsia map, defined in \ref{def:garsiamap}},
	sort = garsia
}
\newglossaryentry{Grr}
{
	name = {\ensuremath{G_{rr}}},
	description = {The subgroup of a transformation group $G$ that is generated by its reflections and rotations. In the main case, $G$ is a permutation group, and this is the subgroup generated by transpositions, double transpositions, and 3-cycles},
	sort = Grr
}
\renewcommand{\emptyset}{\ensuremath{\varnothing}}
\newcommand{\C}{\ensuremath{\mathbb{C}}}
\newcommand{\R}{\ensuremath{\mathbb{R}}}
\newcommand{\Q}{\ensuremath{\mathbb{Q}}}
\newcommand{\Z}{\ensuremath{\mathbb{Z}}}
\newcommand{\N}{\gls{naturals}}
\newcommand{\F}{\ensuremath{\mathbb{F}}}
\newcommand{\A}{\ensuremath{\mathbb{A}}}
\newcommand{\Proj}{\ensuremath{\mathbb{P}}}
\newcommand{\primep}{\ensuremath{\mathfrak{p}}}
\newcommand{\primeq}{\ensuremath{\mathfrak{q}}}
\newcommand{\maxm}{\ensuremath{\mathfrak{m}}}
\newcommand{\maxn}{\ensuremath{\mathfrak{n}}}
\newcommand{\depth}{\ensuremath{\operatorname{depth}}}
\newcommand{\height}{\gls{height}}
\newcommand{\Bn}{\gls{Bn}}
\newcommand{\scrP}{\gls{partitions}}
\newcommand{\scrPconj}{\gls{conjparts}}
\newcommand{\garsia}{\gls{garsia}}
\newcommand{\Grr}{\gls{Grr}}
\newcommand{\Spec}{\gls{spec}}
\newcommand{\MaxSpec}{\ensuremath{\operatorname{MaxSpec}}}
\newcommand{\Aut}{\operatorname{Aut}}
\newcommand{\Tr}{\operatorname{Tr}}
\newcommand{\Ind}{\operatorname{Ind}}
\newcommand{\Res}{\operatorname{Res}}
\newcommand{\gr}{\operatorname{gr}}
\newcommand{\lk}{\operatorname{lk}}
\newcommand{\rk}{\operatorname{rk}}
\theoremstyle{plain}
\newtheorem{thm}{Theorem}[section]
\newtheorem{prop}[thm]{Proposition}
\newtheorem{lemma}[thm]{Lemma}
\newtheorem{cor}[thm]{Corollary}
\newtheorem{conj}[thm]{Conjecture}
\newtheorem*{introthm}{Theorem}
\theoremstyle{definition}
\newtheorem{definition}[thm]{Definition}
\newtheorem{remark}[thm]{Remark}
\newtheorem{question}[thm]{Question}
\newtheorem{example}[thm]{Example}
\newtheorem{notation}[thm]{Notation}
\newtheorem{theme}{Theme}
\begin{document}

\pagenumbering{roman}

\thispagestyle{empty}

\begin{center}
  {\large\textbf{\thesistitle}}
  \vspace{.7in}
	
  by
  \vspace{.7in}

  \thesisauthor
  \vfill

\begin{doublespace}
  A dissertation submitted in partial fulfillment\\
  of the requirements for the degree of\\
  Doctor of Philosophy\\
  Department of Mathematics\\
  Courant Institute of Mathematical Sciences\\
  New York University\\
  \graddate
\end{doublespace}
\end{center}
\vfill

\noindent\makebox[\textwidth]{\hfill\makebox[2.5in]{\hrulefill}}\\
\makebox[\textwidth]{\hfill\makebox[2.5in]{\hfill\thesisadvisora\hfill}}\\
\\
\noindent\makebox[\textwidth]{\hfill\makebox[2.5in]{\hrulefill}}\\
\makebox[\textwidth]{\hfill\makebox[2.5in]{\hfill\thesisadvisorb\hfill}}
\newpage

 
\thispagestyle{empty}
\vspace*{0in}
\begin{center}
{\copyright{} \thesisauthor}\\
All Rights Reserved, 2017
\end{center}

\newpage

\section*{Dedication}\addcontentsline{toc}{section}{Dedication}
\vspace*{\fill}
\begin{center}
{\it dedicated to Diane and Ben, who never doubted where this was headed}
\end{center}
\vfill
\newpage
\section*{Acknowledgements}\addcontentsline{toc}{section}{Acknowledgements}

It gives me tremendous pleasure to acknowledge the many people who helped this thesis come into being.

My advisors Yuri Tschinkel and Fedor Bogomolov were each critical. Yuri provided reliable professional advice since the beginning of my studies and throughout the research process, as well as a certain helpful impatience with my tendency to try to understand everything before beginning. This empowered me to get on the court before I felt I was ready, which I then discovered is how you get ready. 

Fedor was incredibly generous with me throughout my time at NYU. He proposed several problems for me, each of which I learned a great deal from, and one of which has become chapter \ref{ch:absing}. He also spent {\em a lot} of time with me and with other students during my years here. I am still amazed at how he made a habit of dropping by my office after class to follow up on questions I had asked during lecture. This interest in his students goes hand in hand with an attitude toward mathematics full of humility and wonder, which is a delight to be around.

I owe a very special debt of gratitude to Sophie Marques. Sophie was the first mathematician to collaborate with me seriously on a project aimed at advancing the frontier. Although that project has not ended up in this thesis, it provided me with my first real research experience: working on something that fits into an active research program, and having a research need inform my choices about what to learn. Sophie was always generous with her knowledge while also seeking and valuing my input. In addition to all of this, she has given me extremely helpful, detailed feedback on an earlier draft of this thesis.

The research in chapter \ref{ch:invar} was bookended by encounters with two recent PhD theses, and correspondence with their authors was critical to its success. Owen Biesel was the source of the project itself. His work on $G$-closures of rings was the starting point: I knew I wanted to follow it up somehow. In the ensuing exchange, he posed the question to which chapter \ref{ch:invar} is addressed. At the other end of the process, after forming a topological conjecture about sphere quotients and then spending a summer trying to prove it, a challenge for which I was woefully underprepared, I learned of the work of Christian Lange, which supplied everything I needed. Like Owen, Christian was a generous correspondent, patiently answering my questions about his work, and inquiring with curiosity about my application of it. As Owen helped me find the question, Christian helped me find the answer.

I am also delighted for the opportunity to thank the many mathematicians who shared their expertise with me over the course of the research detailed here. Nothing could have made me feel more welcomed into the worldwide mathematical community. Special thanks go to Victor Reiner, Gregor Kemper, Allan Steel, John Voight, Cory Colbert, Mohamed Omar, Robert Young, Josephine Yu, David Eisenbud, Naoki Terai, Sylvain Cappell, and Harold Edwards.

My fellow students at NYU have grounded the process of becoming mathematicians in a sense of in-it-togetherness, and I have learned a great deal from them about all parts of mathematics. I would like to mention by name Edgar Costa, Jin Qian, Yash Jhaveri, Carlos Amendola Ceron, Or Hershkovits, Alex Blumenthal, Aukosh Jagganath, Manas Rachh, Mihai Nica, Ian Tabasco, Joey MacDonald, Rachel Hodos, Ethan O'Brien, Lukas Koehler, Jordan Thomas, and Guillaume Dubach. A special acknowledgement goes to my officemate Federico Buonerba, whose area is just close enough to mine that we often had questions to ask each other, and whose point of view is just different enough that we never failed to learn something from the answer.

Finally it gives me great joy to acknowledge my friends and family. I grew up with four deeply thoughtful and intellectually curious adults, with interests spanning a tremendous swath of the intellectual landscape: my parents Judith Smith and Lawrence Blum, and Noel Jette and Alan Zaslavsky. All of my scholarly pursuits have been built on the foundation of curiosity and thoughtfulness I learned from them.

My friends and family have shown me that they are in my corner in countless ways -- from hosting me when I was in town for a math conference, and asking me insistently to explain what I was working on and being willing to spend 45 minutes working to parse the answer, to being understanding when I needed to disappear socially in order to write this, to coming over at 11pm bearing banana bread in the final weeks.

In particular I want to acknowledge Ben Spatz, my brother in all but blood, whose friendship has always been a catalyst for my will to pursue the most beautiful truth.

Lastly, I must express my profound gratitude to my partner Diane Henry, who not only took over the joint administration of our household almost completely during this last stretch while she simultaneously pursued her own goals, but whose wisdom has been a guiding light for me since the beginning.
\newpage
\section*{Abstract}\addcontentsline{toc}{section}{Abstract}

This thesis addresses questions in representation and invariant theory of finite groups. The first concerns singularities of quotient spaces under actions of finite groups. We introduce a class of finite groups such that 
the quotients have at worst abelian quotient singularities. We prove that supersolvable groups belong to this class  and show that nonabelian finite simple groups do not belong to it.
The second question concerns the Cohen-Macaulayness of the invariant ring $\Z[x_1,\dots,x_n]^G$, where $G$ is a permutation group. We prove that this ring is Cohen-Macaulay if $G$ is generated by transpositions, double transpositions, and 3-cycles, and conjecture that the converse is true as well.
\newpage
\tableofcontents
\cleardoublepage
\addcontentsline{toc}{section}{List of Figures}
\listoffigures
\newpage
\listoftables\addcontentsline{toc}{section}{List of Tables}
\newpage
\addcontentsline{toc}{section}{List of Symbols}
\printglossary[title={List of Symbols}]
\newpage
\pagenumbering{arabic} 
\section*{Introduction}\addcontentsline{toc}{section}{Introduction}

This thesis concerns questions in representation theory and invariant theory of finite groups. 

In chapter \ref{ch:absing}, we study quotients of products of projective spaces by actions of finite groups. 
The question is to determine when the quotient has only {\em abelian singularities}, which are amenable to an explicit desingularization process. Our methods are group- and representation-theoretic. We work over the complex numbers \C.

In chapter \ref{ch:invar}, the principal object is a polynomial ring over the integers $\Z$ or a finite field $\F_p$, and our question is when the invariant ring of a group $G$ is free as a module over a polynomial subring. Geometrically, this is the question of when the quotient of an affine space by $G$ has a finite flat morphism to affine space. The methods are invariant-theoretic and combinatorial. This question is intimately related to a third, purely topological one: if the object being acted on is $\R^n$, viewed as a piecewise linear (PL) manifold, when is the quotient also a PL manifold? This question has been recently resolved by Christian Lange, and his results will allow us draw conclusions
about the ring of invariants. 

Explicating the close connection between the second and third questions is a major goal of chapter \ref{ch:invar}. The first question is not as closely related; indeed, chapters \ref{ch:absing} and \ref{ch:invar} are logically independent. But the following themes unite our work.

Throughout, $G$ is a finite group. 

\begin{theme}
\textbf{The local structure of the quotient of a smooth object by $G$ is determined by the stabilizers.}
\end{theme}

In chapter \ref{ch:absing}, we let $G$ act on a smooth projective variety, and we want to control the structure of singularities in the quotient. In chapter \ref{ch:invar}, we let $G$ act on a simplicial ball in $\R^n$, and we want to control the homology of links in the quotient. In both cases, we rely on the fact that we can control the local structure by looking at stabilizers. 

Here is an algebraic version of this statement:


\begin{introthm}[algebraic]\label{lem:quotientstabalg}
If $G$ acts on a variety $X$, and $x\in X$ is a point, and $G_x$ its stabilizer, then the canonical map $X/G_x\rightarrow X/G$ is \'{e}tale over a neighborhood of the image of $x$ in $X/G$.\qed
\end{introthm}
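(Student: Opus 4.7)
The plan is to produce a $G_x$-stable open neighborhood $U$ of $x$ whose $G$-translates by distinct left cosets of $G_x$ are pairwise disjoint. Such a $U$ serves as a \emph{slice} for the $G$-action near the orbit of $x$, and the restriction of the canonical map $X/G_x\to X/G$ to $U/G_x$ will then be an open immersion onto a neighborhood of $[x]_G$, which in particular is étale.

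First, I would construct $U$. Since $G\cdot x$ is finite and $X$ is separated, for each $g\in G\setminus G_x$ the points $x$ and $gx$ can be separated by disjoint open neighborhoods; intersecting finitely many such neighborhoods produces an open $U_1\ni x$ with $gU_1\cap U_1=\emptyset$ for every $g\in G\setminus G_x$. Setting $U=\bigcap_{h\in G_x}hU_1$---a finite intersection of opens all containing $x$, since $G_x$ stabilizes $x$---yields a $G_x$-stable open with the same disjointness property; I can shrink further to make $U$ affine.

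Next, I would identify the relevant quotients. By construction $GU=\coprod_{[g]\in G/G_x}gU$ is $G$-stable and open in $X$, and the composite $U\hookrightarrow GU\to GU/G$ is $G_x$-invariant, so it factors through a morphism $U/G_x\to GU/G$. This morphism is bijective on points---if $u,u'\in U$ satisfy $u'=gu$ for some $g\in G$, then $gU\cap U\neq\emptyset$ forces $g\in G_x$---and is in fact an isomorphism of schemes via the associated-bundle description $GU\cong G\times^{G_x}U$.

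Finally, $U/G_x$ is open in $X/G_x$ and $GU/G$ is open in $X/G$ (both are quotients of stable opens by the group acting on them), and the restriction of the canonical map to $U/G_x$ factors as $U/G_x\xrightarrow{\sim}GU/G\hookrightarrow X/G$, an open immersion whose image is a neighborhood of $[x]_G$; in particular this restriction is étale. The main point requiring care is the construction of the separated slice $U$: this is straightforward in the complex-analytic category but, in the purely algebraic setting, typically requires producing a $G_x$-invariant function $f\in A^{G_x}$ that vanishes on each $gx$ with $g\notin G_x$ but not at $x$, from which $U$ arises as a $G_x$-invariant principal open.
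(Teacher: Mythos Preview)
The paper does not supply a proof of this statement; it appears in the introduction as a background fact, marked with \qed, and is invoked only to motivate the thesis. So there is no argument in the paper to compare yours against.

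Your outline is the correct picture in the analytic topology, and since Chapter~1 works over $\mathbb{C}$ this may be all that is intended. But as you yourself flag, the argument does not survive the passage to the Zariski topology, and the fix you propose at the end does not rescue it. The separation step ``for each $g\in G\setminus G_x$ the points $x$ and $gx$ can be separated by disjoint open neighborhoods'' is simply false on an irreducible variety, where any two nonempty Zariski opens meet. Your suggested remedy---a $G_x$-invariant function $f$ with $f(x)\neq 0$ and $f(gx)=0$ for $g\notin G_x$---produces a $G_x$-stable principal open $U=D(f)$ containing $x$ and missing each $gx$, but it does \emph{not} make the translates $gU$ disjoint from $U$: for $X$ irreducible, $D(f)\cap gD(f)=D(f\cdot(g\cdot f))$ is nonempty whenever both functions are nonzero. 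Consequently the injectivity claim ``$u,u'\in U$ with $u'=gu$ forces $g\in G_x$'' fails for generic $u\in U$, and $U/G_x\to (GU)/G$ is not an isomorphism of schemes; indeed no Zariski open in $X/G_x$ maps isomorphically onto a Zariski open in $X/G$ unless $G_x=G$.

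The standard algebraic argument abandons slices and works with local rings. The morphism $X/G_x\to X/G$ is finite, and one shows it is \'etale at $[x]_{G_x}$ by comparing completions: both $\widehat{\mathcal{O}}_{X/G,\,[x]_G}$ and $\widehat{\mathcal{O}}_{X/G_x,\,[x]_{G_x}}$ are naturally identified with $\bigl(\widehat{\mathcal{O}}_{X,x}\bigr)^{G_x}$, since in each case the decomposition group at $x$ is exactly the stabilizer $G_x$. The induced map on completions is therefore an isomorphism, so the morphism is \'etale at $[x]_{G_x}$, hence on a Zariski neighborhood of that point by openness of the \'etale locus.
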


\begin{introthm}[smooth corollary]
If $X$ is smooth, the local structure of any singular point $y$ of $X/G$ is given by the local structure of the image of $x$ in $X/G_x$ for some preimage $x\in X$ of $y$.\qed
\end{introthm}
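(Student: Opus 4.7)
The plan is to derive the corollary directly from the algebraic theorem stated just above. Let $y \in X/G$ be a singular point. The canonical quotient $X \to X/G$ is surjective, so we may pick a preimage $x \in X$; let $G_x \subseteq G$ be its stabilizer and $\bar x \in X/G_x$ its image.

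The algebraic theorem supplies an \'etale morphism $\pi \colon X/G_x \to X/G$ defined on a neighborhood of $\bar x$, carrying $\bar x$ to $y$. Because \'etale morphisms of varieties over \C, sending a closed point to a closed point, induce an isomorphism on completed local rings,
\[
\widehat{\mathcal{O}}_{X/G,\,y} \;\cong\; \widehat{\mathcal{O}}_{X/G_x,\,\bar x},
\]
and in the complex-analytic category this upgrades to an isomorphism of analytic germs. Either encoding captures what is meant by ``local structure,'' so the singularity at $y$ has the same type as that at $\bar x$ in $X/G_x$.

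The hypothesis that $X$ is smooth at $x$ is what makes this output concretely usable in practice: an analytic neighborhood of $x$ in $X$ is biholomorphic to a neighborhood of $0$ in $T_x X \cong \C^n$ on which $G_x$ acts through its linear representation on the tangent space, and passing to the $G_x$-quotient commutes with this linearization. So the local structure at $\bar x$, and hence at $y$, is that of $\C^n/G_x$ for a linear action of the finite group $G_x$.

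I do not anticipate a serious obstacle: granting the algebraic theorem, the corollary is essentially a reformulation of ``\'etale'' in local-structure language. The only delicate point is the non-canonical choice of preimage $x$, but the preimages of $y$ in $X$ form a single $G$-orbit, so their stabilizers are conjugate in $G$ and produce identical local models up to relabeling.
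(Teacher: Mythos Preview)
Your proposal is correct and matches the paper's intent: the paper states this corollary with an immediate \qed, treating it as a direct consequence of the algebraic theorem, and you have simply made explicit the standard fact that \'etale morphisms induce isomorphisms on completed local rings (hence preserve local structure). Your additional remarks on linearization via the tangent space and on the ambiguity of the preimage $x$ are accurate elaborations beyond what the paper spells out.
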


And here is a topological version:

\begin{introthm}[topological]\label{lem:quotientstabtop}
If $G$ acts on a simplicial complex $\Delta$ simplicially and in such a way that fixed point sets are subcomplexes, and $\alpha\in \Delta$ is a face with stabilizer $G_\alpha$, then the link of the image of $\alpha$ in $\Delta / G$ is the quotient of the link of $\alpha$ in $\Delta$ by $G_\alpha$.\qed
\end{introthm}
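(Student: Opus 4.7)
My plan is to find a $G_\alpha$-invariant open neighborhood $U$ of $\mathring\alpha$ in $|\Delta|$ small enough that no element of $G \setminus G_\alpha$ sends a point of $U$ back into $U$, and then read off the link of $[\alpha]$ from $U/G_\alpha$. To produce such a $U$, I would pass to the barycentric subdivision $\mathrm{sd}\,\Delta$, in which the barycenter $b_\alpha$ of $\alpha$ becomes a vertex. Since $G$ acts on barycenters by $g \cdot b_\sigma = b_{g\sigma}$, the stabilizer of $b_\alpha$ is exactly $G_\alpha$. Take $U := \mathrm{st}_{\mathrm{sd}\,\Delta}(b_\alpha) \subseteq |\Delta|$, the open star of $b_\alpha$ in the subdivision; this is a $G_\alpha$-invariant open subset of $|\Delta|$ containing every point of $\mathring\alpha$.

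The critical step is the claim that $gU \cap U = \emptyset$ for every $g \in G \setminus G_\alpha$. If the two open stars shared a point, some simplex of $\mathrm{sd}\,\Delta$ would have to contain both vertices $b_\alpha$ and $b_{g\alpha}$; but simplices of $\mathrm{sd}\,\Delta$ are chains of faces of $\Delta$, so such a chain would contain both $\alpha$ and $g\alpha$. Since chains are totally ordered by inclusion and $\dim \alpha = \dim g\alpha$, this forces $\alpha = g\alpha$, contradicting $g \notin G_\alpha$. I expect this to be the main obstacle, and the reason the subdivision is essential: in $\Delta$ itself one cannot in general rule out a face containing both $\alpha$ and a translate $g\alpha \neq \alpha$ (for instance, a vertex and an adjacent vertex sharing an edge), so the naive open star of $\alpha$ could overlap its translates.

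Granting the claim, the quotient map $\pi \colon |\Delta| \to |\Delta|/G$ restricted to $U$ identifies precisely the $G_\alpha$-orbits, so $\pi(U) \cong U/G_\alpha$ is an open neighborhood of $[\alpha]$ in $|\Delta|/G$. Topologically $U$ is homeomorphic to $\mathring\alpha \times c\bigl(\lk_\Delta(\alpha)\bigr)$, where $c$ denotes the open cone, via a homeomorphism sending $\mathring\alpha$ to $\mathring\alpha \times \{\text{cone point}\}$. The hypothesis that fixed point sets are subcomplexes is equivalent to saying that every $g \in G_\alpha$ fixes $\alpha$ pointwise, so $G_\alpha$ acts trivially on the first factor of this product and via its natural simplicial action on $\lk_\Delta(\alpha)$ in the second. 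Passing to $G_\alpha$-quotients gives $U/G_\alpha \cong \mathring\alpha \times c\bigl(\lk_\Delta(\alpha)/G_\alpha\bigr)$, and reading off the link of the stratum $[\alpha]$ in $|\Delta|/G$ yields $\lk_\Delta(\alpha)/G_\alpha$, as asserted.
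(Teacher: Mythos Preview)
The paper does not prove this statement: it appears in the introduction as a standard fact, tagged with \qed\ and no argument. So there is no ``paper's proof'' to compare against. Your task is then simply whether your argument is correct, and it is: the passage to the barycentric subdivision to isolate a $G_\alpha$-saturated neighborhood whose $G$-translates are disjoint is exactly the right move, and your dimension argument for why $b_\alpha$ and $b_{g\alpha}$ cannot share a simplex of $\mathrm{sd}\,\Delta$ is clean.

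One small remark: the statement as used later in the paper (in the proof of Theorem~\ref{thm:quotientCMforpermgroups}) is really a combinatorial identity between cell complexes, not just a topological one. Your argument delivers a homeomorphism $\lk_{\Delta/G}([\alpha]) \cong \lk_\Delta(\alpha)/G_\alpha$, which is all that is needed for the homological applications, but if you want the combinatorial version you can argue even more directly: a cell of $\Delta/G$ lying over $[\alpha]$ is a $G$-orbit of faces $\beta \supsetneq \alpha$, and restricting to representatives containing the fixed $\alpha$ gives exactly a $G_\alpha$-orbit of faces in the star of $\alpha$; the hypothesis that setwise stabilizers fix pointwise then lets you pass to the link. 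Either route is fine.
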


The definitions of {\em simplicial complexes}, {\em simplicial actions}, and {\em links}, will be given in chapter \ref{ch:invar}.

\begin{introthm}[smooth corollary]
If $\Delta$ is a PL triangulation of a PL manifold, the link of the image of $\alpha$ in the quotient is the quotient of a simplicial sphere by $G_\alpha$.\qed
\end{introthm}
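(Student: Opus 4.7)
The plan is to derive the corollary immediately from the topological version of the theorem, combined with a standard property of PL manifolds. By the topological version, the link of the image of $\alpha$ in $\Delta / G$ equals $\lk(\alpha, \Delta) / G_\alpha$. Thus the content of the corollary reduces entirely to the assertion that $\lk(\alpha, \Delta)$ is itself a simplicial sphere, so that $G_\alpha$ is acting on a simplicial sphere in the quotient description.

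For this last step I would appeal to the fact that in a PL triangulation of a PL $n$-manifold, the link of every $k$-simplex is a PL $(n{-}k{-}1)$-sphere. The cleanest route is to choose, in chapter \ref{ch:invar}, the characterization of a PL triangulation of a PL manifold that takes this property as its definition (or at least lists it among the equivalent formulations); then the corollary follows in one line. If instead the definition is phrased in terms of atlases of PL charts, a short verification is required: pass to any PL chart around a point in the relative interior of $\alpha$, in which the star of $\alpha$ is PL-equivalent to the star of a simplex in a standard triangulation of $\R^n$, whose link is manifestly a PL sphere; transporting this PL equivalence across produces the desired sphere structure on $\lk(\alpha, \Delta)$.

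The main (and only) obstacle is really this foundational step of pinning down the PL-manifold definition so that "link of every simplex is a PL sphere" is available without further work. Once that is in place, the corollary is a one-line composition: Theorem on quotients of links, plus "link is a sphere," yields "link of image is the quotient of a sphere by $G_\alpha$."
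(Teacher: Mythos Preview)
Your proposal is correct and matches the paper's approach exactly: the paper states this corollary with a bare \qed, treating it as immediate from the topological theorem combined with the standard PL fact (recorded later in the paper as a remark) that links of faces in a PL triangulation of a PL manifold are PL spheres. Your discussion of how to pin down that foundational PL fact is more careful than anything the paper provides.
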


\begin{theme}
\textbf{Groups generated by elements with low-codimension fixed point sets have well-behaved quotients.}
\end{theme}

This thesis involves three different theorems of this kind. The first is a central result in invariant theory which is a lemma for our work in chapter \ref{ch:absing} and an inspiration for our main result in chapter \ref{ch:invar}.

Let $V$ be a vector space over a field $k$. A \textbf{pseudoreflection} is a finite-order linear transformation that fixes a hyperplane pointwise. The ring $k[V]$ is the ring of polynomial functions on $V$. Assume that $G$ acts on $V$ and therefore on $k[V]$.

\begin{introthm}[Chevalley-Shephard-Todd]
Suppose that the characteristic of $k$ does not divide the order of $G$. Then the ring $k[V]^G$ of $G$-invariants is a polynomial subalgebra of $k[V]$ if and only if $G$ is generated by pseudoreflections.
\end{introthm}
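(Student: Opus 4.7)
The plan is to prove the two implications separately. Let $R = k[V]$ and $S = R^G$; the hypothesis on $\operatorname{char}(k)$ supplies the Reynolds operator $\rho = |G|^{-1}\sum_{g \in G} g \colon R \twoheadrightarrow S$, a graded $S$-linear retraction onto the invariants.

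For the implication pseudoreflections $\Rightarrow$ polynomial invariants, the crux is a syzygy lemma, due to Chevalley: if $f_1, \ldots, f_m \in S$ are homogeneous with $f_1 \notin (f_2, \ldots, f_m) S$, and $h_1 f_1 = \sum_{i \geq 2} h_i f_i$ with $h_i \in R$ homogeneous, then $h_1 \in S_+ R$. Assuming the lemma, let $f_1, \ldots, f_r$ be a minimal homogeneous generating set of the ideal $S_+ R \subset R$, chosen with each $f_i \in S$; the lemma forces the $f_i$ to be a regular sequence, hence algebraically independent, and a graded Nakayama argument inside $S$ shows they also generate $S$ as a $k$-algebra, so $S = k[f_1, \ldots, f_r]$ is polynomial, with $r = n$ by Krull dimension. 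To prove the lemma itself, which I expect to be the main obstacle, I would induct on $\deg h_1$: for each pseudoreflection $s$ with fixed-hyperplane equation $\ell_s$, the divisibility $\ell_s \mid s(f) - f$ defines a degree-lowering operator $\Delta_s \colon f \mapsto (s(f) - f)/\ell_s$ on $R$. Since the $f_i$ are $s$-invariant, applying $\Delta_s$ to the given relation produces a new relation of the same shape with $\Delta_s(h_1)$ in place of $h_1$; by the inductive hypothesis $\Delta_s(h_1) \in S_+ R$, hence $(s-1)(h_1) = \ell_s \Delta_s(h_1) \in S_+ R$. Because pseudoreflections generate $G$ and $S_+ R$ is $G$-stable, a telescoping argument extends this to $(g-1)(h_1) \in S_+ R$ for every $g \in G$; averaging with $\rho$ yields $\rho(h_1) - h_1 \in S_+ R$, and since $\rho(h_1) \in S_+$ whenever $\deg h_1 > 0$, we conclude $h_1 \in S_+ R$.

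For the converse, I would let $H \trianglelefteq G$ be the subgroup generated by the pseudoreflections in $G$ (normal because conjugation sends pseudoreflections to pseudoreflections). By the first direction, $R^H = k[z_1, \ldots, z_n]$ is polynomial with degrees $e_i$; by hypothesis $S = k[y_1, \ldots, y_n]$ with degrees $d_i$. Expanding Molien's formula $\operatorname{HS}(R^G) = |G|^{-1} \sum_g 1/\det(1 - gt)$ near $t = 1$ yields $\prod d_i = |G|$ and $\sum(d_i - 1) = N$, where $N$ is the number of pseudoreflections in $G$; applied to $H$, which contains exactly the same $N$ pseudoreflections, this gives $\prod e_i = |H|$ and $\sum(e_i - 1) = N$, so $\sum d_i = \sum e_i$. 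Now $R^H$ is a graded $S$-direct summand of $R$ via the Reynolds operator of $H$, and $R$ is free over $S$ of rank $|G|$ as a byproduct of the Chevalley argument, so $R^H$ is free over $S$ of rank $[G : H]$. Thus $\operatorname{HS}(R^H) / \operatorname{HS}(S) = \prod(1 - t^{d_i})/\prod(1 - t^{e_i})$ is a polynomial in $t$ with constant term $1$ (evaluate at $t = 0$) and value $[G : H]$ at $t = 1$. Since it has degree $\sum d_i - \sum e_i = 0$, it is a constant polynomial equal to both $1$ and $[G : H]$; therefore $[G : H] = 1$, i.e.\ $G = H$ is generated by pseudoreflections. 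Everything in this converse---the Molien expansions, the freeness of $R^H$ over $S$, and the Hilbert-series bookkeeping---is formal once the syzygy lemma of the first direction is in hand, which is why I expect that lemma to be the principal difficulty.
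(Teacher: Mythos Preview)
The paper does not supply its own proof of Chevalley--Shephard--Todd; the theorem is stated in the introduction as a classical result and later cited to \cite{neuselsmith}, Theorem~7.1.4. So there is no in-paper argument to compare against, and your proposal should be judged on its own merits. What you have written is essentially the classical proof: Chevalley's syzygy lemma for the forward direction, and the Shephard--Todd/Solomon Hilbert-series argument for the converse.

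Your proof of the syzygy lemma itself is correct, and your converse is clean and complete: the Molien expansion really does yield $\prod d_i = |G|$ and $\sum(d_i-1)=N$ for any $G$ with polynomial invariants (pairing each pseudoreflection with its inverse handles the eigenvalue sum), the freeness of $R^H$ over $S$ follows exactly as you say, and the degree count on the Hilbert-series quotient forces $[G:H]=1$.

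There is, however, a genuine gap in the forward direction. The sentence ``the lemma forces the $f_i$ to be a regular sequence'' is not justified, and I do not believe it follows directly. The lemma tells you that in any homogeneous syzygy $\sum h_i f_i = 0$ you have $h_1 \in S_+R = (f_1,\dots,f_r)R$; it does \emph{not} give $h_1 \in (f_2,\dots,f_r)R$, which is what regularity would require. The standard route from the lemma to algebraic independence is different: assume a minimal-degree polynomial relation $H(f_1,\dots,f_r)=0$, set $g_i=(\partial H/\partial X_i)(f)\in S$, differentiate with respect to each $x_k$ to obtain $\sum_i g_i\,\partial f_i/\partial x_k = 0$, apply the lemma to these relations (after singling out a $g_j$ not in the ideal generated by the others), and then multiply by $x_k$, sum over $k$, and invoke Euler's identity to produce $f_j \in (f_1,\dots,\widehat{f_j},\dots,f_r)R$, contradicting minimality. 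This Jacobian/Euler step is the piece you are missing; once it is in place, the rest of your outline (graded Nakayama to get generation of $S$, Krull dimension to get $r=n$) goes through.
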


Geometrically, the statement that $k[V]^G$ is a polynomial algebra is the statement that the quotient of $V$, viewed as affine $n$-space over $k$, by $G$, is itself isomorphic to affine $n$-space. More generally this implies, in view of theme 1, that if a finite group $G$ acts on a smooth variety $X$ over a field $k$ of characteristic not dividing $|G|$, then the quotient will be smooth if the point stabilizers $G_x$ are generated by pseudoreflections.

The second such theorem is a recent result of Christian Lange, building on work of Marina Mikhailova, resolving the third question mentioned above. For the purposes of this statement (and throughout this thesis), we use the word \textbf{rotation} to mean a linear transformation that fixes a codimension 2 subspace pointwise.

\begin{introthm}[Lange]
If $G$ acts linearly on $\R^n$, viewed as a PL manifold, then $\R^n/G$ is a PL manifold (with or without boundary) if and only if $G$ is generated by reflections and rotations. When this does happen, $\R^n/G$ is homeomorphic to $\R^{n-1}\times\R^{\geq 0}$, respectively $\R^n$, if $G$ does, respectively does not, contain a reflection.
\end{introthm}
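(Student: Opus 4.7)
The strategy is to pass from $\R^n$ to its link $S^{n-1}$ via the conical structure of the quotient, and then induct on $n$, using Theme 1 to reduce the local structure at any nonzero point to a lower-dimensional instance of the same question. After averaging the inner product, assume $G\leq O(n)$, so $G$ preserves the unit sphere and $\R^n/G$ is the open cone on $S^{n-1}/G$. Hence $\R^n/G$ is a PL manifold near $[0]$ iff $S^{n-1}/G$ is a PL sphere or PL ball. For $x\in\R^n\setminus\{0\}$, Theme 1 identifies the germ of $\R^n/G$ at $[x]$ with the germ of $(T_x\R^n)/G_x$ at $[x]$; since $G_x$ fixes $\R x$ pointwise, this reduces (up to a trivial $\R$-factor) to $\R^{n-1}/G_x$ with $G_x$ acting orthogonally on the complement of $\R x$. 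By induction, this germ is PL iff $G_x$ is generated by reflections and rotations in its action on $\R^{n-1}$; those same elements act as reflections and rotations on $\R^n$ itself, by appending the fixed direction.

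\textbf{Backward direction.} Assume $G$ is generated by reflections and rotations. I would first establish an analog of Steinberg's theorem for rotation-reflection groups: every stabilizer $G_x$ is itself generated by reflections and rotations of its restricted action. Combined with the local reduction, this gives PL manifold structure away from $[0]$. For the behavior at $[0]$, let $W\trianglelefteq G$ be the normal subgroup generated by reflections; classical Coxeter theory identifies $\R^n/W$ with a closed half-space $\R^{n-1}\times\R^{\geq 0}$ (or with $\R^n$ when $W$ is trivial) as a PL manifold. The residual group $G/W$ then acts on $\R^n/W$ through images of rotations, and I would verify that each such quotient remains PL using an explicit model: a cyclic rotation on $\R^2$ has quotient PL-homeomorphic to $\R^2$ (the cone on a regular polygon is a PL disk), and this extends by a product argument across fixed directions. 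Iterating yields a PL manifold quotient of the stated homeomorphism type.

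\textbf{Forward direction.} Assume $\R^n/G$ is a PL manifold and let $H\leq G$ be generated by reflections and rotations. The local reduction and induction force $G_x\subseteq H$ for every $x\neq 0$. Therefore any $g\in G\setminus H$ has no nonzero fixed vector, so it acts freely on $\R^n\setminus\{0\}$; an easy coset computation then shows that $gH$ acts freely on $(\R^n\setminus\{0\})/H$. By the backward direction, $\R^n/H$ is a half-space or $\R^n$, so the link of $[0]$ in $\R^n/H$ is $D^{n-1}$ or $S^{n-1}$, and the link of $[0]$ in $\R^n/G$ is the further free quotient by $G/H$. For $n\geq 3$, a nontrivial finite group cannot act freely on $S^{n-1}$ with sphere quotient (the quotient would have nontrivial $\pi_1$), and by the Brouwer fixed point theorem no nontrivial finite group acts freely on $D^{n-1}$. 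The cases $n\leq 2$ are direct since every element of $O(1)$ or $O(2)$ is a reflection or rotation. Hence $G/H$ is trivial and $G=H$.

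\textbf{Main obstacle.} The principal obstacle is the Steinberg-type statement that stabilizers inside a rotation-reflection group remain rotation-reflection groups for their restricted action: without it, the induction machinery does not close, and the local-to-global analysis in both directions stalls. A secondary subtlety — and where Lange's refinement of Mikhailova's earlier topological result is essential — is upgrading the conclusion from the topological category to the PL category; this requires producing $G$-equivariant PL triangulations compatible with the stratification by orbit type, so that the topological models constructed stratum-by-stratum assemble into a genuine PL structure on the quotient.
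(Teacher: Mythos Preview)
The paper does not prove this theorem; it is cited as an external result due to Lange (\cite{lange2}, building on Mikha\^{i}lova). What the paper does tell us about the actual proof is significant for evaluating your proposal: per the remark following Theorem~\ref{thm:lange}, Lange's argument for the ``if'' direction proceeds via a \emph{complete classification} of rotation-reflection groups (published jointly with Mikha\^{i}lova), followed by a case-based induction on the group order using an ad-hoc toolbox that includes the Chevalley--Shephard--Todd theorem, explicit fundamental domains, and even the Poincar\'e conjecture.

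Your approach is entirely different: you attempt a classification-free induction on dimension. The paper explicitly flags this as an open problem (Question~\ref{q:classificationfreeLange}), noting that Lange himself has proposed a program for a uniform proof but that none is currently known. So your strategy is not a proof but a program, and one already recognized as hard.

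The concrete gaps are the two you half-acknowledge. First, the Steinberg-type statement for rotation-reflection groups is not established; without it the induction does not close. Second, and more seriously, your backward-direction argument at the cone point does not work as written. After quotienting by the reflection subgroup $W$, the residual group $G/W$ acts on $\R^n/W\cong\R^{n-1}\times\R^{\geq 0}$, but this action is \emph{not} by rotations of a vector space in any useful sense---the quotient is a half-space, not a linear model, and images of rotations in $G/W$ can act in complicated ways on the Weyl chamber. Your sentence ``iterating yields a PL manifold quotient of the stated homeomorphism type'' is exactly the step that, in Lange's actual proof, requires the full classification and the heavy machinery listed above. The forward direction is in better shape, but it leans on the backward direction in lower dimension, so the whole argument stands or falls on that unresolved step.
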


One may view this a topological analogue to the Chevalley-Shephard-Todd theorem. In fact, the Chevalley-Shephard-Todd theorem is one of the many tools that Lange uses in the proof.

The third theorem illustrating the present theme is the most significant result of this thesis.

\begin{thm}[Main result]\label{thm:mainintro}
Let $G$ be a permutation group, acting on $\Z[x_1,\dots,x_n]$ by permuting the variables. If $G$ is generated by transpositions, double transpositions, and 3-cycles, then the invariant ring $\Z[x_1,\dots,x_n]^G$ is a Cohen-Macaulay ring, and is therefore free as a module over the subring of symmetric polynomials.
\end{thm}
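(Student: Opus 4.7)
The strategy is to transport the Cohen-Macaulayness question across a sequence of reductions — from integers to prime fields, from the polynomial ring to a Stanley-Reisner ring via the Garsia map $\garsia$, and finally from the Stanley-Reisner ring to the topology of a simplicial complex — at which point the hypothesis on generators of $G$ combines with Lange's theorem to supply what is needed.

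First I would reduce to showing that $\F_p[x_1,\dots,x_n]^G$ is Cohen-Macaulay for every prime $p$. Because $G$ is a permutation group, orbit sums of monomials give a $\Z$-basis for $\Z[x_1,\dots,x_n]^G$ whose reduction mod $p$ is an $\F_p$-basis of $\F_p[x_1,\dots,x_n]^G$, so invariant-formation commutes with base change; hence Cohen-Macaulayness over $\Z$ is equivalent to Cohen-Macaulayness of each fiber. The Hochster-Eagon theorem covers $p=0$ and the non-modular primes, so all work concentrates on $p \mid |G|$. Via Auslander-Buchsbaum, Cohen-Macaulayness over the subring of symmetric functions is equivalent to freeness, giving the second assertion of the theorem for free.

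For a fixed modular prime, I would apply the Garsia map to pass from $\F_p[x_1,\dots,x_n]$ to the Stanley-Reisner ring $S = \F_p[\Bn \setminus \emptyset]$ of the order complex $\Delta$ of $\Bn\setminus\emptyset$ — the barycentric subdivision of the $(n-1)$-simplex. Because $\garsia$ is $S_n$-equivariant and exhibits $S$ as a finite free module related to $\F_p[x_1,\dots,x_n]$, it reduces the Cohen-Macaulayness of $\F_p[x_1,\dots,x_n]^G$ to that of $S^G$. I would then analyze $S^G$ as the face ring of the quotient complex $\Delta/G$: by Theorem~\ref{lem:quotientstabtop}, the link of the image of a face $\alpha$ in $\Delta/G$ is the quotient $\lk_\Delta(\alpha)/G_\alpha$, and a modular Reisner-style criterion identifies Cohen-Macaulayness of $S^G$ with vanishing of reduced $\F_p$-homology of these link quotients below top dimension. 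Here the hypothesis enters: the face $\alpha$ is a chain $I_1 \subsetneq \cdots \subsetneq I_k$ in $\Bn\setminus\emptyset$, and its stabilizer $G_\alpha$ is the intersection of $G$ with a Young subgroup. A combinatorial lemma — the permutation analogue of Steinberg's theorem, with rotations adjoined — then shows that $G_\alpha$ inherits generation by transpositions, double transpositions, and 3-cycles from $G$, i.e.\ $G_\alpha = (G_\alpha)_{rr}$. Applying Lange's theorem to $G_\alpha$ acting on the appropriate subspace, the quotient $\lk_\Delta(\alpha)/G_\alpha$ is a PL sphere or PL disk, whose reduced homology vanishes below top dimension as required.

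I expect the main obstacle to be the very last reduction: making precise the modular Reisner-type criterion that translates the PL structure of $\Delta/G$ into Cohen-Macaulayness of $S^G$. In the non-modular case $S^G$ is a direct summand of $S$ and the classical criterion applied to (a subdivision of) $\Delta/G$ suffices, but when $p$ divides $|G|$ this averaging operator is unavailable, and the link topology of $\Delta/G$ does not obviously control the depth of $S^G$. Overcoming this is likely to require either constructing an explicit combinatorial resolution of $S^G$ modeled on $\Delta/G$, or running an equivariant spectral sequence that relates the homology of link quotients to the local cohomology of $S^G$, leveraging the very specific Young-subgroup structure of the stabilizers $G_\alpha$. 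A secondary, smaller obstacle is verifying the inheritance claim $G_\alpha = (G_\alpha)_{rr}$, which is the combinatorial heart of how the generation hypothesis interacts with the stratification of $\R^n$ by level sets.
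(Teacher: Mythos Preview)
Your overall architecture matches the paper's --- pass from $R^G$ to $S^G$ via the Garsia map, identify $S^G$ with the face ring of $\Delta/G$, and control the topology of $\Delta/G$ via Lange --- but you have misdiagnosed where the work lies. The ``main obstacle'' you flag is not one: Reiner's theorem (Theorem~\ref{thm:reinerSRquotient}) gives an honest ring isomorphism $S^G \cong A[\Delta/G]$, where the right side is the Stanley-Reisner ring of the balanced boolean complex $\Delta/G$ in the sense of Definition~\ref{def:SRringofboolean}. This holds over any coefficient ring and uses no averaging operator; it is a combinatorial statement about orbit sums of monomials. Once $S^G$ is literally a face ring, Duval's theorem (Theorem~\ref{thm:duval}) extends the Reisner--Munkres criterion to boolean complexes, so Cohen-Macaulayness of $S^G$ is purely a question about the topology of $|\Delta/G|$. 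There is no modular subtlety and no need for an equivariant spectral sequence or a bespoke resolution.

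Your link-by-link plan --- prove inheritance $G_\alpha = (G_\alpha)_{rr}$ and apply Lange to each stabilizer separately --- is also more than required. The paper applies Lange exactly once, to $G$ acting on the hyperplane $H = \{\sum x_i = 0\} \cong \R^{n-1}$: since $G = \Grr$ is a rotation-reflection group on $H$, the quotient $H/G$ is a PL manifold, and $\Delta/G$, being the cone over the link of the origin in a triangulation of $H/G$, is a PL ball and hence Cohen-Macaulay over every field at once. No face-by-face analysis and no inheritance lemma are needed. (The inheritance claim is in fact true a posteriori, via both directions of Lange, but proving it directly is extra work the paper sidesteps entirely.)
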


This theorem is the principal objective of chapter \ref{ch:invar}, and is proven in section \ref{sec:mainresult}. Geometrically, the conclusion states that the quotient of affine space over $\Z$ by $G$ has a finite flat morphism to affine space.

\section{Overview}

In chapter \ref{ch:absing}, we introduce a class of groups called SEP groups, which are guaranteed to have an action on a smooth projective variety with abelian quotient singularities. We investigate which groups are SEP. The main findings are:
\begin{itemize}
\item  nilpotent and supersolvable groups are always SEP,
\item  nonabelian finite simple groups are never SEP, and 
\item metabelian groups are often but not always SEP.
\end{itemize}

We then begin to consider groups which are not SEP but still have an action on a projective space with at worst abelian quotient singularities. 
Our main results are that $A_5$ and $PSL(2,7)$ have such an action.

In chapter \ref{ch:invar}, we investigate the ring $\Z[x_1,\dots,x_n]^G$. The basic structural question is 
whether or not it is module-free over a polynomial subring. This turns out to be equivalent to {\em Cohen-Macaulayness}. To prove this equivalence we use the theory of Cohen-Macaulay rings.

Then we apply combinatorial ideas developed by Garsia and Stanton to connect the structure of this ring to the topological question mentioned above. The connection is via a fundamental construction in combinatorial commutative algebra called a {\em Stanley-Reisner ring}. We develop the theory of Stanley-Reisner rings, and their application by Garsia and Stanton to invariant theory, in detail.

Finally we apply the theorem of Lange, and a new argument about permutation group actions on a simplicial complex, to show that the Cohen-Macaulayness of a certain Stanley-Reisner ring is equivalent to $G$ being generated by transpositions, double transpositions, and 3-cycles. This allows us to deduce theorem \ref{thm:mainintro} via the work of Garsia and Stanton.

We also discuss methods for constructing explicit bases for $\Z[x_1,\dots,x_n]^G$ using the geometry of the associated cell complex.

In the final section we collect several open questions. 
The most pressing of these is whether or not the converse to theorem \ref{thm:mainintro} holds. We conjecture that it does.
\newpage

\chapter{Abelian singularities}\label{ch:absing}

In this chapter we introduce a class of groups characterized by a representation-theoretic property we call {\em SEP}. This property guarantees that the group has an action on a smooth projective variety with mild quotient singularities. It has intrinsic group-theoretic interest in addition to geometric consequences. 

In the first section, we define and investigate this class. In the second, we describe the results of a preliminary search for non-SEP groups that still have the desired action.

\section{SEP groups}

A fundamental fact in linear algebra is that any pair of diagonalizable commuting matrices shares a full basis of eigenvectors. If a pair of matrices fails to commute, then they may still share some eigenvectors, although not a full basis. In this case, they act by restriction on the subspace spanned by the common eigenvectors, and their actions on this subspace do commute. Thus one may see the sharing of eigenvectors as a kind of {\em partial commuting}. If two diagonalizable matrices do not share any eigenvectors, they {\em noncommute purely}.

In the representation theory of a finite group on an algebraically closed field of characteristic zero, group elements always act as diagonalizable transformations. In this context, if two elements of an abstract group do commute, then in every representation they will be forced to share a full basis of eigenvectors. But if they do not commute abstractly, it may still be the case that in every concrete representation of this group on a vector space, they are forced to share some eigenvectors, i.e. commute partially. This prompts us to ask: given a finite group $G$, and two elements $x,y\in G$ that do not commute, is it possible to find a representation of $G$ in which this fact is expressed in an unadulterated way, i.e. their abstract failure to commute is realized in a pair of transformations that do not share an eigenvector?

This question motivates the following definition:

\begin{definition}[Fedor Bogomolov]
A group $G$ is called  \textbf{SEP}\footnote{Bogomolov does not remember why he chose the name SEP. Our best guess is that it stands for ``shared eigenvector property."}
if for every pair of noncommuting elements $x,y\in G$ there exists a representation $\rho$ of $G$ 
such that $\rho(x),\rho(y)$ do not share a common eigenvector.
\end{definition}

\begin{notation}
Throughout, $G$ is a finite group. All vector spaces are over $\C$. If we have a representation $\rho:G\rightarrow GL(V)$ of a group $G$ on a vector space $V$, we will use the word {\em representation} to refer freely to either $\rho$ or $V$.
\end{notation}

We may also be interested to know if we can find a single representation that has this property for all of $G$'s noncommuting pairs. Therefore we make a second definition:

\begin{definition}\label{def:SEP}
If there exists a representation $\rho$ of $G$ such that for all noncommuting $x,y\in G$ the elements 
$\rho(x),\rho(y)$ do not share any common eigenvectors, then we say $G$ is \textbf{SSEP} (for ``strong SEP").
\end{definition}

We also name the condition on $\rho,x,y$ in these definitions:

\begin{definition}
Given $x,y\in G$, and a representation $\rho$ such that $\rho(x),\rho(y)$ do not share eigenvectors, we say $\rho$ is \textbf{SEP for $x,y$}.
\end{definition}

In this language, a group $G$ is SEP if for each noncommuting pair one can find a SEP representation, and it is SSEP if one can find a single representation that is SEP for all pairs.

This section is an investigation into SEP and SSEP groups. We find that all nilpotent, and more generally supersolvable, groups are SEP (section \ref{sec:supersolvable}); no nonabelian simple group is SEP (section \ref{sec:nonabsimple}); and some metabelian groups are SEP while other are not (section \ref{sec:metabelian}). We also prove that a group is SSEP if its nonabelian subgroups are sufficiently large (proposition \ref{bigirrep}), and that a metabelian group is SEP if its commutator subgroup has a certain structure (theorem \ref{thm:metabcrit}).

\subsection{An example}\label{sec:example}

Before stating general results, we present a concrete example:

\begin{prop}[Bogomolov]\label{prop:A5}
The alternating group $A_5$ is not SEP.
\end{prop}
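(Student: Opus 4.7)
The plan is to exhibit a single noncommuting pair $x, y \in A_5$ whose images in every irreducible representation share a common eigenvector. Since every complex representation of a finite group is completely reducible, this forces a common eigenvector in every representation, so no representation of $A_5$ can be SEP for this pair.

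From the character table of $A_5$, the irreducible representations have dimensions $1, 3, 3, 4, 5$, and the unique conjugacy class of involutions (represented by $(12)(34)$) has character values $1, -1, -1, 0, 1$ in these respective irreducibles. Since an involution has eigenvalues $\pm 1$, its eigenvalue multiplicities are forced to be $(1, -1, -1)$ in each of the $3$-dim irreducibles, $(1, 1, -1, -1)$ in the $4$-dim irreducible, and $(1, 1, 1, -1, -1)$ in the $5$-dim irreducible. Thus for \emph{any} pair of involutions $x, y \in A_5$, the $-1$-eigenspaces of $x$ and $y$ in a $3$-dim irreducible are each of dimension $2$ inside a $3$-dim space and so must intersect in at least a line, giving a common $-1$-eigenvector; similarly their $+1$-eigenspaces in the $5$-dim irreducible are each of dimension $3$ inside a $5$-dim space and must intersect, giving a common $+1$-eigenvector. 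Four of the five irreducibles are therefore handled for free.

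The only remaining case is the $4$-dim irreducible, which can be realized as the standard permutation representation on $\C^5$ modulo the line of constants; here the $\pm 1$-eigenspaces of an involution are each of dimension $2$ inside a $4$-dim space and can a priori be transverse. To force a shared eigenvector here, I would choose a noncommuting pair of involutions both of which stabilize a common $2$-element subset of $\{1, \dots, 5\}$, for example $x = (12)(34)$ and $y = (12)(35)$. Both fix the vector $e_1 + e_2 \in \C^5$, so its image in the $4$-dim quotient is a nonzero common $+1$-eigenvector. A direct product calculation shows $xy$ and $yx$ are the two distinct $3$-cycles on $\{3, 4, 5\}$, so the pair does not commute.

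The only real obstacle is the $4$-dim case, since the $3$- and $5$-dim cases fall out of pure dimension counting. The creative step is choosing a noncommuting pair whose elements have a common invariant $2$-element subset of $\{1, \dots, 5\}$: this automatically produces the needed eigenvector in the permutation representation and hence in its $4$-dim irreducible constituent.
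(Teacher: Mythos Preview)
Your proof is correct. It differs from the paper's in style but, interestingly, not in the obstructing subgroup: your pair $x=(12)(34)$, $y=(12)(35)$ are two involutions whose product is a $3$-cycle, so $\langle x,y\rangle\cong S_3$, which is conjugate to the $S_3=\langle(123),(12)(45)\rangle$ the paper uses. The paper argues by restriction: if some irreducible $V$ of $A_5$ were SEP for this $S_3$, then $\Res^{A_5}_{S_3}V$ would be a multiple of the $2$-dimensional standard representation of $S_3$, forcing $\chi_V$ to vanish on involutions and be negative on $3$-cycles; only the degree-$4$ character vanishes on involutions, and it is positive on $3$-cycles, contradiction. You instead go representation by representation, using eigenspace dimension counting for the $3$- and $5$-dimensional irreducibles and an explicit fixed vector $e_1+e_2$ in the permutation model of the $4$-dimensional one.

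Your argument is more hands-on and needs no character theory of $S_3$, only the $A_5$ table and linear algebra. The paper's argument, on the other hand, packages the obstruction as a statement about a subgroup rather than a specific pair, and this is the template it reuses for all the minimal simple groups later in the chapter (dihedral subgroups obstruct SEPness in $PSL(2,q)$ and $Sz(q)$ by exactly the same ``restriction must contain a one-dimensional summand'' reasoning). So the paper's route generalizes more cleanly, while yours is more self-contained for $A_5$ alone.
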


\begin{table}
\begin{center}
\[
\begin{array}{ c | c c c c c }
 & 1 & (123) & (12)(34)  & (12345) & (13524)\\
 \hline \text{Triv} & 1 & 1 & 1 & 1 & 1\\
 \text{3a} & 3 & 0 & -1 & \phi & \hat \phi \\
 \text{3b} & 3 & 0 & -1 & \hat \phi & \phi \\
 \text{4} & 4 & 1 & 0 & -1 & - 1\\
 \text{5} & 5 & -1 & 1 & 0 & 0
\end{array}
\]
\end{center}
\caption{Character table of $A_5$. The symbols $\phi,\hat\phi$ represent $(1\pm\sqrt{5})/2$.}\label{tbl:A5char}

\begin{center}
\[
\begin{array}{ c | c c c }
 & 1 & (123) & (12) \\
\hline \text{Triv} & 1 & 1 & 1\\
\text{Sign} & 1 & 1 & -1\\
\text{Sta} & 2 & -1 & 0
\end{array}
\]
\end{center}
\caption{Character table of $S_3$.}\label{tbl:S3char}
\end{table}

\begin{proof}
The group $A_5$ contains a conjugacy class of subgroups isomorphic to $S_3$, embedded as $\langle(123),(12)(45)\rangle$, for example. If we take $x,y$ to be a pair of generators for $S_3\subset A_5$, we claim that $x$ and $y$ share a common eigenspace in any representation of $A_5$.

This is equivalent to the statement that for any representation $V$ of $A_5$, its restriction 
\[
\Res_{S_3}^{A_5}V
\]
to $\langle x,y\rangle = S_3$ will necessarily contain a one-dimensional subrepresentation of $S_3$. It is enough to check this statement for irreducible representations of $A_5$, since every representation splits into irreducible ones, so if every irreducible $V$ contains a one-dimensional subrepresentation of $S_3$, then every representation does.

The character table of $A_5$ is given in table \ref{tbl:A5char}, and that of $S_3$ in table \ref{tbl:S3char}. There is only one irreducible character of $S_3$ of degree greater than 1: the character $\operatorname{Sta}$ of the  standard representation.\footnote{The \textbf{standard} representation of $S_n$ is the nontrivial irreducible subrepresentation of the \textbf{defining} (or \textbf{canonical}) representation, which is $S_n$'s action on $\C^n$ via permutations of a basis.} Thus if $\Res_{S_3}^{A_5}V$ does not contain any one-dimensional representation of $S_3$, its character must be a multiple of $\operatorname{Sta}$. Since $\operatorname{Sta}$ is zero on the class of involutions and negative on the class of order 3 elements, this means that the character of $V$ itself must be zero on involutions of $A_5$ and negative on order 3 elements of $A_5$.

The group $A_5$ has only one irreducible character that is zero on the class of involutions, the degree 4 character, and it is positive on the class of order 3 elements. Thus no irreducible representation of $A_5$ can restrict on $S_3$ to a multiple of $\operatorname{Sta}$. So all of $A_5$'s irreducible representations' restrictions to $S_3$ must contain a one-dimensional representation. Thus a $x,y$ have a common eigenspace in any representation of $A_5$.
\end{proof}

On the other hand, there is a non-split $\Z/2\Z$-central extension of $A_5$, known as the \textbf{binary icosahedral group} and written $\tilde A_5$, and it is even SSEP.

\begin{prop}[Bogomolov]
If $G$ has a faithful two-dimensional representation, it is SSEP.
\end{prop}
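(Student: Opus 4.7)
The plan is to take $\rho$ to be the given faithful two-dimensional representation itself, and show that it serves as a single witness that $G$ is SSEP. For a noncommuting pair $x, y \in G$, faithfulness gives immediately that $\rho(x)$ and $\rho(y)$ do not commute in $GL(V)$. The goal is then to conclude that they share no eigenvector.

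My approach is by contradiction. Suppose $v \in V$ were a common eigenvector of $\rho(x)$ and $\rho(y)$. Then every element of the subgroup $H := \langle \rho(x), \rho(y) \rangle$ would stabilize the line $\C v$, since the generators do. Choosing a basis of $V$ whose first element is $v$ displays $H$ as a finite subgroup of the Borel $B$ of upper triangular matrices in $GL(V)$.

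I would then invoke the standard fact that every finite subgroup of $B$ is abelian. A quick proof: $B$ sits in a short exact sequence
\[
1 \to U \to B \to T \to 1,
\]
where $U$ is the unipotent radical (matrices of the form $\begin{pmatrix} 1 & b \\ 0 & 1 \end{pmatrix}$) and $T$ is the diagonal torus. Since $U \cong (\C,+)$ is torsion-free, any finite subgroup $H \subseteq B$ meets $U$ trivially, and therefore the projection $B \to T$ restricts to an injection of $H$ into the abelian group $T$. Hence $H$ is abelian, contradicting the noncommutativity of $\rho(x)$ and $\rho(y)$.

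I do not expect any serious obstacle; the only subtle point is that the argument uses characteristic zero precisely at the step where $U$ is asserted to be torsion-free. In positive characteristic $U$ would carry $p$-torsion and finite subgroups of $B$ could be nonabelian, so an analogous statement over other base fields would require a different approach.
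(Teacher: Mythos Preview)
Your proof is correct and follows the same overall strategy as the paper: take the faithful $2$-dimensional representation itself as the witness, and show that a common eigenvector for $\rho(x),\rho(y)$ forces them to commute, contradicting faithfulness. The only difference lies in how this last implication is established. The paper invokes complete reducibility: a one-dimensional subrepresentation of a two-dimensional representation of a finite group admits a complement, so $\rho(x),\rho(y)$ are simultaneously diagonalizable and hence commute. You instead put $H=\langle\rho(x),\rho(y)\rangle$ inside the Borel and use that the unipotent radical $U\cong(\C,+)$ is torsion-free to inject $H$ into the diagonal torus. Both arguments exploit characteristic zero (yours via torsion-freeness of $U$, the paper's via Maschke), and both are equally short; your version has the minor advantage of making the characteristic-zero dependence explicit, while the paper's phrasing fits more naturally with the representation-theoretic language used elsewhere.
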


\begin{proof}
Let $V$ be the faithful two-dimensional representation. Let $x,y\in G$. Then the restriction
\[
\Res^G_{\langle x,y\rangle} V
\]
to the subgroup they generate is a two-dimensional representation of this subgroup. Suppose $x,y$'s actions on $V$ have a common eigenvector. Then $\Res^G_{\langle x,y\rangle} V$ contains a one-dimensional representation of $\langle x,y\rangle$. But since it is only two-dimensional, this means it splits completely into one-dimensional representations of $\langle x,y\rangle$. In other words, $x$ and $y$'s actions on $V$ share a full basis of eigenvectors. Then the actions of $x$ and $y$ on $V$ commute. Since $V$ is faithful, this means $x$ and $y$ commute in $G$.

Thus if $x,y$ do not commute, their actions on $V$ do not have a common eigenvector. Thus $V$ realizes $G$ as SSEP.
\end{proof}

\begin{prop}[Bogomolov]\label{prop:binicos}
The binary icosahedral group $\tilde A_5$ is SSEP.
\end{prop}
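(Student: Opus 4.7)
The plan is to reduce immediately to the previous proposition: it suffices to exhibit a faithful two-dimensional complex representation of $\tilde A_5$, and SSEP will follow.

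To produce such a representation, I would invoke the classical realization of $\tilde A_5$ as a finite subgroup of $SU(2)$. Under the standard double cover $SU(2) \twoheadrightarrow SO(3)$, the preimage of the rotation group of the regular icosahedron (which is isomorphic to $A_5$) is a group of order $120$ fitting into the non-split central extension
\[
1 \to \{\pm I\} \to \tilde A_5 \to A_5 \to 1,
\]
and this is by definition the binary icosahedral group. The inclusion $\tilde A_5 \hookrightarrow SU(2) \subset GL_2(\C)$ is then, tautologically, a faithful two-dimensional representation, and the previous proposition finishes the argument.

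If one prefers a purely representation-theoretic verification of faithfulness (avoiding the geometric construction), I would argue as follows. The character table of $\tilde A_5$ contains irreducibles of degree $2$; pick one such representation $V$. Its kernel $N$ is a normal subgroup of $\tilde A_5$. Since $A_5 = \tilde A_5 / \{\pm I\}$ is simple, $N$ is either trivial, equal to the center $\{\pm I\}$, or all of $\tilde A_5$. If $N \supseteq \{\pm I\}$, then $V$ descends to a two-dimensional representation of $A_5$; but from table \ref{tbl:A5char}, every irreducible representation of $A_5$ has dimension $1$, $3$, $4$, or $5$, and the only one-dimensional representation is trivial, so any two-dimensional representation of $A_5$ is the trivial one. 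Since $V$ is nontrivial (it is irreducible of degree $2$), this forces $N = 1$, i.e., $V$ is faithful.

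The main — and essentially only — content of the argument is the existence of the faithful two-dimensional representation; the SSEP conclusion is then a one-line application of the preceding proposition. No serious obstacle arises.
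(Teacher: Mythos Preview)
Your proof is correct and your primary argument --- realizing $\tilde A_5$ as the preimage of the icosahedral rotation group under the double cover $SU(2)\to SO(3)$, so that the tautological action on $\C^2$ is faithful, and then invoking the preceding proposition --- is exactly the paper's proof. Your supplementary character-theoretic verification of faithfulness (ruling out a kernel via the normal-subgroup structure of $\tilde A_5$ and the absence of two-dimensional irreducibles of $A_5$) is a valid alternative that the paper does not give.
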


\begin{proof}
By the last proposition, one just needs to see that $\tilde A_5$ has a faithful two-dimensional representation. One sees this using the orthogonal representation $\varphi: SU_2 \rightarrow SO_3$ of the special unitary group $SU_2$, which is a double cover (\cite{artin}, Section 8.3). One embeds $A_5$ in $SO_3$ as the rotations of an icosahedron and then realizes $\tilde A_5$ as the $\varphi$-preimage. Then $\tilde A_5$ acts faithfully on $\C^2$ through the canonical action of $SU_2$. 
\end{proof}

\subsection{Notation and preliminaries}\label{sec:preliminaries}

\begin{notation}
Throughout, for commutators and conjugates we adopt the right-action notation $x^y = y^{-1}xy$ and $[x,y]=x^{-1}y^{-1}xy = x^{-1}x^y$.
\end{notation}

We use the word {\em character} in two well-established but distinct senses: 

\begin{definition}\label{def:characterofrep}
When speaking of a representation $\rho:G\rightarrow GL(V)$ of arbitrary degree of a {\em nonabelian} group $G$, \textbf{the character of the representation} is the class function $\chi:G\rightarrow\C$ giving traces of the actions of elements of $G$ on the representation space. In other words, 
\begin{align*}
\chi:G &\rightarrow \C \\
g &\mapsto \Tr \rho(g).
\end{align*}
We will also refer to $\chi$ in this situation as \textbf{a character of the group $G$}. If $\rho$ is an irreducible representation of $G$, then $\chi$ is called an \textbf{irreducible character} of $G$.
\end{definition}

\begin{definition}\label{def:characterofabgroup}
When speaking of an {\em abelian} group, a \textbf{character of the group} is an element of its Pontryagin dual, i.e. a one-dimensional representation of the group.
\end{definition}

\begin{remark}
A one-dimensional representation 
\[
\chi: A \rightarrow GL(1,\C)
\]
of an abelian group $A$, i.e. a character in the sense of definition \ref{def:characterofabgroup}, can also be seen as a character in the sense of definition \ref{def:characterofrep}, by identifying $GL(1,\C)$ with $\C^\times\subset \C$, so that $\chi$ is a class function. But it has the added feature of being multiplicative: for $x,y\in A$, we have
\[
\chi(xy) = \chi(x)\chi(y).
\]
This does not hold for characters in the sense of definition \ref{def:characterofrep}.

The word {\em character} is sometimes also used to refer to one-dimensional representations of groups that are not abelian. We will avoid this usage entirely and merely call them ``one-dimensional representations."
\end{remark}

\begin{notation}\label{not:G/A}
If $G$ is a group and $A\subseteq G$ is a (not necessarily normal) subgroup, then we use the symbol $G/A$ to mean the left coset space  of $A$ in $G$. Then the statement
\[
[s]\in G/A
\]
should be interpreted to mean that $[s]$ is a coset and $s$ is some representative in $G$ of this coset. We will then sometimes write
\[
g^s
\]
to mean the conjugate of $g$ by {\em any representative} of the coset $[s]$. We will only use this notation when the setting renders the choice of coset representative inconsequential. The main example is below in \ref{rmk:extendedchipreservesmult}.

Note that when $A$ is normal, this notation is consistent with the meaning of $G/A$ as the quotient group.
\end{notation}

\begin{notation}\label{not:extendedchi}
If $\chi$ is a character (in either sense) of a group $A$ that is embedded in a larger group $G$, we adopt the convention that $\chi$ can be extended to a function on $G$, also called $\chi$, by assigning it the value $0$ outside $A$. More precisely, define a new function $\overline{\chi}$ by
\[
\overline{\chi}(g) =
\begin{cases} 
\chi(g), & g\in A\\
0,& g\notin A,
\end{cases}
\]
and then set $\chi = \overline{\chi}$.
\end{notation}

\begin{remark}\label{rmk:extendedchipreservesmult}
Notation \ref{not:extendedchi} allows us to write the formula 
\[
\Ind_A^G\chi(g) = \sum_{[s]\in G/A} \chi(g^s),
\]
giving the character of an induced representation. Per \ref{not:G/A}, this formula does not depend on the choice of coset representative $s\in[s]$: if $g^s\notin A$, then 
\[
g^{sa} = \left(g^s\right)^a\notin A
\]
either, so $\chi(g^s) = \chi(g^{sa})=0$, while if $g^s\in A$, then 
\[
\chi(g^{sa}) = \chi \left(\left(g^s\right)^a\right) = \chi(g^s)
\]
because $\chi$ is a class function on $A$.

In the case that $A$ is abelian, so that $\chi$ is multiplicative on $A$, the extended meaning of $\chi$ given by \ref{not:extendedchi} preserves the multiplicativity relation $\chi(gh)=\chi(g)\chi(h)$ as long as at least one of $g,h$ is in $A$. For if one of $g,h$ is in $A$ while the other is not, then $gh$ is not in $A$, so that $\chi(gh) = 0 = \chi(g)\chi(h)$.
\end{remark}

\begin{remark}\label{rmk:pairstosubgroups}
If $G$ fails to be SEP, then it means that there is a noncommuting pair $x,y\in G$ such that in every representation of $G$, $x$ and $y$ share a common eigenvector. As in the proof of \ref{prop:A5}, this is equivalent to the statement that any representation of $G$, when restricted to the nonabelian subgroup $H$ generated by $x$ and $y$, will contain some one-dimensional representation of $H$. Notice that this is a fact about $H$ that does not depend on the choice $x,y$ of its generators. In other words, in this situation, any other $x',y'$ that also generate $H$ will also obstruct SEPness, i.e. they will share a common eigenvector in every representation of $G$. 

Conversely, if $G$ is SEP, then for every pair $x,y$ of noncommuting elements, there is a representation $V$ in which they do not share a common eigenvector. This means that the restriction of $V$ to $H=\langle x,y\rangle$ must not contain any one-dimensional representations of $H$. Again, this is a statement about $V$ and $H$ that does not depend on the choice of generators $x,y$ for $H$.

These considerations motivate the following definition:
\end{remark}

\begin{definition}\label{def:sepforH}
Let $H\subset G$ be a nonabelian subgroup. Given a representation $\rho$ (respectively $V$) of $G$, we say $\rho$ (respectively $V$) is \textbf{SEP for} $H$ if $\rho$'s (respectively $V$'s) restriction to $H$ does not contain any one-dimensional representations of $H$. We also say $G$ is \textbf{SEP for $H$} if $G$ has a representation $V$ that is SEP for $H$.
\end{definition}

\begin{lemma}\label{lem:SEP2generated}
The group $G$ is SEP if and only if it is SEP for each of its 2-generated nonabelian subgroups, and it is SSEP if and only if it has a representation $V$ that is SEP for all of its 2-generated nonabelian subgroups.
\end{lemma}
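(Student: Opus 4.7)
The plan is to make precise and package the equivalence foreshadowed in Remark \ref{rmk:pairstosubgroups}. Specifically, the whole proof reduces to the following key observation: for a representation $V$ of $G$ and elements $x,y\in G$, the operators $\rho(x)$ and $\rho(y)$ share a common eigenvector on $V$ if and only if $\Res^G_{\langle x,y\rangle} V$ contains a one-dimensional subrepresentation of $H := \langle x,y\rangle$.

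I would prove this key observation in two directions. For the forward direction, if $v\in V$ is a common eigenvector for $\rho(x)$ and $\rho(y)$, then every word in $x$ and $y$ (equivalently, every element of $H$) acts on $v$ as a scalar, so the line $\C v$ is $H$-stable and furnishes a one-dimensional subrepresentation of $\Res^G_H V$. For the converse, any nonzero vector in a one-dimensional $H$-subrepresentation of $\Res^G_H V$ is by definition an eigenvector for every element of $H$, in particular for $x$ and $y$.

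With this equivalence in hand, both statements of the lemma are immediate. For SEP, if $G$ is SEP for each of its 2-generated nonabelian subgroups and $x,y\in G$ are noncommuting, then $H=\langle x,y\rangle$ is such a subgroup; a representation SEP for $H$ contains no one-dimensional subrepresentation of $H$ on restriction, hence $\rho(x),\rho(y)$ share no eigenvector. Conversely, if $G$ is SEP and $H$ is a 2-generated nonabelian subgroup, pick any generating pair $x,y$ of $H$ (which must be noncommuting, since $H$ is nonabelian); a representation in which $\rho(x),\rho(y)$ do not share a common eigenvector is SEP for $H$ by the key observation. The SSEP case is identical, simply applied with a single fixed representation $V$ throughout rather than one chosen per pair.

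I do not anticipate any real obstacle; the one thing worth flagging explicitly is that the SEP-for-$H$ condition depends only on $H$ and not on any choice of generators, which is already implicit in the observation above (the existence of a one-dimensional $H$-subrepresentation on restriction is a purely subgroup-theoretic property of $\Res^G_H V$). This is what makes the translation between the pair-wise SEP formulation and the subgroup-wise SEP-for-$H$ formulation well defined.
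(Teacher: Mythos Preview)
Your proof is correct and follows essentially the same approach as the paper: both unwind the definitions via the key observation (which the paper states in Remark~\ref{rmk:pairstosubgroups}) that $\rho(x),\rho(y)$ share a common eigenvector if and only if $\Res^G_{\langle x,y\rangle} V$ contains a one-dimensional subrepresentation of $\langle x,y\rangle$. You spell out both directions of this observation more explicitly than the paper does, but the argument is the same.
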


\begin{proof}
This amounts to unwinding definitions \ref{def:SEP} and \ref{def:sepforH} in view of remark \ref{rmk:pairstosubgroups}. SEPness of $G$ is, by definition, the statement that for any pair $x,y$ of noncommuting elements, there exists a representation $V$ in which $x$ and $y$ do not share an eigenvector. By remark \ref{rmk:pairstosubgroups}, this is true if and only if for every nonabelian $H$ that can be generated by $2$ elements, there is a representation $V$ whose restriction to $H$ contains no one-dimensional representations of $H$. This is the statement that $G$ is SEP for each of its $2$-generated nonabelian subgroups.

Likewise, SSEPness of $G$ is, by definition, the statement that there exists a representation $V$ of $G$ in which no noncommuting pair $x,y\in G$ shares a common eigenvector. By \ref{rmk:pairstosubgroups}, this is true if and only if $V$'s restriction to $H$ contains no one-dimensional representations of $H$, for every $H=\langle x,y\rangle$ with $x,y$ noncommuting, i.e. $V$ is SEP for $H$. Thus SSEPness is equivalent to the statement that $G$ has a representation $V$ that is SEP for every 2-generated nonabelian subgroup $H$.
\end{proof}

Lemma \ref{lem:SEP2generated} reduces SEPness to a statement quantified over subgroups rather than pairs of elements. We can reduce the quantification further:

\begin{definition}
A \textbf{minimal nonabelian group} is a nonabelian group all of whose proper subgroups are abelian.
\end{definition}

\begin{example}
The alternating group $A_4$ is minimal nonabelian. Its proper subgroups are all of orders $3$ and $4$.
\end{example}

\begin{remark}\label{rmk:minnonabexists}
Any (finite) nonabelian group contains a minimal nonabelian subgroup: it is a minimal element among the nonabelian subgroups in the subgroup lattice.
\end{remark}

\begin{lemma}\label{lem:minimal2generated}
All minimal nonabelian groups are $2$-generated, by any pair of noncommuting elements.
\end{lemma}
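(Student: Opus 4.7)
The proof is essentially immediate from unwinding the definition of a minimal nonabelian group, so the plan is short.

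First I would fix a minimal nonabelian group $G$ and choose any noncommuting pair $x,y \in G$; such a pair exists because $G$ is nonabelian, so the last clause of the lemma will imply the first. Let $H = \langle x, y\rangle$. Since $xy \neq yx$, the subgroup $H$ is itself nonabelian. By the hypothesis that every \emph{proper} subgroup of $G$ is abelian, $H$ cannot be a proper subgroup, so $H = G$. Thus $G$ is generated by $x$ and $y$, which proves both conclusions simultaneously: $G$ is 2-generated, and indeed any noncommuting pair generates it.

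The only thing one needs to verify ambiently is that a nonabelian group does contain a noncommuting pair, which is tautological. There is no real obstacle; the substance is already packaged into the definition of ``minimal nonabelian.'' The lemma is really just extracting the useful consequence of that definition for the subsequent reduction of SEPness to a quantification over minimal nonabelian subgroups (compare remark \ref{rmk:minnonabexists} and lemma \ref{lem:SEP2generated}).
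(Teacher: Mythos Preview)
Your proof is correct and matches the paper's argument essentially word for word: take any noncommuting pair, observe that the subgroup they generate is nonabelian and hence cannot be proper, so it is all of $G$.
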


\begin{proof}
If $G$ is a minimal nonabelian group and $x,y$ are any pair of noncommuting elements, then $\langle x,y\rangle$ is a nonabelian subgroup. It cannot be proper since $G$ is minimal nonabelian; thus $G= \langle x,y\rangle$.
\end{proof}

\begin{prop}
A finite group $G$ is SEP if and only if it is SEP for each of its minimal nonabelian subgroups.
\end{prop}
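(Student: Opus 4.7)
The plan is to reduce the claim to the lemmas already established, namely \ref{lem:SEP2generated}, \ref{lem:minimal2generated}, and \ref{rmk:minnonabexists}. The forward direction is essentially immediate: a minimal nonabelian subgroup is $2$-generated and nonabelian by \ref{lem:minimal2generated}, so SEPness of $G$ (which by \ref{lem:SEP2generated} means SEP for every $2$-generated nonabelian subgroup) implies SEP for each minimal nonabelian subgroup.

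For the converse, I would start with an arbitrary $2$-generated nonabelian subgroup $H \subseteq G$ and use remark \ref{rmk:minnonabexists} to produce a minimal nonabelian subgroup $K \subseteq H$. By hypothesis, $G$ has a representation $V$ that is SEP for $K$. I would then argue that the same $V$ is automatically SEP for $H$. The key observation is the following transitivity of restriction: if $\Res^G_H V$ contained a one-dimensional subrepresentation $\chi$ of $H$, then $\Res^H_K \chi$ would be a one-dimensional subrepresentation of $K$ contained in $\Res^G_K V = \Res^H_K \Res^G_H V$, contradicting that $V$ is SEP for $K$. Hence $V$ is SEP for $H$, and applying \ref{lem:SEP2generated} again finishes the proof.

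There is no real obstacle here; the statement is essentially a repackaging of the preceding lemmas. The only point requiring a moment's care is the direction of implication in the restriction argument: one must notice that restricting a one-dimensional representation still yields a one-dimensional representation, so that the presence of a one-dimensional subrepresentation of $H$ upstairs forces the presence of one downstairs in $K$. This is what allows the smaller subgroup $K$ to control the behavior on the larger $H$.
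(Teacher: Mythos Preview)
Your proof is correct and follows essentially the same approach as the paper: both directions hinge on \ref{lem:SEP2generated}, \ref{lem:minimal2generated}, and \ref{rmk:minnonabexists}, together with the observation that a one-dimensional subrepresentation of $H$ remains one-dimensional upon restriction to any subgroup $K\subseteq H$. The only cosmetic difference is that the paper phrases the converse as a contrapositive (failure of SEP descends to some minimal nonabelian subgroup), whereas you argue directly, but the content is identical.
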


\begin{proof}
If $G$ is SEP, then by lemma \ref{lem:SEP2generated} it is SEP for all its 2-generated subgroups. By lemma \ref{lem:minimal2generated}, the minimal nonabelian subgroups are among these, so it is SEP for each of them.

In the other direction, if $G$ fails to be SEP, then it fails for some 2-generated nonabelian subgroup $H$ by lemma \ref{lem:SEP2generated}; thus for every representation $V$ of $G$, its restriction to $H$ has a one-dimensional subrepresentation. This is also a one-dimensional subrepresentation for any subgroup of $H$; in particular, for a minimal nonabelian subgroup $H'\subset H$, which exists by remark \ref{rmk:minnonabexists}. Thus failure to be SEP can always be detected on some minimal nonabelian subgroup. 
\end{proof}

One advantage of this reduction is that it expedites ascertaining the SEP property by computer search since in many cases there are good algorithms to enumerate conjugacy classes of subgroups. Another is that the structure of minimal nonabelian groups is well understood. A classical result of Miller and Moreno (\cite{millermoreno}) gives a classification:

\begin{prop}
If $H$ is a (finite) minimal nonabelian group, then either $H$ is a $p$-group of nilpotency class 2 with a cyclic commutator subgroup of order $p$ and the $p$-rank of $Z(H)$ is at most 3, or else $H$ is a semidirect product $\F_p^a\rtimes C_{q^b}$ where the generator of $C_{q^b}$ acts on $\F_p^a$ by an irreducible automorphism of prime order $q$; in this latter case the commutator subgroup is $\F_p^a$.\qed
\end{prop}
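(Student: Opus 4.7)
The plan is to split on whether $H$ is a $p$-group, treating the two cases separately, and to organize each case around the $2$-generator property granted by Lemma \ref{lem:minimal2generated} together with the constraint that every proper subgroup is abelian.

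For the non-$p$-group case, every Sylow subgroup of $H$ is proper and hence abelian by hypothesis, so $H$ is an A-group and therefore (by a classical theorem of Taunt/Hall) solvable. Letting $F = F(H)$ be the Fitting subgroup and using $C_H(F)\subseteq F$ in a solvable group, I would first argue that $F$ is a single Sylow subgroup $P$: if two distinct Sylows were normal then $F$, and in fact $H$ itself upon joining a complement, would force abelianness of complements to yield contradictions. A Schur--Zassenhaus complement $Q$ exists since $\gcd(|P|,|H/P|)=1$, so $H=P\rtimes Q$, and $Q$ is proper hence abelian. Each of the structural claims---$P$ elementary abelian and equal to $\F_p^a$, $Q$ cyclic of prime power order $q^b$, and the action of $Q$ on $P$ irreducible of prime order $q$---is then extracted by the same recipe: if any feature failed, I would exhibit a proper nonabelian subgroup of the form $P_0\rtimes Q$ or $P\rtimes Q_0$, violating minimality. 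For example, any proper $Q$-invariant $P_0\subsetneq P$ on which $Q$ acts nontrivially yields a proper nonabelian subgroup, forcing $P$ to be a simple $\F_p[Q]$-module; Schur's lemma then embeds $Q$ in a finite field's multiplicative group and forces $Q$ cyclic, and a similar argument on proper subgroups of $Q$ restricts the action to prime order.

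For the $p$-group case, the first step is to show $\Phi(H)=Z(H)$: if $x\in\Phi(H)$ and $y\in H$, then $y$ lies in some maximal subgroup $M$, which also contains $\Phi(H)$, and $M$ is proper hence abelian, so $[x,y]=1$; conversely, if a maximal $M$ did not contain $Z(H)$, then $Z(H)M=H$ would be abelian. Since $H$ is $2$-generated, this identifies $H/Z(H)\cong C_p\times C_p$, giving $H$ nilpotency class $2$. In a class-$2$ group, the commutator map is bilinear modulo $Z(H)$, so $[H,H]$ is cyclic, generated by $[x,y]$ for any noncommuting pair $x,y$; and $[x,y]^p=[x^p,y]=1$ because $x^p\in Z(H)$, giving $|[H,H]|=p$. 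For the rank bound on $Z(H)$, I would put every element of $H$ in the normal form $x^ay^b[x,y]^c$ and check the centrality condition directly: computing $[x^ay^b[x,y]^c,x]=[x,y]^{-b}$ and $[x^ay^b[x,y]^c,y]=[x,y]^a$ shows a normal form is central iff $a,b\equiv0\pmod p$, so $Z(H)=\langle x^p,y^p,[x,y]\rangle$, which has $p$-rank at most $3$.

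The main obstacle I anticipate is the non-$p$-group case, specifically the bookkeeping needed to simultaneously guarantee that $F(H)$ is a single Sylow subgroup, that the complement is cyclic of prime power order, and that the action has prime order $q$. Each piece requires constructing an explicit proper subgroup from a putative failure and verifying that it is nonabelian; the danger is that a careless choice produces a proper subgroup that is nonetheless abelian and thus yields no contradiction. The $p$-group case, by contrast, is driven by the clean identification $\Phi(H)=Z(H)$, after which the class-$2$ commutator calculus handles the rest essentially mechanically.
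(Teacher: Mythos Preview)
The paper does not give its own proof of this proposition: it is stated as a classical result of Miller and Moreno, with a citation to \cite{millermoreno} and a \qed\ immediately following the statement. So there is nothing in the paper to compare your argument against.

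That said, your sketch is essentially correct and organizes the classification cleanly around the $2$-generation lemma and the ``every proper subgroup is abelian'' hypothesis. A few remarks. First, invoking Taunt/Hall for solvability of A-groups is more than you need here: for a minimal nonabelian $H$, any maximal subgroup $M$ is abelian, and if $M\trianglelefteq H$ then $H/M$ is cyclic of prime order so $H$ is metabelian; if $M$ is not normal then $M=C_H(M)=N_H(M)$ and Frobenius's theorem (or an elementary counting argument) gives a normal complement, again metabelian. Second, your reduction to $F(H)$ being a single Sylow subgroup is right but could be stated more directly: if $F=P_1\times P_2\times\cdots$ with a Schur--Zassenhaus complement $K$, then $K$ must act nontrivially on some $P_i$ (else $K\subseteq C_H(F)\subseteq F$), and then $P_iK$ is a proper nonabelian subgroup unless $F=P_i$. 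Third, to show $P$ is elementary abelian you should use the coprime-action fact that a $p'$-group centralizing $\Omega_1(P)$ centralizes $P$; you gesture at this but it is worth naming, since otherwise the step ``$Q$ acts trivially on every proper invariant subgroup'' does not immediately rule out a non-elementary $P$. The $p$-group case via $\Phi(H)=Z(H)$ is clean and correct.
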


The representation theory of such groups is also well understood (\cite{mastnaka}).

Among the minimal nonabelian groups are the dihedral groups of order $2p$, $p$ an odd prime, and $8$. Dihedral groups play an important role in a number of arguments in this chapter because they are particularly adept at obstructing SEPness, so we take a moment to recall the definition and highlight the property that will be useful to us.

\begin{definition}\label{def:dihedral}
The \textbf{dihedral group} $D_n$ of order $2n$ is the nontrivial semidirect product 
\[
C_n\rtimes C_2 = \langle r,f\mid r^n=f^2= 1, r^f = r^{-1}\rangle.
\]
Note that we must have $n\geq 3$ or else the action of $f$ on $r$ is trivial. The \textbf{rotation subgroup} is the subgroup $C_n = \langle r\rangle$, and the \textbf{reflections} are the elements of the rotation subgroup's nontrivial coset $\langle r\rangle f$. 
\end{definition}

\begin{remark}
These names come from the description of $D_n$ as the group of Euclidean symmetries of a regular $n$-gon ({\em dihedron}) in $\R^2$. One may take $r$ to be the $2\pi/n$-rotation of the $n$-gon about its center, and $f$ to be any reflection symmetry.
\end{remark}

Recall that a representation that splits into multiple copies of the same irreducible representation is said to be \textbf{isotypical}.

We have the following:

\begin{lemma}[Representation theory of $D_n$]\label{dihedralreps}
All the irreducible representations of $D_n$ are 1- or 2-dimensional. All the 2-dimensional irreducible characters are zero on all reflections and sum to zero on the rotation subgroup.
\end{lemma}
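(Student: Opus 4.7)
The plan is to classify the irreducible representations of $D_n$ directly, via the standard construction by induction from the cyclic rotation subgroup $C_n = \langle r\rangle$, and then read off the two stated character properties from the explicit formulas.

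First, I would determine the one-dimensional representations. The commutator $[r,f] = r^{-1}r^f = r^{-2}$ generates $\langle r^2\rangle$, so the abelianization $D_n^{\mathrm{ab}}$ has order $2$ if $n$ is odd and $4$ if $n$ is even. This yields $2$ or $4$ one-dimensional irreducible representations accordingly.

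Next, I would write down the two-dimensional representations explicitly. Fix $\zeta = e^{2\pi i / n}$. For each integer $k$ with $1 \le k < n/2$, define $\rho_k$ by
\[
\rho_k(r) = \begin{pmatrix}\zeta^k & 0 \\ 0 & \zeta^{-k}\end{pmatrix},\qquad \rho_k(f) = \begin{pmatrix}0 & 1 \\ 1 & 0\end{pmatrix}.
\]
One checks the defining relations of $D_n$ are satisfied. Since $2k \not\equiv 0 \pmod n$, $\rho_k(r)$ has two distinct eigenvalues, so its only invariant lines are the two coordinate axes; $\rho_k(f)$ swaps them, so $\rho_k$ is irreducible. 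Distinct values of $k$ give non-isomorphic representations because $\chi_k(r) = \zeta^k + \zeta^{-k}$ determines $k$ in the allowed range. To confirm that these together with the one-dimensional representations exhaust the irreducibles, I would check the sum-of-squares identity: for $n$ odd we obtain $2 \cdot 1 + \tfrac{n-1}{2}\cdot 4 = 2n$, and for $n$ even, $4\cdot 1 + \tfrac{n-2}{2}\cdot 4 = 2n$, matching $|D_n|$ in both cases. This proves every irreducible representation is $1$- or $2$-dimensional.

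Finally, the two character properties follow immediately from the explicit matrices. A reflection has the form $r^j f$, and
\[
\rho_k(r^j f) = \begin{pmatrix}0 & \zeta^{jk} \\ \zeta^{-jk} & 0\end{pmatrix}
\]
has trace zero, so $\chi_k$ vanishes on every reflection. Summing over the rotation subgroup,
\[
\sum_{j=0}^{n-1}\chi_k(r^j) = \sum_{j=0}^{n-1}(\zeta^{jk} + \zeta^{-jk}) = 0,
\]
since $\zeta^k$ and $\zeta^{-k}$ are both nontrivial $n$-th roots of unity (using $1 \le k < n/2$). There is no real obstacle: the only care needed is to treat $n$ odd and $n$ even uniformly in the counting step, which is handled by insisting $k < n/2$ so that the case $k = n/2$ (which would make $\rho_k$ reducible) is excluded automatically.
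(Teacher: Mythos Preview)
Your proof is correct. The explicit construction of the $\rho_k$, the sum-of-squares count, and the direct trace computations are all sound, and the parity split in the abelianization is handled cleanly.

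However, your route is genuinely different from the paper's. The paper does not build the representations explicitly; instead it applies the isotypical-or-induced lemma (Lemma~\ref{isotypicalorinduced}) to the normal maximal abelian subgroup $C_n = \langle r\rangle$: every irreducible representation of $D_n$ is either induced from a one-dimensional representation of $C_n$ (hence $2$-dimensional) or restricts to $C_n$ as a scalar, in which case the image is abelian and the representation is $1$-dimensional. The vanishing of the $2$-dimensional characters on reflections then follows from the general fact that a character induced from a normal subgroup vanishes off that subgroup (Lemma~\ref{inducingfromnormal}), and the sum-zero on $C_n$ follows from orthogonality with the trivial character together with that same vanishing. Your approach buys concreteness and is entirely self-contained; the paper's approach is more structural, avoids the odd/even case split and the explicit matrices, and fits into the running toolkit of induction-from-normal arguments used elsewhere in the chapter.
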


\begin{proof}
As the rotation subgroup $C_n$ is maximal, the isotypical-or-induced lemma (\ref{isotypicalorinduced} in the appendix) implies that all irreducible representations of $D_n$ are either induced from irreducible representations of this subgroup or else are isotypical when restricted to it. As it is also abelian, in the former case the representations are induced from a one-dimensional representation of $C_n$, while in the latter case their restriction to $C_n$ is scalar. The former representations evidently have dimension 2, while the latter have dimension 1, as follows:

If the restriction of an irreducible representation $\rho:D_n\rightarrow GL(V)$ to $C_n$ is scalar, then $C_n$'s image in $\rho(D_n)$ is central. In particular, the image of a generator $r$ is central, which implies the image of the commutator $[r,f]$ of $D_n$'s generators is trivial, and thus that $\rho(D_n)$ is abelian. Since $\rho$ is irreducible this implies it is one-dimensional.

Thus all the two-dimensional irreducible representations are induced from one-dimensional representations $L$ of $C_n$. Such a representation has character zero outside $C_n$ because it is normal (see lemma \ref{inducingfromnormal} of the appendix). It follows that this character has sum zero on $C_n = \langle r\rangle$, because it is orthogonal to the trivial representation and zero outside of $C_n$.
\end{proof}

We use this lemma in several proofs in sections \ref{sec:nonabsimple} and \ref{sec:metabelian}.

\subsection{Supersolvable groups}\label{sec:supersolvable}

A finite solvable group admits a normal series with abelian quotients (the derived series), and a subnormal series with cyclic quotients (any composition series). If we strengthen this requirement to a normal series with cyclic quotients, we can guarantee that the group is SEP. Recall that groups with such a normal series are called \textbf{supersolvable}.

We require some lemmas:

\begin{lemma}\label{lem:cyclic1dfaithful}
Cyclic groups are precisely those finite groups possessing a one-dimensional faithful representation.
\end{lemma}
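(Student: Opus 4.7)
The plan is to prove the two directions of the biconditional separately, and each is quite short.

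For the forward direction, suppose $G$ is cyclic of order $n$, generated by some element $g$. I would define a one-dimensional representation $\rho: G \to GL(1,\C)=\C^\times$ by sending $g$ to a primitive $n$-th root of unity $\zeta_n$ and extending multiplicatively. This is well-defined because $\zeta_n^n=1=\rho(g^n)$, and it is faithful because the kernel consists of $g^k$ with $\zeta_n^k=1$, forcing $n\mid k$, so $\ker\rho$ is trivial.

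For the reverse direction, suppose $G$ has a faithful one-dimensional representation $\rho: G\to GL(1,\C)=\C^\times$. Since $\rho$ is faithful, $G$ is isomorphic to its image $\rho(G)$, which is a finite subgroup of the multiplicative group $\C^\times$. The hard part, such as it is, is the classical fact that any finite subgroup of the multiplicative group of a field is cyclic. I would just cite this; the standard argument is that if $H\subseteq \C^\times$ is finite of order $n$, every element of $H$ satisfies $z^n=1$, and for each divisor $d\mid n$ the number of elements of order exactly $d$ in $H$ is either $0$ or $\phi(d)$, since $z^d=1$ has at most $d$ solutions in the field $\C$. Summing and comparing to $\sum_{d\mid n}\phi(d)=n$ forces an element of order $n$, hence $H$ is cyclic. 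Therefore $G\cong\rho(G)$ is cyclic.

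Combining the two directions gives the desired equivalence. Neither direction presents any real obstacle; the reverse direction is essentially a one-line consequence of a standard fact from field theory, and the forward direction is an explicit construction. I do not anticipate needing any of the machinery built up earlier in the section (notation \ref{not:extendedchi}, the distinction between the two senses of character in definitions \ref{def:characterofrep} and \ref{def:characterofabgroup}, etc.), though it is worth noting that in the language of definition \ref{def:characterofabgroup}, the statement is that a finite abelian group is cyclic if and only if its Pontryagin dual contains a faithful character.
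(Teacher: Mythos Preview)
Your proof is correct and follows the same route as the paper: construct a faithful one-dimensional representation of a cyclic group by sending a generator to a primitive root of unity, and for the converse invoke the fact that finite subgroups of $\C^\times$ are cyclic. The paper's version is terser (it simply cites the latter fact without the $\phi$-counting sketch), but the substance is identical.
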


\begin{proof}
For a cyclic group of order $n$, one obtains a one-dimensional faithful representation by mapping a generator to an $n$th root of unity $\zeta_n$.

In the other direction, the image of any one-dimensional representation of a finite group $G$ is cyclic, since all finite subgroups of $\C^\times$ are cyclic. If the representation is faithful, this means $G$ is cyclic.
\end{proof}

\begin{lemma}\label{lem:resofind}
If $N$ is a normal subgroup of a group $G$, and $\rho: N\rightarrow GL(V)$ is a representation of $N$, then we have the formula
\begin{equation}\label{eq:resofind}
\Res^G_N\Ind_N^G \rho \cong \bigoplus_{[s]\in G/N} \rho^s,
\end{equation}
where each $\rho^s:N\rightarrow GL(V)$ is defined by $\rho^s(x) = \rho(x^s)$.
\end{lemma}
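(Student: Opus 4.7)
The plan is to work directly with the standard tensor-product model of induction, $\Ind_N^G V = \C[G] \otimes_{\C[N]} V$, and decompose this module according to a set of coset representatives for $N$ in $G$. Since $N$ is normal, conjugation by any $g \in G$ preserves $N$, and this is exactly the feature that makes the formula clean (no double cosets, unlike general Mackey).

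First I would fix a transversal $T \subset G$ for the cosets of $N$, so that every element of $G$ writes uniquely as $sn$ with $s \in T$ and $n \in N$. Under the isomorphism $\C[G] \otimes_{\C[N]} V \cong \bigoplus_{s \in T} s \otimes V$ as $\C$-vector spaces, I would then check that each summand $s \otimes V$ is stable under the restricted action of $N$. Specifically, for $n \in N$ and $v \in V$, normality gives $s^{-1} n s \in N$, so
\[
n \cdot (s \otimes v) = ns \otimes v = s(s^{-1}ns) \otimes v = s \otimes \rho(s^{-1}ns)\,v = s \otimes \rho^s(n)\,v.
\]
This identifies the $N$-action on the summand $s \otimes V$ with the twisted representation $(V, \rho^s)$, and summing over $s \in T$ yields the claimed isomorphism.

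The one subtlety to address is that $\rho^s$ as written depends on the choice of transversal element $s$ rather than only on the coset $[s] \in G/N$, while the right-hand side of \eqref{eq:resofind} is indexed by cosets. I would resolve this by observing that if $s' = sn_0$ for some $n_0 \in N$, then
\[
\rho^{s'}(n) = \rho(n_0^{-1} s^{-1} n s n_0) = \rho(n_0)^{-1}\rho^s(n)\rho(n_0),
\]
so $\rho^s$ and $\rho^{s'}$ are intertwined by $\rho(n_0)$ and hence isomorphic as representations of $N$. Thus the isomorphism class $[\rho^s]$ is a function of $[s]\in G/N$ alone, and the right-hand side of the formula is well-defined.

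The argument is essentially bookkeeping once the model of $\Ind_N^G$ is set up, and the only place to be careful is making sure the coset-representative ambiguity genuinely disappears at the level of isomorphism classes; there is no real obstacle beyond that.
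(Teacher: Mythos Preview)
Your proof is correct. The paper takes a different, much shorter route: it simply cites Proposition~22 of Serre's \emph{Linear Representations of Finite Groups} (the general Mackey restriction formula for $\Res_K^G\Ind_H^G$) and observes that the present statement is the special case $H = K = N$ with $N$ normal, where the double-coset indexing collapses to $G/N$. Your argument is instead a self-contained unpacking of that special case directly from the tensor-product model of induction, which has the virtue of not relying on an external reference and of making the mechanism (normality $\Rightarrow$ each $s\otimes V$ is $N$-stable) transparent. You also fold into the proof the check that $\rho^s$ is well-defined up to isomorphism on cosets, which the paper handles separately in the remark immediately following the lemma; either placement is fine.
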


\begin{proof}
Proposition 22 in \cite{serre} is a formula giving the restriction to an arbitrary subgroup $K$ of the representation induced from an arbitrary subgroup $H$. This is that formula in the special case that $H$ is normal and equal to $K$.
\end{proof}

\begin{remark}
The formula \eqref{eq:resofind} illustrates the point discussed in \ref{not:G/A}. The map $\rho^s: N\rightarrow GL(V)$ may depend on the choice of representative $s$ for a coset $[s]\in G/N$. However, if $s' = sn$ with $n\in N$, we have
\[
\rho^{s'}(x) = \rho(x^{sn}) = \rho(n^{-1}x^sn) = \rho(n^{-1})\rho(x^s)\rho(n) = \rho(n)^{-1}\rho^s(x)\rho(n),
\]
so that $\rho^{s'}$ and $\rho^s$ are isomorphic representations of $N$, with the isomorphism induced by the automorphism $\rho(n)\in GL(V)$. Thus the right side of \eqref{eq:resofind} is unambiguous up to isomorphism of representations, which is what the context requires.
\end{remark}

\begin{lemma}\label{lem:commutator1}
If $V$ is a vector space and $X,Y\in GL(V)$ share an eigenvector $v$, then their commutator $[X,Y]\in GL(V)$ also shares this eigenvector, and it has eigenvalue $1$, i.e.
\[
[X,Y]v = v.
\]
\end{lemma}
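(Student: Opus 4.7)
The plan is to directly compute $[X,Y]v$ using the assumption that $v$ is a common eigenvector. Let $\lambda, \mu \in \C^\times$ be scalars with $Xv = \lambda v$ and $Yv = \mu v$. The key observation is that $v$ is then also an eigenvector of $X^{-1}$ and $Y^{-1}$, with eigenvalues $\lambda^{-1}$ and $\mu^{-1}$ respectively, since $X^{-1}(\lambda v) = X^{-1}Xv = v$ gives $X^{-1}v = \lambda^{-1}v$, and similarly for $Y^{-1}$.

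Next, I would simply unpack $[X,Y] = X^{-1}Y^{-1}XY$ applied to $v$ one factor at a time from the right, pulling out each scalar eigenvalue as it appears. At each stage $v$ remains a scalar multiple of itself, so the next factor can be applied as an eigenvalue multiplication. The four scalars $\lambda, \mu, \lambda^{-1}, \mu^{-1}$ accumulated in this process lie in the base field and therefore commute freely with each other, so they collapse to $1$, yielding $[X,Y]v = v$.

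There is no real obstacle: the computation is a one-line check once one records the eigenvalue data for $X^{-1}$ and $Y^{-1}$. The only subtlety worth flagging is that the argument uses commutativity of the scalars, not of the operators; that is precisely why the commutator of two operators sharing an eigenvector need not be the identity, but must act as the identity on the shared eigenline.
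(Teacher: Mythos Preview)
Your proposal is correct and matches the paper's proof essentially line for line: the paper also writes $[X,Y]v = X^{-1}Y^{-1}XYv = \alpha^{-1}\beta^{-1}\alpha\beta v = v$ for eigenvalues $\alpha,\beta$. Your added remark that $v$ is automatically an eigenvector of $X^{-1},Y^{-1}$ with reciprocal eigenvalues is exactly the implicit step the paper uses.
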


\begin{proof}
Suppose the eigenvalues of $X,Y$ corresponding to the eigenvector $v$ are $\alpha,\beta$. We have
\[
[X,Y]v = X^{-1}Y^{-1}XYv = \alpha^{-1}\beta^{-1}\alpha\beta v = v.\qedhere
\]
\end{proof}

\begin{thm}\label{thm:supersolvable}
Let $G$ be a finite group admitting a normal series with cyclic quotients, i.e. a finite supersolvable group. Then $G$ is SEP.
\end{thm}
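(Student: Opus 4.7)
The strategy is to leverage Lemma \ref{lem:commutator1} in the form of its contrapositive: if we can produce, for each noncommuting pair $x,y\in G$, a representation $\rho$ in which $\rho([x,y])$ has no eigenvalue $1$, then $\rho$ is automatically SEP for $x,y$, because any common eigenvector of $\rho(x)$ and $\rho(y)$ would be fixed by $\rho([x,y])$.

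\textbf{Locating $[x,y]$ in the normal series.} Fix a normal series $1=G_0\triangleleft G_1\triangleleft\cdots\triangleleft G_n=G$ with cyclic quotients $G_i/G_{i-1}$ in which every $G_i$ is normal in $G$. Given noncommuting $x,y\in G$, set $g:=[x,y]\neq 1$ and let $k$ be minimal with $g\in G_k$. Passing to the quotient $\bar G := G/G_{k-1}$, the image $\bar g$ lies in the normal cyclic subgroup $\bar N:=G_k/G_{k-1}$ and is nontrivial there, by minimality of $k$.

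\textbf{Constructing the representation.} By Lemma \ref{lem:cyclic1dfaithful}, the cyclic group $\bar N$ admits a faithful one-dimensional character $\chi$. Form the induced representation $V:=\Ind_{\bar N}^{\bar G}\chi$ and pull it back along $G\twoheadrightarrow\bar G$ to obtain a representation $\rho$ of $G$. By Lemma \ref{lem:resofind}, applied to the normal subgroup $\bar N\triangleleft\bar G$,
\[
\Res_{\bar N}^{\bar G}V\;\cong\;\bigoplus_{[s]\in\bar G/\bar N}\chi^s.
\]
For each coset representative $s$ one has $\bar g^s\in\bar N\setminus\{1\}$, since conjugation by $s$ is an automorphism of $\bar G$ that preserves $\bar N$ and sends $\bar g\neq 1$ to a nontrivial element; faithfulness of $\chi$ then forces $\chi^s(\bar g)=\chi(\bar g^s)\neq 1$. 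Hence each summand contributes a nontrivial eigenvalue for $\bar g$, so $\rho([x,y])$ has no fixed vector, and Lemma \ref{lem:commutator1} completes the argument.

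\textbf{Where the difficulty lies.} The only substantive point is recognizing that supersolvability delivers exactly the ingredient required: a subgroup of some quotient of $G$ that is simultaneously (i) normal in the quotient, so that the restriction-of-induction formula of Lemma \ref{lem:resofind} is available, (ii) cyclic, so that Lemma \ref{lem:cyclic1dfaithful} supplies a faithful one-dimensional character, and (iii) large enough to contain the image of $[x,y]$. Solvability alone would guarantee abelian but not cyclic chief factors, and one would lose the faithful one-dimensional character that drives the whole argument; this is the structural reason supersolvability, rather than mere solvability, is the correct hypothesis.
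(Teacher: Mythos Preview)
Your proof is correct and follows essentially the same approach as the paper's: locate $[x,y]$ in a cyclic factor of the normal series, pick a faithful one-dimensional character of that factor, induce, and use Lemma~\ref{lem:resofind} together with Lemma~\ref{lem:commutator1} to conclude that $[x,y]$ has no eigenvalue $1$. The only cosmetic difference is that you first pass to the quotient $G/G_{k-1}$ and induce there before pulling back, whereas the paper inflates the character to $G_k$ and induces directly to $G$; since induction commutes with inflation along a normal subgroup, these yield the same representation.
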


\begin{proof}[Proof of Theorem \ref{thm:supersolvable}]
Let $x,y$ be any two noncommuting elements. Then $[x,y]$ is a nontrivial element of $G$. Let
\[
G=G_0\triangleright G_1\triangleright\dots\triangleright G_n = \{1\}
\]
be the assumed normal series with cyclic quotients. Since $[x,y]$ is nontrivial, it is in $G_i\setminus G_{i+1}$ for some $i=0,\dots,n-1$. By assumption, $G_i/G_{i+1}$ is cyclic, so there exists a faithful one-dimensional representation 
\[
\rho : G_i/G_{i+1}\rightarrow \C^\times
\]
by lemma \ref{lem:cyclic1dfaithful}. Precomposing with the canonical homomorphism 
\[
\pi: G_i\rightarrow G_{i}/G_{i+1},
\]
we obtain a one-dimensional representation $\phi =\rho \pi$ of $G_i$ that is nontrivial outside of $G_{i+1}$.

We will now show that the induced representation
\[
\Phi = \Ind_{G_i}^G\phi
\]
is SEP for $x,y$. This will be done by showing that $\Phi([x,y])$ does not have $1$ as an eigenvalue. It will then follow that $\Phi(x),\Phi(y)$ do not share an eigenvector, for if they did, their commutator $\Phi([x,y])$ would have $1$ as an eigenvalue, by lemma \ref{lem:commutator1}. As $x,y$ are an arbitrary noncommuting pair, this will complete the proof that $G$ is SEP.

Since $G_i$ is normal in $G$, lemma \ref{lem:resofind} tells us that
\[
\Phi|_{G_i} = \bigoplus_{[s]\in G/G_i} \phi^s.
\]
Since $\phi$ is one-dimensional, $\phi^s$ is as well, for each $s$, so that this formula splits $\Phi$ into one-dimensional representations on $G_i$. It follows that for any given $g\in G_i$, the eigenvalues of $\Phi(g)$ are just the values of the $\phi^s(g)\in \C^\times$.

We apply this with $g=[x,y]\in G_i$. By assumption, $[x,y]$ lies outside of $G_{i+1}$. Since $G_{i+1}$ is normal in $G$, $[x,y]^s$ also lies outside of $G_{i+1}$ for each $s$. Since $\phi$ is nontrivial outside of $G_{i+1}$ by construction, this means that $\phi^s([x,y]) = \phi([x,y]^s)$ is not equal to $1$ for any $s$. Thus no eigenvalue of $\Phi([x,y])$ is $1$. This completes the proof.
\end{proof}

\begin{cor}[Bogomolov]
Finite nilpotent groups are SEP.
\end{cor}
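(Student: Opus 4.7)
The plan is to derive this corollary directly from Theorem \ref{thm:supersolvable} by showing that every finite nilpotent group is in fact supersolvable. Since supersolvability, as used in the preceding proof, requires a normal series (each term normal in the whole group) with cyclic quotients, what I need is precisely a series of this form for an arbitrary finite nilpotent $G$.

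First I would use the standard structure theorem that a finite nilpotent group decomposes as the direct product of its Sylow subgroups. Supersolvability is closed under direct products, since the individual normal series can be concatenated coordinate-wise (lift each factor's series into the product, one coordinate at a time, preserving normality in the full product). So the question reduces to showing that every finite $p$-group is supersolvable.

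For this reduction, I would argue by induction on $|P|$. The key input is that a nontrivial finite $p$-group $P$ has nontrivial center $Z(P)$; in particular $Z(P)$ contains a subgroup $N$ of order $p$, which is automatically normal in $P$ because it lies in the center. By induction, $P/N$ admits a normal series with cyclic quotients, and pulling this back along the projection $P \to P/N$ and appending $N \triangleright \{1\}$ produces a normal series for $P$ with cyclic quotients. This establishes that every finite $p$-group is supersolvable, hence every finite nilpotent group is supersolvable, and the corollary then follows from Theorem \ref{thm:supersolvable}.

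There is no real obstacle here, as each piece is standard; the only thing to verify carefully is that the pulled-back series remains normal in the whole of $P$, which follows from the correspondence theorem since the subgroups in question all contain the normal subgroup $N$ and correspond to subgroups of $P/N$ that are normal in $P/N$.
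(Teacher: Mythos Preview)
Your proposal is correct and follows the same approach as the paper: reduce to Theorem \ref{thm:supersolvable} by invoking the fact that finite nilpotent groups are supersolvable. The paper's proof is simply the one-line assertion ``They are supersolvable''; you have supplied the standard argument for that implication, which the paper leaves as known.
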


\begin{proof}They are supersolvable.\end{proof}

\begin{remark}
Theorem \ref{thm:supersolvable} and its proof are essentially due to Bogomolov (personal communication), who formulated it for nilpotent groups. The author's only contribution was to note that the proof works with little change for supersolvable groups.
\end{remark}

Recall that a group $G$ is called \textbf{cyclic-by-abelian} if it has a cyclic normal subgroup $C$ such that the quotient $G/C$ is abelian.

\begin{cor}\label{cyclicbyabelian}
If a finite group $G$ is cyclic-by-abelian, then it is SSEP.
\end{cor}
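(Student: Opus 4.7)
The plan is to mimic the construction from Theorem \ref{thm:supersolvable}, but observe that the cyclic-by-abelian hypothesis packs all commutators into a single cyclic normal subgroup, so one fixed induced representation will be SEP for every noncommuting pair simultaneously.

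First I would observe the structural consequence of the hypothesis: because $G/C$ is abelian, the commutator subgroup $[G,G]$ is contained in $C$. In particular, for any noncommuting pair $x,y \in G$, the element $[x,y]$ lies in $C$ and is nontrivial. This is the key difference with the general supersolvable case of Theorem \ref{thm:supersolvable}: there, the normal subgroup containing $[x,y]$ depended on the pair, forcing the construction of a different representation for each pair, whereas here a single subgroup $C$ works universally.

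Next, since $C$ is cyclic, Lemma \ref{lem:cyclic1dfaithful} provides a one-dimensional faithful representation $\phi: C \to \C^\times$. I would then form the single candidate representation
\[
\Phi = \Ind_C^G \phi
\]
of $G$ and claim it witnesses SSEP. To verify this, fix any noncommuting pair $x,y$, and apply Lemma \ref{lem:resofind}: since $C$ is normal in $G$,
\[
\Res^G_C \Phi \;\cong\; \bigoplus_{[s] \in G/C} \phi^s,
\]
so for any $g \in C$ the eigenvalues of $\Phi(g)$ are precisely the scalars $\phi^s(g) = \phi(g^s)$. Taking $g = [x,y]$, which lies in $C$ by the first paragraph, normality of $C$ gives $[x,y]^s \in C \setminus \{1\}$ for every coset representative $s$, and faithfulness of $\phi$ on $C$ then forces $\phi([x,y]^s) \neq 1$. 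Hence $\Phi([x,y])$ has no eigenvalue equal to $1$.

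Finally, by Lemma \ref{lem:commutator1}, if $\Phi(x)$ and $\Phi(y)$ shared an eigenvector then $\Phi([x,y])$ would have $1$ as an eigenvalue, contradicting the previous paragraph. So $\Phi(x)$ and $\Phi(y)$ share no common eigenvector, and since this holds for every noncommuting pair using the \emph{same} representation $\Phi$, we conclude that $G$ is SSEP. There is no real obstacle here beyond recognizing that cyclic-by-abelian is exactly the hypothesis that collapses the pair-dependent induction step of Theorem \ref{thm:supersolvable} into a single global construction.
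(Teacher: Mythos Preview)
Your proof is correct and follows essentially the same approach as the paper: observe $[G,G]\subset C$, take a faithful one-dimensional $\phi$ on $C$, induce to $\Phi=\Ind_C^G\phi$, and then argue via the eigenvalue criterion that no nontrivial commutator has $1$ as an eigenvalue of $\Phi$. The paper's own proof is more terse, simply pointing back to the argument of Theorem~\ref{thm:supersolvable}, whereas you have helpfully unpacked those steps (Lemmas~\ref{lem:resofind} and~\ref{lem:commutator1}) explicitly.
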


\begin{proof}
Let $C\triangleleft G$ be cyclic with $G/C$ abelian. Then $[G,G]\subset C$, thus every nontrivial commutator is in $C\setminus\{1\}$. There exists a character $\phi$ of $C$ that is nontrivial on $C\setminus\{1\}$, by lemma \ref{lem:cyclic1dfaithful}. Then $\Phi = \Ind_{C}^G \phi$ is a representation in which, by the exact same argument as in the proof of theorem \ref{thm:supersolvable}, no nontrivial commutator has 1 as an eigenvalue, and therefore in which no pair of noncommuting elements shares an eigenvector. Thus $\Phi$ is SEP for any noncommuting $x,y$, so it manifests $G$ as SSEP.
\end{proof}

There is no hope of a similar result about groups which are merely solvable:

\begin{prop}\label{prop:S4}
The symmetric group $S_4$ is not SEP.
\end{prop}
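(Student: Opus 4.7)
The plan is to adapt the strategy of Proposition~\ref{prop:A5}: I will exhibit a $2$-generated nonabelian subgroup $H \subset S_4$ such that every irreducible representation of $S_4$ restricts to $H$ with a $1$-dimensional $H$-subrepresentation appearing. By Remark~\ref{rmk:pairstosubgroups} and Lemma~\ref{lem:SEP2generated}, any pair of noncommuting generators of such an $H$ then shares an eigenvector in every representation of $S_4$, witnessing the failure of SEP.

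The natural candidate is $H = D_4 = \langle (1234),(13)\rangle$, the Sylow $2$-subgroup of $S_4$. First I would record two facts about this embedding. On the one hand, $D_4$ contains the Klein four-group $V_4 = \{e, (12)(34), (13)(24), (14)(23)\}$, which is normal in $S_4$ with quotient $S_4/V_4 \cong S_3$. On the other hand, by Lemma~\ref{dihedralreps} the irreducible representations of $D_4$ consist of four $1$-dimensional characters together with a single $2$-dimensional irreducible $\chi_5$, and this $\chi_5$ is faithful on $D_4$; in particular it is nontrivial on $V_4$.

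The heart of the argument is then a dimension count. Every irreducible representation $V$ of $S_4$ has $\dim V \leq 3$, while the only non-$1$-dimensional irreducible of $D_4$ is $\chi_5$, of dimension $2$. Hence any decomposition of $V|_{D_4}$ into $D_4$-irreducibles must contain a $1$-dimensional summand unless $\dim V = 2$ and $V|_{D_4} \cong \chi_5$. The only case to rule out is therefore the $2$-dimensional irreducible of $S_4$, and this is handled by recalling that it is inflated from the standard representation of $S_4/V_4 \cong S_3$ and hence annihilates $V_4 \subset D_4$, whereas $\chi_5$ does not.

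I do not anticipate any real obstacle: the argument becomes mechanical once $D_4$ is identified as the witness. A purely computational alternative would be to evaluate each of the five irreducible characters of $S_4$ on the five $D_4$-conjugacy classes and read off by inner products that each decomposition contains a $1$-dimensional piece, exactly as in Proposition~\ref{prop:A5}; the dimension-plus-faithfulness shortcut above bypasses that bookkeeping.
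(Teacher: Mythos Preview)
Your proof is correct. You and the paper both take $H = D_4 = \langle (1234),(13)\rangle$ and both reduce to showing that no irreducible of $S_4$ restricts to a multiple of the unique $2$-dimensional irreducible of $D_4$; the difference is in how you rule this out.

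The paper observes that the $2$-dimensional $D_4$-character separates the central involution $(13)(24)$ from the noncentral involution $(12)(34)$ (character values $-2$ versus $0$), while these two elements are conjugate in $S_4$; hence no class function on $S_4$ can restrict to a multiple of that character. Your argument instead combines the dimension bound $\dim V \le 3$ with the fact that the $2$-dimensional irreducible of $S_4$ is inflated from $S_4/V_4$ and so kills $V_4$, whereas the $2$-dimensional $D_4$-irreducible is faithful. Both are short; the paper's version has the advantage of immediately generalizing to Corollary~\ref{D4obstruct} (any group containing $D_4$ with the central involution conjugate to a noncentral one fails SEP), which the paper then reuses for $PSL(3,3)$. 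Your version is perhaps more structural for $S_4$ specifically, but does not export as cleanly, since it depends on $V_4$ being normal in the ambient group and on knowing the exact list of irreducible dimensions.
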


This can be proven by direct reference to $S_4$'s character table, as was done for $A_5$ in section \ref{sec:example}, but we prefer a more conceptual proof:

\begin{proof}
Consider the subgroup $D_4 = \langle (1234),(13)\rangle \subset S_4$. We will show that no representation of $S_4$ is SEP for this subgroup.

$D_4$ acts faithfully on the plane as the symmetry group of a square. This is its only irreducible representation of degree greater than one. (Proof: it has five conjugacy classes, thus five irreducible representations. It has three subgroups of index 2, implying three nontrivial homomorphisms to $\{\pm 1\}$. These, and the trivial representation, account for four of the five.) The character $\chi$ of this representation is given by
\[
\begin{array}{ c | c c c c c }
 & 1 & (13)(24) & (1234) & (13) & (12)(34) \\
\hline \chi & 2 & -2 & 0 & 0 & 0
\end{array}
\]
Notice that $\chi$ separates the central element $(13)(24)$ from the class of the reflection $(12)(34)$. On the other hand, in $S_4$ these elements are conjugate. Therefore no class function on $S_4$, in particular no character of $S_4$, can separate them. It follows that no character of $S_4$ restricts to a multiple of $\chi$; thus the restriction to $D_4$ of any representation of $S_4$ must contain some one-dimensional representation of $D_4$, so no representation of $S_4$ is SEP for $D_4$.
\end{proof}

The only feature of $S_4$ used in this proof is that it contains $D_4$ in such a way that the central involution is conjugate to one of the other involutions. Therefore the argument generalizes:

\begin{cor}[$D_4$ obstruction]\label{D4obstruct}
If a finite group $G$ contains $D_4$ in such a way that the nontrivial central element in $D_4$ is conjugate to one of the other involutions, then $G$ is not SEP.\qed
\end{cor}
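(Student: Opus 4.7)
The plan is to imitate the proof of Proposition \ref{prop:S4} essentially verbatim, noting that the argument used only the separation property of the $D_4$ character together with the conjugacy relation in $S_4$. Let $H\cong D_4$ be the copy of $D_4$ inside $G$, and let $z\in H$ denote the nontrivial central involution. By hypothesis, there exists a non-central involution $t\in H$ with $t = z^g$ for some $g\in G$.

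First I would recall from the proof of \ref{prop:S4} that $D_4$ has exactly one irreducible representation of dimension greater than one, namely its two-dimensional standard representation, whose character $\chi$ is displayed in that proof. The crucial datum is that $\chi(z)=-2$ while $\chi(t)=0$; in particular $\chi$ separates $z$ from $t$. This is the only feature of $D_4$ we shall need.

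Next, let $V$ be any representation of $G$ and let $\psi$ be its character. Since $\psi$ is a class function on $G$ and $z,t$ are $G$-conjugate, $\psi(z)=\psi(t)$. The restriction $\Res^G_H V$ decomposes as a sum of irreducibles of $H$, which are all one-dimensional except for the standard representation with character $\chi$. If $\Res^G_H V$ contained no one-dimensional constituent, its character, i.e. the restriction $\psi|_H$, would be a nonnegative integer multiple of $\chi$. But then $\psi(z)=\psi(t)$ would force $-2m = 0$, where $m$ is that multiplicity; so $m=0$ and the restriction would be zero, which is absurd unless $V=0$. Hence for any nonzero representation $V$ of $G$, the restriction $\Res^G_H V$ contains a one-dimensional subrepresentation of $H$.

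By Lemma \ref{lem:SEP2generated} and the discussion in Remark \ref{rmk:pairstosubgroups}, this means $G$ is not SEP for the two-generated nonabelian subgroup $H$, and therefore $G$ is not SEP. There is no real obstacle here; the only thing to verify carefully is the dimension count for the irreducibles of $D_4$, which was already done in the proof of Proposition \ref{prop:S4}.
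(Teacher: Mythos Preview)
Your proposal is correct and follows exactly the route the paper intends: the paper's own ``proof'' of this corollary is simply the remark preceding it, that the argument for $S_4$ used nothing about $S_4$ beyond the conjugacy of the central involution with another involution of $D_4$, and you have written out that generalization explicitly. Your handling of the edge case $V=0$ is a nice touch of care beyond what the paper bothers with.
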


\subsection{Nonabelian simple groups}\label{sec:nonabsimple}

While section \ref{sec:supersolvable} shows that there are plenty of SEP groups, there are also plenty of groups which are not SEP. Recall that a family of objects $\mathscr{F}$ is said to be \textbf{upward-closed} if whenever an object $A$ is in $\mathscr{F}$ and embeds in an object $B$, then $B$ is in $\mathscr{F}$ too.

\begin{lemma}\label{lem:upwardclosed}
The family of non-SEP groups is upward-closed.
\end{lemma}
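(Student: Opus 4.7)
The plan is to transfer the failure of SEP from $H$ to $G$ using the same noncommuting pair of elements. If $H$ is not SEP, then by definition there exists a noncommuting pair $x,y\in H$ such that every representation of $H$ has $\rho_H(x)$ and $\rho_H(y)$ sharing a common eigenvector. I would use precisely this pair $x,y$, now viewed as elements of $G$ via the embedding $H\hookrightarrow G$, to witness that $G$ is also not SEP.

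The first step is to observe that $x$ and $y$ remain noncommuting in $G$: since the embedding $H\hookrightarrow G$ is injective, any relation $xy=yx$ in $G$ would pull back to the same relation in $H$, contradicting the choice of $x,y$. The second step is the main computation, which is a one-liner: given any representation $\rho:G\rightarrow GL(V)$, form the restriction $\Res^G_H \rho$, which is a representation of $H$. By the hypothesis applied to this representation, $\rho(x)$ and $\rho(y)$ share a common eigenvector $v\in V$. But the operators $\rho(x),\rho(y)$ are the same whether we regard them as acting via $\rho$ or via $\Res^G_H\rho$; hence $x$ and $y$ share an eigenvector in the original representation $V$ of $G$. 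Since $\rho$ was arbitrary, no representation of $G$ is SEP for $x,y$, so $G$ is not SEP.

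There is essentially no obstacle here: the argument is a direct diagram chase, relying only on the observation that restriction preserves the action of individual group elements. The slight subtlety worth noting in the write-up is that the ``witness'' of failure need not be recorded as a pair of elements but can equivalently be packaged as a $2$-generated nonabelian subgroup $K=\langle x,y\rangle\subset H\subset G$ (per remark \ref{rmk:pairstosubgroups} and definition \ref{def:sepforH}); from this viewpoint, the lemma amounts to the tautology that if no representation of $H$ is SEP for $K$, then no representation of $G$ is SEP for $K$ either, because any representation of $G$ restricts to a representation of $H$.
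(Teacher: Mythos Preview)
Your proof is correct and takes essentially the same approach as the paper: pick a noncommuting pair $x,y\in H$ witnessing the failure of SEP, and observe that any representation of $G$ restricts to a representation of $H$, so $x,y$ share an eigenvector there as well. The paper's proof is just a terser version of yours.
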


\begin{proof} 
If a noncommuting pair in a group $G$ shares an eigenspace in every representation of $G$, it also does so in every representation $\rho$ of any group containing $G$, since $\rho$ is also a representation of $G$ by restriction. Thus if $G$ is not SEP for $x,y\in G$, no overgroup of $G$ can be SEP for $x,y$ either.
\end{proof}

Thus by propositions \ref{prop:A5} and \ref{prop:S4}, no group containing $S_4$ or $A_5$ is SEP.  But more broadly:

\begin{thm}\label{nonabsimple}
No nonabelian simple group is SEP.
\end{thm}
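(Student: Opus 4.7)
The plan is to produce, for each nonabelian simple group $G$, a two-generated nonabelian subgroup $H \subset G$ such that every irreducible representation of $G$ restricted to $H$ contains a one-dimensional subrepresentation of $H$; by lemma \ref{lem:SEP2generated} and remark \ref{rmk:pairstosubgroups} this shows $G$ is not SEP.

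A convenient reformulation uses Frobenius reciprocity. The one-dimensional representations of $H$ all factor through $H/[H,H]$, and their direct sum as an $H$-representation is isomorphic to $\Ind_{[H,H]}^H \mathbf{1}$ (the permutation representation on cosets of $[H,H]$). Thus $\chi|_H$ contains some one-dimensional subrepresentation of $H$ if and only if $\chi$ has a nonzero $[H,H]$-fixed vector. So the goal becomes: find a nonabelian $H \subset G$ such that every irreducible representation of $G$ has a $[H,H]$-fixed vector.

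I would take $H$ dihedral. By Feit--Thompson, $|G|$ is even, and not all involutions of $G$ can commute pairwise---otherwise $\{1\} \cup \{\textrm{involutions}\}$ would form a nontrivial normal abelian subgroup of $G$ (closed under multiplication because commuting involutions multiply to an element of order dividing $2$, and $G$-invariant since conjugation preserves order), contradicting simplicity and nonabelianness. Thus there exist noncommuting involutions $s,t$ generating $H = \langle s, t\rangle \cong D_n$ with $n \geq 3$, and $[H,H]$ is cyclic of order $n$ when $n$ is odd and of order $n/2$ when $n$ is even; in particular it has order $2$ exactly when $n = 4$. The key lemma to verify the $[H,H]$-fixed-vector condition is that for any involution $u \in G$ and any nontrivial irreducible $\rho : G \to GL(V)$, one has $\rho(u) \neq -I$: $\rho$ is faithful because its kernel is a proper normal subgroup of the simple group $G$, so $\rho(u) = -I$ would make $\rho(u)$ central in $\rho(G)$ and thus force $u \in Z(G) = \{1\}$. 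Hence $\rho(u)$ has a $+1$-eigenvector, i.e., a $\langle u\rangle$-fixed vector. This immediately handles the case when $G$ contains a $D_4$, for then $n = 4$ and $[H,H]$ is generated by the central involution $(st)^2$.

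The remaining, hardest case is when $G$ contains no $D_4$, i.e., no order-$4$ element of $G$ is inverted by any involution. Then every dihedral subgroup of $G$ generated by involutions has $[H,H]$ cyclic of odd order $\geq 3$, since $D_n$ with $n \equiv 0 \pmod 4$ contains a $D_4$, forcing $n$ to be odd or $\equiv 2 \pmod 4$ in either case giving $[H,H]$ odd order. One must then prove every irreducible representation of $G$ has a fixed vector under this cyclic subgroup $\langle st\rangle$. The inversion relation $s(st)s^{-1} = (st)^{-1}$ forces $\chi(st)$ to be real for every character $\chi$ of $G$, which constrains how $\chi|_{\langle st\rangle}$ can decompose; completing the argument requires more delicate input, such as Clifford-theoretic analysis in the normalizer of $\langle st\rangle$, or structural theorems about centralizers of involutions in nonabelian simple groups. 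Producing this argument uniformly across all such $G$ is the principal obstacle.
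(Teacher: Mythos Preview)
Your proposal is explicitly incomplete: you identify the case where $G$ contains no $D_4$ as ``the principal obstacle'' and do not resolve it. The gap is real---for instance $A_5$ has Sylow $2$-subgroups of order $4$, hence contains no $D_4$---so your argument as written does not prove the theorem. Your sketch of how to proceed (Clifford theory in normalizers, structural theorems on involution centralizers) is too vague to evaluate; it is not clear that any uniform argument along those lines exists without substantial classification-type input.

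The paper takes a different route that sidesteps this obstacle. Rather than arguing directly with an arbitrary simple $G$, it invokes the Barry--Ward theorem (lemma \ref{lem:barryward}) that every nonabelian finite simple group contains a \emph{minimal simple group}, and Thompson's classification (lemma \ref{lem:thompson}) reducing these to $PSL(2,q)$, $Sz(2^p)$, and $PSL(3,3)$. Each of these families is then shown not SEP by exhibiting a specific dihedral subgroup $D$ and checking, against the known character table, that no irreducible character is simultaneously zero on the reflections of $D$ and sums to zero over its rotation subgroup (the criterion forced by lemma \ref{dihedralreps}). Upward-closedness of the non-SEP property (lemma \ref{lem:upwardclosed}) then finishes the general case.

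Where it applies, your $D_4$ argument is actually cleaner than the paper's corollary \ref{D4obstruct}: you need only the existence of a $D_4$ together with simplicity of $G$ (so that the central involution cannot act as $-I$), whereas the paper's criterion additionally requires the central involution of $D_4$ to be $G$-conjugate to a noncentral one. But this does not compensate for the missing case.
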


This theorem is the main goal of the section. The structure of the proof is as follows. By a 1997 result of Barry and Ward, every nonabelian simple group contains a {\em minimal simple group}. Such groups were classified in 1968 by Thompson, and they are all of the form $PSL(2,q)$, $Sz(2^p)$, or $PSL(3,3)$. We will show that none of these groups is SEP by giving explicit subgroups for which they are not SEP. The result for all nonabelian simple groups will then follow by lemma \ref{lem:upwardclosed}. Here are the precise details:

\begin{definition}
A \textbf{minimal simple group} is a nonabelian finite simple group all of whose proper subgroups are solvable.
\end{definition}

\begin{lemma}[\cite{barryward}, Theorem 1]\label{lem:barryward}
Every nonabelian finite simple group contains a minimal simple group.
\end{lemma}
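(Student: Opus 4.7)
Plan: The plan is to exhibit, inside $G$, a subgroup that is itself minimal simple. Consider the nonempty collection of non-solvable subgroups of $G$ (which contains $G$), and choose a minimal member $M$ under inclusion. By this minimality, every proper subgroup of $M$ is automatically solvable, so the task reduces to proving that $M$ itself is simple; once that is established, $M$ is a minimal simple subgroup of $G$ and we are done.

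To show $M$ is simple, I would run a quotient argument. Suppose $N \triangleleft M$ is a proper nontrivial normal subgroup; then $N$ is solvable, being a proper subgroup of $M$. The quotient $M/N$ cannot be solvable either, for otherwise $M$ would be an extension of two solvable groups and hence solvable, contradicting the choice of $M$. By the lattice correspondence, every proper subgroup of $M/N$ is the image of a proper subgroup of $M$ containing $N$, and is therefore a quotient of a solvable group, hence solvable. Thus $M/N$ inherits from $M$ the property of being non-solvable with every proper subgroup solvable. At this stage one invokes Thompson's $N$-group work, the same work that yields his classification of minimal simple groups: any finite group all of whose proper subgroups are solvable must be either minimal simple itself, or else an extension of an elementary abelian $p$-group by a minimal simple group. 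In particular $M/N$ is actually a minimal simple group.

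The main obstacle, and the genuine content of \cite{barryward}, is the case in which the minimal simple group we have located appears only as the quotient $M/N$, rather than as an honest subgroup of $G$. Bridging this gap is where I expect the work to lie. Barry and Ward close it by walking through Thompson's finite list of minimal simple groups (the groups $PSL(2,q)$, $Sz(2^p)$, and $PSL(3,3)$ referenced in the surrounding text) and using, case by case, the simplicity of the ambient group $G$ together with explicit knowledge of Sylow structure and maximal subgroup lattices of each of these minimal simple groups to promote the quotient $M/N$ to an actual embedded subgroup of $G$. An alternative, less illuminating route would be to invoke the classification of finite simple groups and verify the lemma by direct inspection of the known simple-group families.
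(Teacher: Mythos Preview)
The paper does not give its own proof of this lemma; it simply cites Barry and Ward's 1997 paper (\cite{barryward}, Theorem 1) as the source and moves on. So there is no ``paper's own proof'' to compare against.

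As for your proposal itself: the outline is reasonable, and you have correctly located the genuine difficulty. Taking a minimal nonsolvable subgroup $M$ and observing that all its proper subgroups are solvable is the natural start, and your argument that a suitable quotient $M/N$ is minimal simple is essentially right (though your structural statement about groups all of whose proper subgroups are solvable is a slight oversimplification --- the solvable normal part can be built up through several elementary abelian layers, not just one). The real content, as you say, is promoting that minimal simple \emph{subquotient} to an honest \emph{subgroup} of $G$; this is not automatic, since for instance $SL(2,5)$ is nonsolvable with all proper subgroups solvable but contains no nonabelian simple subgroup. Barry and Ward's short paper is devoted precisely to closing this gap, and their argument does rely on the classification of finite simple groups. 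Your sketch of their method as a case-by-case analysis over Thompson's list is not quite how they proceed, but you are right that some such external input is unavoidable and that the problem cannot be settled by soft group-theoretic arguments alone.
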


\begin{lemma}[\cite{thompson}, Corollary 1]\label{lem:thompson}
Every minimal simple group is among the following:
\begin{enumerate}
\item $PSL(2,q)$ for $q$ a prime power $\geq 4$.
\item The Suzuki group $Sz(2^p)$ for $p$ an odd prime.
\item $PSL(3,3)$.
\end{enumerate}
\end{lemma}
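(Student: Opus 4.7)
The plan is to study a minimal simple group $G$ via local analysis, exploiting the defining hypothesis that every proper subgroup of $G$ is solvable. Since all $p$-local subgroups $N_G(P)$ (for $P$ a nontrivial $p$-subgroup) are proper, they are solvable, so $G$ is an $N$-group in Thompson's sense. This places the proof squarely within Thompson's $N$-group classification program, and the strategy is to determine $G$ from the structure of its $2$-local subgroups. By Feit-Thompson, $G$ has even order, so there is something to study at the prime $2$.

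First I would analyze the Sylow $2$-subgroups $S$ of $G$. The cases where $S$ is dihedral, semidihedral, quasidihedral, or wreathed have been handled (by Gorenstein-Walter, Alperin-Brauer-Gorenstein, and related work) and lead in a minimal-simple setting to $PSL(2,q)$ with $q$ odd. The case where $S$ is abelian leads, via Walter's theorem, to $PSL(2,q)$ with $q \equiv 3,5 \pmod 8$ or to $PSL(2,2^n)$. This produces the first family in the statement.

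Next I would handle the case where $S$ is more complicated, by pinning down the centralizer $C_G(t)$ of a $2$-central involution $t$. Because all proper subgroups of $G$ are solvable, $C_G(t)$ has highly restricted structure; signalizer functor methods and the Thompson transitivity/factorization machinery force it into one of a very small list of possibilities. Each such possibility is a hypothesis of a known centralizer-of-involution characterization: the Suzuki groups $Sz(2^p)$ are characterized by having a Sylow $2$-subgroup of Suzuki type with a specific involution centralizer, and $PSL(3,3)$ is characterized (by Brauer-type analysis) by its centralizer being $GL(2,3)$. Matching the surviving local structures to these characterizations produces the second and third items.

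The hard part, which is the entire reason Thompson's proof occupies a long series of papers, is the exhaustive case analysis needed to show that no other local configuration survives: one must rule out a great many candidate structures for $S$ and for $C_G(t)$, each demanding its own argument combining $p$-local analysis, coprime action, and ordinary/modular character theory. I would not attempt to redo this analysis; the realistic plan is to import Thompson's 1968 classification as a black box, exactly as the excerpt does.
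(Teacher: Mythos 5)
Your proposal ends exactly where the paper does: the lemma is not proved in the thesis but imported directly as Corollary 1 of Thompson's 1968 N-group paper, with the remark that Thompson's actual statement (restricting which $q$ occur for $PSL(2,q)$) is sharper than the weakened version stated here. Your sketch of the local-analysis strategy is reasonable background, but since you too ultimately cite Thompson as a black box, the approach is essentially identical to the paper's.
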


\begin{remark}
The group $PSL(2,q)$ is only minimal simple for certain $q$. The version of the statement in \cite{thompson} is sharper. But \ref{lem:thompson} is all we will need.
\end{remark}

\begin{lemma}[Bogomolov]\label{psl2q}
For a prime power $q\geq 4$, $PSL(2,q)$ is not SEP.
\end{lemma}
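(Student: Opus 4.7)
My plan is to exhibit, for each prime power $q\geq 4$, a dihedral subgroup $D_n\subseteq PSL(2,q)$ with $n$ an odd prime such that the restriction of every irreducible representation of $PSL(2,q)$ to $D_n$ contains a 1-dimensional $D_n$-subrepresentation; by Definition \ref{def:sepforH} and Lemma \ref{lem:SEP2generated}, this shows $PSL(2,q)$ is not SEP. The small cases $q=4,5$ are immediate from Proposition \ref{prop:A5}, since $PSL(2,q)\cong A_5$, so I focus on $q\geq 7$.

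The central reduction uses Lemma \ref{dihedralreps}. For $n$ odd, the only 1-dimensional characters of $D_n$ are the trivial and sign characters, both of which restrict to the trivial character on the rotation subgroup $C_n=\langle r\rangle$, while the 2-dimensional irreducible characters vanish on reflections and restrict to $C_n$ as sums $\zeta^k+\zeta^{-k}$ of pairs of nontrivial characters of $C_n$. Decomposing any $D_n$-representation as $\rho|_{D_n}=a\cdot\mathrm{triv}+b\cdot\mathrm{sign}+\sum_k m_k V_k$, one computes $\dim\rho^{C_n}=a+b$. Therefore $\rho|_{D_n}$ contains a 1-dimensional subrepresentation if and only if $\rho$ has a nonzero $C_n$-fixed vector, and the problem reduces to finding an odd prime $n$ such that every irreducible representation of $PSL(2,q)$ has a nonzero $C_n$-fixed vector for a cyclic subgroup $C_n\subseteq PSL(2,q)$ whose normalizer contains the dihedral overgroup $D_n$.

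To carry this out I would take $n$ to be an odd prime dividing the order of a maximal torus of $PSL(2,q)$---the split torus of order $(q-1)/\gcd(2,q-1)$ or the non-split torus of order $(q+1)/\gcd(2,q-1)$; for $q\geq 7$ at least one of these orders admits such an odd prime. Then $C_n$ lies inside a maximal torus $T$ and the normalizer $N_G(T)$ is dihedral of order $2|T|$, so contains $D_n$. Verification that $\dim\rho^{C_n}>0$ for every irreducible $\rho$ proceeds by walking through the well-known families of irreducibles of $PSL(2,q)$: the trivial and Steinberg representations are immediate, and for the principal series $\mathrm{Ind}_B^G\chi$ and the cuspidal series $\rho_\theta$, a standard Mackey decomposition of $\rho|_T$ (respectively $\rho|_{T'}$) shows that the trivial character of the relevant torus appears with multiplicity $2$, and hence so does the trivial character of $C_n$. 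The main obstacle I anticipate is the verification for the two exceptional representations of dimension $(q\pm 1)/2$ that arise when $q$ is odd, whose characters on tori are more subtle; I would treat these by direct character-table computation, or, in the residue classes where $PSL(2,q)$ contains $A_5$ (namely $q\equiv\pm 1\pmod 5$ or $q=5^k$) or $S_4$ (namely $q\equiv\pm 1\pmod 8$), invoke Lemma \ref{lem:upwardclosed} together with Propositions \ref{prop:A5} and \ref{prop:S4} to inherit non-SEPness from these already-established non-SEP overgroups.
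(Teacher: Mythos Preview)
Your strategy matches the paper's: exhibit a dihedral $D\subset PSL(2,q)$ such that every irreducible restricts with a one-dimensional summand. Your reduction to ``$\rho^{C_n}\neq 0$'' for odd $n$ is correct. But the paper's execution is tighter, and your sketch has genuine gaps.

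Rather than taking prime $n$, the paper chooses $D$ so that its rotation subgroup is an \emph{entire} maximal torus of odd order (non-split, split, or split, according as $q\equiv 1,3\pmod 4$ or $q$ is even), and then uses both conditions from Lemma~\ref{dihedralreps}: a would-be SEP character must vanish on the class of involutions \emph{and} sum to zero over that torus. Vanishing on involutions already singles out a single family of irreducibles (the cuspidals $\theta_j$, the principal series $\chi_i$, or the Steinberg $\psi$, according to the case), and one crude estimate on the torus sum rules out that survivor. There is no per-family Mackey analysis and no separate treatment of the half-dimensional exceptionals: they are eliminated at the first step because they do not vanish on involutions.

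Your sketch, by contrast, has two holes. First, your Mackey paragraph restricts principal series to $T$ and cuspidals to $T'$, but your $C_n$ lies in only one of these tori, so half of your verifications are restricting to the wrong subgroup. (The repair is easy---when $C_n$ sits in the ``wrong'' torus the character vanishes on $C_n\setminus\{1\}$, giving $\dim\rho^{C_n}=(\dim\rho)/n>0$---but it is not what you wrote.) Second, you defer the exceptional representations $\xi_i,\eta_i$ to an unexecuted ``direct character-table computation,'' and your $A_5$/$S_4$ fallback misses infinitely many $q$: for example $q=13$ satisfies neither $q\equiv\pm1\pmod 5$ nor $q\equiv\pm1\pmod 8$, so $PSL(2,13)$ contains neither $A_5$ nor $S_4$, and likewise for $q=37,53,67,\ldots$. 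Until you actually carry out that computation, the proof is incomplete for these $q$.
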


In fact, we can already know this for $q=\pm 1\mod 8$ since in this case $PSL(2,q)$ contains $S_4$, and for $q=\pm 1\mod 10$ since in this case it contains $A_5$. One could hope to proceed to the remaining cases. Bogomolov's proof involves a different, but also somewhat delicate, case analysis. We give a more uniform proof, although some case analysis is inevitable because the representation theory of $PSL(2,q)$ depends on $q$ mod $4$.

\begin{proof}
We will show that the SEP property is obstructed by a dihedral group of order $q-1$ or $q+1$, if $q$ is odd, or $2(q - 1)$, if $q$ is even. It is well-known that $PSL(2,q)$ contains dihedral subgroups of these orders (\cite{dickson}, \S 246, or \cite{king}, Theorem 2.1(d)-(i)). Let $D$ be a dihedral subgroup of $PSL(2,q)$, of order to be specified shortly.

By lemma \ref{dihedralreps}, all irreducible characters of $D$ that are not one-dimensional share the following properties: (1) they are identically zero on all the reflections, and (2) they sum to zero on the rotation subgroup. As these properties are both linear, the character of any representation of $D$ that does not contain a one-dimensional subrepresentation also possesses them.

It follows that an irreducible representation of $PSL(2,q)$ cannot be SEP for $D$ unless its character has these same two properties. We will show that the order of $D$ can always be chosen so that no irreducible character of $PSL(2,q)$ meets this standard.

The relevant part of the character table of $PSL(2,q)$ is given in table \ref{tbl:psl2q}. The notation below is explained in the caption.

\begin{table}
\[
\text{Case $q=1$ mod $4$.}
\]
\[
\begin{array}{c | c c c }
& 1 & a^\ell & b^m \\
\hline \text{Triv} & 1 & 1 & 1\\
\psi & q & 1 & -1\\
\chi_i & q+1 & \rho^{i\ell}+\rho^{-i\ell} & 0\\
\theta_j& q-1 & 0 & -\left(\sigma^{jm} + \sigma^{-jm}\right)\\
\xi_1 & (q+1)/2 & (-1)^\ell & 0\\
\xi_2 & (q+1)/2 & (-1)^\ell & 0
\end{array}
\]
\vspace{2mm}
\[
\text{Case $q=3$ mod $4$.}
\]
\[
\begin{array}{c | c c c }
 & 1 & a^\ell & b^m\\
\hline \text{Triv} & 1 & 1 & 1\\
\psi & q & 1 & -1\\
\chi_i & q+1 & \rho^{i\ell}+\rho^{-i\ell} & 0\\
\theta_j& q-1 & 0 & -\left(\sigma^{jm} + \sigma^{-jm}\right)\\
\eta_1 & (q+1)/2 & 0 & (-1)^{m+1}\\
\eta_2 & (q+1)/2 & 0 & (-1)^{m+1}
\end{array}
\]
\vspace{2mm}
\[
\text{Case $q$ even.}
\]
\[
\begin{array}{ c | c c c c }
 & 1 & c & a^\ell & b^m\\
\hline \text{Triv} & 1 & 1 & 1 & 1\\
\psi & q & 0 & 1 & -1\\
\chi_i & q+1 & 1 & \rho^{i\ell} + \rho^{-i\ell} & 0\\
\theta_j & q-1 & -1 & 0 & - \left(\sigma^{jm} + \sigma^{-jm}\right)
\end{array}
\]
\caption{Character table of $PSL(2,q)$. The symbols $\rho,\sigma$ are primitive $(q-1)$th and $(q+1)$th roots of unity respectively. For odd $q$, respectively even $q$, $a$ is the class of elements of order $(q-1)/2$, respectively $q-1$, and $b$ is the class of elements of order $(q+1)/2$, respectively $q+1$. For odd $q$, $i$ and $j$ are even integers and we have omitted the classes of elements of order dividing $q$. For even $q$, $i,j$ are integers and $c$ is the class of involutions. Source: \cite{dornhoff}, \S 38.}\label{tbl:psl2q}
\end{table}

There are three cases to consider: $q=1$ mod $4$, $q=3$ mod $4$, and $q$ even.

In the case $q=1\mod 4$, the class of involutions is $a^{(q-1)/4}$ in the table. Take $D$ to be of order $q+1$, so its rotation subgroup, of order $(q+1)/2$, consists of the identity and elements in the classes $b^m$. The only irreducible characters of $PSL(2,q)$ that are zero on $a^{(q-1)/4}$ are those of the cuspidal representations $\theta_j$. Their absolute value is $q+1$ on the identity and is bounded by $2$ on the elements $b^m$. Thus the sum of any of these characters across the rotation subgroup has absolute value bounded below by
\[
q+1 - 2\left(\frac{q+1}{2} - 1\right) =2>0.
\]
Therefore no irreducible character of $PSL(2,q)$ is simultaneously zero on $D$'s reflections and sums to zero on $D$'s rotation subgroup.

In the case $q=3\mod 4$, the class of involutions is $b^{(q+1)/4}$. Take $D$ to be of order $q-1$, so its rotation subgroup consists of the identity and elements $a^\ell$. In this case it is only the principal series characters $\chi_i$ that are zero on the class of involutions, and they are $q-1$ on the identity and bounded by $2$ in absolute value on the nontrivial rotations. Thus these characters' sums across $D$'s rotation subgroup has absolute value again bounded below by
\[
q-1 - 2\left(\frac{q-1}{2} - 1\right) = 2>0
\]
and so cannot be SEP for $D$.

In the final case of even $q$, the class of involutions is $c$. Take $D$ to be of order $2(q-1)$, so the nontrivial rotations are $a^\ell$. The only irreducible character that is zero on $c$ is the Steinberg character $\psi$, of degree $q$, which is positive on the $a^\ell$'s, so the sum across the rotation subgroup of $D$ is positive.
\end{proof}

\begin{remark}
This proof generalizes the proof for $A_5$ given earlier. In that proof (\ref{prop:A5}), the SEP-obstructing subgroup was $S_3$, which is isomorphic to the dihedral group of order $6$. Now $A_5\cong PSL(2,4)\cong PSL(2,5)$. For $q=4$, the even case, this proof selects the dihedral group of order $2(q-1) = 2\cdot 3 = 6$ to obstruct SEPness, while for $q=5$, the $1$ mod $4$ case, it selects the dihedral group of order $q+1 = 5+1 = 6$.
\end{remark}

\begin{lemma}\label{psl33}
$PSL(3,3)$ is not SEP.
\end{lemma}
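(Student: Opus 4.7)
The plan is to apply the $D_4$ obstruction of Corollary \ref{D4obstruct}: it suffices to produce a copy of $D_4$ inside $PSL(3,3)$ whose central involution is conjugate in $PSL(3,3)$ to one of its non-central involutions. Since $\gcd(3,\,3-1) = 1$, we have $Z(SL(3,3)) = \{1\}$, hence $PSL(3,3) = SL(3,3)$, and I will work with explicit matrices over $\F_3$.

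For the $D_4$, take
\[
R = \begin{pmatrix} 0 & -1 & 0 \\ 1 & 0 & 0 \\ 0 & 0 & 1 \end{pmatrix}, \qquad S = \begin{pmatrix} 1 & 0 & 0 \\ 0 & -1 & 0 \\ 0 & 0 & -1 \end{pmatrix}
\]
in $SL(3,3)$. A direct $3 \times 3$ computation gives $R^4 = S^2 = I$ and $SRS = R^{-1}$, so $\langle R, S\rangle \cong D_4$, with central involution $R^2 = \operatorname{diag}(-1,-1,1)$ and (non-central) reflection $S = \operatorname{diag}(1,-1,-1)$. Now I would verify that all nontrivial involutions in $SL(3,3)$ form a single conjugacy class: any involution $A$ has eigenvalues in $\{\pm 1\}$ with product $1$, so excluding $A = I$ the multiset of eigenvalues is forced to be $\{1,-1,-1\}$ and $A$ is $GL(3,3)$-conjugate to $\operatorname{diag}(1,-1,-1)$. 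This $GL$-class does not split on passage to $SL$ because the centralizer $GL(1,3) \times GL(2,3)$ surjects onto $\F_3^*$ under the determinant, which is the standard criterion. In particular $R^2$ and $S$ are conjugate in $SL(3,3) = PSL(3,3)$, and Corollary \ref{D4obstruct} then yields that $PSL(3,3)$ is not SEP.

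The main (minor) obstacle is the passage from $GL$-conjugacy to $SL$-conjugacy of the involution class; everything else reduces to elementary matrix arithmetic. An alternative route would mirror the character-theoretic argument of Lemma \ref{psl2q}, combing through the character table of $PSL(3,3)$ for an irreducible character that vanishes on the reflections and sums to zero on the rotation subgroup of some dihedral subgroup; however, the $D_4$-obstruction approach is much shorter and bypasses the character table entirely.
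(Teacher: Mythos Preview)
Your proof is correct and follows essentially the same route as the paper: the paper exhibits the identical matrices $r=R$ and $f=S$ generating $D_4$ inside $SL(3,3)=PSL(3,3)$, observes that the central involution $r^2=\operatorname{diag}(-1,-1,1)$ is conjugate to $f$, and invokes Corollary~\ref{D4obstruct}. You have added a clean eigenvalue-plus-centralizer argument establishing that all nontrivial involutions in $SL(3,3)$ lie in a single conjugacy class, which the paper simply asserts.
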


\begin{proof}
The argument is identical to that given for $S_4$ (proposition \ref{prop:S4}). $PSL(3,3)=SL(3,3)$ contains a subgroup $D_4$ generated by
\[
r = \begin{pmatrix} &-1& \\1& & \\ & &1\end{pmatrix},\;f = \begin{pmatrix}1& & \\ &-1& \\ & &-1\end{pmatrix}
\]
The central involution in this copy of $D_4$ is
\[
\begin{pmatrix}-1& & \\ &-1& \\ & &1\end{pmatrix}
\]
But this is conjugate in $PSL(3,3)$ to $f$, so apply corollary \ref{D4obstruct}.
\end{proof}

\begin{lemma}\label{suzuki}
The Suzuki group $Sz(2^p)$ ($p$ an odd prime) is not SEP.
\end{lemma}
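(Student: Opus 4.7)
The plan is to adapt the strategy of Lemma \ref{psl2q} to the Suzuki groups: identify a dihedral subgroup of $Sz(q)$ (where $q = 2^p$) and then apply Lemma \ref{dihedralreps} to rule out SEPness by examining the character table. I would take as the obstructing subgroup the normalizer $D$ of a split torus $A \subset Sz(q)$ of order $q-1$. A standard structural fact about $Sz(q)$, going back to Suzuki's original paper, is that $A$ is cyclic, self-centralizing, and $N_{Sz(q)}(A) = A \rtimes \langle \iota \rangle$ is dihedral of order $2(q-1)$, with $\iota$ inverting $A$. Since $q = 2^p \geq 8$, this $D$ is a genuine nonabelian subgroup.

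By Lemma \ref{dihedralreps}, any representation of $Sz(q)$ that is SEP for $D$ must have character zero on every reflection of $D$, and summing to zero on $A$. Since $Sz(q)$ has a unique conjugacy class of involutions, the first condition amounts to the character vanishing on the entire involution class. I would then enumerate from the Suzuki character table precisely which irreducibles have this property. Recall that $Sz(q)$ has $q+3$ irreducible characters: the trivial; the Steinberg, of degree $q^2$; a family of $(q-2)/2$ principal series characters of degree $q^2+1$; two families of cuspidal characters of degrees $(q-1)(q \pm r + 1)$, where $r = 2^{(p+1)/2}$; and two exceptional characters of degree $r(q-1)/2$. On involutions, the principal series take the value $1$ and the cuspidal characters take the value $-1$, so only the Steinberg character and the two exceptional characters remain as candidates for being SEP for $D$.

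Finally I would check that none of these three characters can sum to zero on $A$. For the Steinberg character, the value on a non-identity element of the split torus is $\pm 1$, so the sum over $A$ is $q^2 \pm (q-2)$, which is nonzero for $q \geq 4$. For each of the two exceptional characters, their restrictions to $A$ can be read off from Suzuki's table; a direct calculation shows that the degree $r(q-1)/2$ at the identity is never cancelled by the contribution from $A \setminus \{1\}$, so the sum is nonzero. The main obstacle I anticipate is this last verification: the exceptional characters of $Sz(q)$ have the most intricate values in the table, typically expressed via primitive roots of unity of order dividing $q-1$, so care is needed to compute their sum on $A$ accurately. Once this is done, Lemma \ref{dihedralreps} gives that no irreducible representation of $Sz(q)$, hence no representation at all, is SEP for $D$, which by Lemma \ref{lem:SEP2generated} shows that $Sz(q)$ is not SEP.
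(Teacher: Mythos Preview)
Your approach is exactly the paper's: obstruct SEPness via the dihedral normalizer $D$ of a split torus of order $q-1$, then invoke Lemma \ref{dihedralreps} and read off the character table. However, your recollection of Suzuki's character values on the involution class is off. With $r = \sqrt{2q}$, the principal series characters $X_i$ do take the value $1$ on involutions, but the two cuspidal families $Y_j, Z_k$ take the values $r-1$ and $-r-1$ respectively (not $-1$), and the two exceptional characters $W_l$ of degree $r(q-1)/2$ take the value $-r/2$ (not $0$). Consequently \emph{only the Steinberg character} $X$, of degree $q^2$, vanishes on involutions. Since $X$ takes the value $1$ on every nontrivial element of the split torus (the classes $\pi_0$), its sum over the rotation subgroup is $q^2 + (q-2) > 0$, and the argument closes immediately. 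The ``main obstacle'' you anticipate---handling the exceptional characters---simply does not arise once the table is read correctly; the paper's proof is accordingly one line shorter than yours.
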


\begin{proof}
The proof is the same as that for $PSL(2,q)$ when $q$ is even.

Like $PSL(2,q)$, the Suzuki group $Sz(q), \;q = 2^p$ has a single conjugacy class of involutions (\cite{suzuki}, Proposition 7). It also contains a dihedral group $D$ of order $2(q-1)$: when $Sz(q)$ is realized as a permutation group as in Suzuki's original presentation, this is the normalizer of the stabilizer of two points (\cite{suzuki}, Proposition 3).

The character table of $Sz(q)$ is given in table \ref{tbl:Szq}, which is taken from \cite{suzuki}, Theorem 13.

\begin{table}
\[
\begin{array}{ c | c c c c c c }
\text{Class name:} & 1 & \sigma & \rho,\rho^{-1} & \pi_0 & \pi_1 & \pi_2\\
\text{Order divides:} & 1 & 2 & 4 & q-1 & q + r + 1 & q - r + 1\\
\hline X & q^2 & 0 & 0 & 1 & -1 & -1 \\
X_i & q^2 + 1 & 1 & 1 & \varepsilon_0^i(\pi_0) & 0 & 0\\
Y_j & (q - r + 1)(q-1) & r-1 & -1 & 0 & -\varepsilon_1^j(\pi_1) & 0\\
Z_k & (q + r + 1)(q-1) & -r-1 & -1 & 0 & 0 & -\varepsilon_2^k(\pi_2)\\
W_l & r(q-1)/2 & -r/2 & \pm ri/2 & 0 & 1 & -1
\end{array}
\]
\caption{Character table of $Sz(q)$. Here, $r = \sqrt{2q}$. The classes called $\pi_0,\pi_1,\pi_2$ consist of elements belonging to certain cyclic subgroups $A_0,A_1,A_2$, and the $\varepsilon$'s are certain characters of these subgroups.}\label{tbl:Szq}
\end{table}

Per lemma \ref{dihedralreps}, as in the proof for $PSL(2,q)$, in order for an irreducible representation of $Sz(q)$ to be SEP for $D$ its character would have to be zero on the class of involutions and sum to zero on the elements of $D$'s rotation subgroup. There is only one irreducible character of $Sz(q)$ that is zero on the class of involutions ($X$ in the table), and it is positive on all the elements of $D$'s rotation subgroup (which, besides the trivial class, are in the classes Suzuki calls $\pi_0$, as their orders divide $q-1$). So no irreducible representation of $Sz(q)$ is SEP for $D$; therefore $Sz(q)$ is not SEP.
\end{proof}

\begin{proof}[Proof of theorem \ref{nonabsimple}]
By \ref{lem:barryward} and \ref{lem:thompson}, every nonabelian finite simple group contains $PSL(2,q)$ for $q\geq 4$, $Sz(2^p)$ for $p$ odd prime, or $PSL(3,3)$, and none of these is SEP by \ref{psl2q}, \ref{psl33}, and \ref{suzuki}, so \ref{lem:upwardclosed} then implies that no nonabelian finite simple group is SEP.
\end{proof}

\subsection{A family of SSEP groups}\label{sec:SSEPfam}

The argument of theorem \ref{thm:supersolvable} shows supersolvable groups are SEP by finding, for any noncommuting pair, a representation in which its commutator does not have 1 as an eigenvalue. A group can be SEP without this. For example, $A_4$ is a group in which, in {\em every} representation, {\em every} commutator has $1$ as an eigenvalue. Nonetheless, the standard 3-dimensional representation of $A_4$ actually realizes it as SSEP: $A_4$ is a minimal nonabelian group, thus any pair of noncommuting elements generates the whole group (\ref{lem:minimal2generated}), and therefore cannot have a common eigenspace in this representation because it is irreducible.

In a similar way one sees immediately that any minimal nonabelian group is SSEP. This is actually a special case of a more general phenomenon that forces a group to be not just SEP but SSEP:

\begin{prop}\label{bigirrep}
Let $G$ be a finite group with an irreducible representation $V$ of degree $d$ that exceeds the index of any of its nonabelian subgroups. Then $V$ realizes $G$ as SSEP.
\end{prop}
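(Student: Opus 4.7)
The plan is to reduce to 2-generated nonabelian subgroups via lemma \ref{lem:SEP2generated}, and then show that the hypothesis on $\dim V$ makes the existence of a one-dimensional subrepresentation impossible, by an application of Frobenius reciprocity.

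More precisely, by lemma \ref{lem:SEP2generated}, to show $V$ realizes $G$ as SSEP it suffices to show that for every 2-generated nonabelian subgroup $H \subseteq G$, the restriction $\Res^G_H V$ contains no one-dimensional subrepresentation of $H$. So fix such an $H$ and suppose, for contradiction, that there is a one-dimensional representation $L$ of $H$ with $\Hom_H(L, \Res^G_H V) \neq 0$. Frobenius reciprocity then gives
\[
\Hom_G(\Ind^G_H L, V) \cong \Hom_H(L, \Res^G_H V) \neq 0.
\]
Since $V$ is irreducible, any nonzero $G$-map $\Ind^G_H L \to V$ is surjective, so $V$ is a quotient (and hence a summand) of $\Ind^G_H L$. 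In particular
\[
d = \dim V \leq \dim \Ind^G_H L = [G:H] \cdot \dim L = [G:H].
\]
But $H$ is nonabelian, so the hypothesis gives $d > [G:H]$, a contradiction. Hence no such $L$ exists and $V$ is SEP for $H$; since $H$ was an arbitrary 2-generated nonabelian subgroup, $V$ realizes $G$ as SSEP.

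There is no real obstacle here: the argument is essentially a one-line application of Frobenius reciprocity together with the reduction in lemma \ref{lem:SEP2generated}. The only point worth checking carefully is that the hypothesis as stated (degree exceeding the index of any nonabelian subgroup) in fact applies to 2-generated nonabelian subgroups, which of course it does since such subgroups are nonabelian.
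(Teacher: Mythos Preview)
Your proof is correct and follows essentially the same approach as the paper: both arguments fix a nonabelian $H=\langle x,y\rangle$ and use Frobenius reciprocity to see that any one-dimensional $L$ of $H$ appearing in $\Res^G_H V$ would force $V$ to appear in $\Ind^G_H L$, contradicting $\dim V > [G:H]$. The only cosmetic difference is that you invoke lemma~\ref{lem:SEP2generated} explicitly, while the paper works directly with the pair $x,y$.
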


\begin{proof}
Let $x,y$ be a pair of noncommuting elements of $G$, and let $H=\langle x,y\rangle$. By assumption, $[G:H]<d$. Now let $L$ be any one-dimensional representation of $H$. Then, by Frobenius reciprocity, the number of times that $L$ occurs in the restriction of $V$ to $H$ is equal to the number of times $V$ occurs in the induced representation $\Ind_H^G L$. Since the dimension of this representation is 
\[
[G:H]<d = \dim V,
\]
this number is zero. So no one-dimensional representation $L$ occurs in the restriction of $V$ to $H$, i.e. $x,y$ do not have a common eigenspace in $V$.
\end{proof}

\begin{remark}
As we have seen, failure to be SEP is always caused by specific obstructing subgroups. For example, the obstruction for $S_4$ is $D_4$ (\ref{prop:S4}). Proposition \ref{bigirrep} shows that a subgroup obstructing SEPness cannot be ``too big." Indeed, $D_4\subset S_4$ is ``as big as possible," since $S_4$ has an irreducible representation (in fact, two) of degree $3$, equal to the index.
\end{remark}

The following construction shows that proposition \ref{bigirrep} has some content beyond the minimal nonabelian groups (such as $A_4$):

\begin{prop}\label{bigirrepexample}
Let $p$ be a prime congruent to $1$ mod $4$ and let $d=(p+1)/2$. There exists an automorphism $A$ of $C_p^2$ of order $d$. Let $G$ be the semidirect product 
\[
C_p^2\rtimes C_d
\]
where $C_d = \langle A\rangle$. Then $G$ has an irreducible representation of degree $d$, and all of its nonabelian subgroups have index $<d$.
\end{prop}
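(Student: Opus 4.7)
The plan is to identify $C_p^2$ with the additive group of $\F_{p^2}$, viewed as a $2$-dimensional $\F_p$-vector space, and to take $A$ to be multiplication by an element $\zeta \in \F_{p^2}^\times$ of order $d = (p+1)/2$. Such $\zeta$ exists because $\F_{p^2}^\times$ is cyclic of order $p^2-1 = (p-1)(p+1)$, which is divisible by $d$. Multiplication by $\zeta$ is $\F_p$-linear of order exactly $d$, giving the desired automorphism $A$ and the semidirect product $G = \F_p^2 \rtimes C_d$.

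To produce the irreducible representation of degree $d$, I would apply Clifford (or ``little-group'') theory to $G = N \rtimes C_d$ with $N = \F_p^2$ abelian: any character $\chi \in \hat N$ whose $C_d$-stabilizer is trivial induces irreducibly to a representation of degree $[G:N] = d$. Identifying $\hat N$ with $\F_{p^2}$ via the trace pairing, the $C_d$-action on $\hat N$ is again by multiplication in $\F_{p^2}^\times$ by a scalar of order $d$, hence acts freely on the nonzero elements. So every nontrivial character has trivial stabilizer and yields the required representation $\Ind_N^G \chi$.

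For the bound on nonabelian subgroups, let $H \le G$ be nonabelian, set $L = H \cap N$, and let $e$ be the order of the image of $H$ in $G/N = C_d$; then $|H| = |L| \cdot e$ and $|L| \in \{1, p, p^2\}$. I would argue by cases. If $|L| = 1$, then $H$ embeds into $C_d$ and is cyclic, contradicting nonabelianness. If $|L| = p$, then $L$ is an $\F_p$-line in $\F_p^2$, normal in $H$, on which the image of $H$ acts via $A^{d/e}$; by the key lemma below this forces $e = 1$, so $H = L$ is abelian, again a contradiction. So $|L| = p^2$, i.e., $H \supseteq N$, and nonabelianness forces $e \ge 2$, whence $[G:H] = d/e \le d/2 < d$.

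The main obstacle is the lemma that no power $A^k$ with $0 < k < d$ preserves any $\F_p$-line in $\F_p^2$. Such a power preserves an $\F_p$-line iff $\zeta^k \in \F_p^\times$, iff $\zeta^{k(p-1)} = 1$, iff $d \mid k(p-1)$. The congruence $p \equiv 1 \pmod 4$ enters here: $(p+1)/2$ and $(p-1)/2$ are consecutive integers and hence coprime, while the former is odd, so $\gcd(d, p-1) = 1$. Therefore $d \mid k(p-1)$ forces $d \mid k$, meaning only the trivial power of $A$ can preserve any line. This coprimality is the one arithmetic input that really depends on the hypothesis $p \equiv 1 \pmod 4$; without it, line-preserving nontrivial powers of $A$ would exist, producing nonabelian subgroups that defeat the index bound.
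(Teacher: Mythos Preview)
Your proof is correct and follows essentially the same approach as the paper: the same realization of $A$ as multiplication by an element of $\F_{p^2}^\times$ of order $d$, the same key arithmetic input $\gcd(d,p-1)=1$ (the paper phrases it as $2d-(p-1)=2$ with $d$ odd), the same Mackey/Clifford argument for irreducibility of the induced representation, and the same conclusion that every nonabelian subgroup must contain $N=C_p^2$ properly. The only cosmetic differences are that you identify $\hat N$ explicitly via the trace pairing (the paper instead invokes an abstract ``free on $N\setminus\{0\}$ implies free on $\hat N\setminus\{1\}$'' lemma) and that you organize the subgroup argument as a case split on $|H\cap N|$ rather than arguing directly that $H\cap N$ is a nonzero $A^k$-invariant subspace.
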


\begin{proof}
We will show that $G$ has an irreducible representation of degree $d$ and that every nonabelian subgroup of $G$ contains $C_p^2$ properly (and thus has index $<d$).

First, $A$ exists. Interpret $C_p^2$ as the additive group of $\F_{p^2}$. Let $\alpha\in\F_{p^2}^\times$ be an element of order $d$, which exists because $d$ divides 
\[
p^2-1 = |\F_{p^2}^\times|.
\]
Then let $A$ be the action of $\alpha$ on $\F_{p^2}$ by multiplication.

Second, $A$'s action on $C_p^2$ is irreducible. We see this as follows. $(d,p-1)=1$ since $2d-(p-1) = 2$ and $d$ is odd. Since $A^d - I = 0$, the eigenvalues of $A$ in the algebraic closure $\overline{\F}_p$ are $d$th roots of unity. Since $d$ is relatively prime with $p-1$, they cannot lie in $\F_p$ without both being $1$; but if they were both $1$ then $A$ would have order dividing $p$, which is also relatively prime with $d$. Thus $A$'s eigenvalues do not lie in $\F_p$, and it follows that $A$ has no eigenvectors in $\F_p^2$. Since $\F_p^2$ is $2$-dimensional, this implies that the action of $A$ is irreducible.

Third, note that in the last paragraph the only property of $A$ that was used in the argument was that its order $d$ is relatively prime to $p-1$ and $p$. Since the same is also true of all nontrivial factors of $d$, which are the orders of $A$'s powers, it follows that $A^2,\dots,A^{d-1}$ also all act irreducibly on $C_p^2$.

As a corollary, $C_d = \langle A\rangle$ acts {\em freely} on $C_p^2\setminus\{1\}$, since a nontrivial point stabilizer would imply an eigenvector (with eigenvalue $1$) of some $A^k$ ($1\leq k\leq d-1$).

Now let $L$ be any nontrivial one-dimensional representation of $N = C_p^2$, and let 
\[
\chi: N\rightarrow GL(L)\cong\C^\times
\]
be the associated homomorphism. Then the desired degree $d$ representation $V$ is 
\[
\Ind_N^G L = \bigoplus_{[s]\in G/N} sL.
\]
The degree of this representation is $[G:N] = d$, and we claim that it is irreducible. Since $N$ is normal, it acts separately on each translate $sL$, and then by Mackey's criterion (\cite{serre}, proposition 23), irreducibility of $V$ follows from irreducibility of $L$ and distinctness of each $sL$ as a representation of $N$. $L$ is irreducible since it is one-dimensional, and we see that each $sL$ is distinct as follows:

The $sL$'s are given (as representations) by the homomorphisms $\chi\circ c_s$ where $c_s$ is the automorphism of $N$ induced by conjugation by $s$ in $G$, where $s$ is a representative of a coset $[s]\in G/N = \langle A\rangle$. I.e. the $c_s$'s are exactly the actions of the group $\langle A\rangle$ on $N$. Thus the representations $sL$ correspond with the orbit of $\chi$ in the dual group $\widehat{N}$ under the action of $C_d = \langle A\rangle$ induced by its action on $N$. They are all distinct because, as mentioned above, $\langle A\rangle$'s action on $N\setminus\{1\}$ is free, and if a finite group acts freely on a finite abelian group (minus its identity) then the induced action on the dual group (minus its identity) is also free. (See the appendix, lemma \ref{freeaction}, for an elementary proof.)

This establishes that $V$ is irreducible, and it is clear that it is degree $d = |\langle A\rangle|$.

Now we show that any nonabelian subgroup of $G$ contains $N$ properly (and therefore has index $<d$). Let $H$ be such a subgroup. The canonical homomorphism $G\rightarrow G/N = C_d$ restricts to a homomorphism $H\rightarrow C_d$, and since $H$ is nonabelian, the kernel of this homomorphism, $H\cap N$, must be nontrivial, so $H$ contains a nonidentity element of $N$. Thinking of $N$ as the vector space $\F_p^2$, this means $H\cap N$ is a nontrivial subspace of $N$.

By the same token, $H$'s image in $C_d$ must be nontrivial since the kernel $H\cap N$ is abelian. Let $A^k$, $0<k\leq d-1$ be any nonidentity element in the image of $H$ in $C_d=\langle A\rangle$, and let $h$ be a preimage in $H$. Then conjugation by $h$ acts as $A^k$ on $N=C_p^2$, and $H\cap N$ is invariant under this action. But since we saw above that the action of $A^k$ on $N=C_p^2$ is irreducible, and last paragraph that $H\cap N$ is nontrivial, we conclude that $H\cap N = N$.

Since $h\notin N$, this shows that $H$ contains $N$ properly.
\end{proof}

Thus $G$ satisfies the hypothesis of proposition \ref{bigirrep} and is therefore SSEP.

\subsection{Metabelian groups}\label{sec:metabelian}

Recall that a group is called \textbf{metabelian} if it has an abelian normal subgroup with an abelian quotient, in other words if it is solvable of height two. In this section we investigate the SEP property for metabelian groups.

The results of sections \ref{sec:supersolvable}, \ref{sec:nonabsimple}, and \ref{sec:SSEPfam} show that SEPness is loosely correlated with abelianness -- the ``extremely nonabelian" simple groups are never SEP, while ``almost abelianness" of various kinds (nilpotence and supersolvability, a propos of section \ref{sec:supersolvable}, and having all nonabelian subgroups ``large," a propos of section \ref{sec:SSEPfam}) guarantee SEPness. (Of course abelian groups themselves are SEP, vacuously.) Based on this intuition, Bogomolov and the author expected that metabelian groups might be always SEP; but this turns out not to be the case.

Recall that the \textbf{affine group}, or \textbf{affine linear group}, $AGL(n,q)$, is the group of transformations of $\F_q^n$ generated by the linear transformations $GL(n,q)$ and the group $T_{n,q}\cong \F_q^n$ of translations 
\begin{align*}
t_x:\F_q^n&\rightarrow \F_q^n\\
v &\mapsto x+v.
\end{align*}
Since conjugation by a linear transformation sends translations to translations, $T_{n,q}$ is a normal subgroup of this group, and since $GL(n,q)$ stabilizes the origin, while $T_{n,q}\cong\F_q^n$ acts freely on the points of $\F_q^n$, their intersection is trivial. Thus
\[
AGL(n,q) = \F_q^n \rtimes GL(n,q)
\]
is a Frobenius group, with Frobenius kernel $\F_q^n$ and Frobenius complement $GL(n,q)$. In this context one sees $\F_q^n$ as the affine space $\A_{\F_q}^n$, hence the name, because the presence of the translations means one cannot distinguish the origin among the points of $\F_q^n$ from the abstract group action on these points. A choice of origin is equivalent to the choice of a section $GL(n,q)\rightarrow ALG(n,q)$.

For $n=1$, $AGL(1,q) = \F_q\rtimes \F_q^\times$ is metabelian. However:

\begin{thm}\label{affinenotSEP}
If $q=p^k$ is an odd prime power with $k>1$, the affine group 
\[
AGL(1,q) = \F_q\rtimes \F_q^\times
\]
is not SEP.
\end{thm}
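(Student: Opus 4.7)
The plan is to exhibit a specific nonabelian subgroup $H \subset G = AGL(1,q)$ and show that no irreducible representation of $G$ is SEP for $H$, which will establish that $G$ is not SEP. Since $q$ is odd, the element $-1 \in \F_q^\times$ is a nontrivial involution, and since $k > 1$, the prime subfield $\F_p$ sits as a proper additive subgroup of $\F_q$. I take
\[
H := \F_p \rtimes \langle -1 \rangle \subset G,
\]
with $-1$ acting on $\F_p$ by negation. Because $p \geq 3$ is odd, this is isomorphic to the dihedral group of order $2p$, with rotation subgroup $\F_p$ and reflections the elements of $\F_p \cdot (-1)$.

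Next, I would classify the irreducibles of $G$ via Clifford--Mackey theory. Writing $N = \F_q$ and $Q = \F_q^\times$, the group $Q$ acts by scalar multiplication on $\widehat{N}$, and this action is free on $\widehat{N} \setminus \{0\}$, yielding a single $Q$-orbit of size $q-1$ with trivial stabilizer. Hence $G$ has exactly $q-1$ one-dimensional representations (pulled back from $G/N \cong Q$) and a single irreducible representation $V = \Ind_N^G \chi$ of degree $q-1$, where $\chi$ is any nontrivial character of $N$. The dimension count $(q-1)\cdot 1 + (q-1)^2 = q(q-1) = |G|$ confirms completeness.

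The crux is computing the character $\psi$ of $V$ on $H$. Normality of $N$ together with the induced character formula gives $\psi(g) = 0$ for $g \notin N$, so $\psi$ vanishes on every reflection of $H$. For $g \in N$ one has $\psi(g) = \sum_{\alpha \in Q} \chi(\alpha g)$, which equals $q-1$ if $g = 0$ and $-1$ otherwise, using the orthogonality relation $\sum_{m \in \F_q} \chi(m) = 0$. Summing $\psi$ over the rotation subgroup $\F_p \subset H$ therefore yields $(q-1) + (p-1)(-1) = q - p$, and this is nonzero precisely because $k > 1$.

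Finally I invoke lemma \ref{dihedralreps}: every two-dimensional irreducible character of the dihedral group of order $2p$ vanishes on reflections and sums to zero on the rotation subgroup, so any character of $H$ which fails at least one of these two properties must contain a one-dimensional constituent. Since $\psi|_H$ vanishes on reflections but has nonzero sum $q - p$ on the rotation subgroup, $V|_H$ contains a one-dimensional subrepresentation of $H$, i.e. $V$ is not SEP for $H$. The one-dimensional representations of $G$ are trivially not SEP for any nonabelian subgroup. Hence no irreducible, and therefore no, representation of $G$ is SEP for $H$. The only delicate point is ensuring the hypothesis $k > 1$ is invoked in exactly the right place --- it is precisely what keeps $q - p$ from vanishing and so makes $H$ into a genuine SEP obstruction. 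In the limiting case $q = p$ the character sum would vanish and this approach would break (consistently so, since $AGL(1,p)$ is cyclic-by-abelian and hence SSEP by corollary \ref{cyclicbyabelian}).
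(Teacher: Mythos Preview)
Your proof is correct and takes essentially the same approach as the paper: both exhibit a dihedral subgroup of order $2p$ (yours is $\F_p \rtimes \langle -1\rangle$; the paper's is $\langle t_x, m_{-1}\rangle$ for any nonzero $x$), classify the irreducibles of $G$ to find the unique one of degree $>1$, compute its character on the dihedral subgroup to get rotation-subgroup sum $q-p \neq 0$, and invoke lemma~\ref{dihedralreps}. The only cosmetic difference is that the paper packages the representation classification as a cited lemma (\ref{lem:repthyofAGL}), whereas you sketch it directly via Clifford--Mackey and a dimension count; your closing remark on the $k=1$ case is a nice addition not in the paper's proof.
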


We defer the proof to the end of the section.

In spite of this negative result, a large class of metabelian groups is SEP. If $G$ is metabelian then the commutator subgroup $[G,G]$ is abelian. There is a criterion on the structure of $[G,G]$ that lets us conclude SEPness without knowing anything else about $G$:

\begin{thm}\label{thm:metabcrit}
Let $G$ be a metabelian group and let
\[
[G,G] \cong C_{p_1^{e_1}}\times\dots\times C_{p_k^{e_k}}
\]
be the expression of its commutator as a direct product of cyclic factors of prime power order. If the factors are pairwise nonisomorphic, then $G$ is SEP.
\end{thm}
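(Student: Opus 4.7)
The plan is to mimic Theorem \ref{thm:supersolvable}: for any noncommuting $x, y \in G$, I aim to construct a one-dimensional character $\phi$ of $A := [G,G]$ such that $\phi([x,y]^s) \neq 1$ for every $s \in G$. Granted such a $\phi$, Lemma \ref{lem:resofind} together with the argument of Theorem \ref{thm:supersolvable} shows that $\Phi := \Ind_A^G \phi$ assigns to $[x,y]$ an operator with no eigenvalue $1$, and Lemma \ref{lem:commutator1} then forces $\Phi(x), \Phi(y)$ to share no eigenvector; since the pair is arbitrary, $G$ is SEP.

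To build $\phi$, I would first decompose $A = \bigoplus_p A_p$ into Sylow subgroups, each of which is characteristic in $A$ and therefore preserved by $G$-conjugation; write $c := [x,y] = \sum_p c_p$ accordingly. For distinct primes the values $\phi_p(c_p^s)$ of a character $\phi_p$ of $A_p$ are roots of unity of coprime prime-power orders, and such roots of unity multiply to $1$ only if each factor is $1$. Hence it suffices, for each prime $p$ with $c_p \neq 0$, to exhibit a character $\phi_p$ of $A_p$ nontrivial on the entire $G$-orbit of $c_p$; then $\phi := \prod_p \phi_p$ (trivial on Sylows where $c_p = 0$) satisfies $\phi(c^s) \neq 1$ for every $s$.

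Fixing a prime $p$ with $c_p \neq 0$, I would further reduce to the prime-order case: if $p^m = \operatorname{ord}(c_p)$, then $\phi_p(c_p^s) = 1$ forces $\phi_p(p^{m-1}c_p^s) = \phi_p(c_p^s)^{p^{m-1}} = 1$, so it suffices to find $\phi_p$ nontrivial on the $G$-orbit of $p^{m-1}c_p \in A_p[p]$. Write $A_p = \bigoplus_{i=1}^k C_{p^{a_i}}$ with $a_1 > \cdots > a_k > 0$ (the strict inequalities are supplied by the hypothesis), and consider the filtration $F_i := A_p[p] \cap p^i A_p$ by subgroups characteristic in $A_p$. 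A direct computation gives $F_i = \bigoplus_{j : a_j > i} p^{a_j-1} C_{p^{a_j}}$, so $\dim_{\F_p} F_i = |\{j : a_j > i\}|$; because the $a_j$ are pairwise distinct, this count takes every value from $k$ down to $0$ as $i$ varies, so the distinct subspaces among the $F_i$ form a complete flag in $A_p[p] \cong \F_p^k$. Each $F_i$ being characteristic, the image of $\Aut(A_p)$, and hence of $G$, in $GL(A_p[p])$ is contained in the Borel subgroup stabilizing this flag. Borel orbits on a vector space equipped with a complete flag coincide with the set differences $F \setminus F'$ for consecutive distinct flag subspaces $F \supsetneq F'$, so the $G$-orbit of $p^{m-1}c_p$ lies inside a single such difference; any $\F_p$-linear functional on $A_p[p]$ vanishing on $F'$ but not on $F$ is then nonzero on the entire orbit. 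Finally, since $A_p$ is abelian, restriction $\widehat{A_p} \twoheadrightarrow \widehat{A_p[p]}$ is surjective, so any such functional extends to a character $\phi_p$ of $A_p$ with the required property.

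I expect the main obstacle to be the complete-flag verification when the sequence $a_1 > \cdots > a_k$ leaves gaps among the filtration indices $0, 1, \ldots, a_1$: one must confirm that $\dim_{\F_p} F_i$ drops by exactly one at each transition $i = a_j - 1 \to i = a_j$ and remains constant between consecutive such drops. This is precisely the combinatorial point at which the hypothesis of pairwise distinct $a_j$ is used; once it is in place, the remainder of the argument is a standard Borel-orbit dictionary applied one prime at a time.
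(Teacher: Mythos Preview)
Your proof is correct. It differs from the paper's in packaging, though the core combinatorial idea is the same. The paper routes through the commutator criterion (Lemma~\ref{commutatorcriterion}) and Proposition~\ref{subgroupcharacterH}, working with the \emph{subgroup} $K=[H,H]$: it shows that under the hypothesis, for every nontrivial $K\subset A$ there is a character of $A$ whose kernel avoids every $\Aut(A)$-image of $K$ (this is Ladisch's Lemma~\ref{ladischlemma}), and then invokes Proposition~\ref{subgroupcharactercondition}. You instead follow the template of Theorem~\ref{thm:supersolvable} and work with the single \emph{element} $c=[x,y]$, aiming for the stronger conclusion that $\Phi([x,y])$ has no eigenvalue $1$. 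Your filtration $F_i=A_p[p]\cap p^iA_p$ is exactly the one appearing in the proof of Lemma~\ref{ladischlemma}; your observation that the distinct $F_i$ form a complete flag when the exponents $a_j$ are pairwise distinct is the contrapositive of the paper's step ``$A_{i+1}$ is codimension $>1$ in $A_i$ $\Rightarrow$ two factors coincide.'' What you add is the clean Borel-orbit interpretation, which makes the existence of the separating functional transparent. Your route is more self-contained (it does not need Lemma~\ref{commutatorcriterion} or Propositions~\ref{subgroupcharacterH}--\ref{subgroupcharactercondition}); the paper's route isolates Lemma~\ref{ladischlemma} as an if-and-only-if statement about abelian groups, which has independent interest and explains exactly why the hypothesis on $[G,G]$ is the right one.
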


\begin{remark}
This is not a necessary condition. Many metabelian groups not satisfying the hypothesis of theorem \ref{thm:metabcrit} are still SEP, for example the family of SSEP groups described in section \ref{sec:SSEPfam}. But it shows that SEP metabelian groups are easy to come by.
\end{remark}

The organization of the section is motivated by the proof of \ref{thm:metabcrit}. The fundamental tool is the following technical lemma, which is of independent utility in investigating the SEP property:

\begin{lemma}[Commutator criterion]\label{commutatorcriterion}
Let $G$ be an arbitrary finite group, $H$ a nonabelian subgroup of $G$, and $V$ a representation of $G$ with character $\chi$. Then $V$ is SEP for $H$ if and only if $\chi$ sums to zero on each coset of $[H,H]$ in $H$.
\end{lemma}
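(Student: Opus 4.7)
The plan is to translate the SEP condition into a statement about inner products of characters, then apply Fourier analysis on the abelian quotient $H/[H,H]$.

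First I would recall that the one-dimensional representations of $H$ are exactly those that factor through the abelianization $H/[H,H]$. Hence they correspond bijectively to characters (in the sense of \ref{def:characterofabgroup}) of the abelian group $H/[H,H]$, pulled back to $H$. By the definition \ref{def:sepforH}, $V$ is SEP for $H$ if and only if no such one-dimensional representation $\psi$ occurs in $\Res^G_H V$, i.e.\ if and only if
\[
\langle \chi|_H,\psi\rangle_H = \frac{1}{|H|}\sum_{h\in H}\chi(h)\overline{\psi(h)} = 0
\]
for every character $\psi$ of $H/[H,H]$.

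Next I would use the fact that such a $\psi$ is constant on each coset $C$ of $[H,H]$ in $H$ (since $\psi$ is trivial on $[H,H]$). Denote this constant value by $\psi(C)$, and set
\[
s_C \;=\; \sum_{h\in C}\chi(h)
\]
for each coset $C\in H/[H,H]$. Then the inner product above rewrites as
\[
\frac{1}{|H|}\sum_{C\in H/[H,H]} \overline{\psi(C)}\, s_C.
\]
Thus $V$ is SEP for $H$ exactly when the function $C\mapsto s_C$ on the finite abelian group $H/[H,H]$ is orthogonal to every character of $H/[H,H]$.

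The final step is to invoke the orthogonality of characters (equivalently, Fourier inversion) on the abelian group $H/[H,H]$: the characters form a basis of the space of complex-valued functions on $H/[H,H]$, so a function is orthogonal to all of them if and only if it is identically zero. Therefore $V$ is SEP for $H$ if and only if $s_C = 0$ for every coset $C$ of $[H,H]$ in $H$, which is exactly the stated condition. No step is really the ``main obstacle'' here; the only thing to keep clear is the distinction between the two notions of character in \ref{def:characterofrep} and \ref{def:characterofabgroup}, and the passage between a one-dimensional representation of $H$ and the corresponding character of the abelian quotient $H/[H,H]$.
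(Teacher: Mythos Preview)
Your proposal is correct and follows essentially the same argument as the paper: both reduce SEPness for $H$ to orthogonality of $\chi|_H$ against all one-dimensional representations of $H$, identify these with pullbacks of characters of $H/[H,H]$, and then use that such characters span the space of functions constant on $[H,H]$-cosets to conclude that orthogonality to all of them is equivalent to vanishing of the coset sums. Your formulation via the function $C\mapsto s_C$ and Fourier inversion on $H/[H,H]$ is just a slightly more explicit rendering of the same step.
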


\begin{proof}
The representation $V$ is SEP for $H$ if and only if $V|_H$ contains no one-dimensional representations of $H$. By the orthogonality relations, this is the case if and only if $\chi|_H$ is orthogonal to all of $H$'s one-dimensional representations, with respect to the $H$-invariant inner product
\[
\langle \chi_1,\chi_2\rangle_H = \frac{1}{|H|}\sum_{h\in H} \chi_1(h)\overline{\chi}_2(h).
\]
Now $H$'s one-dimensional representations are precisely the pullbacks to $H$ of all of the characters of the abelian group $H/[H,H]$. These characters span the full space of functions on $H/[H,H]$ (as for any abelian group), so their pullbacks to $H$ span the space of all functions on $H$ constant on each coset of $[H,H]$. The orthogonal complement of this space is clearly the space of class functions that sum to zero on each coset of $[H,H]$, and $V$ is SEP for $H$ if and only if $\chi|_H$ lies in this orthogonal complement.
\end{proof}

Now we begin to assemble the proof of theorem \ref{thm:metabcrit}.

If $G$ is metabelian, then its commutator subgroup $[G,G]$ is abelian, and therefore acts trivially on itself by conjugation. It follows that the conjugation action of $G$ on $[G,G]$ makes the latter a $G/[G,G]$-module.

\begin{notation} 
In what follows we fix the notation that $G$ is a finite metabelian group, $A=[G,G]$, and $Q=G/[G,G]$, so $A$, $Q$ are abelian and $A$ is a $Q$-module.
\end{notation}

\begin{prop}\label{subgroupcharacterH}
Let $H\subset G$ be a nonabelian subgroup, so that $K=[H,H]$ is a nontrivial subgroup of $A$. Suppose that $A$ has a character $\chi$ whose kernel does not contain any image of $K$ under the $Q$-action. Then the representation $\Ind_A^G\chi$ is SEP for $H$.
\end{prop}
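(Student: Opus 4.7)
The plan is to apply the commutator criterion (Lemma \ref{commutatorcriterion}) directly to the induced representation $\Phi = \Ind_A^G\chi$. By that criterion, $\Phi$ is SEP for $H$ if and only if the character of $\Phi$ sums to zero on each coset of $K=[H,H]$ in $H$. So fix $h\in H$; I need to show that
\[
\sum_{k\in K}\Phi(hk)=0.
\]

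First I would write out $\Phi$ using the induced-character formula from Remark \ref{rmk:extendedchipreservesmult}, extending $\chi$ by zero outside $A$:
\[
\sum_{k\in K}\Phi(hk)=\sum_{k\in K}\sum_{[s]\in G/A}\chi\bigl((hk)^s\bigr)=\sum_{[s]\in G/A}\sum_{k\in K}\chi(h^s k^s),
\]
where I used that conjugation is a homomorphism. Next I would fix $[s]$ and analyze the inner sum by cases. If $h^s\notin A$, then since $k^s\in A$ (as $K\subset A$ and $A$ is normal in $G$), the product $h^s k^s$ lies outside $A$, so every term is zero by the extension convention and the inner sum vanishes. If on the other hand $h^s\in A$, then both factors lie in $A$, so the multiplicativity of $\chi$ on $A$ (preserved by the extension, per Remark \ref{rmk:extendedchipreservesmult}) yields
\[
\sum_{k\in K}\chi(h^s k^s)=\chi(h^s)\sum_{k\in K}\chi(k^s).
\]

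Now the key point: the inner sum $\sum_{k\in K}\chi(k^s)$ is the sum of the character $\chi$ over the subgroup $K^s\subset A$ (which is the image of $K$ under the $Q$-action indexed by $[s]$, using that $A$ acts trivially on itself so only the class of $s$ in $Q$ matters). By hypothesis, the kernel of $\chi$ contains no image of $K$ under the $Q$-action, so $\chi$ restricts nontrivially to $K^s$. Then the standard character-theoretic orthogonality fact that a nontrivial character of a finite abelian group sums to zero over that group forces $\sum_{k\in K}\chi(k^s)=0$. Hence the inner sum vanishes in this case as well.

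Combining the two cases, every term of the outer sum over $[s]\in G/A$ vanishes, so $\sum_{k\in K}\Phi(hk)=0$, and since $h$ was an arbitrary element of $H$ this holds for every coset of $K$ in $H$. The commutator criterion then gives that $\Phi$ is SEP for $H$. The only subtle step is the case analysis and the careful use of the extension-by-zero convention; the multiplicativity behavior already noted in Remark \ref{rmk:extendedchipreservesmult} makes this routine rather than an obstacle.
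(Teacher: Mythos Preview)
Your proof is correct and follows essentially the same route as the paper's: apply the commutator criterion, expand the induced character, and use that $\chi$ is nontrivial on each conjugate $K^s$ to kill the inner sum. The only cosmetic difference is that you make the case split on whether $h^s\in A$ explicit, whereas the paper absorbs both cases into the single identity $\chi(k^sg^s)=\chi(k^s)\chi(g^s)$ justified by Remark~\ref{rmk:extendedchipreservesmult}.
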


For example, if $A$ is cyclic (so $G$ is cyclic-by-abelian), it has a character $\chi$ that is a faithful representation of $A$ and is thus nontrivial on all nontrivial subgroups. Therefore $\Ind_A^G\chi$ is SEP for all nonabelian subgroups; thus $G$ is SSEP. This reproduces corollary \ref{cyclicbyabelian}, although without the information (obtained in the proof in section \ref{sec:supersolvable}) that no commutator has an eigenvalue $1$ in this representation.

\begin{proof}[Proof of proposition \ref{subgroupcharacterH}]
We want to show $\Ind_A^G\chi$ is SEP for $H$, and by the commutator criterion (lemma \ref{commutatorcriterion}), this is equivalent to showing that 
\[
\sum_{k\in K}\Ind_A^G\chi(kh) = 0
\]
for all $h\in H$. Actually we even have $\sum_{k\in K}\Ind_A^G \chi(kg)=0$ for every $g\in G$. We see this as follows:
\begin{align*}
\sum_{k\in K}\Ind_A^G \chi(kg) &= \sum_{k\in K}\sum_{[s]\in G/A} \chi((kg)^s)\\
&=\sum_{k\in K} \sum_{[s]\in G/A}\chi(k^sg^s)\\
&=\sum_{k\in K} \sum_{[s]\in G/A}\chi(k^s)\chi(g^s).
\end{align*}
The last equality is because $\chi$, being a character of $A$, is multiplicative, since $k^s\in A$ as $k\in K\subset A$ and $A$ is normal (see \ref{rmk:extendedchipreservesmult}). Reversing the summations and then reindexing the inner sum, we have
\[
\sum_{[s]\in G/A}\left(\sum_{k\in K} \chi(k^s)\right)\chi(g^s) = \sum_{[s]\in G/A}\left(\sum_{k\in K^s} \chi(k)\right)\chi(g^s).
\]
But the inner sum is zero, because by assumption $K^s$ is not contained in $\ker \chi$, therefore $\chi$ restricts to a nontrivial character of $K^s$, and the sum of a nontrivial character over a group is always zero.
\end{proof}

This proposition immediately implies that the following condition on the module structure guarantees SEPness:

\begin{prop}[Subgroup character condition]\label{subgroupcharactercondition}
Suppose $A$ (as $Q$-module) has the property that for any nontrivial subgroup $K\subset A$, $A$ has a character whose kernel does not contain any image of $K$ under the $Q$-action. Then $G$ is SEP.
\end{prop}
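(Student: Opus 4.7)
The plan is to derive this essentially as a direct corollary of proposition \ref{subgroupcharacterH}, combined with the observation that the condition ``$G$ is SEP'' can be checked pair-by-pair, passing to the subgroup each pair generates.

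First I would fix an arbitrary pair of noncommuting elements $x,y \in G$ and form the nonabelian subgroup $H = \langle x,y\rangle$. The key structural fact I need is that $K := [H,H]$ is a nontrivial subgroup of $A = [G,G]$: nontriviality comes from $H$ being nonabelian (it contains $[x,y]\neq 1$), and containment in $A$ is automatic from $H\subseteq G$. This is the one and only place where the hypothesis ``$x,y$ do not commute'' enters the argument.

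Now I invoke the hypothesis of the proposition on this particular $K$: there exists a character $\chi$ of $A$ whose kernel contains none of the $Q$-translates $K^s$ of $K$. Proposition \ref{subgroupcharacterH} then applies verbatim and yields that the induced representation $\Ind_A^G \chi$ is SEP for $H$. By definition of SEP for a subgroup (definition \ref{def:sepforH}) together with remark \ref{rmk:pairstosubgroups}, this means precisely that the generators $x,y$ of $H$ do not share a common eigenvector in $\Ind_A^G \chi$. Since $x,y$ were an arbitrary noncommuting pair, this exhibits $G$ as SEP.

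There is no real obstacle here: the work has already been done in proposition \ref{subgroupcharacterH}, and the present statement is just the observation that the hypothesis of \ref{subgroupcharacterH} was stated for a \emph{specific} nontrivial subgroup $K\subseteq A$, while the uniform condition ``holds for every nontrivial $K$'' is exactly what is needed to handle every noncommuting pair at once. The only care required is to remember that we do not need to produce a single representation serving all pairs simultaneously (that would be SSEP); $\chi$, and hence $\Ind_A^G\chi$, is allowed to depend on $H$, which is why the weaker SEP conclusion, rather than SSEP, is what we get.
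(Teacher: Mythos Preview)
Your proof is correct and follows exactly the same approach as the paper, which simply notes that for any nonabelian subgroup $H$ of $G$, proposition~\ref{subgroupcharacterH} furnishes a representation SEP for $H$. You have just spelled out more carefully the step of passing from a noncommuting pair $x,y$ to $H=\langle x,y\rangle$ and identifying $K=[H,H]$ as the nontrivial subgroup of $A$ to which the hypothesis is applied.
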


\begin{proof}
If $H$ is any nonabelian subgroup of $G$, then proposition \ref{subgroupcharacterH} shows how to construct a representation of $G$ that is SEP for $H$.
\end{proof}

\begin{remark}
In fact, we found theorem \ref{affinenotSEP} by looking for a group where the condition of this proposition fails. 
\end{remark}

The next lemma links \ref{subgroupcharactercondition} with the hypothesis of theorem \ref{thm:metabcrit}.

\begin{lemma}\label{ladischlemma}
The following conditions on a finite abelian group $A$ are equivalent:

\begin{enumerate}
\item $A$ has a nontrivial subgroup $K$ such that every character of $A$ is trivial on an image of $K$ under some automorphism of $A$.
\item In a decomposition of $A$ into cyclic factors of prime power order, two of the factors are isomorphic.
\end{enumerate}
\end{lemma}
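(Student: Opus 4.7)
My plan is to reduce to the case where $A$ is a finite abelian $p$-group and then pass through Pontryagin duality to a statement about orbits of $\Aut(A)$ on $A$ itself. The primary decomposition $A = \bigoplus_p A_p$ splits $\Aut(A) = \prod_p \Aut(A_p)$ and every subgroup of $A$ into its $p$-parts, so conditions (1) and (2) for $A$ each hold iff the analogous condition holds for some $A_p$; I therefore assume $A = \bigoplus_{i=1}^k C_{p^{e_i}}$. Setting $L = K^\perp \subseteq \hat A \cong A$, a character $\chi$ is trivial on $\sigma(K)$ iff $\chi\circ\sigma \in L$, iff the $\Aut(A)$-orbit of $\chi$ meets $L$; so (1) is equivalent to the existence of a \emph{proper} subgroup $L \subseteq A$ meeting every $\Aut(A)$-orbit on $A$.

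For $(2) \Rightarrow (1)$, I exhibit $L$ explicitly. If $A = C_{p^e} \oplus C_{p^e} \oplus B$, take $L = pC_{p^e} \oplus C_{p^e} \oplus B$, of index $p$. Given any $a = (a_1, a_2, b) \in A$, I use $GL_2(\Z/p^e) \subseteq \Aut(A)$ acting on the $C_{p^e}^2$ block to move the first coordinate into $pC_{p^e}$: the identity when $a_1 \in pC_{p^e}$; the factor-swap when $a_2 \in pC_{p^e}$ and $a_1$ is a unit; and the shear $(a_1, a_2) \mapsto (a_1 - a_1 a_2^{-1} a_2, a_2) = (0, a_2)$ when both $a_1, a_2$ are units. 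In every case $\sigma(a) \in L$, so $L$ meets every orbit.

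For $\neg(2) \Rightarrow \neg(1)$, assume $e_1 < e_2 < \cdots < e_k$ are distinct. I claim that, for each $i$, every element $a = (a_1, \ldots, a_k)$ in the $\Aut(A)$-orbit $O_i$ of the standard basis vector $e_i^{(k)} = (0,\dots,0,1,0,\dots,0)$ satisfies $a_i$ coprime to $p$ and $a_j \in p^{e_j - e_i} C_{p^{e_j}}$ for $j > i$. Indeed, since order and the height sequence $(h(a), h(pa), h(p^2 a), \ldots)$ are $\Aut(A)$-invariant, $a$ shares the order $p^{e_i}$ and height sequence $(0, 1, \ldots, e_i - 1, \infty)$ of $e_i^{(k)}$; order $p^{e_i}$ forces each $a_j$ into $p^{\max(0, e_j - e_i)} C_{p^{e_j}}$, handling $j > i$. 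For $a_i$: the nonzero coordinates of $p^{e_i - 1} a$ come from indices $j \geq i$ with $a_j$ of order $p^{e_i}$, contributing heights $e_j - 1$; for the minimum to equal $e_i - 1 = h(p^{e_i - 1} a)$, some such $j$ must satisfy $e_j = e_i$, which by distinctness forces $j = i$, so $a_i$ has order $p^{e_i}$ and is coprime to $p$.

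To conclude: if $L$ meets every $O_i$, pick $a^{(i)} \in L \cap O_i$. By the previous paragraph, the image $\bar a^{(i)} \in A/pA \cong \F_p^k$ has $i$-th coordinate nonzero and $j$-th coordinate zero for $j > i$, so the matrix $(\bar a^{(i)}_j)_{i,j}$ is lower triangular with nonzero diagonal, hence invertible over $\F_p$, and $\{\bar a^{(i)}\}$ spans $A/pA$. By Nakayama's lemma applied to $A$ as a module over $\Z/p^N\Z$ (any $N \geq e_k$), the $a^{(i)}$ generate $A$, forcing $L = A$. This contradicts $L$ being proper, so every proper $L$ misses some $O_i$, establishing $\neg(1)$. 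The main obstacle is the orbit analysis.
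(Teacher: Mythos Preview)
Your proof is correct and takes a genuinely different route from the paper's. Both reduce to $p$-groups, but after that they diverge.

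Your key move is the Pontryagin duality reformulation: condition (1) becomes ``some proper subgroup $L\subseteq A$ meets every $\Aut(A)$-orbit,'' which lets you work entirely inside $A$. For $(2)\Rightarrow(1)$ you then give an explicit $L$ of index $p$ and use elementary $GL_2(\Z/p^e\Z)$ moves; the paper instead keeps the original formulation, takes $K$ to be a cyclic subgroup of the repeated block $C_{p^k}\times C_{p^k}$, and argues via transitivity of $\Aut$ on elements of fixed order together with the observation that every character of $C_{p^k}^2$ kills some maximal cyclic subgroup. For $\neg(2)\Rightarrow\neg(1)$ you analyze the orbits of the standard basis vectors using the order and height invariants, obtain a lower-triangular system over $\F_p$, and apply Nakayama to force $L=A$; the paper instead studies the characteristic filtration $A_i = A[p]\cap p^iA$ of the $p$-torsion, shows that any $K$ witnessing (1) forces a codimension $\geq 2$ jump $A_i\supsetneq A_{i+1}$, and reads off two equal cyclic factors from that jump. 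Your approach is more constructive (you exhibit the specific orbits that a proper $L$ must miss) and the duality step makes the two directions pleasantly symmetric; the paper's filtration argument is more structural and connects the obstruction directly to the partition data encoding the isomorphism type of $A$.
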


This result and its proof are due to Frieder Ladisch (personal communication).

\begin{proof}
We write $A$ additively.

Condition 2 is fulfilled by $A$ if and only if it is fulfilled by at least one of $A$'s Sylow subgroups. We will show the same for condition 1. If a nontrivial subgroup $K$ of a Sylow subgroup $A_p$ fulfills condition 1 for $A_p$, it also does so for $A$ because automorphisms of $A_p$ extend to $A$ and characters of $A$ restrict to $A_p$. Conversely, if a nontrivial subgroup $K$ of $A$ fulfills condition 1 for $A$, then there is a prime $p$ (any prime dividing $|K|$ in fact) such that $K_p = A_p\cap K$ fulfills condition 1 for $A_p$, since characters of $A_p$ extend to $A$ and automorphisms of $A$ act on $A_p$.

Thus without loss of generality we can suppose $A$ is a $p$-group. If it fulfills condition 2, it has the form $A = F \oplus B$ where $F\cong C_{p^k}\times C_{p^k}$. Then let $K$ be the cyclic subgroup generated by any nonzero element in $F$. Note $\Aut F\subset\Aut A$. Every character of $A$ restricts to a character on $F$ and we assert any character of $F$ is trivial on some $\Aut F$-image of $K$. Indeed, $\Aut F = GL(2,\Z/p^k\Z)$ acts transitively on the elements of $F$ of any given order, and therefore on the order-$|K|$ cyclic subgroups of $F$. Meanwhile every character of $F$ is trivial on some order-$|K|$ cyclic subgroup since it is trivial on some maximal (order $p^k$) cyclic subgroup, as otherwise its image would not be cyclic. This shows 2$\Rightarrow$1. 

In the other direction, suppose $K$ fulfills condition 1 for $A$ and consider the subgroup $A_0$ of $A$ of elements of order dividing $p$. This is an $\F_p$-vector space of dimension the $p$-rank of $A$. It has a filtration 
\[
A_0\supset A_1\supset \dots A_k = 0,
\]
where $A_i = A_0\cap p^iA$ is the subgroup of $A_0$ consisting of $p^i$-divisible elements, and $p^k$ is the exponent of $A$. Both $A_0$ and this filtration of it are invariant under automorphisms of $A$. 

Now as $K$ is nontrivial it contains elements of order $p$, so it must meet $A_0$ nontrivially. Thus there is a maximal $i<k$ such that $A_i$ meets $K$ nontrivially; fix this $i$, so that $K\cap A_{i+1} = 0$. Furthermore, as $A_i$ and $A_{i+1}$ are both automorphism invariant, we must have that every image $K'$ of $K$ under $\Aut A$ also meets $A_i$ nontrivially and $A_{i+1}$ trivially.

We assert $A_{i+1}$ is codimension $>1$ in $A_i$. If it were codimension $1$, it would be the kernel of some character on $A_i$, which could be extended to a character $\chi$ of $A$. This character would be nontrivial on every $\Aut A$-image $K'$ of $K$, since they all meet $A_i$ nontrivially outside of $A_{i+1}$. This contradicts the assumption that $K$ fulfills condition 1 for $A$, so we conclude $A_{i+1}$ is codimension $>1$ in $A_i$.

We claim this in turn implies that at least two of $A$'s cyclic factors are isomorphic. Indeed, the dimension of $A_i$ is the number of cyclic factors of $A$ of order at least $p^{i+1}$.\footnote{In fact, if $\lambda = (\lambda_1,\dots,\lambda_r)$ with $\lambda_1\geq \dots\geq \lambda_r$ is the partition describing the type of $A$, so that $A \cong \prod_{\lambda_j} C_{p^{\lambda_j}}$, then the tuple $(\dim A_0,\dots,\dim A_{k-1})$ is the conjugate partition $\lambda'$.} That $A_{i+1}$ is codimension at least two in $A_i$ thus implies that the number of cyclic factors of order at least $p^{i+1}$ is at least two greater than the number of cyclic factors of order at least $p^{i+2}$. This implies that there are at least two cyclic factors of order exactly $p^{i+1}$.

This establishes 1$\Rightarrow$2.
\end{proof}

\begin{proof}[Proof of theorem \ref{thm:metabcrit}]
In this situation, by lemma \ref{ladischlemma}, for every subgroup $K$ of $A=[G,G]$, there is a character $\chi$ of $A$ whose kernel does not contain any image of $K$ under $\Aut A$, so $A$ fulfills the hypothesis of proposition \ref{subgroupcharactercondition} for any possible $Q$-action.
\end{proof}

It remains to prove our claim about $AGL(1,q)$. (Recall that $q=p^k$ is an odd, composite prime power.) We fix notation:

\begin{notation}\label{not:AGL}
Let $G = AGL(1,q)$. Then $G = A\rtimes Q$ where $A$ is isomorphic to the additive and $Q$ to the multiplicative group of $\F_q$. The commutator subgroup $[G,G]$ is equal to $A$, so these labels are consistent with those used throughout the section. We identify $G$ as a group of permutations of the  elements of $\F_q$, with $A$ being translations by $x\in \F_q$ and $Q$ being multiplications by $a\in \F_q^\times$.

Let $x\in \F_q$; then denote by $t_x$ the element 
\begin{align*}
t_x: \F_q &\rightarrow \F_q\\
z & \mapsto z+x
\end{align*}
of $A$, i.e. the translation of the affine line $\A_{\F_q}^1$ by $x$. Likewise, if $a\in \F_q^\times$, let $m_a$ be the element 
\begin{align*}
m_a: \F_q &\rightarrow\F_q\\
z&\mapsto az
\end{align*}
of $Q$, i.e. the linear map of $\A^1$ given by multiplication by $a$.
\end{notation}

\begin{remark}\label{rmk:freeandtrans}
If $j$ is an integer, we have 
\[
t_x^j = t_{jx}
\]
and 
\[
m_a^j = m_{a^j},
\]
thus all $t_x$'s obey the relation $t_x^p=1$ and all $m_a$'s obey $m_a^{q-1}=1$. If to match the right-action notation of the rest of this chapter we agree that $G$ acts on $\A^1$ from the right, then we also have for all $y\in \F_q$ that
\[
yt_x^{m_a} = y(m_a)^{-1}t_xm_a = (a^{-1}y)t_xm_a = (a^{-1}y+x)m_a = y+ax = yt_{ax},
\]
so that
\[
t_x^{m_a} = t_{ax}
\]
for any $a,x$.

As the equation $ax=y$ in $\F_q$ always has a unique solution for $a$ given nonzero $x,y$, there is a unique $m_a$ such that $t_x^{m_a} = t_y$. In other words, the action of $Q$ on the nonidentity elements of $A$ is free and transitive.
\end{remark}

The key to theorem \ref{affinenotSEP} is this lemma:

\begin{lemma}[Representation theory of $AGL(1,q)$]\label{lem:repthyofAGL}
Let $V$ be an irreducible representation of $G=AGL(1,q)$. Then $V$ is
\begin{itemize}
\item one-dimensional and pulled back to $G$ from a character of $Q$, or else 
\item the unique representation $W$ induced from any nontrivial character on $A$. 
\end{itemize}
The character of $W$ restricted to $A$ is the sum of all nontrivial characters of $A$.
\end{lemma}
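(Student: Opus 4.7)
The idea is to enumerate the irreducible representations of $G = AGL(1,q)$ by counting conjugacy classes, identifying the one-dimensional representations via the abelianization, and then using Mackey's criterion to describe the single remaining irreducible.

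First, I would enumerate conjugacy classes of $G$. A short calculation with notation \ref{not:AGL} and remark \ref{rmk:freeandtrans} gives that conjugation by $m_b$ fixes every $m_a$ (since $Q$ is abelian) while acting on $A$ as $t_x \mapsto t_{bx}$, and conjugation by $t_y$ sends $m_a$ to the affine map $z \mapsto az + y(1-a)$. From this the classes are $\{1\}$; the single class of nonidentity translations (by the transitivity noted in \ref{rmk:freeandtrans}); and, for each $a \in Q\setminus\{1\}$, a class of size $q$ consisting of maps $z\mapsto az + c$. This gives $1 + 1 + (q-2) = q$ classes, so $G$ has $q$ irreducible representations.

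Second, a direct computation shows $[t_x, m_a] = t_{(a-1)x}$, so $[G,G] \supseteq A$ whenever some $a \neq 1$ exists (which holds since $q > 2$); combined with $G/A \cong Q$ being abelian this gives $[G,G] = A$. Therefore the one-dimensional irreducibles are exactly the $q-1$ characters of $Q$ pulled back to $G$. By the sum-of-squares formula, the one remaining irreducible has dimension $d$ with $d^2 = q(q-1) - (q-1) = (q-1)^2$, so $d = q-1$.

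Third, I would realize this representation as $W = \Ind_A^G \chi$ for any nontrivial character $\chi$ of $A$. Its dimension is $[G:A] = q-1$, matching. For irreducibility, apply Mackey's criterion in the form used in the proof of \ref{bigirrepexample} (Serre, Proposition 23): since $A$ is normal, $W$ is irreducible iff for each coset representative $s \notin A$ the twisted representation $\chi^s$ is nonisomorphic to $\chi$, i.e.\ iff the stabilizer of $\chi$ under the $Q$-action on $\widehat{A}$ is trivial. By remark \ref{rmk:freeandtrans}, $Q$ acts freely on $A\setminus\{1\}$; lemma \ref{freeaction} transfers this to a free $Q$-action on $\widehat{A}\setminus\{1\}$, so $W$ is irreducible. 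Transitivity of this action further shows that $W$ is independent up to isomorphism of the choice of nontrivial $\chi$, so it is the unique degree-$(q-1)$ irreducible. Finally, by lemma \ref{lem:resofind}, $\Res_A^G W = \bigoplus_{[s]\in G/A} \chi^s$; since the $Q$-orbit of $\chi$ is all of $\widehat{A}\setminus\{1\}$, this direct sum runs over every nontrivial character of $A$, yielding the claimed formula for $W|_A$.

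The main potential obstacle is ensuring the correct form of Mackey's criterion is invoked and that the freeness of the $Q$-action on $\widehat{A}\setminus\{1\}$ is justified; once these are in hand, everything else is accounting.
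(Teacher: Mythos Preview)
Your argument is correct. The conjugacy class count, the abelianization computation, the sum-of-squares step, and the Mackey/free-action endgame all check out.

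The paper itself disposes of this lemma by citation to Piatetski-Shapiro, and in the appendix (remark \ref{rmk:ourproof}) gives an alternative argument via the isotypical-or-induced lemma \ref{isotypicalorinduced}: starting from an arbitrary irreducible $V$, it finds a subgroup $M\supseteq A$ and a representation $L$ of $M$ with $V=\Ind_M^G L$ and $L|_A$ isotypical, then argues by cases that either $M=G$ and $V$ is one-dimensional, or $M=A$ and $L$ is a nontrivial character. Your route is different: you go through an explicit enumeration of conjugacy classes and the sum-of-squares formula to pin down in advance that there is exactly one missing irreducible of dimension $q-1$, and only then construct it. Both approaches converge on the same final ingredients (Mackey's criterion for irreducibility, lemma \ref{freeaction} for the free $Q$-action on $\widehat{A}\setminus\{1\}$, and lemma \ref{lem:resofind} for the restriction formula). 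Your approach is more hands-on and self-contained for this particular group; the paper's appendix argument is more structural and would adapt more readily to other split metabelian groups where conjugacy classes are less easy to list.
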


\begin{proof}
This is Theorem 6.1 of \cite{piatetskishapiro}. For a different proof using the isotypical-or-induced lemma, see \ref{rmk:ourproof} in the appendix.
\end{proof}

We are ready to prove that $AGL(1,q)$ is not SEP if $q$ is composite:

\begin{proof}[Proof of theorem \ref{affinenotSEP}]
Since the $W$ of \ref{lem:repthyofAGL} is the only irreducible representation of $G$ of degree greater than $1$, all $G$'s hope of being SEP lies with $W$.

Consider the character $\chi_W$ of $W$. Since $A$ is normal, $\chi_W$ is zero outside of $A$ (lemma \ref{inducingfromnormal} of the appendix). On $A$, as it is the sum of all nontrivial characters of $A$, it is one less than the sum of {\em all} characters, which is the character of the regular representation. Thus 
\[
\chi_W(1) = |A|-1 = q-1,
\]
and 
\[
\chi_W(t_x) = 0-1 = -1
\]
for all $x\neq 0$ in $\F_q$.

Let $D$ be the subgroup generated by $m_{-1}$ and any $t_x$ with $x\neq 0$. Because $q$ is odd, $m_{-1}$ is the negation map on $\F_q$. Therefore $D$ is a dihedral group of order $2p$, where $p$ is the characteristic of $\F_q$. We now show $W$ is not SEP for $D$.

The rotation subgroup of $D=\langle t_x,m_{-1}\rangle$ is $\langle t_x\rangle$, of order $p$. Thus the sum of $\chi_W$ over the rotation subgroup of $D$ is $1(q-1)+(p-1)(-1) = q-p$. Since $q=p^k$ with $k>1$, this is nonzero, and we can conclude from lemma \ref{dihedralreps} that $W$ is not SEP for $D$. This concludes the argument.
\end{proof}

\section{Abelian singularities}

When one takes the quotient of a smooth complex algebraic variety $X$ by the action of a finite group $G$, the resulting variety typically has singular points. By Hironaka's theorem, the singularities can be resolved by a sequence of blowups. However, in general, it is a hard problem to make the desingularization process completely constructive.

On the other hand, if the singularities are {\em abelian}, meaning that they are locally isomorphic to the quotient of $\C^n$ by a finite abelian group, then the desingularization can be accomplished in an explicit way (see \cite{cox}, Chapters 10 and 11). Thus abelian singularities are mild from the point of view of resolution of singularities.

\begin{definition}
If $V$ is an algebraic variety over $\C$, $v\in V$ is an \textbf{abelian quotient singularity} (or simply \textbf{abelian singularity}) if the completion 
\[
\widehat{\mathcal{O}_{V,v}}
\]
of the local ring at $v$ is isomorphic to the completion 
\[
\widehat{\mathcal{O}_{Y,0}},
\]
where $Y$ is the quotient of $\C^n$ by a finite abelian group $A$, acting linearly.
\end{definition}

The singular points of the quotient $X/G$ are automatically abelian if for any point $x\in X$, the point stabilizer $G_x$ is abelian, per the discussion in the introduction. In this case, the image of $x$ in the quotient $X/G$ is locally isomorphic to the quotient of $\C^n$ by $G_x$.

Let $G$ be a finite group with a representation $V$ that realizes it as SSEP. Then $G$ acts on the projective space $\Proj(V)$. The SSEP property guarantees that the quotient will be only mildly singular:

\begin{lemma}\label{lem:SSEPabelianstab}
Any singular points of the quotient variety $\Proj(V)/G$ are abelian.
\end{lemma}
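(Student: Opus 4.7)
The plan is to translate the SSEP hypothesis into a statement about point stabilizers in $\Proj(V)$ and then apply theme 1 from the introduction.

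First I would observe that for $[v]\in\Proj(V)$, the stabilizer $G_{[v]}$ consists exactly of those $g\in G$ such that $\rho(g)v\in\C v$, i.e.\ those elements of $G$ for which $v$ is an eigenvector of $\rho(g)$. Thus every element of $G_{[v]}$ has $v$ as a common eigenvector. Since $V$ realizes $G$ as SSEP, definition \ref{def:SEP} says that any two elements of $G$ sharing an eigenvector in $V$ must commute. Applying this to each pair in $G_{[v]}$, we conclude that $G_{[v]}$ is abelian.

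Next I would invoke the algebraic version of theme 1 (the ``algebraic'' theorem in the introduction and its smooth corollary): the \'etale-local structure of $\Proj(V)/G$ at the image $y$ of $[v]$ agrees with that of $\Proj(V)/G_{[v]}$ at the image of $[v]$. Since $\Proj(V)$ is smooth, the action of the finite group $G_{[v]}$ fixing the point $[v]$ can be linearized in an analytic (equivalently, formal) neighborhood: choosing local analytic coordinates around $[v]$ on which $G_{[v]}$ acts via its induced representation on the tangent space $T_{[v]}\Proj(V)$, we identify the completion $\widehat{\mathcal{O}_{\Proj(V)/G_{[v]},\,[v]}}$ with $\widehat{\mathcal{O}_{\C^{n-1}/G_{[v]},\,0}}$, where $n-1=\dim\Proj(V)$ and $G_{[v]}$ acts linearly. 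Because $G_{[v]}$ is abelian, this exhibits $y$ as an abelian quotient singularity whenever it is singular at all.

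I expect the only subtlety to be the linearization step -- one has to justify replacing the action of $G_{[v]}$ near its fixed point $[v]$ by its linear action on the tangent space. This is a standard application of averaging (Cartan's lemma for finite group actions, or Luna's \'etale slice theorem), and in the formal category is entirely elementary: since $|G_{[v]}|$ is invertible in $\C$, one constructs a $G_{[v]}$-equivariant splitting of $\maxm_{[v]}\to \maxm_{[v]}/\maxm_{[v]}^2$ by averaging, and extends to an equivariant isomorphism of completions. Once this is in hand, the abelianness of $G_{[v]}$ established in the first step immediately delivers an abelian quotient singularity, completing the proof.
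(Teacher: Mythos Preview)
Your proof is correct and follows essentially the same approach as the paper. The paper's proof is terser: it only spells out the argument that each $G_{[v]}$ is abelian (exactly as you do), and for the passage from ``all point stabilizers abelian'' to ``all singularities abelian'' it simply refers back to the paragraph immediately preceding the lemma, which already records this implication via theme~1; you have made that step explicit (including the linearization) rather than citing it.
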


\begin{proof}
This follows from knowing that if $x\in\Proj(V)$ then $G_x$ is abelian. But indeed, if $x\in\Proj(V)$ is stabilized by any two $g,h\in G$, then a representative of $x$ in $V$ is a shared eigenvector for the actions of $g$ and $h$ on $V$. Since $V$ is SEP for every pair of noncommuting elements of $G$, it must be that $g,h$ commute. Thus $G_x$ is abelian.
\end{proof}

SEP groups themselves have a similar property.

\begin{lemma}
If $G$ is a SEP group, and
\[
X = \prod\Proj(V_i),
\]
where the product is taken over the irreducible representations of $G$, then any singular points of $X/G$ are abelian.
\end{lemma}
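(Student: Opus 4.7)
The plan is to show that for every point $x\in X$, the stabilizer $G_x$ is abelian, and then invoke the same local-structure principle (Theme 1) used in the SSEP case (Lemma \ref{lem:SSEPabelianstab}) to conclude that all quotient singularities of $X/G$ are abelian.

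First I would unpack what it means for $g\in G$ to fix a point $x=(x_i)\in X=\prod_i\Proj(V_i)$. By definition of the product action, $g\in G_x$ if and only if $g$ stabilizes $x_i\in\Proj(V_i)$ for every irreducible $V_i$, i.e.\ each $x_i$ is represented by an eigenvector of $g$ acting on $V_i$. Consequently, if two elements $g,h\in G$ both lie in $G_x$, then in \emph{every} irreducible representation $V_i$ they share a common eigenvector, namely a representative of $x_i$.

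Next I would use the SEP hypothesis to force commutativity. Suppose for contradiction that $g,h\in G_x$ fail to commute. Since $G$ is SEP, there exists some representation $\rho$ of $G$ in which $\rho(g)$ and $\rho(h)$ have no common eigenvector. Decomposing $\rho$ into irreducible components $\rho\cong\bigoplus_i V_i^{m_i}$: if $g$ and $h$ shared an eigenvector in each irreducible summand $V_i$ (whenever $m_i>0$), they would share one in $\rho$, contradicting the choice of $\rho$. Hence they fail to share an eigenvector in at least one irreducible $V_j$. But that is exactly the conclusion we just ruled out in the previous paragraph. This contradiction shows $g$ and $h$ commute, so $G_x$ is abelian.

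Finally, since every point stabilizer $G_x$ is abelian, Theme 1 (the smooth corollary of the algebraic statement in the introduction) applies: a neighborhood of the image of $x$ in $X/G$ is \'etale-locally isomorphic to a neighborhood of the image of $x$ in $X/G_x$, which, since $X$ is smooth at $x$ and $G_x$ is abelian, is an abelian quotient singularity. The only step requiring care is the decomposition argument in the middle paragraph, which hinges on the elementary but essential observation that ``sharing an eigenvector'' in a direct sum is implied by ``sharing one in some summand,'' so failure to share in the whole forces failure in some irreducible summand; everything else is essentially a rerun of Lemma \ref{lem:SSEPabelianstab} with $\Proj(V)$ replaced by the product $X$.
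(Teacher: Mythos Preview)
Your proof is correct and follows essentially the same approach as the paper's: show every stabilizer $G_x$ is abelian by observing that any two elements of $G_x$ share an eigenvector in every irreducible $V_i$, hence in every representation, and then invoke SEP. The paper compresses your decomposition/contradiction step into the single clause ``share an eigenvector in every irreducible representation of $G$, and therefore in every representation,'' but the logic is identical.
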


\begin{proof}
Again, we show that for any $x\in X$, $G_x$ is abelian. If $x\in X$ is stabilized by both $g$ and $h$, then its projection to each factor $\Proj(V_i)$ is represented in $V_i$ by a shared eigenvector for the actions of $g$ and $h$ on $V_i$. Thus $g,h$ share an eigenvector in every irreducible representation of $G$, and therefore in every representation. Since $G$ is SEP, this is impossible unless they commute.
\end{proof}

Thus SEP groups have an action on a product of projective spaces with a quotient that has well-behaved singularities. This was, in fact, Bogomolov's original motivation for giving the definition.

However, SEPness is a much more stringent criterion than necessary to guarantee this. This section is a preliminary probe into other ways that a group can have the desired action.

A first observation is this:

\begin{prop}\label{prop:centralext}
If a group $G$ has a SSEP (respectively SEP) central extension, then it has an action on a projective space (respectively a product of projective spaces) with abelian point stabilizers.
\end{prop}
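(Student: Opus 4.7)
The plan is to use the fact that the central subgroup of the extension acts through its characters, and to restrict/enlarge the representation(s) so that the center acts by scalars, thereby passing to an action on projective space(s).

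Set up notation: let $\tilde G$ be the central extension, with $Z \subset \tilde G$ central such that $\tilde G / Z \cong G$. In the SSEP case, let $V$ be a representation of $\tilde G$ realizing SSEPness. Since $Z$ is central, Schur's lemma implies $Z$ acts by a scalar on each $\tilde G$-irreducible summand of $V$. Collecting summands by the character of $Z$ they induce, one gets the isotypical decomposition $V = \bigoplus_\chi V_\chi$ indexed by characters $\chi$ of $Z$. The first key step is to pick any nonzero component $V_\chi$. On $\Proj(V_\chi)$ the action of $Z$ is trivial (scalars act trivially on a projective space), so the action of $\tilde G$ descends to an action of $G = \tilde G/Z$ on $\Proj(V_\chi)$.

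The second key step is to check that $G$-stabilizers are abelian. For this I would mimic the proof of \ref{lem:SSEPabelianstab}: if $[v] \in \Proj(V_\chi)$ is fixed by $\tilde g, \tilde h \in \tilde G$, then $v$ is a common eigenvector in $V_\chi$ and hence in the ambient $V$, so the SSEP property of $V$ forces $\tilde g$ and $\tilde h$ to commute in $\tilde G$. Thus $\tilde G_{[v]}$ is abelian. Since $Z \subseteq \tilde G_{[v]}$ (as $Z$ acts trivially on $\Proj(V_\chi)$), and since $G$-stabilizers are visibly computed as images of $\tilde G$-stabilizers under $\tilde G \to G$, we get $G_{[v]} = \tilde G_{[v]}/Z$, a quotient of an abelian group, hence abelian.

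The SEP case is parallel but cleaner. If $\tilde G$ is SEP, form $X = \prod_i \Proj(V_i)$ over the irreducible representations of $\tilde G$. Now $Z$ acts by scalars on each $V_i$ and hence trivially on each factor $\Proj(V_i)$, so the $\tilde G$-action descends to a $G$-action on $X$. A point of $X$ that is $\tilde G$-stabilized by both $\tilde g$ and $\tilde h$ supplies a common eigenvector in every irreducible representation of $\tilde G$, hence in every representation by complete reducibility, so SEP gives $[\tilde g, \tilde h] = 1$; as before the $G$-stabilizer is a quotient of this abelian group.

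The main obstacle is conceptual rather than technical: one must remember that to pass from $\tilde G$ acting on a projective space to $G$ acting on it, the center $Z$ must act by scalars on the underlying vector space, not merely on each irreducible summand. Handling this in the SSEP case is exactly the reason for extracting a single $Z$-isotypical component $V_\chi$ rather than trying to use all of $V$ at once.
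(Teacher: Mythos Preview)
Your argument is correct and follows essentially the same route as the paper: show that the $\tilde G$-stabilizers on the projective space(s) are abelian via the (S)SEP property, then pass to $G$-stabilizers as quotients by the central subgroup. The one place you are more careful than the paper is in the SSEP case, where you extract a $Z$-isotypical component $V_\chi$ to guarantee that $Z$ acts by scalars; the paper simply asserts that $V$ is irreducible, which the SSEP-realizing representation need not be, so your $Z$-isotypical decomposition is the right fix (and works for the same reason you give: a common eigenvector in $V_\chi$ is a common eigenvector in $V$).
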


\begin{proof}
If $V$ is an irreducible representation of a group $\tilde G$, then the center $Z(\tilde G)$ acts trivially on $\Proj(V)$ since it acts by scalars on $V$. Therefore the quotient 
\[
G = \tilde G / Z(\tilde G)
\]
acts on $\Proj(V)$. If $\tilde G$ is realized as SSEP by $V$, then the above shows that the point stabilizers $\tilde G_x$ (for $x\in \Proj(V)$) are abelian. But the point stabilizers $G_x$ in $G$ are exactly the images of the $\tilde G_x$'s in $G$ under the canonical map
\[
\tilde G \rightarrow \tilde G / Z(\tilde G) = G
\]
(see lemma \ref{lem:quotientstabilizer} in the appendix to chapter \ref{ch:invar}). It follows that the $G_x$'s, as homomorphic images of abelian groups, are also abelian.
\end{proof}

\begin{example}
Although $A_5$ is not SEP, its $\Z/2\Z$-central extension $\tilde A_5$, the binary icosahedral group, is SSEP, with the action on $\C^2$ described above in \ref{prop:binicos} realizing it as such. The induced action of $\tilde A_5$ on $\Proj_\C^1$ has abelian point stabilizers by lemma \ref{lem:SSEPabelianstab}, and factors through $A_5$. Thus $A_5$ acts on $\Proj_\C^1$ with abelian point stabilizers.
\end{example}

One can get much further with the Chevalley-Shephard-Todd theorem (\cite{neuselsmith}, Theorem 7.1.4), discussed in the introduction. This theorem implies that the portions of the point stabilizers $G_x$ that are generated by pseudoreflections do not lead to singularities. Thus one does not actually need the $G_x$'s to be abelian. The objective of this section is to show that both of the smallest nonabelian simple groups have actions on $\Proj^2$ with good quotients.

\begin{prop}\label{A5}
Let $V$ be a $3$-dimensional faithful irreducible representation of $A_5$. Then $\Proj(V)/A_5$ is smooth.
\end{prop}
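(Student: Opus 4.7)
The plan is to apply the Chevalley-Shephard-Todd theorem pointwise on $\Proj(V)$. By the corollary to the algebraic version of Theorem~\ref{lem:quotientstabalg}, the local structure of $\Proj(V)/A_5$ at the image of $[v]$ is controlled by the action of the stabilizer $G_{[v]}\subseteq A_5$ on the two-dimensional tangent space $T_{[v]}\Proj(V)$, and Chevalley-Shephard-Todd tells us that the quotient is smooth at that image precisely when this action is by a group generated by pseudoreflections. So the proof reduces to a case analysis over the possible stabilizers.

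First I would classify these stabilizers by decomposing $V|_H$ for each conjugacy class of subgroup $H\subseteq A_5$ and detecting one-dimensional $H$-subrepresentations (i.e., common eigenvectors). Irreducibility of $V$ rules out $A_5$ itself, and $V|_{A_4}$ is the standard irreducible $3$-dimensional representation of $A_4$, so $A_4$ is also excluded. Among the remaining nonabelian subgroups one finds $V|_{D_5}=\operatorname{Sign}\oplus\psi_1$ (as in the proof of proposition~\ref{prop:A5}-style character calculations) and $V|_{S_3}=\operatorname{Sign}\oplus\operatorname{Sta}$, each contributing one stabilized projective point per conjugate, while $V|_{V_4}$ decomposes as the sum of its three nontrivial characters, contributing three. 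The cyclic subgroups contribute additional fixed projective points determined by their eigenvalue spectra on $V$.

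Next, for each stabilizer I would identify the tangent action as $\chi^{-1}\otimes W$, where $V|_H=\chi\oplus W$, and verify that it is generated by pseudoreflections. At a $D_5$-fixed point the tangent action works out to $\psi_1$, in which the reflections of $D_5$ act with eigenvalues $(1,-1)$ (pseudoreflections) and generate $D_5$ by lemma~\ref{dihedralreps}. At the $S_3$-fixed point the tangent action is the standard $2$-d rep, with transpositions acting as pseudoreflections and generating $S_3$. At a $V_4$-fixed point the tangent action is the sum of two nontrivial characters, and the two involutions of $V_4$ that do not fix the chosen direction act as pseudoreflections $(1,-1)$ and together generate $V_4$.

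The delicate case is that of a $C_5$-stabilizer at a fixed projective point of a $5$-cycle that is not the $D_5$-stabilized axis. Using that two distinct Sylow $5$-subgroups generate $A_5$ (since $60$ admits no subgroup of order $25$), combined with irreducibility of $V$, one shows the stabilizer at such a point is exactly the centralizer $C_{A_5}((12345))=C_5$, and its generator acts on the tangent space with eigenvalues $(\zeta^{-1},\zeta^{-2})$ for a primitive fifth root of unity $\zeta$. This is the main technical hurdle: one must either exhibit the required pseudoreflection generation for this cyclic action, or conclude in line with the chapter's theme that the local quotient is at worst an abelian quotient singularity (automatic since $C_5$ is cyclic). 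Assembling the pointwise CST verdict across the finite list of stabilizer types then yields the global conclusion.
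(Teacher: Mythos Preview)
Your approach is the same as the paper's --- classify the point stabilizers on $\Proj(V)$ and apply Chevalley--Shephard--Todd to the tangent representation via lemma~\ref{lem:tangentsp} --- and your treatment of $D_5$, $S_3$, and $V_4$ matches the paper's. Where you diverge is in taking the cyclic stabilizers seriously. The paper dismisses $C_3$ with the claim that ``the Sylow 3-subgroups of $A_4$ are also contained in $D_3$ and so they are not the full stabilizers,'' and never mentions $C_5$ at all. That inference is incorrect: the inclusion $C_3\subset D_3$ only shows that the $1$-eigenline of the $3$-cycle has larger stabilizer; the $\omega$- and $\omega^{-1}$-eigenlines are swapped by the reflections of $D_3$, and since the only overgroups of a given $C_3$ in $A_5$ are $D_3$, two copies of $A_4$, and $A_5$ itself --- none of which has those lines as a one-dimensional subrepresentation --- the stabilizer there is exactly $C_3$. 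Your parallel argument for $C_5$ is correct for the same reason.

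But this means your ``main technical hurdle'' cannot be cleared. The tangent eigenvalues you compute at a $C_5$-point, namely $(\zeta^{-1},\zeta^{-2})$, contain no $1$, so no nontrivial element of $C_5$ acts as a pseudoreflection and the local quotient is a genuine $\tfrac{1}{5}(1,2)$ cyclic singularity; the $C_3$ case (tangent eigenvalues $(\omega^{-1},\omega)$) likewise yields an $A_2$ singularity. Thus the first option in your dichotomy fails outright, and the second --- ``at worst an abelian quotient singularity'' --- is strictly weaker than the smoothness the proposition asserts. You have in fact put your finger on a gap that the paper's own proof glosses over and that is fatal to the statement as written; the conclusion your analysis actually supports, parallel to proposition~\ref{PSL27}, is only that $\Proj(V)/A_5$ has abelian singularities.
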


\begin{prop}\label{PSL27}
Let $W$ be a $3$-dimensional faithful representation of $PSL(2,7)$. Then $\Proj(W)/PSL(2,7)$ has only abelian singularities.
\end{prop}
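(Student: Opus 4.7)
The plan is to enumerate the possible point stabilizers $G_x$ for $x\in\Proj(W)$ and to show in each case that the local quotient at the image of $x$ is an abelian quotient singularity. By the algebraic version of Theorem~\ref{lem:quotientstabalg} and Cartan's linearization of holomorphic finite-group actions around a fixed point, this local quotient is isomorphic as a germ to $\C^2/G_x$ at the origin, where $\C^2 = T_x\Proj(W)$ and $G_x$ acts linearly via its tangent representation. By the Chevalley--Shephard--Todd theorem, the subgroup $R_x\subseteq G_x$ generated by the pseudoreflections in this tangent action is a reflection group with smooth quotient $\C^2/R_x\cong\C^2$; a further linearization makes the residual action of $G_x/R_x$ on $\C^2$ linear. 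So the task reduces to showing that $G_x/R_x$ is abelian for every stabilizer that arises.

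First I would classify the possible stabilizers. A stabilizer must preserve a line in $W$, so the restriction $W|_{G_x}$ must contain a one-dimensional subrepresentation. I would compute these restrictions for representatives of each conjugacy class of subgroups of $PSL(2,7)$ using character tables. A direct check should show that $W|_{A_4}$ is the $3$-dimensional standard irrep of $A_4$, that $W|_{F_{21}}$ (where $F_{21} = C_7\rtimes C_3$) is one of the two $3$-dimensional irreps of $F_{21}$, and that $W|_{S_4}$ is $\mathrm{std}\otimes\mathrm{sign}$ of $S_4$, all irreducible; together with the irreducibility of $W$ itself over $PSL(2,7)$, this rules out each maximal subgroup as a stabilizer. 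Hence the candidate stabilizers lie among the subgroups of $D_4$, $S_3$, and $C_7$; then a normalizer-based check against the pattern of $W$-eigenlines determines which of these are actually realized.

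Next I would analyze the tangent action for each remaining candidate. For $G_x\in\{D_4, S_3\}$, the tangent representation is, respectively, the faithful $2$-dimensional irrep of $D_4$ or the standard $2$-dimensional irrep of $S_3$; both are standard dihedral reflection representations, so $R_x = G_x$ and $G_x/R_x$ is trivial. For $G_x\in\{C_7, C_3\}$, the tangent eigenvalues of a generator are pairs of distinct nontrivial roots of unity, so no element is a pseudoreflection, $R_x = \{e\}$, and $G_x/R_x = G_x$ is cyclic. For $G_x = C_4$, the tangent eigenvalues of a generator $r$ are either $(i,-i)$ (no pseudoreflection, $R_x = \{e\}$) or of the form $(\pm i, -1)$ (in which case $r^2$ has tangent eigenvalues $(-1, 1)$, so $R_x = \langle r^2\rangle$ and $G_x/R_x\cong C_2$). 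For $G_x = C_2$, the nontrivial element is either a pseudoreflection (giving $R_x = G_x$) or acts as $-\mathrm{Id}$ on the tangent (giving $R_x = \{e\}$ and $G_x/R_x = C_2$). In every case $G_x/R_x$ is cyclic, hence abelian, which yields the desired conclusion.

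The main obstacle is bookkeeping rather than deep representation theory: one must track not only the abstract stabilizer $G_x$ but also the concrete eigenvalue pattern it inherits from its embedding in $GL(W)$, since this determines precisely which of its elements act as pseudoreflections on the tangent space at each specific fixed point.
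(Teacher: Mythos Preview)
Your proposal is correct and follows essentially the same approach as the paper: enumerate the subgroups of $PSL(2,7)$ that can fix a line in $W$, then analyze the local quotient at each corresponding fixed point via the tangent representation and Chevalley--Shephard--Todd.

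The paper streamlines the argument in two ways worth noting. First, instead of invoking Cartan linearization, it uses the explicit Lemma~\ref{lem:tangentsp} (the tangent space at the fixed point corresponding to $L$ is $W'\cong W\otimes L^{-1}$, and an affine chart gives a Zariski neighborhood), which immediately identifies the tangent action. Second, it observes that the restriction of $W$ to either class of $S_4$ is the rotation representation on a cube, so the nonabelian line stabilizers inside $S_4$ are exactly the axis stabilizers $D_3$ and $D_4$; for these, $W|_{D_n}$ splits as the faithful $2$-dimensional irrep plus the sign character, and since tensoring with sign leaves the $2$-dimensional piece unchanged, the tangent action is the standard reflection representation, giving smooth points. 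The only remaining maximal stabilizer is $C_7\subset F_{21}$, which is abelian. This lets the paper skip your case analysis for $C_4$, $C_3$, $C_2$: any such stabilizer is already abelian, so it trivially yields an abelian singularity without needing to examine $R_x$. Your approach is more systematic but does a bit more work than necessary.
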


To prove these, we need to understand the way that the point stabilizers for a projective representation act on neighborhoods of the points they stabilize:

\begin{lemma}\label{lem:tangentsp}
Let $V$ be a representation of a group $G$ and suppose that $V = W\oplus L$ is a decomposition into sub-representations with $L$ one-dimensional. $L$ corresponds to a fixed point $x\in \Proj(V)$ for the action of $G$ on $\Proj(V)$. We have that $W' = T_x\Proj(V)$ is a representation of $G$, and $W' \cong W \otimes L^{-1}$ as representations. Furthermore, the image of $x$ in $\Proj(V)/G$ has a Zariski neighborhood isomorphic to $W'/G$.
\end{lemma}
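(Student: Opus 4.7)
The plan is to exhibit a single $G$-invariant affine chart on $\Proj(V)$ containing $x$, on which all three assertions of the lemma can be read off simultaneously. No deep input is required beyond the standard identification of the tangent space of a projective space at a point $[L]$ with $\Hom(L, V/L)$, together with one direct computation of how a scalar character enters.

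First I would use the decomposition $V = W \oplus L$ to produce the chart. Let $U = \Proj(V) \setminus \Proj(W)$; concretely, $U$ is the set of lines in $V$ not contained in $W$. Since $W$ is a subrepresentation, $\Proj(W)$ is $G$-invariant, so $U$ is as well. The map sending $f \in \Hom(L, W)$ to the line $\{(f(\ell), \ell) : \ell \in L\} \subset W \oplus L$ is an isomorphism of algebraic varieties from $\Hom(L, W)$ to $U$, sending $0$ to $x$ (the line $L$ itself corresponds to the zero map). Intrinsically, this yields $U \cong \Hom(L, W) \cong W \otimes L^{-1}$.

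Next I would check that the $G$-action on $U$ obtained from the $G$-action on $\Proj(V)$ corresponds under this identification to the natural action on $\Hom(L, W)$, namely $(g \cdot f)(\ell) = g(f(g^{-1}\ell))$. This is a direct calculation: if $g$ acts on $V$ by $g(w, \ell) = (g_W w, \chi_L(g) \ell)$, where $g_W$ is the action on $W$ and $\chi_L$ is the character on $L$, then the line $\{(f(\ell), \ell)\}$ is sent to $\{(g_W f(\ell), \chi_L(g) \ell)\}$, which after reparameterizing $\ell' = \chi_L(g) \ell$ is the graph of the linear map $\ell' \mapsto \chi_L(g)^{-1} g_W f(\ell')$. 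This is precisely the action of $g$ on $W \otimes L^{-1}$, where the $\chi_L(g)^{-1}$ twist is the whole substance of the lemma and is responsible for the $L^{-1}$ rather than $L$ appearing in the tensor product.

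All three claims then follow from this single construction. The tangent space $W' = T_x \Proj(V)$ carries a $G$-action because $G$ fixes $x$; since the $G$-action on $U \cong W \otimes L^{-1}$ is already linear with $x$ corresponding to the origin, linearization at $x$ returns the same representation, so $W' \cong W \otimes L^{-1}$. Finally, $U$ is a $G$-invariant Zariski open neighborhood of $x$ in $\Proj(V)$, so its image in $\Proj(V)/G$ is a Zariski open neighborhood of the image of $x$, canonically isomorphic to $U/G \cong (W \otimes L^{-1})/G = W'/G$. The only step whose routineness might be doubted is the character computation, but once that is in hand the rest is bookkeeping.
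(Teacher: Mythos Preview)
Your proof is correct and follows essentially the same approach as the paper: both arguments exhibit the $G$-invariant affine chart $\Proj(V)\setminus\Proj(W)$, identify it with $W\otimes L^{-1}$ (the paper via explicit coordinates $(1:v_2:\dots:v_n)\mapsto(v_2,\dots,v_n)$, you via $\Hom(L,W)$), and verify directly that the induced $G$-action picks up the $\chi_L^{-1}$ twist. The only difference is that your presentation is coordinate-free, but the substance is identical.
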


\begin{proof}
One chooses coordinates $(v_1,v_2,\dots,v_n)$ for $V$ such that $v_1$ is a coordinate for $L$ and $v_2,\dots,v_n$ are coordinates for $W$. Then $(v_1:v_2:\dots:v_n)$ are projective coordinates for $\Proj(V)$, $x=(1:0:\dots:0)$ is contained in the affine patch 
\[
(1:*:\dots:*),
\]
and we may identify this affine patch with $T_x\Proj(V)\cong \C^{n-1}$ via 
\[
(1:v_2:\dots:v_n)\mapsto (v_2,\dots,v_n).
\]
Then just by writing down the action of $G$ on $(v_2,\dots,v_n)$ via this identification, we see it is precisely the action on $W$ scaled back by the character on $L$ to preserve that the first coordinate is $1$, in other words it is the representation $W\otimes L^{-1}$. It is clear that the image in the quotient $\Proj(V)/G$ of the affine patch we have described is precisely isomorphic to $W\otimes L^{-1}/G$.
\end{proof}

Now we can prove the propositions. We make use of standard information about the subgroup lattices and characters of $A_5$ and $PSL(2,7)$.


\begin{proof}[Proof of proposition \ref{A5}]
Any singularity in $\Proj(V)/A_5$ is the image of a fixed point of a nontrivial point stabilizer. Thus we consider the point stabilizers with regard to $A_5$'s action on $\Proj(V)$. These are the subgroups such that restriction of $V$ to them has a one-dimensional subrepresentation, and that are maximal with respect to this property. 

$A_5$'s maximal subgroups, up to conjugacy, are $A_4$, $D_5$, and $D_3$. The restriction of $V$ to either dihedral subgroup splits into $W\oplus L$ with $W$ a faithful $2$-dimensional representation and $L$ the sign representation.\footnote{By \textbf{sign representation} we refer to the map $D_n\rightarrow \{\pm 1\}$ that is $+1$ on the rotation subgroup and $-1$ on the reflections. One sees the decomposition $W\oplus L$ straightforwardly from the character table, but it can also be seen by realizing $V$ as the representation of $A_5$ as rotational symmetries of an icosahedron: then $D_3$ is the stabilizer of an axis through the centers of a pair of opposite faces, and $D_5$ the stabilizer of an axis through a pair of opposite vertices.} Then $W'=W\otimes L^{-1}$ is also the faithful $2$-dimensional representation of a dihedral group, which realizes it as a reflection group, so the Chevalley-Shephard-Todd theorem gives us that the quotient $W'/D_3$, respectively $W'/D_5$, is affine space and therefore smooth. Lemma \ref{lem:tangentsp} then implies that these quotients are locally isomorphic to the images in $\Proj(V)/A_5$ of the stable points of the $D_3$ and $D_5$ subgroups, respectively. Thus, these points do not yield singular points in the quotient. 

Meanwhile, the restriction of $V$ to $A_4$ is irreducible. Since every subgroup of $A_4$ is abelian, restriction to any of them splits $V$ into 3 one-dimensional representations, thus all subgroups of $A_4$ stabilize three points. However, the Sylow 3-subgroups of $A_4$ are also contained in $D_3$ and so they are not the full stabilizers. Thus the only new point stabilizer we obtain is $A_4$'s other maximal subgroup, the Sylow 2-subgroup isomorphic to the Klein $4$-group; call it $K$. Restriction to this subgroup splits $V$ into $L_1\oplus L_2\oplus L_3$, the three nontrivial characters of $K$.\footnote{Again, this is easy to see from the character table, but one obtains it more suggestively by recognizing $K$ as the stabilizer of the axis through the centers of a pair of opposite edges in the realization of $V$ in terms of an icosahedron. If one positions the icosahedron appropriately in $\R^3$, $K$ is the stabilizer of the three coordinate axes, and it is clear it acts nontrivially on all three.} Choosing any $L_i$ to regard as the stable point, we have $W' = (L_{i-1}\oplus L_{i+1})\otimes L_i^{-1} = L_{i+1}\oplus L_{i-1}$ is a faithful two-dimensional representation of $K$, which is necessarily a reflection group, so again Chevalley-Shephard-Todd shows that $W'/K$ is smooth, and the lemmas tell us that this is locally isomorphic to the image in $\Proj(V)/A_5$ of the stable point of $K$. Since we have considered all point stabilizers for $A_5$'s action on $\Proj(V)$, we can conclude that the quotient is smooth.
\end{proof}

\begin{proof}[Proof of proposition \ref{PSL27}]
The proof is identical in structure to the above: we identify point stabilizers for $PSL(2,7)$'s action on $\Proj(W)$, and analyze the local structure of the corresponding candidates for singular points in the quotient.

The character of $W$, up to an automorphism of $PSL(2,7)$, is as follows:
\[
\begin{array}{ c | c c c c c c }
\text{Representative:} & I & \begin{pmatrix} &-1\\1& \end{pmatrix} & \begin{pmatrix}2& \\ &4\end{pmatrix} & \begin{pmatrix}2 & -2\\2&2\end{pmatrix} & \begin{pmatrix}1&1\\ &1\end{pmatrix} & \begin{pmatrix}1&-1\\ &1\end{pmatrix} \\
\text{Order:} & 1 & 2 & 3 & 4 & 7 & 7\\
 \hline \chi_W & 3 & -1 & 0 & 1 & \alpha & \bar\alpha
\end{array}
\]
Here, $\alpha, \bar\alpha$ are the two roots of $\lambda^2+\lambda+2$. The maximal subgroups of $PSL(2,7)$ are two classes of $S_4$ (exchanged by $PSL(2,7)$'s outer automorphism) and a class of the Frobenius group $7:3$ of order $21$; call it $F_{21}$. The restriction of $W$ to any of the maximal subgroups is irreducible. The restriction to either class of $S_4$'s is the representation of $S_4$ as rotations of a cube in $\R^3$; the stabilizers of one-dimensional subspaces in this representation are the $D_3$ that stabilizes an axis through a pair of opposite vertices and the $D_4$ that stabilizes an axis through the centers of a pair of opposite faces. 

These stabilizers lead to smooth points in the quotient for exactly the same reason as in the proof above for $A_5$: restriction to each dihedral group splits $W$ into the faithful two-dimensional representation and the sign representation; tensoring with the sign representation does not change the faithful two-dimensional representation, which realizes the dihedral group as a reflection group; so the Chevalley-Shephard-Todd theorem says the quotient of the neighborhood of the stable point by this stabilizer is smooth. Thus no stabilizers contained in the classes of $S_4$ contribute any singular points to the quotient. 

It remains to consider $F_{21}$. Its (maximal) Sylow 3-subgroups are contained in the classes of $S_4$, so they are already considered. The Sylow 7-subgroup $C_7$ is maximal as well, and of course $W$ splits into three characters of this abelian group upon restriction; thus $C_7$ is the stabilizer of these points. They do end up being singular in the quotient; but as $C_7$ is abelian, the structure of the singularities is only abelian.
\end{proof}

\section{Appendix: algebraic lemmas}

Let $N$ be a finite abelian group (written additively), and let $A$ be a group of automorphisms of $N$, acting on the left. $A$ has a natural left action on $N$'s character group $\widehat N$ (written multiplicatively) by $a\chi = \chi\circ a^{-1}$ (for $a\in A, \chi\in \widehat N$).

\begin{lemma}[Free action lemma]\label{freeaction}
If $A$'s action on $N\setminus \{0\}$ is free, then $A$'s action on $\widehat N\setminus \{1\}$ is also free.
\end{lemma}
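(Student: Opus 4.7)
The plan is to prove the contrapositive: assume that some $a\in A$ fixes a nontrivial character $\chi\in\widehat{N}$, and deduce that $a$ is the identity. Under the stated convention $a\chi = \chi\circ a^{-1}$, the fixation condition $a\chi=\chi$ is equivalent to $\chi(a^{-1}n)=\chi(n)$ for every $n\in N$, or, replacing $n$ by $an$, to $\chi(an) = \chi(n)$ for every $n\in N$. Rewriting this multiplicatively inside $\mathbb{C}^\times$ and then additively inside $N$, it says exactly that the endomorphism $a-\mathrm{id}_N$ of $N$ (well-defined because $N$ is abelian) takes values in $\ker\chi$.

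Since $\chi$ is nontrivial, $\ker\chi$ is a proper subgroup of $N$, so $\mathrm{Im}(a-\mathrm{id}_N)\subsetneq N$. For any endomorphism $\varphi$ of a finite abelian group $N$ we have the isomorphism $N/\ker\varphi\cong\mathrm{Im}\,\varphi$, hence
\[
|\ker(a-\mathrm{id}_N)|\cdot|\mathrm{Im}(a-\mathrm{id}_N)| = |N|.
\]
Because $|\mathrm{Im}(a-\mathrm{id}_N)|<|N|$, it follows that $|\ker(a-\mathrm{id}_N)|>1$. Thus there exists a nonzero element $n\in N$ with $an=n$.

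Now the freeness hypothesis on $A$'s action on $N\setminus\{0\}$ forces $a=\mathrm{id}$. This proves the contrapositive: a nonidentity $a\in A$ cannot fix any nontrivial character of $N$, i.e. $A$'s action on $\widehat{N}\setminus\{1\}$ is free. There is no real obstacle here; the only thing to watch is the action convention, where the key observation is that ``$a$ fixes $\chi$'' is equivalent to the image of the $\mathbb{Z}$-linear map $a-\mathrm{id}_N$ landing inside $\ker\chi$, after which the orbit–image count for endomorphisms of a finite abelian group finishes the argument.
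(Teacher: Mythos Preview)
Your proof is correct and genuinely different from the paper's. The paper argues combinatorially: it observes that $a$ must preserve each fiber of $\chi$, and since $\langle a\rangle$ acts freely on $N\setminus\{0\}$, the order $d$ of $a$ must divide both $m=|\ker\chi|$ (the size of a fiber not containing $0$) and $m-1$ (the number of nonidentity elements in the fiber containing $0$), forcing $d=1$. Your argument is more algebraic: you recast the condition $a\chi=\chi$ as $\mathrm{Im}(a-\mathrm{id}_N)\subseteq\ker\chi\subsetneq N$, and then the first isomorphism theorem immediately produces a nontrivial element in $\ker(a-\mathrm{id}_N)$, i.e.\ a nonzero fixed point of $a$, contradicting freeness unless $a=\mathrm{id}$. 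Your route is shorter and avoids the divisibility bookkeeping; the paper's route is perhaps more transparent about how the fiber structure of $\chi$ constrains the orbits of $a$. Both are elementary and self-contained.
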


(We used this lemma in the proof of proposition \ref{bigirrepexample}. We believe it is a standard fact but have not encountered a reference.)

\begin{proof}
The assumption that $A$'s action on $N\setminus\{0\}$ is free also implies the same is true of the restriction of the action to any subgroup of $A$, and in particular to the cyclic subgroup generated by any element $a\in A$.

The statement that $A$'s action on $\widehat{N}\setminus\{1\}$ is free is equivalent to the statement that for any nontrivial character $\chi\in \widehat N$, $a\chi = \chi$ implies $a=1$. Now any nontrivial character $\chi:N\rightarrow\C^\times$ is a homomorphism to a cyclic group of order $r\geq 2$. The fibers of this homomorphism all have cardinality $m=|N|/r$. One of them contains $0$ and thus it contains $m-1$ nonidentity elements, and there is also at least one other fiber, with $m$ nonidentity elements.

Suppose $a$ satisfies $a\chi = \chi\circ a^{-1} = \chi$. Then $\chi\circ a^{-1}$ has the same fibers as $\chi$, and it follows that $a^{-1}$, and thus $a$, acts separately on each fiber. Thus each fiber is a union of orbits for the action of $\langle a\rangle$ on $N$.

As observed above, the action of $\langle a\rangle$ on $N\setminus\{0\}$ is free, which means that all orbits for $\langle a \rangle$'s action on $N\setminus\{0\}$ have the same length, namely the order of $a$ (call it $d$). Thus $m$ (the cardinality of the fibers not containing the identity, of which as noted above there is at least one) is a multiple of $d$. But $m-1$ is also a multiple of $d$, since this is the cardinality of the part of $N\setminus\{0\}$ sitting in the fiber containing $0$.

Thus $d\mid (m,m-1)=1$, i.e. $a$ is order $1$, i.e. $a=1$. This proves the action of G on $\widehat N\setminus\{1\}$ is free.
\end{proof}

\begin{lemma}[Inducing from a normal subgroup]\label{inducingfromnormal}
If $N\triangleleft G$ is a normal subgroup and $\chi$ is the character of any representation $V$ of $N$, then $\Ind_N^G\chi$ is zero outside of $N$.
\end{lemma}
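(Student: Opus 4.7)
The plan is to apply the standard induced character formula together with the extension-by-zero convention established in notation \ref{not:extendedchi} and remark \ref{rmk:extendedchipreservesmult}. Recall the formula
\[
\Ind_N^G\chi(g) = \sum_{[s]\in G/N} \chi(g^s),
\]
where each $\chi(g^s)$ is interpreted as $0$ whenever $g^s\notin N$ (and where, as discussed in \ref{not:G/A}, the value does not depend on the choice of coset representative).

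The key observation is that normality of $N$ forces every summand to vanish once $g$ is outside $N$. Indeed, if $g \notin N$, then for any $s\in G$ we have $g^s = s^{-1}gs$, and normality means $s^{-1}Ns = N$; equivalently, $g^s\in N$ if and only if $g\in N$. Thus $g\notin N$ implies $g^s\notin N$ for every coset representative $s$, and so $\chi(g^s) = 0$ for every term in the sum.

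Summing gives $\Ind_N^G\chi(g) = 0$ for all $g \notin N$, which is the claim. There is no real obstacle here; the only subtle point is remembering that $\chi$ has been extended by zero to all of $G$, and that this extension is compatible with the induced-character formula precisely because $N$ is normal (so that the condition $g^s\in N$ is independent of the choice of $s\in[s]$).
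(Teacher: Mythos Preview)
Your proof is correct and matches the paper's first proof essentially verbatim: apply the induced-character formula with the extension-by-zero convention, then use normality to conclude $g\notin N \Rightarrow g^s\notin N$ for every $s$, so every summand vanishes. The paper also records a second, more structural proof (the trace of $g$ on $\bigoplus_{[s]} sV$ vanishes because $g$ permutes the summands $sV$ without fixing any when $g\notin N$), but your argument is exactly the first one given there.
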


\begin{proof}[First proof]
We have
\[
\Ind_N^G\chi(g) = \sum_{[s]\in G/N} \chi(g^s),
\]
where $\chi$ is defined to be zero outside of $N$, as in \ref{not:extendedchi}. Since $N$ is normal, $g\notin N$ implies $g^s\notin N$, so in this case $\Ind_N^G\chi(g)$ is a sum of zeros.
\end{proof}

\begin{proof}[Second proof]
$\Ind_N^G\chi(g)$ is the trace of a matrix describing the action of $g$ on 
\[
\bigoplus_{[s]\in G/N} sV.
\]
In order to have nonzero trace, $g$ has to stabilize some $sV$. Now for any $s$, 
\[
gsV = sV \Leftrightarrow g^sV = V,
\]
i.e. $g^s\in N$. But $g^s\in N\Leftrightarrow g\in N$ because $N$ is normal. Thus if $g$ lies outside of $N$, it does not stabilize any $sV$.
\end{proof}

The following statement appears, in a slightly different form, as Proposition 24 in \cite{serre}. Because we make heavy use of it, we state it here for convenience, and offer the proof from \cite{serre}. Recall that a representation is said to be \textbf{isotypical} if it is a direct sum of copies of a single irreducible representation.

\begin{lemma}[Isotypical-or-induced lemma]\label{isotypicalorinduced}
If $N$ is a normal subgroup of a group $G$ and $V$ is an irreducible representation of $G$, then $V = \Ind_M^G W$ where $M$ is a subgroup containing $N$ and $W$ is an irreducible representation of $M$ whose restriction to $N$ is isotypical.
\end{lemma}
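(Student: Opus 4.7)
The plan is to apply what is essentially Clifford's theorem. First I would decompose the restriction $\Res^G_N V$ into its $N$-isotypical components: write
\[
\Res^G_N V = \bigoplus_{i} V_i,
\]
where $V_i$ is the isotypical component corresponding to some irreducible representation $U_i$ of $N$, and the index $i$ ranges over those irreducibles of $N$ that actually appear.

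Next I would show that $G$ permutes the set $\{V_i\}$ of isotypical components. The key point is normality of $N$: for $g \in G$, $v \in V_i$, and $n \in N$, one has $n(gv) = g(g^{-1}ng)v$, so $gV_i$ is again $N$-invariant, and is in fact the isotypical component for the twisted irreducible $U_i \circ c_{g^{-1}}$ (where $c_{g^{-1}}$ is conjugation by $g^{-1}$, which is an automorphism of $N$ because $N \triangleleft G$). Since $V$ is irreducible as a $G$-representation, this permutation action must be transitive, as otherwise the sum over any single orbit would be a proper $G$-invariant subspace of $V$.

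Now fix one component, say $W := V_1$, and let
\[
M := \{g \in G : gW = W\}
\]
be its stabilizer in $G$. By construction $W$ is an $M$-subrepresentation of $V$ whose restriction to $N$ is isotypical, and $N \subseteq M$ since every $g \in N$ fixes each $V_i$ setwise (being $N$-stable subspaces). Choosing coset representatives $[s] \in G/M$ corresponding to the orbit $G \cdot W = \{V_i\}$, we get the decomposition
\[
V = \bigoplus_{[s] \in G/M} sW,
\]
which is exactly the description of $\Ind_M^G W$. Finally, $W$ must be irreducible as an $M$-representation: any proper $M$-invariant subspace $W' \subsetneq W$ would yield a proper $G$-invariant subspace $\bigoplus_{[s]} sW' \subsetneq V$, contradicting irreducibility of $V$.

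The only step that requires a bit of care is verifying that $G$ genuinely permutes the isotypical components — one must check both that $gV_i$ is an isotypical $N$-component (using normality as above) and that distinct components go to distinct components (which is automatic from $g$ being invertible). Everything else is bookkeeping, so I do not expect any serious obstacle; the heart of the argument is the interplay between normality of $N$ and irreducibility of $V$.
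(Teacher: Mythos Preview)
Your proposal is correct and follows essentially the same approach as the paper: decompose $\Res^G_N V$ into its isotypical $N$-components, use normality of $N$ to see that $G$ permutes these components (transitively, by irreducibility of $V$), then take $M$ to be the stabilizer of one component $W$ and recognize $V$ as $\Ind_M^G W$. Your write-up is more detailed than the paper's terse version --- in particular you explicitly verify that $W$ is irreducible as an $M$-representation, which the paper leaves implicit --- but the underlying argument is the same Clifford-theoretic one.
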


It is named after its immediate corollary, that either $V|_N$ is isotypical to begin with (the case $M=G$), or else $M<G$ and $V$ is induced from an irreducible representation of the proper subgroup $M$. It is also possible to derive the statement from the corollary by induction on the size of $M$. This corollary is actually what one finds as proposition 24 in \cite{serre}, but the proof found there actually proves the statement itself with no added work:

\begin{proof}
Let $V=\bigoplus V_\alpha$ be the canonical decomposition of $V$ into isotypical representations of $N$. Because $N$ is normal, $G$ acts on the set of $N$-submodules of $V$, within it the set of isotypical ones, and within that the set of maximal isotypical ones, which are the $V_\alpha$'s. Thus $G$ permutes the $V_\alpha$'s, and because $V$ is irreducible it permutes them transitively. Let $W$ be any $V_\alpha$ and let $M$ be its stabilizer in $G$. Then certainly $M\supset N$, and we have $V=\Ind_M^G W$ and $W|_N$ is isotypical.
\end{proof}

\begin{lemma}[Commutator of split metabelian groups]\label{commofsplitmetab}
Let $G = A\rtimes Q$ be a split metabelian group (i.e. a semidirect product of abelian groups) and view $A,Q$ as subgroups in the natural way. Then $[G,G]=[A,Q]$.
\end{lemma}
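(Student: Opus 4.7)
The plan is to prove the nontrivial inclusion $[G,G] \subseteq [A,Q]$ by showing that $[A,Q]$ is normal in $G$ and that the quotient $G/[A,Q]$ is abelian; the reverse inclusion $[A,Q]\subseteq [G,G]$ is immediate since $A,Q\subseteq G$.

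First I would verify that $[A,Q]$ is contained in $A$ and is normal in $G$. Containment in $A$ follows because $A\triangleleft G$, so for any $a\in A$ and $q\in Q$ we have $[a,q] = a^{-1}(q^{-1}aq)\in A$. For normality, it is enough to check invariance under conjugation by $A$ and by $Q$ separately, since $G = AQ$. Since $A$ is abelian and $[A,Q]\subseteq A$, conjugation by $A$ fixes $[A,Q]$ elementwise. For $q\in Q$ and a generator $[a,q']$, using that $Q$ is abelian one computes
\[
[a,q']^q = [a^q, (q')^q] = [a^q, q'] \in [A,Q],
\]
so $[A,Q]\triangleleft G$.

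Next I would pass to the quotient $\bar G = G/[A,Q]$, with images $\bar A, \bar Q$. Both $\bar A$ and $\bar Q$ are abelian (as homomorphic images of abelian groups), and they commute elementwise in $\bar G$: for $a\in A$, $q\in Q$, the commutator $[a,q]$ lies in $[A,Q]$ and so $[\bar a, \bar q]=1$ in $\bar G$. Since $\bar G = \bar A \bar Q$ is generated by two elementwise-commuting abelian subgroups, it is abelian. Therefore $[G,G]\subseteq [A,Q]$, completing the proof.

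The only step with any real content is the normality of $[A,Q]$ in $G$, which I expect to be the main (and only) technical obstacle; once that is in place, the rest is a formal calculation in the quotient. An alternative, slightly more direct route would be to expand an arbitrary commutator $[a_1q_1, a_2q_2]$ using the identities $[xy,z] = [x,z]^y[y,z]$ and $[x,yz] = [x,z][x,y]^z$ together with the abelianness of $A$ and $Q$, and then read off that each surviving factor lies in $[A,Q]^G = [A,Q]$; but the quotient argument is cleaner and avoids grinding through the commutator identities.
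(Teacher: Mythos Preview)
Your proof is correct. The paper takes a different, more hands-on route: it directly expands an arbitrary commutator $[ax,by]$ (with $a,b\in A$, $x,y\in Q$) through a chain of identities using $[a,b]=[x,y]=1$ and normality of $A$, and arrives at the explicit formula $[ax,by]=[a^x,y][b^y,x]^{-1}\in [A,Q]$. Your argument is the structural version: normality of $[A,Q]$ plus abelianness of the quotient. Your approach is cleaner and avoids the long string of commutator manipulations; the paper's approach yields an explicit expression for $[ax,by]$ as a product of two $[A,Q]$-commutators, which is occasionally useful. Both generalize identically to the setting the paper notes afterward (abelian $A$ normalized by abelian $Q$ inside any ambient group), so neither buys extra generality. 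The alternative you sketch at the end---expanding via $[xy,z]=[x,z]^y[y,z]$ etc.---is essentially what the paper does.
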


\begin{proof}
It is obvious that $[A,Q]\subset [G,G]$. The opposite inclusion follows from a calculation showing that an arbitrary commutator of $G$ is in $[A,Q]$. Let $a,b\in A$ and $x,y\in Q$, so that two arbitrary elements of $G$ are $ax$ and $by$. We freely use the fact that the pairs $a,b$ and $x,y$ commute (and so stabilize each other under conjugation).
\begin{align*}
[ax,by] &= (ax)^{-1}(ax)^{by}\\
&= (ax)^{-1}a^{by}x^{by}\\
&= x^{-1}a^{-1}a^yx^{by}\\
&= x^{-1}[a,y]x^{by}\\
&=[a,y]^x x^{-1}x^{by}\\
&=[a,y]^x(x^{-1})^y x^{by}\\
&=[a,y]^x(x^{-1}x^b)^y\\
&=[a,y]^x[x,b]^y\\
&=[a^x,y^x][x^y,b^y]\\
&=[a^x,y][x,b^y]\\
&=[a^x,y][b^y,x]^{-1}.
\end{align*}
As $A$ is normal, this last is in $[A,Q]$.
\end{proof}

The assumption that both $A$ and $Q$ are abelian is necessary for the result. For example taking $G=S_4$ with $A=V$ and $Q=S_3$, we find $[G,G]=A_4$, but $[A,Q]=A=V$.

As an aside, as this calculation only used $[a,b]=[x,y]=1$ and $a^x,b^y\in A$, it generalizes without change to show that in an arbitrary group, if $A$ is any abelian subgroup and $Q$ any abelian subgroup that normalizes $A$ (so that $AQ$ is also a subgroup), $[AQ,AQ] = [A,Q]$.

\begin{remark}\label{rmk:ourproof}
We promised a proof of lemma \ref{lem:repthyofAGL} based on the isotypical-or-induced lemma. Recall that this is the statement that $G=AGL(1,q)$ only has one irreducible representation $W$ of degree greater than one, and it is induced from any nontrivial character of $A$. See \ref{not:AGL} for notation.
\end{remark}

\begin{proof}[Proof of lemma \ref{lem:repthyofAGL}]
In \ref{not:AGL}, $A$ and $Q$ are both identified as specific subgroups of $G$. But we also have a canonical surjective homomorphism $G\rightarrow Q$, since $G=A\rtimes Q$. Thus we can speak of an element of $Q$ as an element of $G$ but we can also ask for its preimage in $G$.

Since $A\triangleleft G$ is normal, the isotypical-or-induced lemma (\ref{isotypicalorinduced}) tells us that $V$ is induced from an irreducible representation $L$ of some subgroup $M$ containing $A$ such that the restriction of $L$ to $A$ is isotypical. It will turn out that either $M=A$, $L$ is a nontrivial character of $A$, and $V$ is $W$, or else $M=G$ and $V=L$, and $L$ is the pullback to $G$ of one of the $q-1$ characters of $G/A\cong Q$. The plan will be to show that in all cases $L$ is one-dimensional, and then to show that if $M$ contains $A$ properly then this also implies $M=G$.

We first claim that $L$ is necessarily one-dimensional. We see this as follows:

If $M=A$, then it is abelian, and any irreducible representation of it is one-dimensional. 

On the other hand suppose $M$ contains $A$ strictly, in which case its image $Q_M$ in $Q$ contains $m_a$ for some specific $a \neq 1\in\F_q^\times$. Now $Q_M\subset M$, because $A\subset M$ is the entire kernel of $G\rightarrow Q$, so $M$ contains all the preimages of the elements of $Q_M$, and in particular the elements themselves. Thus $M$ contains $m_a$.

Clearly $A$ contains $[M,M]$, as $M/A$ is isomorphic to the subgroup $Q_M$ of $Q$ and therefore abelian. But also, $[M,M]$ contains 
\[
[t_x,m_a] = t_x^{-1}t_x^{m_a} = t_{-x}t_{ax} = t_{(a-1)x}
\]
for any $x\in A$. Since $a-1\neq 0$, the map $x\mapsto (a-1)x$ is surjective onto $\F_q$, and therefore the map
\begin{align*}
A &\rightarrow [M,M]\subset A\\
t_x &\mapsto [t_x,m_a] = t_{(a-1)x}
\end{align*}
is surjective onto $A$. Thus $A = [M,M]$.

But on closer examination of $[t_x,m_a]$, one sees that it is $\in [A,Q_M]\subset [M,M]$. Thus the previous calculation even shows that $A\subset [A,Q_M]$, and therefore that $[A,Q_M] = [M,M] = A$. (For an alternative proof that $[A,Q_M] = [M,M]$, appeal to lemma \ref{commofsplitmetab}.)

Therefore,
\[
[M,M]=[A,Q_M]\subset [A,M] = [[M,M],M] \subset [M,M],
\]
so we conclude $A=[[M,M],M]$ as well. 

Now $L$'s restriction to $A$ is isotypical; as $A$ is abelian that means it is scalar, which is to say that the image of $A$ in $GL(L)$ is central. This means that the actions of elements of $A$ commute with any element of $M$ on $L$. This implies that $M\rightarrow GL(L)$ factors through 
\[
M/[A,M] = M/[[M,M],M].
\]
But 
\[
M/[[M,M],M] = M/A = M/[M,M]
\]
is abelian, so because $L$ is irreducible, it is one-dimensional after all.

Therefore it restricts to a character $\chi$ on $A$. Let $\chi_L$ be the character of the one-dimensional representation $L$, so that $\chi_L|_A = \chi$, and let $\chi_V$ be the character of the induced representation $V$, so that for any $g\in G$ we have
\[
\chi_V(g) = \Ind_M^G\chi_L(g) = \sum_{[s]\in G/M} \chi_L(g^s).
\]

Our plan is to show that if  $\chi$ is trivial, then $V$ is one-dimensional (and equal to $L$; and $M=G$); and then that if $M$ contains $A$ properly this case must hold. Therefore either $V$ is one-dimensional, or else $V$ is induced from a {\em nontrivial} character of $M=A$. Then we will check that all nontrivial characters of $A$ induce the same representation $W$ and that it is irreducible. Here are the arguments:

Since $A$ is normal, $t_x^s \in A$ for any $s\in G$. If $\chi=\chi_L|_A$ is trivial, this implies $\chi(t_x^s) = 1$ regardless of $s$. Thus
\begin{align*}
\chi_V(t_x) &= \Ind_M^G\chi(t_x) \\
&= \sum_{[s]\in G/M} \chi(t_x^s) \\
&= \sum_{[s]\in G/M} 1 \\
&= [G:M] = \Ind_M^G\chi(1).
\end{align*}
This equality implies that this induced representation $V$ is trivial on $A$, and therefore $G\rightarrow GL(V)$ factors through $G/A\cong Q$. But because $Q$ is abelian and $V$ is presumed irreducible, this implies that 
\[
1=\dim V = \Ind_M^G\chi(1),
\]
so we conclude $[G:M]=1$. Thus if $\chi$ is trivial, $M=G$ and $V=L$.

Above, we found that if $M$ contains $A$ properly, then $[M,M]=A$. Since $L$ is one-dimensional, it is trivial on $[M,M]=A$. Then $\chi = \chi_L|_A$ is trivial, so we are in the case $M=G$ and $V=L$ just described, and $V$ is one-dimensional.

To summarize, if $\chi$ is trivial then $V$ is one-dimensional, and if $M$ contains $A$ properly then this holds. Therefore, if $V$ is not one-dimensional, it must be that $M=A$ {\em and} $\chi$ is not trivial.

Thus the only way for an irreducible representation $V$ of $G$ to have degree $>1$ is for it to be induced from a nontrivial character of $A$. 

It was observed previously (remark \ref{rmk:freeandtrans}) that the conjugation action of $Q=G/A$ is free and transitive on the nonidentity elements of $A$. It follows that the induced action on the nontrivial elements of the character group $\widehat{A}$ is also free and transitive. (Use lemma \ref{freeaction} of the appendix to conclude that the action is free, and then count the elements of $A$ and $\widehat{A}$ to conclude transitivity.) Therefore, if $\chi$ is now any nontrivial character of $A$, then
\[
\Ind_A^G\chi(g) = \sum_{[s]\in G/A} \chi(g^s)
\]
restricted to $g\in A$ yields precisely the sum of all $q-1$ nontrivial characters of $A$.

Thus inducing from any nontrivial character $\chi$ of $A$ yields the same representation. This is $W$.

Since $A$ is normal and the action on the nontrivial characters is free, $t_x\mapsto\chi(t_x^s)$ is a distinct character of $A$ for every $s$. Thus $W$ is irreducible by Mackey's criterion (\cite{serre}, Proposition 23).
\end{proof}
\newpage

\chapter{Polynomial invariants of permutation groups over $\Z$ and $\F_p$}\label{ch:invar}

\section{Background and motivation}\label{sec:background}

In this chapter we consider a typical problem in invariant theory: given a group acting on a ring, describe the subring fixed under the action. This subject dates back to the second half of the 19th century (\cite[p.~1]{derksenkemper}). The original impetus came from geometry: interest in coordinate-independent descriptions of important geometric quantities naturally led to the question of which quantities do not change when you change coordinates. It became apparent, over time, that the changes of coordinates constituted a group (\cite[pp.~24--5]{eisenbud}); meanwhile, the totality of quantities being considered constituted a ring, usually a ring of polynomial functions over a field (generally $\R$ or $\C$), and the invariant quantities were a subring.

The 19th century era culminated with groundbreaking papers \cite{hilbert1}, \cite{hilbert2}, by David Hilbert, which showed that for the groups under consideration, including finite groups, the invariant subring of a polynomial ring is always finitely generated as an algebra over the ground field. This was the first instance of what is now a long tradition of theorems that give general conditions under which an invariant ring is guaranteed to be well-behaved in some important respect. Other examples, in the case of finite groups, are:

\begin{itemize}
\item {\em Noether's bound} (1916), which states that if the characteristic of the ground field does not divide the group order $|G|$, then the invariant ring is generated in degrees $\leq |G|$.\footnote{Emmy Noether's original formulation of her result required the assumption that the characteristic of the ground field, if not zero, {\em exceeds} the group order. However, it was long suspected that the bound held under the weaker assumption we have mentioned here. This discrepancy between what had been proven and what was expected became known as the ``Noether gap." It was finally closed around the turn of this century by John Fogarty (\cite{fogarty}) and Peter Fleischmann (\cite{fleischmann}), working independently.}
\item The {\em Chevalley-Shephard-Todd theorem}, discussed in the introduction, which under the same assumption gives a precise characterization of the groups whose invariant ring is ``as nice as possible", i.e. polynomial. 
\item Hochster and Eagon's 1971 result (\ref{thm:hochstereagon} below) that under the same assumption, the invariant ring always has the desirable property known as {\em Cohen-Macaulayness}, which we will explicate at length shortly.
\end{itemize}

One is struck by the shared assumption in these three theorems: that the characteristic of the ground field is prime to the group order. This is the so-called {\em nonmodular case}, and these results are representative of the fact that the theory as a whole is well-behaved in this case. In the {\em modular case}, the invariant ring can be much more pathological; all three of these theorems, and many others, can fail. Thus, interest has arisen in recent decades in sorting out when in the modular case such pathologies arise. 

We will be focused on the question of Cohen-Macaulayness. We give some sample results, to set the stage. Let $k$ be a field of characteristic $p$. Let $G$ be a finite group with order divisible by $p$, acting linearly on a $k$-vector space $V$ of dimension $n$. Let $k[V]$ be the coordinate ring of $V$ seen as a $k$-variety, in other words, the polynomial algebra $k[x_1,\dots,x_n]$ generated by coordinate functions $x_1,\dots,x_n$ on $V$, with the induced action of $G$. Let $k[V]^G$ be the ring of polynomials invariant under the action.

\begin{itemize}
\item In 1980, Ellingsrud and Skjelbred (\cite{ellingsrud}) showed that if $G$ is cyclic of order $p^k$, then $k[V]^G$ is not Cohen-Macaulay unless $G$ fixes a subspace of $V$ of codimension $\leq 2$. 
\item In 1996, Larry Smith (\cite{smith96}) showed that if $\dim_k V = 3$, then $k[V]^G$ {\em is} Cohen-Macaulay. (This was priorly known to hold for $\dim_kV\leq 2$.)
\item In 1999, Campbell et al (\cite{campbelletal}) showed that if $G$ is a $p$-group, and $V=W^{\oplus 3}$, where $W$ is any nontrivial representation of $G$, then $k[V]^G$ is not Cohen-Macaulay. 
\item Also in 1999, Gregor Kemper (\cite{kemper99}) showed that if $G$ is a $p$-group and $k[V]^G$ is Cohen-Macaulay, then $G$ is necessarily generated by elements $g$ whose fixed-point sets in $V$ have codimension $\leq 2$, generalizing Ellingsrud and Skjelbred's result beyond cyclic groups.
\end{itemize}

See \cite{kemper12} for a 2012 summary of the state of the art. 

Our interest will be in permutation groups $G\subset S_n$, acting on $V$ by permuting a basis. These have the feature that $k$ is not an essential part of the definition of the action, allowing it to be varied. Thus we can ask: 

\begin{question}\label{q:CMpermutationgroups}
For which $G\subset S_n$ is $k[V]^G$ Cohen-Macaulay regardless of $k$?
\end{question}

Here, little appears to have been priorly known. Kemper in \cite{kemper01} gave a criterion to determine Cohen-Macaulayness when the characteristic of $k$ divides $|G|$ exactly once, but for most permutation groups this leaves out several primes. A few results are established:

\begin{itemize}
\item If $G$ is a Young subgroup, then $k[V]^G$ is a polynomial algebra over $k$, so it is Cohen-Macaulay regardless of $k$. 

\item Kemper (\cite{kemper99}) showed that if $G\subset S_n$ is {\em regular} (i.e. its action on $\gls{[n]}$ is free and transitive), then $k[V]^G$ is Cohen-Macaulay over every $k$ if it is isomorphic to $C_2$, $C_3$, or $C_2\times C_2$, but not otherwise. (In fact, in other cases, it is not Cohen-Macaulay for any $k$ with $\operatorname{char}k$ dividing $|G|$.)

\item Victor Reiner (\cite{reiner92}) and Patricia Hersh (\cite{hersh}, \cite{hersh2}) have shown that $A_n$, the diagonally embedded $S_n\hookrightarrow S_n\times S_n\subset S_{2n}$, and the wreath product $S_2\wr S_n\subset S_{2n}$, have invariant rings that are Cohen-Macaulay regardless of the field.
\end{itemize}

Our primary contribution in this chapter is to give a sufficient criterion for a permutation group to have a Cohen-Macaulay invariant ring over all fields. It unites and extends the results of Reiner and Hersh, and echoes Kemper's result mentioned above. We state our theorem in the next section and prove it over the course of 
this chapter.

We conjecture that our criterion is also necessary and present supporting evidence in the final section.

There is an independent motivation for question \ref{q:CMpermutationgroups}. A striking recent development in Galois theory is Manjul Bhargava and Matthew Satriano's work \cite{bhargavasatriano} extending the notion of Galois closure from finite field extensions to (locally module-free of finite rank) ring extensions. Bhargava's student Owen Biesel further developed this idea in \cite{biesel}, generalizing Bhargava and Satriano's {\em $S_n$-closure} to a more general {\em $G$-closure} operation for an arbitrary permutation group $G$.

In analyzing the $G$-closures of monogenic ring extensions (see \cite{biesel}, Chapter 5), a certain ``universal" role is played by the invariant ring of $G$ over the {\em integers}. Thus it is natural to ask questions about the structure of this ring. 

Let $R = \Z[x_1,\dots,x_n]$, and consider $R^G$. In the cases analyzed in his thesis ($A_n,\;\forall n$; $D_4\subset S_4$), Biesel found, and made use of the fact, that $R^G$ is free as a module over $R^{S_n}$. But when Riccardo Ferrario brought Biesel's approach systematically to each of the subgroups of $S_4$, he discovered (\cite{ferrario}, \S 2.2) that for $G=C_4\subset S_4$, $R^G$ is not free.

\begin{question}[Biesel] \label{q:bieselsquestion} For what permutation groups $G$ is the invariant ring $R^G$ free as an $R^{S_n}$-module? \end{question}

Below, it will be shown that this question is equivalent to question \ref{q:CMpermutationgroups}.

\section{The result, and the plan of attack}

Let $G\subset S_n$ be a permutation group, acting on the polynomial ring 
$$
R = \Z[x_1,\dots,x_n]
$$ 
by permuting the variables. Let $R^G\subset R$ be the subring of $G$-invariants. Our major goal is to prove:

\begin{thm}[Cohen-Macaulayness of integer invariants]\label{thm:mainresult}
Let $\Grr$ be the subgroup of $G$ generated by its transpositions, double transpositions, and three-cycles. If $G = \Grr$, then $R^G$ is Cohen-Macaulay.
\end{thm}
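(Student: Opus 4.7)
The plan is to leverage the Garsia-Stanton bridge between invariant theory and combinatorial commutative algebra to turn Cohen-Macaulayness of $R^G$ into a topological statement about a quotient simplicial complex, then apply Lange's theorem to verify it when $G = \Grr$.

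First I would reduce the problem to fields. Since $R^{S_n}$ is a polynomial ring and $R^G$ sits between it and $R$ as a finitely generated module, the Cohen-Macaulayness of $R^G$ over $\Z$ can be checked one residue field at a time: it suffices to show that $k[x_1,\dots,x_n]^G$ is Cohen-Macaulay for $k = \Q$ and $k = \F_p$ for each prime $p$. For $k = \Q$ and for nonmodular primes $p \nmid |G|$, Hochster-Eagon supplies the conclusion for free, so the real content lies at primes dividing $|G|$. Next I would invoke the Garsia-Stanton machinery: to $G$ acting on $R$ they attach the Stanley-Reisner ring $S = k[\Bn \setminus \emptyset]$ and a $G$-equivariant map $\garsia : S \to R$ (the Garsia map) that allows Cohen-Macaulayness to be transferred from $S^G$ to $R^G$. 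Since $S^G$ is itself the face ring of the quotient complex $\Delta/G$, where $\Delta$ is the order complex of $\Bn \setminus \emptyset$ — the Coxeter complex of $S_n$, topologically a sphere — Reisner's criterion translates the whole question into the combinatorial-topological condition that every link in $\Delta/G$ has the reduced homology of a sphere (or is acyclic) over each coefficient field $k$.

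By the topological version of Theme~1, the link of the image of a face $\alpha \in \Delta$ in $\Delta/G$ is the quotient of the link of $\alpha$ in $\Delta$ by the face stabilizer $G_\alpha$. The link in $\Delta$ is itself a simplicial sphere, so the question becomes: when is the quotient of a sphere by a face stabilizer well-behaved? Here the key structural lemma is that $G_\alpha$ inherits the generation property from $G$: if $G = \Grr$ then $G_\alpha = (G_\alpha)_{rr}$ for every face $\alpha$. I expect this to be the main technical obstacle and the place where a genuinely new argument about permutation group actions on the Boolean lattice is required, since a priori restricting to a stabilizer and re-expressing products of transpositions, double transpositions and 3-cycles in terms of \emph{that stabilizer's own} transpositions, double transpositions and 3-cycles is not automatic.

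Granted the stabilizer lemma, the proof concludes by invoking Lange's theorem. Transpositions act on $\R^n$ as reflections, while double transpositions and 3-cycles each pointwise fix a codimension-$2$ subspace and hence act as rotations in the sense of the introduction. Consequently each $G_\alpha$ acts on its ambient linear subspace as a group generated by reflections and rotations, and Lange's theorem then guarantees that the quotient is a PL manifold — a ball or a closed half-space. Translating back to the simplicial setting (intersecting with a sphere and taking links), this says every link in $\Delta/G$ is a PL sphere or PL ball, so its reduced homology vanishes in all relevant degrees over every coefficient field. This verifies Reisner's criterion for $\Delta/G$, yielding Cohen-Macaulayness of $S^G$, and then the Garsia-Stanton transfer gives Cohen-Macaulayness of $R^G$ itself.
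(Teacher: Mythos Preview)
Your outline is essentially correct and matches the paper's architecture: Reisner--Munkres reduces Cohen-Macaulayness of $S^G \cong A[\Delta/G]$ to topology, Lange supplies the topology, and the Garsia map transfers freeness back to $R^G$. But you have introduced an unnecessary detour at the topological step, and there is one small factual slip.

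You propose to verify Reisner's criterion face by face, which forces you into a ``stabilizer lemma'' $G_\alpha = (G_\alpha)_{rr}$ and a separate application of Lange to each $G_\alpha$. The paper avoids this entirely: it applies Lange \emph{once}, to $G$ itself acting linearly on the hyperplane $H = \{\sum x_i = 0\}\cong\R^{n-1}$. The hypothesis $G = G_{rr}$ says precisely that $G$ is a rotation-reflection group on $H$, so Lange gives that $H/G$ is a PL manifold (with or without boundary). Since $|\Delta|$ is a cone neighborhood of the origin in $H$, the quotient $|\Delta/G|$ is the cone over the link of the origin in this PL manifold, hence a PL ball. A ball is Cohen-Macaulay over every field (equivalently over $\Z$), so Reisner's criterion is verified globally with no link-by-link check. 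The paper's only stabilizer-type lemma is used for the \emph{converse} direction (showing $\Delta/G$ fails to be Cohen-Macaulay when $G\neq G_{rr}$), not for the theorem you are proving. In fact your stabilizer lemma is a \emph{consequence} of the global argument: once $H/G$ is known to be a PL manifold, the local quotient at any point is one too, and the ``only if'' direction of Lange then forces each point stabilizer to be rotation-reflection. So the obstacle you flag as ``the main technical obstacle'' dissolves.

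Minor slip: $\Delta = \Delta(B_n\setminus\{\emptyset\})$ is the barycentric subdivision of the full $(n-1)$-simplex, hence a \emph{ball}; the Coxeter complex of $S_n$ is its boundary $\partial\Delta$, which is the sphere. This does not break your argument but affects how you phrase the link analysis.
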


It will also be shown that the conclusion is equivalent to $R^G$ being free as an $R^{S_n}$-module.

This result is an analog to the Chevalley-Shephard-Todd theorem. Like the latter, it asserts that the invariant ring is ``nice" when the group is generated by elements fixing ``big" subspaces.\footnote{Note that, when viewed as linear transformations of $\R^n$, transpositions, double transpositions and 3-cycles are precisely the permutations that pointwise-fix subspaces of codimension at most 2.} To fold it into the theme of this thesis, we can also view it as a statement about ``well-behaved quotients." In the language of schemes, the polynomial ring's spectrum is affine $n$-space over $\Z$, carrying an action of a permutation group $G$, and the spectrum of the invariant ring is the quotient of this scheme by the action. The theorem asserts that the quotient is a Cohen-Macaulay scheme if $G=G_{rr}$. This will turn out to mean it has a finite flat morphism to affine space.

As noted in the last section, we also conjecture the converse:

\begin{conj}\label{conj:notCM}
If $R^G$ is Cohen-Macaulay, then $G=\Grr$.
\end{conj}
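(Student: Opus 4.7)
The strategy is to argue by contrapositive: assume $G \neq \Grr$ and show that $R^G$ fails to be Cohen-Macaulay. Two routes present themselves — an algebraic one that reduces mod $p$ and invokes Kemper's theorem in modular invariant theory, and a topological one that reverses the Garsia-Stanton / Reisner / Lange chain used to prove Theorem \ref{thm:mainresult}.

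Algebraic route: since $R$ is a free $\Z$-module, $R^G$ is $\Z$-flat, and a standard descent argument gives that if $R^G$ is Cohen-Macaulay then $R^G/pR^G$ is Cohen-Macaulay for every prime $p$. The first technical step is to compare $R^G/pR^G$ with $\F_p[x_1,\dots,x_n]^G$; the natural map is always injective, and its failure to be surjective is governed by the group cohomology of $G$ acting on $R$, an obstruction that must be controlled. Granting that Cohen-Macaulayness of $\F_p[x_1,\dots,x_n]^G$ can be extracted, one appeals to Kemper's 1999 theorem (\cite{kemper99}) for a Sylow $p$-subgroup $P$: if $\F_p[x_1,\dots,x_n]^P$ is Cohen-Macaulay, then $P$ is generated by its elements whose linear fixed-point set in $\R^n$ has codimension $\leq 2$. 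Translating this to cycle structure — the fixed space of $\sigma \in S_n$ has codimension $n - c(\sigma)$, where $c(\sigma)$ is the number of cycles — shows that the codimension-$\leq 2$ elements of $S_n$ are precisely the transpositions, double transpositions, and $3$-cycles. For $p \geq 5$ no nontrivial $p$-group element has this shape, so $P = \{e\}$ and $|G|$ must be $\{2,3\}$-smooth. For $p \in \{2,3\}$ every Sylow of $G$ then lies inside $\Grr$. Since the set of transpositions, double transpositions, and $3$-cycles is closed under conjugation, $\Grr \triangleleft G$; the Sylow theorems then imply $G/\Grr$ has no nontrivial Sylow, hence $G = \Grr$.

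The principal obstacle is the descent of Cohen-Macaulayness from $\F_p[x_1,\dots,x_n]^G$ to the Sylow invariant ring $\F_p[x_1,\dots,x_n]^P$. The classical transfer argument handles the opposite direction ($P$-CM implies $G$-CM, because $[G:P]$ is a unit in $\F_p$ and $\F_p[x]^G$ is then a direct summand of $\F_p[x]^P$ via the Reynolds operator), but no general descent exists in the direction we need, so one must exploit features particular to permutation actions — for instance, analyzing $\F_p[x]^P$ as an explicit $\F_p[x]^G$-module or bringing Garsia-Stanton to bear on both subgroups simultaneously. A plausible alternative is the topological route: reinterpret Cohen-Macaulayness of $R^G$ via Reisner's criterion on the Stanley-Reisner ring associated by Garsia-Stanton, so that Cohen-Macaulayness becomes a homology-manifold condition on the simplicial quotient of the order complex of $\Bn \setminus \emptyset$ by $G$. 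The hard new step is then upgrading this homology-manifold condition to a genuine PL-manifold condition, which is exactly the input Lange's theorem needs to force $G$ to be generated by reflections and rotations — i.e., $G = \Grr$. I expect the bridge step in whichever route one takes — CM-descent to the Sylow in the algebraic route, or homology-to-PL manifold in the topological route — to be the decisive technical difficulty.
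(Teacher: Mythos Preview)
The statement you are attempting to prove is labeled a \emph{conjecture} in the paper (Conjecture~\ref{conj:notCM}), and the paper does not prove it. It is explicitly left open; section~\ref{sec:notCM} is devoted to discussing evidence and obstacles. So there is no ``paper's own proof'' to compare against.

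That said, your analysis is largely on target and worth comparing to the paper's discussion. For the algebraic route: the paper observes that the conjecture holds for $p$-groups by exactly the Kemper argument you sketch --- when $G$ is itself a $p$-group, CM of $R^G$ over $\Z$ descends to CM of $\F_p[x]^G$, and Kemper's theorem forces $G = \Grr$. Your attempt to bootstrap this to arbitrary $G$ via Sylow subgroups hits the obstacle you correctly name: CM of $\F_p[x]^G$ does not in general descend to CM of $\F_p[x]^P$ for a Sylow $p$-subgroup $P$ (the Reynolds-operator transfer runs the wrong way). This is a genuine gap, not a technicality, and the paper does not know how to close it either.

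For the topological route, you have misidentified the hard step. The paper already proves (Theorem~\ref{thm:quotientCMforpermgroups}, the $\Leftarrow$ direction) that Cohen-Macaulayness of $\Delta/G$ over $\Z$ --- a pure homology condition, no PL upgrade required --- forces $G = \Grr$; that argument is self-contained via Lemma~\ref{lem:primestabilizer} and does not invoke Lange in that direction. The actual gap lies earlier: the Garsia map (Theorem~\ref{thm:garsiabasis}) transfers bases, hence Cohen-Macaulayness, from $S^G$ to $R^G$, but there is no known reverse transfer. Whether CM of $R^G$ implies CM of $S^G$ is precisely Conjecture~\ref{conj:mainconj}, which the paper situates in the broader open problem of whether an algebra with straightening law being Cohen-Macaulay forces its discrete counterpart to be. Your ``reinterpret CM of $R^G$ via Reisner's criterion'' step is therefore not a reinterpretation --- it is the whole conjecture.
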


We discuss our evidence for this conjecture in section \ref{sec:notCM}.

Figure \ref{fig:schematic} is a schematic diagram of the argument. $G$, $R$, $R^G$, and $\Grr$ were mentioned above; the rest of the notation ($S$, $S^G$, $A[\Delta / G]$, etc.) will be defined over the course of the chapter.

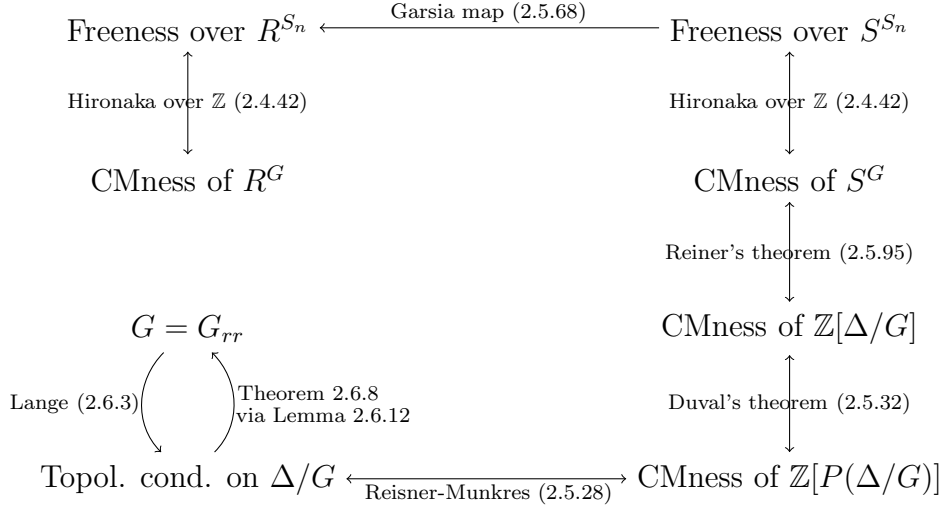
\begin{figure}
\begin{center}
\begin{tikzpicture}

\node (CMnessR) at (0,0) {CMness of $R^G$};
\node (FreenessR) at (0,2) {Freeness over $R^{S_n}$};
\draw [<->] (CMnessR) -- (FreenessR);
\node at (0,1) {\scriptsize Hironaka over $\Z$ (\ref{thm:HironakaoverZ})};

\node (CMnessS) at (8,0) {CMness of $S^G$};
\node (FreenessS) at (8,2) {Freeness over $S^{S_n}$};
\draw [<->] (CMnessS) -- (FreenessS);
\node at (8,1) {\scriptsize Hironaka over $\Z$ (\ref{thm:HironakaoverZ})};

\draw [->] (FreenessS) -- (FreenessR);
\node at (4,2.2) {\scriptsize Garsia map (\ref{thm:garsiabasis})};

\node (CMnessADel) at (8,-2) {CMness of $\Z[\Delta / G]$};
\draw [<->] (CMnessADel) -- (CMnessS);
\node at (8,-1) {\scriptsize Reiner's theorem (\ref{thm:reinerSRquotient})};

\node (CMnessPADel) at (8,-4) {CMness of $\Z[P(\Delta / G)]$};
\draw [<->] (CMnessADel) -- (CMnessPADel);
\node at (8,-3) {\scriptsize Duval's theorem (\ref{thm:duval})};

\node (HomCond) at (0,-4) {Topol. cond. on $\Delta / G$};
\draw [<->] (HomCond) -- (CMnessPADel);
\node at (4,-4.2) {\scriptsize Reisner-Munkres (\ref{thm:reisnermunkres})};

\node (Grr) at (0,-2) {$G=\Grr$};
\draw [->] (Grr) to [out=225,in=135] (HomCond);
\draw [->] (HomCond) to [out=45,in=315] (Grr);
\node at (-1.5,-3) {\scriptsize Lange (\ref{thm:lange})};
\node at (1.6,-2.85) {\scriptsize Theorem \ref{thm:quotientCMforpermgroups}};
\node at (1.8,-3.15) {\scriptsize via Lemma \ref{lem:primestabilizer}};

\end{tikzpicture}
\end{center}
\caption{Schematic diagram of proof of theorem \ref{thm:mainresult}.}\label{fig:schematic}
\end{figure}

The structure of the chapter is as follows:

In section \ref{sec:FTSPandbasics}, we state the fundamental theorem on symmetric polynomials (FTSP), and collect some other basic facts about invariant rings that are used in the sequel.

In section \ref{sec:CMrings}, we use the theory of Cohen-Macaulay rings to 
show that the Cohen-Macaulay property of $R^G$ is equivalent to the existence of a free basis for it as a module over the subring of symmetric polynomials, and we prove the equivalence of questions \ref{q:bieselsquestion} and \ref{q:CMpermutationgroups}. 



In section \ref{sec:stanleyreisner}, we use work of Adriano Garsia and Dennis Stanton \cite{garsia}, \cite{garsiastanton} to connect the problem of the Cohen-Macaulayness of the polynomial invariant ring $R^G$ to the topology of a certain regular CW complex. This takes several steps:

In subsection \ref{sec:SRrings}, we review some fundamentals regarding simplicial complexes and define the {\em Stanley-Reisner ring} of a simplicial complex and a poset. We also define boolean complexes and their Stanley-Reisner rings.

In subsection \ref{sec:reisnermunkres}, we report on foundational work by Gerald Reisner, James Munkres, and Richard Stanley that relates the Cohen-Macaulayness of the Stanley-Reisner ring to the topology of the associated complex.

In subsection \ref{sec:garsiamap}, we use the ideas of Garsia and Stanton to connect the Cohen-Macaulayness of a polynomial invariant ring to that of an invariant ring $S^G$ inside a Stanley-Reisner ring $S$. We develop these ideas in a self-contained way. We define the notion of {\em stacking up}. We see this as the key to understanding the power of the {\em Garsia map}, a $\Z$-linear isomorphism defined by Garsia from the Stanley-Reisner ring to the polynomial ring. The Garsia map is ``approximately a homomorphism," in a sense that is made precise in the section, and can be used to transfer module bases. We then give a version of the classical proof of the FTSP reformulated in the language of the Garsia map. As a bonus, the theory delivers the classical homogeneous system of parameters (h.s.o.p.) for $R^G$, and a corresponding h.s.o.p. for $S^G$.

In subsection \ref{sec:reinerthm}, we introduce the notion of {\em balanced} boolean complexes and use a theorem of Victor Reiner to identify the invariant subrings $S^G$ with full Stanley-Reisner rings of balanced boolean complexes $\Delta / G$. This allows us to finally establish a connection between our original problem concerning polynomial invariants and a topological problem. This subsection concludes section \ref{sec:stanleyreisner}.

In section \ref{sec:quotientsofspheres}, we solve the topological problem, using recent work in orbifold theory by Christian Lange. We are able to determine for which groups $G$ our boolean complexes have the topological property implied by the Cohen-Macaulay property of the Stanley-Reisner ring.

In section \ref{sec:mainresult}, we assemble the proof of \ref{thm:mainresult}, per figure \ref{fig:schematic} above.

However, we aim not only to prove Cohen-Macaulayness, but to give explicit bases for the invariant ring with $\Z$-coefficients as a module over the symmetric subring. For this purpose, in section \ref{sec:shellings} we again make use of the work of Garsia and Stanton, which shows how to explicitly construct a basis from a {\em shelling} of the boolean complex, when such exists. We also extend their work, defining the notion of a {\em cell basis} of a balanced boolean complex, from which one can immediately extract a basis for the invariant ring over $\Q$, whether such exists over $\Z$, and conjecture that Cohen-Macaulayness of a balanced boolean complex always implies that a cell basis exists.

Finally, in section \ref{sec:notCM}, we discuss our conjecture \ref{conj:notCM}. We also collect other open questions prompted by our inquiry.

\section{The FTSP, and some invariant theory basics}\label{sec:FTSPandbasics}

The fundamental theorem on symmetric polynomials (FTSP) may be thought of as the primordial theorem of invariant theory, since it describes the invariant ring of a group acting on a ring, in terms of explicit generators, and yet it predates the existence of the field of invariant theory by arguably over a century. Indeed, it predates the concepts of groups and rings.\footnote{There is some ambiguity regarding how exactly to date the theorem. Two respected historical monographs on Galois theory, \cite{edwards} and \cite{tignol}, tell somewhat different stories based on differing standards about what to count. Harold Edwards, in \cite{edwards}, credits Newton with knowledge of the essence of the theorem, and both authors describe the theorem as entering the collective consciousness of mathematicians over the course of the 18th century. Edwards sets a standard for what constitutes a ``precise statement and proof" that likely was not met before the 1816 proof by Gauss, based on the lexicographic monomial order, mentioned in the next paragraph. Jean-Pierre Tignol, in \cite{tignol}, credits Edward Waring's 1770 {\em Meditationes Algebraicae} with the first printed proof (see \cite{waring}, Problems I -- III in Chapter 1) and even with the lexicographic order idea. In our opinion, Waring's work does contain a proof over $\Q$, modulo a techicality important to Edwards, but not over $\Z$, and we think Tignol is being much too generous to read the lexicographic argument into it. In addition to his own proof, Waring does (Problem III, section 3) describe in vague terms a method for writing down a representation of a symmetric polynomial that is reminiscent of Gauss' lexicographic algorithm, but he does not define lexicographic order or argue that the algorithm terminates.}

This theorem, and its classical 1816 proof by Gauss, are wellsprings of inspiration for our inquiry, so it serves us to recall them here. 

\begin{definition}\label{def:elemsympolys}
Let $x_1,\dots,x_n$ be indeterminates in a polynomial ring $R$. The \textbf{elementary symmetric polynomials} in the $x_i$ are the $n$ polynomials
\begin{align*}
\sigma_1 &= \sum_{j\in \gls{[n]}} x_j = x_1 + x_2 + \dots + x_n\\
\sigma_2 &= \sum_{S\in \binom{\gls{[n]}}{2}} \prod_{j\in S} x_j = x_1x_2 + x_1x_3 + \dots + x_{n-1}x_n\\
&\:\vdots \\
\sigma_i &= \sum_{S \in \binom{\gls{[n]}}{i}} \prod_{j\in S} x_j\\
&\:\vdots \\
\sigma_n &= \prod_{j\in \gls{[n]}} x_j = x_1x_2\dots x_n
\end{align*}
where $\binom{\gls{[n]}}{i}$ is the set of $i$-subsets of $\gls{[n]}$.
\end{definition}

\begin{thm}[FTSP]\label{thm:ftsp}
Let $A$ be an arbitrary commutative unital ring, and let $R=A[x_1,\dots,x_n]$. Let $S_n$ act on $R$ by permuting the variables. Then the subring $A[\sigma_1,\dots,\sigma_n]$ is a polynomial algebra, and it coincides precisely with $R^{S_n}$.\footnote{In terms that would have been somewhat more familiar to the seventeenth-through-nineteenth century authors who discovered it, this theorem is asserting that every symmetric polynomial is representable in a unique way as a polynomial in the $\sigma_i$'s. Most of these authors thought of the $x_i$'s as being roots of a polynomial of which the $\sigma_i$'s are coefficients; the technicality important to Edwards, mentioned above, is that they did this without ever proving that polynomials always have roots.}
\end{thm}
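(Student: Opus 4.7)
The plan is to follow the classical 1816 argument of Gauss, based on the lexicographic monomial order, which adapts cleanly to an arbitrary coefficient ring $A$. There are three things to establish: (i) the inclusion $A[\sigma_1,\dots,\sigma_n] \subseteq R^{S_n}$, (ii) the reverse inclusion, and (iii) algebraic independence of the $\sigma_i$ over $A$. Claim (i) is immediate: each $\sigma_i$ is symmetric by construction, so any polynomial in them is symmetric.

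For (ii), equip the set of monomials in $R$ with the lexicographic order, where $x_1^{a_1}\cdots x_n^{a_n} > x_1^{b_1}\cdots x_n^{b_n}$ if $a_i > b_i$ at the first index $i$ where they differ. First I would observe the key lemma: if $f \in R^{S_n}$ is nonzero and its leading monomial (in lex order) is $c\cdot x_1^{a_1}\cdots x_n^{a_n}$ with $c\in A$, then the exponent tuple must be weakly decreasing, $a_1 \geq a_2 \geq \dots \geq a_n$. Indeed, any permutation of the variables applied to $f$ yields $f$ again, so the monomial obtained by permuting the exponents must also appear in $f$; by choice of leading term, the original exponent tuple is lex-maximal among its permutations, forcing the decreasing condition. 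Then consider the polynomial
\[
g \;=\; c\cdot \sigma_1^{a_1-a_2}\sigma_2^{a_2-a_3}\cdots \sigma_{n-1}^{a_{n-1}-a_n}\sigma_n^{a_n}.
\]
Since the leading monomial of $\sigma_i$ is $x_1x_2\cdots x_i$, the leading monomial of $g$ is exactly $c\cdot x_1^{a_1}\cdots x_n^{a_n}$. Hence $f - g$ is symmetric with strictly smaller leading monomial. Iterating, we obtain a sequence of symmetric polynomials with strictly decreasing leading monomials; since the set of monomials bounded above by a fixed one in lex order is finite (all exponents are bounded by $a_1$), the process terminates, expressing $f$ as a polynomial in the $\sigma_i$ with coefficients in $A$.

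For (iii), I would argue again by looking at leading monomials. The leading monomial of $\sigma_1^{b_1}\sigma_2^{b_2}\cdots \sigma_n^{b_n}$ is
\[
x_1^{b_1+b_2+\dots+b_n}\, x_2^{b_2+\dots+b_n}\, \cdots\, x_n^{b_n},
\]
and the map $(b_1,\dots,b_n)\mapsto (b_1+\dots+b_n,\, b_2+\dots+b_n,\,\dots,\,b_n)$ is injective (it is invertible by differencing consecutive entries). Thus distinct monomials in the $\sigma_i$ have distinct leading monomials in the $x_i$. If $P(\sigma_1,\dots,\sigma_n) = 0$ for some nonzero $P\in A[y_1,\dots,y_n]$, then among the monomials of $P$ appearing with nonzero coefficient, pick one whose image has lex-maximal leading $x$-monomial; this monomial cannot be cancelled by any other, contradicting $P(\sigma_1,\dots,\sigma_n)=0$.

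The main obstacle, such as it is, is purely bookkeeping: one must verify that the lex-leading-term argument works with arbitrary $A$ (including the case where $A$ has zero divisors or is not a domain). This is not a real difficulty because we never divide by the leading coefficient $c$; we simply subtract the explicit polynomial $g$ whose leading coefficient is also $c$, and cancellation of the leading terms happens because $c - c = 0$ in any ring. Similarly, the algebraic-independence argument uses only the fact that a nonzero coefficient remains nonzero after being attached to a unique leading monomial.
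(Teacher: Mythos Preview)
Your proof is correct and is precisely the classical Gauss argument the paper cites. The paper defers its own proof to Theorem~\ref{thm:ftsp2}, where it reformulates this same argument through the Stanley-Reisner ring $S = A[B_n\setminus\emptyset]$ and the Garsia map $\mathscr{G}:S\to R$: it first proves an analogue for the rank-row sums $\theta_i\in S$ (Proposition~\ref{prop:FTSPforS}), then transfers to $R$ using the fact that $\mathscr{G}(\theta_i)=\sigma_i$ and that $\mathscr{G}$ is an ``approximate homomorphism'' in the sense of Proposition~\ref{prop:approximatehomomorphism}. The induction runs over partitions $\lambda$ (the ``shapes'' of monomials) in degree-lexicographic order rather than over monomials themselves in lex order. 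The paper itself remarks that this is ``nothing but a dressed-up form of Gauss' proof,'' the only superficial difference being that Gauss orders monomials lexicographically while the paper orders their shapes; your version is the undressed original, and what the paper's reformulation buys is a unified language that later transfers module bases from $S^G$ to $R^G$ for arbitrary $G\subset S_n$.
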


Gauss' beautiful proof, based on the lexicographic order on monomials, can be found in many sources, e.g. \cite{jacobson}, Theorem 2.20, and \cite{sturmfels}, Theorem 1.1.1, and the original \cite{gauss}, paragraphs 3--5. We have discussed this proof at length elsewhere (\cite{FTSP}). Below (\ref{thm:ftsp2}), we give a proof based on the tools we develop in section \ref{sec:stanleyreisner}. At its heart it is nothing but the Gauss proof. In our view, the tools of section \ref{sec:stanleyreisner} provide insight into these classical ideas.

We turn to a few elementary results in invariant theory of which we will make use in the sequel.

\begin{notation}
Let $A$ be a commutative, unital, noetherian ring, and let $R$ be a finitely generated, graded $A$-algebra. In subsequent sections, except for section \ref{sec:CMrings}, we will always take $R=A[x_1,\dots,x_n]$, with the standard grading (i.e. the grading given by assigning degree $1$ to each indeterminate), unless otherwise noted, and $A=\Z$ or a field. Let $G$ be a finite group acting on $R$ by graded $A$-algebra automorphisms. If $f\in R$, we denote by $g(f)$ the image of $f$ under the action of $g$.
\end{notation}

\begin{prop}\label{prop:Risintegral}
$R$ is finite as a module over $R^G$. 
\end{prop}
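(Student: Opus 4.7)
The plan is to show each algebra generator of $R$ over $A$ is integral over $R^G$, and then conclude finite generation as an $R^G$-module from finite generation as an $R^G$-algebra.

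First I would choose a finite set of homogeneous $A$-algebra generators $f_1,\dots,f_m$ for $R$, which exists by hypothesis. Since $G$ acts by $A$-algebra automorphisms, $A \subseteq R^G$, so these $f_i$ also generate $R$ as an $R^G$-algebra. Thus $R = R^G[f_1,\dots,f_m]$, and it will suffice to show each $f_i$ is integral over $R^G$, as then $R$ is a finitely generated $R^G$-algebra by integral elements, hence finite as an $R^G$-module (a standard fact about integral extensions).

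To prove integrality of $f_i$ over $R^G$, I would form the monic polynomial
\[
P_i(T) \;=\; \prod_{g \in G}\bigl(T - g(f_i)\bigr) \;\in\; R[T].
\]
The coefficients of $P_i(T)$ are, up to sign, the elementary symmetric functions in the multiset $\{g(f_i) : g \in G\}$. Any $h \in G$ permutes this multiset (since $G$ is a group), so applying $h$ coefficient-by-coefficient fixes $P_i(T)$. Hence the coefficients of $P_i(T)$ lie in $R^G$. Taking $g = e$ in the product shows $P_i(f_i) = 0$, so $f_i$ is a root of a monic polynomial over $R^G$, i.e.\ integral over $R^G$. This gives the result.

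The proof is entirely routine, and I do not anticipate any real obstacle. The only thing worth flagging is that the statement implicitly requires $A \subseteq R^G$, which is automatic here because $G$ acts by $A$-algebra automorphisms; if one dropped that hypothesis one would need to work over $A^G$ rather than $A$. Note also that gradedness of $R$ and noetherianness of $A$ are not actually used in this argument, though they will be central in the subsequent sections where the module structure is analyzed more finely.
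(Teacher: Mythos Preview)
Your proposal is correct and essentially identical to the paper's argument: the paper also uses the monic polynomial $\prod_{g\in G}(X-g(f))$ to show integrality of any $f\in R$ over $R^G$, then cites the standard fact (Atiyah--MacDonald, Corollary 5.2) that a finitely generated integral algebra is module-finite. The only cosmetic difference is that the paper phrases it for arbitrary $f$ rather than just the generators, but this is immaterial since integrality only needs to be checked on algebra generators.
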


\begin{proof}
Any $f\in R$ satisfies the monic polynomial $F(X) = \prod_{g\in G} (X-g(f))$. This polynomial's coefficients are elementary symmetric polynomials in the images $g(f)$ of $f$; thus they are $G$-invariant, i.e. $F(X)\in R^G[X]$; this proves integrality. Since $R$ is finitely generated as an $A$-algebra and thus as an $R^G$-algebra, being integral over $R^G$ implies being module-finite over it (this is corollary 5.2 in \cite{atiyahmacdonald}).
\end{proof}

\begin{prop}\label{prop:fgasalgebra}
$R^G$ is finitely generated as an $A$-algebra.
\end{prop}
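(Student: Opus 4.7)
The plan is to deduce this from the previous proposition via the Artin-Tate lemma: we will find a noetherian $A$-subalgebra $A' \subseteq R^G$ such that $R$ is module-finite over $A'$, then use noetherianness to pass finiteness to the submodule $R^G$.

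First, I would fix a finite set of homogeneous $A$-algebra generators $r_1, \dots, r_n$ for $R$. By the proof of Proposition \ref{prop:Risintegral}, each $r_i$ is a root of the monic polynomial $F_i(X) = \prod_{g\in G}(X - g(r_i)) \in R^G[X]$. Let $A' \subseteq R^G$ be the $A$-subalgebra generated by the (finitely many) coefficients of $F_1, \dots, F_n$. Because $A$ is noetherian and $A'$ is a finitely generated commutative $A$-algebra, the Hilbert basis theorem gives that $A'$ is itself noetherian.

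Next, I would observe that each $r_i$ is integral over $A'$ by construction, so the $A'$-subalgebra of $R$ generated by $r_1, \dots, r_n$ — which is all of $R$, since the $r_i$ generate $R$ as an $A$-algebra and $A \subseteq A'$ — is module-finite over $A'$. Thus $R$ is a finitely generated $A'$-module.

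Now the crucial step: since $A' \subseteq R^G$, the invariant ring $R^G$ is an $A'$-submodule of the finitely generated $A'$-module $R$. Because $A'$ is noetherian, every submodule of a finitely generated $A'$-module is finitely generated, so $R^G$ is finite as an $A'$-module. In particular $R^G$ is finitely generated as an $A'$-algebra, and since $A'$ is finitely generated as an $A$-algebra, $R^G$ is finitely generated as an $A$-algebra, completing the proof.

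There is no real obstacle — the argument is essentially bookkeeping once the Artin-Tate framework is in place. The only subtle point worth flagging is that the noetherian hypothesis on $A$ is used in an essential way: it is what allows the passage from ``$R$ is finite over $A'$'' to ``the $A'$-submodule $R^G$ is finite over $A'$.''
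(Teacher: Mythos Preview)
Your proof is correct and takes essentially the same approach as the paper: both invoke the Artin--Tate lemma applied to the tower $A\subset R^G\subset R$. The only difference is that the paper cites Artin--Tate as a black box (\cite{atiyahmacdonald}, Proposition~7.8), whereas you have unpacked its standard proof inline by constructing the intermediate noetherian algebra $A'$ explicitly.
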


\begin{proof}
We have $A\subset R^G\subset R$. Because $A$ is noetherian, $R$ is finitely generated over $A$, and per \ref{prop:Risintegral}, $R$ is integral over $R^G$, the Artin-Tate lemma (\cite{atiyahmacdonald}, Proposition 7.8) immediately implies $R^G$ is finitely generated as an $A$-algebra.
\end{proof}

\begin{prop}\label{prop:reynoldsoperator}
If $|G|$ is invertible in $A$, there exists an $R^G$-linear projection 
$$
\Omega: R\rightarrow R^G.
$$
\end{prop}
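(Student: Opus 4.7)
The plan is to construct the classical Reynolds operator via averaging over the group. Specifically, since $|G|$ is invertible in $A$, the map
\[
\Omega: R \rightarrow R, \qquad \Omega(f) = \frac{1}{|G|}\sum_{g \in G} g(f)
\]
is well-defined as an $A$-linear endomorphism of $R$. I will show it has the required properties by a sequence of short, direct verifications.

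First, I would check that $\Omega$ lands in $R^G$. For any $h \in G$ and any $f \in R$, the action of $h$ on $\Omega(f)$ reindexes the sum:
\[
h(\Omega(f)) = \frac{1}{|G|}\sum_{g \in G} h(g(f)) = \frac{1}{|G|}\sum_{g \in G} (hg)(f) = \frac{1}{|G|}\sum_{g' \in G} g'(f) = \Omega(f),
\]
where the substitution $g' = hg$ runs over $G$ as $g$ does. Hence $\Omega(f) \in R^G$.

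Next, I would check that $\Omega$ restricts to the identity on $R^G$: if $f \in R^G$, then $g(f) = f$ for every $g \in G$, so $\Omega(f) = \frac{1}{|G|} \cdot |G| \cdot f = f$. Combined with the previous step, this shows $\Omega$ is a projection onto $R^G$. Finally, $R^G$-linearity follows because $G$ acts on $R$ by $A$-algebra automorphisms, so for $r \in R^G$ and $f \in R$, each summand satisfies $g(rf) = g(r)g(f) = r \cdot g(f)$, and hence $\Omega(rf) = r\,\Omega(f)$; together with $A$-linearity this gives $R^G$-linearity.

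There is no real obstacle here: the only place the hypothesis enters is in dividing by $|G|$, which is precisely why the invertibility assumption is made. The argument is a completely formal averaging computation and does not require any of the deeper structural results developed earlier in the chapter.
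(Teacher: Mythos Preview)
Your proof is correct and follows essentially the same approach as the paper: define $\Omega$ by averaging over $G$, then verify that its image lies in $R^G$, that it restricts to the identity on $R^G$, and that it is $R^G$-linear. The only difference is cosmetic---you spell out the reindexing argument for $G$-invariance of $\Omega(f)$, whereas the paper simply asserts this is clear from the formula.
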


\begin{proof}
The map $\Omega$ is given by averaging over $G$:
\[
\Omega(\cdot) = \frac{1}{|G|}\sum_{g\in G} g(\cdot)
\]
It is clear from the formula that it restricts to the identity on $R^G$ and that its image lies in $R^G$, thus it is a projection. The linearity as $R^G$-module follows from a direct calculation. Let $f_i\in R^G$ (and thus $\Omega(f_i)= f_i$), let $r_i\in R$, and keep in mind that each $g\in G$ is a ring automorphism:
\begin{align*}
\Omega\left(\sum_i f_i r_i\right) &= \frac{1}{|G|}\sum_{g\in G}g\left(\sum_i f_ir_i\right) \\
&= \frac{1}{|G|}\sum_{g\in G} \sum_i g(f_i)g(r_i) \\
&= \frac{1}{|G|}\sum_{g\in G} \sum_i f_ig(r_i) \\
&= \sum_i f_i \frac{1}{|G|}\sum_{g\in G} g(r_i) \\
&= \sum_i f_i \Omega(r_i).\qedhere
\end{align*}
\end{proof}

\begin{definition}
The projection $\Omega(\cdot)$ is called the \textbf{Reynolds operator}. It is the main tool for proving structural results in the nonmodular case. It does not exist in the modular case, which leads to substantial complications. 
\end{definition}

\begin{notation}
Now specialize to the case that $R=A[x_1,\dots,x_n]$ and $G$ is a permutation group, acting by permutations of the indeterminates $x_i$. Then we have $R^{S_n}\subset R^G \subset R$.
\end{notation}

This situation has several particularly nice features:

\begin{prop}\label{prop:integraloverSn}
As a  module, $R^G$ is finite over $R^{S_n}$, and thus integral over $R^{S_n}$.
\end{prop}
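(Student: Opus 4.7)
The plan is to bootstrap off Proposition \ref{prop:Risintegral} applied to the full symmetric group $S_n$, combined with noetherianity. Applying \ref{prop:Risintegral} with $G = S_n$, we obtain immediately that $R$ is finite as a module over $R^{S_n}$. Since $R^G$ sits as an $R^{S_n}$-submodule between $R^{S_n}$ and $R$, finiteness of $R^G$ over $R^{S_n}$ will follow as soon as we know the ring $R^{S_n}$ is noetherian, because submodules of finitely generated modules over noetherian rings are themselves finitely generated.

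To show $R^{S_n}$ is noetherian, I invoke the FTSP (Theorem \ref{thm:ftsp}): $R^{S_n} = A[\sigma_1,\dots,\sigma_n]$ is a polynomial algebra in $n$ indeterminates over $A$. Since $A$ is assumed noetherian, iterated application of the Hilbert basis theorem shows $R^{S_n}$ is noetherian as well. Combined with the preceding paragraph, this gives that $R^G$ is a finitely generated $R^{S_n}$-module.

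Integrality over $R^{S_n}$ is then automatic: a module-finite ring extension is always integral (\cite{atiyahmacdonald}, Proposition 5.1). There is no real obstacle here; the content of the proposition is essentially that the chain $R^{S_n} \subset R^G \subset R$ inherits good finiteness behavior from the outer inclusion $R^{S_n}\subset R$ established in \ref{prop:Risintegral}, and the only ingredient beyond that result is the noetherianity of the subring of symmetric polynomials supplied by the FTSP.
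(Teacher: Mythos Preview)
Your proof is correct and follows the same approach as the paper: apply Proposition~\ref{prop:Risintegral} with $S_n$ in place of $G$ to get $R$ finite over $R^{S_n}$, then observe that $R^G$ is an $R^{S_n}$-submodule of $R$ and invoke noetherianity. The paper's proof is terser and leaves the noetherianity implicit; your explicit justification via the FTSP is fine, though one could equally cite Proposition~\ref{prop:fgasalgebra} (applied to $S_n$) together with the Hilbert basis theorem, avoiding the forward dependence on the structure of $R^{S_n}$ as a polynomial ring.
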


\begin{proof}
By applying \ref{prop:Risintegral} to $S_n$'s action on $R$, we find that $R$ is a finite $R^{S_n}$-module, and $R^G$ is a submodule.
\end{proof}

Another feature is that taking invariants commutes with base change:

\begin{prop}\label{prop:basechange}
If $B$ is any $A$-algebra, then $G$ acts on the tensor product $B\otimes_A R$ through its action on $R$, and we have
\[
(B\otimes_A R)^G = B \otimes_A R^G
\]
\end{prop}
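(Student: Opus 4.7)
The plan is to exploit the fact that $G$ acts on $R = A[x_1,\dots,x_n]$ by permuting a distinguished $A$-basis, namely the monomials. Everything will reduce to the following elementary observation: if a finite group $G$ acts on a set $X$, and $F$ is the free $A$-module on $X$ with the linearly extended $G$-action, then $F^G$ is the free $A$-module on the orbit sums $s_{\mathcal{O}} = \sum_{x \in \mathcal{O}} x$ (one per $G$-orbit $\mathcal{O}$ in $X$). This is immediate: $\sum_x a_x x$ is $G$-invariant iff the coefficient function $x \mapsto a_x$ is constant on orbits, and orbit sums are $A$-linearly independent because they are supported on disjoint subsets of the basis $X$.

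First I would set up the action. The action of $G$ on $R$ permutes the variables, hence is by $A$-algebra automorphisms; in particular it is $A$-linear. Therefore the formula $g(b \otimes r) = b \otimes g(r)$ gives a well-defined $B$-algebra action of $G$ on $B \otimes_A R$.

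Next, decompose $R$ as an $A$-module according to the $G$-orbits on the set $\mathcal{M}$ of monomials in $x_1,\dots,x_n$:
\[
R = \bigoplus_{\mathcal{O} \in \mathcal{M}/G} A\mathcal{O},
\]
where $A\mathcal{O}$ denotes the free $A$-submodule on the monomials in $\mathcal{O}$. By the observation above (applied to $X = \mathcal{M}$), $R^G = \bigoplus_{\mathcal{O}} A \cdot s_{\mathcal{O}}$ is the free $A$-module on orbit sums. Now tensor with $B$. Because tensor product commutes with direct sums and with forming free modules,
\[
B \otimes_A R = \bigoplus_{\mathcal{O}} B \otimes_A A\mathcal{O} = \bigoplus_{\mathcal{O}} B\mathcal{O},
\]
the free $B$-module on $\mathcal{M}$, and the $G$-action of the previous paragraph still just permutes this basis. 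Applying the observation a second time (now over $B$) gives $(B \otimes_A R)^G = \bigoplus_{\mathcal{O}} B \cdot s_{\mathcal{O}}$. Under the canonical map $B \otimes_A R^G \to B \otimes_A R$, whose image is clearly contained in $(B\otimes_A R)^G$, the element $1 \otimes s_{\mathcal{O}}$ hits $s_{\mathcal{O}}$, so this map carries the free $B$-module on the $s_{\mathcal{O}}$ (which is exactly $B \otimes_A R^G$, since $R^G$ is $A$-free on the $s_{\mathcal{O}}$) isomorphically onto $(B \otimes_A R)^G$.

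There is no serious obstacle here: the argument is bookkeeping, and the one essential ingredient is that the $G$-action preserves an $A$-basis. It is worth noting that this is genuinely special to permutation groups acting on a polynomial ring by permuting variables; for a general finite group acting $A$-linearly on a free $A$-module, invariants need not commute with base change without further hypotheses (such as $|G|$ invertible in $A$, which would furnish the Reynolds operator of \ref{prop:reynoldsoperator} and give a splitting $R \to R^G$ preserved by $\otimes_A B$).
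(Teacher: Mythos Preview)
Your proof is correct and follows essentially the same approach as the paper: both arguments rest on the fact that $G$ permutes the monomial $A$-basis of $R$, so that orbit sums give an $A$-basis of $R^G$ which becomes a $B$-basis of both $B\otimes_A R^G$ and $(B\otimes_A R)^G$. Your write-up is more detailed, and your closing remark about why this is special to permutation actions is a nice addition not made explicit in the paper.
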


\begin{proof}
This is a result of the fact that $G$ acts on the set of monomials in $R$. A consequence is that $R^G$ has an $A$-basis consisting of sums of monomials across orbits (which we call \textbf{orbit monomials}, following Reiner \cite{reiner95}). This becomes a $B$-basis of $B\otimes_A R^G$; but the same argument starting with $B\otimes_A R = B[x_1,\dots,x_n]$ shows that $(B\otimes_A R)^G$ has the same basis.
\end{proof}

For a quick introduction to the invariant theory of finite groups, we refer the reader to \cite{stanley4}. For a fuller treatment, with attention to the modular case, see \cite{neuselsmith}, \cite{smith95}, or \cite{derksenkemper}, the latter of which also treats some infinite groups. The recent \cite{campbellwehlau} is devoted to the modular case.

\section{Cohen-Macaulay rings}\label{sec:CMrings}

In this section we introduce the basic concepts of the theory of Cohen-Macaulay rings 
and explain why they are of interest to modular invariant theorists. We prove the equivalence of questions \ref{q:bieselsquestion} and \ref{q:CMpermutationgroups}. We rely heavily on the treatments in \cite{brunsherzog} and \cite{eisenbud}.

\begin{notation}
In this section, $R$ will be an arbitrary noetherian ring except where otherwise noted. It will revert to its meaning as a polynomial algebra in the next section.
\end{notation}

\begin{definition}\label{def:regularseq}
Let $R$ be a ring and let $x_1,\dots,x_n$ be elements of $R$. If $x_1$ is a nonunit nonzerodivisor of $R$, and $x_i$ is a nonunit nonzerodivisor of $R/(x_1,\dots,x_{i-1})$ for $i = 2,\dots, n$, then $x_1,\dots,x_n$ is a \textbf{regular sequence} in $R$.
\end{definition}

The condition in this definition is more compactly stated if we adopt the natural convention that an empty sequence of elements generates the zero ideal; then we can state the condition as that $x_i$ is a nonunit nonzerodivisor on $R/(x_1,\dots,x_{i-1})$ for all $i$.

Regular sequences behave ``kind of like indeterminates," in the following sense:

\begin{prop}[\cite{brunsherzog}, Theorem 1.1.8]
Let $x_1,\dots,x_k$ be a regular sequence in $R$, and let $I = (x_1,\dots,x_k)$ be the ideal they generate. Let 
\[
\gr_I(R) = R/I \oplus I/I^2 \oplus \dots
\]
be the associated graded ring. Then $\gr_I(R)$ is isomorphic to the polynomial ring $R/I[X_1,\dots,X_k]$, where $X_1,\dots,X_k$ are indeterminates.\qed
\end{prop}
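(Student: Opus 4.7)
The plan is to exhibit a natural map $\phi: (R/I)[X_1,\ldots,X_k] \to \gr_I(R)$ of graded $R/I$-algebras, show it is surjective for general reasons, and then use the regular sequence hypothesis to prove injectivity. First I would define $\phi$ on generators by $X_i \mapsto \bar{x}_i$, where $\bar{x}_i$ denotes the class of $x_i$ in the degree-one piece $I/I^2$. Since $(R/I)[X_1,\ldots,X_k]$ is the universal graded-commutative $R/I$-algebra on $k$ degree-one generators, this extends uniquely. Surjectivity is immediate from the fact that $I=(x_1,\ldots,x_k)$ forces $I^n/I^{n+1}$ to be generated over $R/I$ by the degree-$n$ monomials in the $\bar{x}_i$.

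The real work is injectivity. Unpacked, what must be shown is that if $f(X_1,\ldots,X_k)\in R[X_1,\ldots,X_k]$ is homogeneous of degree $d$ and $f(x_1,\ldots,x_k)\in I^{d+1}$, then every coefficient of $f$ lies in $I$. I would prove this by induction on $k$. For $k=1$, any such $f$ has the form $aX_1^d$, and $ax_1^d\in (x_1^{d+1})$ means $ax_1^d = bx_1^{d+1}$ for some $b\in R$, so $(a-bx_1)x_1^d = 0$; since $x_1$ is a nonzerodivisor so is $x_1^d$, forcing $a\in (x_1)=I$.

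For the inductive step, set $J=(x_1,\ldots,x_{k-1})$ and write $f = \sum_{i=0}^d X_k^i g_i(X_1,\ldots,X_{k-1})$ with each $g_i$ homogeneous of degree $d-i$. I would peel off the $X_k$-degrees from the top. Reducing $f(x_1,\ldots,x_k)\in I^{d+1}$ modulo $J$, only the leading piece $g_d(x_1,\ldots,x_{k-1})\cdot x_k^d$ survives on the left, and on the right every generator of $I^{d+1}$ involving an $x_j$ with $j<k$ is killed, leaving a relation of the form $\bar{g}_d\cdot \bar{x}_k^d\in (\bar{x}_k^{d+1})$ in $R/J$. Since $x_k$ is a nonzerodivisor modulo $J$ by the regular sequence hypothesis, the base-case argument applied in $R/J$ forces $g_d\in J+(x_k)=I$. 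Now $g_d$ can be absorbed and the remainder handled by the induction hypothesis on $k$ applied after reduction modulo $x_k$, where $\bar{x}_1,\ldots,\bar{x}_{k-1}$ remains a regular sequence in $R/(x_k)$ (a standard consequence of regularity).

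The main obstacle will be the bookkeeping in that last step: carefully justifying how the single hypothesis $f(x_1,\ldots,x_k)\in I^{d+1}$ splits, modulo appropriate powers of $J$ and $(x_k)$, into separate assertions about the $g_i$ that can be handled by the two inductive hypotheses. An alternative I would consider, if the direct induction becomes unwieldy, is to invoke the Koszul complex framework: regularity of $x_1,\ldots,x_k$ makes the Koszul complex acyclic, and this acyclicity can be transported to an isomorphism between the symmetric algebra on $I/I^2$ and $\gr_I(R)$. That tool is heavier but organizes the argument more uniformly across $k$.
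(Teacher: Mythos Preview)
The paper does not give its own proof; the proposition is cited from \cite{brunsherzog} and closed with a \qed, so there is nothing in the paper to compare against beyond the standard literature. Your outline has the right architecture---define the surjection, then prove injectivity by induction on $k$---but the inductive step contains a real error. You assert that ``$\bar{x}_1,\ldots,\bar{x}_{k-1}$ remains a regular sequence in $R/(x_k)$'' as ``a standard consequence of regularity.'' It is not: regular sequences cannot in general be permuted outside the local (or Jacobson-radical) setting, and the proposition here is stated for an arbitrary noetherian ring. Concretely, in $R=k[X,Y,Z]$ the sequence $x_1=Z(1-Y),\ x_2=Y,\ x_3=X(1-Y)$ is regular (check: modulo $(x_1,x_2)$ the ring is $k[X]$ and $x_3$ becomes $X$), yet in $R/(x_3)$ one has $\bar x_1\cdot X = Z(1-Y)\,X = Z\cdot X(1-Y)=0$ with $X\notin (X(1-Y))$, so $\bar x_1$ is already a zerodivisor. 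Your reduction modulo $x_k$ is therefore not available in the generality required.

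There is also a second gap even if one grants the permutation: reducing modulo $J$ yields $g_d\in I$, and reducing modulo $x_k$ yields only that the coefficients of $g_0$ lie in $I$; nothing in your sketch forces the intermediate $g_1,\ldots,g_{d-1}$ into $I$. The standard direct arguments avoid both problems by never quotienting by $x_k$ alone: one writes $f=X_kG+H$ with $H\in R[X_1,\ldots,X_{k-1}]$ of degree $d$ and $G$ of degree $d-1$, uses that $x_k$ is a nonzerodivisor modulo $J=(x_1,\ldots,x_{k-1})$---which \emph{is} the hypothesis, in the given order---and then runs a double induction (on $k$ together with $d$, or with the $X_k$-degree). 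Your Koszul-complex alternative is also a correct and standard route and sidesteps the permutation issue entirely.
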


\begin{lemma}[\cite{brunsherzog}, Corollary 1.1.3]
If $x_1,\dots,x_n$ is a regular sequence in $R$, and $\primep$ is a prime ideal of $R$ containing $x_1,\dots,x_n$, then the images of $x_1,\dots,x_n$ in $R_\primep$ also form a regular sequence in the latter ring. \qed
\end{lemma}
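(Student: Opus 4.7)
The plan is to prove the lemma by induction on $n$, with the crucial technical fact being that localization at a multiplicative set disjoint from an ideal preserves the property of being a nonzerodivisor modulo that ideal. The base case $n=1$ already contains the essential argument; the inductive step is then almost pure bookkeeping, reducing everything to a single application of the base case in a quotient ring.

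For the base case, I would check the two defining properties of regularity for the image of $x_1$ in $R_\primep$ separately. Nonunitness is immediate: since $x_1\in\primep$, its image lies in the unique maximal ideal $\primep R_\primep$, so it cannot be a unit (and $R_\primep\neq 0$ since $\primep$ is prime). For the nonzerodivisor property, suppose $(x_1/1)(r/s)=0$ in $R_\primep$. Clearing denominators gives some $t\notin\primep$ with $tx_1r=0$ in $R$. Since $x_1$ is a nonzerodivisor on $R$ by hypothesis, $tr=0$, and since $t\notin\primep$, this forces $r/s=0$ in $R_\primep$.

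For the inductive step, suppose the claim holds for regular sequences of length $n-1$. Applying the hypothesis to $x_1,\dots,x_{n-1}$ shows that these images form a regular sequence in $R_\primep$, so it remains to show that the image of $x_n$ is a nonunit nonzerodivisor on $R_\primep/(x_1,\dots,x_{n-1})R_\primep$. The key observation is that localization commutes with quotients by finitely generated ideals:
\[
R_\primep/(x_1,\dots,x_{n-1})R_\primep \;\cong\; \bigl(R/(x_1,\dots,x_{n-1})\bigr)_{\bar{\primep}},
\]
where $\bar{\primep}$ is the image of $\primep$ in $R/(x_1,\dots,x_{n-1})$, which is prime because $\primep$ contains $(x_1,\dots,x_{n-1})$. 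Now $x_n$ is a nonzerodivisor on $R/(x_1,\dots,x_{n-1})$ by the regularity hypothesis, and its image lies in $\bar{\primep}$, so the base-case argument applied to this quotient ring completes the verification.

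The main obstacle — such as it is — is not conceptual but notational: one must carefully distinguish between elements, their images in quotients, and their images in localizations, and verify that $\bar{\primep}$ really is a prime of the quotient so that the localized quotient makes sense and is nonzero. No deeper machinery is needed; the whole proof reduces to the interplay between the universal properties of localization and quotients.
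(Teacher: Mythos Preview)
Your proof is correct. The paper does not actually give a proof of this lemma: it is stated with a citation to \cite{brunsherzog}, Corollary 1.1.3, and closed immediately with \qed. So you have supplied a complete argument where the paper simply defers to the reference; your induction via the isomorphism $R_\primep/(x_1,\dots,x_{n-1})R_\primep \cong \bigl(R/(x_1,\dots,x_{n-1})\bigr)_{\bar\primep}$ is the standard route and is exactly what one finds in Bruns--Herzog.
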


\begin{definition}\label{def:depth}
Let $(R,\maxm)$ be a noetherian local ring. The \textbf{depth} of $R$ is the length of the longest regular sequence in $\maxm$.
\end{definition}

\begin{prop}[\cite{brunsherzog}, Proposition 1.2.12]\label{prop:depthleqdim}
If $(R,\maxm)$ is a noetherian local ring, its depth is bounded above by its Krull dimension: 
\[
\pushQED{\qed}
\operatorname{depth} R \leq \dim R.\qedhere
\popQED
\]
\end{prop}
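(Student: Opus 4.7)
The plan is to induct on the depth $n = \operatorname{depth} R$. The base case $n = 0$ is immediate since Krull dimension is nonnegative. For the inductive step, I would select a maximal regular sequence $x_1,\dots,x_n$ in $\maxm$ and pass to the quotient $\overline{R} = R/(x_1)$. This is again a noetherian local ring, with maximal ideal $\maxm/(x_1)$. By the definition of a regular sequence (\ref{def:regularseq}), the images of $x_2,\dots,x_n$ in $\overline{R}$ form a regular sequence of length $n-1$ lying in $\maxm/(x_1)$, hence $\operatorname{depth}\overline{R}\geq n-1$. The inductive hypothesis then gives $\dim\overline{R}\geq n-1$.

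The crux is to promote this to $\dim R \geq n$, which requires showing that killing a nonzerodivisor in $\maxm$ drops the Krull dimension by exactly $1$:
\[
\dim R/(x_1) = \dim R - 1.
\]
The inequality $\dim R/(x_1) \geq \dim R - 1$ holds for any $x_1\in\maxm$ by Krull's principal ideal theorem (any prime minimal over $(x_1)$ has height at most $1$, so a saturated chain below such a prime, concatenated with any chain above it, gives a chain in $R$ at most one longer than its image in $R/(x_1)$). The reverse inequality is where the nonzerodivisor hypothesis enters: in a noetherian ring the set of zerodivisors coincides with the union of the associated primes, which contains the set of minimal primes, so a nonzerodivisor $x_1$ lies in no minimal prime of $R$. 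Consequently any saturated chain of primes in $R$ containing $x_1$ can be extended strictly downward to a minimal prime, yielding $\dim R \geq \dim R/(x_1) + 1$.

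Granting the dimension-drop lemma, the induction closes cleanly: $\dim R = \dim R/(x_1) + 1 \geq (n-1) + 1 = n = \operatorname{depth} R$, completing the proof.

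The main obstacle is really the dimension-drop fact; it is standard commutative algebra (this is where Krull's principal ideal theorem does the essential work), but it is not something established earlier in the excerpt, so a careful writeup would cite it from a reference such as \cite{brunsherzog} or \cite{eisenbud}. Once this is in hand, the inductive skeleton is routine bookkeeping: one just needs to verify that the inductive hypothesis applies to $R/(x_1)$ (which is immediate, as it is still noetherian local), and that the residue of a regular sequence beyond $x_1$ is still regular in the quotient (which is immediate from \ref{def:regularseq} itself).
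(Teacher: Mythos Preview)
The paper does not give its own proof of this proposition; it simply cites \cite{brunsherzog}, Proposition 1.2.12, and marks it with a \qed. So there is no in-paper argument to compare against.

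Your proof is correct and is in fact the standard one. One minor efficiency note: you only need the inequality $\dim R \geq \dim R/(x_1) + 1$, which follows from $x_1$ avoiding all minimal primes; the Krull principal ideal theorem direction $\dim R/(x_1) \geq \dim R - 1$ is true but unnecessary for the induction to close. Otherwise the argument is clean and complete.
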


\begin{definition}\label{def:CMlocal}
The noetherian local ring $(R,\maxm)$ is said to be \textbf{Cohen-Macaulay} if $\operatorname{depth} R = \dim R$.
\end{definition}

Cohen-Macaulayness of arbitrary noetherian rings is defined locally:

\begin{definition}[Cohen-Macaulayness]\label{def:CM}
A noetherian ring $R$ is \textbf{Cohen-Macaulay} if $(R_\primep, \primep_\primep)$ is a Cohen-Macaulay local ring for each prime $\primep\in\Spec R$.
\end{definition}

\begin{remark}
It is sufficient to check the condition in definition \ref{def:CM} on maximal ideals $\maxm\in\MaxSpec R$. Note that the definition we give here presumes that the ring is noetherian.
\end{remark}

\begin{cor}
A Dedekind domain is Cohen-Macaulay. In particular, $\Z$ is Cohen-Macaulay.
\end{cor}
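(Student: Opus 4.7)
The plan is to verify the Cohen-Macaulay condition prime by prime, using that a Dedekind domain has Krull dimension at most one. Recall that a Dedekind domain $R$ is either a field or a one-dimensional noetherian integrally closed domain; in either case, for every prime $\primep\in\Spec R$, the height of $\primep$ is $0$ or $1$. Since Cohen-Macaulayness is a local property by definition \ref{def:CM}, it is enough to verify $\operatorname{depth} R_\primep = \dim R_\primep$ at each prime $\primep$.

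First I would dispose of the height-zero case. If $\primep$ is the zero ideal, then $R_\primep = \Frac(R)$ is a field, so $\dim R_\primep = 0$ and the maximal ideal is $(0)$. The only regular sequence is the empty one (whose length I take to be $0$ by the convention discussed just after definition \ref{def:regularseq}), so $\operatorname{depth} R_\primep = 0$ as well.

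Next I would handle the height-one case. If $\primep$ is a nonzero prime, then $\primep$ is maximal (dimension one) and $R_\primep$ is a one-dimensional noetherian local domain. By proposition \ref{prop:depthleqdim} we have $\operatorname{depth} R_\primep\leq 1$, so it suffices to exhibit a single nonunit nonzerodivisor in the maximal ideal. But any nonzero element $x\in \primep R_\primep$ will do: $x$ is a nonzerodivisor because $R_\primep$ is a domain, and a nonunit because it lies in the maximal ideal. Hence $\operatorname{depth} R_\primep = 1 = \dim R_\primep$.

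The hardest step is really not hard at all; the whole argument reduces to checking that a one-dimensional noetherian local domain is Cohen-Macaulay, and domainhood gives the nonzerodivisor for free. Having treated both possible heights, every localization of $R$ is Cohen-Macaulay, so $R$ itself is. For the last assertion, $\Z$ is a principal ideal domain, hence a Dedekind domain, and thus Cohen-Macaulay.
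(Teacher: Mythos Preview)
Your argument is correct and follows essentially the same route as the paper: localize at each prime and exhibit a regular sequence of the right length. The paper streamlines slightly by invoking the remark that it suffices to check maximal ideals, and it appeals to the DVR structure to name a uniformizer as the regular element; you instead handle height-zero primes explicitly and use only that $R_\primep$ is a domain to get a nonzerodivisor. Your version is marginally more elementary (it never needs integral closedness or the DVR description), while the paper's is terser; substantively they are the same proof.
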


\begin{proof}
The localization of a Dedekind domain at a maximal ideal is a discrete valuation ring (DVR), which is a one-dimensional local ring with maximal ideal generated by (and therefore containing) a nonunit nonzerodivisor, which thus constitutes a regular sequence of length one.
\end{proof}

Cohen-Macaulayness is preserved under localization, completion, adjoining indeterminates, and taking quotients by regular sequences; see \cite{brunsherzog}, Chapter 2.


The theory takes on a particularly elegant form when the ring $R$ is a finitely generated $\N$-graded algebra over a field $k$.

\begin{definition}\label{def:connected}
In the context of a graded $A$-algebra, \textbf{connected} is used here to mean that the degree zero piece equals the coefficient ring $A$.
\end{definition}

\begin{remark}\label{rmk:connectedisconnected}
This should not be confused with the common use of the same word to mean that the ring lacks idempotents other than $0$ and $1$, or equivalently that its $\Spec$ is connected as a topological space. To avoid confusion, when referring to a ring, we will only use the word {\em connected} in the sense of definition \ref{def:connected}, and when we need to assert that the prime spectrum is connected as a topological space, we will use phrases like {\em $R$ has a connected spectrum.} The author believes that the usage here (which is taken from \cite{smith95}, \cite{smith97}) is inspired by the cohomology ring of a topological space, which in degree zero is just the coefficient ring if and only if the space is connected. Nonetheless, the present usage actually makes the connectedness of the $\Spec$ equivalent to that of the coefficient ring $A$; see \ref{lem:connectedisconnected} below.
\end{remark}

\begin{notation}\label{notation:graded}
Until stated otherwise, let $R=\bigoplus_{\ell \geq 0} R_\ell$ be a finitely generated connected $\N$-graded algebra over a field $k$.
\end{notation}

\begin{definition}\label{def:hsop}
Let $\theta_1,\dots,\theta_n\in R$ be homogeneous elements of $R$ that are algebraically independent over $k$ and such that $R$ is finite as a module over the subring $k[\theta_1,\dots,\theta_n]$. Then $\theta_1,\dots,\theta_n$ is a \textbf{homogeneous system of parameters}, or $\textbf{h.s.o.p.}$ for short.
\end{definition}

\begin{prop}[\cite{smith97}, Theorem 1.5]
The Krull dimension of $R$ is equal to the length of any h.s.o.p.\qed
\end{prop}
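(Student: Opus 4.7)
The plan is to reduce this to the familiar fact that $\dim k[X_1,\dots,X_n] = n$ for a polynomial ring in $n$ indeterminates. Given an h.s.o.p.\ $\theta_1,\dots,\theta_n$, the first step is to observe that algebraic independence over $k$ means the map $k[X_1,\dots,X_n]\rightarrow R$ sending $X_i\mapsto \theta_i$ is injective, so the $k$-subalgebra $B := k[\theta_1,\dots,\theta_n]\subset R$ is isomorphic to a genuine polynomial ring in $n$ indeterminates. In particular, $\dim B = n$.

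The second step is to invoke the hypothesis that $R$ is finite as a module over $B$. Module-finiteness immediately implies integrality (via the Cayley--Hamilton/determinant trick, i.e.\ each element of $R$ satisfies a monic polynomial with coefficients in $B$). Then a standard consequence of the Cohen--Seidenberg theorems (lying-over, going-up, and incomparability) is that an integral ring extension preserves Krull dimension: chains of primes in $B$ lift to chains of the same length in $R$, and no chain in $R$ can be longer than the corresponding chain obtained by contracting to $B$. Applying this yields $\dim R = \dim B = n$. Because the same argument works for any choice of h.s.o.p., all h.s.o.p.'s have the same length, namely $\dim R$, which is the assertion of the proposition.

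I do not anticipate any serious obstacle; the proof is essentially a pointer to the appropriate general facts in commutative algebra. The one subtle point worth flagging is that nothing in the argument actually uses the grading or the connectedness hypothesis on $R$ beyond ensuring that $k\hookrightarrow R$ and that $\theta_i$ can be chosen homogeneous; the dimension computation itself is purely ring-theoretic. (Those hypotheses become important for the companion existence statement via graded Noether normalization, but existence is not part of what we must prove here.)
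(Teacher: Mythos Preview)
Your argument is correct and is the standard one. Note, however, that the paper does not actually supply its own proof of this proposition: it is stated with a citation to \cite{smith97} and closed with \qed. So there is nothing to compare against beyond observing that your approach is the expected one. It is worth remarking that the paper later states and proves exactly the key ingredient you invoke---that integral extensions preserve Krull dimension---as Lemma~\ref{lem:integralpreservesdim}, so your proof fits seamlessly into the paper's toolkit.
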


While Cohen-Macaulayness is defined locally, for a finitely generated connected $\N$-graded $k$-algebra there is a simple global test:

\begin{prop}\label{prop:CMforgraded}
In the present setting, $R$ is a Cohen-Macaulay ring if and only if some h.s.o.p. is a regular sequence, if and only if any h.s.o.p. is a regular sequence.
\end{prop}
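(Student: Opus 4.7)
The strategy is to reduce Cohen-Macaulayness of the graded algebra $R$ to the classical local question about systems of parameters in the localization at the unique maximal graded ideal $\maxm_+ = \bigoplus_{\ell > 0} R_\ell$. The first observation is that any h.s.o.p. $\theta_1,\dots,\theta_n$ necessarily consists of homogeneous elements of positive degree: a degree-zero element would be in $k^\times$ (so a unit) or zero, contradicting algebraic independence. Hence the $\theta_i$ lie in $\maxm_+$. Moreover, since $R$ is finite over $k[\theta_1,\dots,\theta_n]$, the quotient $R/(\theta_1,\dots,\theta_n)$ is a finite-dimensional $k$-vector space, so $(\theta_1,\dots,\theta_n)$ is $\maxm_+$-primary, which means that after localizing at $\maxm_+$ an h.s.o.p. becomes a system of parameters in the Noetherian local ring $R_{\maxm_+}$ of dimension $n$.

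The plan is then to combine two standard facts. First, in a Noetherian local ring $(A,\maxm)$, the ring is Cohen-Macaulay if and only if some (equivalently every) system of parameters is a regular sequence in $A$: one direction is by induction on $\dim A$, using that quotienting a CM local ring by a nonzerodivisor in $\maxm$ yields a CM local ring of dimension one less, so each successive parameter is a nonzerodivisor on the appropriate quotient; the reverse direction follows immediately from Proposition \ref{prop:depthleqdim}, since a regular sequence of length $\dim A$ forces $\depth A \geq \dim A$ and hence equality. Second, for a finitely generated connected $\N$-graded $k$-algebra, Cohen-Macaulayness globally (Definition \ref{def:CM}) is detected by the single localization $R_{\maxm_+}$; this uses that Cohen-Macaulayness is preserved under further localization of a CM local ring, and that every associated prime of a quotient by a homogeneous ideal is itself homogeneous and therefore contained in $\maxm_+$.

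Assembling these, I would argue as follows. If some h.s.o.p.~is a regular sequence in $R$, it remains regular after localizing at $\maxm_+$ (regularity passes to localizations), giving a system of parameters of length $n = \dim R_{\maxm_+}$ that is regular there; hence $R_{\maxm_+}$ is CM, hence $R$ is CM. Conversely, if $R$ is CM, then $R_{\maxm_+}$ is CM, so every system of parameters in $R_{\maxm_+}$ is regular by the local result; given any h.s.o.p.~of $R$, its image is such a system of parameters, and regularity descends from $R_{\maxm_+}$ back to $R$ because the associated primes of each homogeneous quotient $R/(\theta_1,\dots,\theta_i)$ are homogeneous and therefore detected at $\maxm_+$. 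This will close the loop between the three conditions. The main obstacle is the justification that Cohen-Macaulayness of $R$ reduces to Cohen-Macaulayness at the single graded maximal ideal $\maxm_+$; this is a standard but non-trivial fact about graded rings, for which I would cite the treatment in \cite{brunsherzog} (or equivalently the analogous statement for $*$-local rings).
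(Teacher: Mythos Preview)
Your proposal is correct and follows essentially the same route as the paper: reduce everything to the localization at the unique graded maximal ideal $\maxm_+$, where an h.s.o.p.\ becomes a system of parameters, and invoke the standard fact that Cohen--Macaulayness of a connected graded $k$-algebra is detected at $\maxm_+$. The paper's own proof is in fact terser than yours---for the forward direction it localizes and cites \cite{brunsherzog}, Exercise 2.1.27(c), and for the reverse it simply cites \cite{smith95}, Corollary 6.7.7---whereas you supply the argument for why regularity of a homogeneous sequence descends from $R_{\maxm_+}$ back to $R$ via the homogeneity of associated primes; that extra detail is correct and is exactly what underlies the cited results.
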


\begin{proof}
If some h.s.o.p. is a regular sequence, it remains regular after localizing at the unique maximal graded ideal $\maxm$. Then $R_\maxm$ is Cohen-Macaulay since it has a regular sequence of length equal to its Krull dimension. By \cite{brunsherzog} exercise 2.1.27(c), this implies $R$ is Cohen-Macaulay. 

In the other direction, the implication
\[
\text{Cohen-Macaulay $\Rightarrow$ every h.s.o.p. is a regular sequence}
\]
 is Corollary 6.7.7 in \cite{smith95}.
\end{proof}

\begin{cor}\label{cor:polyringCM}
A polynomial ring over a field is Cohen-Macaulay.
\end{cor}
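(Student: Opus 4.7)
The plan is to apply Proposition \ref{prop:CMforgraded}: since a polynomial ring $R = k[x_1,\dots,x_n]$ is a finitely generated connected $\N$-graded algebra over $k$ (with the standard grading), it suffices to exhibit a single homogeneous system of parameters that is also a regular sequence.

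My candidate h.s.o.p.\ will be the indeterminates $x_1,\dots,x_n$ themselves. To see that this is an h.s.o.p.\ in the sense of Definition \ref{def:hsop}, I would note that they are homogeneous of degree $1$, algebraically independent over $k$ by construction of the polynomial ring, and $R$ is trivially module-finite over $k[x_1,\dots,x_n] = R$ (it is generated by $1$).

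It then remains to check that $x_1,\dots,x_n$ form a regular sequence in the sense of Definition \ref{def:regularseq}. Each $x_i$ is a nonunit since the polynomial ring is connected and $x_i$ has positive degree. For the nonzerodivisor condition, I would observe that for each $i$, there is a canonical isomorphism
\[
R/(x_1,\dots,x_{i-1}) \;\cong\; k[x_i,\dots,x_n],
\]
under which the image of $x_i$ is the indeterminate $x_i$ in a polynomial ring over $k$. Since a polynomial ring over an integral domain is again an integral domain, $x_i$ is a nonzerodivisor on this quotient. This is the entire content of the verification; no step presents a real obstacle, since the argument is essentially bookkeeping once Proposition \ref{prop:CMforgraded} is available.
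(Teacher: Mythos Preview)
Your proof is correct and takes exactly the same approach as the paper: apply Proposition \ref{prop:CMforgraded} using the indeterminates as an h.s.o.p.\ that is a regular sequence. The paper's proof is simply the one-line ``The indeterminates form a regular sequence,'' leaving the reader to unpack precisely the details you have written out.
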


\begin{proof}
The indeterminates form a regular sequence.
\end{proof}

Since the elements of a h.s.o.p. $\theta_1,\dots,\theta_n$ are algebraically independent, the subring $k[\Theta] = k[\theta_1,\dots,\theta_n]\subset R$ is a polynomial algebra. The ring $R$ is then a finite module over this polynomial algebra. One may measure the homological complexity of $R$ by the length of a minimal free resolution of it as a $k[\Theta]$-module. By this measure, Cohen-Macaulay rings are homogically simple:

\begin{prop}[\cite{stanley4}, Lemma 3.3]
An h.s.o.p. $\theta_1,\dots,\theta_n$ is a regular sequence if and only if $R$ is free as a module over $k[\Theta] = k[\theta_1,\dots,\theta_n]$.\qed
\end{prop}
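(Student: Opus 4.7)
I would prove both implications by induction on $n$, with the base case $n=0$ being vacuous, since then $k[\Theta]=k$ and every $k$-algebra is a free $k$-module. The workhorse of the inductive step in both directions is the short exact sequence of graded $k[\Theta]$-modules
$$0 \to R(-d_1) \xrightarrow{\;\cdot\, \theta_1\;} R \to R/\theta_1 R \to 0, \qquad d_1 = \deg \theta_1,$$
whose exactness on the left is precisely the statement that $\theta_1$ is a nonzerodivisor on $R$.

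For the forward direction, assuming $\theta_1,\dots,\theta_n$ is a regular sequence, the images of $\theta_2,\dots,\theta_n$ in $R/\theta_1 R$ form a regular sequence by definition. Since $R/\theta_1 R$ is finite over $k[\theta_2,\dots,\theta_n]$ (as a quotient of $R$, which is finite over $k[\Theta]$) and has Krull dimension $n-1$, these images form an h.s.o.p.\ there, so the inductive hypothesis gives that $R/\theta_1 R$ is free over $k[\theta_2,\dots,\theta_n]$. I would then lift a homogeneous basis $\bar u_1,\dots,\bar u_m$ to $u_1,\dots,u_m\in R$ and verify by a secondary induction on degree that these form a free basis of $R$ over $k[\Theta]$. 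For spanning, any homogeneous $r\in R$ can be written modulo $\theta_1$ as $\sum p_i \bar u_i$ with $p_i \in k[\theta_2,\dots,\theta_n]$, so $r - \sum p_i u_i = \theta_1 r'$ with $r'$ of strictly smaller degree, which recursion handles. For independence, split each coefficient in a purported relation $\sum p_i u_i = 0$ as $p_i = p_i^0 + \theta_1 p_i'$ with $p_i^0\in k[\theta_2,\dots,\theta_n]$; reducing mod $\theta_1$ gives $\sum \bar p_i^0 \bar u_i = 0$, whence $p_i^0 = 0$ by freeness of $R/\theta_1 R$, leaving $\theta_1 \sum p_i' u_i = 0$, which the nonzerodivisor property of $\theta_1$ reduces to a lower-degree relation.

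For the reverse direction, assuming $R$ has a homogeneous free basis $\{u_\alpha\}$ over $k[\Theta]$, observe first that $\theta_1$ is manifestly a nonzerodivisor on $R$: any relation $\theta_1\sum p_\alpha u_\alpha = 0$ forces $\theta_1 p_\alpha = 0$ in the integral domain $k[\Theta]$, hence $p_\alpha = 0$. Next, writing $k[\Theta] = k[\theta_2,\dots,\theta_n][\theta_1]$ and decomposing each summand of $R = \bigoplus_\alpha k[\Theta] u_\alpha$ as $\bigoplus_{j \geq 0} k[\theta_2,\dots,\theta_n] \theta_1^j u_\alpha$, I would read off that $R/\theta_1 R = \bigoplus_\alpha k[\theta_2,\dots,\theta_n] \bar u_\alpha$ is free over $k[\theta_2,\dots,\theta_n]$. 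The images of $\theta_2,\dots,\theta_n$ form an h.s.o.p.\ for $R/\theta_1 R$, so by induction they are a regular sequence on it, giving the full regularity of $\theta_1,\dots,\theta_n$ on $R$.

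The main obstacle I anticipate is not the arguments themselves, which are short and formal, but the bookkeeping verification that $\theta_2,\dots,\theta_n$ remain an h.s.o.p.\ of the expected length in $R/\theta_1 R$; specifically, that their images are still algebraically independent over $k$. This I would settle by a dimension count: $R/\theta_1 R$ has Krull dimension $n-1$ and is integral over the subring generated by the images of the $\theta_i$, so that subring must itself have dimension $n-1$, forcing algebraic independence. Once this is in place, both inductions run smoothly.
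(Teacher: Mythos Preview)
The paper does not supply a proof of this proposition: the \qed after the statement signals that it is simply quoted from \cite{stanley4}, Lemma~3.3, without argument. So there is no ``paper's own proof'' to compare against.

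Your proposed proof is correct and self-contained. The induction on $n$ via the short exact sequence $0 \to R(-d_1) \to R \to R/\theta_1 R \to 0$ is the standard elementary route, and both directions go through as you describe. The one point you flag---that the images $\bar\theta_2,\dots,\bar\theta_n$ remain algebraically independent in $R/\theta_1 R$---is indeed the only place requiring care, and your dimension count handles it: $R/(\theta_1,\dots,\theta_n)R$ is Artinian, so $\dim R/\theta_1 R \le n-1$, while Krull's Hauptidealsatz gives $\dim R/\theta_1 R \ge \dim R - 1 = n-1$; hence the subring generated by the $\bar\theta_i$ has dimension $n-1$ and must be polynomial. (In the reverse direction you can alternatively argue directly: since $R$ is free over $k[\Theta]$ with $1$ among the homogeneous basis elements, any $f \in k[\Theta] \cap \theta_1 R$ satisfies $f \in \theta_1 k[\Theta]$ by comparing coefficients, so $k[\Theta]/(\theta_1) \hookrightarrow R/\theta_1 R$.) Either way, the bookkeeping closes and the argument is sound.
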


\begin{cor}[Hironaka's criterion]\label{cor:CMfreemodule}
The ring $R$ is Cohen-Macaulay if and only if is free as a $k[\theta_1,\dots,\theta_n]$-module, where $\theta_1,\dots,\theta_n$ is any h.s.o.p.\qed
\end{cor}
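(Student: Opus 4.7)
The corollary follows by directly chaining the two immediately preceding propositions. The plan is: use Proposition \ref{prop:CMforgraded} to translate Cohen-Macaulayness into the statement that every h.s.o.p.\ is a regular sequence, and then use Stanley's Lemma 3.3 (the proposition just above the corollary) to translate ``regular sequence'' into ``$R$ is free over $k[\Theta]$.''

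More precisely, first I would prove the forward direction. Assume $R$ is Cohen-Macaulay and let $\theta_1,\dots,\theta_n$ be any h.s.o.p. By Proposition \ref{prop:CMforgraded}, Cohen-Macaulayness implies that every h.s.o.p.\ is a regular sequence, so $\theta_1,\dots,\theta_n$ is regular. Then by Stanley's Lemma 3.3, $R$ is free as a module over $k[\theta_1,\dots,\theta_n]$.

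For the converse, suppose $R$ is free over $k[\Theta] = k[\theta_1,\dots,\theta_n]$ for some h.s.o.p.\ $\theta_1,\dots,\theta_n$. Apply Stanley's Lemma 3.3 in the reverse direction to conclude that $\theta_1,\dots,\theta_n$ is a regular sequence. Then invoke the ``some h.s.o.p.\ is a regular sequence'' half of Proposition \ref{prop:CMforgraded} to conclude that $R$ is Cohen-Macaulay.

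There is no real obstacle here; the work is entirely done by the two cited propositions, and the only thing that needs to be said is that one can be applied on top of the other with no hypotheses to check in between. The only subtlety worth noting is a matter of quantifiers: the ``if'' direction uses ``\emph{some} h.s.o.p.\ is a regular sequence'' from Proposition \ref{prop:CMforgraded}, while the ``only if'' direction uses ``\emph{any} h.s.o.p.\ is a regular sequence,'' so that the conclusion of the corollary can be stated with ``any'' rather than merely ``some'' h.s.o.p.\ $\theta_1,\dots,\theta_n$.
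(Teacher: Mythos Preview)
Your proposal is correct and matches the paper's intent exactly: the corollary is stated with a \qed and no explicit proof, signaling that it follows immediately by combining Proposition~\ref{prop:CMforgraded} with Stanley's Lemma~3.3, precisely as you describe. Your remark about the quantifiers (``some'' for the if direction, ``any'' for the only-if direction) is the only thing worth spelling out, and you have it right.
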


\begin{remark}\label{rmk:depthofgraded}
One may have detected in the above the strong analogy between the local and graded-algebra-over-a-field cases, with the role of the unique maximal ideal in the former played by the unique graded maximal ideal in the latter. Indeed, in the present (latter) setting, the maximum length of a regular sequence contained in the positively graded ideal is again called the \textbf{depth}. Thus a finitely generated graded $k$-algebra, like a local ring, is Cohen-Macaulay if its depth equals its dimension.
\end{remark}

The relation of Cohen-Macaulayness to the invariant theory of finite groups begins with the fact mentioned in the chapter introduction that in the nonmodular case, invariant rings are always Cohen-Macaulay:

\begin{thm}[Hochster-Eagon theorem]\label{thm:hochstereagon}
If $k$ is a field, and $G$ is a finite group with order not divisible by the characteristic of $k$, having a graded action on the polynomial ring $R = k[x_1,\dots,x_n]$, then the invariant ring $R^G$ is Cohen-Macaulay.
\end{thm}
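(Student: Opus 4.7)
The plan is to produce a homogeneous system of parameters $\theta_1,\dots,\theta_n$ for $R^G$ and show it is a regular sequence there, whence Cohen-Macaulayness follows from proposition \ref{prop:CMforgraded}. The crucial tool is the Reynolds operator $\Omega:R\to R^G$ of proposition \ref{prop:reynoldsoperator}, which exists because $|G|$ is invertible in $k$ and which exhibits $R^G$ as an $R^G$-module direct summand of $R$.

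First I would assemble the structural facts about $R^G$. By proposition \ref{prop:fgasalgebra}, $R^G$ is a finitely generated $k$-algebra; it inherits a connected $\N$-grading from $R$. By proposition \ref{prop:Risintegral}, $R$ is finite as an $R^G$-module, so $R^G$ and $R$ have the same Krull dimension, namely $n$. Applying Noether normalization to $R^G$ produces homogeneous, algebraically independent elements $\theta_1,\dots,\theta_n\in R^G$ over which $R^G$ is module-finite; since $R$ is module-finite over $R^G$, it is also module-finite over $k[\theta_1,\dots,\theta_n]$. Thus $\theta_1,\dots,\theta_n$ is simultaneously an h.s.o.p. for $R^G$ and for $R$.

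Next, since $R$ is a polynomial ring over a field it is Cohen-Macaulay (corollary \ref{cor:polyringCM}), so by proposition \ref{prop:CMforgraded} the sequence $\theta_1,\dots,\theta_n$ is regular in $R$. The main step is then to transfer this to $R^G$. Suppose inductively that for some $1\le i\le n$ there exists $f\in R^G$ with
\[
\theta_i f \in (\theta_1,\dots,\theta_{i-1})R^G.
\]
A fortiori $\theta_i f\in(\theta_1,\dots,\theta_{i-1})R$, and since $\theta_1,\dots,\theta_n$ is regular on $R$ we conclude $f=\sum_{j<i}\theta_j r_j$ for some $r_j\in R$. Applying the Reynolds operator, and using its $R^G$-linearity together with $\Omega(f)=f$, gives
\[
f=\Omega(f)=\sum_{j<i}\theta_j\,\Omega(r_j)\in(\theta_1,\dots,\theta_{i-1})R^G,
\]
as required. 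A similar application of $\Omega$ handles the nonunit condition at $i=1$: $\theta_1$ is a nonzerodivisor on $R$ hence on $R^G$, and it is a nonunit in $R^G$ because it is homogeneous of positive degree.

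The hard part, at a conceptual level, is the step that uses $\Omega$ to push the regularity from $R$ down to $R^G$; everything else is standard graded commutative algebra. This is exactly where the nonmodular hypothesis enters, since without the invertibility of $|G|$ there is no $R^G$-linear retraction of $R\twoheadrightarrow R^G$ and the argument collapses. Having shown that an h.s.o.p.\ is a regular sequence on $R^G$, proposition \ref{prop:CMforgraded} delivers the conclusion.
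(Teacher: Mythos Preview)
Your proof is correct and follows essentially the same route as the paper: pick an h.s.o.p.\ for $R^G$, observe it is also an h.s.o.p.\ for $R$ (since $R$ is finite over $R^G$), use Cohen--Macaulayness of $R$ to conclude the sequence is regular there, and then push regularity down to $R^G$ via the $R^G$-linearity of the Reynolds operator. The only cosmetic difference is that you explicitly invoke Noether normalization to produce the h.s.o.p., whereas the paper simply posits one.
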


Hochster and Eagon's 1971 paper actually proves the more general statement that if $R$ is any unital noetherian Cohen-Macaulay ring (not necessarily graded or finitely generated over a field), $G$ acts by automorphisms (not necessarily graded automorphisms), and $|G|$ is a unit in $R$, then $R^G$ is Cohen-Macaulay (\cite{hochstereagon}, proposition 13). However the version we give here is the classical situation of invariant theory, and it is subject to a short proof using tools so far developed:

\begin{proof}
We use proposition \ref{prop:CMforgraded}. Let $\theta_1,\dots,\theta_n$ be a h.s.o.p. for $R^G$. The ring extension $R^G\subset R$ is finite by proposition \ref{prop:Risintegral}. Thus the composed ring extension $k[\theta_1,\dots,\theta_n]\subset R^G\subset R$ is finite, and $\theta_1,\dots,\theta_n$ is a h.s.o.p. for $R$.

$R$ is Cohen-Macaulay by \ref{cor:polyringCM}, so by \ref{prop:CMforgraded}, $\theta_1,\dots,\theta_n$ must be a regular sequence in it. Unwinding definition \ref{def:regularseq}, the statement that $\theta_i$ is a nonzerodivisor in $R/(\theta_1,\dots,\theta_{i-1})R$ translates to the statement that
\[
\theta_i f \in R\theta_1 + \dots + R\theta_{i-1} \: \Rightarrow \: f \in R\theta_1 + \dots + R\theta_{i-1}
\]
for each $i=1,\dots,n$ (where the ideal $R\theta_1+\dots+R\theta_{i-1}$ is a void sum when $i=1$ and thus is taken as $(0)$).

We will show $\theta_1,\dots,\theta_n$ is a regular sequence in $R^G$. For any $i=1,\dots,n$, suppose $f\in R^G$ and $\theta_i f\in R^G\theta_1 + \dots + R^G\theta_{i-1}$. Then certainly $\theta_i f\in R\theta_1+\dots+R\theta_i$, so the above applies, and $f\in R\theta_1 + \dots + R\theta_{i-1}$. Let
\begin{equation}\label{eq:finRtheta}
f = \sum_{j=1}^{i-1} r_j\theta_j,\: r_j\in R
\end{equation} 
be the expression for $f$ whose existence is implied by this. Now apply the Reynolds operator $\Omega$ whose existence was established in \ref{prop:reynoldsoperator} to both sides of \eqref{eq:finRtheta}. Recalling that $\Omega$ is $R^G$-linear, and that $f, \theta_1,\dots,\theta_{i-1}\in R^G$, we obtain
\[
f = \Omega(f) = \sum_{j=1}^{i-1}\Omega(r_j)\theta_j.
\]
Each $\Omega(r_j)$ lies in $R^G$, thus this equation expresses $f$ as an element of 
\[
R^G\theta_1+\dots+R^G\theta_{i-1}.
\]
Therefore $\theta_i$ is a nonzerodivisor in $R^G/(\theta_1,\dots,\theta_{i-1})R^G$. Furthermore it is a nonunit, since it is positive degree (and $\theta_1,\dots,\theta_{i-1}$ generates a homogeneous ideal, so that $R^G/(\theta_1,\dots,\theta_{i-1})$ inherits $R^G$'s grading). Thus, $\theta_1,\dots,\theta_n$ is a regular sequence in $R^G$, which is therefore Cohen-Macaulay by \ref{prop:CMforgraded}.
\end{proof}

\begin{remark}
The existence of $\Omega$ is the only use of the assumption $\operatorname{char}k\nmid |G|$ in this proof.
\end{remark}

The fact that the theorem fails, but not uniformly, in the modular case, was mentioned in the chapter introduction. 

\begin{example}
Let $k=\F_2$ and let $G=C_4 = \langle (1234)\rangle \subset S_4$, acting by permutations on the indeterminates of $R = k[x_1,\dots,x_4]$. Then $R^G$ is not Cohen-Macaulay. A h.s.o.p. is formed by $\sigma_1,\dots,\sigma_4$, as we will see below in \ref{prop:hsops}, but there is no module basis for $R^G$ over $k[\sigma_1,\dots,\sigma_4]$. The polynomials
\begin{align*}
g_0 &= 1\\
g_2 &= x_1x_3 + x_2x_4\\
g_3 &= x_1^2x_2 + x_2^2x_3 + x_3^2x_4 + x_4^2x_1\\
g_{4a} &= x_1^2x_2x_3 + x_2^2x_3x_4 + x_3^2x_4x_1 + x_4^2x_1x_2\\
g_{4b} &= x_1x_2^2x_3 + x_2x_3^2x_4 + x_3x_4^2x_1 + x_4x_1^2x_2\\
g_5 &= x_1^2x_2^2x_3 + x_2^2x_3^2x_4 + x_3^2x_4^2x_2 + x_4^2x_1^2x_2\\
\end{align*}
do form a module basis for $\Q[x_1,\dots,x_4]^G$ over $\Q[\sigma_1,\dots,\sigma_4]$, but they fail either to be linearly independent or to span $R^G$ over $k[\sigma_1,\dots,\sigma_4]$ because of relations (over $\Q$) such as
\[
2\left(x_1^3x_2^2x_3+x_2^3x_3^2x_4+x_3^3x_4^2x_1 + x_4^3x_1^2x_2 \right)= \sigma_3g_3 + \sigma_2g_{4b} + \sigma_1g_5.
\]
Non-Cohen-Macaulayness of $R^G$ means every candidate basis will have a similar problem.

On the other hand, if $G = D_4 = \langle C_4, (13)\rangle$, then $R^G$ is Cohen-Macaulay.
\end{example}

Since our primary aim regards the ring $\Z[x_1,\dots,x_n]^G$ for a permutation group $G$, which is a graded algebra over $\Z$ rather than a field, we need to tie this ring to the setting in which we have been working:

\begin{prop}\label{prop:equivofZandallfields}
Let $R$ be a finitely generated graded connected $\Z$-algebra that is free as a $\Z$-module. Then the following are equivalent:
\begin{enumerate}
\item $R$ is Cohen-Macaulay.
\item $R\otimes\Q$ is Cohen-Macaulay, and $R\otimes \F_p$ is Cohen-Macaulay for every prime $p$.
\item $R\otimes_\Z k$ is Cohen-Macaulay for every field $k$.
\item $R\otimes_\Z A$ is Cohen-Macaulay for every Cohen-Macaulay ring $A$.
\end{enumerate}
\end{prop}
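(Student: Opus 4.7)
The plan is to establish the four-way equivalence as a cycle $(1)\Leftrightarrow(2)\Leftrightarrow(3)\Leftrightarrow(4)$, leaning on standard behavior of Cohen-Macaulayness under localization, quotient by a nonzerodivisor, field extension, and flat base change. The implications $(4)\Rightarrow(3)$ and $(3)\Rightarrow(2)$ are immediate since fields are Cohen-Macaulay; the substantive content lies in $(1)\Leftrightarrow(2)$, $(2)\Rightarrow(3)$, and $(3)\Rightarrow(4)$.

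The heart of the argument is $(1)\Leftrightarrow(2)$. For $(1)\Rightarrow(2)$, I would exploit $\Z$-freeness of $R$: each prime $p$ of $\Z$ is then a nonzerodivisor in $R$, so $R\otimes\F_p = R/pR$ is the quotient of a Cohen-Macaulay ring by a regular element and thus is Cohen-Macaulay; and $R\otimes\Q$ is a localization of $R$, which also preserves Cohen-Macaulayness. For the converse, I would check Cohen-Macaulayness at each prime $\primep\in\Spec R$, splitting on $\primep\cap\Z$: in the $(0)$ case, $R_\primep$ is a localization of the Cohen-Macaulay ring $R\otimes\Q$; in the $(p)$ case, $p\in\primep$ is a nonzerodivisor on $R_\primep$ and $R_\primep/pR_\primep$ is a localization of the Cohen-Macaulay ring $R\otimes\F_p$, and then the standard identities $\dim R_\primep = \dim(R_\primep/pR_\primep)+1$ and $\depth R_\primep = \depth(R_\primep/pR_\primep)+1$ for a regular element (Bruns-Herzog 1.2.5) lift Cohen-Macaulayness from the quotient to $R_\primep$ itself.

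For $(2)\Rightarrow(3)$, I would write any field $k$ as an extension of its prime subfield $k_0\in\{\Q,\F_p\}$ and regard $R\otimes_\Z k = (R\otimes_\Z k_0)\otimes_{k_0} k$. I then apply Proposition \ref{prop:CMforgraded} to the finitely generated graded $k_0$-algebra $S = R\otimes_\Z k_0$: a h.s.o.p.\ $\theta_1,\dots,\theta_n$ for $S$ persists as a h.s.o.p.\ for $S\otimes_{k_0}k$ (algebraic independence and finite-module-ness survive faithfully flat base change), and whether the $\theta_i$ form a regular sequence transfers both ways. Hence $R\otimes k$ is Cohen-Macaulay. For $(3)\Rightarrow(4)$, given $A$ Cohen-Macaulay and $\primep\in\Spec(R\otimes A)$ with contraction $\primep_A\in\Spec A$, the induced map $A_{\primep_A}\rightarrow (R\otimes A)_\primep$ is flat (since $R$ is $\Z$-flat), its closed fiber is a localization of $R\otimes\kappa(\primep_A)$ and so is Cohen-Macaulay by $(3)$, and $A_{\primep_A}$ is Cohen-Macaulay by hypothesis; the flat local criterion (Bruns-Herzog 2.1.7) then yields that $(R\otimes A)_\primep$ is Cohen-Macaulay.

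The main obstacle is essentially bookkeeping: keeping track of the distinction between the local definition of Cohen-Macaulayness in play throughout and the graded-algebra-over-a-field version of Proposition \ref{prop:CMforgraded} invoked only in $(2)\Rightarrow(3)$, and assembling the standard flat-base-change and regular-element lemmas cleanly. No ideas beyond classical commutative algebra are required.
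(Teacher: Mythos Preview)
Your proposal is correct and uses the same toolkit as the paper---localization, quotient by a regular element, field extension, and flat base change---with only an organizational difference: the paper closes the cycle via $4\Rightarrow 1$ by simply taking $A=\Z$ (since $\Z$ is Cohen-Macaulay), whereas you prove $(2)\Rightarrow(1)$ directly by splitting on $\primep\cap\Z$. Your $(2)\Rightarrow(3)$ is also more hands-on, unpacking the h.s.o.p.\ argument where the paper just cites \cite{brunsherzog}, Theorem~2.1.10; both routes are fine.
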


\begin{proof}
This is a generalization of exercise 5.1.25 in \cite{brunsherzog}. Here are the details:

1$\Rightarrow$2: for $\Q$, localizations of Cohen-Macaulay rings are Cohen-Macaulay since Cohen-Macaulayness is a local property. For $\F_p$, the claim follows because Cohen-Macaulayness is preserved by quotients of regular sequences, since $p$ is a nonzerodivisor of $R$ because it is free as a $\Z$-module.

2$\Rightarrow$3: Every field $k$ contains one of the fields $\Q,\F_p$, and for finitely generated algebras over a field, Cohen-Macaulayness is retained under arbitrary field extensions (\cite{brunsherzog}, Theorem 2.1.10).

3$\Rightarrow$4: As $R$ is free as a $\Z$-module, $R\otimes A$ is free, and therefore faithfully flat, as an $A$-module. Then \cite{brunsherzog} exercise 2.1.23 applies, which states that the Cohen-Macaulayness of a faithfully flat extension can be deduced from that of the base and of every fiber. The base $A$ is Cohen-Macaulay by assumption, and the fibers $R\otimes A \otimes \kappa(\primep) = R\otimes\kappa(\primep)$, $\primep\in \Spec A$, are Cohen-Macaulay by 3.

4$\Rightarrow$1: Take $A=\Z$.
\end{proof}

\begin{remark}
An examination of this proof shows that the assumption that $R$ is graded (and connected) can be dropped. But we will only use this result on graded rings.
\end{remark}

We also have a $\Z$-analogue to the Hironaka criterion \ref{cor:CMfreemodule}. In the following subsection, we develop the needed machinery to state and prove this result.

\subsection{Hironaka decomposition over $\Z$}


The theorem we aim to prove in this subsection states essentially that, just as for a $k$-algebra, Cohen-Macaulayness can be detected for a graded $\Z$-algebra by freeness as a module over the polynomial subalgebra generated by an h.s.o.p. The precise statement is given below in \ref{thm:HironakaoverZ}. Although it is a known result (David Eisenbud, personal communication), we are not aware of a careful proof in the literature. We will derive it as a consequence of a more general theorem (\ref{thm:generalHironaka}) relating the Cohen-Macaulayness of a noetherian ring with connected spectrum to its projectivity as a module over an equidimensional regular subring.

Applying the general result in our situation requires knowing that our $\Z$-algebra is equidimensional. Thus we need to develop some dimension theory. We now recall the relevant notions and lemmas.



\begin{definition}
A ring $R$ is \textbf{equidimensional} if all of its maximal ideals have the same height and all of its minimal prime ideals have the same dimension. It is \textbf{catenary} if all saturated chains of prime ideals connecting any specific two have the same length. It is \textbf{universally catenary} if all finitely generated algebras over it are catenary.
\end{definition}

\begin{lemma}[\cite{brunsherzog}, Theorem 2.1.12]\label{lem:catenary}
Cohen-Macaulay rings are universally catenary.\qed
\end{lemma}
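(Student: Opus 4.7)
The plan is to reduce universal catenarity to three more basic facts: (i) if $R$ is Cohen-Macaulay, so is $R[x]$; (ii) any Cohen-Macaulay ring is catenary; (iii) catenarity is inherited by homomorphic images. Granted these, any finitely generated algebra $S$ over a Cohen-Macaulay ring $R$ is a quotient $R[x_1,\dots,x_n]/I$; by (i) and induction on $n$, $R[x_1,\dots,x_n]$ is Cohen-Macaulay; by (ii) it is catenary; and by (iii) so is $S$.

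The central of these steps is (ii). The plan to prove it is to reduce to the local case (since catenarity is a statement about prime chains, which localize to chains in the localization at the top prime of the chain), and then to establish the sharper dimension formula
\[
\height \primep + \dim R/\primep = \dim R
\]
for every prime $\primep$ of a Cohen-Macaulay local ring $R$. Catenarity is an immediate consequence: given a saturated chain $\primep = \primep_0 \subsetneq \dots \subsetneq \primep_k = \primeq$, applying the formula to each $\primep_i$ and using that $\dim R/\primep_i = \dim R/\primep_{i+1} + 1$ at each saturated step gives $k = \height \primeq - \height \primep$, independent of the chain.

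To prove the dimension formula, the plan is induction on $\height \primep$. The base case $\height \primep = 0$ uses that a Cohen-Macaulay local ring is equidimensional (every associated prime is minimal and all minimal primes have the same dimension), which gives $\dim R/\primep = \dim R$. For the inductive step, because associated primes in a Cohen-Macaulay ring coincide with the minimal primes of $(0)$, any $\primep$ of positive height contains a nonzerodivisor $x$; then $R/(x)$ is again Cohen-Macaulay with $\dim R/(x) = \dim R - 1$, the image $\primep/(x)$ has height $\height \primep - 1$, and induction applies.

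The main obstacle will be the ``unmixedness'' property of Cohen-Macaulay local rings --- that the associated primes coincide with the minimal primes of $(0)$ --- since it underwrites both the equidimensionality used in the base case and the availability of the nonzerodivisor used in the inductive step. This in turn rests on the characterization of depth in terms of $\operatorname{Ext}$ vanishing. With (ii) in hand, steps (i) and (iii) are routine: (i) follows because a regular sequence in $R$ (localized appropriately) remains regular in $R[x]$ while dimension increases by exactly one, and (iii) is immediate from the order-preserving bijection between primes of $R/I$ and primes of $R$ containing $I$, which sends saturated chains to saturated chains of the same length.
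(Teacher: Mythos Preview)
The paper does not give a proof of this lemma; it is stated with a citation to \cite{brunsherzog}, Theorem 2.1.12, and closed immediately with \qed. So there is no proof in the paper to compare against.

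Your outline is correct and is essentially the standard argument found in Bruns--Herzog. The reduction to showing that Cohen--Macaulay local rings satisfy the dimension formula $\height\primep + \dim R/\primep = \dim R$, and the induction on $\height\primep$ via a nonzerodivisor in $\primep$, is exactly how that reference proceeds. One small point worth tightening in the inductive step: the assertion that $\height(\primep/(x)) = \height\primep - 1$ is not automatic for arbitrary Noetherian rings; it uses that $R_\primep$ is Cohen--Macaulay, so $x$ extends to a regular sequence of length $\height\primep$ in $\primep R_\primep$, whence the image of $\primep$ in $R/(x)$ has height exactly $\height\primep - 1$. With that clarification, the argument goes through.
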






\begin{prop}[\cite{eisenbud}, Corollary 18.11]
In a Cohen-Macaulay local ring, all minimal primes have the same dimension.\qed
\end{prop}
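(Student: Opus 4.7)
The plan is to derive this from the stronger claim that $\dim R/\primep = \dim R$ for every associated prime $\primep$ of $R$. Since every minimal prime is an associated prime, and the reverse inequality $\dim R/\primep \leq \dim R$ is automatic, this stronger claim immediately yields the proposition. Using the Cohen-Macaulay hypothesis $\depth R = \dim R$, the task reduces to showing $\dim R/\primep \geq \depth R$ for each $\primep \in \operatorname{Ass} R$.

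I would prove this inequality by induction on $d := \depth R$, the base case $d=0$ being vacuous. For $d > 0$, the hypothesis $\maxm \notin \operatorname{Ass} R$ (which follows from positive depth) together with prime avoidance produces an $x \in \maxm$ lying outside every associated prime of $R$; such an $x$ is automatically a nonzerodivisor, and $R/xR$ is Cohen-Macaulay of depth $d-1$ and dimension $\dim R - 1$. The crux is then to produce, for each $\primep \in \operatorname{Ass} R$, a prime $\primeq \supseteq \primep + xR$ that is simultaneously associated in $R/xR$ and minimal over $\primep + xR$ in $R$. Choosing $u \in R$ with annihilator $\primep$ and taking an associated prime of the image $\overline{u} \in R/xR$ yields the first property, while Krull's principal ideal theorem applied in the domain $R/\primep$ (where $x$ maps to a nonzerodivisor, since $x \notin \primep$) can be used to arrange $\height(\primeq/\primep) = 1$, giving the second.

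Once such a $\primeq$ is in hand, the universal catenarity of $R$ supplied by Lemma \ref{lem:catenary} forces $\dim R/\primeq = \dim R/\primep - 1$, while the induction hypothesis applied to $\primeq/xR \in \operatorname{Ass}(R/xR)$ yields $\dim R/\primeq \geq d - 1$; combining the two gives $\dim R/\primep \geq d$, completing the step. The main obstacle is arranging the two demands on $\primeq$ simultaneously — that it be an associated prime of $R/xR$ and that it drop the dimension by exactly one when passing from $\primep$ — and it is precisely the catenarity afforded by the Cohen-Macaulay hypothesis that reconciles them.
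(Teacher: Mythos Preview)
The paper offers no proof here—the proposition is simply cited from \cite{eisenbud}, Corollary 18.11, and closed with a \qed—so you are supplying an argument where the paper defers entirely to the literature. Your strategy is the standard one (essentially \cite{brunsherzog}, Proposition 1.2.13, specialized to $M=R$), and it works, but you have grafted on an unnecessary complication that also risks circularity.

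You do not need $\primeq$ to be minimal over $\primep+xR$, and you do not need catenarity. Once you have \emph{any} $\primeq\in\operatorname{Ass}(R/xR)$ with $\primep\subsetneq\primeq$ (strict containment is automatic since $x\in\primeq\setminus\primep$), the inequality $\dim R/\primep\geq\dim R/\primeq+1$ is immediate: prepend $\primep$ to any maximal chain from $\primeq$ to $\maxm$. Combined with the inductive bound $\dim R/\primeq\geq d-1$, this already gives $\dim R/\primep\geq d$. Your appeal to Lemma~\ref{lem:catenary} is therefore not just superfluous but potentially circular: in \cite{brunsherzog} the universal catenarity of Cohen--Macaulay rings (Theorem 2.1.12) is established \emph{after}, and using, the very equidimensionality statement you are proving. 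The ``main obstacle'' you identify—reconciling the two demands on $\primeq$—is thus a phantom.

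One genuine small gap to patch: you must check that $\overline{u}\neq 0$ in $R/xR$ so that it has an associated prime at all. If $u=xv$, then regularity of $x$ forces $\operatorname{ann}(v)=\primep$ as well; iterating would place $u$ in $\bigcap_n x^nR=0$ by Krull's intersection theorem, a contradiction. So after finitely many replacements one has $\overline{u}\neq 0$, and then any associated prime of the cyclic submodule $R\overline{u}\subseteq R/xR$ contains $\primep$ (since $\primep\cdot\overline{u}=0$) and lies in $\operatorname{Ass}(R/xR)$, which is all you need.
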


\begin{cor}\label{prop:locallyequidimensional}
A Cohen-Macaulay ring $R$ is \textbf{locally equidimensional}, i.e. for any fixed prime ideal $\primeq\triangleleft R$, the length of a saturated chain from a minimal prime $\primep$ to $\primeq$ does not depend on the choice of $\mathfrak{p}$.\qed
\end{cor}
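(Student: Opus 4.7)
The plan is to reduce to the preceding proposition by localizing at $\primeq$. First, I would observe that $R_\primeq$ is itself a Cohen-Macaulay local ring: for any prime $\primep' \subseteq \primeq R_\primeq$, with contraction $\primep \subseteq \primeq$ in $R$, we have
\[
(R_\primeq)_{\primep'} \cong R_\primep,
\]
which is Cohen-Macaulay by hypothesis on $R$ (Definition \ref{def:CM}). Hence $R_\primeq$ is Cohen-Macaulay, again by Definition \ref{def:CM}.

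Next, I would use the inclusion-preserving bijection between the primes of $R_\primeq$ and the primes of $R$ contained in $\primeq$. Under this correspondence, minimal primes match with minimal primes of $R$ contained in $\primeq$, the maximal ideal $\primeq R_\primeq$ matches with $\primeq$, and saturated chains are matched with saturated chains. Thus the claim for $R$ and $\primeq$ reduces to the analogous claim for $R_\primeq$ and its maximal ideal.

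The key step is then to combine two facts about the Cohen-Macaulay local ring $R_\primeq$. The preceding proposition gives that all minimal primes of $R_\primeq$ have the same dimension, namely $d = \dim R_\primeq$. Lemma \ref{lem:catenary} gives that $R_\primeq$, being Cohen-Macaulay, is (universally) catenary; so for any minimal prime $\primep'$ of $R_\primeq$, every saturated chain from $\primep'$ to $\primeq R_\primeq$ has length exactly $\dim R_\primeq/\primep' = d$, a value independent of $\primep'$. Transporting this back along the correspondence yields the corollary.

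The main conceptual hurdle, such as it is, is to disentangle the statements ``all minimal primes have the same dimension'' and ``all saturated chains from minimal primes to $\primeq$ have the same length.'' In general these are not equivalent: catenarity is needed to pass from equal codimensions to equal chain lengths. Both ingredients, however, are immediate once one knows that localizations of Cohen-Macaulay rings remain Cohen-Macaulay, so no serious obstacle arises.
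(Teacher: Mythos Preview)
Your proof is correct and is exactly the argument the paper has in mind: the corollary is stated with a bare \qed as an immediate consequence of the preceding proposition (together with Lemma~\ref{lem:catenary} on catenarity), and you have simply written out the localization-at-$\primeq$ step that makes the deduction explicit. In particular, your observation that one needs both ingredients---equal dimension of minimal primes \emph{and} catenarity---to pass from ``same dimension'' to ``same length of every saturated chain'' is the content the \qed is suppressing.
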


One might hope that local equidimensionality in this sense would mean that the ring is equidimensional on the connected components of its spectrum, but this is not so in general. For example, the localization of a polynomial ring over a field at the complement of the union of two primes of different heights is Cohen-Macaulay but not equidimensional. However, in a circumstance of use to us, a Cohen-Macaulay ring with connected spectrum can be guaranteed to be equidimensional -- see \ref{prop:connectedimpliesequidimensional} below.

To prepare this result, we recall a basic fact about integral ring extensions:






\begin{lemma}\label{lem:integralpreservesdim}
If $R$ is an integral ring extension of $S$, then the Krull dimensions of $R$ and $S$ are equal.
\end{lemma}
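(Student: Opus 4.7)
The plan is to prove the two inequalities $\dim R \leq \dim S$ and $\dim R \geq \dim S$ separately, each as a direct consequence of one of the classical Cohen--Seidenberg theorems on integral extensions. The only machinery required is \textbf{lying over}, \textbf{going up}, and \textbf{incomparability} for an integral extension $S \subseteq R$, all of which are standard (see, e.g., Atiyah--Macdonald, Chapter~5, or Eisenbud, Chapter~4).

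First I would show $\dim R \leq \dim S$. Take any strict chain of primes
\[
\primeq_0 \subsetneq \primeq_1 \subsetneq \cdots \subsetneq \primeq_n
\]
in $R$. Contracting to $S$ yields primes $\primep_i = \primeq_i \cap S$. Each inclusion $\primeq_{i-1} \subsetneq \primeq_i$ gives $\primep_{i-1} \subseteq \primep_i$, and by the incomparability theorem two distinct primes of $R$ comparable under inclusion cannot contract to the same prime of $S$, so in fact $\primep_{i-1} \subsetneq \primep_i$. Thus the contracted chain has length $n$, proving $\dim S \geq n$ for every such chain, i.e.\ $\dim S \geq \dim R$.

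Next I would show $\dim R \geq \dim S$. Take any strict chain of primes
\[
\primep_0 \subsetneq \primep_1 \subsetneq \cdots \subsetneq \primep_n
\]
in $S$. By lying over, choose a prime $\primeq_0$ of $R$ with $\primeq_0 \cap S = \primep_0$. Then apply going up repeatedly: for each $i \geq 1$, given $\primeq_{i-1}$ lying over $\primep_{i-1}$, there exists a prime $\primeq_i \supseteq \primeq_{i-1}$ of $R$ with $\primeq_i \cap S = \primep_i$. Since the contractions $\primep_{i-1}$ and $\primep_i$ are distinct, $\primeq_{i-1} \neq \primeq_i$, so each inclusion is strict. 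This produces a strict chain of length $n$ in $R$, yielding $\dim R \geq \dim S$.

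No step should present any real obstacle, as each invocation is a textbook application of the respective theorem; the only thing to be careful about is consistently using incomparability to upgrade the inclusions to strict inclusions in the contracted chain of the first step, and using lying-over (not merely going-up) to start the lifted chain in the second step.
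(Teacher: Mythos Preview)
Your proof is correct and takes essentially the same approach as the paper: both argue the two inequalities separately, using lying-over plus going-up to lift a chain from $S$ to $R$, and incomparability to show that contracting a chain from $R$ to $S$ preserves strictness. The only cosmetic difference is that the paper establishes $\dim R \geq \dim S$ first and $\dim R \leq \dim S$ second, whereas you do them in the opposite order.
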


\begin{proof}
Let $\dim R = \ell$ and $\dim S = d$.

Let $\primeq_0\subset\dots\subset\primeq_d$ be a maximal chain of primes in $S$. By lying-over (\cite{atiyahmacdonald}, Theorem 5.10) and going-up (\cite{atiyahmacdonald}, Theorem 5.11), there exists a chain of primes $\primep_0\subset\dots\subset\primep_d$ in $R$ with $\primep_i\cap S = \primeq_i$ for each $i$. Thus $\ell \geq d$.

Let $\primep_0\subset\dots\subset\primep_\ell$ be maximal chain of primes in $R$. Then $\primep_0\cap S \subset\dots\subset\primep_\ell\cap S$ is a chain of primes in $S$, and the primes are all distinct, by incomparability (\cite{atiyahmacdonald}, Corollary 5.9). Thus $\ell \leq d$.
\end{proof}

We make the following definition in order to state lemma \ref{lem:graphandspec}, which will be used to link equidimensionality and the connectedness of the spectrum.

\begin{definition}
If $R$ is a noetherian ring, define the \textbf{containment graph} $\Gamma_R$ to be the bipartite graph with vertex sets $V_{min} = \text{minimal primes of $R$}$ and $V_{max} = \text{maximal ideals of $R$}$, with an edge for every pair $\primep\in V_{min}, \maxm\in V_{max}$ satisfying $\primep\subset\maxm$.
\end{definition}

\begin{lemma}\label{lem:graphandspec}
The spectrum of a noetherian ring $R$ is connected if and only if its containment graph $\Gamma_R$ is connected.
\end{lemma}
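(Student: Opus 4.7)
My plan is to reduce both the topological condition on $\Spec R$ and the combinatorial condition on $\Gamma_R$ to the connectedness of a single auxiliary graph $H$ on the minimal primes.

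First, I would use the fact that a noetherian ring has finitely many minimal primes $\primep_1,\dots,\primep_r$ and that the irreducible components of $\Spec R$ are precisely the closed sets $V(\primep_i)$, so that $\Spec R = V(\primep_1) \cup \dots \cup V(\primep_r)$ is a finite union of closed irreducible (hence connected) subsets. For a topological space expressed as a finite union of closed connected subsets, it is a standard fact that the space is connected if and only if the ``intersection graph'' on the subsets is connected. (I would prove this by noting that if the intersection graph disconnects into pieces $I_1 \sqcup I_2$, then $\bigcup_{i \in I_1} V(\primep_i)$ and $\bigcup_{i \in I_2} V(\primep_i)$ are disjoint closed sets covering $\Spec R$; conversely, each component of $\Spec R$ is a union of some of the $V(\primep_i)$.) Let $H$ denote the graph on $V_{min}$ with edges $\{\primep, \primep'\}$ whenever $V(\primep) \cap V(\primep') \neq \emptyset$. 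Then $\Spec R$ is connected iff $H$ is connected.

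Next I would give the key algebraic translation: $V(\primep) \cap V(\primep') = V(\primep + \primep')$ is nonempty iff $\primep + \primep'$ is a proper ideal, iff it is contained in some maximal ideal $\maxm$, iff $\primep$ and $\primep'$ share a common neighbor in $\Gamma_R$. Thus $H$ is obtained from $\Gamma_R$ by declaring two minimal primes adjacent precisely when they have a common neighbor among the maximal ideals.

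Finally, I would verify via elementary graph theory that $H$ is connected iff $\Gamma_R$ is connected. The crucial observation is that $\Gamma_R$ has no isolated vertices: every maximal ideal contains some minimal prime, and every minimal prime is contained in some maximal ideal. Given this, if $\Gamma_R$ is connected then any two minimal primes are joined by a path in $\Gamma_R$, which (being bipartite) alternates between $V_{min}$ and $V_{max}$; each length-2 subpath $\primep \text{---} \maxm \text{---} \primep'$ yields an edge in $H$, so $H$ is connected. Conversely, if $H$ is connected then each edge of $H$ lifts to a length-2 path in $\Gamma_R$ through a shared maximal ideal, so the subgraph on $V_{min}$ together with the lifted maximal ideals is connected; any remaining maximal ideal is attached to this subgraph via its (nonempty) neighborhood in $V_{min}$, so all of $\Gamma_R$ is connected. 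I expect this graph-theoretic bookkeeping to be the most finicky step, but it presents no real obstacle; the substantive content lies in the algebraic translation of component-intersection into shared containment in a maximal ideal.
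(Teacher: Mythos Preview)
Your argument is correct, and it takes a somewhat different route from the paper's. The paper argues each direction directly by contrapositive: a disconnection $\Spec R = W_0 \sqcup W_1$ into nonempty closed sets induces a disconnection of $\Gamma_R$ by looking at which minimal primes and maximal ideals fall in each $W_i$; conversely, a disconnection $W_0 \sqcup W_1$ of $\Gamma_R$ yields a disconnection of $\Spec R$ via the closed sets $V(I_0), V(I_1)$ where $I_j = \bigcap_{\primep \in W_j \cap V_{\min}} \primep$. Your approach instead factors through the intersection graph $H$ on the irreducible components, showing $\Spec R$ connected $\Leftrightarrow$ $H$ connected (pure topology of a finite closed cover) $\Leftrightarrow$ $\Gamma_R$ connected (the algebraic translation plus bipartite graph bookkeeping). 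Your version is arguably more conceptual, isolating $H$ as the natural pivot; the paper's version is more hands-on but has the small bonus of observing that the implication ``$\Gamma_R$ disconnected $\Rightarrow$ $\Spec R$ disconnected'' (equivalently, your forward direction) does not actually require the noetherian hypothesis, whereas your setup invokes finiteness of $V_{\min}$ from the outset.
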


\begin{proof}
$\Rightarrow$ Suppose $\Spec R$ is disconnected, and let $\Spec R = W_0\cup W_1$ be a partition into disjoint nonempty closed sets. Neither $W_0$ nor $W_1$ can contain all the minimal primes without containing all of $\Spec R$, thus $W_0\cap V_{min}$ and $W_1\cap V_{min}$ are also disjoint and nonempty. But $W_0$ contains every prime of $R$ containing any element of $W_0\cap V_{min}$, and in particular every maximal ideal containing any such element. This means there are no elements of $W_1\cap V_{max}$ that have an edge in $\Gamma_R$ to any element of $W_0\cap V_{min}$. Likewise, no elements of $W_0\cap V_{max}$ have an edge to any element of $W_1\cap V_{min}$. Thus $W_0$ and $W_1$ also disconnect $\Gamma_R$. This half of the argument does not rely on the noetherian hypothesis.

$\Leftarrow$ Suppose $\Gamma_R$ is disconnected and let $W_0$ and $W_1$ be a partition of $V_{min}\cup V_{max}$ into nonempty sets of vertices with no edges between them. Note that because $R$ is noetherian, $V_{min}$ is a finite set.

It must be that $W_0$ and $W_1$ both meet $V_{min}$ nontrivially. Since $W_0$ is nonempty, then either it meets $V_{min}$ in the first place, or else it meets $V_{max}$, say at $\maxm\in W_0\cap V_{max}$, and then we must have $\primep\in W_0\cap V_{min}$ for any minimal prime $\primep$ contained in $\maxm$. Similar logic applies to $W_1$.

Let
\[
I_0 = \bigcap_{\primep\in W_0\cap V_{min}} \primep,\; I_1 = \bigcap_{\primep\in W_1\cap V_{min}} \primep
\]
Then $V(I_0) = \bigcup_{\primep\in W_0\cap V_{min}} V(\primep)$ contains every prime ideal containing any minimal prime in $W_0\cap V_{min}$, and similarly for $V(I_1)$ and $W_1\cap V_{min}$. If $V(I_0)$ and $V(I_1)$ shared any element $\primeq$ of $\Spec R$, then they would also share any maximal ideal containing $\primeq$, and then this maximal would be an element of $V_{max}$ with edges to both $W_0\cap V_{min}$ and $W_1\cap V_{min}$, contradicting the assumption that $W_0,W_1$ disconnect $\Gamma_R$. Therefore $V(I_0)$ and $V(I_1)$ are disjoint. Since their union is a closed set containing every minimal prime of $\Spec R$, it exhausts $\Spec R$, so $V(I_0)$ and $V(I_1)$ disconnect $\Spec R$.
\end{proof}

\begin{prop}\label{prop:connectedimpliesequidimensional}
Let $R$ be a Cohen-Macaulay ring with a connected spectrum, that is integral over an equidimensional, catenary, integrally closed domain $S$. Then $R$ is equidimensional.
\end{prop}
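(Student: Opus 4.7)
The plan is to prove this by reducing it to the statement that every minimal prime of $R$ contracts to $(0)$ in $S$, and then leveraging Cohen--Seidenberg going-down in each domain quotient $R/\primep$. First note that by Lemma \ref{lem:integralpreservesdim}, $\dim R = \dim S =: d$. By lying-over and incomparability, every maximal ideal $\maxm$ of $R$ contracts to a maximal ideal of $S$, which has height $d$ by the equidimensionality of $S$; consequently $\height \maxm \le d$. Once we know every minimal prime $\primep$ satisfies $\primep \cap S = (0)$, each $R/\primep$ is a domain integral over the normal domain $S$, so Matsumura's going-down theorem applies. Then for any $\maxm \supset \primep$, lifting a saturated chain of length $d$ in $S$ from $(0)$ to $\maxm \cap S$ produces a saturated chain from $\primep$ to $\maxm$ in $R$ of length $d$; this forces $\height \maxm = d$ and $\dim R/\primep = d$, yielding equidimensionality of $R$.

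The key step, then, is to establish that every minimal prime of $R$ contracts to $(0)$. By lying-over applied to $(0) \in \Spec S$, at least one minimal prime $\primep^*$ of $R$ satisfies $\primep^* \cap S = (0)$. Let $X$ be the set of minimal primes of $R$ contracting to $(0)$ and $Y$ the complementary set, and suppose for contradiction that $Y \neq \emptyset$. Let $M(X)$, $M(Y)$ be the sets of maximal ideals of $R$ containing some element of $X$, respectively $Y$. If $M(X) \cap M(Y) = \emptyset$, then the partition $X \sqcup M(X)$ and $Y \sqcup M(Y)$ disconnects the containment graph $\Gamma_R$, contradicting the hypothesis that $\Spec R$ is connected via Lemma \ref{lem:graphandspec}. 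So we may pick $\maxm \in M(X) \cap M(Y)$, witnessed by $\primep_0 \in X$ and $\primep_1 \in Y$ with $\primep_1 \cap S = \primeq \neq (0)$, both contained in $\maxm$.

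Now derive a contradiction from the existence of this $\maxm$. The extension $S \hookrightarrow R/\primep_0$ is an integral extension of a normal domain by a domain, so going-down applies. Lifting the chain $(0) \subsetneq \dots \subsetneq \maxm \cap S$ in $S$ of length $d$ to $R/\primep_0$ (ending at $\maxm/\primep_0$) produces a chain from $\primep_0$ to $\maxm$ in $R$ of length $d$, giving $\ell(\primep_0, \maxm) \ge d$; incomparability furnishes the reverse inequality, so $\ell(\primep_0, \maxm) = d$. Local equidimensionality at $\maxm$ (Corollary \ref{prop:locallyequidimensional}) forces $\ell(\primep_1, \maxm) = \ell(\primep_0, \maxm) = d$ as well. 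Contracting this saturated chain in $R$ to $S$ yields, via incomparability, a chain of $d+1$ distinct primes in $S$ running from $\primeq$ up to the maximal $\maxm \cap S$ of height $d$. But since $S$ is catenary, equidimensional, and a domain, $\height \primeq + \dim S/\primeq = d$, so the maximum chain length from a nonzero $\primeq$ up to any maximal ideal is $d - \height \primeq \le d - 1$, a contradiction.

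The main obstacle is the subtle reliance on going-down: since $R$ is not assumed to be a domain, we cannot invoke Matsumura's going-down theorem directly on the extension $S \subset R$. Instead we have to do it one minimal prime of $R$ at a time, by working in each $R/\primep$. The role of the connectedness of $\Spec R$ is precisely to rule out the situation in which some minimal prime contracts to a nonzero prime of $S$---geometrically, such a prime would correspond to a ``dimensionally deficient'' component of $\Spec R$ sitting over a proper closed subscheme of $\Spec S$, and the argument above shows that such a component would have to be disjoint from the closure of the component coming from $\primep^*$, contradicting connectedness.
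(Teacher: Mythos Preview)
Your proof is correct and follows essentially the same approach as the paper: both arguments partition the minimal primes of $R$ according to whether they contract to $(0)$ in $S$, use going-down in the domain quotients $R/\primep$ (which is where the normality of $S$ enters), invoke local equidimensionality of Cohen--Macaulay rings to compare chain lengths at a common maximal ideal, and appeal to the containment-graph characterization of connectedness (Lemma~\ref{lem:graphandspec}) to rule out the ``bad'' set of minimal primes. The only cosmetic difference is the order in which the contradiction is derived in the last step---the paper first bounds the chain length from a bad minimal prime above by $d-1$ and then contrasts with height $d$ from a good one, whereas you first force length $d$ via local equidimensionality and then contract to $S$---but the content is identical.
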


\begin{proof}
Suppose the dimension of $S$ is $d$. Since it is a domain, the zero ideal is its unique minimal prime. Because it is equidimensional, every maximal ideal $\maxn$ has a saturated chain of length $d$ down to zero. Since $S$ is catenary, for any prime $\primeq$ of $S$, the length of any saturated chain from $\primeq$ down to $0$ is $\height(\primeq)$, i.e. does not depend on the choice of chain. Applying this with $\primeq = \maxn$ maximal, we conclude that every maximal chain in $\Spec S$ has length $d$. 

Note that $R$ is integral over $S$ because it is module-finite, thus incomparability, lying-over, and going-up apply.

Let $W_0$ be the set of prime ideals of $R$ lying over the zero ideal of $S$. Every $\primep\in W$ is minimal, by incomparability (\cite{atiyahmacdonald}, Corollary 5.9). We claim that (a) every prime in $W_0$ has dimension $d$, (b) every maximal ideal containing a minimal prime in $W_0$ has height $d$, and (c) actually every minimal prime of $R$ is in $W_0$. Claims (a)-(c) combine to establish the equidimensionality of $R$.

Note that for any $\primep\in W_0$, $R/\primep$ contains $S$ since $\primep\cap S = 0$. Furthermore, it is integral over $S$. (Each element of $R$ satisfies a monic polynomial over $S$, so it certainly satisfies the same polynomial mod $\primep$).

For (a), $R/\primep$'s dimension must be $d$ since integral extensions preserve dimension (\ref{lem:integralpreservesdim}), and this is by definition the dimension of $\primep$.

For (b), let $\maxm$ be a maximal ideal of $R$ containing $\primep\in W_0$. Its height is not greater than $d$, since this is the dimension of $R$, by \ref{lem:integralpreservesdim}. We can thus establish (b) by exhibiting a chain of primes in $R$ of length $d$ from $\primep$ to $\maxm$. Since the ideals of $R$ containing $\primep$ are in inclusion-preserving bijective correspondence with the ideals of $R/\primep$, for this purpose we can replace $R$ with $R/\primep$, and $\maxm$ with its image in this ring. Thus we may temporarily assume $R$ is an integral domain and $\primep = 0$.

Since $S$ is integrally closed, the going-down theorem (\cite{atiyahmacdonald}, Theorem 5.16) applies to the extension $S\subset R$. The ideal $\maxn = \maxm\cap S$ is maximal in $S$ by the going-up theorem (\cite{atiyahmacdonald}, Theorem 5.11), thus height $d$, so we may take a chain of length $d$ descending from $\maxn$, and apply the going-down theorem to obtain a chain of length $d$ descending from $\maxm$ in $R$.

For (c), we return to the original setting, dropping the temporary assumption that $R$ is a domain. Let $W_1$ be the set of minimal primes of $R$ whose intersections with $S$ are not zero. We will show that $W_1$ is empty.

For a contradiction, suppose that $\primep \in W_1$. Note that $\height(\primep \cap S) > 0$ since $\primep\cap S$ is nonzero and $S$ is a domain. Let $\maxm$ be a maximal ideal of $R$ containing $\primep$. If a saturated chain of primes from $\primep$ to $\maxm$ has length $\ell$, one obtains by intersecting with $S$ a chain of length $\ell$ ascending from $\primep\cap S$. It follows from $\height(\primep \cap S) > 0$ that $\ell < d$.

If $\maxm$ also contains a member of $W_0$, then by (b) it has height $d$. But $R$ is a Cohen-Macaulay ring, and \ref{prop:locallyequidimensional} asserts that a maximal ideal cannot possess saturated chains of two different lengths to different minimal primes. Thus there is no maximal ideal of $R$ containing elements of both $W_0$ and $W_1$. 

This implies that there is no path in the containment graph $\Gamma_R$ from $W_0$ to $W_1$. The set $W_0$ is nonempty, by lying-over (\cite{atiyahmacdonald}, Theorem 5.10). If $W_1$ is also nonempty, this means $\Gamma_R$ is disconnected. This contradicts the assumption that $\Spec R$ is connected, by \ref{lem:graphandspec}. Thus $W_1$ is empty. This establishes (c), so $R$ is equidimensional.
\end{proof}

The relevance of equidimensionality is that it allows us to apply a far-reaching generalization of the Hironaka criterion, given as theorem \ref{thm:generalHironaka} below, which works for integral extensions of regular domains. This is essentially a consequence of a local version, lemma \ref{lem:localHironaka}. We also need the fact that regular domains are Cohen-Macaulay and integrally closed. 






\begin{definition}
Recall that a noetherian local ring $(R,\maxm)$ is a \textbf{regular local ring} if the minimal number of generators of its maximal ideal $\maxm$ is equal to its Krull dimension. A noetherian ring is \textbf{regular} if all its localizations at maximal ideals are regular local rings. 
\end{definition}

\begin{lemma}[\cite{brunsherzog}, Proposition 2.2.5]\label{lem:regularisCM}
If $(R,\maxm)$ is a regular local ring, any minimal set of generators for $\maxm$ forms a regular sequence. Thus $R$ is Cohen-Macaulay.\qed
\end{lemma}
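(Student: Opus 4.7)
The plan is to prove the stronger statement, that any minimal generating set for $\maxm$ forms a regular sequence, by induction on $d = \dim R$; the Cohen-Macaulayness then follows immediately, for a regular sequence of length $d$ in $\maxm$ gives $\depth R \geq d$, and this combined with the reverse inequality $\depth R \leq \dim R$ supplied by Proposition \ref{prop:depthleqdim} yields $\depth R = \dim R$.

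The crux is the auxiliary claim that every regular local ring is an integral domain, which I would also prove by induction on $d$. The base case $d=0$ is clear: $\maxm$ is generated by $0$ elements, so $R$ is a field. For the inductive step, assume $d \geq 1$. By Nakayama, $\maxm \neq \maxm^2$, and since $\dim R \geq 1$, none of the (finitely many, because $R$ is noetherian) minimal primes $\primep_1, \dots, \primep_k$ equals $\maxm$. A prime-avoidance argument (the version permitting one non-prime term) then produces
\[
x \in \maxm \setminus \bigl(\maxm^2 \cup \primep_1 \cup \dots \cup \primep_k\bigr).
\]
Extend $x$ to a minimal generating set $x, x_2, \dots, x_d$ of $\maxm$ (possible since $x \notin \maxm^2$). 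Then $R/(x)$ is local with maximal ideal generated by $d-1$ elements, and Krull's principal ideal theorem gives $\dim R/(x) \geq d - 1$, forcing equality, so $R/(x)$ is regular of dimension $d-1$ and hence, by induction, a domain. Thus $(x)$ is prime. Now for any minimal prime $\primep$ of $R$, the choice of $x$ gives $\primep \subsetneq (x)$. For any $y \in \primep$ write $y = xz$; since $x \notin \primep$, primality forces $z \in \primep$. Hence $\primep = x\primep$, and Nakayama (applied to the finitely generated $R$-module $\primep$) yields $\primep = 0$. So $R$ is a domain.

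Returning to the main induction, assume $d \geq 1$ and let $x_1, \dots, x_d$ be a minimal generating set of $\maxm$. Since $R$ is now known to be a domain and $x_1 \neq 0$, $x_1$ is a nonzerodivisor. By the argument above (applied with $x = x_1$, which lies outside $\maxm^2$ by minimality of the generating set), $R/(x_1)$ is a regular local ring of dimension $d-1$ whose maximal ideal has the images of $x_2, \dots, x_d$ as a minimal generating set. The inductive hypothesis applied to $R/(x_1)$ says these images form a regular sequence there; concatenating with $x_1$ gives the desired regular sequence in $R$.

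The main obstacle is the domain lemma, which in turn hinges entirely on the prime-avoidance choice of $x \in \maxm \setminus (\maxm^2 \cup \bigcup \primep_i)$ and on having Krull's principal ideal theorem available to control $\dim R/(x)$. Everything else is bookkeeping with Nakayama's lemma and the depth-dimension inequality.
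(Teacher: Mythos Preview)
The paper does not give its own proof of this lemma: it is stated with a citation to Bruns--Herzog and closed with \qed. Your argument is essentially the standard one found there (and in Matsumura), so you have reconstructed the intended proof.

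There is one small logical slip. You write ``for any minimal prime $\primep$ of $R$, the choice of $x$ gives $\primep \subsetneq (x)$.'' The choice of $x$ (namely $x \notin \maxm^2$ and $x$ avoiding all minimal primes) does not by itself force every minimal prime to sit inside $(x)$. What you actually need is: since $(x)$ is prime, it contains \emph{some} minimal prime $\primep$; for \emph{that} $\primep$ the Nakayama argument yields $\primep = 0$, whence $(0)$ is prime and $R$ is a domain. The quantifier ``for any'' is not justified at that point in the argument, and is also unnecessary --- once one minimal prime is shown to be zero, $R$ is a domain and the others evaporate. With this correction the proof is complete.
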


\begin{lemma}[\cite{atiyahmacdonald}, Proposition 5.13]\label{lem:normalityislocal}
An integral domain is integrally closed if and only if its localization at every maximal ideal is integrally closed.\qed
\end{lemma}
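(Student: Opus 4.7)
The plan is to deduce this from two well-known ingredients: (i) integral closure commutes with localization inside the fraction field, and (ii) a module over a noetherian (in fact, any) ring is zero if and only if all its localizations at maximal ideals are zero. Write $K$ for the fraction field of the integral domain $A$, and $A'\subset K$ for the integral closure of $A$ in $K$. The goal is to show that $A=A'$ if and only if $A_\maxm = (A_\maxm)'$ (integral closure taken in $K$) for every maximal ideal $\maxm\triangleleft A$.

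First I would prove the commutation lemma: for any multiplicative subset $S\subset A$, one has $(A_S)' = (A')_S$ as subrings of $K$. The inclusion $(A')_S\subset (A_S)'$ is immediate by dividing an integral dependence relation of $y\in A'$ by $s^n$ for $s\in S$. For the reverse inclusion, given $x\in K$ with $x^n + a_1 x^{n-1} + \dots + a_n = 0$ and $a_i\in A_S$, pick a common denominator $s\in S$ for the $a_i$ and multiply through by $s^n$ to obtain
\[
(sx)^n + s a_1 (sx)^{n-1} + s^2 a_2 (sx)^{n-2} + \dots + s^n a_n = 0,
\]
which exhibits $sx$ as integral over $A$; hence $sx\in A'$ and $x\in (A')_S$.

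Once commutation is in hand, the forward implication is immediate: if $A=A'$ then $A_\maxm = (A')_\maxm = (A_\maxm)'$ for every maximal $\maxm$. For the converse, consider the $A$-module $M = A'/A$ arising from the inclusion $A\hookrightarrow A'$. The assumption that $A_\maxm = (A_\maxm)'$ for every maximal $\maxm$, combined with the commutation lemma, gives $A_\maxm = (A')_\maxm$, so $M_\maxm = (A')_\maxm / A_\maxm = 0$ for every maximal ideal. By the standard local-to-global principle, $M=0$, i.e.\ $A=A'$.

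I expect no serious obstacle here: the content is entirely the commutation lemma, which is a routine manipulation of an integral dependence relation, together with the module-theoretic local-to-global principle. The proof is short and essentially bookkeeping once one identifies the cokernel $A'/A$ as the object to which the local-to-global principle should be applied.
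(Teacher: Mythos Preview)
Your proof is correct and is essentially the standard argument (the one in Atiyah--Macdonald). The paper does not give its own proof of this lemma: it simply cites \cite{atiyahmacdonald}, Proposition 5.13, and closes with a \qed. So there is nothing to compare against beyond noting that your argument matches the cited source.
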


\begin{lemma}\label{lem:regularisnormal}
A regular domain is integrally closed.
\end{lemma}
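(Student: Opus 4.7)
The plan is to reduce to the local case via lemma \ref{lem:normalityislocal} and then invoke the Auslander--Buchsbaum theorem. Since $R$ is a domain, each localization $R_\maxm$ at a maximal ideal is also a domain, with the same fraction field as $R$, and by the definition of regularity each $R_\maxm$ is a regular local ring. Lemma \ref{lem:normalityislocal} then reduces the statement to showing that every regular local ring is integrally closed in its fraction field.

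The main substantive input would be the Auslander--Buchsbaum theorem, which states that a regular local ring is a unique factorization domain. I would not attempt to reproduce its proof but cite it as a standard (nontrivial) result; this is the one step of the argument that is not essentially formal, and hence the main obstacle. With this in hand, the lemma follows from the classical fact that every UFD is integrally closed in its fraction field.

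For the UFD step, given $x = a/b$ in the fraction field with $a, b$ sharing no common prime factor and satisfying a monic integral equation $x^n + c_1 x^{n-1} + \dots + c_n = 0$ with $c_i \in R$, I would clear denominators to obtain
\[
a^n + c_1 a^{n-1} b + \dots + c_n b^n = 0,
\]
so that $b$ divides $a^n$. Since $\gcd(a,b) = 1$, unique factorization forces $b$ to be a unit, giving $x \in R$. Combining these ingredients yields the lemma without any further work.
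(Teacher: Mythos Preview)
Your proof is correct and follows essentially the same route as the paper: reduce to the local case via \ref{lem:normalityislocal}, invoke the fact that regular local rings are UFDs (the paper cites \cite{eisenbud}, Theorem 19.19), and conclude since UFDs are integrally closed. The only difference is that you spell out the standard argument for why a UFD is integrally closed, whereas the paper simply asserts it.
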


\begin{proof}
Apply \ref{lem:normalityislocal}. Each localization at a maximal is a regular local ring. Regular local rings are unique factorization domains (\cite{eisenbud}, Theorem 19.19), which are always integrally closed.
\end{proof}

\begin{lemma}[Local Hironaka criterion, \cite{eisenbud}, Corollary 18.17]\label{lem:localHironaka}
If $R$ is a ring whose maximal ideals are all the same height, and it contains a regular local ring $(S,\maxn)$ and is module-finite over it, then $R$ is Cohen-Macaulay if and only if it is a free $S$-module.\qed
\end{lemma}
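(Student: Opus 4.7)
The plan is to exploit the Auslander--Buchsbaum formula applied to $R$ viewed as a finitely generated module over the regular local ring $(S,\maxn)$. Let $d=\dim S$. Because $S$ is regular it has finite global dimension, so $\operatorname{pd}_S R<\infty$, and the formula yields $\operatorname{pd}_S R + \depth_S R = \depth S = d$. Since $R$ is finitely generated over the local ring $S$, projectivity and freeness coincide, so $R$ is $S$-free if and only if $\operatorname{pd}_S R = 0$, if and only if $\depth_S R = d$. It therefore suffices to prove that, under the stated height hypothesis on $R$, this last equality is equivalent to the Cohen--Macaulayness of $R$.

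First I would pin down the heights. Module-finiteness forces integrality, so $\dim R = \dim S = d$ by \ref{lem:integralpreservesdim}, and by lying-over and going-up there exists a maximal ideal of $R$ of height exactly $d$; the hypothesis that all maximal ideals of $R$ share a common height then forces every one of them to have height $d$. In particular $R$ is a $d$-dimensional semilocal ring, so its Cohen--Macaulayness is tested at its maximal ideals and amounts to $\depth R_\maxm = d$ for every maximal $\maxm$.

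Second I would reinterpret $\depth_S R$ as an invariant that sees only the maximal ideals of $R$. A maximal $R$-regular sequence inside $\maxn R$ simultaneously computes $\depth_S R$ (regarding $R$ as an $S$-module with respect to $\maxn$) and the $(\maxn R)$-depth of the $R$-module $R$. By the standard formula
\[
\depth_{\maxn R}(R) = \min\bigl\{\depth R_\primep : \primep \supseteq \maxn R\bigr\},
\]
combined with the observation that a prime of $R$ containing $\maxn R$ contracts to $\maxn$ in $S$ and is therefore maximal in $R$ by incomparability, we obtain $\depth_S R = \min_\maxm \depth R_\maxm$. Since each summand satisfies $\depth R_\maxm \le \dim R_\maxm = d$, the equality $\depth_S R = d$ is precisely the statement that every $\depth R_\maxm$ attains the common upper bound $d$, which is to say that $R$ is Cohen--Macaulay.

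The main obstacle is the bookkeeping around the two meanings of depth --- depth of $R$ as an $S$-module versus depth of each localization $R_\maxm$ as an $R_\maxm$-module --- and in particular the justification of the identity $\depth_S R = \min_\maxm \depth R_\maxm$; once this identification is in hand, the remainder of the argument is a direct application of Auslander--Buchsbaum together with the elementary dimension theory already developed in this section.
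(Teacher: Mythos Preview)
The paper does not prove this lemma: it is stated with a \qed and attributed to \cite{eisenbud}, Corollary 18.17, so there is no in-paper argument to compare against. Your proof sketch via the Auslander--Buchsbaum formula is correct and is in fact the standard route (and the one Eisenbud takes): regularity of $S$ gives finite projective dimension, Auslander--Buchsbaum converts freeness into the condition $\depth_S R = d$, and the equal-height hypothesis on the maximal ideals of $R$ is exactly what makes this equivalent to Cohen--Macaulayness of $R$.

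The one point worth tightening is the identity $\depth_S R = \min_{\maxm} \depth R_{\maxm}$. Your justification is right in outline; to make it airtight, note that $\sqrt{\maxn R}$ is the intersection of the (finitely many) maximal ideals $\maxm_i$ of $R$, so $\depth_{\maxn R}(R)$ equals the grade of that intersection, and then use the Ext characterization together with the fact that $\operatorname{Ext}^j_R(R/\maxn R, R)$ is supported only on the $\maxm_i$ and localizes to $\operatorname{Ext}^j_{R_{\maxm_i}}(R_{\maxm_i}/\maxn R_{\maxm_i}, R_{\maxm_i})$, whose vanishing below $\depth R_{\maxm_i}$ follows since $\maxn R_{\maxm_i}$ has radical $\maxm_i R_{\maxm_i}$. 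This fills the gap you flagged as the ``main obstacle.''
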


\begin{thm}[General Hironaka criterion]\label{thm:generalHironaka}
Let $R$ be a ring with connected spectrum that is finite as a module over an equidimensional regular domain $S$. Then $R$ is Cohen-Macaulay if and only if it is projective as an $S$-module.
\end{thm}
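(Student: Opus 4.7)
My plan is to prove the two implications separately, in both cases reducing to a local statement by localizing at primes of $S$.

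For the implication from projectivity to Cohen-Macaulayness, I would begin by observing that projectivity of $R$ over $S$ is equivalent to flatness, since $R$ is finitely generated as an $S$-module and $S$ is noetherian. Cohen-Macaulayness is a local property on $\Spec R$, so I would fix a prime $\primep\triangleleft R$ and set $\primeq = \primep\cap S$. The induced map $S_\primeq \to R_\primep$ is local (since $\primeq\subset\primep$, the maximal ideal $\primeq S_\primeq$ maps into $\primep R_\primep$) and flat, being a composition of localizations of a flat map. Its source is a regular local ring, hence Cohen-Macaulay by Lemma \ref{lem:regularisCM}, and its closed fiber $R_\primep/\primeq R_\primep$ is a localization of a finite-dimensional algebra over the residue field $S_\primeq/\primeq S_\primeq$, hence artinian and therefore Cohen-Macaulay. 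Applying the flat local Cohen-Macaulay criterion (Bruns--Herzog, Theorem 2.1.7) then yields that $R_\primep$ is Cohen-Macaulay, completing this direction.

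For the reverse implication, my first step would be to deduce that $R$ is equidimensional by applying Proposition \ref{prop:connectedimpliesequidimensional}. Its hypotheses hold: $S$ is Cohen-Macaulay by Lemma \ref{lem:regularisCM} and thus catenary by Lemma \ref{lem:catenary}, equidimensional by assumption, and integrally closed by Lemma \ref{lem:regularisnormal}; meanwhile $R$ is Cohen-Macaulay with connected spectrum, and is integral over $S$ because module-finite. Next I would fix a maximal ideal $\maxn$ of $S$ and consider $R_\maxn := R\otimes_S S_\maxn$, which is module-finite over the regular local ring $S_\maxn$. The maximal ideals of $R_\maxn$ correspond, via lying-over and incomparability, to the maximal ideals of $R$ contracting to $\maxn$; by equidimensionality of $R$ these all have a common height, which is preserved under localization. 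This places us in position to invoke the Local Hironaka criterion (Lemma \ref{lem:localHironaka}): $R_\maxn$ is Cohen-Macaulay as a localization of $R$, so the lemma concludes that $R_\maxn$ is free over $S_\maxn$.

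To finish, since $R$ is finitely presented over the noetherian ring $S$ and its localization at every maximal ideal of $S$ is free, the standard equivalence between projectivity and being finitely presented plus locally free gives that $R$ is projective over $S$. The hard part of the argument is securing the equal-height hypothesis of the Local Hironaka criterion, which is precisely where the connectedness of $\Spec R$ is essential: without it, $\Spec R$ could split into components of differing dimensions, breaking equidimensionality and collapsing the local reduction.
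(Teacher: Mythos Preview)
Your argument is correct, and for the direction Cohen-Macaulay $\Rightarrow$ projective it matches the paper's proof essentially step for step: deduce equidimensionality of $R$ from Proposition~\ref{prop:connectedimpliesequidimensional}, localize at a maximal ideal of $S$, verify the equal-height hypothesis, invoke the Local Hironaka criterion (Lemma~\ref{lem:localHironaka}), and conclude projectivity from local freeness.

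For the direction projective $\Rightarrow$ Cohen-Macaulay, your route differs from the paper's. The paper localizes at a maximal ideal $\maxm$ of $R$, uses that $R\otimes_S S_{\maxm\cap S}$ is \emph{free} over the regular local ring $S_{\maxm\cap S}$, pulls back an explicit regular sequence of length $d=\dim S$ generating the maximal ideal of $S_{\maxm\cap S}$, and then squeezes $\depth R_\maxm$ between $d$ and $\dim R_\maxm\leq d$. You instead localize at an arbitrary prime of $R$, keep only flatness, and appeal to the general fiber criterion for Cohen-Macaulayness under flat local maps (Bruns--Herzog, Theorem~2.1.7), noting that the closed fiber is artinian. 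Your approach is cleaner and invokes a single citable theorem; the paper's is more self-contained, building the regular sequence by hand. Both correctly avoid using the connected-spectrum or equidimensionality hypotheses in this direction, which the paper also notes.
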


\begin{proof}
$\Rightarrow$ Let $\dim_{Krull} S = d$. Suppose $R$ is finitely generated and projective as an $S$-module. Let $\maxm$ be a maximal ideal of $R$. The intersection $\maxn = \maxm\cap S$ is a maximal ideal of $S$ by going-up. Then $R\otimes_S S_\maxn$ is finitely generated projective as an $S_\maxn$-module, and therefore finite free, since $S_\maxn$ is local and projective modules over local rings are free (\cite{kaplansky}). The dimension of $S_\maxn$ is $d$, and $S_\maxn$ is regular, so $\maxn S_\maxn$ contains a regular $S_\maxn$-sequence of length $d$ by \ref{lem:regularisCM}, which is also regular on $R\otimes_S S_\maxn$ since it is a free $S_\maxn$-module. This same sequence is furthermore regular on $R_\maxm$ since this is a localization of $R\otimes_S S_\maxn = R_{S\setminus\maxn}$. Therefore 
\[
\depth R_\maxm \geq d.
\]
But
\[
\dim R_\maxm \leq \dim R = \dim S = d,
\]
with the first equality because integral inclusions preserve dimension (\ref{lem:integralpreservesdim}). Meanwhile, 
\[
\depth R_\maxm \leq \dim R_\maxm,
\]
by \ref{prop:depthleqdim}. Thus 
\[
\depth R_\maxm\leq \dim R_\maxm \leq d \leq \depth R_\maxm,
\]
so we can conclude equality. Therefore $R_\maxm$ is Cohen-Macaulay. Since $\maxm$ was an arbitrary maximal of $R$, $R$ is Cohen-Macaulay.

$\Leftarrow$ Suppose that $R$ is a Cohen-Macaulay ring with connected spectrum that is finitely generated as an $S$-module. Since regular domains are integrally closed (\ref{lem:regularisnormal}), and Cohen-Macaulay (\ref{lem:regularisCM}) and therefore catenary (\ref{lem:catenary}), $R$ and $S$ satisfy the hypotheses of proposition \ref{prop:connectedimpliesequidimensional}, and we conclude that $R$ is equidimensional.

Let $\maxn$ be a maximal ideal of $S$. $R\otimes_S S_\maxn$ is a localization of $R$ and thus a Cohen-Macaulay ring. Furthermore it is finite over the regular local ring $S_\maxn$ ($=S\otimes_S S_\maxn$), since $R$ is finite over $S$. Lastly, it is equidimensional, since its prime spectrum consists of exactly those primes of $R$ that do not meet the complement of $\maxn$ in $S$, i.e. those primes of $R$ whose intersection with $S$ is contained in $\maxn$. These are the primes of $R$ which lie over $\maxn$ in $S$, and the primes they contain. The former are all maximal by incomparability. Since $R$ is equidimensional, they are all the same height.

Therefore \ref{lem:localHironaka} applies to the rings $R\otimes_S S_\maxn$ and $S_\maxn$. Since $R\otimes_S S_\maxn$ is Cohen-Macaulay, \ref{lem:localHironaka} implies it is a free module.

Since this holds for every maximal $\maxn$ of $S$, and finitely generated projective modules are exactly those that are locally free (\cite{eisenbud}, Theorem 19.2),  $R$ is projective over $S$. 
\end{proof}

\begin{remark}
The ``if" direction of theorem \ref{thm:generalHironaka} remains true if we drop the requirements that the spectrum of $R$ is connected and that $S$ is a domain, and weaken the requirement on $S$ from regularity to mere Cohen-Macaulayness, and the same proof works.
\end{remark}

With just one more piece of preparation, we are ready to state and prove the version of the Hironaka criterion of which we will make use in the sequel.


\begin{lemma}\label{lem:connectedisconnected}
If $R$ is an $\N$-graded $A$-algebra connected in the sense of definition \ref{def:connected}, then $R$ has a connected spectrum if and only if the same is true of $A$.
\end{lemma}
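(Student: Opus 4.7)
The plan is to reduce the statement to a comparison of the idempotents of $R$ and $A$, since a noetherian ring has connected spectrum if and only if its only idempotents are $0$ and $1$. The connectedness hypothesis, which is exactly the assertion $R_0 = A$, gives a canonical ring projection $\pi \colon R \twoheadrightarrow R/R_+ = A$, where $R_+ = \bigoplus_{\ell > 0} R_\ell$; this projection is a section of the inclusion $A = R_0 \hookrightarrow R$. One direction of the equivalence is immediate: a nontrivial idempotent $a \in A$ is a nontrivial idempotent in $R$, since $A \hookrightarrow R$ is an injective unital ring map, so $\Spec A$ disconnected forces $\Spec R$ disconnected.

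For the converse, I want to show that if $e \in R$ is a nontrivial idempotent, then $\pi(e) \in A$ is also nontrivial, i.e.\ lies in $A \setminus \{0,1\}$. The key lemma is the following: \emph{any idempotent $e \in R_+$ is zero.} To prove it, decompose $e$ into its (finitely many) nonzero homogeneous components $e = e_{d_0} + e_{d_0+1} + \cdots + e_{d_1}$, with $d_0 \geq 1$ the minimum degree appearing. Every nonzero homogeneous summand of the product $e^2$ has degree at least $2d_0$, so the degree-$d_0$ component of $e^2$ is zero. The identity $e^2 = e$ then yields $e_{d_0} = 0$, contradicting the choice of $d_0$ unless $e = 0$ in the first place. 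Applying this to $e$ when $\pi(e) = 0$, and to the idempotent $1-e$ when $\pi(e) = 1$ (noting that $\pi(1-e) = 0$ means $1-e \in R_+$), shows that a nontrivial $e$ must have $\pi(e) \notin \{0,1\}$.

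Combining the two directions: $R$ has a nontrivial idempotent if and only if $A$ does, which is the desired equivalence of connectedness of the two spectra. I do not anticipate a serious obstacle; the entire argument rests on the trivial degree inequality $2d_0 > d_0$ for $d_0 \geq 1$, together with the standard equivalence between connectedness of $\Spec$ and the absence of nontrivial idempotents. The only care required is to note that an element of a graded ring is, by definition, a finite sum of homogeneous pieces, so the minimum-degree component $e_{d_0}$ is well defined.
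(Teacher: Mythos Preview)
Your proof is correct and follows essentially the same approach as the paper: reduce to idempotents, note one direction is immediate, and for the other show that an idempotent of $R$ with degree-zero part $0$ or $1$ must itself be $0$ or $1$. The paper carries this out by an induction on degree (using the relations $x_k = 2x_0 x_k$), whereas your ``key lemma'' argument—observing that squaring at least doubles the minimum positive degree—accomplishes the same thing in one stroke; the difference is cosmetic.
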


\begin{proof}
It is well-known that the connectedness of the $\Spec$ of a ring is equivalent to the existence of nontrivial (i.e. $\neq 0$ or $1$) idempotents (e.g. \cite{eisenbud}, exercise 2.25). Suppose $x\in R$ is idempotent and let $x = x_0+x_1+\dots+x_n$ be its decomposition into homogeneous components. Then $x^2 = x$ implies $x_k = \sum_{i+j=k} x_ix_j$ for all $0\leq k \leq n$. Because $A$ has connected $\Spec$ and is the entire degree zero component of $R$ by assumption, the first of these relations, $x_0 = x_0^2$, implies $x_0 = 0$ or $1$. In either case, the next relation, $x_1 = 2x_0x_1$, implies $x_1=0$. Then $x_j=0$ for $j>0$ follows by induction: the induction assumption reduces the relation $x_k = \sum_{i+j=k} x_ix_j$ to $x_k = 2x_0x_k$, which again implies $x_k=0$ for either choice of $x_0$. Thus $x=0$ or $1$.

In the converse direction, clearly if $A$ contains a nontrivial idempotent then so does $R$.
\end{proof}



And finally:

\begin{thm}[Hironaka criterion over $\Z$]\label{thm:HironakaoverZ}
Let $R=\bigoplus_{\ell\geq 0}R_\ell$ be a finitely generated graded connected $\mathbb{Z}$-algebra. Let $n = \dim_{Krull} R - 1$. Suppose $\theta_1,\dots,\theta_n$ are homogeneous, positive-degree elements of $R$ such that $R$ is finite as a module over $\mathbb{Z}[\theta_1,\dots,\theta_n]$. Then $R$ is Cohen-Macaulay if and only if it is free as a $\mathbb{Z}[\theta_1,\dots,\theta_n]$-module.
\end{thm}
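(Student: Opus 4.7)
The plan is to reduce to the general Hironaka criterion (Theorem \ref{thm:generalHironaka}) by taking $S = \Z[\theta_1,\dots,\theta_n]$, and then convert projectivity over $S$ into freeness using the Quillen--Suslin theorem.

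First I would argue that $\theta_1,\dots,\theta_n$ are algebraically independent over $\Z$, so that $S$ really is a polynomial ring in $n$ variables. Since $R$ is finite as a module over $S$, it is integral over $S$, and by Lemma \ref{lem:integralpreservesdim} we have $\dim S = \dim R = n+1$. A surjection from the abstract polynomial ring $\Z[T_1,\dots,T_n]$ onto $S$ with nonzero kernel would produce a ring of dimension strictly less than $n+1$; hence the $\theta_i$ are algebraically independent and $S \cong \Z[T_1,\dots,T_n]$. In particular $S$ is a regular integral domain.

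Next I would verify the remaining hypotheses of Theorem \ref{thm:generalHironaka}. The ring $S$ is equidimensional: $(0)$ is its unique minimal prime, of dimension $n+1$, and every maximal ideal of $\Z[T_1,\dots,T_n]$ has height $n+1$ (any maximal contracts to $(p) \subset \Z$, and its image in $\F_p[T_1,\dots,T_n]$ is a height-$n$ maximal, so by catenarity in $S$ the total height is $n+1$). The spectrum of $R$ is connected: since $R$ is $\N$-graded and connected with $R_0 = \Z$, Lemma \ref{lem:connectedisconnected} reduces this to the connectedness of $\Spec \Z$. Finally, $R$ is finite over $S$ by hypothesis. Applying Theorem \ref{thm:generalHironaka} yields that $R$ is Cohen--Macaulay if and only if $R$ is projective as an $S$-module.

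The remaining step is to replace ``projective'' by ``free''. This follows immediately from the Quillen--Suslin theorem: every finitely generated projective module over $\Z[T_1,\dots,T_n]$ is free. Alternatively, one may give a self-contained graded argument: lift a $\Z$-basis of the finitely generated projective (hence free) $\Z$-module $R/(\theta_1,\dots,\theta_n)R$ to a graded $S$-linear map $F \to R$ from a graded free module, observe that this map is surjective by graded Nakayama applied to its cokernel, and if $R$ is projective, split the surjection to obtain a kernel $K$ with $K = (\theta_1,\dots,\theta_n)K$, so that $K = 0$ by another application of graded Nakayama. I expect the main obstacle to lie in cleanly verifying the equidimensionality of $S$ and confirming that the connectedness hypothesis in Theorem \ref{thm:generalHironaka} really does apply; the passage from projective to free is routine once Quillen--Suslin is invoked.
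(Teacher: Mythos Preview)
Your proposal is correct and follows essentially the same route as the paper: set $S=\Z[\theta_1,\dots,\theta_n]$, use the dimension equality $\dim S=\dim R=n+1$ to force algebraic independence, verify the hypotheses of Theorem~\ref{thm:generalHironaka} (regularity and equidimensionality of $S$, connectedness of $\Spec R$ via Lemma~\ref{lem:connectedisconnected}), and then pass from projective to free via Quillen--Suslin. The paper even makes the same remark you do about bypassing Quillen--Suslin with a graded Nakayama argument; your sketch of that alternative is also in line with what the paper indicates.
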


\begin{proof}
Let $S = \Z[\theta_1,\dots,\theta_n]$. Since $S\subset R$ is finite, $\dim_{Krull} S = \dim_{Krull} R = n+1$. 
 It follows immediately that $\theta_1,\dots,\theta_n$ are algebraically independent over $n$: the homomorphism from the polynomial ring $\Z[X_1,\dots,X_n]$ to $S$ mapping $X_i\mapsto \theta_i$ cannot have a nontrivial kernel without causing $\dim S < \dim \Z[X_1,\dots,X_n] = n+1$, 
 since $\Z[X_1,\dots,X_n]$ is a domain. Thus $S$ is a polynomial ring over the principal ideal domain $\Z$. In particular, it is an equidimensional regular domain.

Meanwhile, $R$ has connected spectrum, by \ref{lem:connectedisconnected}, since $\Z$ has connected spectrum. Therefore, theorem \ref{thm:generalHironaka} applies, and $R$ is Cohen-Macaulay if and only if it is a projective $S$-module. But since $\Z$ is a principal ideal domain, a finitely generated $S$-module is projective if and only if it is free, by the Quillen-Suslin theorem (\cite{lam}, Corollary V.4.12). 
\end{proof}

\begin{remark}
One can avoid the heavy machinery of the Quillen-Suslin theorem in this proof, since for graded modules over a graded, connected algebra over a p.i.d., one can prove that projectiveness coincides with freeness using only the graded Nakayama lemma (\ref{lem:gradedNakayama} in the algebraic lemmas appendix) and elementary arguments.
\end{remark}

The analogy between \ref{thm:HironakaoverZ} and \ref{cor:CMfreemodule} motivates us to name the $\theta_1,\dots,\theta_n$ of \ref{thm:HironakaoverZ} by analogy with the field case:

\begin{definition}\label{def:hsopoverZ}
If $R$ is a finitely generated graded connected $\Z$-algebra and $\theta_1,\dots,\theta_n$ are positive-degree homogeneous elements with $n=\dim R - 1$ and $R$ module-finite over $\Z[\theta_1,\dots,\theta_n]$, as in the hypothesis of proposition \ref{thm:HironakaoverZ}, then we will say $\theta_1,\dots,\theta_n$ is a \textbf{homogeneous system of parameters (h.s.o.p.) for $R$}.
\end{definition}

We are now in a position to prove the equivalence of questions \ref{q:bieselsquestion} and \ref{q:CMpermutationgroups}.

\begin{definition}\label{def:orbitmonomial}
Let $A$ be a ring and $R$ an $A$-algebra with an $A$-basis of monomials (in some specified set of generators). Let $G$ be a finite group that acts by automorphisms of $A$ that fix this basis setwise, so that $G$ has an action on the monomial basis. Then an \textbf{orbit monomial} is the sum of the monomials in a single orbit of this action. If $m$ is a monomial, we denote the corresponding orbit monomial by $Gm$.
\end{definition}

In this circumstance, $R^G$ has an $A$-basis of orbit monomials.

\begin{thm}\label{thm:equivofquestions}
If $G\subset S_n$ is a permutation group, acting on the polynomial ring $R=\Z[x_1,\dots,x_n]$ by permuting the indeterminates, then the following are equivalent:
\begin{enumerate}
\item $R^G$ is a Cohen-Macaulay ring.
\item $(R\otimes\F_p)^G$ is a Cohen-Macaulay ring for all primes $p$.
\item $R^G$ is free as a module over the subring $R^{S_n} = \Z[\sigma_1,\dots,\sigma_n]$.
\end{enumerate}
\end{thm}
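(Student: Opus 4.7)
The plan is to reduce the theorem to theorem \ref{thm:HironakaoverZ} (the Hironaka criterion over $\Z$) and proposition \ref{prop:equivofZandallfields}, both of which apply to finitely generated connected $\N$-graded $\Z$-algebras that are free as $\Z$-modules. First I would verify these hypotheses for $R^G$. Finite generation is proposition \ref{prop:fgasalgebra}; the grading is inherited from $R$ with degree-zero piece $R^G \cap \Z = \Z$; and the explicit $\Z$-basis of orbit monomials (definition \ref{def:orbitmonomial}) makes $R^G$ free as a $\Z$-module. Next, I would check that $\sigma_1, \dots, \sigma_n$ form an h.s.o.p.\ for $R^G$ in the sense of definition \ref{def:hsopoverZ}: they are homogeneous of positive degree, and by the FTSP (theorem \ref{thm:ftsp}) together with proposition \ref{prop:integraloverSn}, $R^G$ is module-finite over the polynomial subring $\Z[\sigma_1,\dots,\sigma_n] = R^{S_n}$. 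Since this polynomial subring has Krull dimension $n+1$ and the extension is integral, lemma \ref{lem:integralpreservesdim} yields $\dim R^G = n+1$, so that $n = \dim R^G - 1$.

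With this setup in place, (1) $\Leftrightarrow$ (3) is a direct application of theorem \ref{thm:HironakaoverZ}. For (1) $\Leftrightarrow$ (2), I would invoke proposition \ref{prop:equivofZandallfields}: it tells us that $R^G$ is Cohen-Macaulay if and only if $R^G \otimes_{\Z} A$ is Cohen-Macaulay for $A = \Q$ and $A = \F_p$ for every prime $p$. Proposition \ref{prop:basechange} identifies $R^G \otimes_{\Z} A$ with $(R \otimes_{\Z} A)^G$, which is the invariant ring of $G$ acting on the polynomial algebra $A[x_1,\dots,x_n]$. The Hochster-Eagon theorem (\ref{thm:hochstereagon}) then handles the characteristic-zero case (over $\Q$) and all primes $p \nmid |G|$ automatically, so the remaining content is precisely Cohen-Macaulayness of $(R \otimes \F_p)^G$ for every prime $p$, which is (2) as stated (the primes $p \nmid |G|$ contribute nothing, being satisfied for free).

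I do not expect serious obstacles: the argument amounts to a careful orchestration of previously-developed tools. The main point requiring care is the Krull dimension computation $\dim R^G = n+1$ needed to realize $\sigma_1,\dots,\sigma_n$ as an h.s.o.p.\ in the sense of definition \ref{def:hsopoverZ}; once this is in hand, the equivalences fall out immediately from the three theorems cited above.
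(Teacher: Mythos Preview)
Your proposal is correct and follows essentially the same route as the paper: verify that $R^G$ satisfies the hypotheses of proposition \ref{prop:equivofZandallfields} and theorem \ref{thm:HironakaoverZ}, establish that $\sigma_1,\dots,\sigma_n$ is an h.s.o.p.\ for $R^G$ (using the FTSP and proposition \ref{prop:integraloverSn}), and then read off both equivalences. The only cosmetic difference is in the dimension count: the paper obtains $\dim R^G = n+1$ from the integrality of $R$ over $R^G$ (proposition \ref{prop:Risintegral}) rather than from the integrality of $R^G$ over $\Z[\sigma_1,\dots,\sigma_n]$, but either direction works.
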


\begin{proof}
Note that for any field $k$, $R^G\otimes k$ and $(R\otimes k)^G$ are both the $k$-vector spaces spanned by all orbit monomials of $G$, thus $(R\otimes k)^G = R^G \otimes k$. The equivalence of 1 and 2 is then given by the equivalence of 1 and 2 in \ref{prop:equivofZandallfields} once we take into account that $R^G\otimes\Q$ is automatically Cohen-Macaulay by theorem \ref{thm:hochstereagon}.

We get 1$\Leftrightarrow$3 as follows:

$R$ is a finite module over $R^G$, by proposition \ref{prop:Risintegral}, so they have the same dimension. Thus $\dim R^G = n+1$. The elements $\sigma_1,\dots,\sigma_n$ are homogeneous, positive degree, and generate $R^{S_n}$ over $\Z$ by the FTSP (\ref{thm:ftsp}); and $R^G$ is finite as a module over $R^{S_n}$ by proposition \ref{prop:integraloverSn}. Thus $\sigma_1,\dots,\sigma_n$ is an h.s.o.p. for $R^G$, i.e. $R^G$ and $\sigma_1,\dots,\sigma_n$ meet the conditions of \ref{thm:HironakaoverZ}, so the Cohen-Macaulayness of $R^G$ is equivalent to its being a free module over $\Z[\sigma_1,\dots,\sigma_n]$.
\end{proof}

\section{The Stanley-Reisner ring and the invariant ring}\label{sec:stanleyreisner}

This section reports on fundamental work in combinatorial commutative algebra done in the 1970's and 80's by Melvin Hochster, Gerald Reisner, Richard Stanley, Adriano Garsia, Dennis Stanton, and others, which connects the algebraic properties of the polynomial invariant ring of a permutation group to those of a related ring, the {\em Stanley-Reisner ring} of a certain cell complex, and shows that these properties are also reflected in the topology of this complex. This allows us to connect our question to a purely topological one about quotients of spheres and balls, and thus to apply recent work in orbifold theory by Christian Lange, which we do in the next section.

\subsection{Stanley-Reisner rings}\label{sec:SRrings}

A {\em Stanley-Reisner ring}, or {\em face ring}, is a ring that encodes combinatorial information about a poset or regular cell complex, allowing algebraic techniques to be brought to bear on combinatorial and topological questions, and vice versa. The original definition was given for a simplicial complex. We review some basic terminology:

\begin{definition}\label{def:simplicial}
Let $V$ be a finite set of cardinality $n$. A (finite abstract) \textbf{simplicial complex} $\Delta$ is a family of subsets of $V$ that is downward-closed, i.e. $\alpha\in \Delta$ and $\beta \subset \alpha$ implies $\beta\in \Delta$. Each $\alpha \in \Delta$ is called a \textbf{face} of $\Delta$, and the \textbf{dimension} of a face $\alpha$ is $\#\alpha-1$, where $\#\alpha$ is the cardinality of $\alpha$. A face maximal with respect to inclusion is a \textbf{facet}. A singleton $\{v\}\in\Delta,\;v\in V$ is called a \textbf{vertex}. The \textbf{dimension of $\Delta$} is the maximum dimension of any of its faces. If all facets are the same dimension, the simplicial complex is \textbf{pure}.
\end{definition}

The geometric langauge in definition \ref{def:simplicial} is justified by the following definition.

\begin{definition}\label{def:totalspace}
If $\Delta$ is a finite abstract simplicial complex on the vertex set $V$, let $W$ be a real vector space of dimension $\#V-1$, and identify $V$ with any set of $\#V$ distinct points of $W$ in general position (i.e. not contained in a hyperplane). Then the \textbf{total space} or \textbf{geometric realization} $|\Delta|$ of $\Delta$ is the union of the convex hulls of the points in each face $\alpha$ of $\Delta$. The convex hull of any individual $\alpha$ is a \textbf{simplex}. See figure \ref{fig:examplecomplex}.
\end{definition}

\begin{remark}
Note that $|\cdot|$ is {\em not} being used to denote cardinality in this context.
\end{remark}

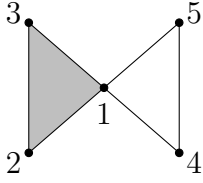
\begin{figure}
\begin{center}
\begin{tikzpicture}

\draw [fill = lightgray] (0,0) -- (-1, -0.866) -- (-1,0.866) -- (0,0);
\draw (0,0) -- (1, -0.866) -- (1,0.866) -- (0,0);
\draw [fill] (0,0) circle [radius = 0.05];
\draw [fill] (-1,-0.866) circle [radius = 0.05];
\draw [fill] (-1,0.866) circle [radius = 0.05];
\draw [fill] (1,0.866) circle [radius = 0.05];
\draw [fill] (1,-0.866) circle [radius = 0.05];

\node at (0,-0.35) {1};
\node at (-1.2,-1) {2};
\node at (-1.2,1) {3};
\node at (1.2,-1) {4};
\node at (1.2,1) {5};

\end{tikzpicture}
\end{center}
\caption{The geometric realization of the abstract simplicial complex $K$ on the vertex set $V=[5]$, with facets $123$, $34$, $35$, and $45$, projected into a plane. It is not pure, as one facet is dimension 2 while others are dimension 1.}\label{fig:examplecomplex}
\end{figure}

\begin{remark}\label{rmk:totalspaceasCWcomplex}
One chooses the space $W$ and the points in it with which to identify $V$ in definition \ref{def:totalspace}. However, it is clear that there is a homeomorphism between the total spaces arrived at through any two sets of choices, which is even linear when restricted to (the geometric realization of) any individual face. Thus $|\Delta|$ is well-defined as a topological space, and also as a piecewise-linear (PL) space, modulo the technicalities involved in defining the latter. Furthermore, because the convex hull of $k$ points in general position in a real vector space is homeomorphic to a closed $(k-1)$-ball, and because if $\alpha\subset\beta \subset V$ then the convex hull of $\alpha$ is a subset of the boundary of that of $\beta$, the geometric realization $|\Delta|$ naturally carries the structure of a regular CW complex, with the faces as the cells.

For basic definitions regarding PL spaces, see \cite{lange}, \S 2.2.1. For definitions and technical details on CW complexes, see \cite{hatcher}, Chapter 0 and the Appendix.
\end{remark}

\begin{definition}[Stanley-Reisner ring of a simplicial complex]\label{def:SRringofsimplicial}
Let $\Delta$ be a finite abstract simplicial complex on the vertex set $V$, and let $A$ be a ring, typically a field or $\Z$. Let $R_\Delta$ be the polynomial ring $A[\{x_v\}_{v\in V}]$ with indeterminates indexed by the vertex set of $\Delta$. A monomial $\prod_{v\in V} x_v^{e_v}$ of $R_\Delta$ is \textbf{supported on a face} $\alpha$ of $\Delta$ if $e_v=0$ unless $v\in \alpha$. The \textbf{Stanley-Reisner ideal} $I_\Delta$ is the ideal of $R_\Delta$ generated by all squarefree monomials that are {\em not} supported on faces of $\Delta$. (It is of course sufficient to use the minimal such squarefree monomials.) The \textbf{Stanley-Reisner ring} (or \textbf{face ring}) $A[\Delta]$ of $\Delta$ is the quotient ring $R_\Delta / I_\Delta$. It has a natural $A$-basis consisting of (residue classes of) monomials that {\em are} supported on faces.
\end{definition}

\begin{example}
The Stanley-Reisner ideal of the complex $K$ depicted in figure \ref{fig:examplecomplex} is generated by $x_2x_4,x_2x_5, x_3x_4, x_3x_5$, and $x_1x_4x_5$. Thus the Stanley-Reisner ring of $K$ over the integers is
\[
\Z[K] = \Z[x_1,\dots,x_5]/(x_2x_4,x_2x_5, x_3x_4, x_3x_5, x_1x_4x_5).
\] 
\end{example}

\begin{remark}
The Stanley-Reisner ring was defined independently in the 1970's by Richard Stanley and Melvin Hochster. Hochster's student Gerald Reisner made a critical contribution, which we will see in the next section, by showing that the algebraic properties of the ring $A[\Delta]$ are closely tied to the topology of the geometric realization $|\Delta|$. The first major application was Stanley's proof of the Upper Bound Conjecture for simplicial spheres (\cite{stanley14}). See \cite{franciscoetal} for a short, self-contained introduction to the theory, and \cite{cca} for a fuller account. In \cite{brunsherzog}, Chapter 5, one can find a complete proof of the Upper Bound Conjecture.
\end{remark}

One also defines the Stanley-Reisner ring of a partially ordered set ({\em poset}). Indeed, a fundamental tool in the topological study of posets (see \cite{wachs}, whose notation we are following) is to associate to a poset a certain simplicial complex called its {\em order complex}, and then one can take the Stanley-Reisner ring of this complex.

\begin{definition}\label{def:ordercomplex}
Given a finite poset $(P,\leq)$, the \textbf{order complex} $\Delta(P)$ of $P$ is the abstract simplicial complex with vertex set the elements of $P$, and with faces the chains of $P$ (i.e. the totally ordered subsets).
\end{definition}

\begin{definition}[Stanley-Reisner ring of a poset]\label{def:SRringofposet}
The \textbf{Stanley-Reisner ring} $A[P]$ of a poset $P$ (over a coefficient ring $A$) is the Stanley-Reisner ring of its order complex, i.e. $A[P]$ is defined as $A[\Delta(P)]$.
\end{definition}

\begin{remark}\label{rmk:compactSRposet}
Tracing through the three definitions just given, one gets a compact description of $A[P]$: the indeterminates are indexed by the elements of $P$, and a product of indeterminates is zero whenever the corresponding elements of $P$ are incomparable. 
\end{remark}

\begin{example}
In figure \ref{fig:faceposet} is shown the face poset (to be defined below) of a triangle. This is order-isomorphic to the poset $P=B_3\setminus\{\emptyset\}$ of nonempty subsets of $[3]$, ordered by inclusion. The Stanley-Reisner ring $\Z[P]$ of this poset has indeterminates
\[
y_1,y_2,y_3,y_{12},y_{13},y_{23},y_{123},
\]
corresponding to the elements of the poset. The Stanley-Reisner ideal is generated by the products
\[
y_1y_2,y_1y_3,y_2y_3,y_1y_{23}, y_2y_{13}, y_3y_{12}, y_{12}y_{13},y_{12}y_{23},y_{13}y_{23}
\]
coming from each pair of incomparable sets in $B_3\setminus\{\emptyset\}$, and the Stanley-Reisner ring is the quotient by this ideal.
\end{example}

\begin{definition}[Cohen-Macaulayness of complexes and posets]\label{def:CMnessofcomplexesandposets}
A finite poset $P$ is said to be \textbf{Cohen-Macaulay}, over $\Z$ or a field $k$, if $\Z[P]$, respectively $k[P]$, is a Cohen-Macaulay ring. Likewise, a finite simplicial complex $\Delta$ is said to be \textbf{Cohen-Macaulay} over $\Z$ or $k$ if $\Z[\Delta]$, respectively $k[\Delta]$, is Cohen-Macaulay.
\end{definition}

Definition \ref{def:ordercomplex} gives a way of creating a simplicial complex out of a poset. There is an even more natural operation in the reverse direction:

\begin{definition}
To a simplicial complex $\Delta$, or more generally a CW complex, is associated a poset $P(\Delta)$ called its \textbf{face poset}: the elements are the cells of the complex, and the order relation is given by inclusion (or inclusion of closures in the case of the CW complex). See figure \ref{fig:faceposet}. A \textbf{flag} of $\Delta$ is a collection of faces, respectively cells, that form a chain in $P(\Delta)$. A \textbf{full flag} is a maximal chain.
\end{definition}

\begin{remark}\label{rmk:emptyface}
The definition of the face poset of a complex raises a point of tension between the abstract definition of a simplicial complex (\ref{def:simplicial}) and the view of its geometric realization as a CW complex (\ref{rmk:totalspaceasCWcomplex}). From the abstract definition \ref{def:simplicial}, it is clear that if $\Delta$ is nonempty then $\emptyset \in \Delta$ (the \textbf{empty face}). But if one then regards the total space as a regular CW complex via definition \ref{def:totalspace} and remark \ref{rmk:totalspaceasCWcomplex}, the CW complex structure no longer contains evidence of this empty face. Thus the face poset $P(\Delta)$ built from the abstract simplicial complex will have a minimal element corresponding to the empty face that is missing from the face poset built from the regular CW complex structure. See figure \ref{fig:faceposet}. 

In some respects the theory works best if one leans toward the simplicial point of view by always adding a minimal ``empty face" to regular CW complexes when taking their face posets. However, the relationship between face posets and order complexes, explicated momentarily in remark \ref{rmk:barycentricsubdivision}, is cleaner in the CW complex point of view. Therefore, when we need to, we will explicitly add the empty face to the face posets of regular CW complexes, as in definition \ref{def:booleancomplex}, or remove it from those of simplicial complexes, as in remark \ref{rmk:barycentricsubdivision}.
\end{remark}

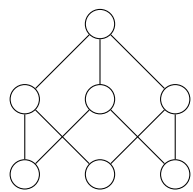
\begin{figure}
\begin{center}
\begin{tikzpicture}

\tikzstyle{every node} = [draw, shape=circle]

\node (v1) at (-1,1) {};
\node (v2) at (0,1) {};
\node (v3) at (1,1) {};
\node (e12) at (-1,2) {};
\node (e13) at (0,2) {};
\node (e23) at (1,2) {};
\node (tri) at (0,3) {};

\foreach \from/\to in {v1/e12, v1/e13, v2/e12, v2/e23, v3/e13, v3/e23, e13/tri, e23/tri, e12/tri} 
    \draw (\from) -- (\to);
    
\end{tikzpicture}
\end{center}
\caption{The face poset of a triangle, regarded as regular CW complex.}\label{fig:faceposet}
\end{figure}

\begin{remark}\label{rmk:barycentricsubdivision}
The operations of taking the order complex and the face poset are not inverse to each other. Indeed, the face poset of the order complex of a poset has many more elements than the poset, and likewise the order complex of the face poset of a regular CW complex has many more cells. However, if $\Delta$ is a regular CW complex, we do have a homeomorphism of the total space of $\Delta$ with that of the order complex of its face poset $\Delta(P(\Delta))$. This is because the operation $\Delta(P(\cdot))$ is nothing but barycentric subdivision. The points of the face poset $P(\Delta)$ correspond with the cells of $\Delta$. They then become the vertices of the order complex $\Delta(P(\Delta))$, and we can interpret them as the barycenters of the cells. This interpretation makes the simplices of $\Delta(P(\Delta))$, being chains in $P(\Delta)$, into the simplices spanned by the barycenters of cells in $\Delta$ that form flags. See figure \ref{fig:barycentric}.

If $\Delta$ is a simplicial complex, we need to remove the empty face from the face poset before taking the order complex in order to get this construction to work right, since the empty face ought not to have a barycenter. (If we retain the empty face in the face poset, the total space of $\Delta(P(\Delta))$ is the cone over the total space of $\Delta$.)
\end{remark}

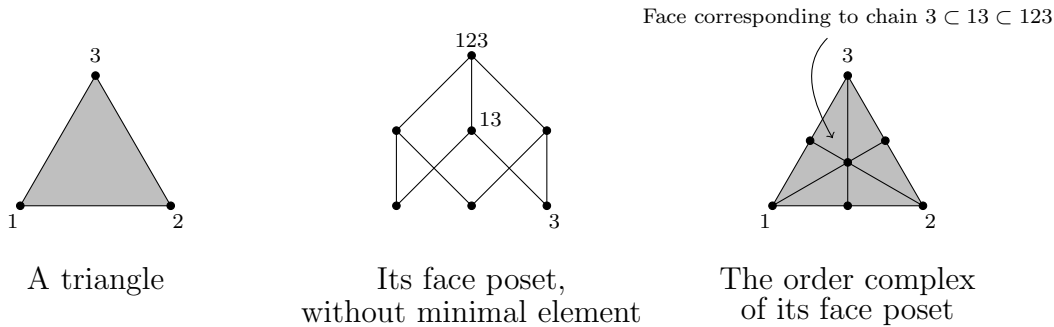
\begin{figure}
\begin{center}
\begin{tikzpicture}

\draw [fill = lightgray] (-1,0) -- (-3,0) -- (-2,1.732) -- (-1,0);
\draw [fill] (-1,0) circle [radius=0.05];
\draw [fill] (-3,0) circle [radius=0.05];
\draw [fill] (-2,1.732) circle [radius=0.05];
\node at (-2,-1) {A triangle};
\node at (-3.1,-0.2) {\scriptsize 1};
\node at (-0.9, -0.2) {\scriptsize 2};
\node at (-2, 2) {\scriptsize 3};

\draw [fill] (2,0) circle [radius = 0.05];
\draw [fill] (3,0) circle [radius = 0.05];
\draw [fill] (4,0) circle [radius = 0.05];
\draw [fill] (2,1) circle [radius = 0.05];
\draw [fill] (3,1) circle [radius = 0.05];
\draw [fill] (4,1) circle [radius = 0.05];
\draw [fill] (3,2) circle [radius = 0.05];
\draw (2,0) -- (2,1) -- (3,2) -- (4,1) -- (4,0) -- (3,1) -- (2,0);
\draw (2,1) -- (3,0) -- (4,1);
\draw (3,1) -- (3,2);
\node at (4.1,-0.2) {\scriptsize 3};
\node at (3.25, 1.15) {\scriptsize 13};
\node at (3, 2.2) {\scriptsize 123};
\node at (3,-1) {Its face poset,}; 
\node at (3,-1.4) {without minimal element};

\draw [fill = lightgray] (9,0) -- (7,0) -- (8,1.732) -- (9,0);
\draw (7,0) -- (8.5,0.866);
\draw (9,0) -- (7.5,0.866);
\draw (8,0) -- (8,1.732);
\draw [fill] (9,0) circle [radius=0.05];
\draw [fill] (8,0) circle [radius=0.05];
\draw [fill] (7,0) circle [radius=0.05];
\draw [fill] (7.5,0.866) circle [radius=0.05];
\draw [fill] (8.5,0.866) circle [radius=0.05];
\draw [fill] (8,1.732) circle [radius=0.05];
\draw [fill] (8,0.577) circle [radius=0.05];
\node at (8,-1) {The order complex};
\node at (8,-1.4) {of its face poset};
\node (labelr) at (8, 2.5) {\scriptsize Face corresponding to chain $3\subset 13 \subset 123$};
\draw [->] (labelr) to [out=225,in=120] (7.8, 0.9);
\node at (6.9,-0.2) {\scriptsize 1};
\node at (9.1, -0.2) {\scriptsize 2};
\node at (8, 2) {\scriptsize 3};

\end{tikzpicture}
\end{center}
\caption{The barycentric subdivision.}\label{fig:barycentric}
\end{figure}

Suppose we want to construct a Stanley-Reisner ring for a general regular CW complex analogous to the Stanley-Reisner ring of a simplicial complex. Remark \ref{rmk:barycentricsubdivision} shows that it would not be entirely unreasonable to take the Stanley-Reisner ring of its face poset: this at least preserves the underlying topology. However, this method does not strictly generalize definition \ref{def:SRringofsimplicial}, i.e. it does not produce an isomorphic ring in the simplicial case. However, there is a class of ``almost simplicial" complexes for which we have a definition that does strictly generalize \ref{def:SRringofsimplicial}.

\begin{definition}
A (finite) \textbf{boolean algebra} is a poset that is order-isomorphic to the set of subsets of some finite set, under inclusion.
\end{definition}

Note that the face poset of a simplex (as simplicial complex) is a boolean algebra. Regarded as a regular CW complex it is missing the empty face, as in remark \ref{rmk:emptyface}. See figure \ref{fig:faceposet}.

\begin{definition}\label{def:booleancomplex}
A \textbf{boolean complex} is a finite regular CW complex whose face poset has the property that if one appends a minimal element, all lower intervals are boolean algebras.\footnote{The name {\em boolean complex} was coined by Garsia and Stanton in \cite{garsiastanton}. They did not clearly distinguish the complex from its face poset (with minimal element), and worked primarily with the latter. The same concept is referred to by Stanley in terms of its face poset, which he calls a {\em simplicial poset} -- see \cite{cca} and \cite{stanley}. The literature has not decisively sided with one or the other of these terms. One could also get at the same concept by calling it a regular $\Delta$-complex, using Allan Hatcher's (\cite{hatcher}) coinage {\em $\Delta$-complex} for a CW complex in which the cells are simplices and the attaching maps are simplicial, but Hatcher's term does not seem to have caught on. We go with {\em boolean complex} over {\em simplicial poset} to allow us to refer to the complex itself (not only the poset) without confusion with a simplicial complex.} As with simplicial complexes, we refer to the cells as \textbf{faces}, the maximal cells as \textbf{facets}, and the $0$-cells as \textbf{vertices}, and the complex is \textbf{pure} if all facets have the same dimension.
\end{definition}

\begin{remark}
In a simplicial complex, the intersection of two faces $\alpha$ and $\beta$ is a single face of each. (In the abstract point of view it is $\alpha\cap\beta$. In the CW complex point of view it is the intersection of their closures.) Also, if there is any face containing both $\alpha$ and $\beta$, there is only one minimal such face. (It is $\alpha \cup \beta$ in the abstract point of view.) One may think of a boolean complex as ``like a simplicial complex but without these constraints." Figure \ref{fig:booleancomplex} gives an example.
\end{remark}

\begin{figure}
\begin{center}
\begin{tikzpicture}

\node at (-1.3,0) {A};
\draw [fill=black] (-1,0) circle [radius = 0.1];
\node at (1.3,0) {B};
\draw [fill=black] (1,0) circle [radius = 0.1];

\draw (-1,0) arc [radius = 1, start angle = 180, end angle = 360];
\node at (0,-1.2) {C};
\draw (1,0) arc [radius = 1, start angle = 0, end angle = 180];
\node at (0,1.2) {D};

\node at (5,-1.5) {$\emptyset$};
\node at (3.9,0) {A};
\node at (6.1,0) {B};
\node at (3.9,1) {C};
\node at (6.1,1) {D};

\tikzstyle{every node} = [draw, shape=circle]

\node (empt) at (5,-1) {};
\node (A) at (4.3,0) {};
\node (B) at (5.7,0) {};
\node (C) at (4.3,1) {};
\node (D) at (5.7,1) {};

\foreach \from/\to in {empt/A, empt/B, A/C, A/D, B/C, B/D}
    \draw (\from) -- (\to);

\end{tikzpicture}
\end{center}
\caption{Left: a boolean complex with total space homeomorphic to a circle. Right: its face poset, with minimal element appended.}\label{fig:booleancomplex}
\end{figure}
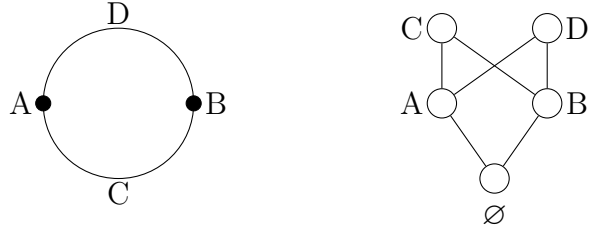

\begin{definition}
Let $P$ be a poset. $P$ is \textbf{ranked} if for every $\alpha \in P$, every chain maximal among those descending from $\alpha$ (i.e. of the form $\alpha_0\leq \alpha_1\leq\dots\leq \alpha_r = \alpha$) has the same length. The common length $r$ is said to be the \textbf{rank of $\alpha$}, written $\rk_P\alpha$, or $\rk \alpha$ if $P$ is clear from context.
\end{definition}

\begin{remark}
Note that the face posets of finite simplicial and boolean complexes are automatically ranked, because boolean algebras are ranked. In the version of the face poset with the minimal element, the rank of a face is the number of vertices it spans.
\end{remark}

\begin{definition}
If two elements $\alpha,\beta$ in a poset $P$ have a unique maximal common lower bound $\gamma$, it is called their \textbf{meet}, and written $\alpha\wedge \beta$. Dually, if they have a unique minimal common upper bound, it is their \textbf{join}, and is written $\alpha\vee\beta$.
\end{definition}

For boolean complexes, the Stanley-Reisner ring can be defined in a way that generalizes definition \ref{def:SRringofsimplicial}. The definition was first given by Stanley in \cite{stanley}.

\begin{definition}[Stanley-Reisner ring of a boolean complex]\label{def:SRringofboolean}
Let $\Delta$ be a boolean complex, and let $\widehat{P}$ be its face poset, with minimal element $\emptyset$ appended, and let $A$ be a coefficient ring. Let 
\[
R_\Delta = A[\{y_\alpha\}_{\alpha\in \widehat{P}}]
\]
be a polynomial ring over $A$ with indeterminates indexed by the elements of $\widehat{P}$, and let $I_\Delta$ be the ideal generated by:
\begin{enumerate}
\item $y_\emptyset - 1$, 
\item $y_\alpha y_\beta$ if $\alpha,\beta\in \widehat{P}$ do not have a common upper bound, and
\item $y_\alpha y_\beta - y_{\alpha\wedge \beta}\sum_{\gamma} y_\gamma$ if $\alpha,\beta$ do have a common upper bound, where the sum is taken over the set of minimal common upper bounds for $\alpha,\beta$.
\end{enumerate}
Then the \textbf{Stanley-Reisner ring $A[\Delta]$ of $\Delta$} is $R_\Delta/I_\Delta$.
\end{definition}

\begin{remark}
The expression $\alpha\wedge \beta$ in the third line of the definition of $I$ is well-defined because if $\alpha,\beta$ have any common upper bound $\gamma$, then the interval $[\emptyset,\gamma]$ contains everything below $\alpha,\beta$, and it is furthermore a boolean algebra because $\Delta$ is a boolean complex. Boolean algebras are lattices, thus $\alpha,\beta$ have a unique greatest lower bound in $[\emptyset,\gamma]$ and thus in $\widehat{P}$. This is $\alpha\wedge \beta$. 

The reader may have wondered why we added the $\emptyset$ element to $\widehat{P}$ in this definition only to absorb its corresponding indeterminate $y_\emptyset$ into $A$ with the relation $y_\emptyset - 1$; the reason is the need to make sure $\alpha\wedge\beta$ is always defined.
\end{remark}

\begin{remark}
If $\Delta$ is a true simplicial complex on a vertex set $V$, then the isomorphism between definitions \ref{def:SRringofboolean} and \ref{def:SRringofsimplicial} is as follows. Fix a coefficient ring $A$. Let $S$ be the Stanley-Reisner ring defined in \ref{def:SRringofsimplicial}, with indeterminates labeled $x_v, \, v\in V$. Let $S'$ be the Stanley-Reisner ring defined in \ref{def:SRringofboolean}, with indeterminates labeled $y_\alpha, \, \alpha\in \widehat{P}$ where $\widehat{P}$ is the face poset of $\Delta$ including the minimal element $\emptyset$, as in that definition. Identifying the elements of $\widehat{P}$ with these labels, the isomorphism $S\rightarrow S'$ is given by $x_v \mapsto y_{\{v\}},\, v\in V$. The inverse is given by $y_{\alpha}\mapsto \prod_{v\in\alpha} x_v$.
\end{remark}


\subsection{The Reisner-Munkres theorem}\label{sec:reisnermunkres}

The utility of Stanley-Reisner rings to our inquiry is the close relationship between the algebra of $A[\Delta]$ (respectively $A[P]$), and the topology of $|\Delta|$ (respectively $|\Delta(P)|$). The following beautiful theorem is due to Gerald Reisner and James Munkres. It requires a definition:

\begin{definition}\label{def:link}
Let $\Delta$ be a simplicial complex and $\alpha\in\Delta$ a face. The \textbf{link} $\lk_\Delta(\alpha)$ of $\alpha$ in $\Delta$ is the subcomplex consisting of all those $\beta\in \Delta$ disjoint from $\alpha$ and such that $\alpha\cup \beta\in \Delta$.
\end{definition}

Note that $\Delta = \lk_\Delta(\emptyset)$.

\begin{example}
Figure \ref{fig:link} shows a simplicial complex $\Delta$ consisting of the six tetrahedra $1234$, $1245$, $\dots$, $1283$  joined along a common edge $\alpha = 12$, shown in red. Its link $\lk_\Delta (\alpha)$ is in blue.
\end{example}

\begin{figure}
\begin{center}
\begin{tikzpicture}

\path [fill = lightgray] (2,0) -- (1.866,0.866) -- (0,2) -- (-2,0) -- (-1.866,-0.866) -- (0,-2) -- (2,0);

\draw [blue, ultra thick] (2,0) -- (1.866,0.866) -- (-0.134,0.866) -- (-2,0);
\draw (0,-2) -- (-0.134,0.866) -- (0,2);
\draw (0,-2) -- (-2,0) -- (0,2);
\draw (0,-2) -- (1.866,0.866) -- (0,2);
\draw [red, ultra thick] (0,-2) -- (0,2);
\draw (0,-2) -- (2,0) -- (0,2);
\draw (0,-2) -- (-1.866,-0.866) -- (0,2);
\draw (0,-2) -- (0.134,-0.866) -- (0,2);
\draw [blue, ultra thick] (-2,0) -- (-1.866,-0.866) -- (0.134,-0.866) -- (2,0);

\node at (0,-2.2) {1};
\node at (0,2.2) {2};
\node at (2.2,0) {3};
\node at (2,1) {4};
\node at (-0.25,1.1) {5};
\node at (-2.2,0) {6};
\node at (-2.05,-1) {7};
\node at (0.25,-1.1) {8};

\end{tikzpicture}
\end{center}
\caption{The link of a simplex.}\label{fig:link}
\end{figure}
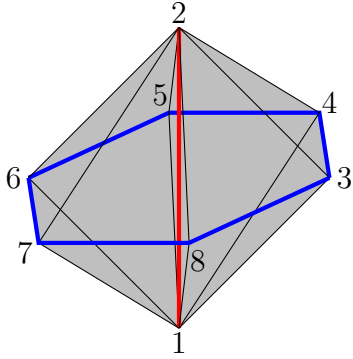

\begin{remark}
One can think of the link of a point topologically as the boundary of a ball around the point. In particular, if $\Delta$ is a PL triangulation of a PL manifold, then the link of a point is PL $(n-1)$-sphere. More generally, in a triangulated PL manifold, all links of nonempty faces are spheres: for any $\alpha\in \Delta$, $\lk_\Delta(\alpha)$ is PL-homeomorphic to a PL sphere of dimension $\dim \Delta - \dim \alpha - 1$.
\end{remark}

\begin{thm}[Reisner-Munkres theorem]\label{thm:reisnermunkres}
Let $k$ be a field and let $\Delta$ be a simplicial complex, of dimension $n$. Then the following are equivalent:
\begin{enumerate}
\item $k[\Delta]$ is a Cohen-Macaulay ring.
\item $\tilde H_i(|\lk_\Delta(\alpha)|; k) = 0$ for all $\alpha\in \Delta$ and all $i<\dim\lk_\Delta(\alpha)$.
\item $\tilde H_i(|\Delta|; k) = H_i(|\Delta|, |\Delta|\setminus p; k) = 0$ for all $p\in|\Delta|, i < n$.
\end{enumerate}
Here $\tilde H_* (-;k)$ is reduced homology with coefficients in $k$, and $H_*(-,-;k)$ is relative homology.\footnote{For definitions regarding homology, see \cite{hatcher}, Chapter 2.}
\end{thm}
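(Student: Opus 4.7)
The plan is to prove the two equivalences separately: (1)$\Leftrightarrow$(2) is Reisner's theorem, proved algebraically via local cohomology, and (2)$\Leftrightarrow$(3) is Munkres's complementary result, proved by comparing link homology with local homology of $|\Delta|$.

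For (1)$\Leftrightarrow$(2), the starting point is the characterization (remark \ref{rmk:depthofgraded}) of Cohen-Macaulayness of the connected $\N$-graded $k$-algebra $k[\Delta]$ as $\depth k[\Delta] = \dim k[\Delta] = \dim\Delta+1$, combined with the standard identity
\[
\depth k[\Delta] = \min\{\, i : H^i_\maxm(k[\Delta]) \neq 0\,\},
\]
where $\maxm$ is the irrelevant graded maximal ideal. One must therefore show that $H^i_\maxm(k[\Delta]) = 0$ for $i\le\dim\Delta$ is equivalent to (2). This is the content of Hochster's formula, which uses the fine $\Z^n$-grading on $k[\Delta]$ to give, in each multidegree $\mathbf{a}$,
\[
H^i_\maxm(k[\Delta])_{\mathbf{a}} \;\cong\; \tilde H_{i - \#\alpha - 1}(\lk_\Delta \alpha; k),
\]
where $\alpha$ is the face of $\Delta$ determined by the negative support of $\mathbf{a}$ (and the module vanishes unless this support is a face and the remaining entries are nonpositive). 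I would prove this formula by computing local cohomology via the \v{C}ech complex on $x_1,\dots,x_n$, splitting it by $\Z^n$-degree, and identifying each graded piece with a shifted reduced simplicial chain complex of the link. Using $\dim \lk_\Delta\alpha = \dim\Delta - \#\alpha$ in the pure case, the vanishing of $H^i_\maxm$ for $i \leq \dim\Delta$ then becomes precisely condition (2); the non-pure case is handled by observing that (2) forces purity and that nonpurity produces nonzero $H^i_\maxm$ of low index.

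For (2)$\Leftrightarrow$(3), I would work topologically. The case $\alpha = \emptyset$ of (2) recovers the condition $\tilde H_i(|\Delta|; k) = 0$ for $i < n$ of (3), since $\lk_\Delta(\emptyset) = \Delta$. For the local homology, every point $p \in |\Delta|$ lies in the relative interior of a unique face $\alpha$, and a small open neighborhood of $p$ is homeomorphic to $\R^{\#\alpha - 1} \times C(|\lk_\Delta\alpha|)$ with $p$ at the cone point. Excision reduces the local homology to that of this neighborhood, and K\"unneth for pairs together with the long exact sequence of the cone give
\[
H_i(|\Delta|, |\Delta|\setminus p; k) \;\cong\; \tilde H_{i - \#\alpha}(|\lk_\Delta \alpha|; k).
\]
The inequality $i < n$ on the left then translates to $i - \#\alpha < \dim \lk_\Delta\alpha$ on the right, matching (2) exactly over all nonempty $\alpha$.

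The main obstacle is Hochster's formula: one must carry the $\Z^n$-grading cleanly through the \v{C}ech complex, recognize the squarefree-graded pieces as reduced simplicial (co)chain complexes of links, and handle signs and index shifts correctly. The topological half is comparatively routine once the star–cone decomposition of a neighborhood is in place, though the dimension shift in the local-to-link isomorphism must be tracked consistently against the threshold in condition (3).
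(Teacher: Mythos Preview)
Your sketch is correct and follows the standard modern route (Hochster's formula for local cohomology, then the star--cone decomposition for the topological half). The paper, however, does not prove this theorem at all: its entire proof consists of the two sentences ``(1)$\Leftrightarrow$(2) is due to Reisner (\cite{reisner}). (2)$\Leftrightarrow$(3) is due to Munkres (\cite{munkres}).'' So you have supplied substantially more than the paper does; there is nothing to compare beyond noting that your outline is what one finds in expository treatments such as \cite{brunsherzog} or \cite{cca}, which is presumably what the cited original papers do in essence.
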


For condition $2$ one may take either singular or simplicial homology. Condition $3$ is referring to singular homology.

\begin{proof}
(1)$\Leftrightarrow$(2) is due to Reisner (\cite{reisner}). (2)$\Leftrightarrow$(3) is due to Munkres (\cite{munkres}).
\end{proof}

\begin{remark}\label{rmk:CMistopological}
This theorem implies that Cohen-Macaulayness of $k[\Delta]$ is a purely topological condition on $|\Delta|$; in other words, two simplicial complexes with the same total space (up to homeomorphism) will be simultaneously Cohen-Macaulay over a given field $k$. In particular, in view of remark \ref{rmk:barycentricsubdivision}, a simplicial complex and its face poset minus the minimal element are simultaneously Cohen-Macaulay. (In fact, so is the face poset including the minimal element, since including a minimal element translates topologically to taking the cone, which does not affect the conditions in theorem \ref{thm:reisnermunkres}, as the reader can check.)
\end{remark}

\begin{remark}\label{rmk:kvsZ}
One can replace $k$ with $\Z$ in the statement of theorem \ref{thm:reisnermunkres}. Indeed, per \ref{prop:equivofZandallfields}, $\Z[\Delta]$'s Cohen-Macaulayness is equivalent to that of $k[\Delta]$'s for all fields $k$. Meanwhile, conditions 2 and 3 for $\Z$ are equivalent to conditions 2 and 3 for all $k$ by the universal coefficient theorem.
\end{remark}

Since there is a topological characterization of Cohen-Macaulayness, one can define Cohen-Macaulayness over a field $k$ for any CW complex (more generally any topological space such that there is a well-defined notion of dimension) using condition 3 of theorem \ref{thm:reisnermunkres} as the definition:

\begin{definition}\label{def:CMcomplex}
A CW complex $X$ of dimension $n$ is \textbf{Cohen-Macaulay} over a ring $A$ if it satisfies $\tilde H_i(X;A) = H_i(X,X\setminus p; A) = 0$ for every $i<n$ and every $p\in X$.
\end{definition}

Theorem \ref{thm:reisnermunkres} can then be summarized as stating that a simplicial complex is Cohen-Macaulay (i.e. its Stanley-Reisner ring is Cohen-Macaulay) if and only if its geometric realization is. We have a similar statement for posets if one takes the geometric realization of a poset to be the total space of its order complex. 

One can ask the same question about boolean complexes: if $\Delta$ is a boolean complex, is the Cohen-Macaulayness of the $k[\Delta]$ defined in \ref{def:SRringofboolean} equivalent to that of $|\Delta|$ as defined in \ref{def:CMcomplex}? This is also true. Half of this statement was proven by Stanley in the paper \cite{stanley} that introduced definition \ref{def:SRringofboolean}, and the other by his student Art Duval in \cite{duval}.

\begin{thm}[\cite{duval}, corollary 6.1]\label{thm:duval}
If $P$ is the face poset of a boolean complex $\Delta$ {\em without} a minimal element appended, then the depth (in the sense of remark \ref{rmk:depthofgraded}) of the ring $k[P]$ defined in \ref{def:SRringofposet} equals that of the ring $k[\Delta]$ defined in \ref{def:SRringofboolean}.
\end{thm}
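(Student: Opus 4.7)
The plan is to reduce both depths to a common topological invariant of the shared underlying space. Recall from remark \ref{rmk:barycentricsubdivision} that $|\Delta(P)|$ is the barycentric subdivision of $|\Delta|$, so in particular $|\Delta(P)| \cong |\Delta|$ as topological spaces, and for each face $\alpha\in \Delta$ the corresponding star neighborhoods in $\Delta$ and $\Delta(P)$ are PL-homeomorphic. Since depth of a Stanley--Reisner ring is known (by Reisner's original argument, refined by Munkres) to be detectable from the reduced homology of links of faces, the two depths ought to agree because the local topological data agrees.

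First, I would make this precise by recalling Hochster's formula for the local cohomology of a Stanley--Reisner ring at its irrelevant ideal $\mathfrak{m}$: for a simplicial complex $\Delta'$,
\[
\dim_k H^i_{\mathfrak{m}}(k[\Delta'])_{\mathbf{a}} = \dim_k \tilde{H}^{i-\#\sigma-1}(\lk_{\Delta'} \sigma; k)
\]
for appropriate multidegrees $\mathbf{a}$ supported on a face $\sigma$, and zero otherwise. Combined with the standard fact $\depth R = \min\{i : H^i_\mathfrak{m}(R) \neq 0\}$, this yields
\[
\depth k[\Delta'] = 1 + \min\{i + \#\sigma : \tilde{H}^{i-1}(\lk_{\Delta'}\sigma; k) \neq 0,\ \sigma\in\Delta'\}.
\]
Applying this directly to $\Delta' = \Delta(P)$ expresses $\depth k[P]$ in terms of homology of links of chains (i.e.\ flags) in $\Delta$.

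Second, I would establish the analogous formula for the Stanley--Reisner ring of a boolean complex $\Delta$ in the sense of definition \ref{def:SRringofboolean}. The ring $k[\Delta]$ carries a natural multigrading indexed by the monoid $\bigoplus_{v\in \Delta^{(0)}}\N$ twisted by the incidence data of $\Delta$, and one can check that the \v{C}ech complex computing $H^*_\mathfrak{m}(k[\Delta])$ decomposes under this grading into pieces whose cohomology is exactly the reduced cohomology of the link of a face of $\Delta$. This is the content of Stanley's half of the theorem in \cite{stanley}, and gives
\[
\depth k[\Delta] = 1 + \min\{i + \#\alpha : \tilde{H}^{i-1}(\lk_{\Delta} \alpha; k) \neq 0,\ \alpha \in \Delta\}.
\]

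Third, I would compare the two minima. The key topological input is that for any face $\alpha$ of $\Delta$, $\lk_\Delta(\alpha)$ and the link in $\Delta(P)$ of the vertex corresponding to $\alpha$ have homeomorphic geometric realizations (they are both PL-models for the link of an interior point of the cell $\alpha$ in $|\Delta|$). More generally, a flag $\sigma = \{\alpha_0 < \alpha_1 < \cdots < \alpha_r\}$ in $P$ gives a face of $\Delta(P)$ whose link is built from $\lk_\Delta(\alpha_r)$ joined with combinatorial pieces encoding the interval $[\alpha_0,\alpha_r]$, which is a boolean algebra and so contributes trivially (the join with a simplex boundary sphere shifts dimension predictably). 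Tracking these shifts shows that the minima computed in the two formulas coincide.

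The main obstacle is the careful bookkeeping in this last step: one must verify that the dimension shift $\#\alpha - \#\sigma$ coming from replacing a face $\alpha$ by a flag $\sigma$ ending in $\alpha$ is compensated exactly by the shift in cohomological degree coming from the join operation on links. Establishing the multigraded Hochster formula for boolean complexes in step two is also nontrivial, since definition \ref{def:SRringofboolean} has the extra ``join-expansion'' relations $y_\alpha y_\beta = y_{\alpha\wedge\beta}\sum_\gamma y_\gamma$ that must be handled when writing down an explicit minimal free resolution or \v{C}ech complex; this is where the subtlety of passing from simplicial to boolean complexes really bites, and is the technical heart of Duval's contribution in \cite{duval}.
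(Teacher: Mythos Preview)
The paper does not prove this theorem at all: it is stated as a citation of \cite{duval}, Corollary 6.1, and used as a black box in the proof of Corollary \ref{cor:duval}. There is therefore no proof in the paper for your proposal to be compared against.

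Your sketch is a plausible outline of how such a result is proved, and indeed the Hochster-type local cohomology formula for $k[\Delta]$ in the boolean-complex sense is exactly what Duval establishes in \cite{duval}. A small caution: your second step says ``one can check'' that the \v{C}ech complex decomposes nicely under the multigrading, but this is the entire content of Duval's paper and is not routine---the relations $y_\alpha y_\beta = y_{\alpha\wedge\beta}\sum_\gamma y_\gamma$ mean $k[\Delta]$ is not a quotient of a polynomial ring by a monomial ideal, so the usual Hochster machinery does not apply directly. Duval instead works with a free resolution over the polynomial ring on $\{y_\alpha\}$ and computes $\operatorname{Tor}$ groups. Your third step, matching the two minima via join decompositions of links, is correct in spirit but would require care to make rigorous; in practice one bypasses it by observing that both depth formulas reduce to the same Munkres-style condition on $|\Delta|\cong|\Delta(P)|$, which is the route the paper takes (via Corollary \ref{cor:duval}) rather than matching link-by-link.
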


\begin{cor}\label{cor:duval}
If $\Delta$ is a boolean complex and $k$ is a field, then $|\Delta|$ is Cohen-Macaulay over $k$ in the sense of definition \ref{def:CMcomplex} if and only if its Stanley-Reisner ring $k[\Delta]$ is Cohen-Macaulay.
\end{cor}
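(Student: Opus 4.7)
The plan is to route the equivalence through the order complex of the face poset, applying first Reisner--Munkres on the simplicial side and then Duval's depth equality to pass back to the boolean complex. Let $P$ be the face poset of $\Delta$ with no minimal element appended, so that by remark \ref{rmk:barycentricsubdivision} the order complex $\Delta(P)$ is a genuine simplicial complex whose geometric realization is precisely the barycentric subdivision of $|\Delta|$, and in particular is homeomorphic to $|\Delta|$. The topological condition in definition \ref{def:CMcomplex} depends only on the underlying space (and its dimension), so $|\Delta|$ is Cohen-Macaulay over $k$ if and only if $|\Delta(P)|$ is.

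Now I would apply the Reisner--Munkres theorem \ref{thm:reisnermunkres} to the simplicial complex $\Delta(P)$: the topological Cohen-Macaulayness of $|\Delta(P)|$ is equivalent to the algebraic Cohen-Macaulayness of the simplicial Stanley-Reisner ring $k[\Delta(P)]$, which by definition \ref{def:SRringofposet} is exactly the poset Stanley-Reisner ring $k[P]$. So the topological side of the corollary is equivalent to the Cohen-Macaulayness of $k[P]$.

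It remains to identify the Cohen-Macaulayness of $k[P]$ with that of $k[\Delta]$ as defined in \ref{def:SRringofboolean}. Duval's theorem \ref{thm:duval} asserts that $\operatorname{depth}k[P] = \operatorname{depth}k[\Delta]$. To promote this from a depth equality to a Cohen-Macaulayness equivalence, I need to know that $\dim k[P] = \dim k[\Delta]$, for then depth equals dimension in one ring if and only if it does in the other. For $k[P]$, the Krull dimension equals $\dim\Delta(P)+1$, which equals the length of the longest chain in $P$, namely $\dim\Delta+1$. For $k[\Delta]$, one shows the same bound by picking a facet $F$ and a full flag $\emptyset < v_0 < v_0v_1 < \dots < F$ through it: the variables $y_{v_0},y_{v_0v_1},\dots,y_F$ span a polynomial subalgebra (the relations in definition \ref{def:SRringofboolean} reduce on a chain to the boolean-algebra case, where they are trivial), and every other generator $y_\alpha$ satisfies a monic equation over this subalgebra via the third family of relations in \ref{def:SRringofboolean}, making $k[\Delta]$ module-finite over it.

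The step most likely to be a nuisance is the Krull-dimension check for $k[\Delta]$; Duval's theorem does the algebraic heavy lifting, and Reisner--Munkres plus barycentric subdivision furnish the topological bridge, so the rest is bookkeeping. Once the dimension matches, the chain of equivalences topological CM of $|\Delta|$ $\Leftrightarrow$ topological CM of $|\Delta(P)|$ $\Leftrightarrow$ algebraic CM of $k[P]$ $\Leftrightarrow$ algebraic CM of $k[\Delta]$ closes up and the corollary follows.
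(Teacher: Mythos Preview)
Your approach is essentially the paper's: pass to the barycentric subdivision $\Delta(P)$, apply Reisner--Munkres there to get the equivalence with Cohen-Macaulayness of $k[P]$, then invoke Duval's depth equality to transfer to $k[\Delta]$. The paper's proof is the same chain, though it leaves the dimension check implicit.

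One small caution on your dimension sketch for $k[\Delta]$: an arbitrary generator $y_\alpha$ is not obviously integral over the polynomial subalgebra coming from a \emph{single} chosen flag---the relations of type~(3) in definition~\ref{def:SRringofboolean} do not directly produce such an equation. What does work (and what the paper uses later in proposition~\ref{prop:balancedhsop}) is that each $y_\alpha$ satisfies $y_\alpha^2 = \bigl(\prod_{i\in r(\alpha)}\psi_i\bigr)y_\alpha$, so $k[\Delta]$ is module-finite over $k[\psi_1,\dots,\psi_{d+1}]$; alternatively one can cite the Krull-dimension result for algebras with straightening laws. Either route confirms $\dim k[\Delta] = \dim\Delta + 1 = \dim k[P]$, after which your depth-to-CM promotion goes through.
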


\begin{proof}
$\Delta$'s Cohen-Macaulayness over $k$, in the topological sense, is equivalent to that of its barycentric subdivision, since they are homeomorphic. The latter is the geometric realization of the order complex $\Delta(P)$, so its Cohen-Macaulayness is equivalent to the Cohen-Macaulayness of $k[\Delta(P)] = k[P]$, by theorem \ref{thm:reisnermunkres}, and this is equivalent to the Cohen-Macaulayness of $k[\Delta]$ by Duval's theorem \ref{thm:duval}.
\end{proof}

Thus when studying Stanley-Reisner rings (of simplicial complexes, posets, or boolean complexes), it is possible to think of Cohen-Macaulayness as a property either of a ring or of a topological space.

\begin{remark}
Again, one may replace $k$ by $\Z$ in the statements of \ref{thm:duval} and \ref{cor:duval}, for the same reasons as in \ref{rmk:kvsZ}.
\end{remark}

It is worth pausing to take stock of the geometric flavor of the topological Cohen-Macaulay condition. A Cohen-Macaulay space is much like a homology manifold with an additional global acyclicity constraint. In particular, spheres and balls are Cohen-Macaulay over every field (equivalently, $\Z$). The real projective plane $R\Proj^2$ is Cohen-Macaulay over fields of characteristic different from $2$, but not over $\F_2$, or over $\Z$, due to having nontrivial $H_1$. The torus $\mathbb{T}^2$ has $H_1(\mathbb{T}^2;\Z) = \Z^2$, so it is not Cohen-Macaulay over any field.

\subsection{The Garsia map}\label{sec:garsiamap}

The goal of this section is to prove results that will allow the Cohen-Macaulayness of a ring of polynomial permutation invariants to be deduced from the Cohen-Macaulayness of a corresponding Stanley-Reisner ring. Almost all of our work here is based on the pair of papers \cite{garsia}, \cite{garsiastanton} by Adriano Garsia, the second in collaboration with Dennis Stanton, published in 1980 and 1984 respectively. Garsia discovered that there is a natural $S_n$-equivariant linear isomorphism between the polynomial ring in $n$ variables and a certain Stanley-Reisner ring, which translates bases for the latter into bases for the former. Garsia and Stanton then applied this map to invariant rings of subgroups $G\subset S_n$. (They also showed how to obtain certain bases from the geometry of cell complexes; more on this in section \ref{sec:shellings}.)

While they stated all their results over $\Q$, a number of their arguments are independent of characteristic. This seems to have been first noted in print in 2003 by Victor Reiner in an appendix to the paper \cite{hersh2} of Patricia Hersh. We give our own, self-contained, account.

\subsubsection{Preliminary ideas}\label{sec:prelim}

\begin{notation}\label{not:definitionofR}
In this section, $A$ is an arbitrary commutative unital ring; sometimes it is specified to be an integral domain. In subsequent sections, it will always be $\Z$ or $\F_p$. Henceforth, $R$ is always the polynomial ring $A[x_1,\dots,x_n]$. 
\end{notation}


The circle of ideas presented here begins with a definition, which we see as the key to understanding Garsia's accomplishment in \cite{garsia}, although he does not make it entirely explicit.

\begin{definition}\label{def:stackup}
We say two monomials $x_1^{k_1}\dots x_n^{k_n}$ and $x_1^{\ell_1}\dots x_n^{\ell_n}$ of $R$ \textbf{stack up} if there is an index $i$ that maximizes both $k_i$ and $\ell_i$, another index $i'$ that maximizes both $k_{i'}$ and $\ell_{i'}$ among the remaining indices, and so on.
\end{definition}

\begin{example}
$x_1^2x_2^3x_3$ and $x_1x_2^2$ stack up. $x_1^2x_2^3x_3$ and $x_1x_2$ stack up. $x_1^2x_2^3x_3$ and $x_1^2x_2$ do not stack up. (See figure \ref{fig:stackingup}.)
\end{example}

\begin{figure}
\begin{tikzpicture}
\node at (1.5,0) {};

\node at (3.4,-0.5) {These stack up.};

\draw (2,0) rectangle (2.9,0.4); \node at (2.5,0.2) {$x_1$};
\draw (2,0.5) rectangle (2.9,0.9); \node at (2.5,0.7) {$x_1$};
\draw (3,0) rectangle (3.9,0.4); \node at (3.5,0.2) {$x_2$};
\draw (3,0.5) rectangle (3.9,0.9); \node at (3.5,0.7) {$x_2$};
\draw (3,1) rectangle (3.9,1.4); \node at (3.5,1.2) {$x_2$};
\draw (4,0) rectangle (4.9,0.4); \node at (4.5,0.2) {$x_3$};

\draw (2,2) rectangle (2.9,2.4); \node at (2.5,2.2) {$x_1$};
\draw (3,2) rectangle (3.9,2.4); \node at (3.5,2.2) {$x_2$};
\draw (3,2.5) rectangle (3.9,2.9); \node at (3.5,2.7) {$x_2$};

\node at (8.9,-0.5) {These stack up.};

\draw (7.5,0) rectangle (8.4,0.4); \node at (8,0.2) {$x_1$};
\draw (7.5,0.5) rectangle (8.4,0.9); \node at (8,0.7) {$x_1$};
\draw (8.5,0) rectangle (9.4,0.4); \node at (9,0.2) {$x_2$};
\draw (8.5,0.5) rectangle (9.4,0.9); \node at (9,0.7) {$x_2$};
\draw (8.5,1) rectangle (9.4,1.4); \node at (9,1.2) {$x_2$};
\draw (9.5,0) rectangle (10.4,0.4); \node at (10,0.2) {$x_3$};

\draw (7.5,2) rectangle (8.4,2.4); \node at (8,2.2) {$x_1$};
\draw (8.5,2) rectangle (9.4,2.4); \node at (9,2.2) {$x_2$};

\node at (14.4,-0.5) {These do not stack up.};

\draw (13,0) rectangle (13.9,0.4); \node at (13.5,0.2) {$x_1$};
\draw (13,0.5) rectangle (13.9,0.9); \node at (13.5,0.7) {$x_1$};
\draw (14,0) rectangle (14.9,0.4); \node at (14.5,0.2) {$x_2$};
\draw (14,0.5) rectangle (14.9,0.9); \node at (14.5,0.7) {$x_2$};
\draw (14,1) rectangle (14.9,1.4); \node at (14.5,1.2) {$x_2$};
\draw (15,0) rectangle (15.9,0.4); \node at (15.5,0.2) {$x_3$};

\draw (13,2) rectangle (13.9,2.4); \node at (13.5,2.2) {$x_1$};
\draw (13,2.5) rectangle (13.9,2.9); \node at (13.5,2.7) {$x_1$};
\draw (14,2) rectangle (14.9,2.4); \node at (14.5,2.2) {$x_2$};

\end{tikzpicture}
\caption{Stacking up. $x_1^2x_2^3x_3$ and $x_1x_2^2$ stack up; $x_1^2x_2^3x_3$ and $x_1x_2$ stack up; but $x_1^2x_2^3x_3$ and $x_1^2x_2$ do not stack up.}\label{fig:stackingup}
\end{figure}
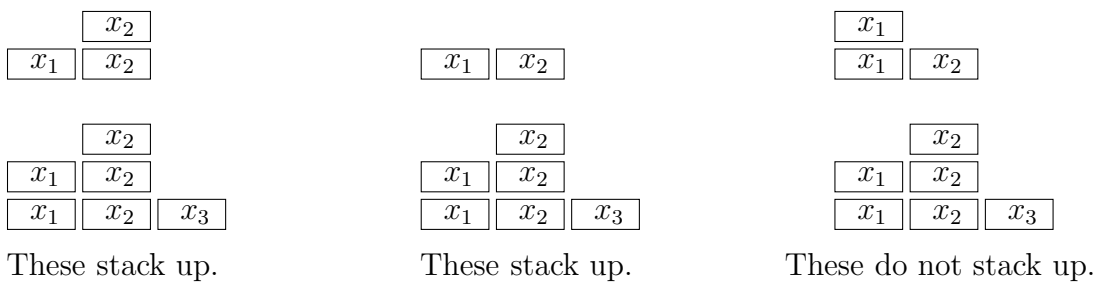

\begin{notation}\label{not:lambdaandP}
The tuple of exponents in a monomial of $R$ specifies a partition of the monomial's degree -- for example for the monomial $x_1^3x_2^4x_3$, this partition is $8 = 4 + 3 + 1$. Let the partition associated to a monomial $m\in R$ be denoted $\Lambda(m)$ and referred to as the \textbf{shape} of $m$. Since $R$ has $n$ indeterminates, $\Lambda(m)$ never has more than $n$ parts. Thus let $\lambda = (\lambda_1,\dots,\lambda_n)$ with $\lambda_1\geq \lambda_2\geq\dots\geq\lambda_n$ (with each $\lambda_i \geq 0$) be such a partition, and let $|\lambda| = \sum\lambda_i$. Let $\scrP$ be the set of all such partitions.
\end{notation}

One can add partitions by
\[
\mu + \lambda = (\mu_1+\lambda_1,\dots,\mu_n+\lambda_n).
\]
This makes $\scrP$ into a commutative monoid, with identity $\emptyset = (0,\dots,0)$. The map 
\[
\lambda \mapsto |\lambda|
\]
 is a monoid homomorphism $\scrP\rightarrow \N$.

The motivation for the definition of stacking up comes from the way multiplication of monomials interacts with this monoid structure of $\scrP$. Indeed, the definition is designed so that pairs of monomials that stack up are exactly those for whom multiplication corresponds to addition of the partitions describing their shapes. More precisely:

\begin{lemma}\label{lem:stackingup}
If $m,m'$ are monomials of $R$, then 
$$
\Lambda(m m') = \Lambda(m)+\Lambda(m')
$$
 if and only if $m,m'$ stack up.
\end{lemma}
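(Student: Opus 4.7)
The plan is to prove both directions by unwinding what $\Lambda(m m') = \Lambda(m)+\Lambda(m')$ says in terms of exponent vectors and the sorting permutations that produce the shape partitions. Write $m = \prod x_i^{k_i}$ and $m' = \prod x_i^{\ell_i}$, so that $m m' = \prod x_i^{k_i+\ell_i}$. Let $\lambda = \Lambda(m)$, $\mu = \Lambda(m')$, $\nu = \Lambda(mm')$; these are the vectors $(k_i)$, $(\ell_i)$, $(k_i+\ell_i)$ each rearranged into weakly decreasing order. Observe that because $\lambda$ and $\mu$ are already weakly decreasing, the sum $\lambda+\mu$ is automatically weakly decreasing. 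Thus the equation $\nu = \lambda + \mu$ simply says: sorting $(k_i + \ell_i)$ in decreasing order gives the same tuple as sorting $(k_i)$ and $(\ell_i)$ separately and then adding componentwise.

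For the easy direction, assume $m,m'$ stack up, so that there is a single permutation $\sigma$ of $[n]$ with $k_{\sigma(1)}\geq k_{\sigma(2)}\geq\cdots\geq k_{\sigma(n)}$ and simultaneously $\ell_{\sigma(1)}\geq\ell_{\sigma(2)}\geq\cdots\geq\ell_{\sigma(n)}$. Then $(k_{\sigma(i)}+\ell_{\sigma(i)})$ is weakly decreasing and equals $\lambda_i+\mu_i$ termwise, so $\nu_i = \lambda_i+\mu_i$, which is the desired equation.

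For the converse direction, which is the real content, I will argue by induction on $n$. Assuming $\nu = \lambda + \mu$, I will first produce a single index $i_1$ that simultaneously achieves $k_{i_1} = \lambda_1$ and $\ell_{i_1}=\mu_1$, and then strip this index away and apply the inductive hypothesis. The production step uses the following observation: because $\nu$ and $\lambda+\mu$ agree as tuples (not merely as multisets), in particular $\nu_1 = \lambda_1+\mu_1$. Since $\nu_1$ is the maximum value of $k_i+\ell_i$, some index $i_1$ satisfies $k_{i_1}+\ell_{i_1} = \lambda_1+\mu_1$; but $k_{i_1}\leq\lambda_1$ and $\ell_{i_1}\leq\mu_1$ always, so equality forces $k_{i_1}=\lambda_1$ and $\ell_{i_1}=\mu_1$. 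This $i_1$ is precisely the first index required by the stacking-up condition. Now restrict both exponent vectors to $[n]\setminus\{i_1\}$: the sorted shapes become $(\lambda_2,\dots,\lambda_n)$ and $(\mu_2,\dots,\mu_n)$ respectively, since we removed the maximum entry from each, and the sorted shape of the restriction of $(k_i+\ell_i)$ is $(\nu_2,\dots,\nu_n)$. The hypothesis $\nu = \lambda+\mu$ restricts to $(\nu_2,\dots,\nu_n) = (\lambda_2+\mu_2,\dots,\lambda_n+\mu_n)$, so we may iterate (or invoke the inductive hypothesis) to produce the remaining indices $i_2,i_3,\ldots,i_n$ with the stacking-up property.

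I do not anticipate a real obstacle here. The only subtle point is keeping careful track of the difference between agreement of the multisets $\{k_i+\ell_i\}$ and $\{\lambda_i+\mu_i\}$ versus agreement of the sorted tuples; but since both tuples are already written in decreasing order, the two notions coincide and I can freely pass between them. The argument is purely combinatorial and requires no assumptions on the coefficient ring $A$.
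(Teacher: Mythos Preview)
Your proof is correct. The forward direction matches the paper's exactly. For the converse, you argue directly by induction: from $\nu_1 = \lambda_1 + \mu_1$ you extract a common maximizing index $i_1$, strip it, and iterate. The paper instead proves the contrapositive: assuming $m,m'$ do not stack up, it locates the first position $j$ where a common choice of index fails and shows $\Lambda(mm')_j < \Lambda(m)_j + \Lambda(m')_j$ while equality holds for $i<j$. Your inductive argument is arguably cleaner for this lemma in isolation; the paper's contrapositive has the advantage that it simultaneously yields the finer inequality needed immediately afterward (Lemma~\ref{lem:stackingandlex}), namely that when $m,m'$ fail to stack up one has $\Lambda(mm') < \Lambda(m)+\Lambda(m')$ in degree-lexicographic order. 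With your approach that inequality would require a separate short argument.
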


\begin{proof}
If $m$ and $m'$ stack up, then the same $x_i$ occurs a maximum number of times in both, and thus also in the product $mm'$. Thus $\Lambda(mm')_1 = \Lambda(m)_1 + \Lambda(m')_1$. Similarly, among the remaining $x_i$'s, the same one occurs a maximum number of times in both, so $\Lambda(mm')_2 = \Lambda(m)_2 + \Lambda(m')_2$; similar reasoning shows that $\Lambda(mm')_i = \Lambda(m)_i + \Lambda(m')_i$ for all $i$. There is equality in this case.

Conversely, if $m$ and $m'$ do not stack up, then there is a $j\in \gls{[n]}$ such that while a set of $j-1$ greatest exponents in $m$ and in $m'$ can be chosen to have the same indices, the $j$th greatest exponent in each monomial occurs on different indices. Thus while the greatest $j-1$ exponents in the product $mm'$ can be taken to be sums of the greatest $j-1$ exponents in each of $m$ and $m'$, no exponent in the product $mm'$ other than these greatest $j-1$ can be as great as the sum of the two $j$th greatest exponents in $m$ and $m'$ (since they occur on different indeterminates). I.e. we must have $\Lambda(mm')_j < \Lambda(m)_j + \Lambda(m')_j$ even though $\Lambda(mm')_i = \Lambda(m)_i+\Lambda(m')_i$ for $i<j$.
\end{proof}

It may seem that this lemma does not do much for monomials that do not stack up. However, we can still say something important. In a sense to be made precise momentarily, when $m$ and $m'$ fail to stack up, $\Lambda(mm')$ is strictly lower than $\Lambda(m)+\Lambda(m')$.

\begin{definition}
The \textbf{degree lexicographic order} on partitions (\textit{deglex} for short) is the total order given by $\lambda > \mu$ if $|\lambda|>|\mu|$, or $|\lambda|=|\mu|$ but $\lambda_1 > \mu_1$, or $|\lambda|=|\mu|$ and $\lambda_1 = \mu_1$ but $\lambda_2 > \mu_2$, or etc.
\end{definition}

In all of what follows, all inequalities between partitions are with reference to this order.

\begin{remark}
The monoid structure of $\scrP$ is compatible with the degree lexicographic order in the sense that $\emptyset \leq \lambda$, and $\lambda > \mu \Rightarrow \lambda + \nu > \mu + \nu$, for all $\lambda,\mu,\nu\in \scrP$.
\end{remark}

\begin{remark}
The partial order $\leq$ not only totally orders $\scrP$ but well-orders it, i.e. $(\scrP,\leq)$ satisfies the descending chain condition. In fact, $(\mathscr{P},\leq)$ is order-isomorphic to $\N$, since there are only finitely many $\lambda\in \scrP$ with $|\lambda|=n$ for a fixed $n\in \N$, and thus only finitely many $\lambda\in\scrP$ less than a fixed $\mu\in\scrP$. In particular, we can do induction on $\lambda\in\scrP$.
\end{remark}

\begin{lemma}\label{lem:stackingandlex}
If monomials $m,m'$ of $R$ fail to stack up, then $\Lambda(mm') < \Lambda(m)+\Lambda(m')$ with respect to degree lexicographic order.
\end{lemma}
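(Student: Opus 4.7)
The plan is to extract finer information from the argument already used to prove Lemma \ref{lem:stackingup}. The key observation is that the total degrees always match: $|\Lambda(mm')| = |\Lambda(m)| + |\Lambda(m')|$, since degree is additive under multiplication of monomials and equals $|\Lambda(\cdot)|$ in each case. Thus the deglex comparison between $\Lambda(mm')$ and $\Lambda(m) + \Lambda(m')$ reduces at the outset to the lexicographic comparison of their parts.

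Next I would set $\lambda = \Lambda(m)$, $\mu = \Lambda(m')$, and $\nu = \Lambda(mm')$, and look at the smallest index $j$ where $\nu_j \neq \lambda_j + \mu_j$. For $i < j$ we have $\nu_i = \lambda_i + \mu_i$ by the choice of $j$, and the proof of Lemma \ref{lem:stackingup} already exhibits such a $j$ precisely when $m, m'$ fail to stack up: this is the first position at which the ``greatest exponent on the same indeterminate in both $m$ and $m'$'' strategy breaks down. At that position, the $j$th greatest exponent of $m$ occurs on some indeterminate $x_a$, the $j$th greatest of $m'$ on some $x_b \neq x_a$, and no indeterminate of $mm'$ (outside the previously accounted $j-1$ of them) can accumulate as much exponent as $\lambda_j + \mu_j$, since $x_a$ receives at most $\lambda_j$ additional from $m'$ and $x_b$ at most $\mu_j$ additional from $m$. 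Hence $\nu_j < \lambda_j + \mu_j$ strictly.

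Finally I would conclude: since the total degrees agree and the first coordinate of disagreement satisfies $\nu_j < (\lambda+\mu)_j$, the definition of deglex yields $\Lambda(mm') < \Lambda(m) + \Lambda(m')$. No obstacles are anticipated; the entire argument is a bookkeeping refinement of Lemma \ref{lem:stackingup}, and in fact one could have stated both lemmas simultaneously as ``$\Lambda(mm') \leq \Lambda(m) + \Lambda(m')$ in deglex, with equality iff $m,m'$ stack up.''
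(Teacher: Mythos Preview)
Your proposal is correct and follows essentially the same approach as the paper: both observe that the total degrees agree (so deglex reduces to lex), then point back to the proof of Lemma~\ref{lem:stackingup} to extract an index $j$ with $\Lambda(mm')_i = \Lambda(m)_i + \Lambda(m')_i$ for $i<j$ and strict inequality at $j$. Your middle paragraph's bookkeeping about $x_a$ and $x_b$ is slightly informal (the phrase ``$x_a$ receives at most $\lambda_j$ additional from $m'$'' has the wrong partition symbol; you mean the exponent of $x_a$ in $m'$ is at most $\mu_j$, and in fact strictly less, else $x_a$ could have been chosen as the $j$th index for both), but this is exactly the content already in the paper's proof of Lemma~\ref{lem:stackingup}, which both you and the paper simply cite.
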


\begin{proof}
Examining the proof of lemma \ref{lem:stackingup}, we see that when $m,m'$ do not stack up, there is a $j\in \gls{[n]}$ such that $\Lambda(mm')_i = \Lambda(m)_i + \Lambda(m')_i$ for $i < j$ and $\Lambda(mm')_j < \Lambda(m)_j + \Lambda(m')_j$. This implies $\Lambda(mm') < \Lambda(m)+\Lambda(m')$ degree-lexicographically, in view of the fact that
\[
|\Lambda(mm')| = \deg mm' = \deg m + \deg m' = |\Lambda(m)| + |\Lambda(m')| = |\Lambda(m) + \Lambda(m')|
\]
with the last equality because $|\cdot|$ is a monoid homomorphism.
\end{proof}

\begin{remark}\label{rmk:partitiondecompSninvar}
In fact, in the situation of this lemma, $\Lambda(mm') < \Lambda(m)+\Lambda(m')$ not only with respect to degree lexicographic order but even with respect to {\em dominance order}, a partial order on partitions that is refined by the degree lexicographic total order. (In the dominance order, $\lambda \geq\mu$ means for any $k$, the biggest $k$ parts of $\lambda$ add up to at least as much as the biggest $k$ parts of $\mu$.) This is a somewhat stronger statement. Garsia and Stanton (in \cite{garsia},\cite{garsiastanton}) and Victor Reiner (in \cite{hersh2}) formulate the ideas on which this section is based in terms of the dominance order rather than deglex order. We prefer the deglex order because it is a total order on partitions, allowing it to be used to filter the ring $R$, which we do next.
\end{remark}

\begin{notation}
Given a partition $\lambda\in \scrP$, let $R_\lambda$ be the $A$-submodule of $R$ generated by the monomials of shape $\lambda$ (i.e. those $m\in R$ with $\Lambda(m)=\lambda$). We have
\[
R = \bigoplus_{\lambda\in \scrP} R_{\lambda}.
\]
Let $R_{\leq \lambda} = \bigoplus_{\nu \leq \lambda} R_\nu$, and let $R_{<\lambda} = \bigoplus_{\nu< \lambda} R_\nu$.
\end{notation}

\begin{remark}\label{rmk:A[S_n]-module}
The direct sum decomposition into $R_\lambda$'s is $S_n$-invariant; in fact, if $m\in R$ is a monomial and $\lambda = \Lambda(m)$, then $R_\lambda$ is precisely the $A$-span of the $S_n$-orbit of $m$, i.e. the cyclic $A[S_n]$-module generated by $m$, where $A[S_n]$ is the group algebra over $A$. Note that the $S_n$-invariance implies that if $G$ is a permutation group, 
\[
R^G = \bigoplus_{\lambda\in\scrP} R_\lambda^G
\]
since $G$ acts separately on each $R_\lambda$.
\end{remark}

\begin{remark}\label{rmk:dispersion}
One may think of the degree lexicographic order on the components $R_\lambda$ as refining the grading by measuring, within any given degree, the extent of the dispersion of the exponents of the monomials. Monomials with lexicographically higher shapes but the same degree have more spread-out exponents. We explicate this idea at length in \cite{FTSP}, where we also relate it to other measures of dispersion, specifically variance and higher moments (see \cite{FTSP}, Theorem 4). In that paper, we call the partial order on monomials induced by the lexicographic order on shapes the {\em symmetric lexicographic order}, to acknowledge its $S_n$-invariance, and to distinguish it from the lexicographic order on monomials themselves.
\end{remark}

The decomposition $R=\bigoplus R_\lambda$ is not a grading, because we do not have $R_\lambda R_\mu \subset R_{\lambda + \mu}$. However, there is a substitute:

\begin{prop}\label{prop:filtered}
We have
\[
R_\lambda R_\mu \subset R_{\leq \lambda +\mu}.
\]
\end{prop}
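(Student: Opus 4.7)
The plan is to reduce to a statement about monomials and then split into cases based on whether the monomials stack up, applying the two preceding lemmas directly. Since $R_\lambda$ and $R_\mu$ are $A$-spans of monomials of shapes $\lambda$ and $\mu$ respectively, and multiplication is $A$-bilinear, it suffices to show that for any monomial $m$ of shape $\lambda$ and any monomial $m'$ of shape $\mu$, the product $m m'$ lies in $R_{\leq \lambda + \mu}$. But $m m'$ is itself a monomial, so this amounts to checking that $\Lambda(m m') \leq \lambda + \mu$ in the degree lexicographic order.

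Now I would dichotomize on whether $m$ and $m'$ stack up. In the stacking case, Lemma \ref{lem:stackingup} gives $\Lambda(m m') = \Lambda(m) + \Lambda(m') = \lambda + \mu$, so $m m' \in R_{\lambda + \mu} \subset R_{\leq \lambda + \mu}$. In the non-stacking case, Lemma \ref{lem:stackingandlex} gives $\Lambda(m m') < \Lambda(m) + \Lambda(m') = \lambda + \mu$, so $m m' \in R_{<\lambda + \mu} \subset R_{\leq \lambda + \mu}$. Either way the claim holds on monomials, and linearity finishes the proof.

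There is no real obstacle here; all the substantive work has already been done in establishing Lemmas \ref{lem:stackingup} and \ref{lem:stackingandlex}, and this proposition is essentially a packaging step that converts those monomial-level statements into a filtration statement on the ring $R$. The only mild subtlety worth flagging in the writeup is that the decomposition $R = \bigoplus_{\lambda} R_\lambda$ is a decomposition into $A$-submodules rather than a grading (which is precisely the point being made), so one must invoke $A$-bilinearity of multiplication rather than a grading property to pass from monomials to general elements.
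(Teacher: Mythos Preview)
Your proof is correct and takes essentially the same approach as the paper: reduce to monomials by $A$-bilinearity, then invoke Lemmas~\ref{lem:stackingup} and~\ref{lem:stackingandlex} to conclude that the product monomial has shape at most $\lambda+\mu$. The paper's version is simply a more compressed statement of exactly this argument.
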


\begin{proof}
By lemmas \ref{lem:stackingup} and \ref{lem:stackingandlex}, the product of a monomial in $R_\lambda$ with a monomial in $R_\mu$ is a monomial lying in $R_{\leq \lambda + \mu}$. The result follows by $A$-linearity.
\end{proof}

\begin{remark}
Thus, the $R_{\leq \lambda}$'s give an ascending $A$-algebra filtration of $R$. Note that for any permutation group $G$, taking $G$-invariants one finds that the $R_{\leq\lambda}^G$'s give an ascending $A$-algebra filtration of $R^G$ in the same way.
\end{remark}

Now we introduce the Stanley-Reisner ring from which we can transfer bases to $R$. Recall remark \ref{rmk:compactSRposet} which gives a compact description of the Stanley-Reisner ring of a poset.

\begin{notation}\label{not:definitionofS}
Let $\Bn$ be the boolean algebra, i.e. the power set of $\gls{[n]}=\{1,\dots,n\}$, regarded as a poset under inclusion. Let $S$ be the Stanley-Reisner ring of the poset $\Bn\setminus\{\emptyset\}$ over $A$. Denote the indeterminate of $S$ corresponding to a set $U\subset \gls{[n]}$ by $y_U$. 
\end{notation}

\begin{remark}\label{rmk:SisaPLball}
As noted in the previous section, $\Bn\setminus\{\emptyset\}$ is the face poset of an $(n-1)$-dimensional simplex regarded as a regular CW complex (i.e. without its empty face). The order complex of $\Bn\setminus\{\emptyset\}$ is thus the barycentric subdivision of an $(n-1)$-simplex, so its total space is a PL ball. In particular, $S$ is Cohen-Macaulay.
\end{remark}

\begin{definition}\label{def:garsiamap}
The \textbf{Garsia map} $\garsia:S \rightarrow R$ is the $A$-linear isomorphism obtained by first mapping $y_U \mapsto \prod_{i\in U}x_i$, extending multiplicatively to the monomials of $S$, and then extending $A$-linearly to all of $S$. (See figure \ref{fig:garsiamap}.)
\end{definition}

\begin{remark}
Garsia himself (\cite{garsia},\cite{garsiastanton}), and Reiner following him (\cite{hersh2}), call this map the \textit{transfer map}. However, some authors in invariant theory (e.g. \cite{smith95}, \cite{neuselsmith}) use this phrase to refer to the map $R\rightarrow R^G$ given by $x\mapsto \sum_{g\in G} gx$ (though others call this the {\em trace}), and there are parallel usages in topology and group theory, so we are taking the opportunity to rename it to honor Garsia's discovery.
\end{remark}

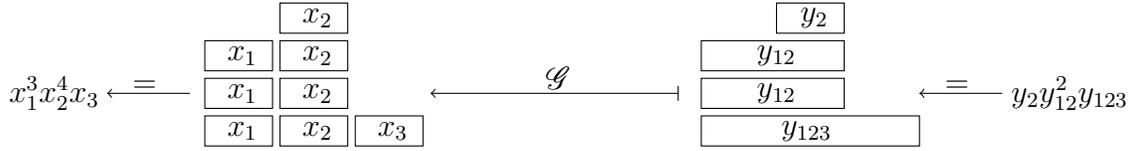
\begin{figure}
\begin{tikzpicture}
\node at (1,0.7) {$x_1^3x_2^4x_3$};
\draw [<-] (1.7,0.7) -- (2.8,0.7); \node at (2.2,0.8) {$=$};
\draw (3,0) rectangle (3.9,0.4); \node at (3.5,0.2) {$x_1$};
\draw (3,0.5) rectangle (3.9,0.9); \node at (3.5,0.7) {$x_1$};
\draw (3,1) rectangle (3.9,1.4); \node at (3.5,1.2) {$x_1$};
\draw (4,0) rectangle (4.9,0.4); \node at (4.5,0.2) {$x_2$};
\draw (4,0.5) rectangle (4.9,0.9); \node at (4.5,0.7) {$x_2$};
\draw (4,1) rectangle (4.9,1.4); \node at (4.5,1.2) {$x_2$};
\draw (4,1.5) rectangle (4.9,1.9); \node at (4.5,1.7) {$x_2$};
\draw (5,0) rectangle (5.9,0.4); \node at (5.5, 0.2) {$x_3$};
\draw [<-|] (6,0.7) -- (9.3,0.7); \node at (7.7,0.9) {$\garsia$};
\draw (9.6,0) rectangle (12.5,0.4); \node at (11,0.2) {$y_{123}$};
\draw (9.6,0.5) rectangle (11.5,0.9); \node at (10.6,0.7) {$y_{12}$};
\draw (9.6,1.0) rectangle (11.5,1.4); \node at (10.6,1.2) {$y_{12}$};
\draw (10.6,1.5) rectangle (11.5,1.9); \node at (11.1,1.7) {$y_2$};
\draw [<-] (12.5,0.7) -- (13.6,0.7); \node at (13,0.8) {$=$};
\node at (14.5,0.7) {$y_2y_{12}^2y_{123}$};
\end{tikzpicture}
\caption{The Garsia map}\label{fig:garsiamap}
\end{figure}

The Garsia map is not a ring homomorphism. However, it is well-behaved in a number of important respects. To begin with, it translates monomials whose product is nonzero in $S$ into monomials that stack up in $R$.

\begin{lemma}\label{lem:maximalchainandstacking}
If $m = \prod_i y_{U_i}$ is a monomial of $S$, then a maximal chain
\[
\{i_1\}\subset \{i_1,i_2\}\subset \dots \subset \gls{[n]}
\]
of $\Bn$ contains every $U_i$ if and only if in $\garsia(m) = \prod_j x_j^{k_j}$, $k_{i_1}$ is a maximal exponent, $k_{i_2}$ is a maximal exponent other than $k_{i_1}$, etc.
\end{lemma}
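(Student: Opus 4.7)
My plan is to prove both directions by directly computing the exponents $k_j$ in $\garsia(m)$ from the chain structure of the $U_i$'s. First, since $m\neq 0$ in $S$ the sets $U_i$ must themselves form a chain in $\Bn\setminus\emptyset$, so let their distinct values be $V_1\subsetneq V_2\subsetneq\dots\subsetneq V_s$ with multiplicities $m_1,\dots,m_s$ in $m$. Unwinding the definition of $\garsia$, we get $k_j=\sum_{t\,:\,j\in V_t} m_t$, so the value of $k_j$ depends only on the smallest $V_t$ containing $j$. This yields exactly $s+1$ distinct values of $k_j$, namely $r,\,r-m_1,\,r-m_1-m_2,\dots,0$ (where $r=\sum_t m_t$), taken on the ``shells'' $V_1,\,V_2\setminus V_1,\dots,[n]\setminus V_s$ respectively.

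For the forward direction, suppose every $U_i$ lies in the chain $C: \{i_1\}\subset\{i_1,i_2\}\subset\dots\subset[n]$. Then each $V_t$ is an initial segment $\{i_1,\dots,i_{|V_t|}\}$ of $C$, so $i_r\in V_t$ precisely when $|V_t|\geq r$. Substituting into the formula above gives $k_{i_r}=\sum_{t\,:\,|V_t|\geq r} m_t$, which is visibly weakly decreasing in $r$. By the ``shell'' computation, this is exactly the sorted (i.e., partition-shape) rearrangement of the multiset $(k_1,\dots,k_n)$, which is precisely the condition that $k_{i_1}$ is a maximal exponent, $k_{i_2}$ is maximal among the remaining, and so on.

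For the converse, assume $(k_{i_1},k_{i_2},\dots,k_{i_n})$ is the sorted shape. The maximum value $r$ is attained on exactly $|V_1|$ indices, namely the elements of $V_1$; since $k_{i_1},\dots,k_{i_{|V_1|}}$ are the indices realizing this maximum, we get $V_1=\{i_1,\dots,i_{|V_1|}\}$, which is a term of $C$. The next value $r-m_1$ is attained precisely on $V_2\setminus V_1$, so the next block $i_{|V_1|+1},\dots,i_{|V_2|}$ of $(i_r)$'s must be $V_2\setminus V_1$; combined with the previous step this gives $V_2=\{i_1,\dots,i_{|V_2|}\}$, again a term of $C$. Iterating through the shells identifies each $V_t$ as an initial segment $\{i_1,\dots,i_{|V_t|}\}$ of $C$, so every $U_i$ lies in $C$.

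The bookkeeping around ties (when several $k_j$'s coincide, the choice of which index to call $i_r$ is not unique) is the only subtle point, but it is harmless: both the hypothesis and conclusion are invariant under permuting $i_r$'s within a tie, and the identification $V_t=\{i_1,\dots,i_{|V_t|}\}$ works for any such choice since the shells are exactly the level sets of $j\mapsto k_j$. Thus the argument is essentially routine once the two key formulas --- $k_j=\sum_{t\,:\,j\in V_t}m_t$ and the initial-segment description of the $V_t$'s inside $C$ --- are in hand.
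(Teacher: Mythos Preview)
Your proof is correct and follows essentially the same approach as the paper's: both compute the exponent $k_j$ as the number of $U_i$'s containing $j$ and observe that the indices carrying the top $\ell$ exponents are exactly those in the $\ell$th set of the chain, from which the equivalence is immediate. Your version is more explicit about multiplicities, shells, and the handling of ties, while the paper's is terser but relies on the same computation.
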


\begin{example}
This lemma is probably best apprehended by staring at figure \ref{fig:garsiamap} with the statement in mind, although we give a proof below. The maximal chain $\{2\}\subset \{1,2\}\subset\{1,2,3\}$ supports the monomial $y_2y_{12}^2y_{123}$ of the figure, and this is equivalent to the fact that $x_2$ has maximum exponent in $\garsia(y_2y_{12}^2y_{123}) = x_1^3x_2^4x_3$, and $x_1$ has maximum exponent among indeterminates other than $x_2$.
\end{example}

\begin{proof}[Proof of lemma \ref{lem:maximalchainandstacking}]
Since the $U_i$'s are supported on a chain (by the definition of the Stanley-Reisner ring $S$), without loss of generality we can reorder them so that $U_1\supset U_2\supset\dots \supset U_k$ (repetitions allowed). By definition of $\garsia$, we have
\[
\garsia(m) = \prod_i\prod_{j\in U_i} x_j
\]
The indeterminates with an exponent of $k$ in this product are precisely the $x_j$'s for which $j$ is in all $k$ of the $U_i$'s, or equivalently in $U_k$. More generally, for $\ell\in [k]$, the indeterminates with an exponent of at least $\ell$ are precisely those in $U_1,\dots,U_\ell$, or equivalently in $U_\ell$. Thus, in general, the indices giving the maximum $|U_\ell|$ exponents are exactly those in $U_\ell$. It follows that for a maximal chain 
\[
\{i_1\}\subset \{i_1,i_2\}\subset \dots \subset \gls{[n]},
\]
the property of supporting all the $U_i$'s of $m$ is equivalent to the property that $x_{i_1}$ has maximum exponent in $\garsia(m)$, $x_{i_2}$ has maximum exponent among the remaining indeterminates, etc.
\end{proof}

\begin{cor}\label{cor:prodnonzerostackup}
Let $m,m'\in S$ be monomials. Then $mm' \neq 0$ if and only if $\garsia(m),\garsia(m')\in R$ stack up.
\end{cor}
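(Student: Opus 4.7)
The plan is to reduce the statement to Lemma \ref{lem:maximalchainandstacking} by passing from the monomials to the maximal chains of $B_n$ that ``support'' them.

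First, I would write $m = \prod_i y_{U_i}$ and $m' = \prod_j y_{V_j}$ as products of indeterminates of $S$. Recall (see remark \ref{rmk:compactSRposet}) that in the Stanley-Reisner ring of $B_n\setminus\{\emptyset\}$, a product of indeterminates is nonzero precisely when the indexing elements are pairwise comparable in the poset, i.e.\ when they form a chain in $B_n\setminus\{\emptyset\}$. Hence $mm' = \prod_i y_{U_i}\prod_j y_{V_j}\neq 0$ if and only if the combined collection $\{U_i\}\cup \{V_j\}$ is a chain of subsets of $[n]$, which is the case if and only if there exists a maximal chain
\[
\{i_1\}\subset \{i_1,i_2\}\subset \dots \subset [n]
\]
of $B_n$ simultaneously containing every $U_i$ and every $V_j$.

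Next, I would interpret the stacking-up condition via \ref{lem:maximalchainandstacking}. By definition, $\garsia(m)$ and $\garsia(m')$ stack up iff there is an ordering $i_1,\dots,i_n$ of $[n]$ such that $x_{i_1}$ carries a maximal exponent in both monomials, $x_{i_2}$ a maximal exponent among the remaining indeterminates in both, and so on. Applying \ref{lem:maximalchainandstacking} separately to $m$ and $m'$, this condition is equivalent to the existence of a single maximal chain of $B_n$ that supports all the $U_i$'s of $m$ \emph{and} all the $V_j$'s of $m'$.

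The two reformulations coincide, so $mm'\neq 0$ in $S$ if and only if $\garsia(m)$ and $\garsia(m')$ stack up in $R$. There is no serious obstacle here: the proof is essentially a translation via \ref{lem:maximalchainandstacking}, and the only point that requires a brief argument is that ``pairwise comparable in $B_n\setminus\{\emptyset\}$'' is equivalent to ``contained in a common maximal chain of $B_n$,'' which is immediate for a finite totally ordered subcollection of $B_n$.
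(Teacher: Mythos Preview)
Your proposal is correct and follows essentially the same approach as the paper: both arguments observe that $mm'\neq 0$ is equivalent to all the $U_i$'s and $V_j$'s lying on a common maximal chain of $B_n$, and then invoke Lemma~\ref{lem:maximalchainandstacking} together with the definition of stacking up to identify this with the condition that $\garsia(m)$ and $\garsia(m')$ stack up. Your write-up is slightly more explicit than the paper's, but the logic is identical.
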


\begin{proof}
By the definition of $S$, $mm'\neq 0$ if and only if all the indeterminates in $m,m'$ are supported on the same maximal chain of $\Bn$. So we need to show that $\garsia(m),\garsia(m')$ stacking up is equivalent to the existence of a maximal chain in $\Bn$ such that every $U\in \Bn$ for which either $y_U\mid m$ or $y_U\mid m'$ belongs to this chain. This is immediate from the preceding lemma and the definition of stacking up.
\end{proof}

\begin{remark}
The proof of this corollary reveals why we chose the name ``stack up" for definition \ref{def:stackup}. Monomials in $R$ stack up if and only if their counterparts in $S$ contain only indeterminates supported on the same maximal chain of $\Bn$. Per the diagrams we have been using in figures \ref{fig:stackingup} and \ref{fig:garsiamap}, indeterminates supported on the same chain are depicted as a stack of blocks in which each block rests entirely on the one below it. If two blocks correspond to incomparable $U$'s in $\Bn$, then neither can rest entirely on the other -- they do not ``stack up."
\end{remark}

\begin{definition}
We will say that monomials $m,m'\in S$ \textbf{stack up} if their $\garsia$-images in $R$ stack up.
\end{definition}

Thus in $S$, monomials either stack up or multiply to zero.

A second good behavior of the Garsia map critical to our purposes is that it is $S_n$-equivariant for the natural actions of $S_n$ on the two rings. Furthermore, if $R$ and $S$ are given standard $\N$-gradings by $\deg y_U = |U|$ and $\deg x_i = 1$, then the $S_n$-actions are both graded actions, and $\garsia$ is a graded map. (In either case, when we speak of the \textit{degree} of an element of either $R$ or $S$, it is to this grading that we refer.) The ring $S$ even has an $\N^n$-grading which interacts well with the filtration of $R$ given above (in proposition \ref{prop:filtered}):

\begin{definition}\label{def:finegrading}
The \textbf{fine grading} of the Stanley-Reisner ring $S$ is the $\N^n$-grading $d$ given by
\[
y_U \xmapsto{d} e_{|U|}
\]
where $e_i$ is the $i$th member of the standard basis for $\N^n$.
\end{definition}

This indeed gives a grading of $S$ because it gives a grading of the polynomial ring $A[\{y_U\}_{U\in \Bn\setminus\emptyset}]$, of which $S$ is the quotient by a monomial (and therefore homogeneous) ideal.

\begin{example}
The monomial $y_2y_{12}^2y_{123}$ of figure \ref{fig:garsiamap} has $d(y_2y_{12}^2y_{123}) = e_1 + 2e_2 + e_3$.
\end{example}

\begin{definition}
If an element of $S$ is homogeneous with respect to the fine grading, it is \textbf{finely homogeneous}. We refer to its image in $\N^n$ under the fine grading as its \textbf{fine grade}.
\end{definition}

Note that $S_n$ acts gradedly on $S$ even with respect to the fine grading, and in fact (similarly to remark \ref{rmk:A[S_n]-module}), the homogeneous components of $S$ with respect to the fine grading are exactly the cyclic $A[S_n]$-submodules generated by monomials.

\begin{notation}\label{not:lambdaconj}
The fine grading of $S$ reverts to the standard (degree) grading by mapping $e_i\mapsto i$. In this way, the fine grade of a monomial (more generally, a finely homogeneous element) gives a partition of its degree. In the example of figure \ref{fig:garsiamap}, this partition is 
\[
8 = 1 + 2\cdot 2 + 3.
\]
As a tuple this is $(3,2,2,1)$. If $m$ is a monomial of $S$, let this partition be called $\overline{\Lambda}(m)$. Note that for any indeterminate $y_U$ of $S$, $U\subset \gls{[n]}$, thus $|U|\leq n$. Therefore all the parts of this partition have size $\leq n$. Call the set of all such partitions $\scrPconj$.
\end{notation}

\begin{lemma}\label{lem:conjugatepartition}
Let $m\in S$ be a monomial. Then $\overline{\Lambda}(m)$ is the conjugate partition of $\Lambda(\garsia(m))$.
\end{lemma}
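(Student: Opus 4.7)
The plan is to encode the data of $m$ in an incidence matrix between its divisor-set of $y_U$'s and the index set $[n]$, and then to read off both $\overline{\Lambda}(m)$ and $\Lambda(\garsia(m))$ directly from its column and row sums. Since $m\neq 0$ in $S$, all indeterminates $y_U$ that divide $m$ are supported on the same chain of $\Bn\setminus\{\emptyset\}$, so with multiplicity I can list them as
\[
m = y_{U_1}y_{U_2}\cdots y_{U_k}, \qquad U_1\supseteq U_2\supseteq\cdots\supseteq U_k.
\]
By the definition of the fine grading, $\overline{\Lambda}(m) = (|U_1|,|U_2|,\dots,|U_k|)$, which is already weakly decreasing. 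By the definition of $\garsia$,
\[
\garsia(m) = \prod_{i=1}^{k}\prod_{a\in U_i} x_a = \prod_{a=1}^{n} x_a^{c_a},\qquad c_a := \#\{\,i\in\{1,\dots,k\} : a\in U_i\,\},
\]
so $\Lambda(\garsia(m))$ is the weakly decreasing rearrangement of $(c_1,\dots,c_n)$, with zero entries dropped.

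From here the strategy is to consider the $n\times k$ incidence matrix $M$ with $M_{a,i}=1$ if $a\in U_i$ and $M_{a,i}=0$ otherwise. By construction, the column sums of $M$ are the $|U_i|$ and the row sums of $M$ are the $c_a$. The chain condition $U_1\supseteq\cdots\supseteq U_k$ forces the $1$'s in the $a$-th row of $M$ to occupy exactly the initial segment of columns $\{1,2,\dots,c_a\}$. Permuting the rows of $M$ so that the $c_a$ are listed in weakly decreasing order, $M$ becomes literally the Young diagram of the partition $\Lambda(\garsia(m))$ (drawn with rows corresponding to the indices $a$), and this Young diagram has column lengths $(|U_1|,\dots,|U_k|)=\overline{\Lambda}(m)$. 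Since reading off column lengths from a Young diagram is the combinatorial definition of the conjugate partition, this yields the desired identity.

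The only real content of the argument is the recognition that the chain structure of the $U_i$'s is precisely what makes $M$ (after sorting its rows) a Young diagram, at which point the result is a direct reformulation of row/column duality. There is no significant obstacle; the edge cases $k=0$ (in which both partitions are empty) and vanishing $c_a$'s (in which case the corresponding empty rows are simply discarded) are handled automatically by the conventions of $\scrP$ and $\scrPconj$.
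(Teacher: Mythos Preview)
Your proof is correct and takes essentially the same approach as the paper: both arguments write $m=\prod y_{U_i}$ on a chain, identify $\overline{\Lambda}(m)=(|U_1|,\dots,|U_k|)$, and then observe that the chain condition forces the incidence pattern of indices-in-sets to form a Young/Ferrers diagram whose row and column lengths give the two partitions in question. The paper reorders the $x_j$'s so that each $U_i=\{1,\dots,|U_i|\}$ and reads off exponents directly, whereas you package the same data in an incidence matrix and sort its rows; these are cosmetic variants of the same row/column duality argument.
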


As with lemma \ref{lem:maximalchainandstacking}, one is probably most readily convinced of this lemma by examining figure \ref{fig:garsiamap} with its statement in mind, but for good form here is a proof:

\begin{proof}
Monomials 
\[
m=\prod_1^k y_{U_i}
\]
in $S$ (repetitions of $U_i$ allowed) are supported on chains of $\Bn$, i.e. by suitably ordering the $U_i$ we have 
\[
U_1\supset U_2\supset\dots\supset U_k.
\]
Then the partition $\overline{\Lambda}(m)$ is 
\[
\deg m = \sum |U_i|,
\]
i.e. $(|U_1|,|U_2|,\dots,|U_k|)$ as a tuple. Note that as the shape of the monomial $\garsia(m)$ is $S_n$-invariant, we may reorder the indeterminates of $R$ (with corresponding action on the $U_i$'s) without loss of generality. Therefore, using the fact that the $U_i$'s form a chain, reorder the $x_j$'s of $R$ such that each $U_i$ has the form $\{1,\dots,\ell_i\}$ for some $\ell_i\leq n$. (To wit: send the indices in $U_k$ to $1,\dots,|U_k|$; any indices in $U_{k-1}\setminus U_k$ to $|U_k|+1,\dots,|U_{k-1}|$; etc.)

Note that the achievement of this reordering is that in the monomial 
\[
\garsia(m)= \prod_i\prod_{j\in U_i} x_j,
\]
the exponents are nonincreasing. Thus $x_1,\dots,x_{|U_k|}$ all have an exponent of $k$, while if there is anything in $U_{k-1}\setminus U_k$ then $x_{|U_k|+1},\cdots,x_{|U_{k-1}|}$ all have exponent $k-1$, etc. Thus the partition $\Lambda(\garsia(m))$ giving the shape of this monomial is
\[
\deg \garsia(m) = |U_k|\cdot k + |U_{k-1}\setminus U_k| \cdot (k-1) + \dots + |U_1\setminus \bigcup_2^k U_i| \cdot 1
\]
If the $U_i$'s index the rows of a Ferrers diagram, then the parts of this partition measure the columns of this diagram, while the parts of $\overline{\Lambda}(m)$ measure the rows. Thus they are conjugate partitions.
\end{proof}

\begin{notation}
If $\lambda\in\scrP$ is a partition, then let $\overline{\lambda} \in \scrPconj$ be the conjugate partition. Given $\overline{\lambda}\in \scrPconj$, let $S_{\overline\lambda}$ be the $A$-submodule of $S$ generated by monomials $m$ with $\overline{\Lambda}(m) = \overline{\lambda}$. Note this is precisely a homogeneous component of $S$ for the fine grading.
\end{notation}

\begin{cor}\label{cor:conjugatepartition}
The Garsia map sends the homogeneous components $S_{\overline{\lambda}}, \overline{\lambda}\in \scrPconj$ to the direct summands $R_\lambda, \lambda\in \scrP$.\qed
\end{cor}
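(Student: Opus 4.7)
The plan is to deduce this immediately from Lemma \ref{lem:conjugatepartition} together with the fact that $\garsia$ is an $A$-linear isomorphism carrying the monomial basis of $S$ bijectively onto the monomial basis of $R$. First I would fix $\overline{\lambda}\in\scrPconj$ and consider an arbitrary monomial $m\in S_{\overline\lambda}$, i.e., one with $\overline{\Lambda}(m)=\overline\lambda$. Applying Lemma \ref{lem:conjugatepartition}, the shape of the monomial $\garsia(m)\in R$ is the partition conjugate to $\overline\lambda$, which (since conjugation of partitions is involutive) is $\lambda$. Hence $\garsia(m)\in R_\lambda$, and $A$-linearity then gives $\garsia(S_{\overline\lambda})\subseteq R_\lambda$.

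For the reverse inclusion, I would run the same reasoning backward: given a monomial $m'\in R$ with $\Lambda(m')=\lambda$, the unique monomial $m\in S$ with $\garsia(m)=m'$ (which exists by the bijectivity of $\garsia$ on monomial bases) satisfies $\overline{\Lambda}(m)=\overline{\lambda}$ by Lemma \ref{lem:conjugatepartition}, so $m\in S_{\overline\lambda}$ and $m'\in \garsia(S_{\overline\lambda})$. Extending $A$-linearly, $R_\lambda\subseteq \garsia(S_{\overline\lambda})$, giving equality.

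There is no genuine obstacle here; the work was already done in Lemma \ref{lem:conjugatepartition}. The only subtlety worth mentioning in the writeup is the observation that $\garsia$ restricts to a bijection on monomials (immediate from its definition in \ref{def:garsiamap}), which is what lets the argument pass from the monomial-level statement of Lemma \ref{lem:conjugatepartition} to the module-level statement asserted in the corollary.
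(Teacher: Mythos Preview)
Your proposal is correct and matches the paper's approach: the corollary is stated with a \qed and no proof, since it is immediate from Lemma~\ref{lem:conjugatepartition} together with the fact that $\garsia$ is an $A$-linear bijection on monomials. Your writeup simply spells out this immediate deduction, including the (harmless) extra step of checking the reverse inclusion to get equality rather than just containment.
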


The theory we have developed so far allows us to limit in a precise way how far the Garsia map is from a ring homomorphism:

\begin{prop}\label{prop:approximatehomomorphism}
Let $f,g\in S$ be finely homogeneous, with $fg = h$ in $S$. Let 
\[
\lambda = \Lambda(\garsia(h)) = \overline{\overline{\Lambda}(h)}
\]
be the shape of $h$'s image in $R$ under the Garsia map. Then
\[
\garsia(f)\garsia(g) - \garsia(h) \in R_{<\lambda}.
\]
\end{prop}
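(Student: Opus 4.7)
The plan is to reduce to the case of monomials by expanding $f$ and $g$ in the basis of finely-homogeneous monomials and exploiting the fact that the Garsia map is genuinely multiplicative whenever the product of two monomials in $S$ is nonzero. Write
\[
f = \sum_i a_i m_i, \qquad g = \sum_j b_j n_j,
\]
where each $m_i$, respectively $n_j$, is a monomial of $S$, all $m_i$ share a common fine grade $\overline{\mu}$, and all $n_j$ share a common fine grade $\overline{\nu}$. Then
\[
\garsia(f)\garsia(g) - \garsia(h) \;=\; \sum_{i,j} a_i b_j \bigl(\garsia(m_i)\garsia(n_j) - \garsia(m_i n_j)\bigr),
\]
so it suffices to control the behavior of $\garsia$ on the product of a single pair of monomials $m_i, n_j$, and partition the terms according to whether $m_i n_j = 0$ in $S$ or not.

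First I would record two auxiliary facts. By Lemma \ref{lem:conjugatepartition}, $\Lambda(\garsia(m_i)) = \overline{\overline{\mu}}$ for every $i$ and $\Lambda(\garsia(n_j)) = \overline{\overline{\nu}}$ for every $j$. Next, I would verify the additivity of conjugation: interpreting a partition in $\scrPconj$ by its multiplicity-of-parts vector, addition in this representation corresponds to addition of the conjugate partitions in $\scrP$ term-by-term, because the $r$-th part of the conjugate counts parts of size $\geq r$, an operation that commutes with addition of multiplicities. In particular, setting $\lambda = \overline{\overline{\mu}} + \overline{\overline{\nu}}$ will turn out to coincide with $\Lambda(\garsia(h))$, as the proposition claims.

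Now consider a pair $m_i, n_j$ with $m_i n_j \neq 0$ in $S$. Because $\garsia$ is defined by extending $y_U \mapsto \prod_{k\in U} x_k$ multiplicatively over monomials of $S$, one checks directly that $\garsia(m_i n_j) = \garsia(m_i)\garsia(n_j)$ whenever $m_i n_j$ is a (nonzero) monomial of $S$, so such pairs contribute zero to the difference above. Summing these nonzero products recovers exactly $\garsia(h)$, and along the way the shape of each is $\overline{\overline{\mu}} + \overline{\overline{\nu}}$ by Corollary \ref{cor:prodnonzerostackup} (stacking up) together with Lemma \ref{lem:stackingup}; this identifies $\lambda$ as claimed.

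For a pair with $m_i n_j = 0$ in $S$, Corollary \ref{cor:prodnonzerostackup} says that $\garsia(m_i)$ and $\garsia(n_j)$ \emph{fail} to stack up in $R$, so Lemma \ref{lem:stackingandlex} gives
\[
\Lambda\bigl(\garsia(m_i)\garsia(n_j)\bigr) \;<\; \Lambda(\garsia(m_i)) + \Lambda(\garsia(n_j)) \;=\; \overline{\overline{\mu}} + \overline{\overline{\nu}} \;=\; \lambda,
\]
so $\garsia(m_i)\garsia(n_j) \in R_{<\lambda}$, while $\garsia(m_i n_j) = \garsia(0) = 0$ contributes nothing. Summing over all $(i,j)$ yields $\garsia(f)\garsia(g) - \garsia(h) \in R_{<\lambda}$. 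I do not anticipate a serious obstacle; the only step requiring care is the bookkeeping that conjugation of partitions is additive under the operation inherited from the fine grading, so that the number $\lambda$ appearing in all three roles (shape of $\garsia(h)$, common shape of the good monomial products, and common upper bound for the bad ones) is really the same.
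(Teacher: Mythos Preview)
Your proof is correct and follows essentially the same approach as the paper's: both expand into monomial pairs, use Corollary~\ref{cor:prodnonzerostackup} to separate the stacking-up pairs (which contribute to $h$ and whose $\garsia$-images land in $R_\lambda$) from the non-stacking pairs (whose $\garsia$-images land in $R_{<\lambda}$ by Lemma~\ref{lem:stackingandlex}). Your version is simply more explicit about the bookkeeping, in particular the additivity of conjugation identifying $\lambda = \overline{\overline{\mu}} + \overline{\overline{\nu}}$, which the paper leaves implicit.
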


\begin{proof}
By corollary \ref{cor:prodnonzerostackup}, the terms in the product $fg$ that contribute to $h$ are precisely those that come from monomials in $f$ and $g$ that stack up. The corresponding terms in the product $\garsia(f)\garsia(g)$ are precisely those that are in $R_\lambda$, by lemma \ref{lem:stackingup}, and the rest are in $R_{<\lambda}$, by lemma \ref{lem:stackingandlex}. Thus $\garsia(h)$ cancels the $R_\lambda$-part of $\garsia(f)\garsia(g)$, and what is left lies in $R_{<\lambda}$.
\end{proof}

\begin{remark}\label{rmk:approxhomformultiple}
This proposition extends easily by induction to any finite number of factors.
\end{remark}

We think of this proposition as telling us that $\garsia$ is a ``first-order approximation of a homomorphism," i.e. that if one approximates a product in $R$ with the corresponding product in $S$, one gets the ``biggest" terms (i.e. those maximal with respect to deglex order on shapes) correct.

\subsubsection{The main theorem about the Garsia map}\label{sec:garsiabasis}

The objective of all the theory developed in the last subsection is the following theorem, which is a slight generalization of \cite{garsiastanton}, Theorem 9.1:

\begin{thm}[Garsia-Stanton]\label{thm:garsiabasis}
Suppose $A$ is an integral domain, and $S^G$ is free as an $S^{S_n}$-module. Let 
\[
B = \{b_1,\dots,b_r\}
\]
be a finely homogeneous basis for $S^G$ as $S^{S_n}$-module. Then its image 
\[
\garsia(B)=\{\garsia(b_1),\dots,\garsia(b_r)\}
\]
under the Garsia map is a homogeneous basis for $R^G$ as an $R^{S_n}$-module.
\end{thm}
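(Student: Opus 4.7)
The plan is to use the approximate-homomorphism property of $\garsia$ (proposition \ref{prop:approximatehomomorphism}) for the spanning half and a Hilbert-series comparison for linear independence. First I would collect the preliminary observations. Since $\garsia$ is $A$-linear, graded, and $S_n$-equivariant, it restricts to $A$-linear graded isomorphisms $S^G \xrightarrow{\sim} R^G$ and $S^{S_n}\xrightarrow{\sim}R^{S_n}$. Moreover, because $S_n$ permutes subsets of $[n]$ of each fixed size, the fine grading on $S$ is $S_n$-stable, so $S^G$ and $S^{S_n}$ are finely graded submodules; consequently, for any finely homogeneous $\bar f\in S^G$ of fine grade $\bar\lambda$, the expansion $\bar f=\sum c_ib_i$ provided by $B$ can be arranged so that each $c_i\in S^{S_n}$ is finely homogeneous and each product $c_ib_i$ has fine grade $\bar\lambda$.

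For spanning I would induct on $\lambda\in\scrP$, which is well-ordered by deglex. Using the decomposition $R^G=\bigoplus_\lambda R_\lambda^G$, it suffices to handle $f\in R_\lambda^G$. Set $\bar f=\garsia^{-1}(f)$; by corollary \ref{cor:conjugatepartition} this lies in $S^G\cap S_{\bar\lambda}$. Expand $\bar f=\sum_i c_ib_i$ as above, apply $\garsia$ term by term, and invoke proposition \ref{prop:approximatehomomorphism} on each product to obtain
\[
f=\sum_i\garsia(c_ib_i)=\sum_i\garsia(c_i)\,\garsia(b_i)+r,\qquad r\in R_{<\lambda}.
\]
Since $f$ and each summand $\garsia(c_i)\garsia(b_i)$ lie in $R^G$ (noting $\garsia(c_i)\in R^{S_n}$ by equivariance), the remainder $r$ lies in $R^G\cap R_{<\lambda}=\bigoplus_{\nu<\lambda}R_\nu^G$, and the inductive hypothesis places $r$ in the $R^{S_n}$-span of $\garsia(B)$, completing the induction.

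The main obstacle is linear independence, because the Garsia map is not multiplicative, so a relation $\sum d_i\garsia(b_i)=0$ in $R^G$ cannot simply be pulled back through $\garsia^{-1}$ to a relation on $B$ in $S^G$. I would sidestep this via a Hilbert-series count. All four rings $R^G,R^{S_n},S^G,S^{S_n}$ are free $A$-modules (each having an $A$-basis of orbit monomials), and after tensoring with $K=\Frac A$, the $A$-linear graded bijection $\garsia$ becomes a $K$-linear graded isomorphism in each case. Freeness of $S^G$ over $S^{S_n}$ with basis $B$ is preserved under base change to $K$, so
\[
H\bigl(R^G\otimes K\bigr)=H\bigl(S^G\otimes K\bigr)=\sum_i t^{\deg b_i}H\bigl(S^{S_n}\otimes K\bigr)=\sum_i t^{\deg b_i}H\bigl(R^{S_n}\otimes K\bigr).
\]
The surjection
\[
\phi:\bigoplus_i R^{S_n}(-\deg b_i)\twoheadrightarrow R^G,\qquad (f_i)\mapsto\sum_i f_i\garsia(b_i),
\]
furnished by the spanning half thus has source and target of the same $K$-dimension in every degree after tensoring with $K$, so $\phi_K$ is a graded isomorphism and $\ker\phi_K=0$. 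Since $\ker\phi$ embeds in the $A$-free source, it is $A$-torsion-free, and therefore $\ker\phi=0$; hence $\garsia(B)$ is a free $R^{S_n}$-basis for $R^G$, homogeneous for the standard grading because $\garsia$ preserves degree.
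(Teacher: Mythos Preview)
Your spanning argument is essentially identical to the paper's: both induct on $\lambda\in\scrP$, pull $f\in R_\lambda^G$ back to $S^G$, expand on $B$, push forward via proposition~\ref{prop:approximatehomomorphism}, and absorb the remainder in $R_{<\lambda}^G$ by induction.

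Your linear-independence argument is correct but genuinely different from the paper's. You deduce injectivity of $\phi$ from surjectivity via a Hilbert-series count over $K=\Frac A$, using that $\garsia$ is a graded $A$-isomorphism on both invariant rings and that flatness of $K$ over $A$ identifies $\ker\phi_K$ with $(\ker\phi)\otimes K$. This is exactly the route Garsia and Stanton took in the original paper, as the author notes in remark~\ref{rmk:spanningandLIareseparate}. The present paper instead gives a direct argument: starting from a hypothetical nontrivial relation $\sum t_i\garsia(b_i)=0$, it isolates the deglex-maximal shape $\lambda$ appearing, projects each $t_i$ to its $R_{\lambda-\Lambda(\garsia(b_i))}$-component $t_i'$, and shows that $\sum\garsia^{-1}(t_i')\,b_i=0$ is a nontrivial relation in $S^G$ (using that $A$ is a domain to rule out cancellation at the top shape). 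The payoff of the paper's route is that linear independence is obtained \emph{without} first establishing spanningness, so the two halves are logically decoupled; your Hilbert-series argument, by contrast, needs the surjection $\phi$ already in hand. Both approaches are valid for the theorem as stated.
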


\begin{proof}
We need to show that $\garsia(b_1),\dots,\garsia(b_r)$ are linearly independent over $R^{S_n}$ and span $R^G$ over it.

Recall that $R^G = \bigoplus_{\lambda\in\scrP} R_\lambda^G$ (see remark \ref{rmk:A[S_n]-module}). We get spanningness from an induction on $\lambda$: we will assume that $R_{<\lambda}^G$ lies in the $R^{S_n}$-span of $\garsia(B)$, and show that $R_\lambda^G$ does too. The base case to be verified is that $R_{<\emptyset}^G$ lies in the $R^{S_n}$-span of $\garsia(B)$, but $R_{<\emptyset}^G$ is a void direct sum, i.e. it is the zero $A$-submodule of $R^G$, so of course it does.

Thus, let $f\in R_\lambda^G$, and let
\[
\garsia^{-1}(f) = \sum_1^r s_ib_i, \; s_i\in S^{S_n}
\]
be a representation of $\garsia^{-1}(f)\in S^G$, guaranteed to exist by the fact that $B$ spans $S^G$ as $S^{S_n}$-module. We claim that $f-\sum_1^r \garsia(s_i)\garsia(b_i)$ is contained in $R_{<\lambda}^G$. This is a consequence of proposition \ref{prop:approximatehomomorphism}, as follows:

By the linearity of $\garsia$, we have
\[
f = \sum_1^r \garsia(s_ib_i).
\]
Since the $b_i$'s are finely homogeneous and $\garsia^{-1}(f)$ is too (since $f\in R_\lambda$ and thus $\garsia^{-1}(f)\in S_{\overline{\lambda}}$), and since, in the graded ring $S$, an equality is separately an equality in each finely graded component, we can without loss of generality assume that the $s_i$'s are finely homogeneous and $\garsia(s_ib_i)\in R_\lambda$ for each $i$. (If not, drop any terms of any $s_i$'s whose product with the corresponding $b_i$ has fine grading different from $\overline{\lambda}$, since all such terms must ultimately cancel.) Then proposition \ref{prop:approximatehomomorphism} applies to each product $s_ib_i$, and we have $\garsia(s_ib_i) - \garsia(s_i)\garsia(b_i)\in R_{<\lambda}$. Since $R_{<\lambda}$ is closed under addition, we therefore have
\[
f - \sum_1^r \garsia(s_i)\garsia(b_i) = \sum_1^r\left(\garsia(s_ib_i) - \garsia(s_i)\garsia(b_i)\right) \in R_{<\lambda}.
\]
Note that everything in sight is $G$-invariant, so $f-\sum_1^r\garsia(s_i)\garsia(b_i)$ is actually in $R_{<\lambda}^G$. Thus, by the induction assumption, we have
\[
f - \sum_1^r\garsia(s_i)\garsia(b_i) = \sum_1^r t_i\garsia(b_i),\; t_i\in R^{S_n}.
\]
Since each $\garsia(s_i)$ is also in $R^{S_n}$ due to $\garsia$'s $S_n$-equivariance, it follows that the resulting expression
\[
f = \sum_1^r \left(\garsia(s_i) + t_i\right)\garsia(b_i)
\]
is an expression of $f$ as a linear combination of $\garsia(b_i)$'s with coefficients in $R^{S_n}$; this shows $R_\lambda^G$ lies in the $R^{S_n}$-span of $\garsia(B)$, completing the proof of spanningness.

For linear independence, suppose for a contradiction that we have a nontrivial $R^{S_n}$-linear relation
\[
0 = \sum_1^r t_i\garsia(b_i).
\]
There is a greatest $\lambda\in\scrP$ such that some nonzero monomial of some $t_i\garsia(b_i)$ occurs in $R_\lambda$. For each $i$ such that $t_i\garsia(b_i)$ does have a term of shape $\lambda$, it must be that $t_i$ contains at least one term of shape $\lambda - \Lambda(\garsia(b_i))$, and no terms of higher shape. This is because $t_i$ is in $R^{S_n}$, and therefore, for each of its terms, it also contains every term obtained by permuting the $x_i$'s. Thus one of its terms of maximum shape will stack up with any given term of $\garsia(b_i)$, and therefore $\lambda$ will be the sum of the shape of $\garsia(b_i)$ (i.e. $\Lambda(\garsia(b_i))$) and the shape of terms of maximum shape in $t_i$. (We are using the fact that the coefficient ring $A$ is an integral domain, so that any given product of terms will be nonzero.)

For each $i$, let $t_i'$ be $t_i$'s projection to $R_{\lambda-\Lambda(\garsia(b_i))}$, i.e. the result of discarding all terms of shape lower than $\lambda-\Lambda(\garsia(b_i))$. Since the $R_\lambda$'s are $S_n$-invariant, $t_i' \in R_{\lambda-\Lambda(\garsia(b_i))}^{S_n}$ for each $i$. (Some $t_i'$'s may be zero, but the ones coming from $t_i\garsia(b_i)$'s that have terms in $R_\lambda$ are nonzero by construction.) Now it may no longer be true in $R^G$ that $0 = \sum_1^r t_i'\garsia(b_i)$. However, by construction, since for each $i$ we have only dropped terms of shape lower than $\lambda - \Lambda(\garsia(b_i))$, the terms we lose from the product $t_i'\garsia(b_i)$ are all of shape lower than $\lambda$. It follows that the projection to $R_\lambda$ of $\sum_1^r t_i'\garsia(b_i)$ contains the same terms as the projection to $R_\lambda$ of $\sum_1^r t_i\garsia(b_i)$. We deduce that this projection is zero.

It follows that
\begin{equation}\label{eq:lincomb}
0 = \sum_1^r\garsia^{-1}(t_i') b_i
\end{equation}
in $S$. This is because the terms of each $t_i'\garsia(b_i)$ that are in $R_\lambda$ are exactly those coming from pairs of terms from $t_i'$ and $\garsia(b_i)$ that stack up, and by corollary \ref{cor:prodnonzerostackup}, these are exactly those whose corresponding pair of terms in $\garsia^{-1}(t_i')$ and $b_i$ have nonzero product. Thus for each $i$, the projection of $t_i'\garsia(b_i)$ to $R_\lambda$ is equal to $\garsia(\garsia^{-1}(t_i')b_i)$.

Thus \eqref{eq:lincomb} is a nontrivial $S^{S_n}$-linear combination of the $b_i$'s, contrary to assumption. This completes the proof of linear independence.
\end{proof}

\begin{remark}\label{rmk:spanningandLIareseparate}
The proof of spanningness given here, although formulated in the language of stacking up that we have developed, is essentially that found in \cite{garsiastanton}. In \cite{garsiastanton}, Garsia and Stanton use a counting argument based on Hilbert series to obtain that linear independence follows from spanningness. 

We have given the above proof in order to show that spanningness and linear independence are independent of each other. Thus we can conclude that $\garsia(B)$ spans $R^G$ over $R^{S_n}$ if $B$ spans $S^G$ over $S^{S_n}$, regardless of linear independence, and we can also deduce linear independence of $\garsia(B)$ over $R^{S_n}$ from that of $B$ over $S^{S_n}$, regardless of spanningness.

The spanningness proof makes no use of the assumption that $A$ is a domain, so we can lift spanningness from $S^G$ to $R^G$ over any coefficient ring $A$.
\end{remark}

\subsubsection{The FTSP, and h.s.o.p.'s for $R$ and $S$}\label{sec:FTSPhsops}

We promised a proof of theorem \ref{thm:ftsp} (the FTSP) in terms of the tools we have been developing. We have gotten far enough to give this proof. As a bonus, we will obtain explicit h.s.o.p.'s for the invariant rings of $R$ and $S$ under the action of any permutation group. (In fact, we have already done this for $R$, in the proof of \ref{thm:equivofquestions}.)

\begin{notation}\label{not:rankrowsums}
Let $A$ be any coefficient ring and let $S$ be as defined in \ref{not:definitionofS}. For $i=1,\dots,n$, let 
\[
\theta_i = \sum_{|U|=i} y_U\in S.
\]
These are called the \textbf{rank-row sums} of $S$.
\end{notation}

\begin{remark}
Note that the $\theta_i$'s are $S_n$-invariant, and that the elementary symmetric polynomials $\sigma_i$ are the images $\garsia(\theta_i)$ of the $\theta_i$'s under the Garsia map.
\end{remark}

\begin{prop}[FTSP for $S$]\label{prop:FTSPforS}
The rank-row sums $\theta_1,\dots,\theta_n$ are algebraically independent over $A$, and generate $S^{S_n}$ as an $A$-algebra.
\end{prop}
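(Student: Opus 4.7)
The plan is to prove both parts at once by identifying $\theta_1^{a_1}\cdots\theta_n^{a_n}$, as $\alpha=(a_1,\dots,a_n)$ ranges over $\N^n$, with a natural $A$-basis of $S^{S_n}$. Algebraic independence is then equivalent to $A$-linear independence of these monomials, and generation is equivalent to their spanning $S^{S_n}$, so both conclusions follow simultaneously.

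First, I would use the same base-change style observation as in \ref{prop:basechange}: since $S_n$ permutes the monomial $A$-basis of $S$, the $S_n$-orbit sums of monomials form an $A$-basis of $S^{S_n}$. Next, I would classify these orbit sums by fine grade $\alpha\in\N^n$. The key observation is that in $S$ the indeterminates $y_U,y_V$ with $|U|=|V|$ and $U\neq V$ are incomparable and therefore $y_Uy_V=0$; consequently a nonzero monomial of $S$ can use at most one subset of each given size. Hence a nonzero monomial of fine grade $\alpha=(a_1,\dots,a_n)$ must be of the form $\prod_{j: a_j\geq 1} y_{U_j}^{a_j}$ where the $U_j$'s satisfy $|U_j|=j$ and (because nonzero in $S$) form a chain. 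Since $S_n$ acts transitively on the set of chains in $B_n$ with any prescribed sequence of sizes, there is exactly one $S_n$-orbit of nonzero monomials per $\alpha\in\N^n$, and every $\alpha$ is realized (a nested family of initial segments works).

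Second, I would compute $\theta^\alpha:=\theta_1^{a_1}\cdots\theta_n^{a_n}$ explicitly. For each $j$,
\[
\theta_j^{a_j} \;=\; \Bigl(\sum_{|U|=j} y_U\Bigr)^{a_j} \;=\; \sum_{|U|=j} y_U^{a_j},
\]
because all cross terms involve distinct equal-sized subsets and therefore vanish. Multiplying these identities across $j$ and again discarding cross terms supported off a chain, one sees that $\theta^\alpha$ is precisely the orbit sum of the (unique) $S_n$-orbit of monomials of fine grade $\alpha$. Combined with the previous paragraph, this proves $\{\theta^\alpha\}_{\alpha\in\N^n}$ is an $A$-basis of $S^{S_n}$: $A$-linear independence yields algebraic independence of $\theta_1,\dots,\theta_n$ over $A$, and spanning yields $S^{S_n}=A[\theta_1,\dots,\theta_n]$.

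The only real obstacle is the bookkeeping when some $a_j$ vanish, so that the chain involves only the subset sizes $j$ for which $a_j\geq 1$; but the same ``distinct equal-sized subsets annihilate'' principle still forces the surviving monomials into a single $S_n$-orbit, so the argument goes through uniformly. Note that, unlike Theorem~\ref{thm:garsiabasis}, nothing in this plan requires $A$ to be a domain; the proposition holds for any commutative unital coefficient ring, which is a small bonus.
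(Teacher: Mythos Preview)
Your proposal is correct and essentially coincides with the paper's proof. Both arguments rest on the same computation: a product $\theta_{|U_1|}^{a_1}\cdots\theta_{|U_r|}^{a_r}$ equals the $S_n$-orbit sum of $y_{U_1}^{a_1}\cdots y_{U_r}^{a_r}$ because the cross terms not supported on a chain vanish in $S$, and $S_n$ acts transitively on chains with a given rank set. The paper separates the two claims---it deduces algebraic independence directly from the fact that distinct $\theta$-monomials have distinct fine grades, and then proves generation via the orbit-sum identity---whereas you unify them by showing $\{\theta^\alpha\}_{\alpha\in\N^n}$ \emph{is} the orbit-monomial basis of $S^{S_n}$; but this is a difference in packaging, not in content.
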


\begin{proof}
Algebraic independence is an immediate consequence of the fine grading of $S$ defined in \ref{def:finegrading}. Note that the fine grade of each $\theta_i$ is the $i$th basis vector $e_i$ of $\N^n$. Thus the map from monomials $\prod\theta_i^{a_i}$ to their fine grades is
\[
\prod \theta_i^{a_i} \mapsto \sum a_ie_i
\]
This map is injective (actually bijective) from tuples $(a_1,\dots,a_n)$ to $\N^n$. Since the graded pieces of a graded $A$-algebra are $A$-linearly independent, this shows that all distinct monomials in the $\theta_i$'s are $A$-linearly independent, which is the statement of algebraic independence.

$S$ has as an $A$-basis the monomials in the $y_U$ (for $U\in \Bn\setminus\{\emptyset\}$) that are supported on chains of $\Bn\setminus\{\emptyset\}$. Thus $S^{S_n}$ has as an $A$-basis the $S_n$-orbit monomials of these monomials. To prove that the $\theta_i$ generate $S^{S_n}$ as an $A$-algebra it is necessary and sufficient to show that they generate any individual one of these orbit monomials. Thus, let $U_1\subsetneq\dots\subsetneq U_r$ be a chain in $\Bn\setminus\{\emptyset\}$, and let $S_ny_{U_1}^{a_1}\dots y_{U_r}^{a_r}$ be the orbit monomial of a monomial $y_{U_1}^{a_1}\dots y_{U_r}^{a_r}$ supported on that chain. Then we claim that
\[
S_ny_{U_1}^{a_1}\dots y_{U_r}^{a_r} = \theta_{|U_1|}^{a_1}\dots\theta_{|U_r|}^{a_r}.
\]
We see this as follows: $S_n$ acts transitively on chains with any given rank set $\{|U_1|,\dots,|U_r|\}$, so the left side is the sum of all the monomials $y_{U'_1}^{a_1}\dots y_{U'_r}^{a_r}$ for every choice of chain $U'_1\subset\dots\subset U'_r$ that satisfies $|U'_i| = |U_i|,\; i = 1,\dots,r$. Meanwhile, each $\theta_{|U_i|}$ is the sum of $y_{U'_i}$ for every $U'_i$ with $|U'_i| = |U_i|$, and the resulting terms in the right side product that are nonzero are precisely those that stack up, i.e. are supported on chains. So the left and right sides coincide term-for-term.
\end{proof}

\begin{figure}
\begin{center}
\begin{tikzpicture}

\draw (0,0) rectangle (1.9,0.4);
\node at (1,0.2) {$y_{12}$};
\draw (0,0.5) rectangle (0.9,0.9);
\node at (0.5,0.7) {$y_1$};

\node at (1,-0.5) {A term of $\theta_1\theta_2$ that stacks up.};

\draw (7,0) rectangle (8.9,0.4);
\node  at (8,0.2) {$y_{12}$};
\draw (9,0.5) rectangle (9.9,0.9);
\node at (9.5,0.7) {$y_3$};

\node at (8,-0.5) {A term of $\theta_1\theta_2$ that does not stack up.};

\end{tikzpicture}
\end{center}
\caption{Illustration of the proof of \ref{prop:FTSPforS}. Cross-products in $\prod \theta_i$ that do not stack up are zero.}\label{fig:illusproof}
\end{figure}

\begin{example}
Let $n=3$. Consider the monomial $m=y_1y_{12}\in S$. Its orbit monomial is
\[
S_n m = y_1y_{12} + y_2y_{12} + y_1y_{13} + y_3y_{13} + y_2y_{23} + y_3y_{23}.
\]
The proof of \ref{prop:FTSPforS} argues that this is equal to
\[
\theta_1\theta_2 = (y_1+y_2+y_3)(y_{12} + y_{13} + y_{23}).
\]
This is so because the three cross-terms in this product that do not appear in $S_nm$ are precisely the three that do not stack up because they are not supported on chains. See figure \ref{fig:illusproof}.
\end{example}

Now the actual FTSP falls out as a corollary. We restate it for reference:

\begin{thm}[FTSP]\label{thm:ftsp2}
$\sigma_1,\dots,\sigma_n$ are algebraically independent over $A$, and generate $R^{S_n}$ as an $A$-algebra.
\end{thm}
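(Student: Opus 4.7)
The plan is to bootstrap the FTSP from $S$ (Proposition \ref{prop:FTSPforS}) to $R$ using the Garsia map. The central tools are: $\garsia(\theta_i)=\sigma_i$; the approximate-homomorphism property (Proposition \ref{prop:approximatehomomorphism}, iterated via Remark \ref{rmk:approxhomformultiple}), which gives $\prod_i \garsia(\theta_i)^{a_i} = \garsia(\prod_i \theta_i^{a_i}) + r$ with $r\in R_{<\lambda}$ where $\lambda = \Lambda(\garsia(\prod_i \theta_i^{a_i}))$; and Corollary \ref{cor:conjugatepartition}, which tells us that the Garsia image of $\prod_i \theta_i^{a_i}$ lives in $R_{\lambda(\vec a)}$ where $\lambda(\vec a)$ is the conjugate of the fine grade $\sum a_i e_i$. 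In particular, the map $\vec a \mapsto \lambda(\vec a)$ is injective, so distinct exponent tuples produce Garsia images in distinct shape components.

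For algebraic independence, suppose $P(\sigma_1,\dots,\sigma_n)=\sum c_{\vec a}\sigma_1^{a_1}\cdots\sigma_n^{a_n}=0$ with some $c_{\vec a}\neq 0$. Among tuples with $c_{\vec a}\neq 0$, select the unique $\vec a^*$ maximizing $\lambda(\vec a)$ in deglex order. By the approximate-homomorphism bound, each $c_{\vec a}\prod_i\sigma_i^{a_i}$ with $\vec a\neq \vec a^*$ lies in $R_{\leq\lambda(\vec a)}\subseteq R_{<\lambda(\vec a^*)}$, while the $\vec a^*$ term equals $c_{\vec a^*}\garsia(\prod_i\theta_i^{a_i^*})$ modulo $R_{<\lambda(\vec a^*)}$. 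Projecting the identity $P(\sigma)=0$ onto $R_{\lambda(\vec a^*)}$ therefore yields $c_{\vec a^*}\garsia(\prod_i\theta_i^{a_i^*})=0$. But, following the computation in the proof of Proposition \ref{prop:FTSPforS}, $\prod_i\theta_i^{a_i^*}$ expands in $S$ as a sum of distinct chain-supported monomials each with coefficient $1$ (any multinomial-coefficient term of multiplicity greater than one would involve incomparable sets of the same rank and hence vanishes). Since $\garsia$ bijects the chain-supported monomial basis of $S$ onto the monomial basis of $R$, the element $\garsia(\prod_i\theta_i^{a_i^*})$ is a sum of distinct monomials in $R$ each with coefficient $1$; thus $c_{\vec a^*}\garsia(\prod_i\theta_i^{a_i^*})=0$ forces $c_{\vec a^*}=0$. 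This contradicts the choice of $\vec a^*$, and works over an arbitrary commutative $A$.

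For generation, induct on $\lambda\in\scrP$, which is well-ordered by deglex; the base case $R_{<\emptyset}^{S_n}=0$ is vacuous. For the inductive step, take $f\in R^{S_n}$, decompose $f=\sum_\mu f_\mu$ using $R^{S_n}=\bigoplus_\mu R_\mu^{S_n}$ (Remark \ref{rmk:A[S_n]-module}), and let $\lambda$ be the maximal shape with $f_\lambda\neq 0$. By Corollary \ref{cor:conjugatepartition}, $\garsia^{-1}(f_\lambda)\in S_{\overline\lambda}^{S_n}$, so Proposition \ref{prop:FTSPforS} produces a polynomial $Q\in A[Y_1,\dots,Y_n]$, each of whose monomials has fine grade $\overline\lambda$, with $Q(\theta_1,\dots,\theta_n)=\garsia^{-1}(f_\lambda)$. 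Applying the iterated approximate-homomorphism property monomial-by-monomial and summing gives $Q(\sigma_1,\dots,\sigma_n)=\garsia(Q(\theta_1,\dots,\theta_n))+(\text{terms in }R_{<\lambda})=f_\lambda+(\text{terms in }R_{<\lambda})$. Hence $f-Q(\sigma)\in R^{S_n}$ is supported on shapes strictly below $\lambda$, and by the induction hypothesis lies in the $A$-subalgebra generated by $\sigma_1,\dots,\sigma_n$; therefore so does $f$.

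The main delicacy is the algebraic-independence step, where one must ensure that the $R_{\lambda(\vec a^*)}$-projection isolates a single nonzero term; this rests both on the injectivity of $\vec a\mapsto \lambda(\vec a)$ and on the unit-coefficient expansion of $\prod_i\theta_i^{a_i^*}$ in $S$. Once these are in place, the rest of the proof is bookkeeping with the filtration $R=\bigcup_\lambda R_{\leq\lambda}$.
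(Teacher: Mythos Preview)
Your proof is correct and follows essentially the same route as the paper's: both parts bootstrap Proposition~\ref{prop:FTSPforS} through the Garsia map using the approximate-homomorphism property and induction on the deglex order of shapes. Your algebraic-independence argument is slightly more explicit than the paper's in tracking the unit-coefficient expansion of $\prod_i\theta_i^{a_i^*}$ to handle an arbitrary (possibly non-domain) coefficient ring $A$, but the underlying idea---that distinct monomials in the $\sigma_i$ have distinct leading shape components, so no cancellation can occur at the top---is identical.
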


\begin{proof}
Algebraic independence follows much as it did for the $\theta_i$ in \ref{prop:FTSPforS}. By proposition \ref{prop:approximatehomomorphism} and remark \ref{rmk:approxhomformultiple}, a monomial in the $\sigma_i$'s coincides with the $\garsia$-image of the corresponding monomial in the $\theta_i$'s in its degree-lexicographically greatest component. Since, as noted in the proof of \ref{prop:FTSPforS}, distinct monomials in the $\theta_i$ have distinct fine grades, they are in distinct $S_{\overline{\lambda}}$'s, and thus their $\garsia$-images are in distinct $R_\lambda$'s. Thus distinct monomials in the $\sigma_i$'s have distinct degree-lexicographically greatest components, and complete cancellation is impossible.

That $\sigma_1,\dots,\sigma_n$ generate $R^{S_n}$ follows from the corresponding part of \ref{prop:FTSPforS} by induction on $\lambda\in\scrP$, in parallel with the spanningness part of theorem \ref{thm:garsiabasis}. An element $f$ of $R_\lambda^{S_n}$ has a corresponding element $\garsia^{-1}(f) \in S_{\overline{\lambda}}^{S_n}$ which is equal to a polynomial $F(\theta_1,\dots,\theta_n)\in \Z[\theta_1,\dots,\theta_n]$ by \ref{prop:FTSPforS}. Then 
\[
f - F(\sigma_1,\dots,\sigma_n)
\]
lies in $R_{<\lambda}$ by proposition \ref{prop:approximatehomomorphism}, remark \ref{rmk:approxhomformultiple}, and the linearity of $\garsia$. Since $R_{<\lambda}$ lies in $\Z[\sigma_1,\dots,\sigma_n]$ by the induction hypothesis, we can conclude that $f\in\Z[\sigma_1,\dots,\sigma_n]$ as well.
\end{proof}

\begin{remark}
As stated previously, this proof is nothing but a dressed-up form of Gauss' proof in \cite{gauss}. The language of the Stanley-Reisner ring $S$ and the Garsia map focuses attention on the way that the proof represents an $f\in R^{S_n}$ one lexicographic layer at a time. A superficial difference is that Gauss orders the monomials themselves lexicographically, whereas we order their shapes; but the Gauss proof actually operates on the terms of $f$ in the order of their shapes (see \cite{FTSP}), so really there is no difference.

Per remark \ref{rmk:dispersion}, the order on shapes $\lambda\in\scrP$ can be seen as a measure of exponent spread. Thus this proof, and equivalently the Gauss proof, represent an $f\in R^{S_n}$ by aiming at the layer of terms with most spread-out exponents first. 
\end{remark}

We promised to extract h.s.o.p.'s for any permutation invariants of $R$ and $S$ from the above. We saw in \ref{thm:equivofquestions} that $\sigma_1,\dots,\sigma_n$ are an h.s.o.p. for $R$. It will not surprise the reader to learn that $\theta_1,\dots,\theta_n$ is an h.s.o.p. for $S$ by the same argument:

\begin{prop}\label{prop:hsops}
Take $A = \Z$. Then for any permutation group $G\subset S_n$, $\sigma_1,\dots,\sigma_n$, respectively $\theta_1,\dots,\theta_n$, is an h.s.o.p. (in the sense of definition \ref{def:hsopoverZ}) for $R^G$, respectively $S^G$.
\end{prop}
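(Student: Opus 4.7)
The plan is to verify, for each of the two pairs $(R^G,\sigma_1,\dots,\sigma_n)$ and $(S^G,\theta_1,\dots,\theta_n)$, the three conditions making up definition \ref{def:hsopoverZ}: that the candidate elements are positive-degree homogeneous, that $R^G$ (respectively $S^G$) has Krull dimension $n+1$, and that $R^G$ (respectively $S^G$) is module-finite over the subring they generate, which is $\Z[\sigma_1,\dots,\sigma_n]$ (respectively $\Z[\theta_1,\dots,\theta_n]$).

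For the first bullet, all $\sigma_i$ and $\theta_i$ are manifestly homogeneous of positive degree (respectively of degree $i$ and of fine grade $e_i$ mapping to standard degree $i$), and they are $S_n$-invariant, hence $G$-invariant for any permutation group $G\subset S_n$. The module-finiteness claims will be the engine driving everything else. For $R^G$ over $\Z[\sigma_1,\dots,\sigma_n]$, this is exactly proposition \ref{prop:integraloverSn}, which uses that $R$ is integral over $R^{S_n}$ by \ref{prop:Risintegral} and that $R^{S_n} = \Z[\sigma_1,\dots,\sigma_n]$ by the FTSP (\ref{thm:ftsp2}); since $\Z[\sigma_1,\dots,\sigma_n]$ is noetherian, any submodule of a finitely generated module is finitely generated, so $R^G\subset R$ is itself module-finite over the symmetric subring. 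For $S^G$ over $\Z[\theta_1,\dots,\theta_n]$, I would run the exact same argument: proposition \ref{prop:Risintegral} applies verbatim with $S$ in place of $R$ (the proof only uses that $G$ acts by graded $A$-algebra automorphisms on a finitely generated graded $A$-algebra, which $S$ is), giving that $S$ is module-finite over $S^{S_n}$, and proposition \ref{prop:FTSPforS} identifies $S^{S_n}$ with $\Z[\theta_1,\dots,\theta_n]$. Hence $S^G\subset S$ is module-finite over $\Z[\theta_1,\dots,\theta_n]$.

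For the Krull dimension condition, I would invoke lemma \ref{lem:integralpreservesdim}: since $\Z[\sigma_1,\dots,\sigma_n]$ is a polynomial ring on algebraically independent elements (FTSP), it has Krull dimension $n+1$, and so does its module-finite extension $R^G$; similarly for $S^G$ using the algebraic independence of $\theta_1,\dots,\theta_n$ from \ref{prop:FTSPforS}. This gives $n = \dim R^G - 1 = \dim S^G - 1$, matching the count of parameters in definition \ref{def:hsopoverZ}.

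There is no serious obstacle here — the proposition is really just the assembly of earlier results. The only thing to be careful about is matching up the hypotheses of \ref{prop:Risintegral} when applying it to $S$ rather than $R$; the proof of that proposition goes through unchanged since it only uses that each orbit of $G$ on an element produces a monic polynomial with $G$-invariant coefficients, which is formal. Once module-finiteness over the polynomial subring is in hand, everything else (dimensions, algebraic independence of the parameters) is already available from section \ref{sec:FTSPhsops}.
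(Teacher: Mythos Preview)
Your proposal is correct and follows essentially the same approach as the paper: both verify the conditions of definition \ref{def:hsopoverZ} by identifying $R^{S_n}$ and $S^{S_n}$ as polynomial rings via \ref{thm:ftsp2} and \ref{prop:FTSPforS}, invoking \ref{prop:integraloverSn} (resp.\ the underlying \ref{prop:Risintegral}) for module-finiteness, and deducing the Krull dimension from preservation under integral extensions. You are slightly more explicit than the paper in noting that \ref{prop:Risintegral} applies to $S$ as well as $R$, which is a fair point of care but not a substantive difference.
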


\begin{proof}
The rings $S^{S_n}=\Z[\theta_1,\dots,\theta_n]$ and $R^{S_n} = \Z[\sigma_1,\dots,\sigma_n]$ are polynomial rings by \ref{thm:ftsp2} and \ref{prop:FTSPforS}, thus they are both Krull dimension $n+1$. The inclusions $R^{S_n}\subset R^G$ and $S^{S_n}\subset S^G$ are both finite, by \ref{prop:integraloverSn}. Thus $R^G, S^G$ also have Krull dimension $n+1$. Since the two sequences of elements are self-evidently homogeneous of positive degree,  this verifies all the conditions of definition \ref{def:hsopoverZ} in each case.
\end{proof}


\begin{remark}\label{rmk:SGistorsionfree}
The ring $S$ has many zerodivisors. Nonetheless, if $A$ is an integral domain, then so is $S^{S_n}$, since it is a polynomial algebra over $A$. In this circumstance, $S^G$, although it may not be a domain, is nonetheless torsion-free as a module over this integral domain. We see this as follows: suppose $s\in S^{S_n}$ and $f\in S^G$, and $sf = 0$, with $f\neq 0$. We may suppose $s,f$ are finely homogeneous. Then $s$ is actually an $S_n$-orbit monomial (possibly times a scalar $\gamma\in A$), and therefore, by the argument in \ref{prop:FTSPforS}, is a single term in the $\theta_i$'s. I.e.
\[
s = \gamma\prod_i \theta_i^{a_i}
\]
with $\gamma \in A$. By inspection, this product is $\gamma$ times the sum of every monomial of the form $\prod_i y_{U_{ij}}^{a_i}$, where $|U_{ij}| = i$ and for each fixed $j$, the $U_{ij}$ form a chain.

Each of $f$'s terms is supported on some chain in $B_n\setminus\{\emptyset\}$. Because $f$ is finely homogeneous, the supports of two distinct terms $m,m'$ of $f$ cannot be contained in any of the same maximal chains, since they contain sets of the same size but are not the same.

Any individual term $m$ of $f$ has nonzero product with $s$ because $sm$ will be the sum of $\gamma m\prod_i y_{U_{ij}}^{a_i}$ for every $j$ such that the chain of $U_{ij}$'s is contained in a maximal chain also supporting $m$. And cancellation between $sm, sm'$ for two different terms $m,m'$ of $f$ is not possible because any terms of $sm$, resp. $sm'$, will be supported on maximal chains containing the support of $m$, resp. $m'$, and we noted in the last paragraph that these sets of maximal chains are disjoint.
\end{remark}

\subsection{Invariant subrings of $S$ and quotient complexes}\label{sec:reinerthm}


Since theorem \ref{thm:garsiabasis} relates a Stanley-Reisner ring to an invariant ring, one begins to sense how topological results may be used to speak to invariant theoretic goals. However, there is a missing link, as things stand, between the results of section \ref{sec:reisnermunkres} and this theorem. Section \ref{sec:reisnermunkres} stated theorems about Stanley-Reisner rings themselves, while theorem \ref{thm:garsiabasis} relates a polynomial invariant ring to an invariant ring $S^G$ inside a Stanley-Reisner ring $S$. This link is supplied by an elegant theorem of Victor Reiner's, which shows that $S^G$ is itself the Stanley-Reisner ring of a boolean complex. Before giving Reiner's theorem, we recall the relevant notions and prove some useful facts.

\begin{definition}[Quotient of a poset by a group action]
Let $P$ be a finite poset with an action of $G$. The \textbf{quotient poset} $P/G$ is the poset whose elements are $G$-orbits $p^G$ of elements $p$ of $P$, and whose order relation is given by $p^G \leq q^G$ if there exists $p^\star\in p^G$ and $q^\star\in q^G$ with $p^\star\leq q^\star$.
\end{definition}

See figure \ref{fig:quotientposet} for an example.

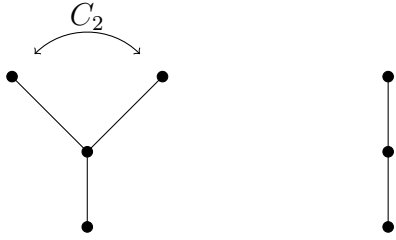
\begin{figure}
\begin{center}
\begin{tikzpicture}

\draw (0,-1) -- (0,0) -- (-1,1);
\draw (0,0) -- (1,1);
\draw [fill=black] (0,-1) circle [radius=0.07];
\draw [fill=black] (0,0) circle [radius=0.07];
\draw [fill=black] (-1,1) circle [radius=0.07];
\draw [fill=black] (1,1) circle [radius=0.07];

\draw [<->] (-0.7,1.3) to [out=45,in=135] (0.7,1.3);
\node at (0,1.8) {$C_2$};

\draw (4,-1) -- (4,0) -- (4,1);
\draw [fill=black] (4,-1) circle [radius=0.07];
\draw [fill=black] (4,0) circle [radius=0.07];
\draw [fill=black] (4,1) circle [radius=0.07];

\end{tikzpicture}
\end{center}
\caption{A poset with an action of $C_2$, and its quotient poset.}\label{fig:quotientposet}
\end{figure}

\begin{remark}
There is a canonical poset homomorphism 
\begin{align*}
P&\rightarrow P/G\\
p&\mapsto p^G. 
\end{align*}
Note that order automorphisms send chains to chains of the same length. Thus if $P$ is ranked, the action of $G$ is automatically rank-preserving, so the quotient poset $P/G$ is ranked.
\end{remark}

\begin{lemma}\label{lem:CWquotientposet}
If $\Delta$ is a finite CW complex with face poset $P$, and $G$ is a finite group acting on $\Delta$ by cellular automorphisms, then there is a natural induced action of $G$ on $P$, and the quotient CW complex complex $\Delta / G$ has face poset $P/G$.
\end{lemma}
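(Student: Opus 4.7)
The plan is to handle the two claims in sequence. First I would verify that $G$ acts on $P$ by order-automorphisms. Each $g\in G$ is a homeomorphism of $\Delta$ that permutes the cells, so it permutes the elements of $P$; and because $g$ is a homeomorphism, it preserves closures, so $\tau\subseteq\overline{\sigma}$ iff $g(\tau)\subseteq \overline{g(\sigma)}$. Thus $g$ is an order-automorphism of $P$, and $G\to \mathrm{Aut}(P)$ is a group homomorphism, functorially from the action on $\Delta$.

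Second, I would construct the CW structure on $\Delta/G$. Assuming the action is cellular in the sense that cell-stabilizers fix their cells pointwise (equivalent, after barycentric subdivision if necessary, to the standing assumption made in the introduction that fixed-point sets are subcomplexes), the open cells of $\Delta$ in a single $G$-orbit $\sigma^G$ all have the same image in $\Delta/G$, while cells in different orbits have disjoint images. Denoting the quotient map by $\pi:\Delta\to\Delta/G$, the set $\pi(\sigma)$ is then an open cell of $\Delta/G$: a characteristic map $\Phi_\sigma:D^n\to\Delta$ for a representative composes with $\pi$ to give a characteristic map $\pi\circ\Phi_\sigma$, and the attaching map to the $(n{-}1)$-skeleton descends accordingly. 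This yields a CW structure on $\Delta/G$ whose open cells are in bijection with the $G$-orbits of cells of $\Delta$, i.e.\ with the elements of $P/G$.

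Finally, I would match the face posets. Since $\pi$ is a quotient by a finite group, it is a closed map, so $\pi(\overline{\tau})=\overline{\pi(\tau)}$. Therefore $\pi(\sigma)$ is a face of $\pi(\tau)$ in $\Delta/G$ iff $\pi(\sigma)\subseteq \overline{\pi(\tau)}=\pi(\overline{\tau})$, which holds iff some representative $\sigma^\star\in\sigma^G$ satisfies $\sigma^\star\subseteq\overline{\tau^\star}$ for some $\tau^\star\in\tau^G$. But this is precisely the definition of $\sigma^G\leq \tau^G$ in the quotient poset $P/G$. Hence the face poset of $\Delta/G$ coincides with $P/G$.

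The main obstacle is the CW structure on the quotient. If some cell $\sigma$ is set-wise but not point-wise stabilized by an element of $G$, then $\pi|_\sigma$ is not injective and the image is not an open $n$-cell but a lower-dimensional ``fractional'' cell; in this situation $\Delta/G$ does not naturally inherit a CW structure with cells indexed by $P/G$ without first subdividing. This is why the standing niceness assumption (that fixed-point sets are subcomplexes) is indispensable, and it is the only nontrivial input beyond bookkeeping. Once this is in force — automatically after passing to barycentric subdivision in the general case, and by hypothesis in the simplicial/boolean settings in which the lemma is applied — the poset identification above is immediate.
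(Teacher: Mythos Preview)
Your proposal is correct and follows essentially the same approach as the paper. The paper's proof is a two-sentence version of your steps~1 and~3: it notes that a cellular action preserves containment of closures (hence induces an order-automorphism of $P$), and that a cell of $\Delta/G$ is contained in the closure of another if and only if some representatives in $\Delta$ are so related.

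Where you go further is in your step~2 and the concluding paragraph: the paper simply \emph{assumes} that $\Delta/G$ carries a CW structure whose cells are the $G$-orbits of cells of $\Delta$, and does not discuss the pointwise-stabilization hypothesis needed to guarantee this. You are right that without it the image of a cell need not be a cell of the same dimension, and that the assumption (fixed-point sets are subcomplexes) is exactly what the paper relies on elsewhere---it is invoked explicitly only later, in the proof of the main topological theorem, and for this lemma the paper treats the CW structure on the quotient as given. Your closed-map argument for $\pi(\overline{\tau})=\overline{\pi(\tau)}$ is a clean way to make the order-matching precise; the paper asserts the same equivalence without justification.
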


\begin{proof}
The action of $G$ on $P$ arises because a cellular action preserves containment between closures of cells. The face poset of $\Delta/G$ is $P/G$ because a cell of $\Delta/G$ is a $G$-orbit $\alpha^G$ of cells of $\Delta$, and a cell $\alpha^G$ of $\Delta/G$ is contained in another $\beta^G$'s closure if and only if some representative $\alpha$ of $\alpha^G$ in $\Delta$ is contained in the closure of some representative $\beta$ of $\beta^G$.
\end{proof}

This lemma holds even if $\Delta$ is not regular.

\begin{remark}
The class of boolean complexes, even of regular complexes, is not closed under the forming of quotients by group actions. For example, the complex depicted in figure \ref{fig:booleancomplex} carries a $G = \Z/2\Z$-action corresponding to rotating the circle through a half-turn, thus transposing A with B and C with D. The quotient poset (including the minimal element) is just the total order $\emptyset \leq A^G \leq C^G$ since $A^G = B^G$ and $C^G = D^G$. This is not the face poset of a boolean complex since the lower interval from $C^G$ is not a boolean algebra. Geometrically, the rotation identifies the two $0$-cells and the two $1$-cells; the resulting complex is not even regular, since the boundary of the $1$-cell is no longer a pair of points. However, we can put conditions in place, satisfied by the posets of concern to us, that guarantee that the quotient remains the face poset of a boolean complex:
\end{remark}

\begin{definition}\label{def:balanced}
A simplicial or boolean complex of dimension $d$ is \textbf{balanced} if its vertices are partitioned into $d+1$ classes such that each facet contains exactly one vertex in each class. The classes are called \textbf{colors},  \textbf{labels}, or \textbf{ranks}. The face poset of a balanced simplicial or boolean complex is also said to be balanced.
\end{definition}

\begin{example}
Figure \ref{fig:balanced} shows a balanced $2$-complex consisting of two triangles glued at two vertices. The balancing is given by 3-coloring the vertices so that each triangle has one vertex of each color.
\end{example}

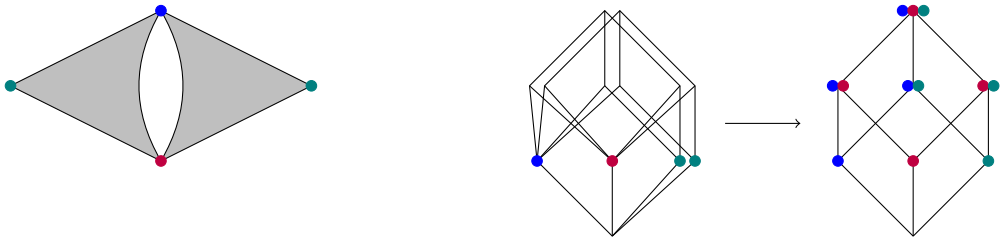
\begin{figure}
\begin{center}
\begin{tikzpicture}

\draw [fill=lightgray] (0,-1) to [out=120,in=240] (0,1) -- (-2,0) -- (0,-1);
\draw [fill=lightgray] (0,-1) to [out=60,in=300] (0,1) -- (2,0) -- (0,-1);
\draw [fill=purple, purple] (0,-1) circle [radius=0.07];
\draw [fill=blue,blue] (0,1) circle [radius=0.07];
\draw [fill=teal,teal] (-2,0) circle [radius=0.07];
\draw [fill=teal,teal] (2,0) circle [radius=0.07];

\draw (6,-2) -- (5,-1) -- (4.9,0) -- (5.9,1) -- (6.9,0) -- (6.9,-1) -- (6,-2);
\draw (6,-2) -- (6,-1) -- (4.9,0);
\draw (6,-1) -- (6.9,0);
\draw (5,-1) -- (5.9,0) -- (5.9,1);
\draw (5.9,0) -- (6.9,-1);

\draw (6,-2) -- (7.1,-1) -- (7.1,0) -- (6.1,1) -- (5.1,0) -- (5,-1);
\draw (5.1,0) -- (6,-1) -- (7.1,0);
\draw (5,-1) -- (6.1,0) -- (7.1,-1);
\draw (6.1,0) -- (6.1,1);

\draw [fill=blue,blue] (5,-1) circle [radius = 0.07];
\draw [fill=purple,purple] (6,-1) circle [radius = 0.07];
\draw [fill=teal,teal] (6.9,-1) circle [radius = 0.07];
\draw [fill=teal,teal] (7.1,-1) circle [radius = 0.07];

\draw (10,-2) -- (9,-1) -- (9,0) -- (10,1) -- (11,0) -- (11,-1) -- (10,-2);
\draw (10,-2) -- (10,-1) -- (9,0);
\draw (10,1) -- (10,0) -- (11,-1);
\draw (9,-1) -- (10,0);
\draw (10,-1) -- (11,0);

\draw [fill=blue,blue] (9,-1) circle [radius = 0.07];
\draw [fill=purple,purple] (10,-1) circle [radius = 0.07];
\draw [fill=teal,teal] (11,-1) circle [radius = 0.07];

\draw [fill=blue,blue] (8.93,0) circle [radius=0.07];
\draw [fill=purple,purple] (9.07,0) circle [radius=0.07];

\draw [fill=blue,blue] (9.93,0) circle [radius=0.07];
\draw [fill=teal,teal] (10.07,0) circle [radius=0.07];

\draw [fill=purple,purple] (10.93,0) circle [radius=0.07];
\draw [fill=teal,teal] (11.07,0) circle [radius=0.07];

\draw [fill=blue,blue] (9.86,1) circle [radius=0.07];
\draw [fill=purple,purple] (10,1) circle [radius=0.07];
\draw [fill=teal,teal] (10.14,1) circle [radius=0.07];

\draw [->] (7.5,-0.5) -- (8.5,-0.5);

\end{tikzpicture}
\end{center}
\caption{Left: a balanced boolean complex. Right: its face poset (with minimal element appended), showing the map $r$ to the boolean algebra on the labels.}\label{fig:balanced}
\end{figure}

\begin{remark}\label{rmk:labelsubsetbij}
The word {\em balanced} in this connection was introduced by Richard Stanley in \cite{stanley3} (in the simplicial context). His definition was slightly more general.  We are using the word to mean what Stanley there called {\em completely balanced}. This has become standard.

Note that a balanced simplicial or boolean complex $\Delta$ is pure, since every facet must be dimension $d$ as it spans $d+1$ vertices. A labeling that realizes $\Delta$ as balanced is equivalent to a $(d+1)$-vertex-coloring of the $1$-skeleton of $\Delta$, since the $1$-skeleton of any facet is a complete graph on its vertices.

Also note that in a balanced boolean complex, the subfaces of any fixed face $\alpha$ are in bijection with subsets of the labels of $\alpha$. There is a corresponding statement about the face poset.
\end{remark}

\begin{notation}\label{not:labelmap}
As a consequence of remark \ref{rmk:labelsubsetbij}, there is a natural poset homomorphism
\[
r :\widehat P\rightarrow B_{d+1}
\]
of the face poset with minimal element $\widehat P$ of a boolean complex $\Delta$ onto the boolean algebra $B_{d+1}$ on the set of labels, given by mapping a face to its label set. The face poset in figure \ref{fig:balanced} is drawn to emphasize the map to $B_3$.
\end{notation}

\begin{lemma}
Let $\Delta$ be a balanced boolean complex. Let $G$ be a finite group acting on $\Delta$ by cellular automorphisms that preserve the labels of vertices. Then the cell-complex quotient $\Delta / G$ is a balanced boolean complex as well.
\end{lemma}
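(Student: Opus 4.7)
The plan is to show that labels descend cleanly to the quotient, and then use this to verify successively that $\Delta/G$ is a regular CW complex, that its face poset has all lower intervals boolean algebras, and that the descended labels realize it as balanced. By lemma \ref{lem:CWquotientposet}, the face poset of $\Delta/G$ is $P(\Delta)/G$, and we append a minimal element to obtain $\widehat{P}(\Delta/G)$.

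First I would record the descent of labels: since $G$ preserves the label of every vertex, every vertex orbit $v^G$ has a well-defined label, namely the common label of any representative. More generally, since the label-set map $r:\widehat{P}(\Delta)\to B_{d+1}$ of notation \ref{not:labelmap} is $G$-invariant, it descends to a map $\bar r:\widehat{P}(\Delta/G)\to B_{d+1}$.

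The crux is the following claim: for any face $\alpha$ of $\Delta$, the map $\beta\mapsto \beta^G$ from subfaces of $\alpha$ in $\widehat{P}(\Delta)$ to subfaces of $\alpha^G$ in $\widehat{P}(\Delta/G)$ is a poset isomorphism whose composition with $\bar r$ is the label-set map $r$ restricted to $[\emptyset,\alpha]$. Surjectivity is routine: by definition of the order on the quotient poset, any $\gamma^G\leq \alpha^G$ has a representative $\gamma'$ with $\gamma'\leq h\alpha$ for some $h\in G$, and then $h^{-1}\gamma'\leq \alpha$ is a preimage. Injectivity is where the label-preservation hypothesis earns its keep: if $\beta,\beta'\leq\alpha$ satisfy $\beta^G=\beta'^G$, then $\beta'=g\beta$ for some $g\in G$, so $r(\beta')=r(g\beta)=r(\beta)$, and since $\alpha$'s subfaces are in bijection with subsets of $r(\alpha)$ by the balanced boolean structure (remark \ref{rmk:labelsubsetbij}), we conclude $\beta=\beta'$. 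This argument also shows the map is order-preserving and order-reflecting and intertwines $r$ and $\bar r$.

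From this claim everything follows cleanly. The lower interval $[\emptyset,\alpha^G]$ in $\widehat{P}(\Delta/G)$ is poset-isomorphic to $[\emptyset,\alpha]$, which is a boolean algebra (as $\Delta$ is a boolean complex), verifying the defining property of \ref{def:booleancomplex}. The same bijection shows that the closure of $\alpha^G$ in $\Delta/G$ has the combinatorial structure of a simplex of dimension $\dim\alpha$, and since no identifications occur on its boundary (distinct subfaces of $\alpha$ project to distinct cells), the attaching map is a homeomorphism onto its image; hence $\Delta/G$ is a regular CW complex. Finally, applying the bijection to a facet $\alpha$ of $\Delta$: $\alpha$ has exactly one vertex in each of the $d+1$ label classes, and these map bijectively to the vertices of the facet $\alpha^G$, whose descended labels are thus one per class. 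So the descended labeling realizes $\Delta/G$ as a balanced boolean complex of dimension $d$.

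The main obstacle is really the injectivity in the crux claim; if $G$ were allowed to permute labels, $\alpha$ could be its own image under some $g\in G$ while having distinct subfaces identified by $g$, and the lower interval $[\emptyset,\alpha^G]$ would collapse below a boolean algebra (as in the circle example in the paper just before definition \ref{def:balanced}). The label-preservation hypothesis is precisely what rules this out.
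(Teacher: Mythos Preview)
Your proof is correct and follows essentially the same approach as the paper's: both establish that the canonical map $\pi$ restricted to a lower interval $[\emptyset,\alpha]$ is a bijection onto $[\emptyset,\alpha^G]$, with surjectivity coming from the definition of the quotient order and injectivity from the fact that label-preservation forces any two $G$-equivalent subfaces of $\alpha$ to have the same label set and hence coincide (via remark \ref{rmk:labelsubsetbij}). You add a brief justification that $\Delta/G$ is regular, which the paper relegates to a remark after the proof; otherwise the arguments are the same.
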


\begin{proof}
Let $\widehat{P}$ be the face poset of $\Delta$ with minimal element appended. Then the face poset of $\Delta / G$ is $\widehat{P}/G$ by lemma \ref{lem:CWquotientposet}. Clearly it also has a unique minimal element $\emptyset$. Let $[\emptyset,\alpha^G]$ be an arbitrary lower interval in $\widehat{P}/G$, and pick a representative $\alpha\in \widehat{P}$ of the orbit $\alpha^G$. The lower interval $[\emptyset,\alpha]$ in $\widehat{P}$ is boolean since $\Delta$ is a boolean complex. The restriction of the canonical map 
\[
\pi:\widehat{P}\rightarrow\widehat{P}/G
\]
 to $[\emptyset,\alpha]$ is surjective onto $[\emptyset,\alpha^G]$, because if $\beta^G\leq \alpha^G$ in $\widehat{P}/G$ (i.e. $\beta^G$ is an arbitrary member of $[\emptyset,\alpha^G]$), then some $\beta$ representing $\beta^G$ lies below some $\alpha^\star$ representing $\alpha^G$; but since $\alpha,\alpha^\star$ lie in the same orbit, there is a $g\in G$ with $g(\alpha^\star) = \alpha$, and then $\beta\leq \alpha^\star$ implies $g(\beta) \leq \alpha$, since $g$ is an order automorphism; thus $g(\beta)\in [\emptyset,\alpha]$. Since $g(\beta)$ is in the orbit of $\beta$, i.e. 
 \[
 \pi(g(\beta)) = \beta^G,
 \]
we have found an element of $[\emptyset, \alpha]$ mapping to arbitrary $\beta^G\in [\emptyset,\alpha^G]$.

We have as yet made no use of the balancedness of $\Delta$ or the label-preservingness of $G$, so all of the above is true of any boolean complex with a group action. But these features force the restriction of $\pi$ to $[\emptyset,\alpha]$ to be injective. This is because the elements of the lower interval $[\emptyset,\alpha]$ correspond bijectively with subsets of labels of $\alpha$ (per remark \ref{rmk:labelsubsetbij}); meanwhile, because the $G$-action is label-preserving, elements of the same $G$-orbit have the same labels. So no $G$-orbit can meet $[\emptyset,\alpha]$ more than once, i.e. the restriction of $\pi$ to $[\emptyset,\alpha]$ is injective.

Since we already know 
\[
\pi|_{[\emptyset,\alpha]} :[\emptyset,\alpha]\rightarrow[\emptyset,\alpha^G]
\]
is surjective, we have now shown it is an order isomorphism to $[\emptyset,\alpha^G]$, which must therefore be boolean. Since the latter is an arbitrary lower interval of $\widehat{P}/G$, we conclude that $\Delta/G$ is a boolean complex. $G$'s label-preservingness means that the quotient map $\Delta\rightarrow\Delta/G$ is well-defined on labels, giving a labeling of the vertices of $\Delta/G$. This labeling realizes $\Delta/G$ as balanced, since every facet in $\Delta/G$ is the image of a facet in $\Delta$.
\end{proof}

\begin{definition}
We will say $G$ has a \textbf{balanced action} on $\Delta$ to mean that $\Delta,G$ meet the hypotheses of this lemma.
\end{definition}

\begin{remark}
In this proof we saw that if $G$ has a balanced action on $\Delta$ then lower intervals in the face poset map bijectively to the quotient. The geometric content of this statement is that the canonical map $\Delta \rightarrow \Delta/G$ is a homeomorphism on individual faces.
\end{remark}

Stanley-Reisner rings of balanced boolean complexes (recall definition \ref{def:SRringofboolean}) have a number of good properties generalizing those of the ring $S$ defined in \ref{not:definitionofS}.

\begin{notation} 
Let $\Delta$ be a balanced boolean complex with label set $[n]$ and face poset $\widehat P$, with minimal element appended.
\end{notation}

\begin{prop}\label{prop:balancedfinegrading}
The ring $A[\Delta]$ has an $\N^{n}$-grading given by
\[
y_\alpha \mapsto \sum_{i\in r(\alpha)} e_i,
\]
for each $\alpha \in \widehat P$, where $r(\alpha)$ is the set of labels of the face $\alpha$, as in \ref{not:labelmap}.
\end{prop}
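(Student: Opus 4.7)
My plan is to verify that the assignment $y_\alpha \mapsto \sum_{i \in r(\alpha)} e_i$ extends to a well-defined $\N^n$-grading of $A[\Delta]$. Since $A[\Delta]$ is by definition $R_\Delta / I_\Delta$, and the assignment trivially extends to an $\N^n$-grading of the polynomial ring $R_\Delta$ (by declaring each generator $y_\alpha$ to be homogeneous of the indicated multidegree), the entire problem reduces to showing that the defining ideal $I_\Delta$ is homogeneous with respect to this grading.

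I would proceed by checking each of the three families of generators of $I_\Delta$ from definition \ref{def:SRringofboolean}. The generator $y_\emptyset - 1$ is homogeneous of multidegree $0$ because $r(\emptyset) = \emptyset$. The generators of the form $y_\alpha y_\beta$, for $\alpha,\beta$ without a common upper bound, are monomials and therefore automatically homogeneous. The remaining, and only nontrivial, case is the relation
\[
y_\alpha y_\beta - y_{\alpha \wedge \beta}\sum_{\gamma} y_\gamma,
\]
where $\gamma$ ranges over the minimal common upper bounds of $\alpha$ and $\beta$. Here we need to show that the multidegree of $y_\alpha y_\beta$ equals that of $y_{\alpha \wedge \beta} y_\gamma$ for every such $\gamma$.

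The key step — and the one that really uses the balancedness of $\Delta$ — will be to establish the two equalities $r(\alpha \wedge \beta) = r(\alpha) \cap r(\beta)$ and $r(\gamma) = r(\alpha) \cup r(\beta)$ (as subsets of $[n]$) whenever $\alpha,\beta$ lie below a common upper bound and $\gamma$ is a minimal such upper bound. Both follow from the fact that, once we fix any common upper bound $\delta$ of $\alpha,\beta$, the lower interval $[\emptyset,\delta]\subset\widehat P$ is a boolean algebra (by the boolean complex hypothesis), and the restriction of $r$ to this interval is an isomorphism onto the boolean algebra of subsets of $r(\delta)$ (by balancedness, via remark \ref{rmk:labelsubsetbij}). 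For the first equality, $\alpha\wedge\beta$ is the meet in $[\emptyset,\delta]$ and so corresponds to $r(\alpha)\cap r(\beta)$. For the second, the join $\alpha\vee\beta$ taken in $[\emptyset,\gamma]$ is a common upper bound of $\alpha,\beta$ lying below $\gamma$ and corresponding to $r(\alpha)\cup r(\beta)$; minimality of $\gamma$ forces $\gamma = \alpha\vee\beta$, hence $r(\gamma) = r(\alpha)\cup r(\beta)$.

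With these two equalities in hand, the verification is a one-line inclusion-exclusion: the multidegree of $y_{\alpha\wedge\beta}y_\gamma$ is
\[
\sum_{i\in r(\alpha)\cap r(\beta)} e_i + \sum_{i\in r(\alpha)\cup r(\beta)} e_i = \sum_{i\in r(\alpha)} e_i + \sum_{i\in r(\beta)} e_i,
\]
which is exactly the multidegree of $y_\alpha y_\beta$. Thus each generator of $I_\Delta$ is $\N^n$-homogeneous, so $I_\Delta$ is a homogeneous ideal and the grading descends to $A[\Delta]$. I expect the main obstacle to be purely expository rather than technical: one must take a little care to distinguish the meet/join of $\alpha,\beta$ (which depends on the ambient interval) from the minimal common upper bounds $\gamma$ (of which there may be several in a boolean complex), and to invoke balancedness at precisely the right moment to convert statements about faces into statements about subsets of $[n]$.
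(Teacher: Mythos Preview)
Your proposal is correct and follows essentially the same approach as the paper: grade the polynomial ring $R_\Delta$, then verify that each of the three families of generators of $I_\Delta$ is homogeneous, the only nontrivial case being the straightening relation, where both you and the paper reduce to the identities $r(\alpha\wedge\beta)=r(\alpha)\cap r(\beta)$ and $r(\gamma)=r(\alpha)\cup r(\beta)$ and then invoke inclusion-exclusion. If anything, you supply slightly more justification for these two identities than the paper does (the paper simply cites that $r$ is a poset homomorphism to the boolean algebra on the labels).
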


Generalizing our language for $S$, we will refer to this as the \textbf{fine grading} on $A[\Delta]$.

\begin{proof}
The suggested map certainly gives an $\N^n$-grading of the polynomial ring 
\[
R_\Delta = A[\{y_\alpha\}_{\alpha\in\widehat P}]
\]
from the definition of $A[\Delta]$ (recall \ref{def:SRringofboolean}). We need to check that the ideal $I_\Delta$ of that definition is homogeneous with respect to this grading. We check the generators:
\begin{enumerate}
\item $y_\emptyset - 1$ is homogeneous, because $r(\emptyset)$ is $\emptyset$, so the fine grade
\[
\sum_{i\in r(\emptyset)} e_i
\]
of $y_\emptyset$ is a void sum, thus $=0\in\N^n$, which is also the fine grade of $1$.
\item $y_\alpha y_\beta$ (for $\alpha,\beta\in\widehat P$ lacking a common upper bound) is homogeneous because it is a monomial.
\item Consider
\[
y_\alpha y_\beta - y_{\alpha\wedge \beta }\sum y_\gamma
\]
where the sum is over minimal upper bounds for $\alpha,\beta$. The fine grade of $y_\alpha y_\beta$ is
\begin{equation}\label{eq:ab}
\sum_{i\in r(\alpha)} e_i + \sum_{i\in r(\beta)} e_i.
\end{equation}
Since $r$ is a poset homomorphism to the boolean algebra on the label set, we have
\[
r(\alpha\wedge \beta) = r(\alpha)\cap r(\beta),
\]
and similarly for any minimal upper bound $\gamma$ we have
\[
r(\gamma) = r(\alpha)\cup r(\beta).
\]
Thus the fine grade of each term in $y_{\alpha\wedge \beta }\sum y_\gamma$ is
\begin{equation}\label{eq:gsum}
\sum_{i\in r(\alpha)\cap r(\beta)} e_i + \sum_{i\in r(\alpha)\cup r(\beta)} e_i.
\end{equation}
The sums \eqref{eq:ab} and \eqref{eq:gsum} are equal term for term. Thus 
\[
y_\alpha y_\beta - y_{\alpha\wedge \beta }\sum y_\gamma
\]
is homogeneous with respect to the $\N^n$ grading on $R_\Delta$.
\end{enumerate}
Thus $I_\Delta$ is homogeneous, so the grading descends to $R_\Delta / I_\Delta = A[\Delta]$.
\end{proof}

\begin{prop}\label{prop:balancedhsop}
If $A=\Z$ or a field, the elements
\[
\psi_i = \sum_{r(\alpha) = \{i\}} y_\alpha,\; i=1,\dots,n
\]
form a finely homogeneous system of parameters for $A[\Delta]$.
\end{prop}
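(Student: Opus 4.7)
The plan is to mimic the structure of the proof of Proposition \ref{prop:FTSPforS} and its extension to the h.s.o.p.\ context in Proposition \ref{prop:hsops}. Each $\psi_i$ is finely homogeneous of fine grade $e_i$, directly from Definition \ref{def:finegrading}. This immediately yields algebraic independence of the $\psi_i$: distinct monomials $\prod \psi_i^{a_i}$ have distinct fine grades $(a_1,\dots,a_n)$, so they sit in different finely homogeneous components of $A[\Delta]$ and are $A$-linearly independent provided each is nonzero. Nonvanishing can be established by fixing a facet $F$ of $\Delta$, with vertices $v_1,\dots,v_n$ labeled $1,\dots,n$ respectively, and constructing a ring homomorphism $\phi: A[\Delta] \to A[z_1,\dots,z_n]$ defined by $y_\alpha \mapsto \prod_{i\in r(\alpha)} z_i$ for $\alpha \leq F$ and $y_\alpha \mapsto 0$ otherwise. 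A routine check against the three families of generators of $I_\Delta$ in Definition \ref{def:SRringofboolean} shows $\phi$ is well-defined, and $\phi(\psi_i) = z_i$, so $\phi\bigl(\prod\psi_i^{a_i}\bigr) = \prod z_i^{a_i} \neq 0$.

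The heart of the argument is the identity
\[
\psi_{i_1}\psi_{i_2}\cdots\psi_{i_k} \;=\; \sum_{\alpha:\, r(\alpha) = \{i_1,\dots,i_k\}} y_\alpha
\]
for distinct $i_1,\dots,i_k \in [n]$, proved by induction on $k$ using the Stanley-Reisner relation $y_\alpha y_\beta = y_{\alpha\wedge\beta}\sum_\gamma y_\gamma$. In the inductive step one multiplies the sum at level $k-1$ by $\psi_{i_k}$; every face $\gamma$ with $r(\gamma) = \{i_1,\dots,i_k\}$ contains a unique subface of label $\{i_1,\dots,i_{k-1}\}$ and a unique vertex of label $i_k$, so $y_\gamma$ arises in the expansion exactly once and with no cancellation. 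The main obstacle is this combinatorial bookkeeping: verifying no double-counting requires that every lower interval of the face poset is a boolean algebra, so that label subsets biject with subfaces of a given face.

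From here I would observe that in a balanced boolean complex, two distinct faces $\alpha \neq \alpha'$ with $r(\alpha) = r(\alpha')$ can have no common upper bound, since otherwise both would lie in some lower interval $[\emptyset,\gamma]$, which is a boolean algebra in which each label subset determines a unique element. Hence $y_\alpha y_{\alpha'} = 0$ in $A[\Delta]$ for every such pair. Multiplying the identity above by $y_\alpha$ then collapses every term with $\alpha'\neq\alpha$, yielding the monic integral equation
\[
y_\alpha^2 \;-\; \Bigl(\prod_{i \in r(\alpha)} \psi_i\Bigr)\, y_\alpha \;=\; 0
\]
over $A[\psi_1,\dots,\psi_n]$. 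Since $A[\Delta]$ is finitely generated as an $A$-algebra by the finitely many indeterminates $y_\alpha$, integrality of each generator promotes to module-finiteness of $A[\Delta]$ over $A[\psi_1,\dots,\psi_n]$. Module-finiteness together with algebraic independence forces $\dim A[\Delta] = \dim A[\psi_1,\dots,\psi_n] = n + \dim A$, so the length $n$ matches Definition \ref{def:hsopoverZ} when $A = \Z$ and Definition \ref{def:hsop} when $A$ is a field. Combined with positive-degree and fine homogeneity, this shows that $\psi_1,\dots,\psi_n$ is a finely homogeneous system of parameters for $A[\Delta]$.
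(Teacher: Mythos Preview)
Your proof is correct and follows essentially the same architecture as the paper: both establish module-finiteness via the integral equation $y_\alpha^2 - \bigl(\prod_{i\in r(\alpha)} \psi_i\bigr) y_\alpha = 0$, derived from the product identity $\prod_{j\in J}\psi_j = \sum_{r(\beta)=J} y_\beta$ together with the observation that distinct faces with the same label set have no common upper bound. The paper proves the product identity in slightly greater generality (Lemma~\ref{lem:arithinbooleancomplexring}, allowing multiplication by an arbitrary $y_\alpha$ rather than just $y_\emptyset$), since this version is needed later for Lemma~\ref{lem:facetproduct}; your special case suffices for the present proposition.

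The one substantive difference is in how the dimension count is handled. The paper cites a general result on algebras with straightening law to obtain $\dim A[\Delta] = n + \dim A$, and then module-finiteness completes the h.s.o.p.\ verification. You instead prove algebraic independence of the $\psi_i$ directly, using the fine grading for linear independence and the homomorphism $\phi$ onto a polynomial ring to establish nonvanishing of monomials; dimension then follows from integrality. Your route is more self-contained and avoids the external reference, at the cost of the extra nonvanishing argument. Both are perfectly valid, and your construction of $\phi$ is a clean way to handle the point the paper leaves to the ASL literature.
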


This h.s.o.p. specializes to the $\theta_i$'s of section \ref{sec:FTSPhsops} when $\Delta$ is the order complex of $B_n\setminus\{\emptyset\}$, so that $A[\Delta] = S$.

\begin{proof}
This is proven in \cite{cca}, Proposition III.4.3, in the case that $\Delta$ is a balanced simplicial complex, but the argument extends to the present case. It has two ingredients:
\begin{enumerate}
\item The Krull dimension of $A[\Delta]$ is $n$ if $A$ is a field or $n+1$ if $A=\Z$.\label{1dimisright}
\item The ring $A[\Delta]$ is module-finite over the subring $A[\psi_1,\dots,\psi_n]$.\label{2modulefinite}
\end{enumerate}
We have \ref{1dimisright} from a general theorem (\cite{eisenbud2}, Corollary 3.6(3)) about {\em algebras with straightening law}, of which $A[\Delta]$ is an example and about which we will have more to say in section \ref{sec:notCM}. We have \ref{2modulefinite} because $A[\Delta]$ is generated over $A$ by a finite set of elements $\{y_\alpha\}|_{\alpha\in\widehat P}$ each of which is integral over $A[\psi_1,\dots,\psi_n]$. This in turn is because any $y_\alpha$ satisfies the equation
\[
y_\alpha^2 - \left(\prod_{i\in r(\alpha)} \psi_i\right)y_\alpha =0.
\]
It is a worthwhile exercise to verify this equation from the definition of $A[\Delta]$. One can also deduce it from the following pair of lemmas.
\end{proof}

\begin{lemma}\label{lem:distinctsamelabels}
No two distinct elements of $\widehat P$ with the same label set have a common upper bound.
\end{lemma}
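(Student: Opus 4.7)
The plan is to prove this by contradiction, invoking the defining property of a boolean complex together with the label-subset bijection recorded in remark \ref{rmk:labelsubsetbij}. First I would suppose that $\alpha,\beta\in\widehat P$ share the label set $r(\alpha)=r(\beta)$ and admit some common upper bound $\gamma\in\widehat P$, and aim to force $\alpha=\beta$.

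Next, I would exploit the fact that $\Delta$ is a boolean complex: by definition \ref{def:booleancomplex}, every lower interval of $\widehat P$ is a boolean algebra, so in particular $[\emptyset,\gamma]$ is boolean. Since $\alpha,\beta\le\gamma$, both lie in $[\emptyset,\gamma]$. By remark \ref{rmk:labelsubsetbij}, the restriction of the label map $r$ gives a bijection between $[\emptyset,\gamma]$ and the set of subsets of $r(\gamma)$. Because $r$ is a poset homomorphism into $B_n$ (notation \ref{not:labelmap}), this bijection is precisely given by $\delta\mapsto r(\delta)$. Hence $r(\alpha)=r(\beta)$ forces $\alpha=\beta$, contradicting distinctness.

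The argument is short and presents no real obstacle; the one subtlety worth stating carefully in the write-up is that the bijection of remark \ref{rmk:labelsubsetbij} is realized by $r$ itself (rather than by some other labeling-compatible correspondence), which is what allows us to convert ``equal label sets'' into ``equal elements of the interval.'' Thus the proof would read simply:

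\begin{proof}
Suppose $\alpha,\beta\in\widehat P$ satisfy $r(\alpha)=r(\beta)$ and admit a common upper bound $\gamma$. Then $\alpha,\beta\in[\emptyset,\gamma]$, which is a boolean algebra since $\Delta$ is a boolean complex. By remark \ref{rmk:labelsubsetbij}, the map $r$ restricts to a bijection from $[\emptyset,\gamma]$ onto the set of subsets of $r(\gamma)$. Since $r(\alpha)=r(\beta)$, this bijectivity yields $\alpha=\beta$.
\end{proof}
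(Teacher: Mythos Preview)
Your proof is correct and follows essentially the same approach as the paper: assume a common upper bound $\gamma$, place $\alpha,\beta$ in the boolean interval $[\emptyset,\gamma]$, and invoke the label-subset bijection of remark \ref{rmk:labelsubsetbij} to conclude $\alpha=\beta$. The paper's version is slightly terser and does not single out the subtlety that the bijection is realized by $r$, but the argument is the same.
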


\begin{proof}
This follows from remark \ref{rmk:labelsubsetbij}. If $\alpha,\beta\in\widehat P$ have a common upper bound $\gamma$, then they are both in the interval $[\emptyset,\gamma]$, whose elements are in bijection with subsets of the label set of $\gamma$. Thus if $r(\alpha) = r(\beta)$, we must have $\alpha = \beta$.
\end{proof}

\begin{lemma}\label{lem:arithinbooleancomplexring}
If $\alpha\in \widehat P$ and $J$ is a set of labels disjoint from $\alpha$'s, i.e.
\[
J \subset [n]\setminus r(\alpha),
\]
then
\[
\left(\prod_{j\in J} \psi_j\right) y_\alpha = \sum_{\substack{\beta\geq \alpha\\r(\beta) = r(\alpha)\cup J}} y_\beta.
\]
\end{lemma}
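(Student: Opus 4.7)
The plan is to prove this by induction on $|J|$, using the defining relations of $A[\Delta]$ (definition \ref{def:SRringofboolean}) and the balancedness of $\Delta$ in an essential way. The base case $|J|=0$ reduces to showing that the only $\beta \geq \alpha$ with $r(\beta) = r(\alpha)$ is $\alpha$ itself; this is a direct application of lemma \ref{lem:distinctsamelabels}.

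For the inductive step, I would pick $j\in J$, set $J' = J\setminus\{j\}$, and multiply the inductive formula for $J'$ by $\psi_j = \sum_{r(\gamma)=\{j\}} y_\gamma$. The key computation is then to evaluate $\psi_j y_\beta$ for each $\beta$ appearing in the induction hypothesis (so $\beta\geq \alpha$ with $r(\beta)=r(\alpha)\cup J'$, in particular $j\notin r(\beta)$). For a fixed vertex $\gamma$ with $r(\gamma) = \{j\}$, the product $y_\gamma y_\beta$ is zero if $\gamma,\beta$ have no common upper bound, and otherwise equals $y_{\gamma\wedge\beta}\sum_\delta y_\delta$ where $\delta$ ranges over minimal common upper bounds. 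Since $r(\gamma)\cap r(\beta)=\emptyset$ and $r$ is a poset homomorphism to a boolean algebra, $\gamma\wedge\beta=\emptyset$, so $y_{\gamma\wedge\beta}=1$.

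The heart of the argument, and the step I expect to require the most care, is the combinatorial bookkeeping that turns $\sum_{r(\gamma)=\{j\}} \sum_\delta y_\delta$ into $\sum_{\delta\geq\beta,\,r(\delta)=r(\beta)\cup\{j\}} y_\delta$. For this I would exploit balancedness twice. First, if $\delta$ is a minimal common upper bound of $\gamma$ and $\beta$, then $r(\delta) \supseteq r(\gamma)\cup r(\beta)$; if the containment were strict, the subset $r(\gamma)\cup r(\beta)$ would correspond, via the boolean structure of $[\emptyset,\delta]$ (remark \ref{rmk:labelsubsetbij}), to a strictly smaller common upper bound, contradicting minimality. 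So $r(\delta)=r(\beta)\cup\{j\}$. Second, given any $\delta\geq\beta$ with $r(\delta)=r(\beta)\cup\{j\}$, the boolean structure of $[\emptyset,\delta]$ identifies a unique vertex of $\delta$ with label $j$, which is the unique $\gamma$ (with $r(\gamma)=\{j\}$) satisfying $\gamma\leq \delta$, and minimality of $\delta$ as an upper bound of this $\gamma$ and $\beta$ follows similarly from the boolean structure of $[\emptyset,\delta]$.

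Finally, I would assemble the double sum. After substituting, for each face $\delta\geq \alpha$ with $r(\delta) = r(\alpha)\cup J$, I need to count how many pairs $(\beta,\gamma)$ produce $y_\delta$: the face $\beta$ is the unique element of $[\alpha,\delta]$ (again boolean) with label set $r(\alpha)\cup J'$, and $\gamma$ is the unique vertex of $\delta$ with label $j$. Thus each such $\delta$ is hit exactly once, yielding the claimed formula. The main technical obstacle is in the second paragraph, where one must verify that the extraneous minimal upper bounds and possibly higher-dimensional common upper bounds cannot appear; this is where pure balancedness (rather than the weaker boolean-complex property) is needed.
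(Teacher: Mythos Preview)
Your proposal is correct and follows essentially the same approach as the paper: induction on $|J|$, with the core computation being the multiplication by a single $\psi_j$ and the identification of the resulting terms with faces having one extra label, using the boolean structure of lower intervals and lemma~\ref{lem:distinctsamelabels}. The only cosmetic difference is that the paper takes $|J|=1$ as its base case (doing the single-$\psi_j$ computation there) and then applies that base case termwise in the inductive step, whereas you take $|J|=0$ as base case and fold the single-$\psi_j$ computation into the inductive step; the content is the same.
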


In words, this is the statement that given $\alpha\in\widehat P$, one can obtain the sum of all elements of $\widehat P$ lying above $\alpha$ and with a fixed label set containing $\alpha$'s, by multiplying $y_\alpha$ by the $\psi_j$'s corresponding to the extra labels needed.

\begin{proof}
This is by induction on the cardinality of $J$. The base case is $\#J=1$. We need to show that if $j\notin r(\alpha)$, then
\[
\psi_j y_\alpha = \sum_{\substack{\beta\geq \alpha \\ r(\beta) = r(\alpha)\cup \{j\}}} y_\beta.
\]
By definition of $\psi_j$ and $A[\Delta]$, we have
\begin{align*}
\psi_j y_\alpha &= \sum_{r(\nu) = \{j\}} y_\gamma y_\alpha \\
 &= \sum_{r(\nu) =\{j\}} y_{\nu \wedge \alpha} \sum_{\gamma} y_\gamma
\end{align*}
where the last sum is taken over least common upper bounds $\gamma$ for $\nu$ and $\alpha$. First, $\nu\wedge\alpha = \emptyset$ because $\nu$ and $\alpha$ do not have any labels in common. Thus $y_{\nu\wedge\alpha} = y_\emptyset = 1$, so the above is
\begin{equation}\label{eq:sumofgammas}
\sum_{r(\nu) =\{j\}} \sum_{\gamma} y_\gamma.
\end{equation}
Second, any minimal common upper bound of $\nu$ and $\alpha$ must have label set 
\[
r(\alpha)\cup r(\nu) = r(\alpha) \cup \{j\}
\]
since $r$ is a poset homomorphism. Furthermore any upper bound of $\alpha$ with this label set appears somewhere in the sum \eqref{eq:sumofgammas}, since any such upper bound is the common upper bound of $\alpha$ and {\em some} $\nu$ with $r(\nu)=j$. Finally, each only appears once, since no two $\nu$'s have any common upper bounds, by lemma \ref{lem:distinctsamelabels}. Thus $\psi_jy_\alpha$ is exactly
\[
\sum_{\substack{\beta\geq \alpha\\ r(\beta) = r(\alpha)\cup\{j\}}} y_\beta,
\]
as was to be shown.

Now suppose $\#J > 1$, and let 
\[
J = J' \cup \{j^\star\}
\]
for some $j^\star\notin J'$. By the induction assumption,
\[
\left(\prod_{j\in J'} \psi_j\right) y_\alpha = \sum_{\substack{\beta' \geq \alpha \\ r(\beta') = r(\alpha)\cup J'}} y_{\beta'}.
\]
Thus
\begin{equation}\label{eq:bigsum}
\left(\prod_{j\in J}\psi_j\right) y_\alpha = \sum_{\substack{\beta' \geq \alpha \\ r(\beta') = r(\alpha)\cup J'}} \psi_{j^\star}y_{\beta'}.
\end{equation}
But by the base case, each term $\psi_{j^\star}y_{\beta'}$ is just the sum of $y_\beta$ for every $\beta\geq \beta'$ with label set
\[
r(\beta) = r(\beta')\cup\{j^\star\} = r(\alpha)\cup J.
\]
Any $\beta\geq \alpha$ with this label set is $\geq$ {\em some} $\beta'$, and none of them is $\geq$ two different $\beta'$'s (again by lemma \ref{lem:distinctsamelabels}), thus the sum \eqref{eq:bigsum} is precisely
\[
\sum_{\substack{\beta\geq \alpha \\ r(\beta) = r(\alpha)\cup J}} y_\beta,
\]
as was to be shown.
\end{proof}

\begin{remark}
An important special case of this theorem is that 
\[
\prod_J \psi_j = \sum_{r(\beta) = J} y_\beta.
\]
This explains the formula
\[
y_\alpha^2 - \left(\prod_{j\in r(\alpha)} \psi_j\right) y_\alpha = 0,
\]
as follows. The parenthetical product is the sum of $y_\beta$ for every $\beta$ of label set equal to $\alpha$. However, by lemma \ref{lem:distinctsamelabels}, the only one of these that has a common upper bound with $\alpha$ is $\alpha$ itself. Thus by the definition of $A[\Delta]$, all the products in the second term vanish except for $y_\alpha^2$.
\end{remark}

We now state Reiner's elegant theorem:

\begin{thm}[\cite{reiner92}, theorem 2.3.1]\label{thm:reinerSRquotient}
Suppose $G$ has a balanced action on $\Delta$. For the induced action of $G$ on the Stanley-Reisner ring $A[\Delta]$, we have
\[
A[\Delta/G]\cong A[\Delta]^G
\]
The isomorphism is given on the generators $y_{\alpha^G},\;\alpha^G\in \widehat{P}/G$ by
\[
\pushQED{\qed}
y_{\alpha^G} \mapsto \sum_{\alpha\in\alpha^G} y_\alpha.\qedhere
\popQED
\]
\end{thm}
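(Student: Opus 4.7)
I will construct the stated formula as a ring homomorphism and then verify it is bijective by matching $A$-bases. Write $\widehat P$ for the face poset of $\Delta$ with minimal element $\emptyset$ appended, and set $Y_{\alpha^G} := \sum_{\alpha \in \alpha^G} y_\alpha \in A[\Delta]$. These elements are manifestly $G$-invariant, so the assignment $y_{\alpha^G} \mapsto Y_{\alpha^G}$ extends to a ring homomorphism $\phi$ from the free polynomial ring on the $y_{\alpha^G}$'s into $A[\Delta]^G$. The whole argument rests on the observation, proved in the discussion preceding the theorem, that the balanced-action hypothesis makes $\pi : \widehat P \to \widehat P/G$ an order isomorphism on every lower interval $[\emptyset, \gamma]$.

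\textbf{Well-definedness.} To show $\phi$ descends to $A[\Delta/G]$, I will verify vanishing on each of the three families of generators of $I_{\Delta/G}$ from definition \ref{def:SRringofboolean}. The relation $y_\emptyset - 1$ is trivial. For the no-common-upper-bound relation, the lower-interval iso shows that $\alpha^G,\beta^G$ lack a CUB in $\widehat P/G$ iff no pair of representatives has one in $\widehat P$, so every cross-term of $Y_{\alpha^G}Y_{\beta^G}$ vanishes. The substantive check is the main relation
$$Y_{\alpha^G} Y_{\beta^G} = Y_{\alpha^G \wedge \beta^G} \sum_{\gamma^G} Y_{\gamma^G}.$$
Expanding the left side yields $\sum_{(\alpha,\beta,\gamma)} y_{\alpha \wedge \beta} y_\gamma$ indexed by triples where $\gamma$ is a minimal CUB of some $(\alpha,\beta) \in \alpha^G \times \beta^G$; by the lower-interval iso applied to $[\emptyset,\gamma]$, the pair $(\alpha,\beta)$ and hence $\alpha \wedge \beta$ are uniquely determined by $\gamma$. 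On the right side, every nonvanishing term $y_\delta y_\gamma$ forces $\delta$ and $\gamma$ to share a common upper bound and hence lie in a common lower interval, which is boolean; since $r(\delta) = r(\alpha^G) \cap r(\beta^G) \subseteq r(\gamma) = r(\alpha^G) \cup r(\beta^G)$, uniqueness of labeled elements in a boolean algebra forces $\delta \leq \gamma$, whence $\delta = \alpha_\gamma \wedge \beta_\gamma$ by the same uniqueness inside $[\emptyset,\gamma]$. This matches the two sides term by term.

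\textbf{Bijectivity and main obstacle.} To finish, I will compare $A$-bases: $A[\Delta]^G$ has a basis of $G$-orbit sums of chain-supported monomials $y_{\alpha_1}^{e_1} \cdots y_{\alpha_k}^{e_k}$ with $\alpha_1 < \cdots < \alpha_k$ in $\widehat P$, while $A[\Delta/G]$ has a basis of chain-supported monomials indexed by chains in $\widehat P/G$. The lower-interval iso puts $G$-orbits of chains in $\widehat P$ into bijection with chains in $\widehat P/G$ (fixing the top element pins down the rest). Using lemma \ref{lem:distinctsamelabels}, distinct elements of a single orbit $\alpha^G$ share labels but have no CUB, so all cross-terms vanish and $Y_{\alpha^G}^e = \sum_{\beta \in \alpha^G} y_\beta^e$. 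Consequently $\phi(y_{\alpha_1^G}^{e_1} \cdots y_{\alpha_k^G}^{e_k}) = \prod_i \sum_{\beta_i \in \alpha_i^G} y_{\beta_i}^{e_i}$, whose nonvanishing terms are exactly the tuples $(\beta_1,\ldots,\beta_k)$ forming a chain in $\widehat P$ projecting to the given chain in $\widehat P/G$, i.e., exactly the $G$-orbit sum of $y_{\alpha_1}^{e_1} \cdots y_{\alpha_k}^{e_k}$. Hence $\phi$ maps basis to basis bijectively. The step I expect to be the main obstacle is the verification of the main Stanley-Reisner relation: the delicate point is ruling out nonvanishing terms $y_\delta y_\gamma$ on the right side with $\delta \not\leq \gamma$, which requires simultaneously exploiting label preservation, the lower-interval iso, and uniqueness of labeled elements in a boolean algebra. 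Once that is pinned down, everything else is bookkeeping with orbit sums.
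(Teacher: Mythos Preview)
The paper does not supply its own proof of this theorem: it is stated with a citation to \cite{reiner92} and closed with a \qed, and the subsequent remark only notes that Reiner's argument (given over a field) works verbatim over any coefficient ring. So there is no in-paper proof to compare against; your task is really to supply what the paper omits.

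Your argument is correct and well-organized. The essential insight---that a balanced action makes the quotient map $\pi$ an order isomorphism on each lower interval $[\emptyset,\gamma]$---is exactly what the paper establishes in the lemma preceding the theorem, and you use it correctly at every turn. Your check of the main Stanley--Reisner relation is sound: the reindexing of the left side by $\gamma'$ (with $(\alpha',\beta')$ then uniquely determined inside $[\emptyset,\gamma']$ by their label sets) is the right move, and your handling of the right side correctly uses that $r(\delta)\subseteq r(\gamma')$ together with lemma~\ref{lem:distinctsamelabels} to force $\delta\leq\gamma'$, leaving a unique surviving $\delta$ for each $\gamma'$. The bijectivity argument via chain-supported monomial bases is also correct; the key point---that chains in $\widehat P$ lying over a fixed chain in $\widehat P/G$ form a single $G$-orbit, parametrized by the top element---follows cleanly from the lower-interval isomorphism, as you say.

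One small presentational point: you invoke the fact that $A[\Delta]$ (for $\Delta$ a boolean complex) has an $A$-basis of monomials supported on chains in $\widehat P$. The paper does use this freely (it is part of the ASL structure discussed in \S\ref{sec:notCM}), but at the point where theorem~\ref{thm:reinerSRquotient} appears it has not been stated explicitly, so in a self-contained write-up you would want either to cite Stanley's original paper \cite{stanley} for it or give the short direct argument.
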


\begin{remark}
Reiner formulated this theorem with the coefficient ring being a field, but his proof works word-for-word over any ring. We are interested in this special case:
\end{remark}

\begin{cor}\label{cor:deltaisbalanced}
Let $\Delta$ be the order complex of the poset $\Bn\setminus\{\emptyset\}$, so that $A[\Delta]$ is the ring $S$ defined in \ref{not:definitionofS}. Let $G\subset S_n$ be a permutation group, acting on $\Delta$ through its action on $\gls{[n]}$. Then $S^G \cong A[\Delta/G]$.
\end{cor}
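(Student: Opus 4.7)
The plan is to deduce this as a direct specialization of Theorem \ref{thm:reinerSRquotient}, so the task reduces to verifying the hypotheses of that theorem in the present setting. There are really three things to check: that $A[\Delta]$ coincides with the ring $S$ of \ref{not:definitionofS}, that $\Delta$ admits a balanced labeling, and that the $G$-action on $\Delta$ induced from its action on $[n]$ preserves that labeling.

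The first point is immediate from the definitions: $\Delta$ is the order complex of $B_n\setminus\{\emptyset\}$, so by definition \ref{def:SRringofposet} we have $A[\Delta]=A[B_n\setminus\{\emptyset\}]$, which is exactly the ring $S$ of notation \ref{not:definitionofS} (indeterminates $y_U$ indexed by nonempty subsets $U\subseteq [n]$, with $y_Uy_V=0$ whenever $U$ and $V$ are incomparable).

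For the second point, note that $\Delta$ has dimension $n-1$ because the longest chains in $B_n\setminus\{\emptyset\}$ have $n$ elements, namely the maximal chains $\{i_1\}\subsetneq\{i_1,i_2\}\subsetneq\cdots\subsetneq [n]$. I would define a labeling of the vertices of $\Delta$ with label set $[n]$ by sending a vertex $U$ (a nonempty subset of $[n]$) to its cardinality $|U|$. Every facet of $\Delta$ corresponds to a maximal chain in $B_n\setminus\{\emptyset\}$, which contains exactly one subset of each cardinality $1,2,\dots,n$, so each facet contains exactly one vertex of each label. Hence this labeling realizes $\Delta$ as a balanced (simplicial, in particular boolean) complex in the sense of \ref{def:balanced}.

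For the third point, the action of $G\subset S_n$ on $[n]$ induces an action on $B_n\setminus\{\emptyset\}$ by $\sigma\cdot U=\sigma(U)$, which is order-preserving and hence cellular on the order complex $\Delta$. Since $|\sigma(U)|=|U|$ for every $\sigma\in S_n$, this action preserves the labels defined above; thus $G$ has a balanced action on $\Delta$ in the sense required by Theorem \ref{thm:reinerSRquotient}. Applying that theorem gives the isomorphism $A[\Delta/G]\cong A[\Delta]^G=S^G$, which is the conclusion. No step is a serious obstacle; the only thing requiring care is lining up the definitions so that the order complex of $B_n\setminus\{\emptyset\}$ really is balanced with the obvious rank labeling, but this is essentially immediate from the fact that $B_n\setminus\{\emptyset\}$ is a ranked poset on which $S_n$ acts by rank-preserving automorphisms.
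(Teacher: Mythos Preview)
Your proof is correct and follows essentially the same approach as the paper: both verify that the cardinality (rank) function gives a balanced labeling of $\Delta$, note that the $S_n$-action preserves cardinality and hence the labeling, and then invoke Theorem~\ref{thm:reinerSRquotient}. You spell out a few more details (the identification $A[\Delta]=S$ and the dimension count), but the argument is the same.
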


Thus, the rings $S^G$ considered in section \ref{sec:garsiamap} are actually Stanley-Reisner rings of balanced boolean complexes.

\begin{proof}
This is immediate from theorem \ref{thm:reinerSRquotient} once we verify that the action of $G$ on $\Delta$ is balanced. But the vertices of $\Delta$ are precisely the elements of the ranked poset $\Bn\setminus\{\emptyset\}$, any maximal chain of $\Bn\setminus\{\emptyset\}$ hits every rank, and $G$'s action on $\Bn\setminus\{\emptyset\}$ necessarily preserves ranks; thus we can take the ranks as the labels of the vertices of $\Delta$, and the induced action on $\Delta$ will thus be balanced.
\end{proof}

\section{Quotients of spheres and balls}\label{sec:quotientsofspheres}

We have the pieces in place, which we will assemble in the next section, to see that the Cohen-Macaulayness of the integer polynomial invariant ring $R^G$ of interest to us is related to the topology of a boolean complex $\Delta/G$, where $\Delta$ is a triangulation of a ball. In order to apply this, we need to know something about the topology of $\Delta/G$. Specifically, for which groups $G$ is it Cohen-Macaulay?

We are able to answer this question completely, thanks in large part to remarkable recent work of Christian Lange in orbifold theory, completing a program initiated by Marina Mikhailova in the 70's and 80's (\cite{maerchik}, \cite{mikhailova78}, \cite{mikhailova82}, \cite{mikhailova85}).

\begin{definition}\label{def:reflectionrotation}
A linear transformation of $\R^n$ of finite order that fixes a hyperplane pointwise is called a \textbf{reflection}. If it fixes a subspace of codimension two pointwise, it is a \textbf{rotation}.
\end{definition}

\begin{remark}
This use of the word {\em reflection} with this sense is very standard; however, {\em rotation} is often used to refer to any element of $SO_n(\R)$. Our usage follows \cite{lange}, \cite{lange2}, \cite{langemikhailova}.

The familiar names reflect familiar geometric pictures:

For any finite-order element $g\in GL_n(\R)$ there exists an invariant inner product, so $g$ may be regarded as an element of $O_n(\R)$ after a change of basis. It restricts, on the orthogonal complement of its fixed point subspace, to an orthogonal transformation free of nontrivial fixed points. Furthermore it is determined by the data of its fixed point set, the invariant inner product, and this fixed-point-free restriction. 

For a reflection, the fixed-point subspace is codimension $1$, so its complement is a line. The only orthogonal transformation of a line that is free of nontrivial fixed points is multiplication by $-1$. Thus it fixes a hyperplane and reverses a line.

By the same token, a rotation is determined by an orthogonal transformation of a plane that lacks nontrivial fixed points; this is necessarily an element of $SO_2(\R)$, i.e. a rotation in the ordinary sense. Thus a rotation fixes a codimension 2 space and rotates its orthogonal plane.
\end{remark}

Lange, building on work of Mikha\^{i}lova, has proven the following theorem:

\begin{thm}[\cite{lange2}, main result]\label{thm:lange}
Let $G\subset O_n(\R)$ be a finite group of orthogonal transformations. Endow $\R^n$ with its standard PL manifold structure. Then the quotient PL space $\R^n/G$ is a PL manifold (with or without boundary) if and only if $G$ is generated by rotations and reflections. If it is, then $\R^n/G$ is PL homeomorphic to either $\R^{n-1}\times \R^{\geq 0}$ or $\R^n$, depending on whether or not $G$ contains a reflection.
\end{thm}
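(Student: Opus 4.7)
The plan is to prove the theorem by induction on $n$, reducing the global PL-manifold question to a local one about quotients of spheres, and then handling each direction by different techniques. The base cases $n=1,2$ are straightforward: in dimension one every finite subgroup of $O_1(\R)$ is $\{1\}$ or generated by a reflection, and in dimension two one checks directly that the quotient of $\R^2$ by a finite subgroup of $O_2(\R)$ is PL homeomorphic to either $\R^2$ (if only rotations are present) or a closed half-plane (if a reflection is present), while cyclic rotation groups in $SO_2(\R)$ give $\R^2$ via the map $z \mapsto z^k$. For the inductive step I would invoke the linear slice principle — essentially a PL version of the smooth corollary to the algebraic theorem in the introduction — which says that $\R^n/G$ is a PL manifold at the image of $x$ if and only if the open cone on $S^{n-1}/G_x$ is a PL manifold, where $G_x \subset O_n(\R)$ acts linearly on the tangent space. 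Thus $\R^n/G$ is a PL manifold (without boundary) iff for every $x$, $S^{n-1}/G_x$ is a PL sphere, and it is a PL manifold with boundary iff each such link is either a PL sphere or a PL ball. Each proper isotropy group $G_x$ is itself a finite linear group on $\R^n$ whose $0$-dimensional fixed subspace gives strictly stronger inductive purchase on $S^{n-1}$.

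For the ``if'' direction, assume $G$ is generated by reflections and rotations. If $G$ is a reflection group (Coxeter group) the result is classical: a fundamental Weyl chamber is a simplicial cone, and the folding map $\R^n \to \R^n/G$ is a PL homeomorphism onto this chamber, so $\R^n/G \cong \R^{n-1}\times \R^{\geq 0}$. If $G$ is generated by rotations only, the quotient is a \emph{rational homology manifold}, and one invokes (or reproves by induction on $n$) Mikha\u{\i}lova's theorem: at any $x \in S^{n-1}$, the stabilizer $G_x$ is generated by rotations of $T_x S^{n-1}$ (this uses that a rotation in $G$ whose axis contains $x$ restricts to a rotation on the tangent sphere), so by induction $S^{n-1}/G$ is locally a PL sphere everywhere; combined with simple connectivity and the Chevalley--Shephard--Todd smoothness of the complexified quotient, one concludes $S^{n-1}/G$ is globally a PL sphere, so the cone $\R^n/G$ is $\R^n$. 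In the mixed case one decomposes along the arrangement of reflection hyperplanes and glues.

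For the ``only if'' direction, which is the main obstacle, let $\Grr \subseteq G$ be the subgroup generated by all reflections and rotations in $G$. A conjugate of a reflection is a reflection and a conjugate of a rotation is a rotation, so $\Grr$ is normal in $G$. By the ``if'' direction, $X := \R^n/\Grr$ is a PL manifold (with or without boundary), and $\R^n/G = X/\overline{G}$ where $\overline{G} := G/\Grr$. The task is to rule out any nontrivial $\overline{G}$. The key point is that any $g \in G \setminus \Grr$ has fixed-point set in $\R^n$ of codimension $\geq 3$ (otherwise it would itself be a reflection or rotation, hence in $\Grr$), and this property descends: the induced homeomorphism $\bar g$ of $X$ has fixed locus of codimension $\geq 3$ in $X$ away from the branch strata. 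A nontrivial PL self-map of a PL manifold whose fixed point set has codimension $\geq 3$ cannot have a PL-manifold quotient, because the link of a point on the fixed locus would have to be a $\overline{G}$-quotient of a sphere $S^k$ ($k\geq 2$) that is simultaneously a sphere and on which $\overline{G}$ acts freely in low cohomology; a local homology computation (Smith theory applied to the action of a prime-order cyclic subgroup of $\overline{G}$) then forces $\overline{G}$ to be trivial. This homological obstruction — essentially, that quotients of spheres by nontrivial freely-acting finite groups have nontrivial fundamental group and thus fail Alexander duality needed for the link to be a sphere — is the crux of the argument, and in Lange's proof is where the classification of finite subgroups of $O_n$ and the Chevalley--Shephard--Todd theorem enter decisively. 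The remaining assertion that $\R^n/G$ is homeomorphic to $\R^{n-1}\times \R^{\geq 0}$ or $\R^n$ according as $G$ does or does not contain a reflection follows by noting that the reflection-generated subgroup always produces a Weyl chamber factor, while pure rotations give a closed manifold quotient of the sphere that must be $S^{n-1}$ by the above analysis.
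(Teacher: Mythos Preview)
This theorem is not proved in the paper; it is cited from Lange's work (\cite{lange2}, \cite{lange}), and the paper only \emph{describes} the shape of Lange's argument in a remark following the statement. So the correct comparison is between your sketch and that description.

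Your proposal has a genuine structural gap in the ``if'' direction. The induction hinges on the claim that for a rotation group $G$ and a point $x$, the stabilizer $G_x$ acts on the tangent space as a rotation group (and similarly in the mixed case). But $G$ being generated by rotations does \emph{not} imply that its subgroups are: an element of $G_x$ may be a product of rotations none of which individually fixes $x$, and there is no a priori reason its fixed set on $T_xS^{n-1}$ has codimension $\leq 2$. Without this, the induction does not close. The paper's description makes clear that Lange's actual proof does not proceed by a clean structural induction of this kind: it ``relies on a complete classification of rotation-reflection groups'' (Lange--Mikha\^{i}lova, \cite{langemikhailova}) and is ``a delicate case-based induction on the group order'' using ``a diverse and ad-hoc set of tools ranging from the Chevalley-Shephard-Todd theorem, to explicit construction of fundamental domains for the action, to the Poincar\'{e} conjecture.'' Your one-line ``decompose along the reflection hyperplanes and glue'' for the mixed case, and the passage from ``locally a PL sphere'' to ``globally $S^{n-1}$'' via ``simple connectivity,'' are exactly the places where the real work (including the Poincar\'{e} conjecture) is hidden.

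Indeed, the paper explicitly poses the existence of a classification-free proof as an open problem (Question~\ref{q:classificationfreeLange}), and notes that Lange himself has proposed a program toward one. Your ``only if'' sketch is closer in spirit to arguments the paper does carry out (compare the $\Leftarrow$ direction of Theorem~\ref{thm:quotientCMforpermgroups} and the remark that this direction ``can also be extracted from Lange's work on when $\R^n/G$ is a homology manifold''), and is on firmer ground; but the ``if'' direction as you have written it is a program, not a proof.
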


\begin{remark}\label{ref:linkinPLmanifold}
A basic fact about PL manifolds is that in any PL triangulation, the link of every face is PL-homeomorphic to a sphere, or, if the face is contained in the boundary, a ball.
\end{remark}

\begin{definition}
A finite group $G\subset GL_n(\R)$ generated by reflections, respectively rotations, respectively rotations and reflections, is a \textbf{reflection group}, respectively \textbf{rotation group}, respectively \textbf{rotation-reflection group}.
\end{definition}

Reflection groups are very important and well-studied objects. They are exactly the finite {\em Coxeter groups}. See \cite{bjornerbrenti}, \cite{davis}, and \cite{humphreys2} for three very different angles on their general theory. Their complex analogues are the {\em pseudoreflection groups}, the subject of the Chevalley-Shephard Todd theorem. See \cite{lehrertaylor} for an introduction. Pseudoreflection groups become rotation groups by regarding the underlying $\C$-vector space as an $\R$-vector space of twice the dimension and forgetting the complex structure. Lange and Mikha\^{i}lova's work may be seen as a first move toward a theory of rotation-reflection groups that extends these well-developed theories.

\begin{remark}
Most of the work of theorem \ref{thm:lange} lies in the ``if" direction. The proof relies on a complete classification of rotation-reflection groups, which Lange published jointly with Mikha\^{i}lova (\cite{langemikhailova}). This classification is used to verify that the quotient of a rotation-reflection group is always $\R^n$ or $\R^{n-1}\times\R^{\geq 0}$ via a delicate case-based induction on the group order. The argument uses a diverse and ad-hoc set of tools ranging from the Chevalley-Shephard-Todd theorem, to explicit construction of fundamental domains for the action, to the Poincar\'{e} conjecture. (This theorem is the principal aim of \cite{lange2}. For a more comprehensive and self-contained account, see Lange's thesis \cite{lange}, which collects everything in one place.)
\end{remark}

Using theorem \ref{thm:lange}, we are able to determine exactly when $\Delta / G$ is Cohen-Macaulay:

\begin{notation}
Let $S_n$ act on an $(n-1)$-simplex linearly by permutations of the vertices. Let $\Delta$ be any triangulation of the simplex such that the action by $S_n$ is simplicial. Let $G\subset S_n$ be any permutation group of degree $n$.
\end{notation}

\begin{thm}\label{thm:quotientCMforpermgroups}
With this notation, $\Delta / G$ is Cohen-Macaulay in the sense of definition \ref{def:CMcomplex}, over $\Z$, or equivalently every field $k$, if and only if $G$ is generated by transpositions, double transpositions, and 3-cycles.
\end{thm}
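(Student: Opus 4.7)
My plan is to translate topological Cohen-Macaulayness of $\Delta/G$ (in the sense of Definition \ref{def:CMcomplex}) into a condition on the action of stabilizers via Theme 1, and then use Lange's theorem (\ref{thm:lange}) as the main engine. Since $|\Delta|$ is a PL $(n-1)$-ball on which $G$ acts linearly---$G \subset S_n$ acts on $\R^n$ by permuting coordinates, which is orthogonal and preserves the affine hyperplane containing the simplex---every link in $\Delta$ is a PL sphere or PL ball, and by \ref{lem:quotientstabtop} every link in $\Delta/G$ takes the form $\mathrm{lk}_\Delta(\alpha)/G_\alpha$, where $G_\alpha$ acts orthogonally on the normal slice to $\alpha$ and invariantly on the link. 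Both directions of the theorem will be addressed by analyzing these link quotients.

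For the forward direction, suppose $G = \Grr$. I would first establish, via Lemma \ref{lem:primestabilizer}, that each stabilizer $G_\alpha$ acts on its normal slice again as a rotation-reflection group. Concretely, a face $\alpha$ of $\Delta$ corresponds to a flag of subsets of $[n]$, $G_\alpha$ is the subgroup of $G$ preserving each block of the flag, and a permutation in $G_\alpha$ acts as a reflection or rotation of the normal slice precisely when it is a transposition, double transposition, or 3-cycle of $G_\alpha$; the lemma says that if $G$ is generated by these elements globally, so is each $G_\alpha$. Given this, Lange's theorem implies the quotient of the normal slice by $G_\alpha$ is a PL manifold (with or without boundary), so $\mathrm{lk}_\Delta(\alpha)/G_\alpha$ is a PL sphere or PL ball. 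PL spheres and balls are topologically Cohen-Macaulay, so every link of $\Delta/G$ is CM, hence (by the topological characterization underlying \ref{thm:reisnermunkres}) $\Delta/G$ is CM.

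For the reverse direction, I would argue by contrapositive. Suppose $G \neq \Grr$, so some $g \in G$ has fixed subspace of codimension at least three in $\R^n$. The barycenter $b$ of the simplex is fixed by all of $S_n$, hence by $G$, so the link of its image in $\Delta/G$ is identified with $S^{n-2}/G$, where $G$ acts via its restriction to the tangent hyperplane $\R^{n-1}$. By Lange's theorem, $\R^{n-1}/G$ is not a PL manifold at the origin, so $S^{n-2}/G$ is not a PL sphere. The crux---and the main obstacle---is promoting this failure from the PL to the homological category: I need to show $S^{n-2}/G$ is not topologically Cohen-Macaulay. I would attack this with a Smith-theoretic analysis of the $\langle g\rangle$-action on $S^{n-2}$: because $g$'s fixed locus in $S^{n-2}$ has codimension at least three, standard arguments produce nontrivial reduced homology of the quotient in a degree strictly below $n-2$, or, dually, nonvanishing local homology at a non-manifold point below the top dimension. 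Lemma \ref{lem:primestabilizer} likely allows the analysis to be reduced to a minimal subgroup violating the $\Grr$ condition, keeping the homological computation manageable.

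The main obstacle, as indicated, is the reverse direction's homological step: upgrading Lange's PL-manifold characterization to a Cohen-Macaulay characterization. Cohen-Macaulayness is strictly weaker than being a PL manifold in general, so one needs the rigidity coming from the orthogonal structure of the action on $S^{n-2}$ to close the gap. I expect this to be the deepest point of the argument, with the forward direction being more routine once the stabilizer lemma is in hand.
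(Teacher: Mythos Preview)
Your forward direction is more roundabout than needed, and it misuses Lemma~\ref{lem:primestabilizer}. That lemma does not say that stabilizers inherit the rotation-reflection property; it says that for a partition $\pi$ maximal among those coming from elements of $G\setminus N$, the image of $G_\pi$ in $G/N$ is cyclic of prime order. When $G=\Grr$ there is nothing in $G\setminus\Grr$, so the lemma is vacuous in your forward argument. What you would actually need is a Steinberg-type statement (point stabilizers in rotation-reflection groups are again rotation-reflection on the normal slice), which is true but not that lemma. The paper sidesteps this entirely: it applies Lange's theorem once to the whole action of $G$ on the hyperplane $H=\{\sum x_i=0\}$, concluding $H/G$ is a PL manifold; then $\Delta/G$ is the cone on the link of the origin in $H/G$, hence a ball.

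The reverse direction is where the real gap lies. Your plan---look at $S^{n-2}/G$ and invoke Smith theory for $\langle g\rangle$---does not produce anything concrete: the $\langle g\rangle$-action on $S^{n-2}$ is not free, Smith theory speaks to fixed sets rather than quotient homology, and $S^{n-2}/\langle g\rangle$ is not $S^{n-2}/G$ anyway. ``Not a PL sphere'' is far weaker than ``not Cohen-Macaulay,'' and you have no mechanism to bridge that. The paper's argument is sharply different and uses Lemma~\ref{lem:primestabilizer} exactly where it has force. First quotient by $\Grr$: by the forward direction, $\Delta/\Grr$ is a PL ball. Now choose $\sigma\in G\setminus\Grr$ whose fixed subspace is \emph{maximal}; its codimension is $\geq 3$, so a full-dimensional face $F$ of its fixed subcomplex has $\dim\lk_{\Delta/\Grr}(\overline F)\geq 2$. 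Lemma~\ref{lem:primestabilizer} (with $N=\Grr$) gives $(G/\Grr)_{\overline F}\cong\Z/p\Z$, and the maximality of $\sigma$ forces this $\Z/p\Z$ to act \emph{freely} on $\lk_{\Delta/\Grr}(\overline F)$; Brouwer then rules out a ball, so that link is a sphere. A free $\Z/p\Z$-quotient of a simply connected sphere has $H_1=\Z/p\Z$, so the corresponding link in $\Delta/G$ carries $p$-torsion below top dimension, destroying Cohen-Macaulayness over $\Z$ and over $\F_p$. The two moves you are missing are: pass to $\Delta/\Grr$ first (using the $\Rightarrow$ direction), and pick the face via maximality so that the residual action becomes free and cyclic.
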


Before giving the proof, we recall some fundamentals about the geometry and combinatorics of $S_n$ acting on an $(n-1)$-simplex in the way indicated. We may take as a model the \textbf{standard simplex}, that is, the convex hull of the points $e_1,\dots,e_n\in \R^n$, i.e. the subset of $\R^n$ satisfying 
\[
x_i\geq 0,\;\forall i
\]
and
\[
\sum x_i = 1.
\]
Let $S_n$ act on it by permuting the axes of $\R^n$. Then a transposition $(ij)\in S_n$ fixes the hyperplane $x_i = x_j$; thus it is the reflection in this hyperplane with respect to standard dot product, since $S_n\subset O_n(\R)$. Because $S_n$ is generated by its transpositions, it is a reflection group; in fact it is a central example of a reflection group. 

The hyperplanes $x_i=x_j$ induce a triangulation $\Delta$ of the simplex. The boundary complex of this triangulation is the {\em Coxeter complex} of $S_n$ (see \cite{brown}, Chapter 1; \cite{reiner92}, Chapter 2; \cite{humphreys2}, \S 1.15). In fact, $\Delta$ is nothing but the barycentric subdivision of the original simplex. See figure \ref{fig:coxetercomplex}.

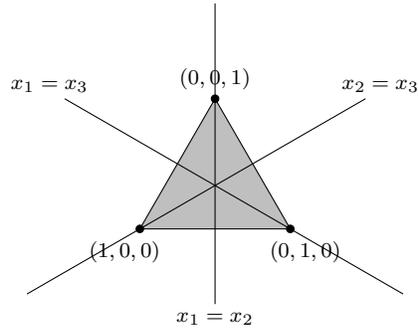
\begin{figure}
\begin{center}
\begin{tikzpicture}

\draw [fill=lightgray] (0,0) -- (2,0) -- (1,1.732) -- (0,0);
\node at (-0.2,-0.3) {\scriptsize $(1,0,0)$};
\node at (2.2,-0.3) {\scriptsize $(0,1,0)$};
\node at (1,2) {\scriptsize $(0,0,1)$};

\draw [fill = black] (0,0) circle [radius = 0.05];
\draw [fill = black] (2,0) circle [radius = 0.05];
\draw [fill = black] (1,1.732) circle [radius = 0.05];

\draw (-1.5,-0.866) -- (3,1.732);
\draw (-1,1.732) -- (3.5,-0.866);
\draw (1,-1) -- (1,3);

\node at (1,-1.2) {\scriptsize $x_1=x_2$};
\node at (3.2, 1.9) {\scriptsize $x_2=x_3$};
\node at (-1.2, 1.9) {\scriptsize $x_1=x_3$};

\end{tikzpicture}
\end{center}
\caption{The Coxeter complex of $S_3$: the standard simplex in the plane $x_1+x_2+x_3=1$, and its intersections with the planes $x_i=x_j$.}\label{fig:coxetercomplex}
\end{figure}

Since the face poset (without minimal element) of a simplex is $\Bn\setminus\{\emptyset\}$, this means that $\Delta$ is precisely the order complex of $\Bn\setminus\{\emptyset\}$, in other words, it is exactly the simplicial complex whose Stanley-Reisner ring is $S$.

\begin{notation}
The hyperplanes themselves form the \textbf{braid arrangement} $\mathcal{A}_{n-1}$ (see \cite{bjorner}, where it is called $\mathcal{A}_{n,2}$, and \cite{wachs}). The hyperplanes intersect along subspaces; the set of all of these subspaces, together with the whole space $\R^n$, ordered by inclusion, is the \textbf{intersection lattice} $L(\mathcal{A}_{n-1})$.
\end{notation}

All points of $\R^n$ that are fixed nontrivially by some element of $S_n$ are contained in the union of the hyperplanes. 

\begin{lemma}
The lattice $L(\mathcal{A}_{n-1})$ is order-isomorphic to the \textbf{partition lattice} $\Pi_n$ of partitions of the set $\gls{[n]}$, ordered by refinement.
\end{lemma}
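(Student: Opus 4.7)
The plan is to exhibit an explicit bijection $\phi : \Pi_n \to L(\mathcal{A}_{n-1})$ and then check that it intertwines the order structures. For a partition $\pi \in \Pi_n$, define
\[
L_\pi \;=\; \{x \in \R^n : x_i = x_j \text{ whenever } i \text{ and } j \text{ lie in the same block of } \pi\}.
\]
Since $L_\pi$ is literally the intersection of the hyperplanes $H_{ij} = \{x_i = x_j\}$ taken over all $i,j$ in a common block of $\pi$, it is an element of $L(\mathcal{A}_{n-1})$ (the finest partition, into singletons, contributes the empty intersection, which we interpret as $\R^n$). Set $\phi(\pi) = L_\pi$.

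Next I would construct the inverse. Given a subspace $V \in L(\mathcal{A}_{n-1})$, define a relation on $[n]$ by $i \sim_V j$ iff $x_i(v) = x_j(v)$ for every $v \in V$; equivalently, iff $V \subseteq H_{ij}$. Transitivity of equality makes $\sim_V$ an equivalence relation, so it determines a partition $\psi(V) \in \Pi_n$. The key observation is that any $V \in L(\mathcal{A}_{n-1})$ is recovered as $L_{\psi(V)}$: writing $V$ as an intersection $\bigcap_{(i,j)\in S}H_{ij}$, the equivalence relation generated by the pairs in $S$ is exactly $\sim_V$, because imposing $x_i = x_j$ and $x_j = x_k$ forces $x_i = x_k$, and no further identifications arise. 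This gives $\phi \circ \psi = \mathrm{id}$, and the reverse composition is immediate from the definitions, so $\phi$ is a bijection.

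Finally I would verify the order-theoretic statement. If $\pi$ refines $\sigma$, then every equality $x_i = x_j$ imposed by $\sigma$ is already imposed by $\pi$, so $L_\pi \subseteq L_\sigma$; conversely, containment $L_\pi \subseteq L_\sigma$ means every equality holding on $L_\pi$ also holds on $L_\sigma$, which forces each block of $\sigma$ to be a union of blocks of $\pi$. Thus $\phi$ is a poset isomorphism between $\Pi_n$ ordered by refinement and $L(\mathcal{A}_{n-1})$ ordered by reverse inclusion (or, equivalently, an anti-isomorphism with respect to inclusion; the ambient conventions for the two lattices are standardized so that these agree). There is no substantial obstacle to overcome here — the whole argument is essentially bookkeeping — but the one subtlety worth stating carefully is the transitivity step in the surjectivity of $\phi$, since it is what makes the partition lattice (rather than merely the poset of symmetric relations on $[n]$) the right combinatorial model.
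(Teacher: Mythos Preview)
Your proposal is correct and uses exactly the same bijection as the paper; the paper's proof is in fact much terser than yours, simply exhibiting the map $\pi\mapsto L_\pi$ and leaving the verification implicit. One small slip: in your order check you wrote that if $\pi$ refines $\sigma$ then every equality imposed by $\sigma$ is imposed by $\pi$, but it is the reverse (blocks of $\pi$ sit inside blocks of $\sigma$, so $\pi$ imposes \emph{fewer} equalities and $L_\pi\supseteq L_\sigma$); your hedging about reverse inclusion at the end absorbs this, so the conclusion is still right.
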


\begin{proof}
The isomorphism is as follows. The partition
\[
\gls{[n]} = \overset{r}{\underset{1}{\gls{coprod}}} \lambda_i
\]
into disjoint sets $\lambda_i$ (call it $\pi$) corresponds to the subspace cut out by the conditions that for each $i$, all $x_j$ with $j\in \lambda_i$ are equal. 
\end{proof}

Observe that a partition with $m$ blocks corresponds to a subspace of dimension $m$. In particular, if a subspace has codimension $\leq 2$, the corresponding partition has $\geq n-2$ blocks.

A permutation $\sigma\in S_n$ determines a partition $\pi$ of $\gls{[n]}$ into orbits; its fixed-point set is the corresponding subspace in $L(\mathcal{A}_{n-1})$. Thus the reflections are exactly the transpositions, since a partition with $n-1$ blocks must have a single block of size two and the rest singletons, and the rotations are the double-transpositions and three-cycles, as the corresponding partitions are the only possibilities with $n-2$ blocks.

The stabilizer of any point $x\in\R^n$ in any subgroup $G\subset S_n$ can be described in terms of $L(\mathcal{A}_{n-1})$. One finds the minimal element of $L(\mathcal{A}_{n-1})$ containing $x$. (It describes all the collisions between $x$'s coordinates.) This defines a partition $\pi\in \Pi_n$ according to the above scheme. Then the stabilizer of $x$ in $G$ consists of all elements of $G$ whose associated partition of $\gls{[n]}$ into orbits refines $\pi$. Note that this is also the stabilizer of $\pi$ for the natural action of $G$ on $\Pi_n$.


All this established, we need only one more technical lemma on permutation groups before we are ready to begin proving theorem \ref{thm:quotientCMforpermgroups}.

\begin{notation}
If $G$ acts on a set $X$ and $x\in X$, the stabilizer of $x$ in $G$ is written $G_x$.
\end{notation}

\begin{lemma}\label{lem:primestabilizer}
Let $G\subset S_n$, and let $N\triangleleft G$ be a normal subgroup. Let $\pi$ be maximal in $\Pi_n$ among partitions associated (via the map $\sigma\mapsto \pi$ just described) with elements of $G\setminus N$. Then the image in $G/N$ of the stabilizer of $\pi$ in $G$ is cyclic of prime order $p$, and any element of $G$ whose orbits are given by $\pi$ has order $p^k$ and its image in $G/N$ generates this stabilizer.
\end{lemma}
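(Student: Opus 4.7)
The plan is to use the maximality of $\pi$ to wring out heavy constraints on the structure of elements of $H$ outside $H \cap N$. First I would establish a key dichotomy: any $\tau \in H \setminus (H \cap N)$ must satisfy $\pi(\tau) = \pi$ exactly. Indeed, $\tau \in H$ means $\pi(\tau)$ refines $\pi$, placing it at or above $\pi$ in the convention used here (where finer partitions sit higher in $\Pi_n$). If the refinement were strict, $\pi(\tau)$ would be a strictly larger partition in $\Pi_n$ associated with an element of $G \setminus N$, contradicting maximality. Consequently each such $\tau$ acts as a single $n_i$-cycle on every block $B_i$ of $\pi$.

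Next I would show that all block sizes $n_i$ are powers of a single prime $p$. Fix $\sigma$ with $\pi(\sigma) = \pi$, and let $p$ be the smallest prime dividing any $n_i$. Suppose some $n_{i_0}$ had a prime factor $q > p$. Consider $\sigma^q$. On $B_{i_0}$ it splits into $q$ sub-orbits of length $n_{i_0}/q$, so $\pi(\sigma^q)$ strictly refines $\pi$ at that block. Provided $\sigma^q \notin N$ (which will follow from the order computation below since $q \neq p$), this is an element of $G \setminus N$ whose associated partition is strictly greater than $\pi$, contradicting maximality. Hence $n_i = p^{a_i}$ and $|\sigma| = \operatorname{lcm}(n_i) = p^k$ for $k = \max a_i$, giving the claimed $p^k$-order statement.

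I would then identify $|\bar{\sigma}|$ in $G/N$ as $p$. An integer $j$ satisfies $\sigma^j \in H \cap N$ precisely when $\pi(\sigma^j) \neq \pi$, which in turn happens exactly when $\gcd(j, n_i) > 1$ for some $i$, i.e.\ when $j$ shares a prime factor with $\operatorname{lcm}(n_i) = p^k$. The smallest such positive $j$ is $p$, so $|\bar{\sigma}| = p$. The same analysis applied to any other $\tau \in H \setminus (H \cap N)$ shows that $|\bar{\tau}| = p$ as well, so $H/(H\cap N)$ has exponent $p$.

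The hard part will be showing $H/(H \cap N) = \langle \bar{\sigma}\rangle$, so that the quotient is cyclic of order $p$ generated by $\bar{\sigma}$. The strategy would be: given $\tau \in H \setminus (H \cap N)$, demonstrate that $\sigma^{k}\tau^{-1} \in H \cap N$ for some $k \in \{0, 1, \ldots, p-1\}$. Suppose not; then each $\sigma^{k}\tau^{-1}$ would lie in $H \setminus (H \cap N)$ and hence have partition $\pi$, forcing, on any block $B_i$ with $p \mid n_i$, that $c_i^{k}d_i^{-1}$ be an $n_i$-cycle for every $k \in \{0, 1, \ldots, p-1\}$, where $c_i = \sigma|_{B_i}$ and $d_i = \tau|_{B_i}$. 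For $p=2$, a parity argument finishes immediately: $2^a$-cycles are odd, and the product of two odd permutations is even, so no such $d_i$ exists. For $p$ odd and $n_i = p$, tracking displacements $v_j = d_i(j) - j \bmod p$ shows they would have to avoid all residues in $\{0, 1, \ldots, p-1\} = \mathbb{Z}/p$, which is impossible. I expect the residual case of $p$ odd with $n_i = p^{a}$ for some $a \geq 2$ to be the main obstacle; handling it will likely require combining the constraints across multiple blocks simultaneously, or exploiting a more refined structural property of the subgroup of $\prod_i \operatorname{Sym}(B_i)$ generated by the restrictions of $\sigma$ and $\tau$, rather than working within a single block.
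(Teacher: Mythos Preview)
Your outline is sound through step~3, but step~4---showing $H/(H\cap N)$ is cyclic generated by $\bar\sigma$---is left genuinely incomplete, and the cycle-structure analysis you propose for it is far harder than necessary. You correctly reduce to showing that for any $\tau\in H\setminus(H\cap N)$ there exists $k$ with $\sigma^k\tau^{-1}\in N$; but you then try to argue, block by block, that $\sigma^k\tau^{-1}$ cannot be a full $n_i$-cycle on every block $B_i$ simultaneously for all $k$. This forces you into delicate combinatorics that you yourself flag as unresolved for $n_i=p^a$ with $a\geq 2$.

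The paper's argument avoids all of this with a single observation: pick any point $a$ moved by $\tau$. Since $\sigma$ acts transitively on the block of $\pi$ containing $a$, there is some integer $k$ with $\sigma^k(a)=\tau(a)$. Then $\tau^{-1}\sigma^k$ fixes $a$, while $a$ lies in a block of $\pi$ of size at least~$2$ (since $\tau$ moves it). Hence the orbit partition of $\tau^{-1}\sigma^k$ strictly refines $\pi$, and maximality forces $\tau^{-1}\sigma^k\in N$, i.e.\ $\tau N=\sigma^kN$. This one-line trick---manufacture a single new fixed point rather than control global cycle structure---establishes cyclicity immediately, and then the $p$-power order and $|\bar\sigma|=p$ statements follow by applying the same maximality argument to $\sigma^p$ for each prime $p$ dividing $|\sigma|$.

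Two minor issues worth noting: your step~2 invokes ``$\sigma^q\notin N$, which will follow from the order computation below,'' which is circular as written (though easily repaired by arguing directly that $\sigma^p\in N$ and $\sigma^q\in N$ for distinct primes $p,q$ would force $\sigma\in N$); and your step~3 asserts a biconditional where only one direction has been established (nothing rules out elements of $N$ with orbit partition exactly $\pi$), though only that direction is needed. Neither of these is the real obstacle; the missing fixed-point trick is.
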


\begin{proof}
Let $g$ be an element of $G\setminus N$ whose orbits are given by $\pi$, and let $h$ be any other nontrivial element of $G_\pi$. Pick any element $a\in \gls{[n]}$ acted on nontrivially by $h$. Since $h$ preserves $\pi$ and $g$ acts transitively on each block of $\pi$, there is a $k\in\Z$ such that $g^k(a) = h(a)$. Then $h^{-1}g^k(a) = a$, so that $h^{-1}g^k$ both preserves $\pi$ and has a fixed point $a$ that $g$ does not have. Thus its orbits properly refine $\pi$, and maximality of $\pi$ among partitions associated to elements of $G\setminus N$ implies that $h^{-1}g^k\in N$. Thus $hN = g^kN$. This shows that $g$ generates the image of $G_\pi$ in $G/N$; thus this image is cyclic. Meanwhile, for any prime $p$ dividing the order of $g$, $g^p$'s orbits also properly refine $g$'s, so $g^p$ is in $N$ too; thus $g$'s image in $G/N$ has order dividing $p$. Since $g\notin N$ by construction, there must be one such $p$ but there can only be one. We conclude $g$ has $p$-power order in $G$ and its image in $G/N$ has order $p$.
\end{proof}

\begin{proof}[Proof of theorem \ref{thm:quotientCMforpermgroups}]
Because the Cohen-Macaulay condition is a topological property, it is insensitive to the CW structure on $|\Delta|/G$. Thus we may choose a triangulation of the total space $|\Delta|$ to suit our needs, and we may embed this triangulation into $\R^n$ in any way that respects the action of $G$.

$\Rightarrow$ This is an immediate consequence of Lange's result. If $G$ is generated by transpositions, double transpositions, and three-cycles, then it is a rotation-reflection group; in this case its restriction to the hyperplane 
\[
H = \left\{\sum x_i = 0\right\},
\]
which is fixed under the action of $S_n$, is a rotation-reflection group as well. Take $\Delta$ to be the orthogonal projection into $H$ of the barycentric subdivision of the standard simplex. Extend $\Delta$ to a triangulation of $H$ in any way, subject to the constraint that $G$ acts simplicially. Then $\Delta / G$ is the cone over the link of the origin in the induced triangulation of $H/G$. Theorem \ref{thm:lange} implies $H/G$ is a PL manifold, thus the link of the origin is a sphere or a ball, and the cone over it is a ball. It is thus Cohen-Macaulay over any field.

$\Leftarrow$ Now we choose as $\Delta$ the second barycentric subdivision of the standard simplex in $\R^n$, orthogonally projected into $H$. As we have seen, the first barycentric subdivision yields the order complex of $\Bn\setminus\{\emptyset\}$, for which the $G$-action is balanced, thus the quotient by $G$ is a balanced boolean complex. Thus the second barycentric subdivision yields a $G$-quotient $\Delta/G$ that is the barycentric subdivsion of a balanced boolean complex. This is the order complex of the face poset of this complex, so it is even a simplicial complex. Similarly, $\Delta / G_1$ is a simplicial complex for any subgroup $G_1\subset G$. (Thus we will be able to apply definition \ref{def:link} and theorem \ref{thm:reisnermunkres} without any adjustments for boolean complexes.)

Note also that all of the fixed-point sets in $\Delta$ of elements of $G$ (in fact of $S_n$) are subcomplexes of $\Delta$, since the reflecting hyperplanes of $S_n$ induce the first barycentric subdivision and $\Delta$ is a refinement of this triangulation. Furthermore, any face of $\Delta$ fixed setwise by an element of $G$ (in fact of $S_n$) is even fixed pointwise.

Let $\Grr$ be the normal subgroup of $G$ generated by transpositions, double transpositions, and three-cycles. By assumption, $\Grr\subsetneq G$. Let $\sigma \in G\setminus \Grr$ be an element whose fixed-point set in $H$ is maximal among such elements; equivalently, such that the partition $\pi$ of $\gls{[n]}$ into orbits of $\sigma$ is maximal. Since all rotations and reflections are contained in $\Grr$, $\sigma$'s fixed-point set must have codimension $\geq 3$. Since $\dim \Delta = n-1$, this means its dimension is $\leq n-4$. Let $F\in \Delta$ be a face of full dimension contained in the subcomplex of $\Delta$ fixed by $\sigma$, and let $\overline{F}$ be its image in the quotient $\Delta / \Grr$.

By our construction, $\Delta / \Grr$ is a simplicial complex, of which $\overline{F}$ is a face. By the $\Rightarrow$ direction, $\Delta/\Grr$ is a PL triangulation of a ball. In particular, the link $\lk_{\Delta/\Grr}(\overline{F})$ of $\overline{F}$ in $\Delta / \Grr$ is topologically a sphere or a ball, of dimension 
$$
\dim \Delta/\Grr - \dim \overline{F} - 1 = \dim \Delta - \dim F - 1 = n - 2 - \dim F,
$$ 
with the first equality because the quotient map $\Delta \rightarrow \Delta / \Grr$ is dimension-preserving, and the second because $\dim \Delta = n-1$. Since $\dim F \leq n-4$, we have that 
\[
\dim \lk_{\Delta/\Grr}(\overline{F}) \geq (n-2) - (n-4) = 2.
\]

Now $\Delta / \Grr$ has an action of $G/\Grr$, and 
$$
\Delta / G \cong (\Delta / \Grr) / (G/\Grr).
$$ 
Let $F'$ be the image of $\overline{F}$ under the quotient map $\Delta / \Grr \rightarrow \Delta / G$, or equivalently the image of $F$ under $\Delta \rightarrow \Delta / G$. Then $\lk_{\Delta / G} (F')$ is the image of $\lk_{\Delta/\Grr}(\overline{F})$ under the action on the latter by the stabilizer $(G/\Grr)_{\overline{F}}$ of $\overline{F}$ in $G/\Grr$. By an elementary result in group theory (see \ref{lem:quotientstabilizer} in the algebraic lemmas appendix), this stabilizer is the image of the stabilizer $G_F$ under $G\mapsto G/\Grr$.

Since $F$ is full dimension in the fixed-point set of $\sigma$, its stabilizer is exactly the stabilizer of this fixed-point set, which in turn is the stabilizer in $G$ of the partition $\pi$ describing $\sigma$'s orbits. Thus lemma \ref{lem:primestabilizer} applies, and 
\[
(G/\Grr)_{\overline{F}} = \langle \overline{\sigma}\rangle \cong \Z/p\Z
\]
for some prime $p$, where $\overline{\sigma}$ is the image of $\sigma$ in $G/\Grr$. (As an addendum, we learn $\sigma$ has prime-power order.)

Furthermore, this stabilizer acts freely on $\lk_{\Delta/\Grr}(\overline{F})$. For if any nontrivial element of $(G/\Grr)_{\overline F}$ fixed a nontrivial face of $\lk_{\Delta/\Grr}(\overline{F})$, by the same basic result of group theory (\ref{lem:quotientstabilizer}), there would be a nontrivial element in $G\setminus \Grr$ stabilizing $F$ and also fixing a face $\alpha$ of $\lk_{\Delta}(F)$, and then $\alpha$ and $F$ would generate a nontrivially-fixed face of $\Delta$ strictly containing $F$, contradicting $F$'s maximality.

It follows by the Brouwer fixed-point theorem that $\lk_{\Delta/\Grr}(\overline{F})$ is a sphere, not a ball.\footnote{In fact, it follows that it is odd-dimensional unless $p=2$, as a corollary to the well-known fact that $\Z/2\Z$ is the only group that can act freely on an even-dimensional sphere, which is a consequence of the Lefschetz fixed point theorem.} Thus $\lk_{\Delta/G}(F')$ is the quotient of a sphere of dimension at least 2, which is simply connected, by a free action of $\Z/p\Z$. It follows that 
\[
H_1(\lk_{\Delta/G}(F');\Z) = \Z/p\Z.
\]
In particular, $p$-torsion shows up in the homology of this link in $\Delta/G$ below top dimension. Thus $\Delta/G$ is not Cohen-Macaulay over $\Z$ or $\F_p$.
\end{proof}


\begin{remark}
We expect that the $\Leftarrow$ direction can also be extracted from Lange's work on when $\R^n/G$ is a homology manifold -- see \cite{lange}, Chapter 4.
\end{remark}

\section{When the invariant ring is Cohen-Macaulay}\label{sec:mainresult}

We now assemble the pieces we have built into a proof of the chapter's main objective, theorem \ref{thm:mainresult}, which we restate for convenience:

\begin{thm}
Let $R$ be the integer polynomial ring $\Z[x_1,\dots,x_n]$. Let $G\subset S_n$ be a permutation group of degree $n$. Then if $G$ is generated by its transpositions, double transpositions, and three-cycles, the invariant ring $R^G$ is Cohen-Macaulay.
\end{thm}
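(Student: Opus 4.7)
The plan is to assemble the theorem by following the schematic of Figure \ref{fig:schematic}, running the chain of equivalences established over the course of the chapter from the topological end back to the invariant ring. Throughout, let $\Delta$ be the order complex of the poset $\Bn\setminus\{\emptyset\}$, so that $\Z[\Delta]$ is precisely the ring $S$ of \ref{not:definitionofS}, and $G$ acts on $\Delta$ balancedly via its permutation action on $\gls{[n]}$.

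First, from the hypothesis $G=\Grr$, theorem \ref{thm:quotientCMforpermgroups} tells us that the quotient complex $\Delta/G$ is Cohen-Macaulay over $\Z$ in the topological sense of definition \ref{def:CMcomplex}. By corollary \ref{cor:deltaisbalanced} (Reiner's theorem applied to our balanced action), the invariant ring $S^G$ is isomorphic to the Stanley-Reisner ring $\Z[\Delta/G]$ of this balanced boolean complex, so Duval's theorem (corollary \ref{cor:duval}), in its integer-coefficient form, converts the topological Cohen-Macaulayness of $\Delta/G$ into the algebraic Cohen-Macaulayness of the ring $S^G$.

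Next, by proposition \ref{prop:hsops} the rank-row sums $\theta_1,\dots,\theta_n$ constitute an h.s.o.p. for $S^G$ over $\Z$, so the Hironaka criterion over $\Z$ (theorem \ref{thm:HironakaoverZ}) promotes the Cohen-Macaulayness of $S^G$ to freeness as a module over $S^{S_n}=\Z[\theta_1,\dots,\theta_n]$. Because $S^G$ splits as an $S^{S_n}$-submodule direct sum across the fine grading (each finely graded component of $S^G$ being the span of the $G$-orbit monomials in that grade), one can choose a basis $B$ that is finely homogeneous. Then theorem \ref{thm:garsiabasis} (Garsia-Stanton), using that $\Z$ is a domain, transports $B$ to a homogeneous $R^{S_n}$-basis $\garsia(B)$ of $R^G$. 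Applying the Hironaka criterion over $\Z$ a second time, now with the h.s.o.p. $\sigma_1,\dots,\sigma_n$ for $R^G$ (again from proposition \ref{prop:hsops}), this freeness is equivalent to the Cohen-Macaulayness of $R^G$, which is what we wanted.

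The main conceptual obstacle has already been dispatched upstream: it is theorem \ref{thm:quotientCMforpermgroups}, which depends on Lange's classification of PL quotients and the homology computation via the Brouwer fixed-point theorem at a face fixed by a putative element of $G\setminus\Grr$. Once that topological input is in hand, the proof above is essentially bookkeeping, pasting together the ring-theoretic Hironaka criterion, the combinatorial Duval-Reiner description of $S^G$ as a Stanley-Reisner ring, and the Garsia map's role as a ``near-homomorphism'' carrying module bases across.
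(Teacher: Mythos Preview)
Your proof is correct and follows essentially the same chain of implications as the paper's own proof: theorem \ref{thm:quotientCMforpermgroups} $\Rightarrow$ Duval/Reiner $\Rightarrow$ Hironaka over $\Z$ $\Rightarrow$ Garsia map $\Rightarrow$ Hironaka over $\Z$ again.

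One small correction: your justification for the existence of a finely homogeneous basis is not quite right. You write that ``$S^G$ splits as an $S^{S_n}$-submodule direct sum across the fine grading,'' but the finely graded components of $S^G$ are \emph{not} $S^{S_n}$-submodules, since multiplication by $\theta_i$ shifts the fine grade by $e_i$. What is true is that $S^{S_n}=\Z[\theta_1,\dots,\theta_n]$ is itself $\N^n$-graded and $S^G$ is a graded module over it; the paper obtains the homogeneous basis by invoking proposition \ref{prop:gradedandfreeisgradedfree} (a graded-Nakayama argument showing that a free graded module over a positively graded ring whose degree-zero piece has free projectives admits a homogeneous basis). With that citation in place of your parenthetical, the argument is complete.
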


\begin{proof}
Let $\Delta$ be the order complex of $\Bn\setminus\{\emptyset\}$. Under the hypothesis on $G$, $\Delta/G$ is a Cohen-Macaulay complex, by theorem \ref{thm:quotientCMforpermgroups}. This means that $\Z[\Delta / G]$ is a Cohen-Macaulay ring, by \ref{thm:duval}. This ring is isomorphic to $\Z[\Delta]^G$, by \ref{thm:reinerSRquotient}. Cohen-Macaulayness of this ring implies it has a free basis as a $\Z[\theta_1,\dots,\theta_n]$-module, by \ref{thm:HironakaoverZ}. This basis $B$ can be taken to be finely homogeneous (see proposition \ref{prop:gradedandfreeisgradedfree} in the algebraic lemmas appendix). But then $\garsia(B)$ is a basis for $R^G$ as a $\Z[\sigma_1,\dots,\sigma_n]=R^{S_n}$-module, by \ref{thm:garsiabasis}. Therefore $R^G$ is Cohen-Macaulay, again by \ref{thm:HironakaoverZ}. This completes the argument.
\end{proof}

\begin{remark}
The class of groups $G$ satisfying the hypothesis of this theorem is fairly restricted. The transitive such groups were classified in 1979 by W. Cary Huffman (\cite{huffman}, Theorem 2.1). Huffman's classification theorem is given in appendix \ref{appendix:huffman}. 
\end{remark}

\section{Shellings and bases}\label{sec:shellings}


We have seen a condition on $G$ under which $R^G$ is guaranteed to be a free $R^{S_n}$-module. The present section is concerned with explicitly finding bases for this module, when this occurs. More generally, since $R^G\otimes \Q$ is always free as an $R^{S_n}\otimes \Q$-module, we seek constructions that provide bases for the former over the latter.

This latter goal was the original motivation of Garsia and Stanton in \cite{garsiastanton}. Their idea was to seek information with which to construct a basis in the combinatorial structure of the balanced boolean complex $\Delta/G$. This idea leads to a dramatic clarification regarding why the module-freeness of the invariant ring is related to the topology of $\Delta/G$.

Garsia and Stanton gave a construction that works some of the time, specifically when the complex is {\em shellable}. We describe the method in subsection \ref{subsec:shellings}.

The ideal construction would be one that provides a $R^{S_n}\otimes\Q$-basis for $R^G\otimes \Q$ in every case, but that is also a $R^{S_n}$-basis for $R^G$ in the case that the latter is free. We give a construction that conjecturally meets this criterion in subsection \ref{subsec:cellbases}. To describe it, we introduce the notion of a {\em cell basis} of a balanced boolean complex.

We also describe, given a basis so constructed, a method to write an arbitrary invariant on this basis.  Its starting point is an algorithm, due to Manfred G\"{o}bel, that represents an arbitrary element of $R^G$ as an $R^{S_n}$-linear combination of certain ``special" orbit monomials, which works for any group $G$, over any coefficient ring. G\"{o}bel's special orbit monomials do not constitute a basis because they are not $R^{S_n}$-linearly independent. However, they are in bijective correspondence with the cells of $\Delta / G$, and the bases we construct are subsets of them. In the next subsection we explicate G\"{o}bel's method and its connection to $\Delta/G$.

We remark that the goals of this section (finding a basis and writing an arbitrary polynomial on this basis) can also be approached using Gr\"{o}bner basis techniques that are not specific to permutation groups -- see \cite{sturmfels}, section 2.5, and \cite{derksenkemper}, section 3.5. However, the methods we present have the advantage of illuminating the connection to the cell complex $\Delta / G$.

\subsection{G\"{o}bel's algorithm and the cell complex $\Delta/G$}\label{subsec:gobel}

G\"{o}bel's method is well-suited to a description in terms of the Garsia map, so recall the notations of section \ref{sec:prelim}: $R$ (\ref{not:definitionofR}), $S$ (\ref{not:definitionofS}), $\Lambda(m)$ and $\scrP$ (\ref{not:lambdaandP}), $\overline{\Lambda}(m)$ and $\scrPconj$ (\ref{not:lambdaconj}), $\garsia$(\ref{def:garsiamap}), and $\theta_i$ (\ref{not:rankrowsums}).

\begin{definition}[G\"{o}bel]
A monomial $m\in R$ is called \textbf{special} if its shape $\Lambda(m) = (\lambda_1,\dots,\lambda_n)$ satisfies $\lambda_n=0$, and $\lambda_i-\lambda_{i+1}\leq 1$ for $i=1,\dots,n-1$. The orbit monomial of a special monomial is a \textbf{special orbit monomial}.
\end{definition}

\begin{remark}
Note that the ``empty monomial," $1$, satisfies this definition.
\end{remark}

\begin{remark}
In \cite{gobel}, which introduced this notion, G\"{o}bel also included $\sigma_n$ as a special monomial, in order be able to make the assertion that the special orbit monomials always generate $R^G$ as an algebra. Since our interest is in the fact that they generate $R^G$ as an $R^{S_n}$ module (note that $R^{S_n}$ already contains $\sigma_n$), we can avoid this exception.
\end{remark}

\begin{lemma}\label{lem:specialfromsquarefree}
The special monomials in $R$ are precisely the $\garsia$-images of squarefree monomials in $S$ that do not contain $y_{\gls{[n]}}$.
\end{lemma}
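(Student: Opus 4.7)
The plan is to translate both conditions (``squarefree, no $y_{[n]}$'' in $S$ versus ``special'' in $R$) into statements about partitions and then invoke Lemma \ref{lem:conjugatepartition}, which says that the fine grade $\overline{\Lambda}(m)$ of a monomial $m \in S$ is the conjugate of the shape $\Lambda(\garsia(m))$ of its Garsia image.

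First I would set up the correspondence. A nonzero monomial $m \in S$ is supported on a chain, so it has the form $m = \prod_{i=1}^{k} y_{U_i}^{a_i}$ with $U_1 \subsetneq U_2 \subsetneq \cdots \subsetneq U_k$ and $a_i \geq 1$. Squarefreeness is equivalent to each $a_i = 1$, and excluding $y_{[n]}$ is equivalent to $U_k \subsetneq [n]$. For such $m$, $\overline{\Lambda}(m) = (|U_k|, |U_{k-1}|, \ldots, |U_1|)$ (sorted in decreasing order), whose parts are therefore \emph{strictly decreasing} and whose largest part $|U_k|$ is $< n$. Conversely, any partition $\overline{\lambda}$ with strictly decreasing parts, all of size at most $n-1$, uniquely specifies a chain $U_1 \subsetneq \cdots \subsetneq U_k \subsetneq [n]$ (by taking the $U_i$ to be the unique sets of the prescribed sizes — but actually we only need \emph{some} chain realizing these sizes, and since $\garsia$ is $S_n$-equivariant and every special orbit monomial has a unique representative up to permutation anyway, it suffices to produce one).

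Next, I would verify that these ``strictly decreasing, bounded by $n-1$'' partitions are the conjugates of the shapes of special monomials. Let $\lambda = (\lambda_1, \ldots, \lambda_n)$ be the shape of a monomial in $R$, padded with zeros to $n$ parts. The condition $\lambda_n = 0$ is equivalent to the conjugate $\overline{\lambda}$ having largest part $< n$; the condition $\lambda_i - \lambda_{i+1} \leq 1$ for all $i$ is equivalent (by the usual Ferrers-diagram argument) to $\overline{\lambda}$ having \emph{strictly} decreasing parts, i.e., each integer from $1$ up to $\lambda_1$ appearing as some $\lambda_i$. Combining both, special monomials correspond precisely to partitions whose conjugates are strictly decreasing and bounded by $n-1$.

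Putting these together with Lemma \ref{lem:conjugatepartition}: $m \in S$ squarefree with $y_{[n]} \nmid m$ iff $\overline{\Lambda}(m)$ is strictly decreasing and $\leq n-1$ iff $\Lambda(\garsia(m))$ satisfies $\lambda_n = 0$ and $\lambda_i - \lambda_{i+1} \leq 1$ iff $\garsia(m)$ is special. Since distinct monomials in $S$ have distinct Garsia images and every monomial of a given shape in $R$ is the image of a monomial of the conjugate fine grade in $S$ (by Corollary \ref{cor:conjugatepartition} and the fact that $\garsia$ sends the monomial basis of $S$ bijectively to the monomial basis of $R$), the correspondence is exact.

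There is no serious obstacle; the lemma is essentially a bookkeeping statement, and the only care needed is to match ``strictly decreasing parts, all $\leq n-1$'' on the $S$ side with ``consecutive differences in $\{0,1\}$ and last part zero'' on the $R$ side via partition conjugation.
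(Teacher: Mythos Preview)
Your argument is correct and takes essentially the same route as the paper, which also reduces the claim to a Ferrers-diagram observation. The paper's version is slightly more direct: instead of passing through Lemma~\ref{lem:conjugatepartition}, it simply notes that for $\garsia^{-1}(m)=\prod y_{U_j}$, the difference $\lambda_i-\lambda_{i+1}$ is exactly the number of factors $y_{U_j}$ with $|U_j|=i$, from which both equivalences (squarefree $\Leftrightarrow$ all differences $\leq 1$; no $y_{[n]}$ $\Leftrightarrow$ $\lambda_n=0$) follow at once.
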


\begin{definition}
We will also refer to such monomials in $S$ as \textbf{special}.
\end{definition}

\begin{proof}[Proof of \ref{lem:specialfromsquarefree}]
If $\garsia^{-1}(m) = \prod y_{U_j}$ with $U_1\supset \dots\supset U_k$ as in the proof of \ref{lem:conjugatepartition}, then $\lambda_i - \lambda_{i+1}$ measures the number of $U_j$'s for which $|U_j| = i$, as can be seen from figure \ref{fig:garsiamap} (where $\lambda = (4,3,1)$). Thus $\lambda_n = 0$ is equivalent to saying $y_{\gls{[n]}}$ is not in $\garsia^{-1}(m)$, and $\lambda_i-\lambda_{i+1}\leq 1, \; i\in[n-1]$ is equivalent to saying that $\garsia^{-1}(m)$ is squarefree.
\end{proof}

This lemma was used implicitly by Victor Reiner in \cite{reiner95}, which built on ideas in \cite{garsiastanton} to generalize G\"{o}bel's results to other Weyl groups.

\begin{thm}[G\"{o}bel]\label{thm:gobel}
For any permutation group $G\subset S_n$, over any ground ring $A$, the invariant ring $R^G$ is generated as an $R^{S_n}$-module by special orbit monomials.
\end{thm}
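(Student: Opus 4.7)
The plan is to translate the problem into $S$ via the Garsia map and settle it there by an elementary reduction. By Lemma \ref{lem:specialfromsquarefree}, a monomial of $R$ is special exactly when it is the $\garsia$-image of a squarefree monomial of $S$ not involving $y_{[n]}$, and since $\garsia$ is $S_n$-equivariant and $A$-linear, special orbit monomials in $R^G$ correspond bijectively to the $\garsia$-images of $G$-orbit sums in $S^G$ of such monomials. The spanningness direction of Theorem \ref{thm:garsiabasis}, which by Remark \ref{rmk:spanningandLIareseparate} holds over any coefficient ring $A$, then reduces the claim to showing that $S^G$ is generated as a module over $S^{S_n} = A[\theta_1,\dots,\theta_n]$ by the $G$-orbit sums of squarefree monomials not containing $y_{[n]}$.

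To establish this, we will induct on the total number of $y$-factors (counted with multiplicity) of a monomial $\tilde m = y_{U_1}^{a_1}\cdots y_{U_k}^{a_k}\in S$, with $U_1 \supsetneq \cdots \supsetneq U_k$. If $\tilde m$ is already squarefree and $U_1 \neq [n]$ there is nothing to prove. Otherwise, two reduction cases arise. If $U_1 = [n]$, then $\theta_n = y_{[n]}$ (since $[n]$ is the only $n$-subset of itself) and $\tilde m = \theta_n\cdot(\tilde m/y_{[n]})$. If $U_1 \neq [n]$ but some $a_j \geq 2$, then in the product $\theta_{|U_j|}\cdot(\tilde m/y_{U_j})$ the only subset $V\subseteq [n]$ of size $|U_j|$ that is comparable with every set in the support of $\tilde m/y_{U_j}$ (which still contains $U_j$) is $V=U_j$ itself, so the product collapses to $\tilde m$. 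In both cases we obtain an identity $\tilde m = \theta_i\cdot \tilde m'$ in $S$ with $\tilde m'$ having strictly fewer factors.

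It remains to pass from this monomial-level identity to one on $G$-orbit sums. In either reduction the stabilizers $G_{\tilde m}$ and $G_{\tilde m'}$ coincide: any $g\in G$ fixing $\tilde m$ must preserve the multiset of sets $U_\ell$ (with multiplicities $a_\ell$), and since the $U_\ell$ have distinct cardinalities this forces $g(U_\ell)=U_\ell$ for each $\ell$, so $g$ also fixes $\tilde m'$; conversely, deleting a factor can never add stabilizers. Averaging the identity $\tilde m = \theta_i\tilde m'$ over $G$ and dividing by this common stabilizer size therefore yields the integer-coefficient identity $G\tilde m = \theta_i\cdot G\tilde m'$ in $S^G$, valid over an arbitrary ring $A$, and the induction closes. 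No major obstacle appears; the only technical point is the second reduction case, which ultimately reflects the fact that two distinct elements of $B_n$ of the same rank are incomparable.
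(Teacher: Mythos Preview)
Your argument is correct and tracks the paper's proof closely: both reduce to $S$ via the spanning half of Theorem \ref{thm:garsiabasis} and then express each orbit monomial in $S^G$ as an $S^{S_n}$-multiple of a special one, the only difference being that the paper writes the factorization $Gm = \big(\prod_i \theta_{|U_i|}^{f_i}\big)\, Gm_\star$ in a single step rather than inductively. One small point of phrasing: ``deleting a factor can never add stabilizers'' is false as a general principle---what actually gives $G_{\tilde m} = G_{\tilde m'}$ in your two cases is that the deleted factor is either $y_{[n]}$ (fixed by every permutation) or a repeated one (so the support, and hence the stabilizer, is unchanged).
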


This was proven via an explicit algorithm (\cite{gobel}, Algorithm 3.12), inspired by Gauss' proof of the FTSP, to represent an arbitrary element of $R^G$ as an $R^{S_n}$-linear combination of special orbit monomials. The proof can be reformulated elegantly in terms of the Garsia map:

\begin{proof}
By \ref{thm:garsiabasis} and \ref{rmk:spanningandLIareseparate}, it suffices to show that the $\garsia$-corresponding orbit monomials in $S^G$ span it over $S^{S_n}$. It will be enough to represent the orbit monomial $Gm$ of any individual monomial $m\in S$. So let 
\[
m = \prod y_{U_i}^{e_i}
\]
where $U_1\supset\dots\supset U_k$ are distinct and each $e_i\geq 1$, and let 
\[
Gm = \sum_{g\in G/G_m} g(m)
\]
be the corresponding orbit monomial (where the sum is over a set of coset representatives for $m$'s stabilizer $G_m$). Let
\[
m_\star = \prod_{i:U_i\neq \gls{[n]}} y_{U_i}
\]
and let $Gm_\star$ be the corresponding orbit monomial. Note that $m_\star$ is special, and since it contains $y_{U_i}$ for each $U_i$ except possibly $\gls{[n]}$, any permutation in $S_n$ that stabilizes it also stabilizes $m$ and vice versa. Let $f_1 = e_1$ if it so happens that $U_1=\gls{[n]}$, and let $f_i = e_i-1$ otherwise. We claim that
\begin{equation}\label{eq:buildingm}
m = \left(\prod \theta_{|U_i|}^{f_i}\right) m_\star.
\end{equation}
Just as in the proof of proposition \ref{prop:FTSPforS}, the parenthetical expression is equal to the sum of all monomials
\[
\prod y_{U_i'}^{f_i}
\]
for which the $U_i'$'s form a chain and $|U_i'| = |U_i|$. But only the one of these for which $U_i' = U_i$, for each $i$, is supported on the same maximal chain as $m_\star$. Thus the right side of \eqref{eq:buildingm} only has one nonzero term, and it is
\[
\left(\prod y_{U_i}^{f_i}\right) m_\star = \prod y_{U_i}^{e_i} = m
\]
by definition of the $f_i$. This establishes \eqref{eq:buildingm}.

Furthermore, $m$ and $m_\star$ have the same stabilizer $G_m$ in $G$ (since they even have the same stabilizer in $S_n$). Therefore we can sum \eqref{eq:buildingm} over a set of coset representatives for $G_m$ to find that
\[
Gm = \left(\prod \theta_{|U_i|}^{f_i}\right)Gm_\star.
\]
Thus every orbit monomial lies in the $S^{S_n}$-span of the special orbit monomials.
\end{proof}

\begin{remark}
This proof sheds light on why the special orbit monomials are sufficient to generate $R^G$ over $R^{S_n}$. The chain $U_1\supset\dots\supset U_k$ on which a monomial $m\in S$ is supported contains all the information needed to determine $m$'s ``symmetry type," i.e. its stabilizer in $S_n$ -- more than enough, in fact, since $y_{\gls{[n]}}$'s presence or absence does not affect this stabilizer. The associated special monomial $m_\star$ in the proof is a pared down version of $m$ that retains all and only the information needed to determine this stabilizer. The theorem can be thought of as the statement that the special orbit monomials contain enough information about the possible symmetry types of monomials in order to reconstruct $R^G$.

As with other results in this thesis that appeal to theorem \ref{thm:garsiabasis} and therefore come down to an induction on monomial shapes $\lambda$, this proof can be arranged into an algorithm. We have implemented this algorithm in Magma. The reader can find the implementation in section \ref{sec:gobelmethod} of the appendices.
\end{remark}

\begin{remark}
G\"{o}bel's result was published in 1995. It was theoretically important at the time because it established a bound on the maximum degrees required to generate $R^G$ as an algebra that is independent of the coefficient ring $A$, and is usually much smaller than Noether's bound.\footnote{Noether's bound states that in the nonmodular case, an invariant ring is generated in degree at most $|G|$; see chapter introduction.} The ring $R^{S_n}$ is generated by $\sigma_1,\dots,\sigma_n$, and $R^G$ is generated over it (even as a module, so certainly as an algebra) by special orbit monomials, the maximum degree of which is $n(n-1)/2$. Thus $R^G$ is always generated in the degrees up to $\max (n,n(n-1)/2)$. This is known as {\em G\"{o}bel's bound}.

It was already known in 1995 that this bound holds over a characteristic-zero field. This fact is sometimes attributed to Garsia and Stanton, e.g. in \cite{neuselbounds}. This was not a goal of Garsia and Stanton's, though it does follow from their work. There is also a beautiful proof dated 1991, due to Barbara Schmid, based on Hilbert series (\cite{schmid}, section 9). It works uniformly in the nonmodular case, though it was formulated in characteristic zero. 

However, all of these authors were anticipated by Leopold Kronecker, by over a century. Kronecker showed (\cite{kronecker}, \S 12) that with $A = \Q$, $R^G$ is generated over $R^{S_n}$ by the orbit sums of monomials of the form $\prod x_i^{e_i}$ with each $e_i<i$, which also have maximum degree $n(n-1)/2$. This implies G\"{o}bel's bound in the characteristic zero case. He also proved that a subset of these ``Kronecker-special" orbit monomials forms a free basis for $R^G$ as an $R^{S_n}$-module, which implies the Hochster-Eagon theorem (\ref{thm:hochstereagon}) in the permutation group case, in view of the Hironaka criterion (\ref{cor:CMfreemodule}).\footnote{It would be anachronistic to say that Kronecker proved the Hochster-Eagon theorem in the permutation group case since the notion of Cohen-Macaulayness did not exist yet.}  Kronecker even claimed that, ``obviously,"\footnote{{\em Offenbar.}} an arbitrary orbit monomial can be written as a $\Z$-linear combination of the Kronecker-special ones, which implies G\"{o}bel's bound in general. However, he did not give a proof.\footnote{We thank Harold Edwards for alerting us to Kronecker's contribution.}
\end{remark}

Theorem \ref{thm:gobel} reduces the work a proposed generating set for $R^G$ or $S^G$ has to do to prove itself, to the finite problem of representing all the special orbit monomials. Our promised clarification of the relationship of $\Delta/G$ to the module structure of $R^G$ and $S^G$ starts from the fact that the special orbit monomials are in bijection with the cells in the boundary of $\Delta/G$.

Recall that $\Delta$ is the order complex of the poset $B_n\setminus\{\emptyset\}$, and that $\Delta/G$ is the boolean complex obtained by taking the quotient of this complex by the simplicial action of $G$ on $\Delta$. Then $\partial \Delta$, the simplicial complex corresponding to the boundary of the topological space $|\Delta|$, can be identified with the order complex of $B_n\setminus\{\emptyset,\gls{[n]}\}$, as follows: $B_n\setminus\{\emptyset\}$ is the face poset of a simplex (interpreted as a regular CW complex); $\gls{[n]}$ represents the top-dimensional cell. Thus its order complex is the barycentric subdivision of this simplex, and now $\gls{[n]}$ corresponds to the barycenter of the simplex, and the boundary is exactly the link of this point. By excluding $\gls{[n]}$ from $B_n\setminus\{\emptyset\}$ before taking the order complex, we are left with just the boundary. See figure \ref{fig:boundarycomplex}. The boundary operator $\partial$ commutes with the action of $G$, so $(\partial \Delta)/G = \partial(\Delta/G)$, and we can write $\partial\Delta/G$ unambiguously.

\begin{figure}
\begin{center}
\begin{tikzpicture}

\node (v1) at (-2,1) {$1$};
\node (v2) at (-1,1) {$2$};
\node (v3) at (0,1) {$3$};
\node (e12) at (-2,2) {$12$};
\node (e13) at (-1,2) {$13$};
\node (e23) at (0,2) {$23$};
\node (tri) at (-1,3) {[3]};
\node at (-1,0) {$B_3\setminus\{\emptyset\}$};

\foreach \from/\to in {v1/e12, v1/e13, v2/e12, v2/e23, v3/e13, v3/e23, e13/tri, e23/tri, e12/tri} 
    \draw (\from) -- (\to);

\draw [fill = lightgray] (2,1) -- (4,1) -- (3,2.732) -- (2,1);
\draw (2,1) -- (3.5,1.866);
\draw (3,1) -- (3,2.732);
\draw (4,1) -- (2.5,1.866);
\draw [fill] (2,1) circle [radius=0.03];
\draw [fill] (4,1) circle [radius=0.03];
\draw [fill] (3,1) circle [radius=0.03];
\draw [fill] (2.5,1.866) circle [radius=0.03];
\draw [fill] (3,2.732) circle [radius=0.03];
\draw [fill] (3.5,1.866) circle [radius=0.03];
\draw [fill] (3,1.577) circle [radius=0.03];
\node (V1) at (1.8,0.9) {$1$};
\node (V2) at (4.2,0.9) {$2$};
\node (V3) at (3,3) {$3$};
\node (E12) at (3,0.75) {$12$};
\node (E13) at (2.2,2.016) {$13$};
\node (E23) at (3.8,2.016) {$23$};
\node (Tri) at (3,1.577) {$[3]$};
\node at (3,0) {$\Delta(B_3\setminus\{\emptyset\})$};

\node (v1) at (6,1) {$1$};
\node (v2) at (7,1) {$2$};
\node (v3) at (8,1) {$3$};
\node (e12) at (6,2) {$12$};
\node (e13) at (7,2) {$13$};
\node (e23) at (8,2) {$23$};
\node at (7,0) {$B_3\setminus\{\emptyset,[3]\}$};

\foreach \from/\to in {v1/e12, v1/e13, v2/e12, v2/e23, v3/e13, v3/e23} 
    \draw (\from) -- (\to);
    
\draw (10,1) -- (12,1) -- (11,2.732) -- (10,1);
\draw [fill] (10,1) circle [radius=0.03];
\draw [fill] (12,1) circle [radius=0.03];
\draw [fill] (11,1) circle [radius=0.03];
\draw [fill] (10.5,1.866) circle [radius=0.03];
\draw [fill] (11,2.732) circle [radius=0.03];
\draw [fill] (11.5,1.866) circle [radius=0.03];
\node (V1) at (9.8,0.9) {$1$};
\node (V2) at (12.2,0.9) {$2$};
\node (V3) at (11,3) {$3$};
\node (E12) at (11,0.75) {$12$};
\node (E13) at (10.2,2.016) {$13$};
\node (E23) at (11.8,2.016) {$23$};
\node at (11,0) {$\Delta(B_3\setminus\{\emptyset,[3]\})$};

\end{tikzpicture}
\end{center}
\caption{The order complex of $B_3\setminus\{\emptyset,[3]\}$ is the boundary of the order complex of $B_3\setminus\{\emptyset\}$.}\label{fig:boundarycomplex}
\end{figure}
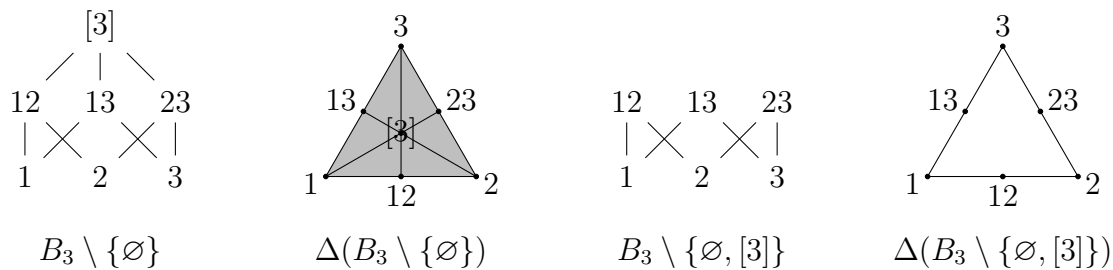

\begin{prop}\label{prop:orbitmonsandfaces}
The special orbit monomials of $R^G$ and $S^G$ other than $1$ are in bijection with the faces of $\partial \Delta / G$.
\end{prop}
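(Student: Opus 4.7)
The proposition unwinds to a straightforward chain of bijections, using the machinery already in place. The plan is to exhibit the correspondence in three steps, reducing each to a known fact.

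First, I would use Lemma \ref{lem:specialfromsquarefree} to replace the question about $R^G$ with a question about $S^G$: the Garsia map is an $S_n$-equivariant $A$-linear isomorphism sending special monomials of $S$ bijectively onto special monomials of $R$, and this bijection clearly respects $G$-orbits, so special orbit monomials of $R^G$ (other than $1$) biject with special orbit monomials of $S^G$ (other than $1$). From this point on, I only need to match the latter with faces of $\partial\Delta/G$.

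Second, I would identify nontrivial special monomials of $S$ with faces of $\partial\Delta$. Recall that $\Delta$ is the order complex of the poset $\Bn\setminus\{\emptyset\}$, so its vertex set is $\Bn\setminus\{\emptyset\}$ and its faces are precisely the chains in this poset. A squarefree monomial in $S$ is a product $\prod_{U\in T} y_U$ indexed by a set $T\subseteq \Bn\setminus\{\emptyset\}$, and by the definition of the Stanley-Reisner ring (Definition \ref{def:SRringofposet} and Remark \ref{rmk:compactSRposet}) such a product is nonzero precisely when the elements of $T$ are pairwise comparable, i.e., when $T$ is a chain in $\Bn\setminus\{\emptyset\}$, i.e., when $T$ is a face of $\Delta$. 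Excluding $T=\emptyset$ (which yields the monomial $1$), this gives a bijection between nontrivial squarefree monomials and nonempty faces of $\Delta$. The additional constraint of being \emph{special}, namely that $y_{[n]}$ does not divide the monomial, restricts to those chains that avoid the top element $[n]$, i.e., to chains in $\Bn\setminus\{\emptyset,[n]\}$; as observed in the paragraph preceding this proposition (see also Figure \ref{fig:boundarycomplex}), these are precisely the faces of $\partial\Delta$.

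Third, I would pass to the quotient. The $G$-action on $S$ by permutation of indeterminates $y_U \mapsto y_{g(U)}$ is exactly the action induced on squarefree monomials by $G$'s simplicial action on $\Delta$ via its action on $\Bn\setminus\{\emptyset\}$; under the bijection of the previous step, $G$-orbits of nontrivial special monomials correspond to $G$-orbits of faces of $\partial\Delta$. By Lemma \ref{lem:CWquotientposet}, the $G$-orbits of faces of $\partial\Delta$ are exactly the faces of $\partial\Delta/G$. Since a special orbit monomial is by definition $\sum_{g\in G/G_m} g(m)$ for a special monomial $m$, and two special monomials produce the same orbit sum iff they lie in the same $G$-orbit, we obtain the desired bijection.

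There is no real obstacle; the content is essentially bookkeeping, and the only points that require any care are (a) the correct identification of the boundary of $\Delta$ with the order complex of $\Bn\setminus\{\emptyset,[n]\}$ (which is handled by Figure \ref{fig:boundarycomplex} and the surrounding discussion) and (b) ensuring that the correspondence between orbit monomials and $G$-orbits is genuinely bijective, which follows because the set of monomials appearing in an orbit sum determines its $G$-orbit uniquely.
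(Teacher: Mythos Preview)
Your proof is correct and follows essentially the same approach as the paper: both arguments reduce to exhibiting a $G$-equivariant bijection between special monomials and faces of $\partial\Delta$ (chains in $\Bn\setminus\{\emptyset,[n]\}$), then pass to $G$-orbits. The paper's proof is simply terser, handling $R$ and $S$ simultaneously via $\garsia$ at the end rather than separating them at the outset as you do.
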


\begin{proof}
A face in $\partial \Delta / G$ is the $G$-orbit of a face in $\partial \Delta$, and a special orbit monomial is the sum over a $G$-orbit of a special monomial in $R$ or $S$, so it will suffice to give a $G$-equivariant bijection between cells of $\partial \Delta$ and special monomials.

The bijection is this. A face of $\partial \Delta$ is a face of the order complex of $B_n\setminus\{\emptyset,\gls{[n]}\}$, which is a chain $U_1\supset\dots\supset U_k$ in $B_n$ not containing either $\gls{[n]}$ or $\emptyset$. This face corresponds with the monomial $y_{U_1}\dots y_{U_k}$ of $S$, and the monomial $\garsia(y_{U_1}\dots y_{U_k})$ of $R$. See figure \ref{fig:cellsandspecialmonomials}.
\end{proof}

\begin{figure}
\begin{center}
\begin{tikzpicture}

\draw (-2,0) -- (2,0) -- (0,3.464) -- (-2,0);
\draw [fill] (-2,0) circle [radius = 0.05];
\draw [fill] (2,0) circle [radius = 0.05];
\draw [fill] (0,3.464) circle [radius = 0.05];
\draw [fill] (0,0) circle [radius = 0.05];
\draw [fill] (-1,1.732) circle [radius = 0.05];
\draw [fill] (1,1.732) circle [radius = 0.05];
\node at (-2.2,-0.3) {$y_1$};
\node at (0,-0.3) {$y_{12}$};
\node at (2.2,-0.3) {$y_2$};
\node at (-1.5,1.932) {$y_{13}$};
\node at (1.5,1.932) {$y_{23}$};
\node at (0,3.764) {$y_3$};
\node at (-1.1,-0.3) {$y_1y_{12}$};
\node at (1.1,-0.3) {$y_2y_{12}$};
\node at (-2,1) {$y_1y_{13}$};
\node at (2,1) {$y_2y_{23}$};
\node at (-1,2.9) {$y_3y_{13}$};
\node at (1,2.9) {$y_3y_{23}$};

\end{tikzpicture}
\end{center}
\caption{The bijection between faces of $\partial \Delta$ and special monomials of $S$, for $n=3$.}\label{fig:cellsandspecialmonomials}
\end{figure}
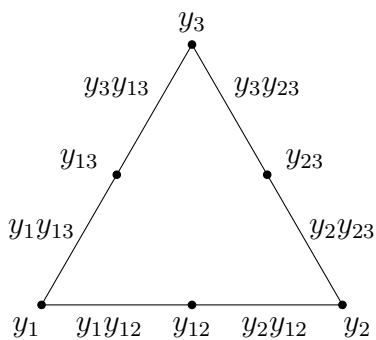

\begin{remark}\label{rmk:posetbij}
This is a point in the theory where we pay a minor price for our decision not to automatically regard the empty face as part of a boolean complex (cf. remark \ref{rmk:emptyface}). The boundary complex $\partial \Delta = \Delta(B_n\setminus\{\emptyset,\gls{[n]}\})$, as a simplicial complex, has an empty face. In the bijection just described, the empty face naturally corresponds with the special orbit monomial $1$, which is the empty product. In order to keep this feature of the correspondence when passing to the quotient $\partial \Delta / G$, which is no longer a simplicial but instead a boolean complex, we need to add the minimal element $\emptyset$ to its face poset and use the elements of the face poset for the bijection. See figure \ref{fig:posetbij}. This is the point of view we take going forward.
\end{remark}

\begin{figure}
\begin{center}
\begin{tikzpicture}

\node (empty) at (0,-1) {$\emptyset$};
\node (v1) at (-5,0) {$1$};
\node (v2) at (-1,0) {$2$};
\node (v3) at (3,0) {$3$};
\node (v12) at (-3,0) {$12$};
\node (v23) at (1,0) {$23$};
\node (v13) at (5,0) {$13$};
\node (e112) at (-5,1) {$1\subset 12$};
\node (e212) at (-3,1) {$2\subset 12$};
\node (e223) at (-1,1) {$2\subset 23$};
\node (e323) at (1,1) {$3 \subset 23$};
\node (e313) at (3,1) {$3\subset 13$};
\node (e113) at (5,1) {$1\subset 13$};

\node at (0,-2) {Face poset of $\partial \Delta$, labeled by chains in $B_3\setminus\{\emptyset,[3]\}$.};
\node at (0,-3) {};

\foreach \from/\to in {empty/v1, empty/v12, empty/v2, empty/v23, empty/v3, empty/v13, v1/e112, v12/e112, v12/e212, v2/e212, v2/e223, v23/e223, v23/e323, v3/e323, v3/e313, v13/e313, v13/e113, v1/e113}
    \draw (\from) -- (\to);

\end{tikzpicture}
\begin{tikzpicture}

\node (empty) at (0,-1) {$1$};
\node (v1) at (-5,0) {$x_1$};
\node (v2) at (-1,0) {$x_2$};
\node (v3) at (3,0) {$x_3$};
\node (v12) at (-3,0) {$x_1x_2$};
\node (v23) at (1,0) {$x_2x_3$};
\node (v13) at (5,0) {$x_1x_3$};
\node (e112) at (-5,1) {$x_1^2x_2$};
\node (e212) at (-3,1) {$x_1x_2^2$};
\node (e223) at (-1,1) {$x_2^2x_3$};
\node (e323) at (1,1) {$x_2x_3^2$};
\node (e313) at (3,1) {$x_1x_3^2$};
\node (e113) at (5,1) {$x_1^2x_3$};

\node at (0,-2) {Special monomials of $R$, including $1$.};

\foreach \from/\to in {empty/v1, empty/v12, empty/v2, empty/v23, empty/v3, empty/v13, v1/e112, v12/e112, v12/e212, v2/e212, v2/e223, v23/e223, v23/e323, v3/e323, v3/e313, v13/e313, v13/e113, v1/e113}
    \draw (\from) -- (\to);
    
\end{tikzpicture}
\end{center}
\caption{The face poset of $\partial \Delta$, for $n=3$, including the minimal element $\emptyset$, showing the bijection with special monomials of $R$.}\label{fig:posetbij}
\end{figure}

\begin{example}
For an example where $G$ is nontrivial, let $n=4$ and $G = D_4 = \langle (1234),(13)\rangle \subset S_4$. In this case, $\Delta$ is the barycentric subdivision of a $3$-simplex, thus $\partial \Delta$ is homeomorphic to a $2$-sphere. The quotient complex $\partial \Delta / G$ is homeomorphic to a disk. It has three facets. See figure \ref{fig:faceposetD4}.
\end{example}

\begin{figure}
\begin{center}
\begin{tikzpicture}

\node (A) at (0,9) {$1^32^23$};
\node (B) at (1.5,9) {$1^32^24$};
\node (C) at (-1.5,9) {$1^323^2$};
\node (D) at (-4,6) {$1^22^23$};
\node (E) at (-5.5,6) {$1^223^2$};
\node (F) at (5.5,6) {$1^224$};
\node (G) at (4,6) {$1^223$};
\node (H) at (0,6) {$1^22$};
\node (I) at (-1.5,6) {$1^23$};
\node (J) at (-4,3) {$12$};
\node (K) at (-5.5,3) {$13$};
\node (L) at (4,3) {$1$};
\node (M) at (0,3) {$123$};
\node (N) at (0,0) {$\emptyset$};

\foreach \from/\to in {A/H, A/D, A/G, B/D, B/H, B/F, C/G, C/E, C/I, D/M, D/J, E/M, E/K, F/M, F/L, G/M, G/L, H/J, H/L, I/L, I/K, J/N, K/N, L/N, M/N}
    \draw (\from) -- (\to);

\end{tikzpicture}

\begin{tikzpicture}

\node (L) at (0,3) {};
\node (M) at (0,-3) {};
\node (J) at (0,0) {};
\node (K) at (-5,0) {};

\draw [fill=lightgray] (M) to [out=180,in=270] (K) to [out=90,in=180] (L) to [out=0,in=90] (5,0) to [out=270,in=0] (M);

\node at (0,3.25) {1};
\draw [fill] (L) circle [radius=0.05];
\node at (0,-3.25) {123};
\draw [fill] (M) circle [radius=0.05];
\node at (0.25,0) {12};
\draw [fill] (J) circle [radius=0.05];
\node at (-5.25,0) {13};
\draw [fill] (K) circle [radius=0.05];

\node at (-3.9,-2.2) {$1^223^2$};

\node at (-3.9,2.1) {$1^23$};

\draw (M) to [out=180,in=270] (-2.5,0) to [out=90,in=180] (L);
\node at (-2.5,0.7) {$1^223$};
\node at (-3.7,-0.2) {$1^323^2$};

\draw (M) -- (L);
\node at (0,-1.5) {$1^22^23$};
\node at (0, 1.5) {$1^22$};
\node at (-1.3,-0.2) {$1^32^23$};

\node at (5,0.3) {$1^224$};
\node at (2.5,-0.2) {$1^32^24$};

\end{tikzpicture}

\end{center}
\caption{The face poset (above) and geometric realization (below) of $\partial \Delta/G$ in the case $G=D_4 = \langle (1234),(13)\rangle$. Each face is labeled by a term in the special orbit monomial corresponding to that face. The monomial $x_1^3x_2x_3^2$ is abbreviated $1^323^2$ etc., so we write the monomial $1$ as $\emptyset$.}\label{fig:faceposetD4}
\end{figure}
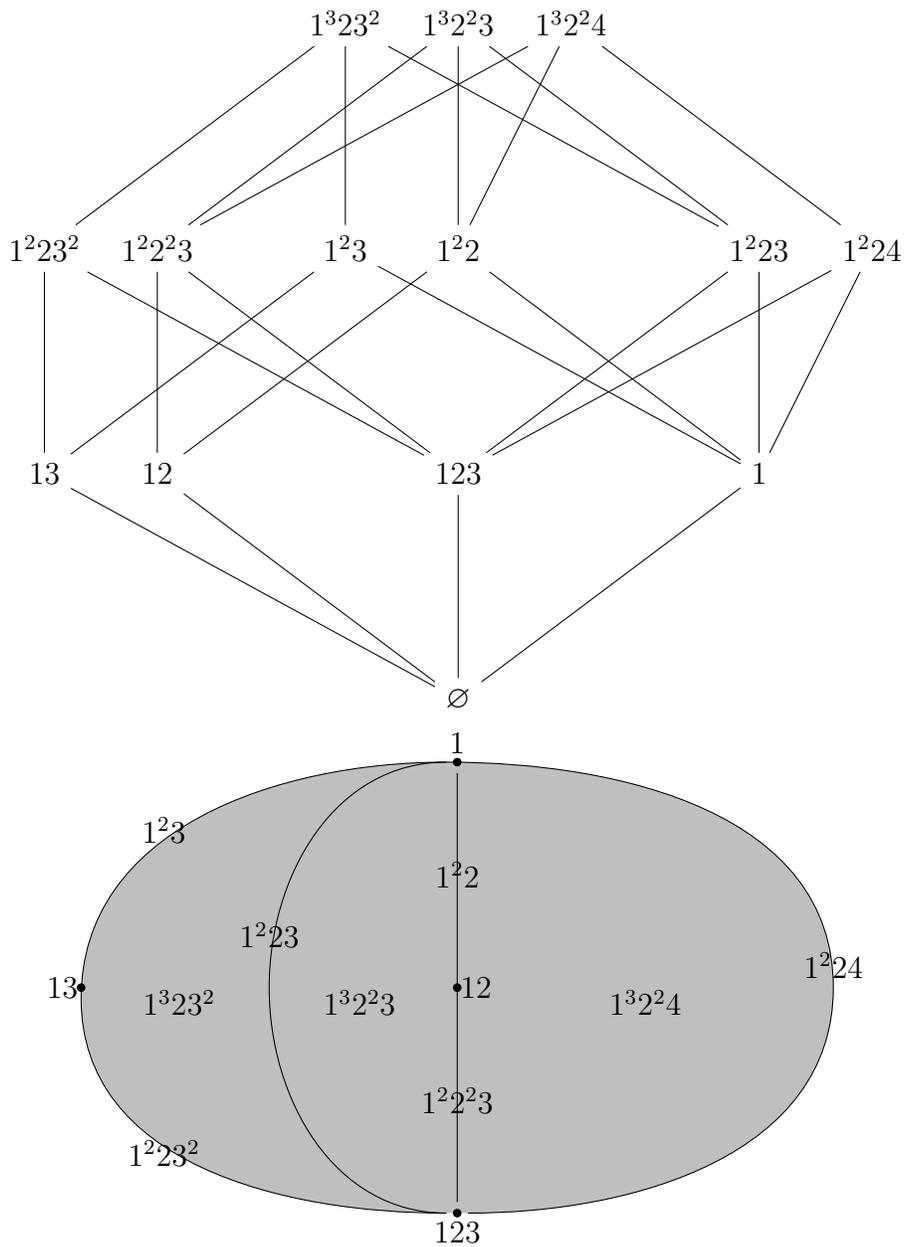

The special orbit monomials are thus a reflection of the combinatorial structure of $\partial \Delta / G$. 

\subsection{Shellings}\label{subsec:shellings}

Shellability is a concept originating in polyhedral geometry. One of Garsia and Stanton's remarkable accomplishments in \cite{garsiastanton} was to show that a shelling of $\partial \Delta / G$ automatically gives rise to an $S^{S_n}$-basis of $S^G$, whose image under the Garsia map is therefore an $R^{S_n}$-basis of $R^G$. 

\begin{definition}
Let $K$ be a pure boolean complex of dimension $d$. If the facets of $K$ admit an ordering $F_1,\dots,F_r$ such that for each $j>1$, the intersection of $F_j$ with the union $\bigcup_{i<j} F_i$ of the earlier facets is a pure subcomplex of $F_j$ of dimension $d-1$, then $K$ is said to be \textbf{shellable}, and the ordering is a \textbf{shelling}.
\end{definition}

\begin{notation}
In what follows, we will use the same symbols $F_j$, etc. whether $K$ is being viewed as a boolean complex or as its face poset. Thus $\alpha\subset F_j$ and $\alpha\leq F_j$ mean the same thing.
\end{notation}

\begin{lemma}\label{lem:minimalfaces}
Let $\widehat{P}$ be the face poset of a pure boolean complex $\Delta$ with minimal element appended. Then an order $F_1,\dots,F_k$ is a shelling if and only if for each $j$, among the faces of $F_j$ not contained in $\bigcup_{i<j} F_i$ there is a unique minimal face $\alpha_j$.
\end{lemma}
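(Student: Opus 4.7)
The plan is to exploit the boolean algebra structure of the face poset of $F_j$. Since $\Delta$ is a boolean complex, the lower interval $[\emptyset, F_j]$ in $\widehat{P}$ is a boolean algebra, so I may identify the faces of $F_j$ with the subsets of its vertex set $V(F_j)$, which has cardinality $d+1$. The set $C_j$ of faces of $F_j$ lying in $\bigcup_{i<j}F_i$ is a downward-closed order ideal (a subcomplex of $F_j$), and its complement $U_j$ in $[\emptyset, F_j]$ is an upward-closed filter. In these terms, the shelling condition says precisely that $C_j$ is pure of dimension $d-1$, and the lemma's proposed condition says precisely that $U_j$ is a principal filter. So what I really have to prove is: in a finite boolean algebra, an order ideal is pure of codimension one if and only if its complementary filter is principal.

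For the easy direction, suppose $U_j$ has unique minimum $\alpha_j$. Then $U_j = \{\sigma \subseteq V(F_j) : \alpha_j \subseteq \sigma\}$, so $C_j = \{\sigma : \alpha_j \not\subseteq \sigma\}$. A quick check shows that the maximal elements of $C_j$ are precisely the sets $F_j \setminus \{v\}$ for $v \in \alpha_j$, and these all have dimension $d-1$; hence $C_j$ is pure of dimension $d-1$.

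For the converse, suppose $C_j$ is pure of dimension $d-1$. Let $W_j = \{v \in V(F_j) : F_j \setminus \{v\} \in C_j\}$, the set of vertices whose deletion gives a codimension-one face covered by $C_j$, and set $\alpha_j = V(F_j) \setminus W_j$. By purity, every $\sigma \in C_j$ is contained in some $F_j \setminus \{v\}$ with $v \in W_j$, i.e. there exists $v \in W_j$ with $v \notin \sigma$; equivalently $\sigma \not\supseteq \alpha_j$. Conversely any $\sigma$ missing some $v \in W_j$ is certainly in $C_j$. Therefore $U_j = \{\sigma : \alpha_j \subseteq \sigma\}$, a principal filter with minimum $\alpha_j$. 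The set $\alpha_j$ is nonempty because $F_j$ itself, being a facet, is not contained in any earlier facet, so $F_j \in U_j$, forcing $\alpha_j \ne V(F_j)$-complement-of-nothing — i.e. $W_j \ne V(F_j)$. The main work is just the careful bookkeeping of this containment/non-containment translation; no genuine obstacle is anticipated.
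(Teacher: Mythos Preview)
Your approach is essentially the same as the paper's: both use the boolean structure of $[\emptyset,F_j]$ to identify faces with vertex subsets, and both locate the minimal new face via the vertices omitted by the codimension-one facets of $C_j$. The ideal/filter language you use is a nice way to phrase it.

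However, there is a complementation slip in your converse direction. You set $W_j = \{v : F_j\setminus\{v\}\in C_j\}$ and then define $\alpha_j = V(F_j)\setminus W_j$. But your own argument shows that $\sigma\in C_j$ if and only if $\sigma$ omits some $v\in W_j$, i.e.\ $W_j\not\subseteq\sigma$; hence $U_j = \{\sigma : W_j\subseteq\sigma\}$ and the unique minimum of $U_j$ is $W_j$ itself, not its complement. The sentence ``there exists $v\in W_j$ with $v\notin\sigma$; equivalently $\sigma\not\supseteq\alpha_j$'' is false as written --- it should read $\sigma\not\supseteq W_j$. (This matches the paper, which takes $\alpha_j$ to be spanned by exactly the vertices that \emph{are} omitted by the codimension-one facets of $C_j$, i.e.\ your $W_j$.) Once you set $\alpha_j = W_j$, the rest of your argument is correct; the garbled closing remark about nonemptiness is then also unnecessary, since for $j=1$ one has $\alpha_1=\emptyset$ and for $j>1$ the purity hypothesis forces $W_j\neq\emptyset$.
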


\begin{proof}
Since $\Delta$ is a boolean complex, each facet $F_j$ is combinatorially a simplex. Thus the faces it contains are in bijection with subsets of the vertices it contains. We may therefore speak of the face of $F_j$ spanned by a specific set of vertices.

If in each $F_j$ there is a unique minimal face $\alpha_j$ not contained in $\bigcup_{i<j}F_i$, then $F_j\cap \bigcup_{i<j} F_i$ consists of all of the faces of $F_j$ missing at least one vertex of $\alpha_j$. Any such face is contained in a face of $F_j$ that is missing exactly one vertex of $\alpha_j$. These faces are all codimension $1$ in $F_j$, i.e. they are dimenison $d-1$.

In the other direction, if the intersection of $F_j$ with $\bigcup_{i<j} F_i$ is pure of dimension $d-1$, then its facets $f_1,\dots,f_r$ are faces of $F_j$ omitting exactly one vertex each. Let $\alpha_j$ be the subcomplex of $F_j$ spanned by these missing vertices. Any subcomplex of $F_j$ not containing $\alpha_j$ is thus contained in one of $f_1,\dots,f_r$. But meanwhile, $\alpha_j$ cannot be contained in any of $f_1,\dots,f_r$ since they are all missing at least one of its vertices. Therefore $\alpha_j$ is the minimal face of $F_j$ not already contained in $\bigcup_{i<j} F_i$.
\end{proof}

\begin{example}
Consider $n=4$ and $G=D_4$ as in figure \ref{fig:faceposetD4}. A shelling cannot begin with the two facets labeled $1^323^2$ and $1^32^24$ (on the left and right in the figure) since they intersect in codimension two. However, any of the other four orders of the facets is a shelling. Figure \ref{fig:shellingD4} depicts the shelling 
\[
1^323^2,\;1^32^23,\;1^32^24.
\]
It illustrates how each new facet intersects the previous facets in codimension one, and also how in the face poset, there is a unique minimal face among those added at each stage.
\end{example}

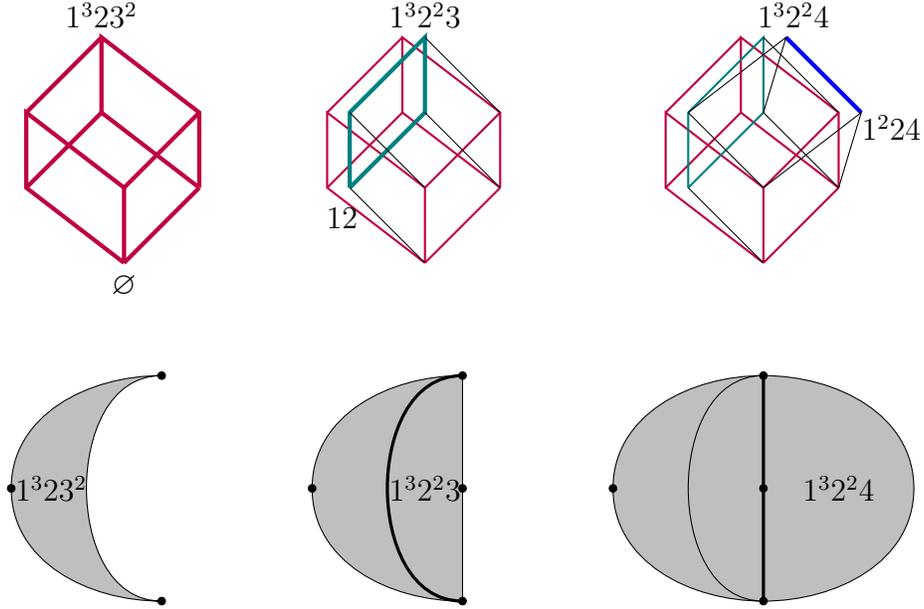
\begin{figure}
\begin{center}
\begin{tikzpicture}

\draw [fill=lightgray] (0,-1.5) to [out=180,in=270] (-2,0) to [out=90,in=180] (0,1.5) to [out=180,in=90] (-1,0) to [out=270,in=180] (0,-1.5);
\draw [fill] (-2,0) circle [radius=0.05];
\draw [fill] (0,-1.5) circle [radius=0.05];
\draw [fill] (0,1.5) circle [radius=0.05];
\node at (-1.47,0) {$1^323^2$};

\draw [purple, ultra thick] (-0.5,3) -- (-1.8,4) -- (-1.8,5) -- (-0.5,4) -- (-0.5,3);
\draw [purple, ultra thick] (0.5,4) -- (-0.8,5) -- (-0.8,6) -- (0.5,5) -- (0.5,4);
\draw [purple, ultra thick] (-0.5,3) -- (0.5,4);
\draw [purple, ultra thick] (-1.8,4) -- (-0.8,5);
\draw [purple, ultra thick] (-1.8,5) -- (-0.8,6);
\draw [purple, ultra thick] (-0.5,4) -- (0.5,5);
\node at (-0.8,6.3) {$1^323^2$};
\node at (-0.5, 2.7) {$\emptyset$};

\draw [fill=lightgray] (4,-1.5) to [out=180,in=270] (2,0) to [out=90,in=180] (4,1.5) -- (4,-1.5);
\draw [fill] (2,0) circle [radius=0.05];
\draw [fill] (4,-1.5) circle [radius=0.05];
\draw [fill] (4,1.5) circle [radius=0.05];
\draw [fill] (4,0) circle [radius=0.05];
\draw [very thick] (4,1.5) to [out=180,in=90] (3,0) to [out=270,in=180] (4,-1.5);
\node at (3.5,0) {$1^32^23$};

\draw [purple, thick] (3.5,3) -- (2.2,4) -- (2.2,5) -- (3.5,4) -- (3.5,3);
\draw [purple, thick] (4.5,4) -- (3.2,5) -- (3.2,6) -- (4.5,5) -- (4.5,4);
\draw [purple, thick] (3.5,3) -- (4.5,4);
\draw [purple, thick] (2.2,4) -- (3.2,5);
\draw [purple, thick] (2.2,5) -- (3.2,6);
\draw [purple, thick] (3.5,4) -- (4.5,5);
\draw [teal, ultra thick] (2.5,4) -- (2.5,5) -- (3.5,6) -- (3.5,5) -- (2.5,4);
\draw (2.5,4) -- (3.5,3);
\draw (2.5,5) -- (3.5,4);
\draw (3.5,6) -- (4.5,5);
\draw (3.5,5) -- (4.5,4);
\node at (3.5, 6.3) {$1^32^23$};
\node at (2.4,3.6) {$12$};

\draw [fill=lightgray] (8,-1.5) to [out=180,in=270] (6,0) to [out=90,in=180] (8,1.5) to [out=0,in=90] (10,0) to [out=270,in=0] (8,-1.5);
\draw [fill] (6,0) circle [radius=0.05];
\draw [fill] (8,-1.5) circle [radius=0.05];
\draw [fill] (8,1.5) circle [radius=0.05];
\draw [fill] (8,0) circle [radius=0.05];
\draw (8,1.5) to [out=180,in=90] (7,0) to [out=270,in=180] (8,-1.5);
\draw [very thick] (8,1.5) -- (8,-1.5);
\node at (9,0) {$1^32^24$};

\draw [purple, thick] (8,3) -- (6.7,4) -- (6.7,5) -- (8,4) -- (8,3);
\draw [purple, thick] (9,4) -- (7.7,5) -- (7.7,6) -- (9,5) -- (9,4);
\draw [purple, thick] (8,3) -- (9,4);
\draw [purple, thick] (6.7,4) -- (7.7,5);
\draw [purple, thick] (6.7,5) -- (7.7,6);
\draw [purple, thick] (8,4) -- (9,5);
\draw [teal, thick] (7,4) -- (7,5) -- (8,6) -- (8,5) -- (7,4);
\draw (7,4) -- (8,3);
\draw (7,5) -- (8,4);
\draw (8,6) -- (9,5);
\draw (8,5) -- (9,4);
\draw [blue, ultra thick] (9.3,5) -- (8.3,6);
\draw (8,4) -- (9.3,5) -- (9,4);
\draw (7,5) -- (8.3,6) -- (8,5);
\node at (8.4,6.3) {$1^32^24$};
\node at (9.7,4.8) {$1^224$};

\end{tikzpicture}
\end{center}
\caption{A shelling of $\partial \Delta / D_4$. Above: among the faces added by each new facet, there is a unique minimal one. Below: the intersection of each new facet with the union of the previous ones is codimension one. The notation is the same as in figure \ref{fig:faceposetD4}.}\label{fig:shellingD4}
\end{figure}

Because $\Delta = \Delta(B_n\setminus\{\emptyset\})$ is always homeomorphic to a simplex, $\partial \Delta$ is always homeomorphic to a sphere. Thus each codimension 1 face is incident to exactly two facets. This property is almost inherited by $\partial\Delta / G$: each codimension 1 face is incident to at most two facets, but perhaps just one, if the two facets incident to any of its preimages in $\partial \Delta$ get identified by the action of $G$. This makes $\partial \Delta / G$ what is called a {\em pseudomanifold} -- see \cite{reiner92}, Proposition 2.4.2.

In this circumstance, shellability of $\partial \Delta / G$ implies it is homeomorphic to a sphere or a ball (\cite{bjorner2}, Proposition 4.3; see also \cite{danarajklee}, Proposition 1.2). Thus shellability automatically implies Cohen-Macaulayness over any field. However, the situation is even better:

\begin{notation}
Let $F_1,\dots,F_r$ be a shelling of $\partial \Delta/G$, and let $\alpha_1,\dots,\alpha_r$ be the corresponding minimal faces guaranteed by lemma \ref{lem:minimalfaces}. Let $b_1,\dots,b_r$ be the orbit monomials in $S^G$ corresponding to $\alpha_1,\dots,\alpha_r$ according to the correspondence given in \ref{rmk:posetbij}. We fix the coefficient ring $A$ as $\Z$ or $\F_p$.
\end{notation}

\begin{thm}[Garsia-Stanton]\label{thm:shellingbasis}
If $\partial \Delta / G$ admits a shelling $F_1,\dots,F_r$, then the orbit monomials $b_1,\dots,b_r$ form a module basis for $S^G$ over $S^{S_n}$. Consequently, their images $\garsia(b_1),\dots,\garsia(b_r)$ form a module basis for $R^G$ over $R^{S_n}$.
\end{thm}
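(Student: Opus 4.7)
The plan is to reduce to the analogous statement about $S^G$ via the Garsia map, establish freeness of $S^G$ over $S^{S_n}$ from the Cohen-Macaulayness furnished by the shelling, and then use the shelling structure to exhibit the $b_j$'s as a spanning set of the right size.

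First I would invoke Theorem \ref{thm:garsiabasis}: once $\{b_1,\dots,b_r\}$ is shown to be a finely homogeneous basis for $S^G$ over $S^{S_n}$, the Garsia map transfers this directly to a homogeneous basis $\{\garsia(b_j)\}$ for $R^G$ over $R^{S_n}$, settling the second assertion. The $b_j$'s are manifestly finely homogeneous, being $G$-orbit sums of squarefree monomials $\prod_{V\in\alpha_j}y_V$ of common fine grade $\sum_{V\in\alpha_j}e_{|V|}$. Using Reiner's isomorphism $S^G\cong A[\Delta/G]$ (Theorem \ref{thm:reinerSRquotient}), together with the fact that the shelling of $\partial\Delta/G$ lifts by coning on the $G$-fixed vertex $[n]$ to a shelling of $\Delta/G$ and hence makes $\Delta/G$ Cohen-Macaulay, Duval's Theorem \ref{thm:duval} and the Hironaka criterion \ref{thm:HironakaoverZ} (applied with the h.s.o.p.\ $\theta_1,\dots,\theta_n$ of Proposition \ref{prop:hsops}) imply that $S^G$ is free as an $S^{S_n}$-module.

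The crux is the spanning statement. By G\"{o}bel's Theorem \ref{thm:gobel} it suffices to represent each special orbit monomial $c_\gamma$, indexed via Proposition \ref{prop:orbitmonsandfaces} by a face $\gamma$ of $\partial\Delta/G$ together with the empty face, as an $S^{S_n}$-combination of the $b_j$'s. For each such $\gamma$, Lemma \ref{lem:minimalfaces} and the shelling assign a unique index $j(\gamma)$ with $\alpha_{j(\gamma)}\leq\gamma\leq F_{j(\gamma)}$. Writing $\alpha_{j(\gamma)}=\{V_1\supset\dots\supset V_\ell\}$ and $\gamma=\{U_1\supset\dots\supset U_k\}$ with complementary sets $T_1,\dots,T_m$, the chain-completion computation from the proof of Proposition \ref{prop:FTSPforS} yields the identity
\[
\Bigl(\prod_{h=1}^m \theta_{|T_h|}\Bigr)\cdot b_{j(\gamma)} \;=\; c_\gamma \;+\; \sum_{\gamma'\neq\gamma} c_{\gamma'},
\]
with the sum over $G$-orbits $\gamma'$ of chains extending some $G$-translate of $\alpha_{j(\gamma)}$'s chain and sharing the fine grade of $\gamma$. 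The coefficient $1$ in front of each $c_{\gamma'}$ comes from the fact that the sizes $|T_h|$ are pairwise distinct, which pins down each $T'_h$ and each sub-chain of type $\alpha_{j(\gamma)}$ uniquely within any representative chain of $\gamma'$. Any such $\gamma'\neq\gamma$ satisfies $j(\gamma')>j(\gamma)$: the condition $\alpha_{j(\gamma)}\leq\gamma'$ precludes $\gamma'\leq F_i$ for $i<j(\gamma)$ by the minimality of $\alpha_{j(\gamma)}$, and $\gamma'\leq F_{j(\gamma)}$ together with the same label set as $\gamma$ would force $\gamma'=\gamma$, since faces of the simplex $F_{j(\gamma)}$ are determined by their label sets. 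Reverse induction on $j(\gamma)$, with base case $j(\gamma)=r$ (where no corrections appear), then places every $c_\gamma$ in the $S^{S_n}$-span of $\{b_j\}$.

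For linear independence I would compare ranks. Since $S^G$ is a free $S^{S_n}$-module, its rank equals the $A$-rank of the artinian reduction $A[\Delta/G]/(\psi_1,\dots,\psi_n)$, which for a shellable balanced boolean complex is computed by the flag $h$-vector and equals the number of facets of $\Delta/G$, namely $r$. So $\{b_1,\dots,b_r\}$ is a spanning set whose cardinality matches the rank of the free module it spans, hence a basis; Theorem \ref{thm:garsiabasis} then transfers the conclusion to $R^G$. The hardest step I expect is the spanning identity in paragraph three: verifying the coefficient calculation so that it holds uniformly over $\Z$ and $\F_p$, and confirming the strict inequality $j(\gamma')>j(\gamma)$ for each extra term so that the reverse induction is airtight. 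Once that is settled, the rank comparison and the transfer via the Garsia map are routine.
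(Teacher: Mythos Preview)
Your proposal is correct. The spanning argument is essentially identical to the paper's: both use G\"{o}bel's theorem to reduce to special orbit monomials, locate the interval $[\alpha_j,F_j]$ of the shelling partition containing the given face, multiply $b_j$ by the appropriate $\theta$'s (the paper packages this as Lemma \ref{lem:facetproduct}), and then observe that every extra term lands in a strictly later interval, enabling an induction on $j$.

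The difference is in linear independence. The paper handles it directly and more elementarily: by Lemma \ref{lem:unitriangular} the incidence matrix of $\alpha_1,\dots,\alpha_r$ against the facets is unitriangular, hence nonsingular over any $A$, and Lemma \ref{lem:nonsingularLI} converts this into $S^{S_n}$-linear independence of $b_1,\dots,b_r$ (the key step there being to clear to full-rank fine grade and read off a linear relation among rows of the incidence matrix). Your route---shelling $\Rightarrow$ Cohen-Macaulay $\Rightarrow$ free via the Hironaka criterion, then a rank count via the flag $h$-vector to match the number $r$ of facets---also works, but it imports Duval's theorem, Theorem \ref{thm:HironakaoverZ}, and an $h$-vector computation that the paper avoids entirely. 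The payoff of the paper's approach is that the incidence-matrix lemma is reusable: it later underlies the more general ``cell basis'' criterion in \S\ref{subsec:cellbases}, where no shelling (hence no a priori Cohen-Macaulayness) is available.
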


This is a slight generalization of \cite{garsiastanton}, Theorem 6.2.

Before giving the proof, we introduce some machinery.

\begin{definition}
A \textbf{partitioning}, or \textbf{E-R decomposition}, of a boolean complex is a decomposition of its face poset into disjoint intervals.
\end{definition}

\begin{lemma}\label{lem:shellingpartitioning}
Given a shelling $F_1,\dots,F_r$ of a boolean complex with face poset $\widehat{P}$ (including the minimal element), the intervals $[\alpha_j,F_j]$ form a partitioning, where each $\alpha_j$ is the minimal face of $F_j$ not contained in $\bigcup_{i<j}F_i$, per lemma \ref{lem:minimalfaces}.
\end{lemma}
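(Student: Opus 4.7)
The plan is to exhibit an explicit map assigning each face $\beta$ of the complex to the interval containing it, and verify that this assignment is well-defined. For each face $\beta \in \widehat P$, let $j(\beta)$ denote the smallest index $j$ such that $\beta \leq F_j$. Such an index exists because $\widehat P$ is the face poset of the complex whose facets are $F_1,\dots,F_r$, so every face lies below some facet. I will claim that $\beta \in [\alpha_{j(\beta)}, F_{j(\beta)}]$, and moreover that this is the only interval containing $\beta$.

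The first step is existence. By the definition of $j = j(\beta)$, we have $\beta \leq F_j$, and by minimality of $j$, $\beta$ is not contained in $\bigcup_{i<j}F_i$. Thus $\beta$ is a face of $F_j$ not in the union of earlier facets. Lemma \ref{lem:minimalfaces} characterizes the set of such faces as precisely the faces of $F_j$ that contain $\alpha_j$. Hence $\alpha_j \leq \beta$, and combined with $\beta \leq F_j$ we obtain $\beta \in [\alpha_j, F_j]$, as desired. The only subtle point here is that the argument must handle the minimal element $\emptyset \in \widehat P$: in that case $j(\emptyset)=1$ and $\alpha_1$ is itself the minimum face of $F_1$ not contained in the (empty) union of earlier facets, which is consistent with $\emptyset \leq \alpha_1$.

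The second step is disjointness. Suppose $\beta \in [\alpha_k, F_k]$ for some index $k$; I will show $k = j(\beta)$. Since $\beta \leq F_k$, the minimality of $j(\beta)$ gives $j(\beta) \leq k$. If $j(\beta) < k$, then $\beta \leq F_{j(\beta)}$ places $\beta$ inside $\bigcup_{i<k}F_i$. But $\beta \geq \alpha_k$, and lemma \ref{lem:minimalfaces} tells us that faces of $F_k$ containing $\alpha_k$ are exactly those \emph{not} contained in $\bigcup_{i<k}F_i$, a contradiction. Therefore $k = j(\beta)$, so $\beta$ lies in exactly one interval $[\alpha_j, F_j]$.

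No step here poses a serious obstacle; the whole proof is really just an unwinding of the characterization of shellings provided by lemma \ref{lem:minimalfaces}, organized around the function $\beta \mapsto j(\beta)$. The only item that deserves a moment's thought is whether the minimal element of $\widehat P$ fits the scheme, and the observation above handles this uniformly.
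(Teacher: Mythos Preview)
Your proof is correct and takes essentially the same approach as the paper. The paper compresses the argument into the single observation that $[\alpha_j,F_j] = F_j \setminus \bigcup_{i<j} F_i$ for each $j$, so that the intervals are exactly the successive ``new faces'' added at each stage of the shelling; your map $\beta \mapsto j(\beta)$ is just the explicit inverse of this description, and your existence and disjointness steps together amount to verifying that identity. One small note: you invoke lemma~\ref{lem:minimalfaces} as if it states that the faces of $F_j$ not in $\bigcup_{i<j}F_i$ are exactly those containing $\alpha_j$, but the lemma as stated only asserts a unique minimal such face; the full characterization is in its proof (or follows because the set of ``new'' faces is upward-closed in the boolean interval $[\emptyset,F_j]$). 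The paper's proof leans on the same implicit step, so this is not a gap peculiar to your argument.
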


\begin{proof}
Since each $\alpha_j$ is minimal in $F_j\setminus \bigcup_{i<j} F_i$ and $F_j$ is obviously maximal in it, we have
\[
[\alpha_j,F_j] = F_j\setminus\bigcup_{i<j} F_i
\]
for each $j$. (The union on the right is empty when $j=1$.) Thus the intervals $[\alpha_j,F_j]$ are precisely the partition of $\widehat{P}$ into the elements that are added by each new $F_j$.
\end{proof}

\begin{definition}
Let $F_1,\dots,F_r$ be the facets of a boolean complex with face poset $\widehat{P}$ (including the minimal element). Let $\alpha_1,\dots,\alpha_r$ be any $r$ elements of $P$. Then the \textbf{incidence matrix} of the sequence of $\alpha_i$'s is the $r\times r$ matrix with $i,j$th entry equal to $1$ if $\alpha_i\leq F_j$ and $0$ otherwise.
\end{definition}

\begin{lemma}\label{lem:unitriangular}
If $F_1,\dots,F_r$ is a shelling, and $\alpha_1,\dots\alpha_r$ are the corresponding minimal elements guaranteed by lemma \ref{lem:minimalfaces}, then the incidence matrix of $\alpha_1,\dots,\alpha_r$ is unitriangular, i.e. upper triangular with $1$'s on the main diagonal.
\end{lemma}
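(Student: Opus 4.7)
The plan is extremely short because almost everything has already been set up by Lemma \ref{lem:minimalfaces}. I would simply unpack what it means for each $\alpha_i$ to be the unique minimal face of $F_i$ not contained in $\bigcup_{k<i}F_k$, and read off the two needed properties of the incidence matrix.

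First, I would handle the diagonal entries. By the statement of Lemma \ref{lem:minimalfaces}, each $\alpha_i$ is a face of $F_i$, so $\alpha_i\leq F_i$ in the face poset, and hence the $(i,i)$ entry is $1$.

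Next, the entries strictly below the diagonal. Suppose $i>j$; then $F_j$ appears in the union $\bigcup_{k<i}F_k$. By the defining property from Lemma \ref{lem:minimalfaces}, $\alpha_i$ is not contained in $\bigcup_{k<i}F_k$, so in particular $\alpha_i\not\leq F_j$. Thus the $(i,j)$ entry is $0$ for $i>j$, and the incidence matrix is upper triangular with $1$'s on the diagonal.

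The only subtlety worth a sentence is to make sure the language matches: Lemma \ref{lem:minimalfaces} is stated with each $\alpha_i$ among ``the faces of $F_i$ not contained in $\bigcup_{k<i}F_k$,'' i.e.\ $\alpha_i$ is a face of $F_i$ but not a face of any earlier $F_k$, which is exactly what we need for both parts of the argument. There is essentially no obstacle here; the work was done in proving Lemma \ref{lem:minimalfaces}.
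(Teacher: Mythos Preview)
Your proof is correct and follows essentially the same approach as the paper: the diagonal entries are $1$ because each $\alpha_i$ is a face of $F_i$, and the subdiagonal entries vanish because $\alpha_i\notin\bigcup_{k<i}F_k$. The paper's proof is just a terser statement of exactly these two observations.
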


\begin{proof}
The diagonal consists of $1$'s because $\alpha_j\in F_j$ for each $j$. The matrix is upper triangular because $\alpha_j \notin \bigcup_{i<j} F_i$.
\end{proof}

Since special monomials of $S$ are square-free, their fine grades are in bijection with subsets of $[n-1]$. It follows that the same is true for special orbit monomials of $S^G$. Given a special orbit monomial $Gm\in S^G$, we refer to the set of ranks $i$ for which its fine grade contains $e_i$ as its \textbf{rank set}. If $m=\prod y_{U_i}^{e_i}$, this is just the set of cardinalities $|U_i|$. Recall from the proof of \ref{cor:deltaisbalanced} that these ranks are the labels that realize $\Delta$ and $\Delta / G$ as balanced boolean complexes.

\begin{lemma}\label{lem:facetproduct}
Let $b\in S^G$ be a special orbit monomial, with rank set $I$, corresponding to a face $\alpha$ in $\partial\Delta/G$. Let $s$ be a squarefree product of $\theta_j$'s for some set of ranks $J\subset [n-1]\setminus I$. Then $sb$ is the sum of all the special orbit monomials that have rank set $I\cup J$ and correspond to faces of $\partial\Delta/G$ that contain the face $\alpha$. In particular, if
\[
s = \prod_{j\in [n-1]\setminus I} \theta_j
\]
then $sb$ is the sum of the special orbit monomials corresponding to the facets $F_\ell$ of $\partial\Delta/G$ that contain $\alpha$.
\end{lemma}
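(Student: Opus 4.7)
The proof strategy is to transport the entire computation from $S^G$ into the ring $A[\Delta/G]$ via Reiner's isomorphism (theorem \ref{thm:reinerSRquotient}, in the form of corollary \ref{cor:deltaisbalanced}), where we can then apply lemma \ref{lem:arithinbooleancomplexring} essentially verbatim. The key observation is that the rank-row sums $\theta_j \in S^{S_n}\subseteq S^G$ are exactly the image under Reiner's isomorphism of the balanced-complex parameters $\psi_j$ of proposition \ref{prop:balancedhsop}: indeed, $\theta_j = \sum_{|U|=j} y_U$, and the indeterminates $y_U$ with $|U|=j$ partition into $G$-orbits corresponding precisely to vertices of $\Delta/G$ of label $j$, so summing $\sum_{\alpha \in \alpha^G}y_\alpha$ across those orbits recovers $\theta_j$. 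Similarly, the special orbit monomial $b$ corresponding to a face $\alpha^G$ of $\partial\Delta/G$ with rank set $I$ is, by the bijection in proposition \ref{prop:orbitmonsandfaces} together with Reiner's isomorphism, exactly the image of the indeterminate $y_{\alpha^G}$ in $A[\Delta/G]$.

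With these identifications in hand, the first part of the claim is immediate: lemma \ref{lem:arithinbooleancomplexring} applied to the balanced boolean complex $\Delta/G$, the face $\alpha^G$, and the label set $J\subset[n-1]\setminus I\subset[n]\setminus r(\alpha^G)$ yields
\[
\left(\prod_{j\in J}\psi_j\right) y_{\alpha^G} \;=\; \sum_{\substack{\beta^G \geq \alpha^G \\ r(\beta^G)=I\cup J}} y_{\beta^G}.
\]
Translating back across Reiner's isomorphism, the left side becomes $sb$ and the right side becomes the sum of special orbit monomials $b_\beta$ associated (via \ref{prop:orbitmonsandfaces}) to the faces $\beta^G$ of $\Delta/G$ satisfying $\beta^G\geq \alpha^G$ and $r(\beta^G)=I\cup J$.

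The one subtlety to check is that each such $\beta^G$ lies in $\partial\Delta/G$, not merely in $\Delta/G$, so that it corresponds to an actual special orbit monomial (recall that faces of $\partial\Delta/G$ are those whose label set avoids $n$, equivalently, whose preimages in $B_n\setminus\{\emptyset\}$ do not contain $[n]$). But this is automatic: since $J\subseteq[n-1]$ and $I\subseteq[n-1]$ by hypothesis, we have $r(\beta^G)=I\cup J\subseteq[n-1]$, so $\beta^G$ never has the label $n$, which is exactly the condition for being a face of $\partial\Delta/G$.

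For the final clause, specialize to $J=[n-1]\setminus I$. Then $r(\beta^G)=[n-1]$, which is the maximal possible rank set for a face of $\partial\Delta/G$; hence every such $\beta^G$ is a facet of $\partial\Delta/G$. Conversely, every facet $F_\ell$ of $\partial\Delta/G$ containing $\alpha^G$ has label set $[n-1]\supset I$ and therefore appears in the sum. Thus the sum runs exactly over the facets of $\partial\Delta/G$ containing $\alpha^G$, giving the claimed identity. I do not anticipate any serious obstacle here: the real work was done already in lemmas \ref{lem:distinctsamelabels} and \ref{lem:arithinbooleancomplexring}, and the present statement is essentially a dictionary translation combined with the observation that the label-set constraint keeps us inside the boundary complex.
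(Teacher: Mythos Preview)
Your proof is correct and follows essentially the same approach as the paper's: transport to $A[\Delta/G]$ via Reiner's isomorphism (corollary \ref{cor:deltaisbalanced}), identify rank sets with label sets, and invoke lemma \ref{lem:arithinbooleancomplexring}, with the boundary restriction coming from the exclusion of rank $n$. Your version simply spells out the dictionary (that $\theta_j\leftrightarrow\psi_j$ and $b\leftrightarrow y_{\alpha^G}$) and the boundary check more explicitly than the paper does.
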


See figure \ref{fig:facetproduct}.

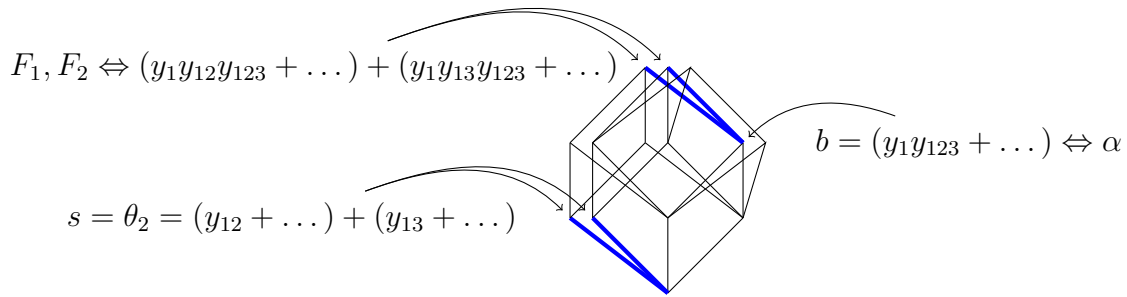
\begin{figure}
\begin{center}
\begin{tikzpicture}

\draw [blue, ultra thick] (8,3) -- (6.7,4);
\draw (6.7,4) -- (6.7,5);
\draw (6.7,5) -- (8,4);
\draw (8,3) -- (8,4);
\draw (9,4) -- (7.7,5);
\draw (7.7,5) -- (7.7,6);
\draw [blue, ultra thick] (7.7,6) -- (9,5);
\draw (9,4) -- (9,5);
\draw (8,3) -- (9,4);
\draw (6.7,4) -- (7.7,5);
\draw (6.7,5) -- (7.7,6);
\draw (8,4) -- (9,5);
\draw (7,4) -- (7,5) -- (8,6) -- (8,5) -- (7,4);
\draw [blue, ultra thick] (7,4) -- (8,3);
\draw (7,5) -- (8,4);
\draw [blue, ultra thick] (8,6) -- (9,5);
\draw (8,5) -- (9,4);
\draw (9.3,5) -- (8.3,6);
\draw (8,4) -- (9.3,5) -- (9,4);
\draw (7,5) -- (8.3,6) -- (8,5);
\node (term) at (12,5) {$b = (y_1y_{123}+\dots)\Leftrightarrow\alpha$};
\draw [->] (term) to [out=160,in=45] (9.075,5.075);
\node (theta) at (3,4) {$s = \theta_2 =( y_{12} + \dots) + (y_{13} + \dots)$};
\draw [->] (theta) to [out=20,in=135] (6.6,4.1);
\draw [->] (theta) to [out=20,in=135] (6.9,4.1);
\node (facets) at (3.3,6) {$F_1,F_2\Leftrightarrow(y_1y_{12}y_{123} + \dots) + (y_1y_{13}y_{123} + \dots)$};
\draw [->] (facets) to [out=20,in=135] (7.6,6.1);
\draw [->] (facets) to [out=20,in=135] (7.9,6.1);

\end{tikzpicture}
\end{center}
\caption{Illustration of lemma \ref{lem:facetproduct} in the case $n=4,\; G = D_4$. The orbit monomial of $y_1y_{123}$ is $b$. Its fine grade is $e_1+e_3$; the missing rank is $2$, so $s= \theta_2$. Note how the product $sb$ corresponds to the sum of the orbit monomials for each facet $F_1,F_2$ containing $\alpha$.}\label{fig:facetproduct}
\end{figure}

\begin{proof}
By \ref{thm:reinerSRquotient} and \ref{cor:deltaisbalanced}, $S^G\cong A[\Delta / G]$ is the Stanley-Reisner ring of the balanced boolean complex $\Delta / G$. The rank sets are the label sets for the balancing. Then this is precisely lemma \ref{lem:arithinbooleancomplexring}, applied to the present case. (The restriction to $\partial \Delta / G$ comes from excluding $n$ as a rank.)
\end{proof}

\begin{lemma}\label{lem:nonsingularLI}
If a sequence of faces $\alpha_1,\dots,\alpha_r$ in $\partial \Delta / G$ has an incidence matrix $M$ that is nonsingular over $A$, then the corresponding orbit monomials $b_1,\dots,b_r$ in $S^G$ are linearly independent over $S^{S_n}$.
\end{lemma}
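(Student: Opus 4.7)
The plan is to assume a nontrivial relation $\sum_{i=1}^r c_i b_i = 0$ with $c_i \in S^{S_n}$ and derive, from nonsingularity of $M$, that all $c_i$ must vanish. Since $S^G$ decomposes as an $A$-module into its fine-homogeneous components and the $b_i$ are themselves finely homogeneous (of grade $\chi_{I_i}$, where $I_i$ is the rank set of $\alpha_i$), I would first reduce to the case where all the $c_i b_i$ share a common fine grade $\mu \in \mathbb{N}^n$. By proposition \ref{prop:FTSPforS}, every finely homogeneous element of $S^{S_n}$ is a scalar multiple of a single monomial in the $\theta_\ell$'s, so for each $i$ with $c_i \neq 0$ we may write $c_i = \gamma_i \prod_\ell \theta_\ell^{\mu_\ell - [\ell \in I_i]}$ with $\gamma_i \in A$; this forces $I_i \subseteq \mathrm{supp}(\mu)$ whenever $\gamma_i \neq 0$.

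Next I would normalize $\mu$ to the characteristic vector $\chi_J$ of some $J \subseteq [n-1]$. The order complex $\Delta$ of $B_n \setminus \{\emptyset\}$ is a PL ball (remark \ref{rmk:SisaPLball}), hence $S = A[\Delta]$ is Cohen-Macaulay, and by proposition \ref{prop:hsops} the sequence $\theta_1,\dots,\theta_n$ is an h.s.o.p.; Hironaka (theorem \ref{thm:HironakaoverZ}) then makes $S$ free over $A[\theta_1,\dots,\theta_n]$, so each $\theta_\ell$ is a nonzerodivisor of $S$ and of $S^G$. Whenever $\mu_\ell \geq 2$, each nonzero $c_i$ is divisible by $\theta_\ell$, and cancelling this common factor gives a relation of lower fine grade with the same scalars $\gamma_i$; iterating yields $\mu_\ell \leq 1$ for all $\ell$. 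Since $n \notin I_i$ for any $\alpha_i \in \partial\Delta/G$, the same cancellation handles $\theta_n$ and achieves $\mu_n = 0$. Thus $\mu = \chi_J$ for some $J \subseteq [n-1]$ and $c_i = \gamma_i \prod_{\ell \in J \setminus I_i} \theta_\ell$ for each $i$ with $\gamma_i \neq 0$.

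The critical step is then to boost the relation up to the facet level by multiplying through by $\prod_{\ell \in [n-1]\setminus J} \theta_\ell$. The new coefficient of $b_i$ becomes
\[
\gamma_i \prod_{\ell \in J \setminus I_i} \theta_\ell \cdot \prod_{\ell \in [n-1]\setminus J}\theta_\ell \;=\; \gamma_i \prod_{\ell \in [n-1]\setminus I_i}\theta_\ell \;=\; \gamma_i s_i,
\]
so lemma \ref{lem:facetproduct} applies and gives $s_i b_i = \sum_j M_{ij} e_j$, where $e_j$ is the orbit monomial of the facet $F_j$. Substituting, the relation reads
\[
0 \;=\; \sum_i \gamma_i s_i b_i \;=\; \sum_j \Bigl(\sum_i \gamma_i M_{ij}\Bigr) e_j,
\]
an equation living in the fine-grade-$\chi_{[n-1]}$ component of $S^G$. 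That component is freely generated as an $A$-module by the distinct orbit monomials $e_1, \dots, e_r$, so each coefficient $\sum_i \gamma_i M_{ij}$ vanishes, i.e.\ $M^{\!\top}\gamma = 0$. Nonsingularity of $M$ over the integral domain $A$ (either $\mathbb{Z}$ or $\mathbb{F}_p$ in all uses) then forces $\gamma = 0$, hence $c_i = 0$ for every $i$.

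The main obstacle is essentially bookkeeping: controlling the fine grading throughout and justifying that the cancellations of $\theta_\ell$ neither lose information nor require any freeness of $S^G$ over $S^{S_n}$ that we have not yet established. The conceptual crux is that lemma \ref{lem:facetproduct} converts incidence data between $\alpha_i$'s and facets into a multiplicative identity in $S^G$, so that the $A$-linear independence of distinct orbit monomials $e_j$ translates nonsingularity of the combinatorial matrix $M$ directly into $S^{S_n}$-linear independence of the $b_i$.
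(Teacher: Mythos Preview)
Your proof is correct and follows essentially the same route as the paper's: reduce to a finely homogeneous relation, strip out common $\theta_\ell$ factors until the shared fine grade is a characteristic vector $\chi_J$ with $J\subseteq[n-1]$, multiply up to the facet level, and invoke lemma \ref{lem:facetproduct} to convert the relation into an $A$-linear dependence among facet orbit monomials, contradicting nonsingularity of $M$. The only cosmetic difference is that you justify cancellation of common $\theta_\ell$'s via Cohen--Macaulayness of $S$ and Hironaka (so that each $\theta_\ell$ is a nonzerodivisor), whereas the paper cites remark \ref{rmk:SGistorsionfree} (torsion-freeness of $S^G$ over $S^{S_n}$) directly; both work.
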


\begin{proof}
Suppose for contradiction that there is a linear relation
\[
0=\sum s_ib_i
\]
with the $s_i\in S^{S_n}$. Without loss of generality we can assume that the relation is finely homogeneous and thus the $s_i$'s are finely homogeneous, since the $b_i$'s are, by construction. It follows by remark \ref{rmk:SGistorsionfree} that each $s_i$ is a single term in the $\theta_j$'s. Since $S^G$ is torsion free as an $S^{S_n}$-module, also by remark \ref{rmk:SGistorsionfree}, we can also assume that the $s_i$'s do not have a common factor. Since $b_i$'s are special orbit monomials, the fine grade of a $b_i$ has the form $\sum a_je_j$ with each $0\leq a_j\leq 1$ for $j\leq n-1$, and $a_n=0$. Then the common fine grade $\sum a_j'e_j$ of $s_ib_i$ cannot have any $a_j'\geq 2$ without forcing the $s_i$'s to have the corresponding $\theta_j$ as a common factor, and likewise $a_n'$ must be $0$. In other words, each $s_ib_i$ is also a sum of special orbit monomials.

Let $t$ be the product 
\[
\prod_{j\leq n-1} \theta_j^{1-a_j'},
\]
i.e. the product of $\theta_j$'s with rank set complementary in $[n-1]$ to that of the $s_ib_i$'s. Then certainly
\[
0=\sum ts_ib_i.
\]
By lemma \ref{lem:facetproduct}, each $ts_ib_i$ is the sum of the orbit monomials corresponding to the facets $F_\ell$ of $\partial \Delta / G$ that contain $\alpha_i$, all with the same coefficient, say $c_i\in A$. If $m_i$ is the $i$th row of $M$, this means
\[
0=\sum c_i m_i.
\]
But this linear relation over $A$ between the rows of $M$ contradicts the assumption about $M$.
\end{proof}

\begin{proof}[Proof of Theorem \ref{thm:shellingbasis}]
We only need to prove the assertion about $S^G$ because the assertion about $R^G$ then follows by theorem \ref{thm:garsiabasis}.

Since the incidence matrix of the shelling is unitriangular by lemma \ref{lem:unitriangular}, it is nonsingular for any choice of $A$, so the $b_i$ are linearly independent over $S^{S_n}$ by lemma \ref{lem:nonsingularLI}. It remains to show spanningness.

By theorem \ref{thm:gobel}, it is enough to show that all special orbit monomials lie in the span of the $b_i$. We give a procedure to represent an arbitrary special orbit monomial as an $S^{S_n}$-linear combination of $b_i$'s.

Let $f$ be an arbitrary special orbit monomial, corresponding to a face $\tau$ in $\partial\Delta / G$. Because the shelling induces a partitioning of $\partial \Delta / G$ by \ref{lem:shellingpartitioning}, there is a unique $j$ with $\tau \in [\alpha_j,F_j]$. Then there is a unique squarefree product $s$ of $\theta_\ell$'s such that the rank set of $sb_j$ coincides with that of $f$. By lemma \ref{lem:facetproduct}, $sb_j$ is the sum of all the orbit monomials with that rank set that correspond to faces containing $\alpha_j$. One of these is $f$. 

We claim that the remaining special orbit monomials in $f - sb_j$ all correspond with faces that lie in various $[\alpha_k,F_k]$'s for $k>j$. Thus if we repeat the procedure on each of these, and then on any new orbit monomials that appear as a result, etc., we will finish in a finite number of steps. The claim holds because every such special orbit monomial corresponds to a face lying over $\alpha_j$ that is different from $\tau$. No face lying over $\alpha_j$ is in any $F_i$ for $i<j$, since $[\alpha_j,F_j]$ is disjoint from $\bigcup_{i<j}F_i$; and the only face lying over $\alpha_j$ and inside $F_j$ and with the right rank set is $\tau$, because $\partial\Delta / G$ is balanced, by \ref{lem:distinctsamelabels}.
\end{proof}

\begin{example}
In our running example $n=4, G = D_4$, the minimal elements $\alpha_1,\alpha_2,\alpha_3$ given by the shelling are the faces labeled $\emptyset, 12, 1^224$ in \ref{fig:shellingD4}, corresponding to the orbit monomials $1, Gy_{12}, Gy_1y_{124}$ in $S^G$ and $1, Gx_1x_2, Gx_1^2x_2x_4$ in $R^G$. Theorem \ref{thm:shellingbasis} implies these form bases, respectively, over $S^{S_n}$ and $R^{S_n}$. We given an example calculation exposing the mechanism of proof of the theorem.

Suppose we want to represent the orbit monomial 
\[
f = G y_{13}y_{123} = y_{13}y_{123} + y_{24}y_{234} + \dots 
\]
as an $S^{S_n}$ linear combination of the basis $1, Gy_{12}, Gy_1y_{124}$. This orbit monomial corresponds to the node in the face poset $\widehat{P}$ labeled $1^223^2$ in figure \ref{fig:faceposetD4}. Consulting figure \ref{fig:shellingD4}, the interval in the partitioning induced by the shelling that contains this face is $[\emptyset, 1^323^2]$. The basis element $b_j$ corresponding to this interval is $1$, which has empty rank set (i.e. fine grade zero), while our target orbit monomial has rank set $\{2,3\}$, i.e. its fine grade is $e_2+e_3$. Thus we take $s = \theta_2\theta_3$. We have
\[
\theta_2\theta_3 \cdot 1 = \text{every monomial with fine grade }e_2+e_3 = Gy_{13}y_{123} + Gy_{12}y_{123}
\]
Thus $f - sb_j$ is $-G_{12}y_{123}$. This is a single orbit monomial, corresponding to the node labeled $1^22^23$ in figure \ref{fig:faceposetD4}. Again consulting figure \ref{fig:shellingD4}, this node lies in the interval $[12,1^32^23]$, so the corresponding basis element is $Gy_{12}$. This has rank set $2$ i.e. its fine grade is $e_2$, so to hit the target of $e_2+e_3$ we need to multiply by $s = \theta_3$. We have
\[
-\theta_3 \cdot Gy_{12} = -Gy_{12}y_{123}
\]
so the target term has been expressed. Combining the two steps we obtain
\[
Gy_{13}y_{123} = \theta_2\theta_3\cdot 1 -\theta_3\cdot Gy_{12}
\]
which is the desired representation.
\end{example}

\subsection{Cell bases}\label{subsec:cellbases}

As we have seen, a shelling of $\partial \Delta / G$ implies Cohen-Macaulayness of $R^G$ and $S^G$ for $A=\Z$ and therefore any $\F_p$ as well. Nonetheless, Cohen-Macaulayness is always present if $A=\Q$. Garsia and Stanton sought combinatorial methods to find bases of $R^G$ and $S^G$ in this latter setting, which meant that shelling, though an elegant solution when available, was inadequate to their broader purpose. They achieved the following generalization in the case $A=\Q$.

\begin{thm}[\cite{garsiastanton}, Theorem 6.1]\label{thm:partitioningbasis}
If $\partial \Delta /G$ admits a partitioning $\bigcup [\alpha_j,F_j]$ such that $\alpha_1,\dots,\alpha_r$ has a nonsingular incidence matrix, then the special orbit monomials corresponding with $\alpha_1,\dots,\alpha_r$ form a basis for $S^G$, respectively $R^G$, over $S^{S_n}$, respectively $R^{S_n}$. \qed
\end{thm}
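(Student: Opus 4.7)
The proof splits into linear independence and spanning of $\{b_1,\dots,b_r\}$ as an $S^{S_n}$-basis for $S^G$; via Theorem \ref{thm:garsiabasis}, the corresponding statements for $\{\garsia(b_1),\dots,\garsia(b_r)\}$ in $R^G$ over $R^{S_n}$ follow. Linear independence is immediate from Lemma \ref{lem:nonsingularLI}, whose hypothesis is exactly the nonsingularity of the incidence matrix of $\alpha_1,\dots,\alpha_r$. The lemma was stated for $A=\Z$ or $\F_p$, but its proof extends to any coefficient domain (it depends only on Remark \ref{rmk:SGistorsionfree}, which likewise holds for any domain), so it applies with $A=\Q$ as required by the present theorem.

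For spanning, the plan is to combine Hochster-Eagon with a graded Hilbert series comparison. Over $\Q$, Hochster-Eagon (\ref{thm:hochstereagon}) gives Cohen-Macaulayness of $S^G$, and the Hironaka criterion (\ref{cor:CMfreemodule}) then makes $S^G$ free over $S^{S_n}=\Q[\theta_1,\dots,\theta_n]$. Let $N=\sum_j S^{S_n}b_j\subseteq S^G$; by linear independence $N$ is itself free of rank $r$ with Hilbert series $H(N,t)=\bigl(\sum_j t^{\deg b_j}\bigr)\cdot H(S^{S_n},t)$. I would conclude $N=S^G$ by showing that the numerator of $H(S^G,t)$ obtained by clearing the denominator $\prod_{i=1}^n(1-t^i)$ is exactly $\sum_j t^{\deg b_j}$. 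As a concrete consistency check, Lemma \ref{lem:facetproduct} with $s_j=\prod_{\ell\notin r(\alpha_j)}\theta_\ell$ yields the matrix identity $M\,\vec{e}=\vec{(s_jb_j)}$ in $S^G$, and the nonsingularity of $M$ over $\Q[\theta_1,\dots,\theta_n]$ lets us invert to recover each facet orbit monomial $e_j$ as an explicit $S^{S_n}$-combination of the $b_j$.

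The main technical obstacle is the Hilbert series identification. I would attack it using the fine grading on $S^G\cong\Q[\partial\Delta/G]$ from Proposition \ref{prop:balancedfinegrading}: for each rank set $I\subseteq[n-1]$, the partitioning, purity of $\partial\Delta/G$, and the boolean structure of each interval together produce a bijection between the set $F(I)$ of faces with rank set $I$ and the set $N(I)=\{j:r(\alpha_j)\subseteq I\}$, since $r(F_j)=[n-1]$ for every $j$ and each interval $[\alpha_j,F_j]$ contains a unique element of each rank set between $r(\alpha_j)$ and $[n-1]$. Summing the Hilbert series contributions of the intervals along these bijections should yield the desired identity. This combinatorial-to-algebraic translation is the step I expect to demand the most care; once it is secured, the remainder of the argument falls into place.
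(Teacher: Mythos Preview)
The paper does not prove this theorem; it cites Garsia--Stanton and later remarks that their argument ``involves Hilbert series.'' Your plan is correct and is essentially that Hilbert-series approach: linear independence from Lemma~\ref{lem:nonsingularLI}, then a Hilbert-series match to force $N=S^G$.

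Two minor remarks. First, the labeled Theorem~\ref{thm:hochstereagon} is stated only for polynomial rings, so to invoke it for $S$ you need the general version mentioned in the paragraph following that theorem, together with the Cohen--Macaulayness of $S$ itself (Remark~\ref{rmk:SisaPLball}). Second, the Hochster--Eagon step is in fact dispensable: the partitioning bijection you describe already computes $H(S^G,t)$ directly. Writing the $\Q$-basis of $S^G$ as orbit monomials indexed by (face of $\Delta/G$, exponent vector), one gets $H(S^G,t)=(1-t^n)^{-1}\sum_{\beta}\prod_{i\in r(\beta)}t^i/(1-t^i)$, and your bijection $\{\beta:r(\beta)=I\}\leftrightarrow\{j:r(\alpha_j)\subseteq I\}$ collapses this to $\bigl(\sum_j t^{\deg b_j}\bigr)/\prod_{i=1}^n(1-t^i)=H(N,t)$. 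Comparing graded $\Q$-dimensions then gives $N=S^G$ without knowing freeness in advance. Dropping the Hochster--Eagon detour also makes the argument go through verbatim over any field in which the incidence determinant is a unit, which is exactly the extension the paper notes immediately after the theorem.
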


The argument works with $A=\F_p$ as well, as noted in \cite{hersh2}. More generally it works when the determinant of the incidence matrix is a unit of $A$. In view of this theorem, much of the subsequent work that has applied Garsia and Stanton's ideas to invariant theory has sought partitionings when shellings were not available (e.g. see \cite{reiner92}, \cite{hersh}, \cite{hersh2}). However, a basis of orbit monomials (over $\Z$, $\F_p$, or $\Q$) need not come from a partitioning.

More broadly, it was long thought that Cohen-Macaulayness might imply the existence of a partitioning. (Stanley, in \cite[p.~85]{cca}, called this ``a central combinatorial conjecture on Cohen-Macaulay complexes.") This has recently turned out to be false: even Cohen-Macaulayness over $\Z$ does not guarantee a partitioning -- see \cite{duvaletal}, which gives an explicit family of counterexamples. The question is still open if we add the assumption that the complex is balanced.\footnote{In the other direction, it has been known since the beginning that partitionability does not imply Cohen-Macaulayness, even in the balanced case. For example, the abstract simplicial complex $\{\emptyset, \{1\},\{2\},\{3\},\{4\},\{1,2\},\{3,4\}\}$, whose geometric realization is two disjoint line segments, is balanced (with color classes $\{1,3\}$ and $\{2,4\}$), but is not Cohen-Macaulay over any field, because it is $1$-dimensional but has nontrivial $\tilde H_0$.}

Also, the incidence matrices of various partitionings of a given complex need not be simultaneously nonsingular in a given characteristic. For example, all connected graphs (i.e. one-dimensional simplicial complexes) are shellable, so they have partitionings with incidence matrix with determinant $1$. On the other hand, the graph with 5 vertices and edges $12,13,23,34,45$ (a triangle with an antenna) has a partitioning 
\[
[1,12],[2,23],[3,13],[34,34], [\emptyset,45]
\]
with incidence matrix $2$. (Example due to Victor Reiner, personal communication.) The same idea with a square in place of a triangle leads to a partitioning with incidence matrix $0$. This latter complex is even balanced.

All this suggests that the relationship between Cohen-Macaulayness and partitionability might not be as close as previously thought.

Thus, we propose to refocus attention on the sets of cells in $\partial \Delta / G$ that correspond to orbit monomial bases, as objects of combinatorial study in themselves. 

\begin{notation}
The natural setting for the definitions we wish to make are a pure, balanced boolean complex $K$ of dimension $d$, and its Stanley-Reisner ring $A[K]$ over $A=k$ or $\Z$. (Recall definition \ref{def:SRringofboolean}.) As we have seen, in this context, $A[K]$ has a natural choice of h.s.o.p., namely the sums
\[
\psi_i = \sum_{v\text{ has label }i} v
\]
across the vertices with each label (\ref{prop:balancedhsop}), and $A[K]$ has a natural fine $\N^{d+1}$-grading (\ref{prop:balancedfinegrading}).
\end{notation}

\begin{remark}
The $\psi_i$'s just defined specialize to our $\theta_i$'s when we specialize $K$ to $\Delta / G$, and the fine grading specializes to the fine grading on $S^G$.
\end{remark}

\begin{definition}
A set of faces $\alpha_1,\dots,\alpha_r$ in the complex is a \textbf{cell basis} of $K$ over $A$ if the corresponding elements $y_{\alpha_1},\dots,y_{\alpha_r}$ are a basis for $A[K]$ over the subring $A[\psi_1,\dots,\psi_{d+1}]$.
\end{definition}

\begin{definition}
Suppose $K$ has facets $F_1,\dots,F_r$. For any cell $\alpha\in K$, the vector whose $j$th entry is $1$ if $\alpha \leq F_j$ and zero otherwise will be called the \textbf{facet vector} of $\alpha$.
\end{definition}

Thus the incidence matrix of a set of cells $\alpha_1,\dots,\alpha_r$ has their facet vectors as rows.

\begin{definition}
If the incidence matrix of $\alpha_1,\dots,\alpha_r$ is a unit of the coefficient ring $A$, then the facet vector of any face $\alpha$ of $K$ has a unique representation as an $A$-linear combination of the facet vectors of the $\alpha_i$'s. When this happens, we will say that $\alpha$'s facet vector is \textbf{supported on} those $\alpha_i$'s whose facet vectors occur with nonzero coefficient in this representation.
\end{definition}

\begin{prop}
A set of faces $\alpha_1,\dots,\alpha_r$ forms a cell basis for $K$ if and only if the following two conditions are met:
\begin{enumerate}
\item The determinant of the incidence matrix of $\alpha_1,\dots,\alpha_r$ is a unit of $A$.\label{cond:unitofA}
\item For any face $\alpha\in K$, with label set $J$, its facet vector is supported only on $\alpha_i$'s whose label sets are subsets of $J$.\label{cond:labelsupport}
\end{enumerate}
\end{prop}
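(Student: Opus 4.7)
The plan is to handle the two implications separately, using lemma \ref{lem:arithinbooleancomplexring} as the master tool throughout. Specializing that lemma to $J' = [d+1] \setminus r(\alpha)$ yields
\[
\left(\prod_{j \notin r(\alpha)} \psi_j\right) y_\alpha \;=\; \sum_{F_\ell \geq \alpha} y_{F_\ell},
\]
so the facet vector of $\alpha$ is manifested inside $A[K]$ as the top fine-grade product of $y_\alpha$ by the appropriate squarefree monomial in the $\psi_j$'s. The strategy in both directions is to translate between $A$-linear relations among facet vectors in $A^f$ (where $f$ is the number of facets) and $A[\psi_1,\dots,\psi_{d+1}]$-linear relations among the $y_{\alpha_i}$ in $A[K]$.

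For the ``only if'' direction, write $y_\alpha = \sum_i p_i(\psi)\, y_{\alpha_i}$ using the presumed basis. The fine grading (proposition \ref{prop:balancedfinegrading}) together with the algebraic independence of the $\psi_j$'s (proposition \ref{prop:balancedhsop}) forces each nonzero $p_i$ to be a scalar multiple $c_i \prod_{j \in r(\alpha) \setminus r(\alpha_i)} \psi_j$ with $c_i \in A$, occurring only when $r(\alpha_i) \subset r(\alpha)$. Multiplying through by $\prod_{j \notin r(\alpha)} \psi_j$ and applying lemma \ref{lem:arithinbooleancomplexring} converts this directly into the facet-vector identity required by condition 2. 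Applying this in the special case $\alpha = F_\ell$ shows that the facet vectors of the $\alpha_i$ span $A^f$ (hence $r \geq f$), while the same translation applied to the assumed $A[\psi]$-linear independence of the $y_{\alpha_i}$'s shows that these facet vectors are $A$-linearly independent (hence $r \leq f$). Thus $r = f$, the incidence matrix has rows forming an $A$-basis of $A^f$, and its determinant is a unit of $A$, giving condition 1.

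For the ``if'' direction, linear independence of $y_{\alpha_1},\dots,y_{\alpha_r}$ over $A[\psi_1,\dots,\psi_{d+1}]$ is immediate from the argument of lemma \ref{lem:nonsingularLI}, which used only the balanced-boolean Stanley-Reisner structure and so carries over verbatim to $A[K]$ with condition 1 in place of the earlier nonsingularity hypothesis. For spanning, it suffices to show that every $y_\alpha$ lies in the $A[\psi]$-span of $\{y_{\alpha_i}\}$, since the $y_\alpha$'s generate $A[K]$ over $A[\psi_1,\dots,\psi_{d+1}]$ (using lemma \ref{lem:arithinbooleancomplexring} to reduce arbitrary monomial products). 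Condition 2 provides coefficients $c_i \in A$, supported on $i$ with $r(\alpha_i) \subset r(\alpha)$, such that translating back through lemma \ref{lem:arithinbooleancomplexring} yields
\[
\left(\prod_{j \notin r(\alpha)}\psi_j\right) \left( y_\alpha - \sum_i c_i \prod_{j \in r(\alpha) \setminus r(\alpha_i)} \psi_j \cdot y_{\alpha_i}\right) = 0.
\]
The main obstacle is to cancel the leading factor. The plan is to show that multiplication by $\prod_{j \notin r(\alpha)} \psi_j$ is injective from the fine-grade $\mathbf{1}_{r(\alpha)}$ component of $A[K]$ into the top fine-grade component: the former is $A$-spanned by $\{y_\beta : r(\beta) = r(\alpha)\}$ (by successive reduction of products of $y$'s via the Stanley-Reisner relations), the latter is $A$-free on $\{y_{F_\ell}\}$, and the multiplication sends each $y_\beta$ to its facet vector, while distinct such $\beta$'s have disjoint-support facet vectors by lemma \ref{lem:distinctsamelabels}. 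This injectivity forces the parenthesized difference to vanish, completing spanning and hence the proof.
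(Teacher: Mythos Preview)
Your proof is correct and follows essentially the same outline as the paper's sketch: both directions hinge on lemma~\ref{lem:arithinbooleancomplexring} to translate between $A[\psi]$-linear relations among the $y_{\alpha_i}$ and $A$-linear relations among facet vectors, and both invoke lemma~\ref{lem:nonsingularLI} for linear independence.

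The one place you diverge is in the spanning step of the ``if'' direction. The paper points back to the iterative procedure of theorem~\ref{thm:shellingbasis}, which would need to be reworked since there is no partitioning here; you instead establish spanning directly by proving that multiplication by $\prod_{j\notin r(\alpha)}\psi_j$ is injective on the fine-grade $\mathbf{1}_{r(\alpha)}$ component. Your injectivity argument---observing that this component is $A$-free on $\{y_\beta : r(\beta)=r(\alpha)\}$ (since these are exactly the standard monomials of that grade) and that distinct such $\beta$'s have disjoint, hence $A$-independent, facet vectors by lemma~\ref{lem:distinctsamelabels}---is clean and avoids any termination argument. This is a genuine clarification of what the paper's sketch leaves implicit. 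Your added care in showing $r$ equals the number of facets (rather than taking it as part of the setup) is also a nice touch.
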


\begin{proof}[Proof sketch]
If condition \ref{cond:unitofA} is met, then the corresponding elements $y_{\alpha_1},\dots, y_{\alpha_r}$ are linearly independent by the same argument as in \ref{lem:nonsingularLI}, which works without significant change in this more general setting. Likewise, if condition \ref{cond:labelsupport} is met, then they span $A[K]$, by the argument of \ref{thm:shellingbasis}. For this, one needs a generalization of G\"{o}bel's theorem (\ref{thm:gobel}). Fortunately, it is available: $A[K]$ is spanned as an $A[\psi_1,\dots,\psi_{d+1}]$-module by the elements $y_\alpha$ for $\alpha\in K$. This is because, as an {\em algebra with straightening law} (more on these in the next section), $A[K]$ has an $A$-basis consisting of monomials $\prod y_\alpha^{e_\alpha}$ supported on chains; but we can write any such monomial as a product of $\psi_i$'s times a single $y_{\alpha^\star}$, by taking $\alpha^\star$ to be maximal in the chain supporting the monomial. Then we can replace each other $y_\alpha$ with the product of $\psi_j$ over the label set of $\alpha$.

In the other direction, if $y_{\alpha_1},\dots,y_{\alpha_r}$ form a cell basis, then one can prove the incidence matrix of $\alpha_1,\dots,\alpha_r$ is nonsingular by representing each $y_{F_i}$ for facets $F_i$ as an $A[\psi_1,\dots,\psi_{d+1}]$-linear combination of $y_{\alpha_j}$'s, and interpreting the results in terms of the $\alpha_j$s' facet vectors. To prove condition \ref{cond:labelsupport}, one writes an arbitrary $y_\alpha$ with label set $J$ as
\[
y_\alpha = \sum s_i y_{\alpha_i}
\] 
with the $s_i\in A[\psi_1,\dots,\psi_{d+1}]$. Taking stock of the fine grading, it is clear that the label sets of the $\alpha_i$'s that appear on the right with nonzero $s_i$'s must be subsets of $\alpha$'s label set. Then one multiplies through by
\[
\prod_{j\in [d+1]\setminus J} \psi_j
\]
to obtain an equation expressing $\alpha$'s facet vector in terms of the facet vectors of the same $\alpha_i$'s.
\end{proof}

Garsia and Stanton give a different if-and-only-if criterion (of which \ref{thm:partitioningbasis} is a consequence) -- see \cite{garsiastanton}, Theorem 5.1 -- but it is equivalent after specializing to their setting. The proof involves Hilbert series.

When a cell basis exists, it can often (conjecture: always) be found inductively, beginning with the empty face, by choosing any face minimal among those whose facet vectors do not lie in the $A$-span of the facet vectors of the faces already selected, and repeating until a basis is achieved.

\begin{example}
In figure \ref{fig:greedycellbasis} we use this procedure to find a cell basis for our running example of $\partial\Delta / G$ for $G=D_4\subset S_4$. Each node is marked by its facet vector. We pick the minimal node first (mauve), and then cross out all nodes whose facet vectors lie in the $\Q$-span of its facet vector. Then we pick a minimal node among those remaining (teal), and cross out all nodes newly in the span. Finally, we pick a minimal node among those remaining (blue), and all nodes now lie in the span. Because there are three facets and it only took three facet vectors to span everything, the incidence matrix is nonsingular over $\Q$. (By inspection, it is even nonsingular over $\Z$.) Because at each stage, each node crossed out had rank set containing those of the nodes already selected, condition \ref{cond:labelsupport} was also met. Thus the end result is a cell basis. Comparing to figure \ref{fig:faceposetD4}, we obtain a basis for $R^G$ consisting of $\emptyset$, $13$, and $1^223$, i.e. of the orbit monomials $1$, $Gx_1x_3$, and $Gx_1^2x_2x_3$.
\end{example}

\begin{figure}
\begin{center}
\begin{tikzpicture}

\node (A) at (0,9) {$010$};
\node (B) at (1.5,9) {$001$};
\node (C) at (-1.5,9) {$100$};
\node (D) at (-4,6) {$011$};
\node (E) at (-5.5,6) {$100$};
\node (F) at (5.5,6) {$001$};
\node (G) at (4,6) {$110$};
\node (H) at (0,6) {$011$};
\node (I) at (-1.5,6) {$100$};
\node (J) at (-4,3) {$011$};
\node (K) at (-5.5,3) {$100$};
\node (L) at (4,3) {$111$};
\node (M) at (0,3) {$111$};
\node (N) at (0,0) {$111$};

\foreach \from/\to in {A/H, A/D, A/G, B/D, B/H, B/F, C/G, C/E, C/I, D/M, D/J, E/M, E/K, F/M, F/L, G/M, G/L, H/J, H/L, I/L, I/K, J/N, K/N, L/N, M/N}
    \draw (\from) -- (\to);

\draw [purple, thick] (-0.5,-0.2) rectangle (0.5,0.2);
\draw [purple, thick] (-0.5,2.8) -- (0.5,3.2);
\draw [purple, thick] (3.5,2.8) -- (4.5,3.2);

\draw [teal, thick] (-6,2.8) rectangle (-5,3.2);
\draw [teal, thick] (-4.5,2.8) -- (-3.5,3.2);
\draw [teal, thick] (-6,5.8) -- (-5,6.2);
\draw [teal, thick] (-4.5,5.8) -- (-3.5,6.2);
\draw [teal, thick] (-2,5.8) -- (-1,6.2);
\draw [teal, thick] (-0.5,5.8) -- (0.5,6.2);
\draw [teal, thick] (-2,8.8) -- (-1,9.2);

\draw [blue, thick] (3.5,5.8) rectangle (4.5,6.2);
\draw [blue, thick] (5,5.8) -- (6,6.2);
\draw [blue, thick] (-0.5,8.8) -- (0.5,9.2);
\draw [blue, thick] (1,8.8) -- (2,9.2);

\end{tikzpicture}
\end{center}
\caption{Finding a cell basis greedily. The nodes of $\Delta / G$ are marked by their facet vectors.}\label{fig:greedycellbasis}
\end{figure}
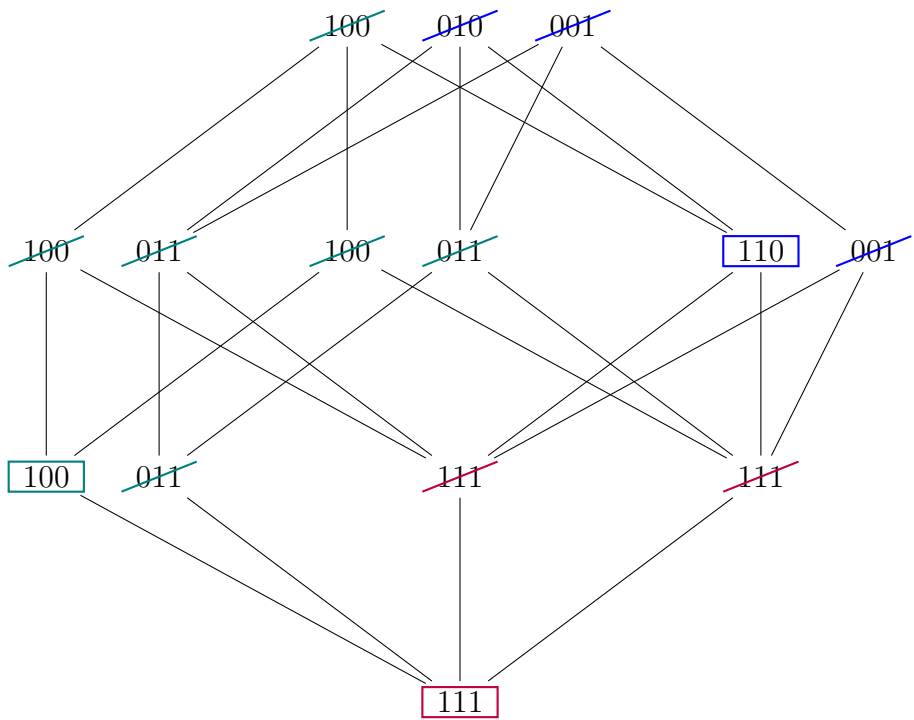

\section{When the invariant ring is not Cohen-Macau\-lay, and other questions}\label{sec:notCM}

In this section we collect many questions and conjectures that this chapter raises, beginning with the question of when the converse of our main result holds.

\subsection{Does Cohen-Macaulayness of $R^G$ imply that of $S^G$?}

In theorem \ref{thm:quotientCMforpermgroups}, we proved a topological theorem that determines when $S^G$ is a Cohen-Macaulay ring, based on whether $G=\Grr$. In section \ref{sec:mainresult}, we combined this with the fact that $S^G$'s Cohen-Macaulayness implies $R^G$'s to deduce that $G=\Grr$ implies $R^G$ is Cohen-Macaulay. The missing piece of this story is the question of whether $R^G$ can be Cohen-Macaulay but not $S^G$. We conjecture that it cannot.

\begin{conj}\label{conj:mainconj}
With $A=\Z$, $R^G$ is not Cohen-Macaulay unless $S^G$ is too (and therefore $G=\Grr$).
\end{conj}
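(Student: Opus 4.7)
The plan is to prove the contrapositive: if $G \neq \Grr$, then $R^G$ fails to be Cohen-Macaulay. By \ref{prop:equivofZandallfields} and \ref{thm:equivofquestions}, it suffices to exhibit a prime $p$ for which $R^G \otimes \F_p$ is not Cohen-Macaulay. The natural candidate comes from the argument of \ref{thm:quotientCMforpermgroups}: take $\sigma \in G \setminus \Grr$ whose orbit partition is maximal in $\Pi_n$ among elements outside $\Grr$; by \ref{lem:primestabilizer}, $\sigma$ has prime-power order $p^k$, and its fixed-point subspace in $H = \{\sum x_i = 0\}$ has codimension $\geq 3$. This $p$ is the same prime that obstructed Cohen-Macaulayness of $S^G$, so the goal is to show it already obstructs Cohen-Macaulayness of $R^G$.

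The cleanest approach would be to exhibit $S$ as the associated graded ring $\gr R$ of $R$ with respect to the shape filtration $R_{\leq \lambda}$, inducing an isomorphism $\gr R^G \cong S^G$. Proposition \ref{prop:approximatehomomorphism} makes this precise: the Garsia map, viewed as mapping each $S_{\overline\lambda}$ to $R_\lambda \cong R_{\leq \lambda}/R_{<\lambda}$, is multiplicative, hence a ring isomorphism $S \xrightarrow{\sim} \gr R$ preserving the $S_n$-action. In particular, $\theta_i \in S^G$ is the shape-leading form of $\sigma_i \in R^G$. By the Hironaka criterion (\ref{thm:HironakaoverZ}), the conjecture reduces to: if $\sigma_1, \ldots, \sigma_n$ is a regular sequence on $R^G \otimes \F_p$, then $\theta_1, \ldots, \theta_n$ is a regular sequence on $S^G \otimes \F_p$ --- that is, Cohen-Macaulayness of $R^G$ descends to its associated graded with respect to this filtration.

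The opposite implication --- regular sequences lifting from an associated graded ring to the filtered ring --- is classical and is essentially what underlies the spanningness argument in \ref{thm:garsiabasis}. The hard part will be the descent direction, which fails for general filtrations. One promising route is via Rees algebra methods: the Rees algebra $\bigoplus_\lambda R_{\leq \lambda}^G \, t^\lambda$ of the shape filtration provides a flat degeneration from $R^G$ (the generic fiber) to $S^G = \gr R^G$ (the special fiber), and one could try to show that Cohen-Macaulayness of the generic fiber forces Cohen-Macaulayness of the total space, from which the special fiber inherits the property. A more geometric alternative would be to localize $R^G \otimes \F_p$ at the image of a generic point on the fixed-point subspace of $\sigma$ in $V = \A_{\F_p}^n$: by the \'etale local structure theorem (\ref{lem:quotientstabalg}), Cohen-Macaulayness of $R^G \otimes \F_p$ at this prime is equivalent to Cohen-Macaulayness of $\F_p[V]^{G_x}$ at the origin, where $G_x \ni \sigma$, and Kemper's theorem applied to a Sylow $p$-subgroup $P \subset G_x$ containing $\sigma$ yields non-Cohen-Macaulayness of $\F_p[V]^P$. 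The residual obstacle of this second approach is that non-Cohen-Macaulayness does not automatically transfer through the non-modular extension $\F_p[V]^{G_x} \subset \F_p[V]^P$; overcoming this, perhaps by a refined analysis of depth using the structure of $G_x$ as a permutation group, is where I expect the main difficulty to lie.
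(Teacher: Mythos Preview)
This statement is a \emph{conjecture} in the paper, not a theorem; the paper offers no proof. What the paper does provide is context and partial evidence: it observes that the question is a special case of the open problem of whether Cohen-Macaulayness of an algebra with straightening law $B$ implies Cohen-Macaulayness of its discrete counterpart $\overline{B}$ (the Stanley-Reisner ring of the underlying poset), a question for which neither a proof nor a counterexample is known in general. The paper then verifies the conjecture computationally for $n \leq 6$ and notes that it holds for $p$-groups via Kemper's theorem.

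Your proposal is not a proof either, and you are candid about this. Your first approach, via the identification $S^G \cong \gr R^G$ and Rees-algebra degeneration, is exactly the ASL framework the paper situates the problem in; the descent of Cohen-Macaulayness from a filtered ring to its associated graded is well known to fail in general, and the paper cites Miyazaki's partial result and Terai's gapped argument to indicate how delicate this direction is. Your second approach, localizing at a generic point of the fixed locus of $\sigma$ and invoking Kemper's theorem on a Sylow $p$-subgroup $P \subset G_x$, is more concrete, but the obstacle you name is genuine: the inclusion $\F_p[V]^{G_x} \subset \F_p[V]^P$ has $\F_p[V]^{G_x}$ as a direct summand (since $[G_x:P]$ is prime to $p$), which transfers Cohen-Macaulayness \emph{downward} but not non-Cohen-Macaulayness upward. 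The paper itself invokes Kemper only when $G$ is already a $p$-group, precisely to avoid this issue.

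In short: you have correctly identified the shape of the problem and two natural lines of attack, and the gaps you flag are the same ones that keep this a conjecture in the paper.
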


This question fits into a broader context. By proposition \ref{prop:approximatehomomorphism}, $S$ is a sort of ``coarse approximation" of $R$. This is parallel to the theory of {\em algebras with straightening laws}, also known as {\em ordinal Hodge algebras} (\cite{hodge}, \cite{eisenbud2}, \cite[III.6]{cca}, \cite[Ch. 7]{brunsherzog}). An algebra with straightening law (ASL) is a ring $B$ with a set of generators (over some specified coefficient ring $A$, usually a field) identified with the elements of a finite poset $P$, such that (1) the monomials supported on chains of $P$ form an $A$-basis, and (2) when products of incomparable elements $x,y\in P$ are expressed on this basis, each term has a factor $z\in P$ lying below both $x$ and $y$. Every ASL $B$ is associated to a {\em discrete ASL} $\overline{B}$, which is nothing but the Stanley-Reisner ring of the poset. In fact, $R$, $S$, and $S^G \cong A[\Delta / G]$ are all ASLs, and $S$ is the discrete ASL associated to $R$.\footnote{The ASL structures of $R$, $S$, and $S^G \cong A[\Delta / G]$ require working with the {\em order duals} of the posets we have so far considered, i.e. the same underlying sets but with all order relations reversed. For example, $R$ and $S$ become ASLs on the order dual of $B_n\setminus\{\emptyset\}$ by associating $U\in B_n\setminus\{\emptyset\}$ with $y_U\in S$ and $\garsia(y_U)\in R$. Then in $R$, we have 
\[
\garsia(y_{1})\garsia(y_2) = x_1x_2 = \garsia(y_{12})
\]
and in order to meet the definition of an ASL we need to insist that $12$ is below instead of above both $1$ and $2$ in $B_n\setminus\{\emptyset\}$.}

The theory of ASLs is designed so that good properties like Cohen-Macaulayness pass from $\overline{B}$ to $B$, in parallel to how, in the theory we have developed here, Cohen-Macaulayness passes from $S^G$ to $R^G$. The reverse question of how $B$ constrains $\overline{B}$ turns out to be a hard open problem (Naoki Terai, personal communication). At present, although there is no known proof that $B$'s Cohen-Macaulayness implies $\overline{B}$'s, there does not appear to be any known counterexample. The best available current result seems to be Mitsuhiro Miyazaki's theorem that if $B$ is Cohen-Macaulay and $\overline{B}$ is Buchsbaum, then $\overline{B}$ is Cohen-Macaulay (\cite{miyazaki}).\footnote{Terai claimed in \cite{terai} that the depth of $\overline{B}$ cannot differ from the depth of $B$ by more than $1$. However, Miyazaki (\cite{miyazaki}) found a gap in Terai's proof, though not a counterexample.}

On the strength of this parallel, it is at least plausible that $R^G$'s Cohen-Macaulay\-ness implies $S^G$'s, although it might be very hard to prove.

The situation for our conjecture is slightly better than this, though. The relationship between $R^G$ and $S^G$ is more rigid than that between a general ASL $B$ and its discrete counterpart $\overline{B}$, since there are many ASLs built on the same poset. In another equally rigid situation, the desired implication is present. Namely, the Stanley-Reisner ring of a boolean complex $K$ (definition \ref{def:SRringofboolean}) is an ASL built on the order dual of the face poset, without the minimal element, and the corresponding discrete ASL is the Stanley-Reisner ring of this poset. As the Stanley-Reisner ring of a poset is isomorphic to that of its order dual (since reversing the ordering does not affect whether a pair of elements is incomparable), Duval's theorem (\ref{thm:duval}) tells us that this particular ASL and its discrete counterpart are simultaneously Cohen-Macaulay. Perhaps the situation with $R^G$ and $S^G$ is analogous.

Furthermore, the conjecture is true in some special cases:

{\em It is true for groups of low degree.} By a computer calculation we have included in appendix \ref{sec:smalldegree}, for permutation groups $G$ of degree $\leq 6$, $R^G$ is Cohen-Macaulay exactly when $G=\Grr$ and thus when $S^G$ is Cohen-Macaulay.

{\em It is true for p-groups.} By a result of Kemper (\cite{kemper99}, Corollary 3.7), which applies in the setting of linear $p$-groups over a field of characteristic $p$, Cohen-Macaulayness implies that the group is generated by elements fixing subspaces of codimension $\leq 2$. Cohen-Macaulayness of $R^G$ over $\Z$ implies Cohen-Macaulayness over $\F_p$, whereupon Kemper's result implies $G=\Grr$ (and then $S^G$ is Cohen-Macaulay as well).

\subsection{Questions about cell bases and partitionings}

Above, we proposed {\em cell bases} for a balanced boolean complex as an object of combinatorial study. There are many basic questions to answer.

\begin{conj}
The procedure described in the last paragraph of section \ref{subsec:cellbases} always leads to a cell basis, when one exists.
\end{conj}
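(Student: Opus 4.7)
The plan is to prove the conjecture by combining a matroid-style exchange argument with an inductive analysis of the label-support condition specific to balanced boolean complexes. Throughout, let $K$ have $r$ facets $F_1,\dots,F_r$, so that facet vectors live in $A^r$, and assume a cell basis $B=\{\beta_1,\dots,\beta_r\}$ exists.

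First I would establish that greedy terminates after selecting exactly $r$ faces with $A$-linearly independent facet vectors. The facet vectors of the facets themselves are the standard basis vectors $e_1,\dots,e_r$, so the $A$-submodule of $A^r$ spanned by all facet vectors has full rank $r$; greedy, by always adding a new $A$-independent facet vector while any facet vector remains outside the current span, must terminate with rank exactly $r$. I would then handle the unit-determinant condition (condition 1 of the characterization of cell bases) by a matroid-style exchange against $B$: inductively, after step $s$, the set $\{\alpha_1,\dots,\alpha_s\}$ extends to a set of $r$ faces whose facet vectors form an $A$-basis of $A^r$. The exchange step, in which greedy's choice $\alpha_{s+1}$ replaces some $\beta_j$, is standard over a field; over $\Z$, the key is that if the Smith normal form of the current incidence submatrix has a nontrivial invariant factor, then some $\beta_j$ lies outside the $\Z$-span of greedy's selections so far, contradicting the minimality of $\alpha_{s+1}$ in the not-yet-spanned set — provided we can prove no $\beta_j$ sits strictly below $\alpha_{s+1}$ while also lying outside the current span.

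The heart of the proof is the label-support condition (condition 2), which I would attack by simultaneous induction on the greedy step $s$. The inductive claim is: after step $s$, for every face $\beta \in K$ whose facet vector lies in the $A$-span of $\{v_{\alpha_1},\dots,v_{\alpha_s}\}$, it in fact lies in the $A$-span of those $v_{\alpha_i}$ with $r(\alpha_i)\subseteq r(\beta)$. The minimality rule of greedy ensures that when $\alpha_{s+1}$ is added, every face strictly below $\alpha_{s+1}$ in the face poset already has its facet vector spanned, and by the inductive hypothesis the label-support holds for each such face. To propagate the condition to $\alpha_{s+1}$ itself and to unselected faces becoming newly spanned, I would translate facet vectors into the top finely-graded piece of the Stanley-Reisner ring $A[K]$: by Lemma \ref{lem:arithinbooleancomplexring}, the facet vector of $\alpha$ corresponds to $y_\alpha\cdot\prod_{j\notin r(\alpha)}\psi_j$. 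The algebraic straightening relations in $A[K]$, combined with the fact that multiplication by $\psi_j$ for $j\in r(\alpha_{s+1})$ preserves the subring generated by faces with label sets $\subseteq r(\alpha_{s+1})$, should allow any expression of $v_{\alpha_{s+1}}$ in terms of earlier $v_{\alpha_i}$ to be rearranged so that only $\alpha_i$ with $r(\alpha_i)\subseteq r(\alpha_{s+1})$ contribute.

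I expect the main obstacle to be precisely the propagation of the label-support condition across the exchange step. Matroid theory supplies extensibility to a basis of the span, but it is blind to the poset and to the labeling, whereas condition 2 is a purely order-theoretic requirement. Overcoming this will require exploiting the interaction between the fine grading on $A[K]$ and the poset structure of $K$, most likely through a cancellation lemma showing that any finely-homogeneous relation among facet vectors of the form $v_\alpha = \sum c_i v_{\alpha_i}$ can be straightened so that all contributing $\alpha_i$ satisfy $r(\alpha_i)\subseteq r(\alpha)$. A secondary obstacle, specific to $A=\Z$, is ensuring the resulting determinant is a unit rather than merely nonzero; the most likely strategy is to prove the conjecture over $\Q$ first — where the matroid structure is clean — and then use the hypothesis that a $\Z$-cell basis exists together with the minimality of greedy's selections to descend to the integral statement via a Smith-normal-form argument.
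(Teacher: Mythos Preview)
This statement is presented in the paper as an open \emph{conjecture}, not a theorem. The paper offers no proof; immediately after stating it, the author remarks only that ``this would be a combinatorial analogue to the fact that one can select a basis for a vector space inductively simply by picking new vectors not in the span of those already picked.'' So there is no proof in the paper to compare your proposal against.

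As for your proposal itself: it is a plan, not a proof, and you say as much. The matroid-exchange step is fine over a field for obtaining an $r$-element set with nonsingular incidence matrix, but the substance of the conjecture is condition~2 (the label-support condition), and here your argument reduces to a hoped-for ``cancellation lemma'' asserting that any expression $v_\alpha = \sum c_i v_{\alpha_i}$ can be straightened so that only $\alpha_i$ with $r(\alpha_i)\subseteq r(\alpha)$ survive. You do not prove this lemma; you say the straightening relations in $A[K]$ ``should allow'' it and then explicitly flag it as ``the main obstacle.'' That is exactly right: matroid exchange is blind to the labeling, and nothing in your sketch bridges that gap. Note also that your proposed inductive claim---that every face already spanned after step $s$ is spanned respecting label-support---is strictly stronger than what greedy's minimality gives you. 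Minimality in the face poset tells you that every face strictly \emph{below} $\alpha_{s+1}$ is spanned, but it tells you nothing about faces with incomparable or larger label sets that happen to fall into the span at step $s+1$.

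Your integral descent sketch has a similar circularity: the Smith-normal-form argument hinges on ``provided we can prove no $\beta_j$ sits strictly below $\alpha_{s+1}$ while also lying outside the current span,'' which you do not establish. In short, your outline isolates the right difficulty but does not resolve it; the conjecture remains open on the strength of what you have written, which is consistent with its status in the paper.
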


This would be a combinatorial analogue to the fact that one can select a basis for a vector space inductively simply by picking new vectors not in the span of those already picked.



\begin{conj}
Any two cell bases over $\Q$ have incidence matrices with the same determinant.
\end{conj}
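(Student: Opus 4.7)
My plan is to relate the incidence matrices of two cell bases via the module change-of-basis matrix, and then to extract invariance of its determinant from the fine grading together with the integral structure of the Stanley-Reisner presentation.

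First I would let $\{\alpha_i\}_{i=1}^r$ and $\{\beta_j\}_{j=1}^r$ be two cell bases, which necessarily have the same cardinality $r$ equal to the number of facets. Both give free $\mathbb{Q}[\psi_1,\ldots,\psi_{d+1}]$-module bases for $\mathbb{Q}[K]$, so there is a unique change-of-basis matrix $C=(c_{ij})\in GL_r(\mathbb{Q}[\psi_1,\ldots,\psi_{d+1}])$ with $y_{\alpha_i}=\sum_j c_{ij} y_{\beta_j}$. Comparing fine grades forces $c_{ij}=0$ unless $r(\beta_j)\subseteq r(\alpha_i)$, and in that case $c_{ij}=a_{ij}\prod_{k\in r(\alpha_i)\setminus r(\beta_j)}\psi_k$ for some scalar $a_{ij}\in\mathbb{Q}$. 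Ordering basis elements by increasing $|r(\cdot)|$ makes the scalar matrix $A=(a_{ij})$ block lower triangular, with diagonal blocks $A_J$ indexed by the label sets that occur (with their multiplicities forced to agree by the common fine Hilbert series of $\mathbb{Q}[K]$).

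Next I would multiply the expansion $y_{\alpha_i}=\sum_j c_{ij} y_{\beta_j}$ through by $\prod_{k\notin r(\alpha_i)}\psi_k$: by Lemma \ref{lem:arithinbooleancomplexring}, the left side becomes $\sum_{F_\ell\geq\alpha_i} y_{F_\ell}$, and the right side collapses to $\sum_j a_{ij}\sum_{F_\ell\geq\beta_j} y_{F_\ell}$ because the factors of $\psi_k$ corresponding to indices in $r(\alpha_i)\setminus r(\beta_j)$ combine with those outside $r(\alpha_i)$ to give exactly $\prod_{k\notin r(\beta_j)}\psi_k$. Reading both sides in the $\mathbb{Q}$-basis $\{y_{F_\ell}\}$ of the fine-grade-$[d+1]$ component of $\mathbb{Q}[K]$ yields the matrix identity $M_\alpha=A\cdot M_\beta$, so that $\det M_\alpha=(\det A)(\det M_\beta)$ and $\det A=\prod_J\det A_J$.

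The main obstacle is then to prove $\det A=\pm 1$. One natural route is integrality: the defining relations of $\mathbb{Z}[K]$ have integer coefficients, so the $\mathbb{Q}$-linear system determining each $a_{ij}$ is defined over $\mathbb{Z}$; if one can show it has a unique solution that lies in $\mathbb{Z}$, then $C\in GL_r(\mathbb{Z}[\psi_1,\ldots,\psi_{d+1}])$, whose determinant is a unit of the polynomial ring $\mathbb{Z}[\psi_1,\ldots,\psi_{d+1}]$ and hence is $\pm 1$. When $K$ is Cohen-Macaulay over $\mathbb{Z}$ this integrality is automatic from the freeness of $\mathbb{Z}[K]$ over $\mathbb{Z}[\psi_1,\ldots,\psi_{d+1}]$. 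In the general $\mathbb{Q}$-case, where cell bases may exist only over $\mathbb{Q}$, I expect the argument to have to proceed block-by-block: when the multiplicity $n_J$ of a label set $J$ is $1$, one can read $\det A_J=\pm 1$ directly from Lemma \ref{lem:arithinbooleancomplexring} applied to the unique $\alpha$ and $\beta$ with label $J$ (the relation $\sum_{r(\gamma)=J,\,\gamma\geq\sigma}y_\gamma=\prod_{k\in J\setminus r(\sigma)}\psi_k\cdot y_\sigma$ pins down the sign), but the blocks with $n_J\geq 2$ seem to demand genuinely new input. The hardest part will be handling these higher-multiplicity blocks; I suspect the right tool is to identify $\det M_\alpha$ with a canonical topological invariant of the pseudomanifold $|K|$, for instance extracted from its fundamental class when $|K|$ is an orientable homology manifold, thereby making $\det M_\alpha$ manifestly independent of the chosen cell basis.
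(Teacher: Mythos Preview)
The statement you are attempting is a \emph{conjecture} in the paper, not a theorem: the paper offers no proof and says only that there is ``very preliminary computational evidence'' for it. So there is no paper's proof to compare against; the relevant question is whether your proposal actually closes the gap.

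Your reduction is correct and is the natural first move. The fine grading does force $c_{ij}=a_{ij}\prod_{k\in r(\alpha_i)\setminus r(\beta_j)}\psi_k$ with $a_{ij}\in\Q$, the block-triangular structure gives $\det C=\prod_J\det A_J=\det A\in\Q^\times$, and multiplying through by $\prod_{k\notin r(\alpha_i)}\psi_k$ together with Lemma~\ref{lem:arithinbooleancomplexring} does yield $M_\alpha=A\,M_\beta$. So the conjecture is equivalent to $|\det A|=1$, exactly as you say.

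But you do not prove $|\det A|=1$, and your two suggested routes both have gaps. The integrality argument in the Cohen--Macaulay-over-$\Z$ case is not quite right: freeness of $\Z[K]$ over $\Z[\psi_1,\dots,\psi_{d+1}]$ tells you that a $\Z$-cell basis exists, but it does \emph{not} tell you that an arbitrary $\Q$-cell basis $\{\beta_j\}$ is a $\Z$-cell basis. Unless $\{y_{\beta_j}\}$ actually spans $\Z[K]$ over $\Z[\psi]$, there is no reason for the coefficients $c_{ij}$ expressing $y_{\alpha_i}$ in terms of the $y_{\beta_j}$ to lie in $\Z[\psi]$, so you cannot conclude $C\in GL_r(\Z[\psi])$. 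Your argument therefore proves only that any two $\Z$-cell bases have incidence matrices of the same determinant (up to sign), which is weaker than the conjecture. The second route---identifying $\det M_\alpha$ with a topological invariant such as a fundamental class---is plausible heuristics but is not carried out; in particular the complexes $\partial\Delta/G$ of interest are pseudomanifolds that need not be orientable, so a naive fundamental-class argument is not available.

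In short: your proposal is honest partial progress that isolates the key obstruction, but it does not settle the conjecture, and the paper does not claim to either.
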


We have very preliminary computational evidence suggesting this.

If cell bases are replaced with the minimal elements from the intervals of a partitioning, this statement fails for boolean complexes -- even for simplicial complexes, and even in the balanced case. 

However, in the balanced case, it is at least true that if a partitioning's incidence matrix is nonsingular over $\Q$, then its minimal elements form a cell basis over $\Q$. Thus this conjecture would imply that if a balanced boolean complex is Cohen-Macaulay, the incidence matrices of any two partitionings have determinants that are equal unless one of them is zero.

\subsection{Questions about shellings and sphere quotients}

Lange's theorem, \ref{thm:lange}, 
is an extremely elegant if-and-only-if statement. Nonetheless, as mentioned in section \ref{sec:quotientsofspheres}, the ``if" direction was proven via a delicate and ad-hoc induction, based on a full classification of rotation-reflection groups and a case analysis of the corresponding quotients of $\R^n$. 

In this way, its history mirrors that of the Chevalley-Shepard-Todd theorem. The ``if" direction (a pseudoreflection group has a polynomial invariant ring) was first proven by Shepard and Todd via a complete classification of pseudoreflection groups and a case-by-case determination of their invariant rings. The ``only if" direction followed much more easily via a combinatorial argument based on Hilbert series (see \cite{stanley4}, \S 4).

Chevalley then came up with a uniform, classification-free proof of the ``if" direction. Lange's result seems ripe for a similar treatment. 

\begin{question}\label{q:classificationfreeLange}
Is there a classification-free proof of Lange's theorem?
\end{question}

Lange himself has proposed a program for such a proof in \cite{lange2}, \S 7. We propose another program below.

Prior to Lange's theorem, in the known cases in which the complex $\Delta / G$ was Cohen-Macaulay over $\Z$, this was proven via an explicit shelling of $\partial \Delta / G$. For Young subgroups in $S_n$ and their counterparts inside other finite Coxeter groups, a family of shellings is due to Garsia and Stanton (\cite{garsiastanton}, sections 7, 8). For $A_n$, and for $S_n$ diagonally embedded in $S_n\times S_n\subset S_{2n}$, there is a shelling due to Reiner (\cite{reiner92}, chapter 4) -- this also works for other finite Coxeter groups. For the wreath product $S_2\wr S_n\subset S_{2n}$, a shelling is due to Patricia Hersh (\cite{hersh}).

Although it is possible for a complex to be Cohen-Macaulay over $\Z$ without being shellable, practitioners in the fields of combinatorial commutative algebra and poset topology have observed that in practice, one usually can prove Cohen-Macaulayness by proving shellability (\cite[p.~68]{wachs}, and Josephine Yu, personal communication). Thus, it is natural to ask:

\begin{question}\label{q:shellingalways}
If $G=\Grr$ for a permutation group, is $\partial\Delta / G$ shellable? Is there a uniform proof?
\end{question}

We have some computational evidence (omitted) that the quotient of the Coxeter complex associated to a finite Coxeter group by a subgroup generated by elements of length $\leq 2$ can always be given a shelling based on length order which generalizes the shelling given for Young subgroups by Garsia and Stanton. We do not have a proof at this time.

If the answer to both of these questions turns out to be {\em yes}, this suggests another approach to question \ref{q:classificationfreeLange}. Given an arbitrary rotation-reflection group $G\in O_n(\R)$, one can take a point $p\in S^{n-1}$ that is not fixed by any nontrivial element of $G$, and consider the Voronoi diagram in $S^{n-1}$ of its $G$-orbit. This gives a tiling of $S^{n-1}$ by congruent convex polytopes on which $G$ acts freely and transitively. In suitable circumstances, the boundaries of the Voronoi cells give a regular CW complex structure that generalizes the Coxeter complex in the case that $G$ is a finite Coxeter group. One can pass to the barycentric subdivision in order to guarantee that the result is a balanced simplicial complex on which $G$ acts balancedly, so that the quotient by $G$ is a balanced boolean complex. Suppose the CW complex structure on $S^{n-1}$ so constructed is called $\Gamma$.

\begin{question}
Is there a uniform proof of shellability of $\Gamma / G$?
\end{question}

If the answer is {\em yes}, then this goes most of the way toward a uniform proof of Lange's theorem, since the fact that the codimension-one faces of $\Gamma$ are incident to two facets will imply $\Gamma / G$ is a pseudomanifold, and shellability of a pseudomanifold (possibly with boundary) implies it is a sphere or a ball.

An alternative, potentially more flexible, approach to proving Lange's theorem using $\Gamma / G$, suggested by Robert Young (personal communication), would be to apply the discrete Morse theory of Robin Forman (\cite{forman1}, \cite{forman2}). This is another theory designed for deducing topological results about a cell complex from its combinatorics. The relationship between shelling and discrete Morse functions is discussed in \cite{babsonhersh}. Although discrete Morse theory is primarily designed to make statements about homotopy type rather than homeomorphism class, it could potentially be employed in Lange's proposed program, which calls for proving that $S^{n-1}/G$ has the homology of a sphere or point (see \cite{lange2}, \S 7).

\section{Appendix: Magma calculations}

\subsection{Testing Cohen-Macaulayness in small degree}\label{sec:smalldegree}

The following routines describe the Magma calculation we used to verify conjecture \ref{conj:mainconj} for permutation groups of degree $<6$.

The following three functions input a degree $n$ and output, in a format Magma understands, the cycle structures of transpositions, double transpositions, and three-cycles of degree $n$:

\begin{verbatim}
Transp := function(n)
    if n gt 2 then
        return [ <2,1>, <1, n-2> ];
    elif n eq 2 then
        return [ <2,1> ];
    else
        return [];
    end if;
end function;

DoubleTransp := function(n)
    if n gt 4 then
        return [ <2,2>, <1, n-4> ];
    elif n eq 4 then
        return [ <2,2> ];
    else
        return [];
    end if;
end function;

ThreeCycle := function(n)
    if n gt 3 then
        return [ <3,1>, <1, n-3> ];
    elif n eq 3 then
        return [ <3,1> ];
    else
        return [];
    end if;
end function;
\end{verbatim}

The function \texttt{SubgroupGenByCycleStructures} inputs a permutation group $G$ and a list of cycle structures $L$, and outputs the subgroup of $G$ generated by the elements having cycle structures in $L$.

\begin{verbatim}
SubgroupGenByCycleStructures := function(G,L)
    S := [];
    for g in G do
        if CycleStructure(g) in L then
            Append(~S,g);
        end if;
    end for;
    return sub< G | S >;
end function;
\end{verbatim}

The function \texttt{TestCMness} inputs a permutation group $G$, identifies the primes $p$ dividing $[G:\Grr]$, and tests Cohen-Macaulayness of $R^G\otimes \F_p$ for each of these primes until it finds a failure of Cohen-Macaulayness. It uses the fact (\cite{derksenkemper}, Theorem 3.7.1) that the size of a set of minimal module generators (``secondary invariants") for $R^G\otimes\F_p$ as a module over the subring generated by an h.s.o.p. (``primary invariants") is given by the product of the h.s.o.p.'s degrees divided by the group order if and only if $R^G\otimes\F_p$ is Cohen-Macaulay. It outputs whether $R^G$ is Cohen-Macaulay and also the index of $\Grr$ in $G$.

\begin{verbatim}
TestCMness := function(G)
    n := Degree(G);
    L := [Transp(n), DoubleTransp(n), ThreeCycle(n)];
    H := SubgroupGenByCycleStructures(G,L);
    Ind := Index(G,H);
    P := PrimeFactors(Ind);
    IsCM := true;
    for p in P do
        F := GaloisField(p);
        R := InvariantRing(G,F);
        Pri := PrimaryInvariants(R);
        Predicted := 1;
        for i in [1..#Pri] do
            Predicted *:= TotalDegree(Pri[i]);
        end for;
        Predicted := Predicted / Order(G);
        if Predicted ne #SecondaryInvariants(R) then
            IsCM := false;
            break p;
        end if;
    end for;
    return IsCM, Ind;
end function;
\end{verbatim}

This function outputs \texttt{true} if the Cohen-Macaulayness of $R^G$ coincides with the statement $G=\Grr$ and \texttt{false} otherwise.

\begin{verbatim}
DoTheyMatch := function(G)
    CM, Ind := TestCMness(G);
    Generated := (Ind eq 1);
    Match := (Generated eq CM);
    return Match;
end function;
\end{verbatim}

The function \texttt{TestTheGroup} inputs a group and calls \texttt{DoTheyMatch} for each of its conjugacy classes of subgroups. Applied to $S_n$, it allows us to see if Cohen-Macaulayness of $R^G$ always coincides with $G=\Grr$ for permutation groups of degree $n$. If there are subgroups for which $R^G$ is Cohen-Macaulay but $G\neq \Grr$, it outputs these subgroups.

\begin{verbatim}
function TestTheGroup(G)
    Cla := SubgroupLattice(G);
    NaughtyGroups := [];
    m := #Cla;
    for i in [1..m] do
        H := Cla[i];
        print i;
        if DoTheyMatch(H) then
            print H, "is good.";
        else
            print H, "is bad.";
            Append(~NaughtyGroups,H);
        end if;
    end for;
    if #NaughtyGroups eq 0 then
        print "Everything matched!";
    else
        print "Uh-oh! There were", #NaughtyGroups, "naughty subgroups!";
    end if;
    return NaughtyGroups;
end function;
\end{verbatim}

We have run this code for $n\leq 6$. For $n>6$ it is infeasible on our equipment. Here is the tail end of the output for $n=6$:

\begin{verbatim}
54
Permutation group H acting on a set of cardinality 6
Order = 120 = 2^3 * 3 * 5
    (1, 6)
    (1, 4, 5)(3, 6)
is good.
55
Permutation group H acting on a set of cardinality 6
Order = 360 = 2^3 * 3^2 * 5
    (1, 6)(4, 5)
    (1, 2)(3, 6, 4, 5)
is good.
56
Symmetric group G acting on a set of cardinality 6
Order = 720 = 2^4 * 3^2 * 5
    (1, 2, 3, 4, 5, 6)
    (1, 2)
is good.
Everything matched!
[]
\end{verbatim}

\subsection{Constructing $\partial\Delta / G$}\label{sec:cellconstruction}

The following routines build the boolean complex $\partial\Delta / G$ as a combinatorial object. They rely on a beautiful, purely group-theoretic description of the face poset of $\partial \Delta / G$ in terms of double cosets in $S_n$:

\begin{definition}
Let $s_i = (i,i+1)\in S_n$ for $i=1,\dots,n-1$. The $s_i$ form a system $\mathcal{S}$ of Coxeter generators for $S_n$. For each finite subset $J\subset\mathcal{S}$, let $\langle J\rangle$ be the subgroup of $S_n$ generated by the elements in $J$. For each $J$ and each $\pi\in S_n$, form the double coset $G\pi\langle J\rangle$ in $S_n$. Let $\Sigma(n,G)$ be the poset consisting of all distinct ordered pairs $(G\pi\langle J\rangle, J)$, ordered by reverse inclusion of both factors, i.e.
\[
(G\pi\langle J\rangle, J) \leq (G\pi'\langle J'\rangle, J') \Leftrightarrow G\pi\langle J\rangle \supset G\pi'\langle J'\rangle \text{ and } J\supset J'.
\]
\end{definition}

\begin{lemma}
The face poset of $\partial\Delta /G$, with minimal face appended, is order-isomorphic to $\Sigma(n,G)$.
\end{lemma}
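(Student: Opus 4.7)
The plan is to set up an explicit bijection between $\Sigma(n,G)$ and (the face poset of $\partial\Delta/G$ with minimal face appended) via the classical correspondence between flags in $[n]$ and cosets of standard parabolic (i.e.\ Young) subgroups of $S_n$. For each $J\subset\mathcal{S}$, the subgroup $\langle J\rangle$ is the Young subgroup $S_{a_1}\times\dots\times S_{a_{k+1}}$ acting on consecutive blocks, where the block sizes are determined by $T(J):=\{i\in[n-1]:s_i\notin J\}$. Let $\sigma_J$ denote the \emph{standard chain} of type $T(J)$, namely the flag $[i_1]\subset[i_2]\subset\dots\subset[i_k]$ of initial segments, where $T(J)=\{i_1<\dots<i_k\}$. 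Then $\mathrm{Stab}_{S_n}(\sigma_J)=\langle J\rangle$, and because $S_n$ acts transitively on flags of each fixed type, the map $\pi\langle J\rangle\mapsto\pi\sigma_J$ is an $S_n$-equivariant bijection from $S_n/\langle J\rangle$ to the set of chains of type $T(J)$ in $B_n\setminus\{\emptyset,[n]\}$ (with $J=\mathcal{S}$ giving the empty chain, i.e.\ the appended minimal face). Passing to $G$-orbits, double cosets $G\pi\langle J\rangle$ correspond bijectively to $G$-orbits of chains of type $T(J)$. Unioning over $J$ gives a bijection $\Phi:\Sigma(n,G)\to P(\partial\Delta/G)\cup\{\hat0\}$ sending $(G\pi\langle J\rangle,J)\mapsto [\pi\sigma_J]_G$.

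The remainder is to check that $\Phi$ is an order-isomorphism. For the easy direction, suppose $(G\pi\langle J\rangle,J)\le(G\pi'\langle J'\rangle,J')$ in $\Sigma(n,G)$, so $J\supset J'$ and $G\pi\langle J\rangle\supset G\pi'\langle J'\rangle$. Write $\pi'=g\pi w$ with $g\in G$, $w\in\langle J\rangle$. Since $J\supset J'$ forces $T(J)\subset T(J')$, the standard chains satisfy $\sigma_J\subset\sigma_{J'}$; and since $w\in\langle J\rangle$ fixes $\sigma_J$, we have $\sigma_J=w\sigma_J\subset w\sigma_{J'}$, hence $g\pi\sigma_J\subset g\pi w\sigma_{J'}=\pi'\sigma_{J'}$. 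The $G$-translate $g\pi\sigma_J$ of $\pi\sigma_J$ sits inside $\pi'\sigma_{J'}$, so $[\pi\sigma_J]_G\le[\pi'\sigma_{J'}]_G$ in $P(\partial\Delta/G)$, as required.

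The converse is the main obstacle and the step requiring the most care: if $[\pi\sigma_J]_G\le[\pi'\sigma_{J'}]_G$, one must lift this to a single element of $G$ that simultaneously witnesses both the type containment and the coset containment. By definition of the order in $P(\partial\Delta/G)$, there exist $g_1,g_2\in G$ with $g_1\pi\sigma_J\subset g_2\pi'\sigma_{J'}$, so setting $g:=g_2^{-1}g_1\in G$ gives $g\pi\sigma_J\subset\pi'\sigma_{J'}$. This containment of chains forces $T(J)\subset T(J')$, equivalently $J\supset J'$. Moreover the sub-chain of $\pi'\sigma_{J'}$ of type $T(J)$ is exactly $\pi'\sigma_J$ (because the standard chains themselves satisfy $\sigma_J\subset\sigma_{J'}$), so $g\pi\sigma_J=\pi'\sigma_J$. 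Therefore $(\pi')^{-1}g\pi\in\mathrm{Stab}(\sigma_J)=\langle J\rangle$, giving $\pi'\in G\pi\langle J\rangle$; combined with $\langle J'\rangle\subset\langle J\rangle$ this yields $G\pi'\langle J'\rangle\subset G\pi\langle J\rangle$. Both conditions of $\le$ in $\Sigma(n,G)$ hold, completing the isomorphism.

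Once these two directions are verified, the proof is complete. The bijection on underlying sets is essentially bookkeeping; the real content is the ``converse'' step above, where one must use the transitivity of $S_n$ on standard chains of each type to promote a pair $(g_1,g_2)$ of separate $G$-elements witnessing a sub-chain relation up to $G$-orbit into a single $g\in G$ that relates the chosen coset representatives $\pi$ and $\pi'$ modulo $\langle J\rangle$.
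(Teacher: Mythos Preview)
Your proof is correct. The paper does not actually give its own proof of this lemma: it simply remarks that the description goes back to Garsia--Stanton and Reiner (with the condition $J\supset J'$ corrected in Babson--Reiner), and that the lemma is ``a straightforward consequence of the special case when $G$ is the trivial subgroup,'' for which it cites Brown, \S I.5H. Your argument is precisely the standard one behind those references: identify $\langle J\rangle$ as the stabilizer of the standard flag $\sigma_J$ of type $T(J)=\{i:s_i\notin J\}$, so that $S_n/\langle J\rangle$ parametrizes flags of that type, and then pass to $G$-orbits. Your verification of the order relation in both directions, including the key step of extracting the sub-chain of type $T(J)$ from $\pi'\sigma_{J'}$ to conclude $(\pi')^{-1}g\pi\in\langle J\rangle$, is exactly the content the paper is alluding to but does not spell out.
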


\begin{remark}
This description of $\partial \Delta / G$ was made use of in \cite{garsiastanton}, in the more general setting of finite Coxeter groups. It was made explicit in \cite{reiner92}, but the requirement that $J\supset J'$ was missing from the description of the order relation on the $(G\pi\langle J\rangle, J)$'s. This was corrected in \cite{babsonreiner}. The lemma is a straightforward consequence of the special case when $G$ is the trivial subgroup, which is explained in detail in \cite{brown}, section I.5H.
\end{remark}

This established, here is the code:

The function \texttt{LexLower} inputs two permutations of the same degree and compares them lexicographically. If the first is greater, it outputs 0; if less, 1; if equal, 2.

\begin{verbatim}
LexLower := function(perm1,perm2)
    assert Degree(Parent(perm1)) eq Degree(Parent(perm2));
    t := 1;
    while t le Degree(Parent(perm1)) do
        if t^perm1 lt t^perm2 then
            return 1;
        elif t^perm1 gt t^perm2 then
            return 0;
        else
            t := t+1;
        end if;
    end while;
    if t gt Degree(Parent(perm1)) then
        return 2;
    end if;
end function;
\end{verbatim}

The function \texttt{LowestCosetMap} inputs a permutation group $G$, and outputs the right transversal consisting of the lexicographically lowest representative in each right coset, as a sequence, and a map from $S_n$ to this transversal mapping an arbitrary element to the lexicographically lowest representative of its coset.

\begin{verbatim}
LowestCosetMap := function(G);
    n := Degree(G);
    S := Generic(G);
    Transv, Phi := Transversal(S,G);
    LowestTransv := [];
    for i in [1..#Transv] do
        iLowest := Transv[i];
        for g in G do
            if LexLower(g*Transv[i],iLowest) eq 1 then
                iLowest := g*Transv[i];
            end if;
        end for;
        Append(~LowestTransv,iLowest);
    end for;
    Translato := map< Set(Transv) -> Set(LowestTransv) | 
                  [Transv[i] -> LowestTransv[i]: i in [1..#Transv]]>;
    return LowestTransv, Phi*Translato;
end function;
\end{verbatim}

The function \texttt{WhichCosets}, given a group $S$, two subgroups $G$ and $Y$, a right transversal of $G$ in $S$, and an element $b$ of $S$, returns the set of elements of the transversal that represent the right cosets of $G$ that are contained in the double coset $GbY$.

\begin{verbatim}
WhichCosets := function(S,G,Y,Transv,b)
    Reps := {};
    B := Base(S);
    BI := DoubleCosetCanonical(S,G,b,Y: B:=B);
    for g in Transv do
        if DoubleCosetCanonical(S,G,g,Y: B:=B) eq BI then
            Include(~Reps, g);
        end if;
    end for;
    return Reps;
end function;
\end{verbatim}

The function \texttt{CellConstruction}, given a permutation group $G$ of degree $n$, constructs the quotient of the Coxeter complex of $S_n$ by $G$. The output is expressed as a sequence of ordered pairs \texttt{< j , Reps >} each representing a double coset $GbY$, where \texttt{j} is a set of integers indexing the Coxeter generators that generate $Y$, and \texttt{Reps} lists the lexicographically minimal representatives of the cosets of $G$ that are contained in $GbY$. Below, we also refer to the output of this function as a CellConstruction.

\begin{verbatim}
CellConstruction := function(G)
    n := Degree(G);
    S := Generic(G); 
    Transv, Phi := LowestCosetMap(G);
    gens := [S!(i,i+1) : i in {1..n-1}];
    f := map< {1..n-1} -> S | [i -> gens[i]: i in {1..n-1}] >;
    P := [Subsets({1..n-1},k): k in [0..n-1]];
    CellCollection := [];
    ParabolicSubgroups := [];
    for i in [1..n] do
        Append(~CellCollection,AssociativeArray(P[i]));
        Append(~ParabolicSubgroups,AssociativeArray(P[i]));
        for j in P[i] do
            ParabolicSubgroups[i][j] := sub< S | f(j) >;
            CellCollection[i][j] := DoubleCosetRepresentatives(S,G,
                                     ParabolicSubgroups[i][j]);
        end for;
    end for;
    OutputArray := [];
    for i in [1..n] do
        for j in P[i] do
            for k in CellCollection[i][j] do
                Reps := WhichCosets(S,G,ParabolicSubgroups[i][j],
                                     Transv,k);
                Append(~OutputArray,<j,Reps>);
            end for;
        end for;
    end for;
    return OutputArray;
end function;
\end{verbatim}

\subsection{G\"{o}bel's algorithm}\label{sec:gobelmethod}

The routines in this subsection use those in the previous one to implement G\"{o}bel's algorithm. An example calculation is given at the end.

The function \texttt{ReorderList} inputs a sequence of length $n$ and a permutation of degree $n$, and outputs the result of applying the permutation to the sequence.

\begin{verbatim}
ReorderList := function(TargetSeq, Perm)
    n := Degree(Parent(Perm));
    assert n eq #TargetSeq;
    Answer := TargetSeq;
    for i in [1..n] do
        Answer[i^Perm] := TargetSeq[i];
    end for;
    return Answer;
end function;
\end{verbatim}

The function \texttt{DetectOrder} inputs a sequence of integers, sorts the sequence into ascending order, and outputs the reordered sequence and a permutation that returns it to the original order.

\begin{verbatim}
DetectOrder := function(IntegerSeq)
    WorkingSeq := IntegerSeq;
    n := #IntegerSeq;
    FinalOrder := [];
    TruncatedRanks := [];
    while #WorkingSeq gt 0 do
        MinGuy, MinIndex := Min(WorkingSeq);
        Remove(~WorkingSeq,MinIndex);
        Append(~TruncatedRanks,MinIndex);
        Append(~FinalOrder,MinGuy);
    end while;
    for i in [0..n-2] do
        for j in [1..n-i-1] do
            if TruncatedRanks[n-i] ge TruncatedRanks[n-i-j] then
                TruncatedRanks[n-i] +:= 1;
            end if;
        end for;
    end for;
    g := SymmetricGroup(n)!TruncatedRanks;
    assert FinalOrder eq ReorderList(IntegerSeq, g^-1);
    return g, FinalOrder;
end function;
\end{verbatim}

The function \texttt{FindSymMultipliers} inputs a monomial, assesses whether it is special, and if it is, outputs the fine grading. If it is not, it outputs the associated special monomial (as a sequence of its exponents), and the monomial in the elementary symmetric polynomials that would have to be multiplied to reach this one from its associated special monomial (also as a sequence of exponents).

\begin{verbatim}
FindSymMultipliers := function(InMonom)
    OrigExps := Exponents(InMonom);
    g, Exps := DetectOrder(OrigExps);
    n := #Exps;
    Sort(~Exps);
    Reverse(~Exps);
    IsSpecial := true;
    OutDegs := {};
    OutExps := [0 : i in [1..n]];
    dif := [0: i in [1..n]];
    for i in [1..n-1] do
        dif[i] := Exps[i] - Exps[i+1];
    end for;
    dif[n] := Exps[n];
    for i in [1..n-1] do
        if dif[i] gt 1 then
            IsSpecial := false;
            break i;
        elif dif[i] eq 1 then
            Include(~OutDegs, i);
        else
            assert dif[i] eq 0;
        end if;
    end for;
    if dif[n] gt 0 then
        IsSpecial := false;
    end if;
    if IsSpecial then
        return IsSpecial, OutDegs, _;
    else
        for i in [1..n-1] do
            OutExps[i] := Max(0,dif[i]-1);
        end for; 
        OutExps[n] := dif[n];
        AmountTaken := [0: i in [1..n]];
        for i in [1..n] do
            for j in [i..n] do
                AmountTaken[i] +:= OutExps[j];
            end for;
        end for;
        AssociatedSpecial := [ Exps[i] - AmountTaken[i] : 
                                     i in [1..n] ];
        Reverse(~AssociatedSpecial);
        AssociatedSpecial := ReorderList(AssociatedSpecial, g);
        return IsSpecial, OutExps, AssociatedSpecial;
    end if;
end function;
\end{verbatim}

The function \texttt{CellToOrbitMonomial} turns an individual face in $\partial \Delta / G$ into the corresponding orbit monomial. It inputs the face, specified as an entry in a CellConstruction (see subsection \ref{sec:cellconstruction}), the group $G$, and the ambient polynomial ring $R$, and returns the orbit monomial as an element of $R$.

\begin{verbatim}
CellToOrbitMonomial := function(CellPair,G,R)
    assert Degree(G) eq Rank(R);
    assert CellPair[2] subset Set(Generic(G));
    n := Degree(G);
    assert CellPair[1] subset {1..n-1};
    y := [ R | 1: i in [1..n]];
    for i in [1..n] do
        for j in [1..i] do
            y[i] *:= R.j;
        end for;
    end for;
    StartMonomial := R!1;
    for i in ({1..n-1} diff CellPair[1]) do
        StartMonomial *:= y[i];
    end for;
    CellMonomial := StartMonomial^(Rep(CellPair[2])^(-1));
    for g in CellPair[2] do
        assert CellMonomial^G eq (StartMonomial^(g^(-1)))^G;
    end for;
    OrbitMonomial := R!0;
    for x in CellMonomial^G do
        OrbitMonomial +:= x;
    end for;
    return OrbitMonomial;
end function;
\end{verbatim}

The function \texttt{CellComplexToOrbitMons} takes in $\partial \Delta / G$ (given as a CellConstruction) and returns the sequence of orbit monomials corresponding to each of the cells.

\begin{verbatim}
CellComplexToOrbitMons := function(CellConstr, G, R)
    MonomialArray := [CellToOrbitMonomial(CellConstr[i], G, R): 
                                   i in [1..#CellConstr] ];
    return MonomialArray;
end function;
\end{verbatim}

The function \texttt{SymDegLex} compares two monomials with respect to the degree lexicographic order on their shapes. It outputs the comparison in the form of a string: \texttt{"gt"}, \texttt{"lt"}, or \texttt{"eq"}.

\begin{verbatim}
SymDegLex := function(Monom1,Monom2)
    Exps1 := Exponents(Monom1); Exps2 := Exponents(Monom2);
    Sort(~Exps1); Sort(~Exps2);
    Reverse(~Exps1); Reverse(~Exps2);
    n := #Exps1;
    assert n eq #Exps2;
    Output := "eq";
    if Degree(Monom1) gt Degree(Monom2) then
        Output := "gt";
    elif Degree(Monom1) lt Degree(Monom2) then
        Output := "lt";
    else
        for i in [1..n] do
            if Exps1[i] gt Exps2[i] then
                Output := "gt";
                break i;
            elif Exps1[i] lt Exps2[i] then
                Output := "lt";
                break i;
            end if;
        end for;
    end if;
    return Output;
end function;
\end{verbatim}

The function \texttt{IdentifySymLexHighest} inputs a sequence of monomials and outputs an element in the sequence with the degree-lexicographically highest shape.

\begin{verbatim}
IdentifySymLexHighest := function(Monoms)
    Highest := Monoms[1];
    for i in [2..#Monoms] do
        if SymDegLex(Monoms[i],Highest) eq "gt" then
            Highest := Monoms[i];
        end if;
    end for;
    return Highest;
end function;
\end{verbatim}

The function \texttt{IdentifySymLexLeading} identifies a monomial of degree lexicographically leading shape in a polynomial.

\begin{verbatim}
IdentifySymLexLeading := function(InputPoly)
    Monoms := Monomials(InputPoly);
    return IdentifySymLexHighest(Monoms);
end function;
\end{verbatim}

The function \texttt{GobelMethod} implements a version of G\"{o}bel's algorithm to calculate a representation on special orbit monomials for a given invariant polynomial. It inputs the group $G$, the target polynomial \texttt{InvarPoly} to be represented, the ambient polynomial ring $R$, a second polynomial ring \texttt{SymR} which will house the coefficients of the representation, and a CellConstruction. The format of the output is a function $F$ from the set of special orbit monomials to \texttt{SymR}, that, for each special orbit monomial, gives its coefficient (as a polynomial in the elementary symmetric polynomials) in the representation.

\begin{verbatim}
GobelMethod := function(InvarPoly, G, R, SymR, CellComplex)
    assert IsInvariant(InvarPoly, G);
    n := Degree(G);
    assert Rank(R) eq n;
    assert Rank(SymR) eq n;
    s := [ElementarySymmetricPolynomial(R,i): i in [1..n]];
    SpecialOrbitMons := CellComplexToOrbitMons(CellComplex, G, R);
    SymCoefficients := [SymR!0 : i in [1..#SpecialOrbitMons]];
    f := hom< SymR -> R | [ElementarySymmetricPolynomial(R,i): 
                                   i in [1..n]] >;
    Remainder := InvarPoly;
    while Remainder ne 0 do
        m := IdentifySymLexLeading(Remainder);
        C := MonomialCoefficient(Remainder, m);
        Orbitm := R!0;
        for x in m^G do
            Orbitm +:= x;
        end for;
        IsSpecial, Multipliers, AssociatedSpecial := 
                                    FindSymMultipliers(m);
        if IsSpecial eq false then
            AssociatedSpecial := Monomial(R,AssociatedSpecial);
            OrbitSpecial := R!0;
            for x in AssociatedSpecial^G do
                OrbitSpecial +:= x;
            end for;
            Multipliers := Monomial(SymR,Multipliers);
            SymCoefficients[Index(SpecialOrbitMons, OrbitSpecial)] 
                                              +:= SymR!C*Multipliers;
            Remainder -:= C*f(Multipliers)*OrbitSpecial;
        else
            SymCoefficients[Index(SpecialOrbitMons, Orbitm)] 
                                              +:= SymR!C;
            Remainder -:= C*Orbitm;
        end if;
    end while;
    F := map< Set(SpecialOrbitMons) -> SymR | [<SpecialOrbitMons[i],
               SymCoefficients[i]>: i in [1..#SpecialOrbitMons] ] >;
    IsItRight := R!0;
    for m in SpecialOrbitMons do
        IsItRight +:= f(F(m))*m;
    end for;
    assert IsItRight eq InvarPoly;
    return F;
end function;
\end{verbatim}

Here is an example calculation. We choose $G = A_3 \subset S_3$.  First we construct the ambient polynomial ring $R$, the coefficient ring \texttt{SymR}, the group $G$, the cell complex $\partial \Delta / G$ (which is homeomorphic to the real projective plane in this case) as a CellConstruction, and the list of special orbit monomials:

\begin{verbatim}
R<[x]> := PolynomialRing(IntegerRing(), 3);
SymR<[s]> := PolynomialRing(IntegerRing(), 3);
G := PermutationGroup< 3 | (1,2,3)>;
CellComplex := CellConstruction(G);
OrbitMons := CellComplexToOrbitMons(CellComplex, G, R);
\end{verbatim}

Then we construct a miscellaneous invariant polynomial to be represented. We have chosen the orbit monomial of $x_1x_3^4$, which is $x_1^4x_2 + x_2^4x_3 + x_1x_3^4$.

\begin{verbatim}
m := x[1]*x[3]^4;
Orbitm := R!0;
for b in m^G do
    Orbitm +:= b;
end for;
\end{verbatim}

Then we use \texttt{GobelMethod} to calculate and then display a representation as an $R^{S_n}$-linear combination of special orbit monomials.

\begin{verbatim}
F := GobelMethod(Orbitm, G, R, SymR, CellComplex);

print Orbitm;
print "= sum of";
for SpecialMon in OrbitMons do
    if F(SpecialMon) ne 0 then
        print F(SpecialMon), "* (", SpecialMon, "),";
    end if;
end for;
\end{verbatim}

The output looks like this:

\begin{verbatim}
x[1]^4*x[2] + x[1]*x[3]^4 + x[2]^4*x[3]
= sum of
s[1]^2 - 2*s[2]
* ( x[1]^2*x[2] + x[1]*x[3]^2 + x[2]^2*x[3]
),
-s[2]
* ( x[1]^2*x[3] + x[1]*x[2]^2 + x[2]*x[3]^2
),
-2*s[3]
* ( x[1]*x[2] + x[1]*x[3] + x[2]*x[3]
),
s[1]*s[3]
* ( x[1] + x[2] + x[3]
),
\end{verbatim}

This expresses the fact that
\[
Gx_1x_3^4 = (\sigma_1^2 - 2\sigma_2)(Gx_1^2x_2) -\sigma_2(Gx_1^2x_3) - 2\sigma_3(Gx_1x_2) + \sigma_1\sigma_3(Gx_1)
\]
where we are abbreviating by $Gm$ the orbit monomial of a monomial $m$. 


\section{Appendix: Algebraic lemmas}\label{appendix:algebraiclemmas2}


\begin{lemma}\label{lem:quotientstabilizer}
Let $G$ be a group acting on a set $X$, let $N$ be a normal subgroup, and let $\overline{X} = X/N$ be the set of $N$-orbits. Let $\overline{x} \in\overline{X}$ be any element, and let $x$ be any of its preimages in $X$. Then the preimage in $G$ of $(G/N)_{\overline{x}}$ is $G_xN$, so the canonical map $G\rightarrow G/N$ maps the stabilizer $G_x$ onto $(G/N)_{\overline{x}}$.
\end{lemma}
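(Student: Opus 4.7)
The plan is to unpack the definitions directly; both claims follow from a short coset manipulation, with no obstacle to speak of.

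First I would spell out how $G/N$ acts on $\overline{X}=X/N$. Since $N$ is normal, the usual formula $(gN)\cdot(Ny) := N(gy)$ is well-defined, and the canonical maps $G\to G/N$ and $X\to\overline{X}$ are equivariant. Under this action, $\overline{x}$ is the $N$-orbit $Nx$, so an element $gN\in G/N$ lies in $(G/N)_{\overline{x}}$ precisely when $N(gx)=Nx$, i.e.\ precisely when $gx=nx$ for some $n\in N$.

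The preimage of $(G/N)_{\overline{x}}$ under $\pi:G\to G/N$ is therefore the set of $g\in G$ such that $gx=nx$ for some $n\in N$. Such a $g$ satisfies $n^{-1}g\in G_x$, hence $g\in nG_x\subset NG_x=G_xN$ (the last equality using normality of $N$). Conversely every element of $G_xN$ manifestly has this property. This identifies the preimage as $G_xN$.

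The final clause is then immediate: applying $\pi$ to both sides of $\pi^{-1}((G/N)_{\overline{x}})=G_xN$ gives
\[
(G/N)_{\overline{x}}=\pi(G_xN)=\pi(G_x)\pi(N)=\pi(G_x),
\]
since $\pi(N)$ is trivial. Thus $\pi$ restricts to a surjection $G_x\twoheadrightarrow(G/N)_{\overline{x}}$, completing the proof.
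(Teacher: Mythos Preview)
Your proof is correct and follows essentially the same approach as the paper: both arguments identify the preimage of $(G/N)_{\overline{x}}$ by noting that $gx\in Nx$ forces $n^{-1}g\in G_x$ for some $n\in N$, hence $g\in NG_x=G_xN$, with the converse clear. You are slightly more explicit than the paper in setting up the $G/N$-action and in deducing the surjection at the end, but the substance is identical.
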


\begin{proof}
If $g\in G$'s image in $G/N$ stabilizes $\overline{x}$, it means it sends $x$ to another preimage $x'$ of $\overline{x}\in \overline{X}$. Then there exists $n\in N$ with $n(x) = x'$ as well, so that $n^{-1}g$ stabilizes $x$ in $X$, i.e. it is $\in G_x$, so that $g\in NG_x = G_xN$. Conversely, if $g\in G_xN$, then $g=g'n$ with $g'\in G_x$ and thus its image in $G/N$ stabilizes $\overline{x}$.
\end{proof}

\begin{definition}
Let $\mathscr{M}$ be a commutative monoid, written additively with identity element $0$. We will say $\mathscr{M}$ is \textbf{positive} if it is cancellative and has the additional property that for any $a,b\in \mathscr{M}$, $a+b=0$ implies that $a=b=0$.
\end{definition}

\begin{example}
$(\N^n,+)$ is a positive monoid.
\end{example}

\begin{lemma}
A positive monoid $\mathscr{M}$ has a canonical partial order given by $a\leq b$ if there exists $c\in\mathscr{M}$ with $a+c=b$.
\end{lemma}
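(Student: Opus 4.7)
The plan is to verify the three axioms of a partial order—reflexivity, transitivity, and antisymmetry—for the relation $\leq$ defined by $a \leq b \iff \exists c \in \mathscr{M},\, a + c = b$. The first two require only the monoid structure, while antisymmetry will use the full strength of the hypothesis that $\mathscr{M}$ is positive.

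For reflexivity, I would take $c = 0$ in the definition: since $a + 0 = a$, we have $a \leq a$. For transitivity, given $a \leq b$ and $b \leq c$, I would pick witnesses $d, e \in \mathscr{M}$ with $a + d = b$ and $b + e = c$, then observe that $a + (d + e) = (a + d) + e = b + e = c$ by associativity, so $d + e$ witnesses $a \leq c$.

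The main step is antisymmetry, and it is where both parts of positivity get used. Suppose $a \leq b$ and $b \leq a$, with witnesses $c, d$ satisfying $a + c = b$ and $b + d = a$. Substituting the first equation into the second gives $a + c + d = a$, which I would rewrite as $a + (c + d) = a + 0$. Cancellativity of $\mathscr{M}$ then yields $c + d = 0$, and the defining property of a positive monoid forces $c = d = 0$, whence $a = b$. This is the only place where the conditions beyond a mere commutative monoid are needed, so the lemma essentially formalizes why we introduced the positivity hypothesis in the first place.
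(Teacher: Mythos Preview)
Your proof is correct and follows essentially the same approach as the paper's: reflexivity from the identity $0$, transitivity from associativity, and antisymmetry by combining the two witnesses, applying cancellativity to get $c+d=0$, and then positivity to conclude $c=d=0$. The paper's version is terser but structurally identical.
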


\begin{proof}
Transitivity follows from associativity of $\mathscr{M}$ and reflexivity from the existence of $0\in\mathscr{M}$. Antisymmetry is a consequence of the positivity: $a\leq b$ and $b\leq a$ implies the existence of $c,c'$ with $a+c = b,\; b+c'=a$, so that 
\[
(a+c) + c' = a + (c+c') = a,
\]
whereupon $c+c'=0$ by cancellation, and then $c=c'=0$ by positivity, implying that $a=b$.
\end{proof}

\begin{definition}
We will say that a positive monoid $\mathscr{M}$ is \textbf{archimedean} if for any fixed $a\in\mathscr{M}$, there is an $\ell\in \N$ such that no $b\leq a$ can be expressed as a sum of $\ell$ nonzero elements of $\mathscr{M}$.
\end{definition}

\begin{example}
$(\N^n,+)$ is archimedean. $(\N\setminus\{0\},\times)$ is archimedean. $(\N,\times)$ is not archimedean, since for $a=0$, no $\ell$ can prevent $b^\ell$ from dividing $a$. (In fact, it is not even positive, since it is not cancellative.)
\end{example}

\begin{definition}
For a positive monoid $\mathscr{M}$, the ring $R$ is \textbf{$\mathscr{M}$-graded} if it has a direct sum decomposition
\[
R = \bigoplus_{a\in\mathscr{M}} R_a
\]
such that $R_aR_b\subset R_{a+b}$ for all $a,b\in\mathscr{M}$. Note that $\bigoplus_{a\neq 0} R_a$ is an ideal of $R$; call it $R_+$. Also, $R_0$ is a subring. An $R$-module $M$ with a direct sum decomposition
\[
M = \bigoplus_{a\in\mathscr{M}} M_a
\]
satisfying $R_aM_b\subset M_{a+b}$ for all $a,b\in\mathscr{M}$ will also be called \textbf{$\mathscr{M}$-graded}. The $R_a$ and $M_a$ are called the \textbf{homogeneous components} of $R$ and $M$, respectively,  and their elements are called \textbf{homogeneous elements}, as usual. The projections of an arbitrary element $x$ of $R$ or $M$ to their homogeneous components are called the {\em homogeneous components of $x$}. If $x\in R$ or $M$ is homogeneous, we will refer to the unique $a\in\mathscr{M}$ such that $x$'s image in $R_a$ or $M_a$ is nonzero, as usual, as $x$'s \textbf{degree}. A \textbf{homogeneous submodule} of $M$ is one that is the direct sum of its projections to each $M_a$, and a \textbf{homogeneous ideal} is a homogeneous submodule of the $R$-module $R$. Note $R_+$ is a homogeneous ideal.
\end{definition}

\begin{lemma}[Nakayama lemma for $\mathscr{M}$-graded modules]\label{lem:gradedNakayama}
If $\mathscr{M}$ is an archimedean, positive monoid, $R$ is an $\mathscr{M}$-graded ring, and $M$ is a $\mathscr{M}$-graded $R$-module satisfying
\[
R_+M = M,
\]
then $M=0$.
\end{lemma}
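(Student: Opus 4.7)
The plan is to argue by contradiction, producing from a nonzero homogeneous element of $M$ a sequence of factorizations whose length is bounded by the archimedean hypothesis.

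Suppose $M \neq 0$. Then some homogeneous component $M_{a_0}$ is nonzero; pick a nonzero $m_0 \in M_{a_0}$. The plan is to construct, by induction on $k$, a nonzero homogeneous element $m_k \in M$ together with nonzero homogeneous elements $r_1,\dots,r_k \in R_+$ such that
\[
a_0 = \deg r_1 + \deg r_2 + \cdots + \deg r_k + \deg m_k.
\]
For the inductive step, suppose $m_{k-1}$ of degree $a_{k-1}$ has been constructed, so that $a_0 = \deg r_1 + \cdots + \deg r_{k-1} + a_{k-1}$. Since $R_+M = M$, we may write $m_{k-1} = \sum_i r_i' m_i'$ with $r_i' \in R_+$ and $m_i' \in M$. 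Decomposing each $r_i'$ and $m_i'$ into their homogeneous components and projecting onto $M_{a_{k-1}}$, we may assume each $r_i'$ is homogeneous in $R_+$ (hence $\deg r_i' \neq 0$), each $m_i'$ is homogeneous, and $\deg r_i' + \deg m_i' = a_{k-1}$ for each $i$. Since $m_{k-1}$ is nonzero, at least one summand $r_i' m_i'$ is nonzero; set $r_k := r_i'$ and $m_k := m_i'$ for such an $i$.

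This inductive construction never terminates, since at every stage the element $m_k$ is nonzero and therefore admits a further factorization. But now the archimedean property of $\mathscr{M}$ applied to $a_0$ yields an $\ell \in \N$ such that no element $b \leq a_0$ is a sum of $\ell$ nonzero elements of $\mathscr{M}$. At step $k = \ell$ we obtain an expression
\[
a_0 = \deg r_1 + \cdots + \deg r_\ell + \deg m_\ell,
\]
which writes $a_0$ (which satisfies $a_0 \leq a_0$) as a sum of at least $\ell$ nonzero elements of $\mathscr{M}$ — the $\ell$ terms $\deg r_i$, plus $\deg m_\ell$ if it happens to be nonzero. This contradicts the choice of $\ell$, so $M = 0$.

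The main obstacle, and indeed the only real content of the argument beyond bookkeeping, is recognizing that the archimedean hypothesis is exactly what is needed to terminate the descending factorization chain; the rest is a routine adaptation of the standard graded Nakayama argument to the partially ordered setting of $\mathscr{M}$. One minor point to verify carefully is that the homogeneous decomposition used at each inductive step is legitimate, which relies on $\mathscr{M}$ being cancellative (so that the degree identity $\deg r_i' + \deg m_i' = a_{k-1}$ is unambiguous).
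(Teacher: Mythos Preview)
Your proof is correct and follows essentially the same approach as the paper: both derive a contradiction from the archimedean hypothesis by exhibiting a nonzero homogeneous element of $M$ whose degree decomposes as a sum involving $\ell$ nonzero contributions from $R_+$. The paper does this in one shot via $R_+^\ell M = M$, while you peel off one factor at a time by induction, but the content is the same.
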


\begin{proof}
We will show that $M$ does not have any homogeneous nonzero elements. It will follow that it is zero since it is the direct sum of its homogeneous components. So suppose for a contradiction that $m\in M_a$ is nonzero. Because $\mathscr{M}$ is archimedean, there is an $\ell$ for which no sum $s$ of $\ell$ nonzero elements of $\mathscr{M}$ can satisfy $s+c = a$ for any $c\in\mathscr{M}$. But meanwhile,
\[
R_+^\ell M = R_+^{\ell-1}(R_+M) = R_+^{\ell - 1} M = \dots = R_+M = M.
\]
Thus there exists an expression for $m$ as a finite sum
\[
m = \sum_ir_{1i}\dots r_{\ell i} n_i
\]
for a finite set of $n_i$'s in $M$ and $r_{ij}$'s in $R_+$. By splitting each element into its homogeneous components and expanding out all the products, we may assume that all the $n_i$'s and $r_{ij}$'s are homogeneous.

But no $r_{i1}\dots r_{i\ell}n_i$ can lie in $M_a$, since each $r_{ij}$ is homogeneous and not in $R_0$, so its degree is a nonzero element of $\mathscr{M}$; if $r_{i1}\dots r_{i\ell}n_i$ lay in $M_a$ this would imply
\[
\deg r_{i1} + \dots + \deg r_{i\ell} + \deg n_i = a
\]
which contradicts the construction of $\ell$. We conclude $m$ cannot exist.
\end{proof}

\begin{definition}
For $a\in\mathscr{M}$, let $R(a)$ denote the $\mathscr{M}$-graded $R$-module given by $R(a)_b = R_c$ for $b\geq a$, with $c$ the element of $\mathscr{M}$ satisfying $a+b=c$ (it must be unique since $\mathscr{M}$ is cancellative); and $R(a)_b = 0$ for $b\ngeq a$.
\end{definition}

\begin{prop}\label{prop:gradedandfreeisgradedfree}
Let $\mathscr{M}$ be an archimedean positive monoid, and let $R$ be an $\mathscr{M}$-graded ring. If $R_0$ is a ring with the property that projective modules over it are free (for example a local ring or a p.i.d.), then any $\mathscr{M}$-graded $R$-module that is free as a module has a homogeneous basis.
\end{prop}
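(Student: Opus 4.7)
\emph{Plan.} The plan is to produce a homogeneous basis of $M$ by lifting a basis of the quotient $\overline{M} := M/R_+M$ over $R_0$. Since $R_+$ is a homogeneous ideal and the canonical map $R \to R/R_+$ identifies the target with $R_0$, the quotient $\overline{M} = M \otimes_R R_0$ is naturally an $\mathscr{M}$-graded $R_0$-module with homogeneous components $\overline{M}_a = M_a/(R_+M)_a$.

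First I would find the candidate lifts. Since $M$ is $R$-free, $\overline{M}$ is free as an $R_0$-module. Each homogeneous component $\overline{M}_a$ is a direct summand of $\overline{M}$ in the category of $R_0$-modules, hence projective, hence free by the hypothesis on $R_0$. Choose an $R_0$-basis $\{\overline{e}_{a,i}\}_{i \in I_a}$ of each $\overline{M}_a$, and lift each $\overline{e}_{a,i}$ to a homogeneous element $e_{a,i} \in M_a$.

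Next I would verify that these lifts form a basis. Assemble them into a graded free $R$-module $F$ with one homogeneous generator $f_{a,i}$ of degree $a$ for each pair $(a,i)$, and let $\phi : F \to M$ be the graded $R$-linear map sending $f_{a,i}$ to $e_{a,i}$. For surjectivity, let $N = \phi(F)$, a graded submodule of $M$; by construction, the images of the generators already reduce to an $R_0$-basis of $\overline{M}$, so $M = N + R_+M$, which yields $R_+(M/N) = M/N$ and hence $M/N = 0$ by the graded Nakayama lemma (Lemma~\ref{lem:gradedNakayama}). For injectivity, let $K = \ker\phi$, a graded submodule of $F$. Because $M$ is $R$-free, the short exact sequence $0 \to K \to F \to M \to 0$ splits as a sequence of ungraded $R$-modules, so it remains exact after tensoring with $R_0$. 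The induced surjection $\overline{F} \to \overline{M}$ carries the $R_0$-basis $\{\overline{f}_{a,i}\}$ of $\overline{F}$ bijectively to the $R_0$-basis $\{\overline{e}_{a,i}\}$ of $\overline{M}$, so it is an isomorphism, whence $\overline{K} = 0$, i.e.\ $K = R_+K$. Applying the graded Nakayama lemma once more forces $K = 0$, so $\phi$ is an isomorphism and $\{e_{a,i}\}$ is a homogeneous $R$-basis of $M$.

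The entire argument hinges on the graded Nakayama lemma, which is what transmits the archimedean positivity of $\mathscr{M}$ into useful vanishing statements; without this the other steps would still produce the lifts $e_{a,i}$ but would not connect them to $M$. The main subtlety I would have to be careful about is the passage to $R_0$-coefficients for $K$: if $M$ were only a graded module and not $R$-free, the sequence for $K$ might fail to stay exact and one could not identify $\overline{F} \to \overline{M}$ as an isomorphism. Beyond this, I expect no substantive obstacles, only bookkeeping to ensure that the grading on $\overline{M}$ is inherited correctly and that the graded Nakayama lemma applies to both $M/N$ and $K$.
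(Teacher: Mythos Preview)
Your proposal is correct and follows essentially the same approach as the paper: lift a homogeneous $R_0$-basis of $M/R_+M$ (using that homogeneous components are projective, hence free, over $R_0$), apply graded Nakayama for surjectivity, then use freeness of $M$ to split the kernel and apply graded Nakayama again for injectivity. Your packaging of the injectivity step via ``the split short exact sequence stays exact after $-\otimes_R R_0$'' is a slightly cleaner phrasing of what the paper does by hand with the decomposition $R^B = \ker\varphi \oplus M'$, but the content is identical.
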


\begin{proof}
Suppose $M$ is a free $R$-module. By tensoring with $R_0 = R/R_+$, we obtain that $M/R_+M$ is a free $R_0$-module, and it inherits an $\mathscr{M}$-grading in the obvious way, since $R_+M$ is a homogeneous submodule.

The homogeneous components of $M/R_+M$ are direct summands of it and thus projective $R_0$-modules. It follows by the assumption about $R_0$ that they are free. Thus each one has a basis over $R_0$; combining these bases, one sees that $M/R_+M$ has a homogeneous basis.

Lift it to a set of homogeneous elements $B\subset M$. By construction, 
\[
RB + R_+M = M.
\]
Thus $M/RB$ satisfies
\[
R_+\left(\frac{M}{RB}\right) = \frac{R_+M}{RB} = \frac{M}{RB},
\]
so the $\mathscr{M}$-graded Nakayama lemma tells us $M/RB = 0$, i.e. $B$ generates $M$. Then we have a surjection from the free $\mathscr{M}$-graded module $R^B = \bigoplus_{x\in B} R(\deg x)$, obtained by sending the generator of the $R(\deg x)$ component to $x$:
\[
\varphi: R^B \rightarrow M.
\]
Because $B$'s image in $M/R_+M$ is a basis, it cannot have any relations mod $R_+$, i.e. it must be that $\ker\varphi\subset R_+ R^B$.

Since $M$ is a free (and thus projective) module by presumption, this surjection splits:
\[
R^B = \ker \varphi \oplus M'
\]
where $M'$ is mapped isomorphically to $M$ by $\varphi$. Note that $\ker \varphi$ is a graded submodule of $R^B$ because $\varphi$ is a degree-preserving map.

Now we have
\[
\ker \varphi \subset R_+R^B = R_+\left(\ker\varphi \oplus M'\right) = R_+\ker \varphi\oplus R_+ M'
\]
Now $\ker \varphi$ cannot meet $R_+M'\subset M'$ in a nonzero element, since $M'$ is mapped isomorphically to $M$ by $\varphi$. But it does contain $R_+\ker\varphi$ completely. Thus its containment in the direct sum $R_+\ker \varphi\oplus R_+ M'$ implies that it $=R_+\ker\varphi$.

Then, a second application of the $\mathscr{M}$-graded Nakayama lemma shows that $\ker\varphi = 0$, and therefore that $\varphi: R^B\rightarrow M$ is an isomorphism. Thus $B$ is a homogeneous basis for $M$.
\end{proof}

\begin{remark}
We first heard this argument from Manny Reyes in the context of an $\N$-graded algebra over a field. It was pointed out to us by Eric Wofsey that the argument also works for an $\N^n$-graded algebra over any ring such that projective modules are free, and the foregoing is our construction of an appropriate abstract setting for this result.
\end{remark}


\section{Appendix: Huffman's theorem}\label{appendix:huffman}

The following classification theorem, due to W. Cary Huffman, gives a sense of the groups to which our main result, theorem \ref{thm:mainresult}, applies.

\begin{thm}[\cite{huffman}, Theorem 2.1]
Let $G$ be a transitive permutation group of degree $n$, satisfying $G=\Grr$. Then the following case analysis holds:
\begin{enumerate}
\item If $G$ contains a transposition and a three-cycle, $G=S_n$.
\item If $G$ does not contain a transposition but does contain a three-cycle, $G=A_n$.
\item If $G$ does not contain a three-cycle but does contain a transposition, then $n=2m$ and $G$ is the wreath product $S_2\wr S_m$.
\item If $G$ does not contain any three-cycles or transpositions, then we have one of the following:
\begin{enumerate}
\item $n=2m$ and $G$ is $A_n \cap S_2\wr S_m$.
\item $n=5$ and $G=D_{10}$.
\item $n=6$ and $G$ is the transitive embedding of $A_5$ in $S_6$.
\item $n=7$ and $G\cong PSL(2,7)$.
\item $n=8$ and $G\cong C_2^3 \rtimes PSL(2,7)\cong AGL(3,2)$.
\end{enumerate}
\end{enumerate}
\end{thm}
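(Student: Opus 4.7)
The plan is to proceed by case analysis on which of the three generator types (transpositions, double transpositions, $3$-cycles) actually occur in $G$. To each type I would attach a $G$-invariant combinatorial structure on $[n]$: the transpositions of $G$ form the edges of a graph $\Gamma_T$; the double transpositions give a matching-edge structure; and the $3$-cycles give a $3$-uniform hypergraph. Transitivity forces these structures to be highly symmetric, and whenever two generators of a single type have overlapping support they produce an element of another type (e.g.\ two overlapping transpositions yield a $3$-cycle), so the absence of certain generator types imposes strong geometric constraints.

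First I would dispose of cases 1 and 2 together using the classical theorem of Jordan that a primitive transitive subgroup of $S_n$ containing a $3$-cycle is either $S_n$ or $A_n$. Primitivity follows from the hypothesis $G = \Grr$: a nontrivial block system would have to be respected by a $3$-cycle, which forces either blocks of size $\geq 3$ permuted trivially (contradicting transitivity on $[n]$) or blocks of size exactly $3$, which a closer analysis rules out using additional generators. The dichotomy $S_n$ versus $A_n$ is then determined by whether a transposition is present. For case 3, the absence of $3$-cycles is decisive: if two transpositions $(ab),(bc)\in G$ shared a point, their product $(acb)$ would be a $3$-cycle in $G$, contrary to assumption. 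Hence $\Gamma_T$ has maximum degree $1$; transitivity of $G$ together with the fact that every vertex lies on some transposition (because the double transpositions alone cannot make $G$ transitive across a pair where $(ab)\in G$) forces $\Gamma_T$ to be a perfect matching, so $n=2m$ and the transpositions generate $S_2^m$. A remaining argument shows the double transpositions project onto the full $S_m$ on the pairs, giving $G = S_2\wr S_m$.

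The crux is case 4, where $G$ is generated entirely by double transpositions and therefore $G\subseteq A_n$. The first subcase to isolate is 4(a): I would check whether $G$ preserves a perfect matching $\mathcal{M}$ on $[n]$, in which case a parallel argument to case 3 (now distinguishing double transpositions that preserve $\mathcal{M}$ from those that swap two pairs of $\mathcal{M}$) gives $G = A_n\cap S_2\wr S_m$. If no such matching exists, then the action of $G$ is primitive, and the classification reduces to identifying primitive permutation groups of degree $n$ generated by elements with exactly $n-4$ fixed points. The main obstacle will be ruling out $n \geq 9$ and pinning down the five sporadic low-degree cases. For this I would combine a careful analysis of the orbit structure on the ``double-transposition graph'' on $\binom{[n]}{2}$ with the classification of primitive groups of small degree; the exceptional cases 4(b)--4(e) correspond to the well-known small primitive groups $D_{10}$, $A_5\hookrightarrow S_6$, $PSL(2,7)$ acting on $7$ and on $8$ points, each of whose nontrivial generators have the required $(2,2)$-cycle shape, and ruling out further sporadic cases should be manageable via order and index restrictions combined with the fact that a primitive group generated by elements of fixed small support is very rigid.
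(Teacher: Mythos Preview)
The paper does not prove this theorem at all: it is stated in an appendix purely as a citation of Huffman's 1979 result, with the sole purpose of giving the reader a sense of which transitive groups satisfy $G=\Grr$. There is no proof in the paper to compare against.

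That said, since you have sketched an argument, a few remarks on its soundness. Your handling of cases 1--3 is broadly on the right track, though the primitivity argument for cases 1--2 is incomplete (you need to rule out block systems with blocks of any size, not just size $3$), and in case 3 the claim that every vertex lies in the support of some transposition needs a real argument. The more serious gap is in case 4: the inference ``if $G$ preserves no perfect matching then $G$ is primitive'' is false as stated---imprimitivity could come from a block system with blocks of size $>2$, and you have not excluded this. Finally, the phrase ``ruling out $n\geq 9$\dots should be manageable via order and index restrictions'' is where essentially all of the content of Huffman's paper lives; this is a genuine classification-type argument, not a routine check, and your outline does not supply the key ideas needed to carry it out.
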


Note that the theorem does not encompass intransitive groups such as the Young subgroups, or $S_n$ diagonally embedded in $S_n\times S_n\subset S_{2n}$. However, an intransitive permutation group $G$ generated by its transpositions, double transpositions, and three-cycles has a surjective homomorphism, for each of its orbits $\Omega$, into $S_\Omega$, and the image is one of the groups on this list. Furthermore, transpositions and three-cycles can act in only one of the orbits $\Omega$ at a time, and double transpositions can act in at most two orbits. If they do this, they act as transpositions in each. Thus $G$'s image in any $S_\Omega$ splits off as a direct factor if it doesn't contain any transpositions, i.e. unless it is case $1$ or $3$ on the list above. So the theorem goes a long way toward completely describing permutation groups for which $G=\Grr$.


\newpage

\end{document}